\DeclareMathOperator{\Refl}{Ref}
\DeclareMathOperator{\sing}{sing}
\title{Mathematics of internal waves in a 2D aquarium}
\author{Semyon Dyatlov}
\email{dyatlov@math.mit.edu}
\address{Department of Mathematics, Massachusetts Institute of Technology, Cambridge, MA 02139}
\author{Jian Wang}
\email{wangjian@email.unc.edu}
\address{Department of Mathematics, University of North Carolina, Chapel Hill, NC 27514}
\author{Maciej Zworski}
\email{zworski@math.berkeley.edu}
\address{Department of Mathematics, University of California, Berkeley, CA 94720}
\begin{document}

\begin{abstract}
Following theoretical and experimental work of Maas et al \cite {Maas-Nature} we consider a linearized model for internal waves in effectively two dimensional
aquaria. We provide a precise description of singular profiles appearing in long time 
wave evolution and 
associate them to classical attractors. That is done by microlocal analysis of the spectral
Poincar\'e problem,  leading in particular to a limiting absorption principle. 
Some aspects of the paper (for instance \S \ref{s:microp}) can be considered as a natural microlocal 
continuation of the work of John \cite{John-Dirichlet} on the Dirichlet problem for 
hyperbolic equations in two dimensions.
\end{abstract}

\maketitle

%%%%%%%%%%%%%%%%%%%%%%%%%%%%%%%%%%%%%%%%%%%%%%%%%%%%%%%%%%%%%%%%%%%%%%%%%%%%%%%%
%%%%%%%%%%%%%%%%%%%%%%%%%%%%%%%%%%%%%%%%%%%%%%%%%%%%%%%%%%%%%%%%%%%%%%%%%%%%%%%%

%%%%%%%%%%%%%%%%%%%%%%%%%%%%%%%%%%%%%%%%%%%%%%%%%%%%%%%%%%%%%%%%%%%%%%%%%%%%%%%%
%%%%%%%%%%%%%%%%%%%%%%%%%%%%%%%%%%%%%%%%%%%%%%%%%%%%%%%%%%%%%%%%%%%%%%%%%%%%%%%%
\section{Introduction}

Internal waves  are a central topic in oceanography and the theory of rotating fluids
-- see  \cite{MaasLNL} and \cite{Siber} for reviews and references. 
%%%%%%%%%%%%%%%%%%%%%%%%%%%%%%%%%%%%%%%%%%%%%%%%%%%%%%%%%%%%%%%%%%%%%%%%%%%%%%%%
\begin{figure}
\includegraphics[width=12.3cm]{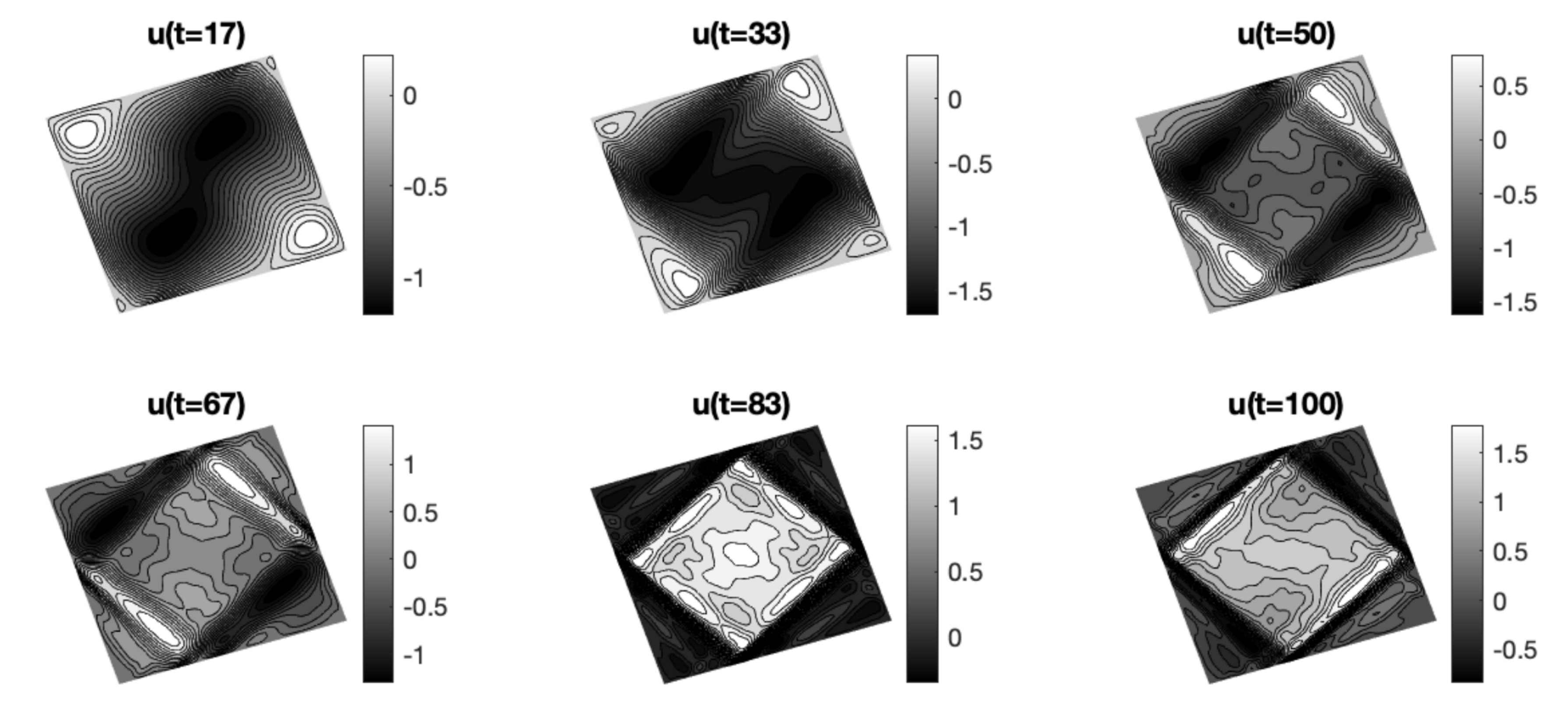}
\caption{Contour plots of  a numerical solution to \eqref{eq:PDE} for $ \Omega $ given by a unit square  tilted by $ \pi/10 $ (see 
\S \ref{s:exms}), with $ f ( x ) = e^{-5\pi^2(x-x^0)^2 } $, where $ x^0 $ is the 
center  and $\lambda = 0.8 $. In that case the rotation number of the 
billiard ball map is $ \frac12 $ (see Figure \ref{f:examples-2}) and the classical attractor is given by 
a parallelogram on which $ u $ develops a singularity -- see Theorem \ref{t:1}.}
\label{f:tilted}
\end{figure}
%%%%%%%%%%%%%%%%%%%%%%%%%%%%%%%%%%%%%%%%%%%%%%%%%%%%%%%%%%%%%%%%%%%%%%%%%%%%%%%%
They can be described by linear perturbations of the initial state of rest of a stable-stratified fluid (dense fluid lies everywhere below less-dense fluid and the isodensity surfaces are all horizontal). Forcing can take place at linear level by pushing fluid away from this equilibrium state either mechanically, by wind, a piston, a moving boundary, or thermodynamically, by spatially differential heating or evaporation/rain. 

The mechanism behind formation of internal waves comes from ray dynamics of the classical system which underlies wave equations -- see \S \ref{s:ass} for the case of nonlinear ray dynamics 
relevant to the case we consider. When parameters
of the system produce hyperbolic dynamics, attractors are observed in wave evolution -- see Figure \ref{f:tilted}.  This phenomenon is both physically and theoretically more accessible in dimension two. The analysis in the physics literature, see \cite{MaasLNL}, \cite{Trotsky}, has focused on constructions of
standing and propagating waves and did not address the evolution problem analytically. (See however
\cite{Bajars} for an analysis of  a numerical approach to the evolution problem.) In this paper we 
prove the emergence of singular profiles in the long time evolution of linear waves for two dimensional domains.

%%%%%%%%%%%%%%%%%%%%%%%%%%%%%%%%%%%%%%%%%%%%%%%%%%%%%%%%%%%%%%%%%%%%%%%%%%%%%%%%
\begin{figure}
\includegraphics[width=7.25cm]{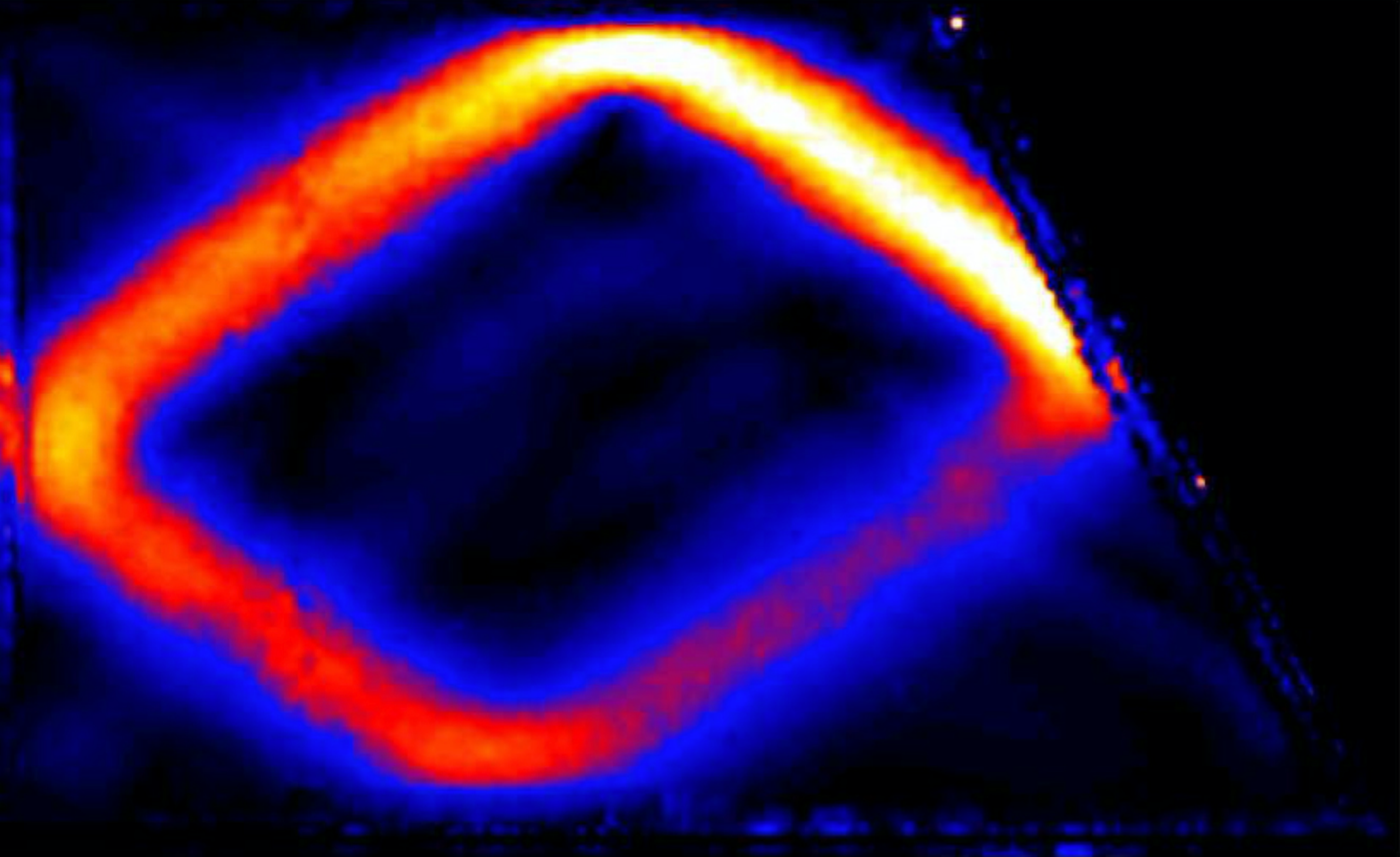}
\qquad
\includegraphics[width=7.25cm]{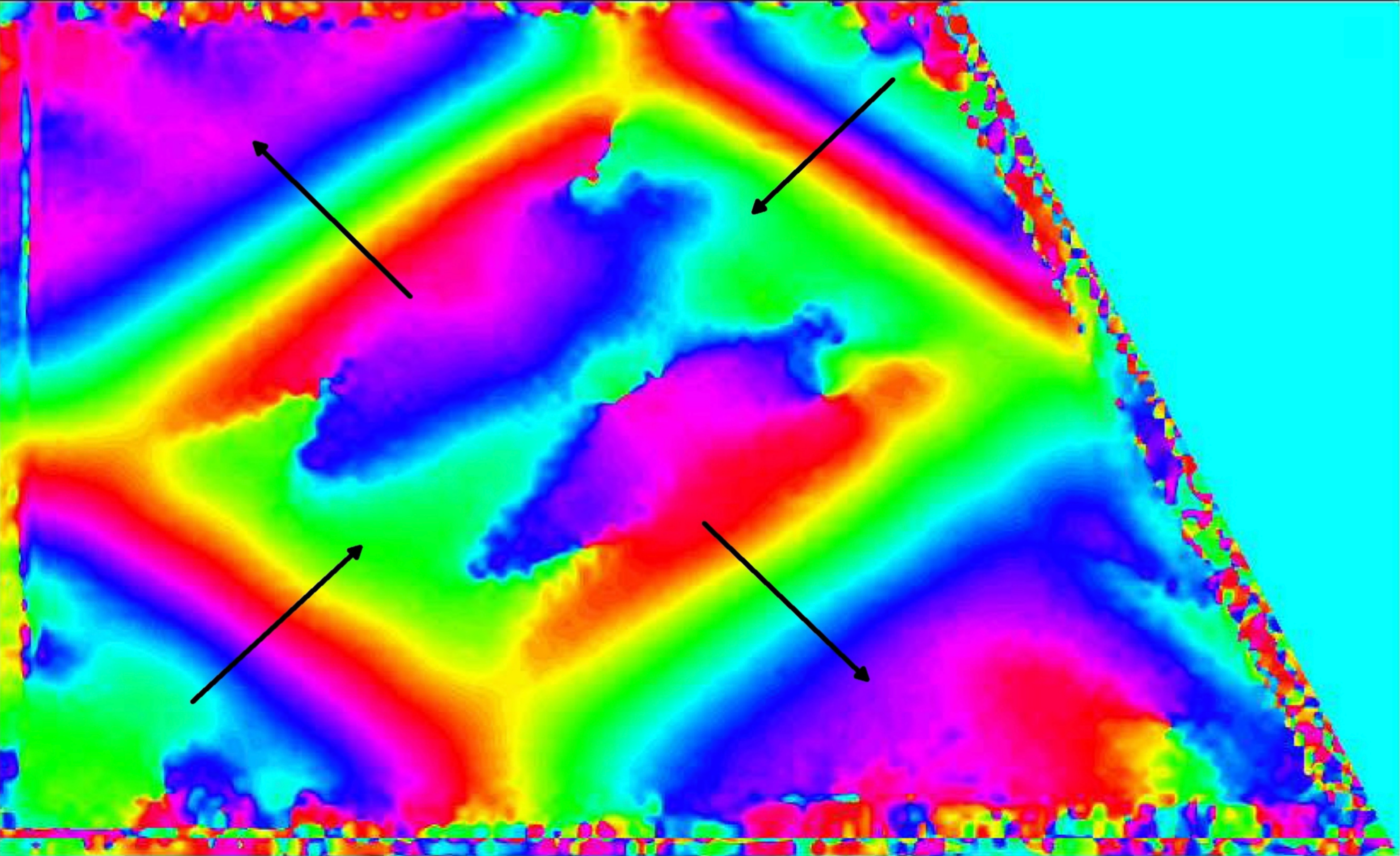}
\caption{Experimental results of
Hazewinkel et al \cite{Hazewinkel}: 
horizontal component of the observed perturbation
buoyancy gradient projected onto a field
that oscillates at the forcing frequency,
thus reducing the
time series to an amplitude field (left) and a phase field (right). 
In terms of our Theorem \ref{t:1} this corresponds to amplitude
and phase of $ u^+ $. 
 The arrows indicate directions of phase propagation 
in agreement with our analysis, shown on Figure \ref{f:Lamp}.}
\label{f:psycho}
\end{figure}
%%%%%%%%%%%%%%%%%%%%%%%%%%%%%%%%%%%%%%%%%%%%%%%%%%%%%%%%%%%%%%%%%%%%%%%%%%%%%%%%

The model we consider is described as follows.
Let $ \Omega \subset \mathbb R^2 = \{ x = (x_1, x_2 ) : x_j \in  \mathbb R \} $ be a bounded simply connected open set with $C^\infty$ boundary $\partial\Omega$. Following the fluid mechanics literature we consider the following evolution problem, sometimes referred to as the Poincar\'e problem:
\begin{equation}
\label{eq:PDE}
( \partial_t^2 \Delta + \partial_{x_2}^2 ) u = f(x) \cos \lambda t , \ \ u|_{t=0} = \partial_t u|_{t=0} = 0 , \ \ 
u |_{\partial \Omega } = 0 ,
\end{equation}
where $ \lambda \in (0, 1) $ and $ \Delta := \partial_{x_1}^2 + \partial_{x_2}^2 $, see
Sobolev~\cite[equation~(48)]{Sobolev-fluid}, 
Ralston~\cite[p.374]{RalstonR}, Maas et al~\cite{Maas-Nature}, Brouzet~\cite[\S\S 1.1.2--3]{Brouzet-thesis}, Dauxois et al~\cite{Dauxois-internal}, Colin de Verdi\`ere--Saint-Raymond~\cite{CdV-LSR}, Sibgatullin--Ermanyuk~\cite{Sibgatullin}, and references given there.
It models internal waves in a stratified fluid in an effectively two-dimensional aquarium~$ \Omega $
with an oscillatory forcing term (here we follow~\cite{CdV-LSR} rather than change the boundary condition). 
The geometry of $ \Omega $ and the forcing frequency $ \lambda $ can produce concentration 
of the fluid velocity $ \mathbf v = ( \partial_{x_2} u , -\partial_{x_1} u)  $ on attractors. This phenomenon was predicted by Maas--Lam~\cite{Maas-Lam} and was then observed experimentally by 
Maas--Benielli--Sommeria--Lam~\cite{Maas-Nature}, see Figure~\ref{f:psycho}
for experimental data from the more recent~\cite{Hazewinkel}.
(See also the earlier work of Wunsch~\cite{Wunsch1968} which studied the case of an internal wave converging to a corner,
along a trajectory of the type pictured on Figure~\ref{f:corners}.)
In this paper we provide a mathematical 
explanation: as mentioned above 
the physics papers concentrated on the analysis of modes and classical dynamics
rather than on the long time behaviour of solutions to~\eqref{eq:PDE}.

%%%%%%%%%%%%%%%%%%%%%%%%%%%%%%%%%%%%%%%%%%%%%%%%%%%%%%%%%%%%%%%%%%%%%%%%%%%%%%%%
\subsection{\texorpdfstring{Assumptions on $ \Omega $ and $ \lambda $}{Assumptions on \unichar{"03A9} and \unichar{"03BB}}} 
\label{s:ass}

The assumptions on~$ \Omega $ and~$ \lambda$ which guarantee existence of singular profiles (internal waves) in long time evolution of \eqref{eq:PDE} are formulated using a ``chess billiard''~-- see~\cite{Nogueira-Troubetzkoy}, \cite{Lenci-chess} for recent studies and references. It was first considered in similar context by John~\cite{John-Dirichlet} (see
also the later work of Aleksandrjan~\cite{Aleksandrian}) and was the basis of the analysis in~\cite{Maas-Lam}. 
It is defined as the reflected bicharacteristic flow for 
 $( 1 - \lambda^2 ) \xi_2^2 - \lambda^2 \xi_1^2 $,  which is  the Hamiltonian for the $1+1$ wave equation with $ x_2 $ corresponding to time and the speed given by $ c = \lambda/\sqrt { 1 - \lambda^2 } $~-- see Figure \ref{f:1} and~\S \ref{s:chess-basic}. 
This flow has a simple reduction to the boundary which we describe using a factorization of the quadratic form dual to $ ( 1 - \lambda^2 ) \xi_2^2 - \lambda^2 \xi_1^2 $:
\begin{equation}
\label{eq:dual2} 
-\frac{x_1^2} { \lambda^2 } + \frac{x_2^2}{1 - \lambda^2 } = \ell^+ ( x, \lambda ) \ell^-
( x, \lambda ) , \quad
 \ell^\pm(x,\lambda): =\pm{x_1\over\lambda}+{x_2\over\sqrt{1-\lambda^2}} .
\end{equation}
We often suppress the dependence on $\lambda$, writing simply $\ell^\pm(x)$.
Same applies to other $\lambda$-dependent objects introduced below.
%%%%%%%%%%%%%%%%%%%%%%%%%%%%%%%%%%%%%%%%%%%%%%%%%%%%%%%%%%%%%%%%%%%%%%%%%%%%%%%%
\begin{defi}
\label{d:1}
Let $0<\lambda<1$.
We say that $\Omega$ is \emph{$\lambda$-simple} if each of the functions $ \partial\Omega \ni x \mapsto\ell^\pm(x,\lambda)$ has only two critical points,
which are both nondegenerate. We denote these minimum/maximum points by~$x^\pm_{\min}(\lambda),x^\pm_{\max}(\lambda)$.
\end{defi}
%%%%%%%%%%%%%%%%%%%%%%%%%%%%%%%%%%%%%%%%%%%%%%%%%%%%%%%%%%%%%%%%%%%%%%%%%%%%%%%%
Under the assumption of $\lambda$-simplicity we define the following two smooth orientation reversing involutions on the boundary (see~\S\ref{s:chess-basic} for more details):
\begin{equation}
\label{e:gamma-pm-def} 
\gamma^\pm(\bullet,\lambda):\partial\Omega\to\partial\Omega,\quad
\ell^\pm(x)=\ell^\pm(\gamma^\pm(x)).
\end{equation} 
These  maps correspond to interchanging intersections of the boundary with lines
with slopes $ \mp 1/c $, respectively~-- see Figure~\ref{f:1}.
The \emph{chess billiard map} $b(\bullet,\lambda)$ is defined as the composition
\begin{equation}
  \label{e:b-def}
b:=\gamma^+\circ \gamma^-
\end{equation}
and is a $C^\infty$ orientation preserving diffeomorphism of $\partial\Omega$.

%%%%%%%%%%%%%%%%%%%%%%%%%%%%%%%%%%%%%%%%%%%%%%%%%%%%%%%%%%%%%%%%%%%%%%%%%%%%%%%%
\begin{figure}
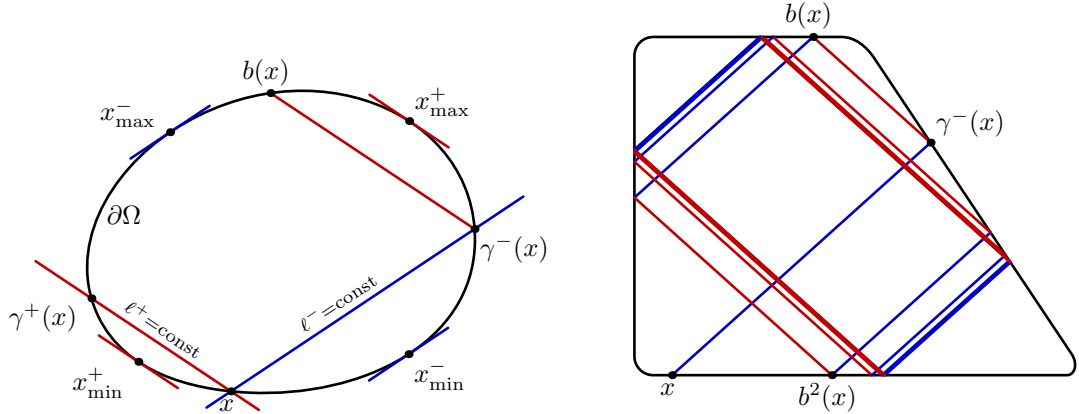

\includegraphics{bflop.1}
\qquad
\includegraphics{bflop.2}
\caption{Left: the involutions~$\gamma^\pm$ and the chess billiard map~$b$.
Right: a forward trajectory of the map~$b$ on a trapezium with rounded
corners, converging to a periodic trajectory. We remark that the effect of smoothed
corner on classical dynamics was investigated by Manders--Duistermaat--Maas~\cite{MaD}, 
see also \S \ref{s:domc}.}
\label{f:1}
\end{figure}
%%%%%%%%%%%%%%%%%%%%%%%%%%%%%%%%%%%%%%%%%%%%%%%%%%%%%%%%%%%%%%%%%%%%%%%%%%%%%%%%
Denoting by $b^n$ the $n$-th iterate of $b$, we consider the set of periodic points
\begin{equation}
  \label{e:periodic-points}
\Sigma_\lambda:=\{x\in\partial\Omega\mid b^n(x,\lambda) = x\text{ for some }n\geq 1\}.
\end{equation}
If $\Sigma_\lambda\neq\emptyset$, then all the periodic points in~$\Sigma_\lambda$
have the same minimal period, see~\S\ref{s:chess-basic}.

We are now ready to state the dynamical assumptions on the chess billiard:
%%%%%%%%%%%%%%%%%%%%%%%%%%%%%%%%%%%%%%%%%%%%%%%%%%%%%%%%%%%%%%%%%%%%%%%%%%%%%%%%
\begin{defi}
\label{d:2}
Let $0<\lambda<1$. We say that $\lambda$ satisfies the \emph{Morse--Smale conditions} if:
\begin{enumerate}
\item $\Omega$ is $\lambda$-simple;
\item the map $b$ has periodic points, that is $\Sigma_\lambda\neq\emptyset$;
\item the periodic points are hyperbolic, that is
$\partial_x b^n(x,\lambda)\neq 1$ for all $x\in \Sigma_\lambda$
where $n$ is the minimal period.
\end{enumerate}
\end{defi}
%%%%%%%%%%%%%%%%%%%%%%%%%%%%%%%%%%%%%%%%%%%%%%%%%%%%%%%%%%%%%%%%%%%%%%%%%%%%%%%%
Under the Morse--Smale conditions we have
$\Sigma_\lambda=\Sigma_\lambda^+\sqcup \Sigma_\lambda^-$ where
$\Sigma_\lambda^+,\Sigma_\lambda^-$ are the sets of attractive, respectively repulsive, periodic points of~$b$:
\begin{equation}
  \label{e:Sigma-pm-def}
\Sigma_\lambda^+:=\{x\in \Sigma_\lambda\mid \partial_x b^n(x,\lambda)<1\},\quad
\Sigma_\lambda^-:=\{x\in \Sigma_\lambda\mid \partial_x b^n(x,\lambda)>1\}.
\end{equation}
Moreover, each of the involutions $\gamma^\pm$ exchanges $\Sigma^+_\lambda$ with $\Sigma^-_\lambda$,
see~\eqref{e:b-inversor}.

For $y\in\partial\Omega$, let
\begin{equation}
  \label{e:Gamma-def}
\Gamma_\lambda^\pm(y):=\{x\in\Omega\mid \ell^\pm(x,\lambda)=\ell^\pm(y,\lambda)\}
\end{equation}
be the open line segment connecting $y$ with $\gamma^\pm(y,\lambda)$. Denote
$\Gamma_\lambda(y):=\Gamma_\lambda^+(y)\cup\Gamma_\lambda^-(y)$.
Then $\Gamma_\lambda(\Sigma_\lambda)$ gives the closed trajectories of the chess
billiard inside~$\Omega$.

%%%%%%%%%%%%%%%%%%%%%%%%%%%%%%%%%%%%%%%%%%%%%%%%%%%%%%%%%%%%%%%%%%%%%%%%%%%%%%%%
\begin{figure}
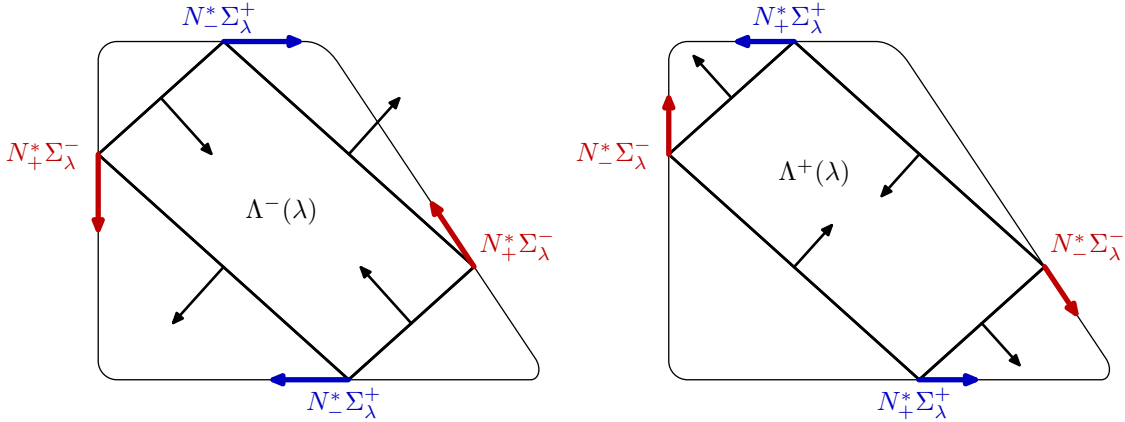

\includegraphics{bflop.10}
\includegraphics{bflop.11}
\caption{A visualization of the Lagrangian submanifolds  \eqref{eq:defLa} corresponding to 
attractive and repulsive cycles of $b$ given in \eqref{e:b-def}. The parallelogram
represents the projection of the attractive ($+$) and repulsive ($-$)  Lagrangians $ \Lambda^\pm ( \lambda ) $ and the arrows perpendicular to the sides represent the conormal directions distinguishing the
two Lagrangians. We also indicate the corresponding sets on the boundary: $ \Sigma_\lambda^\pm $ are the attractive ($+$) and repulsive ($-$) periodic points of $ b $ given by \eqref{e:b-def}
and the arrows indicate the sign of the conormal directions.}
\label{f:Lamp}
\end{figure}
%%%%%%%%%%%%%%%%%%%%%%%%%%%%%%%%%%%%%%%%%%%%%%%%%%%%%%%%%%%%%%%%%%%%%%%%%%%%%%%%

For $y\in\partial\Omega$ which is not a critical point of~$\ell^+$, we split the conormal bundle $N^*\Gamma_\lambda^+(y)$ into the positive/negative directions:
\begin{equation}
  \label{e:n-pm-gamma-def}
\begin{gathered}
N^*\Gamma_\lambda^+(y)\setminus 0=N^*_+\Gamma_\lambda^+(y)\sqcup N^*_-\Gamma_\lambda^+(y),\\
N^*_\pm \Gamma_\lambda^+(y):=\{(x,\tau d\ell^+(x))\mid x\in\Gamma_\lambda^+(y),\
\pm (\partial_\theta\ell^+(y))\tau>0\}
\end{gathered}
\end{equation}
and similarly for $N^*\Gamma_\lambda^-(y)$. Here $\partial_\theta$ is the derivative with respect to a positively oriented
(that is, counterclockwise when $\Omega$ is convex) parametrization of the boundary $\partial\Omega$.
Note that the orientation depends
on the choice of $y$ and not just on~$\Gamma_\lambda^\pm(y)$: we have
$N^*_+\Gamma_\lambda^\pm(\gamma^\pm(y))=N^*_-\Gamma_\lambda^\pm(y)$.
 
We now define Lagrangian submanifolds $ \Lambda^\pm ( \lambda ) \subset 
T^* \Omega \setminus 0 $ by 
\begin{equation}
  \label{eq:defLa} 
\Lambda^\pm(\lambda):=N_+^*\Gamma^-_\lambda(\Sigma^\pm_\lambda)\sqcup N_-^*\Gamma^+_\lambda(\Sigma^\mp_\lambda),
\end{equation}
see Figure~\ref{f:Lamp}. We note that $ \pi ( \Lambda^\pm ( \lambda ) ) = \Gamma_\lambda ( \Sigma_\lambda)$ and $N_-^*\Gamma^\pm_\lambda(\Sigma^+_\lambda)=N_+^*\Gamma^\pm_\lambda(\Sigma^-_\lambda)$.

%%%%%%%%%%%%%%%%%%%%%%%%%%%%%%%%%%%%%%%%%%%%%%%%%%%%%%%%%%%%%%%%%%%%%%%%%%%%%%%%
\subsection{Statement of results}
 
The main result of this paper is formulated using the concept of {\em wave front set}, see~\cite[\S8.1]{Hormander1} and~\cite[Theorem 18.1.27]{Hormander3}.
The wave front set of a distribution, $ \WF ( u ) $, is a closed subspace of 
the cotangent bundle of $ T^* \Omega \setminus 0 $ and it provides phase space information about singularities. Its projection to the base, $ \pi ( \WF ( u ) ) $,  is the singular support, 
$ \sing\supp u $.
%%%%%%%%%%%%%%%%%%%%%%%%%%%%%%%%%%%%%%%%%%%%%%%%%%%%%%%%%%%%%%%%%%%%%%%%%%%%%%%%
\begin{theo}
\label{t:1} 
Suppose that $ \Omega $ and $ \lambda \in ( 0, 1 ) $ satisfy the Morse--Smale conditions of Definition \ref{d:2}.
Assume that $f\in\CIc(\Omega;\mathbb R)$.
Then the solution to \eqref{eq:PDE}
is decomposed as
\begin{equation}
\label{eq:prof}
\begin{gathered}
u ( t ) = \Re \big(e^{  i \lambda t } u^+\big) + r ( t) +e ( t ) , \quad
u^+ \in H^{\frac12-} ( \Omega ) ,\quad
\WF ( u^+ ) \subset  \Lambda^+( \lambda )  ,
 \\
r ( t ) \in H^1 ( \Omega ) ,  \quad
\| r ( t ) \|_{ H^1 ( \Omega ) } \leq C , \quad
\| e( t ) \|_{ H^{\frac12-} ( \Omega )  } \to 0 \quad\text{as } t \to \infty ,
\end{gathered}
\end{equation}
where $ \Lambda^+(\lambda )  \subset T^* \Omega \setminus 0 $ is the attracting
Lagrangian~-- see \eqref{eq:defLa} 
 and Figure~\ref{f:Lamp}. In particular,
$\sing\supp u^+ $ is contained in the union of closed orbits of the chess billiard flow.
In addition, $ u^+ $ is a Lagrangian distribution, $ u^+ \in I^{-1} ( \overline\Omega,\Lambda^+ ( \lambda ) ) $
(see \S \ref{s:con}) and $ u^+ |_{\partial \Omega } = 0 $ (well defined because
of the wave front set condition). 
\end{theo}
%%%%%%%%%%%%%%%%%%%%%%%%%%%%%%%%%%%%%%%%%%%%%%%%%%%%%%%%%%%%%%%%%%%%%%%%%%%%%%%%
For a numerical illustration of \eqref{eq:prof}, see Figure~\ref{f:tilted}. We remark that
numerically it is easier to consider polygonal domains -- see~\S \ref{s:domc} for a discussion of the 
stability of our assumptions for smoothed out polygonal domains.

Theorem~\ref{t:1} is proved using spectral properties of a self-adjoint operator
associated to the evolution equation \eqref{eq:PDE}. To define it,
let $  \Delta_\Omega $ be the (negative definite) Dirichlet Laplacian of $ \Omega $ with the
inverse denoted by $  \Delta_\Omega^{-1} : H^{-1} ( \Omega ) \to H_0^1 ( \Omega ) $. 
Then
\begin{equation}
\label{eq:Rals}
P := \partial_{x_2}^2 \Delta_\Omega^{-1}  : H^{-1} ( \Omega ) \to H^{-1} ( \Omega ) , \ \ \
\langle u, w\rangle_{ H^{-1} ( \Omega ) } := \langle \nabla \Delta_\Omega^{-1} u , \nabla \Delta_\Omega^{-1} w
\rangle_{L^2 ( \Omega )}  , 
\end{equation}
is a bounded non-negative (hence self-adjoint) operator studied by Aleksandrjan~\cite{Aleksandrian} and Ralston~\cite{RalstonR} -- see~\S \ref{s:sap}. Studying the spectrum of $ P $ is referred to as a {\em Poincar\'e problem}.  

%%%%%%%%%%%%%%%%%%%%%%%%%%%%%%%%%%%%%%%%%%%%%%%%%%%%%%%%%%%%%%%%%%%%%%%%%%%%%%%%
\begin{figure}
 \includegraphics[width=15cm]{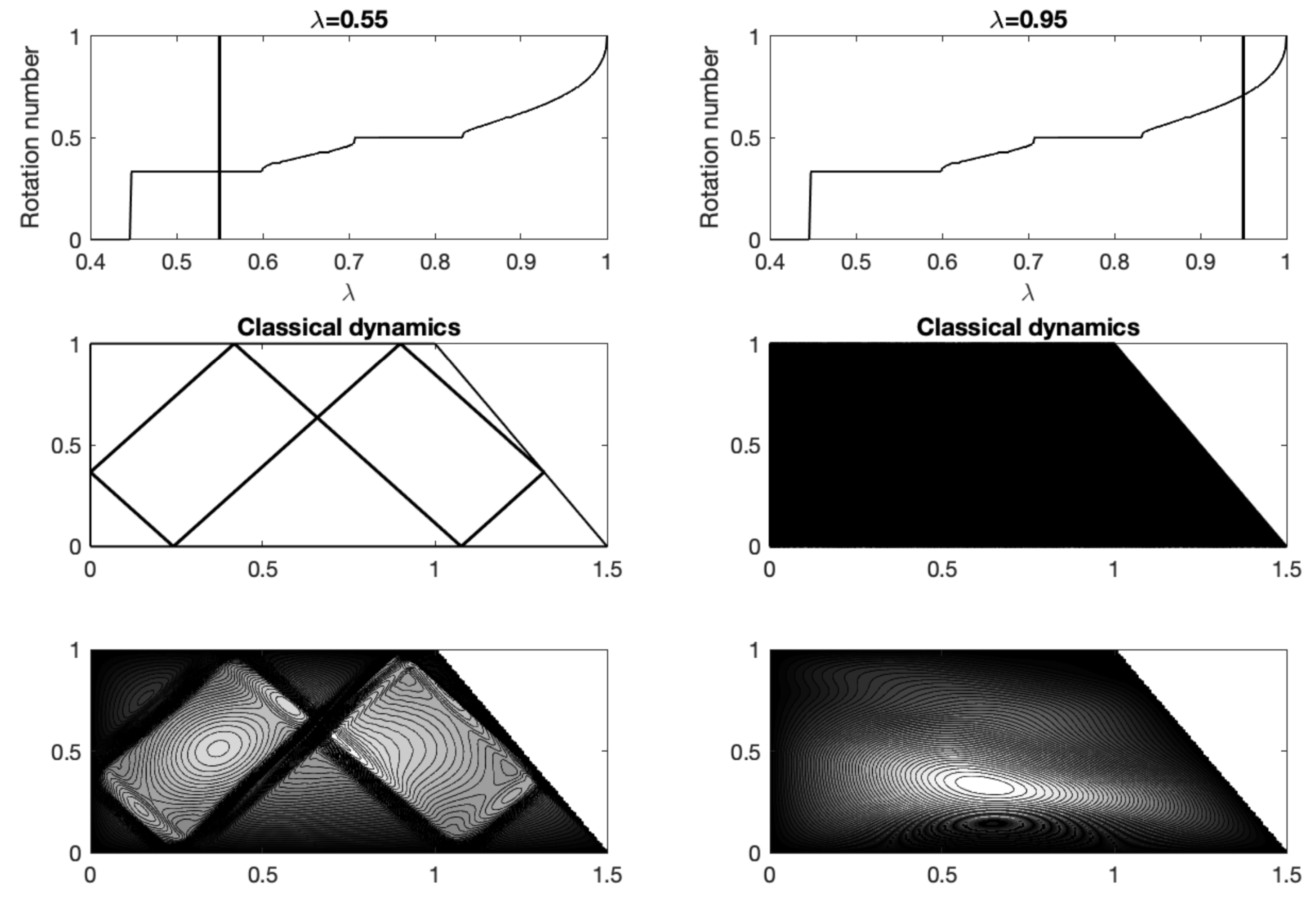} 
\caption{Numerical illustration of Theorem \ref{t:2}: contour plots of 
$ |u ( x ) |$ for $ u = ( \partial_{x_2}^2 - (\lambda^2 + i \varepsilon )\Delta_\Omega )^{-1} f $
where $ \varepsilon = 0.005 $ and $ f( x) =e^{ - 10 ( x-(\frac12, \frac12))^2} $ and
$ \Omega = \mathcal T_{0.5} $ (see \S \ref{s:exms}). On the left, the rotation number is
given by~$ \frac13 $ and we see concentration on an attractor; on the right,  the 
rotation number is (nearly) irrational and, as $ \varepsilon \to 0+ $, $ u $ is
expected to be uniformly distributed \cite{MaasLNL}. Morse--Smale
assumptions do not hold, at least not on scales relevant to numerical calculations and trajectories
are uniformly distributed in the trapezium.
In the contour plots of $ | u ( x) | $ black corresponds to $ 0 $. }
\label{f:trapez}
\end{figure}
%%%%%%%%%%%%%%%%%%%%%%%%%%%%%%%%%%%%%%%%%%%%%%%%%%%%%%%%%%%%%%%%%%%%%%%%%%%%%%%%
The evolution equation 
\eqref{eq:PDE} is equivalent to 
\begin{equation}
\label{eq:PsDE}
( \partial^2_t + P ) w = f \cos \lambda t , \ \ \ w|_{t=0} = \partial_t w|_{t=0} = 0 , \ \ f \in C^\infty_{\rm{c}} ( \Omega; \mathbb R ) ,  \ \ \ 
u = \Delta_\Omega^{-1} w . \end{equation}
This equation is easily solved using the functional calculus of $ P $:
\begin{equation}
  \label{eq:sol}
\begin{gathered}
w ( t )  = \Re \big(e^{i\lambda t}\mathbf W_{t,\lambda}(P)f\big)\qquad\text{where}\\
\mathbf W_{t,\lambda}(z)  = 
 \int_0^t  {\sin \big(s\sqrt z\big)\over \sqrt z}e^{-i\lambda s}\,ds=\sum_{\pm} {1-e^{-it(\lambda\pm\sqrt z)}\over 2\sqrt z(\sqrt z\pm \lambda)}. 
\end{gathered}
\end{equation}
%Note that $\mathbf W_{t,\lambda}(z)$ is an entire function of $z$.
Using the Fourier transform of the Heaviside function (see~\eqref{e:x-i0-FT}),
we see that for any $\varphi\in \CIc(\mathbb R)$ we have
$$
\int_{\mathbb R}{1-e^{-it\zeta}\over \zeta}\varphi(\zeta)\,d\zeta
=i\int_{0}^t\widehat\varphi(\eta)\,d\eta\ \xrightarrow{t\to\infty}\ 
i\int_0^\infty\widehat\varphi(\eta)\,d\eta=\int_{\mathbb R}(\zeta-i0)^{-1}\varphi(\zeta)\,d\zeta
$$
and thus for any $\lambda\in (0,1)$ we have the distributional limit
\begin{equation}
  \label{eq:heur}
\mathbf W_{t,\lambda}(z)\to (z-\lambda^2+i0)^{-1}\quad\text{as }t\to\infty\quad\text{in }\mathcal D'_z((0,\infty)).
\end{equation}
This suggests that, as long as we only look at the spectrum of $P$ near~$\lambda^2$ (the rest of the spectrum
contributing the term $r(t)$ in Theorem~\ref{t:1}), if the spectral measure
of $P$ applied to $f$ is smooth in the spectral parameter $z$, then $\mathbf W_{t,\lambda}(P)f\to (P-\lambda^2+i0)^{-1}f$
as $t\to\infty$. By Stone's Formula, it suffices to establish the
{\em limiting absorption principle} for the operator $ P $ near $ \lambda^2 $
and that is the content of
%%%%%%%%%%%%%%%%%%%%%%%%%%%%%%%%%%%%%%%%%%%%%%%%%%%%%%%%%%%%%%%%%%%%%%%%%%%%%%%%
\begin{theo}
\label{t:2}
Suppose that $\mathcal J\subset (0,1)$ is an open interval such that each $ \lambda \in \mathcal J $
satisfies the Morse--Smale conditions of Definition~\ref{d:2}.
Then for each $f\in \CIc(\Omega)$ and $\lambda\in\mathcal J$ the limits
\begin{equation}
\label{eq:Pla}
( P - \lambda^2 \pm i 0 )^{-1}f
=\lim_{\varepsilon\to 0+} (P-(\lambda\mp i\varepsilon)^2)^{-1}f \quad\text{in } \mathcal D' ( \Omega ) 
\end{equation} 
exist and the spectrum of $ P $ is purely absolutely continuous in $ \mathcal J^2:=\{\lambda^2\mid\lambda\in\mathcal J\} $:
\begin{equation}
\label{eq:Pla1} 
 \sigma ( P ) \cap \mathcal J^2  = \sigma_{\rm{ac}} ( P ) \cap\mathcal   J^2 . 
 \end{equation}
 Moreover, 
 \begin{equation}
 \label{eq:Pla2} 
 ( P - \lambda^2 \pm i 0 )^{-1} f \in I^{1} ( \overline\Omega,\Lambda^\pm (\lambda)  ) \ \subset\ H^{-\frac 32-}(\Omega) ,
 \end{equation}
where $ \Lambda^\pm (\lambda)  $ are given in \eqref{eq:defLa} and the definition of
the conormal spaces $ I^1 ( \overline\Omega,\Lambda^\pm ( \lambda ) ) $ is reviewed in \S \ref{s:con}.
 \end{theo}
%%%%%%%%%%%%%%%%%%%%%%%%%%%%%%%%%%%%%%%%%%%%%%%%%%%%%%%%%%%%%%%%%%%%%%%%%%%%%%%%
\Remarks 1.   The proof provides a more precise statement based on a reduction to the boundary -- see
\S \ref{s:liap}. We also have smooth dependence on $ \lambda $ which plays a crucial role
in proving Theorem~\ref{t:1} as in \cite[\S 5]{DZ-FLOP}~-- see \S \ref{s:asy}. This precise information
is important in obtaining the $H^{\frac12-}$ remainder in~\eqref{eq:prof}. The singular profile in 
Theorem \ref{t:1} satisfies
\[    u^+ =  \Delta_\Omega^{-1} (P-\lambda^2+i0)^{-1}f , \]
which agrees with the heuristic argument following \eqref{eq:heur}.

\noindent 2. As noted in \cite{RalstonR}, $ \sigma ( P ) = [0,1]$ but as emphasized there and in numerous physics papers the structure of the spectrum of
$ P $ is far from clear. Here we only characterize the spectrum~\eqref{eq:Pla1} under the Morse--Smale 
assumptions of Definition~\ref{d:2}.

Rather  than working with $ P $, we consider
the closely related stationary {\em Poincar\'e problem}
\[
( \partial_{x_2}^2 - \omega^2 \Delta ) u_\omega = f \in C_{\rm c}^\infty , \quad
u_\omega|_{\partial \Omega } = 0 , \quad
\Re \omega \in ( 0, 1 ) , \quad
\Im \omega > 0 .
\]
Then $ u_{\lambda+i\varepsilon} \in C^\infty ( \overline \Omega ) $ has a limit in $ \mathcal D' (
\Omega ) $ which satisfies $ u_{\lambda + i 0 } \in I^{-1} ( \overline\Omega,\Lambda^- ( \lambda )) $,
and we have $ ( P - \lambda^2 - i 0 )^{-1} f = \Delta u_{\lambda+i0 } $. 
 
%%%%%%%%%%%%%%%%%%%%%%%%%%%%%%%%%%%%%%%%%%%%%%%%%%%%%%%%%%%%%%%%%%%%%%%%%%%%%%%%
\subsection{Related mathematical work}

Motivated by the study of internal waves results similar to Theorems \ref{t:1} and \ref{t:2} were
obtained for self-adjoint 0th order pseudodifferential operators on 2D tori with dynamical
conditions in Definitions \ref{d:1} and \ref{d:2}  replaced by demanding that a naturally defined flow is Morse--Smale.
That was done first by Colin de Verdi\`ere--Saint Raymond \cite{CdV-LSR,CdV-floppy}, with different
proofs provided by Dyatlov--Zworski \cite{DZ-FLOP}. The question of modes of viscosity limits 
in such models (addressing physics questions formulated for domains with boundary -- 
see  Rieutord--Valdettaro \cite{Rie} and references given there) were investigated by 
Galkowski--Zworski \cite{gaz1} and Wang \cite{jw2}. Finer questions related to spectral theory 
were also answered in \cite{jw1}. Unlike in the situation considered in this paper, embedded eigenvalues are possible in the case of 0th order pseudodifferential operators \cite{ztao}. 

The dynamical system \eqref{e:b-def} 
was recently studied by Nogueira--Troubetzkoy \cite{Nogueira-Troubetzkoy} and 
by Lenci et al~\cite{Lenci-chess}. We refer to those papers for additional references and dynamical results. 

%%%%%%%%%%%%%%%%%%%%%%%%%%%%%%%%%%%%%%%%%%%%%%%%%%%%%%%%%%%%%%%%%%%%%%%%%%%%%%%%
\subsection{Organization of the paper} 
In \S \ref{s:chess} we provide a self-contained analysis of the dynamical system given
by the diffeomorphism \eqref{e:b-def}. We emphasize properties needed in the analysis
of the operator \eqref{eq:Rals}: properties of pushforwards by $ \ell^\pm $ and 
existence of suitable escape/Lyapounov functions. \S\ref{s:microlocal-prelim} is devoted to 
a review of microlocal analysis used in this paper and in particular to definitions and properties
of conormal/Lagrangian spaces used in the formulations of Theorems \ref{t:1} and \ref{t:2}.
In \S \ref{s:blp} we describe reduction to the boundary using 1+1 Feynman propagators 
which arise naturally in the limiting absorption principles. Despite the presence of 
characteristic points, the restricted operator enjoys good microlocal properties -- see
Proposition \ref{p:summa}. Microlocal analysis of that operator is given in \S \ref{s:abo}
with the key estimate \eqref{eq:TG} motivated by Lasota--Yorke inequalities and
radial estimates. The self-contained \S \ref{s:microp} analyses wave front set properties
of distributions invariant under the diffeomorphisms \eqref{e:b-def}. These results are 
combined in \S \ref{s:liap} to give the proof of the limiting absorption principle of
Theorem \ref{t:2}. Finally, in \S \ref{s:asy} we follow the strategy of \cite{DZ-FLOP} 
to describe long time properties of solutions to \eqref{eq:PDE} -- see Theorem \ref{t:1}.

%%%%%%%%%%%%%%%%%%%%%%%%%%%%%%%%%%%%%%%%%%%%%%%%%%%%%%%%%%%%%%%%%%%%%%%%%%%%%%%%
%%%%%%%%%%%%%%%%%%%%%%%%%%%%%%%%%%%%%%%%%%%%%%%%%%%%%%%%%%%%%%%%%%%%%%%%%%%%%%%%
\section{Geometry and dynamics}
\label{s:chess}

In this section we assume that $\Omega\subset\mathbb R^2$
is an open bounded simply connected set with $C^\infty$
boundary $\partial\Omega$ and review the basic properties of the
involutions $\gamma^\pm$ and the chess billiard $b$ defined in~\eqref{e:gamma-pm-def},
\eqref{e:b-def}. We orient $\partial\Omega$ in the positive direction as the boundary
of~$\Omega$ (that is, counterclockwise if $\Omega$ is convex).

%%%%%%%%%%%%%%%%%%%%%%%%%%%%%%%%%%%%%%%%%%%%%%%%%%%%%%%%%%%%%%%%%%%%%%%%%%%%%%%%
\subsection{Basic properties}
\label{s:chess-basic}

Fix $\lambda\in (0,1)$ such that $\Omega$ is $\lambda$-simple in the sense of
Definition~\ref{d:1}. We first show that the involutions
$\gamma^\pm$ defined in~\eqref{e:gamma-pm-def}
are smooth. Away from the critical set~$\{x^\pm_{\min},x^\pm_{\max}\}$ this is immediate.
Next, we write
\begin{equation}
  \label{e:factorize}
\begin{aligned}
\ell^\pm(x)&=\ell^\pm(x^\pm_{\min})+\theta^\pm_{\min}(x)^2\quad\text{for
$x$ near $x^\pm_{\min}$},\\
\ell^\pm(x)&=\ell^\pm(x^\pm_{\max})-\theta^\pm_{\max}(x)^2\quad\text{for
$x$ near $x^\pm_{\max}$}
\end{aligned}
\end{equation}
where $\theta^\pm_{\min},\theta^\pm_{\max}$ are local coordinate
functions on $\partial\Omega$ which map $x^\pm_{\min},x^\pm_{\max}$ to~0.
Then for $x$ near $x^\pm_{\min}$ the point $\gamma^\pm(x)$ satisfies the equation
$$
\theta^\pm_{\min}(\gamma^\pm(x))=-\theta^\pm_{\min}(x)
$$
and similarly near $x^\pm_{\max}$.
This shows the smoothness of $ \partial \Omega \ni x \mapsto \gamma^\pm ( x ) $ near the critical points.

Next, note that since $\gamma^\pm$ are involutions, $b$ is conjugate to its inverse:
\begin{equation}
  \label{e:b-inversor}
b^{-1}=\gamma^\pm\circ b\circ\gamma^\pm.
\end{equation}
Therefore $\Sigma_\lambda^+=\gamma^\pm(\Sigma_\lambda^-)$
where $\Sigma_\lambda^\pm$ are defined in~\eqref{e:Sigma-pm-def}. Since $x^\pm_{\min},x^\pm_{\max}$ are fixed points of $\gamma^\pm$,
the Morse--Smale conditions (see Definition~\ref{d:2}) implies that there are no characteristic periodic points:
\begin{equation}
  \label{e:no-characteristic}
\Sigma_\lambda\cap \mathscr C_\lambda=\emptyset\quad\text{where}\quad
\mathscr C_\lambda:=\mathscr C_\lambda^+\sqcup\mathscr C_\lambda^-,
\quad
\mathscr C_\lambda^\pm:=\{x^\pm_{\min}(\lambda),x^\pm_{\max}(\lambda)\}.
\end{equation}

%%%%%%%%%%%%%%%%%%%%%%%%%%%%%%%%%%%%%%%%%%%%%%%%%%%%%%%%%%%%%%%%%%%%%%%%%%%%%%%%
\subsubsection{Useful identities}

%For future use we also record here a few useful identities.
For $x\in\partial\Omega$ and $\lambda\in (0,1)$ we define the signs
\begin{equation}
  \label{e:omega-pm-def}
\nu^\pm(x,\lambda):=\sgn\partial_\theta \ell^\pm(x,\lambda)
\end{equation}
where $\partial_\theta$ is the derivative along $\partial\Omega$
with respect to a positively oriented parametrization.
%%%%%%%%%%%%%%%%%%%%%%%%%%%%%%%%%%%%%%%%%%%%%%%%%%%%%%%%%%%%%%%%%%%%%%%%%%%%%%%%
\begin{lemm}
  \label{l:identitor-1}
Assume that $\Omega$ is $\lambda$-simple. Then
for all $x\in\partial\Omega$
\begin{gather}
  \label{e:sign-identity}
\sgn \ell^\mp(\gamma^\pm(x)-x)=\pm\nu^\pm(x),\\
  \label{e:flip-sign}
\nu^\pm(\gamma^\pm(x))=-\nu^\pm(x),\\
  \label{e:l-pm-differential}
\partial_\lambda \ell^\pm(x,\lambda)={2\lambda^2-1\over 2\lambda(1-\lambda^2)}\ell^\pm(x,\lambda)+{1\over 2\lambda(1-\lambda^2)}\ell^\mp(x,\lambda).
\end{gather}
\end{lemm}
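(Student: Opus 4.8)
The plan is to dispatch the three identities in turn. For~\eqref{e:l-pm-differential} there is nothing geometric: differentiating $\ell^\pm(x,\lambda)=\pm x_1/\lambda+x_2/\sqrt{1-\lambda^2}$ in $\lambda$ gives $\partial_\lambda\ell^\pm=\mp x_1/\lambda^2+\lambda x_2(1-\lambda^2)^{-3/2}$; inverting the two linear factors in~\eqref{eq:dual2}, that is $x_1=\lambda(\ell^+-\ell^-)/2$ and $x_2=\sqrt{1-\lambda^2}\,(\ell^++\ell^-)/2$, and substituting, one collects the coefficient $-1/(2\lambda)+\lambda/(2(1-\lambda^2))=(2\lambda^2-1)/(2\lambda(1-\lambda^2))$ in front of $\ell^\pm$ and $1/(2\lambda)+\lambda/(2(1-\lambda^2))=1/(2\lambda(1-\lambda^2))$ in front of $\ell^\mp$, which is~\eqref{e:l-pm-differential}. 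For~\eqref{e:flip-sign} I would differentiate the defining relation $\ell^\pm(\gamma^\pm(x))=\ell^\pm(x)$ along $\partial\Omega$ with respect to a positively oriented parameter: since $\gamma^\pm$ reverses orientation it acts with negative derivative in such a parameter, so the chain rule forces $\partial_\theta\ell^\pm$ to take opposite signs at $x$ and $\gamma^\pm(x)$ wherever it is nonzero, while at the critical points $\mathscr C_\lambda^\pm$, which are fixed by $\gamma^\pm$, both sides of~\eqref{e:flip-sign} vanish. (Equivalently, $\gamma^\pm$ swaps the two arcs of $\partial\Omega\setminus\mathscr C_\lambda^\pm$ on which $\nu^\pm$ is $+1$, respectively $-1$.)

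The substance is~\eqref{e:sign-identity}, for which I would argue by a direct computation in the plane. First reduce to $x\notin\mathscr C_\lambda^\pm$: at a critical point $\gamma^\pm(x)=x$ and $\nu^\pm(x)=0$, so both sides vanish. For non-critical $x$, $\lambda$-simplicity ensures that the line $\{\ell^\pm=\ell^\pm(x)\}$ is transversal to $\partial\Omega$ and meets it only at $x$ and $\gamma^\pm(x)$; hence the portion of this line lying in $\Omega$ is exactly the open segment $\Gamma_\lambda^\pm(x)$, so $\gamma^\pm(x)-x$ is a positive multiple of the unit vector $v^\pm$ along the line that points into $\Omega$, i.e.\ with $v^\pm\cdot n(x)>0$ for the inward unit normal $n(x)$ of $\partial\Omega$. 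The two unit directions along $\{\ell^\pm=\mathrm{const}\}$ are $\pm u^\pm/|u^\pm|$, where $u^\pm:=(1/\sqrt{1-\lambda^2},\,\mp1/\lambda)$ is $\nabla\ell^\pm=(\pm1/\lambda,\,1/\sqrt{1-\lambda^2})$ rotated by $-\pi/2$; writing $n(x)$ as the positively oriented unit tangent rotated by $+\pi/2$, a one-line rotation identity gives $\sgn\bigl(u^\pm\cdot n(x)\bigr)=-\nu^\pm(x)$, so $v^\pm=-\nu^\pm(x)\,u^\pm/|u^\pm|$. Since $\ell^\mp$ is linear and $\ell^\mp(u^\pm)=\mp2/(\lambda\sqrt{1-\lambda^2})$, this yields $\sgn\ell^\mp(\gamma^\pm(x)-x)=-\nu^\pm(x)\,\sgn\ell^\mp(u^\pm)=\pm\nu^\pm(x)$, which is~\eqref{e:sign-identity}.

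I expect the \emph{only real obstacle} to be orientation bookkeeping — keeping the positive parametrization of $\partial\Omega$, the inward normal, and the two $\pm\pi/2$ rotations mutually consistent — together with the elementary but essential observation that $\lambda$-simplicity is precisely what guarantees that $\Gamma_\lambda^\pm(x)$ is the entire intersection of the level line with $\Omega$ and therefore leaves $x$ pointing inward. No analytic subtlety is involved.
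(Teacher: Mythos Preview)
Your argument is correct. For~\eqref{e:sign-identity} the paper uses the same geometric input you do---that $\gamma^\pm(x)-x$ points into~$\Omega$ and hence $(v(x),\gamma^\pm(x)-x)$ is a positively oriented basis---but packages the conclusion as the positivity of the $2\times2$ determinant
\[
\det\begin{pmatrix}\ell^+(v(x))&\ell^+(\gamma^\pm(x)-x)\\\ell^-(v(x))&\ell^-(\gamma^\pm(x)-x)\end{pmatrix}>0,
\]
which, once one column is killed by $\ell^\pm(\gamma^\pm(x)-x)=0$, reads off the sign directly without computing~$u^\pm$ or~$n(x)$; your explicit rotation calculation is an equivalent coordinate realization of this orientation fact. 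For~\eqref{e:flip-sign} you differentiate $\ell^\pm\circ\gamma^\pm=\ell^\pm$ and invoke orientation reversal, whereas the paper simply applies~\eqref{e:sign-identity} at $\gamma^\pm(x)$ and uses that $\gamma^\pm$ is an involution; both are one-liners. The treatment of~\eqref{e:l-pm-differential} is identical.
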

%%%%%%%%%%%%%%%%%%%%%%%%%%%%%%%%%%%%%%%%%%%%%%%%%%%%%%%%%%%%%%%%%%%%%%%%%%%%%%%%
\begin{proof}
To see~\eqref{e:sign-identity}, we first notice that it holds when $x\in\{x^\pm_{\min},x^\pm_{\max}\}$, as then both sides are equal to~0. Now, assume that $\gamma^\pm(x)\neq x$ (that is,
$x\notin\{x^\pm_{\min},x^\pm_{\max}\}$). 
Denote by $v(x)\in\mathbb R^2$ the velocity vector of the parametrization at the point $x\in\partial\Omega$.
The vector $\gamma^\pm(x)-x\in\mathbb R^2$ is pointing into $\Omega$ at the point $x\in\partial\Omega$.
Since we use a positively oriented parametrization, the vectors $v(x),\gamma^\pm(x)-x$
form a positively oriented basis. We now note that $\ell^+,\ell^-$
form a positively oriented basis of the dual space to $\mathbb R^2$,
and hence
$$
\det\begin{pmatrix}
\ell^+(v(x)) & \ell^+(\gamma^\pm(x)-x) \\
\ell^-(v(x)) & \ell^-(\gamma^\pm(x)-x)
\end{pmatrix}>0. 
$$
Since $ \partial_\theta \mathcal \ell^\pm (x ) = \ell^\pm ( v ( x ) )$, this  gives~\eqref{e:sign-identity}.
The identity~\eqref{e:flip-sign} follows from~\eqref{e:sign-identity},
and~\eqref{e:l-pm-differential} is verified by a direct computation.
\end{proof}
%%%%%%%%%%%%%%%%%%%%%%%%%%%%%%%%%%%%%%%%%%%%%%%%%%%%%%%%%%%%%%%%%%%%%%%%%%%%%%%%
The next statement is used in the proof of Lemma~\ref{l:con}.
%%%%%%%%%%%%%%%%%%%%%%%%%%%%%%%%%%%%%%%%%%%%%%%%%%%%%%%%%%%%%%%%%%%%%%%%%%%%%%%%
\begin{lemm}
  \label{l:identitor-2}
Assume that $\Omega$ is $\lambda$-simple. Then for all $y\in\partial\Omega$
and $x\in\Omega$
\begin{equation}
\label{e:signs-inside}
\nu^+(y)\ell^-(x-y)>0\quad\text{or}\quad
\nu^-(y)\ell^+(x-y)<0\quad\text{(or both).}
\end{equation}
\end{lemm}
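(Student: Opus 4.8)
The plan is to interpret the two inequalities geometrically in terms of which ``chessboard quadrant'' at the boundary point $y$ contains the interior point $x$, and to rule out the one remaining possibility using the orientation information already established in Lemma~\ref{l:identitor-1}. Fix $y\in\partial\Omega$ and $x\in\Omega$, and consider the two lines through $y$ in the characteristic directions (the level sets of $\ell^+$ and of $\ell^-$ through $y$). These divide a neighborhood of $y$ into four sectors; since $\ell^+,\ell^-$ form a positively oriented basis of the dual of $\mathbb R^2$, the vector $x-y$ is characterized up to sector by the pair of signs $(\sgn\ell^+(x-y),\sgn\ell^-(x-y))$. The claimed statement says precisely that this pair is not equal to $(-\sgn\nu^-(y)\cdot(\text{something}))$; more concretely, after unwinding \eqref{e:omega-pm-def}, the conclusion is that $x-y$ does not lie in the one sector that points \emph{out} of $\Omega$ at $y$, because $x$ is an interior point.

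First I would treat the degenerate cases. If $y\in\mathscr C^+_\lambda=\{x^+_{\min},x^+_{\max}\}$ then $\nu^+(y)=0$, and I claim the second alternative $\nu^-(y)\ell^+(x-y)<0$ must hold: at such a $y$ the level line of $\ell^+$ is tangent to $\partial\Omega$, so $\Omega$ lies (locally, and then globally by $\lambda$-simplicity/convexity-of-the-foliation argument) on one side of that line, forcing the sign of $\ell^+(x-y)$; then one checks this sign is the one making the product with $\nu^-(y)$ negative, using \eqref{e:sign-identity} applied with the roles of $\pm$ swapped. The case $y\in\mathscr C^-_\lambda$ is symmetric with the first alternative. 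So assume $y\notin\mathscr C_\lambda$, i.e.\ $\nu^+(y),\nu^-(y)\in\{\pm1\}$ and $\gamma^\pm(y)\neq y$.

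Now the main step. Suppose for contradiction that both inequalities fail, i.e.\ $\nu^+(y)\ell^-(x-y)\le 0$ and $\nu^-(y)\ell^+(x-y)\ge 0$; since $x\notin\Gamma^\pm_\lambda(y)$ (an interior point not on either characteristic chord) we actually get strict inequalities $\nu^+(y)\ell^-(x-y)<0$ and $\nu^-(y)\ell^+(x-y)>0$. I want to show this sector is the ``outward'' one. The key input is \eqref{e:sign-identity}: $\sgn\ell^-(\gamma^+(y)-y)=\nu^+(y)$ and $\sgn\ell^+(\gamma^-(y)-y)=-\nu^-(y)$. Since $\gamma^+(y)-y$ and $\gamma^-(y)-y$ both point \emph{into} $\Omega$, and since the segments $\Gamma^+_\lambda(y)$, $\Gamma^-_\lambda(y)$ are chords of $\Omega$ starting at $y$, the interior of $\Omega$ near $y$ is exactly the open sector between the rays $\mathbb R_+(\gamma^+(y)-y)$ and $\mathbb R_+(\gamma^-(y)-y)$ — here I use simple connectivity of $\Omega$ and that the boundary goes ``around'' the convex-foliation picture, so that $\Omega$ is the sector on the correct side of \emph{both} characteristic lines, not its complement. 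In sign terms: $x-y$ points into $\Omega$ exactly when $\sgn\ell^-(x-y)=\sgn\ell^-(\gamma^+(y)-y)=\nu^+(y)$ \emph{or} $\sgn\ell^+(x-y)=\sgn\ell^+(\gamma^-(y)-y)=-\nu^-(y)$ — i.e.\ the complement of the assumed-for-contradiction sector. (One has to be a little careful that it is ``or'' and not ``and'': the two characteristic chords from $y$ bound $\Omega$, but a generic interior point may be separated from $y$ by only one of them, which is why \eqref{e:signs-inside} is a disjunction.) This contradicts $x\in\Omega$, completing the proof.

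The delicate point — and the one I would spell out carefully rather than wave at — is the global geometric claim that $\Omega$ is precisely the union of the open sector at $y$ between the two inward characteristic rays together with the two ``lune'' regions cut off by the chords $\Gamma^\pm_\lambda(y)$, so that an interior point fails at most one of the two sign conditions; this is where $\lambda$-simplicity (each $\ell^\pm$ has a single min and max on $\partial\Omega$, so each characteristic line meets $\partial\Omega$ in at most two points) and simple connectivity are really used. Everything else is bookkeeping with the signs $\nu^\pm$ and the determinant positivity already extracted in Lemma~\ref{l:identitor-1}.
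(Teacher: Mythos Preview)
Your geometric picture is correct and you have the right key ingredient, the sign identity \eqref{e:sign-identity}. But the argument has a real gap, precisely the one you flag at the end as the ``delicate point''. Your reasoning about sectors is local (``the interior of $\Omega$ near $y$ is exactly the open sector\ldots''), whereas $x$ may lie anywhere in $\Omega$; the passage from local to global is never carried out. There is also a small error along the way: the claim ``$x\notin\Gamma^\pm_\lambda(y)$ (an interior point not on either characteristic chord)'' is simply false --- interior points can certainly lie on those chords.

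The paper closes both issues at once, and avoids any separate treatment of the degenerate cases $y\in\mathscr C_\lambda$, with a connectedness argument. Define
\[
R^\pm(y):=\{x\in\mathbb R^2\mid \ell^\pm(x-y)=0,\ \pm\nu^\pm(y)\ell^\mp(x-y)\leq 0\},
\]
the part of the characteristic line through $y$ pointing away from $\gamma^\pm(y)$ (a closed ray when $\nu^\pm(y)\neq 0$, the full line when $\nu^\pm(y)=0$). By \eqref{e:sign-identity} and $\lambda$-simplicity one has $\Omega\cap R^\pm(y)=\emptyset$. Now the complement of the set of $x$ satisfying \eqref{e:signs-inside} is the closed region $\{\nu^+(y)\ell^-(x-y)\leq 0,\ \nu^-(y)\ell^+(x-y)\geq 0\}$, whose topological boundary is exactly $R^+(y)\cup R^-(y)$. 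Since $\Omega$ is connected, disjoint from this boundary, and contains points satisfying \eqref{e:signs-inside} (any point of $\Gamma^\pm_\lambda(y)$ will do), all of $\Omega$ satisfies \eqref{e:signs-inside}. This Intermediate Value step is what your sketch was missing.
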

%%%%%%%%%%%%%%%%%%%%%%%%%%%%%%%%%%%%%%%%%%%%%%%%%%%%%%%%%%%%%%%%%%%%%%%%%%%%%%%%
\begin{proof}
Let $\Gamma^\pm_\lambda(y)$ be the sets
defined in~\eqref{e:Gamma-def} and recall that they are open line segments
with endpoints $y,\gamma^\pm(y)$. Then by~\eqref{e:sign-identity},
$$
\Omega\cap R^\pm(y)=\emptyset\quad\text{where}\quad
R^\pm(y):=\{x\in\mathbb R^2\mid \ell^\pm(x-y)=0,\ \pm\nu^\pm(y)\ell^\mp(x-y)\leq 0\}.
$$
The sets $R^\pm(y)$ are closed rays starting at $y$ when $\nu^\pm(y)\neq 0$
and lines passing through~$y$ when $\nu^\pm(y)=0$. Any continuous curve starting
at the set of~$x \in \mathbb R^2$ satisfying~\eqref{e:signs-inside} and ending in the complement of this
set has to intersect $R^+(y)\cup R^-(y)$, as can be seen (in the case $\nu^\pm(y)\neq 0$)
by applying the Intermediate Value Theorem to the pullback to that
curve of the function
$x\mapsto\max(\nu^+(y)\ell^-(x-y),-\nu^-(y)\ell^+(x-y))$.
Thus, since
$\Omega$ is connected and contains at least one point $x$ satisfying~\eqref{e:signs-inside}
(for instance, take any point in $\Gamma^\pm_\lambda(y)$), all points
$x\in\Omega$ satisfy~\eqref{e:signs-inside}.
\end{proof}
%%%%%%%%%%%%%%%%%%%%%%%%%%%%%%%%%%%%%%%%%%%%%%%%%%%%%%%%%%%%%%%%%%%%%%%%%%%%%%%%

%%%%%%%%%%%%%%%%%%%%%%%%%%%%%%%%%%%%%%%%%%%%%%%%%%%%%%%%%%%%%%%%%%%%%%%%%%%%%%%%
\subsubsection{Properties of pushforwards}

We next show basic properties of pushforwards of smooth
functions by the maps $\partial\Omega\ni x\mapsto \ell^\pm(x,\lambda)$,
which are used in the proof of Lemma~\ref{l:singl-2}.
Fix $\lambda\in (0,1)$ such that $\Omega$ is $\lambda$-simple and define
\begin{equation}
  \label{e:ell-pm-min-max}
\ell^\pm_{\min}:=\ell^\pm(x^\pm_{\min}),\quad
\ell^\pm_{\max}:=\ell^\pm(x^\pm_{\max}),
\end{equation}
so that $\ell^\pm$ maps $\partial\Omega$ onto the interval
$[\ell^\pm_{\min},\ell^\pm_{\max}]$. We again fix a positively oriented coordinate $\theta$
on $\partial\Omega$.
%%%%%%%%%%%%%%%%%%%%%%%%%%%%%%%%%%%%%%%%%%%%%%%%%%%%%%%%%%%%%%%%%%%%%%%%%%%%%%%%
\begin{lemm}
  \label{l:pushforwards}
1. Assume that $f\in C^\infty(\partial\Omega)$ and define $\Pi^\pm_\lambda f\in\mathcal E'(\mathbb R)$ by the formula
\begin{equation}
  \label{e:Pi-pm-def}
\int_{\mathbb R} \Pi_\lambda^\pm f(s)\varphi(s)\,ds
=\int_{\partial\Omega} f(x)\varphi(\ell^\pm(x))\,d\theta(x)\quad\text{for all}\quad
\varphi\in C^\infty(\mathbb R).
\end{equation}
Then $\supp\Pi_\lambda^\pm f\subset [\ell^\pm_{\min},\ell^\pm_{\max}]$ and
\begin{equation}
  \label{e:pushf-sqrt}
\sqrt{(s-\ell^\pm_{\min})(\ell^\pm_{\max}-s)}\,\Pi_\lambda^\pm f(s)\ \in\ C^\infty([\ell^\pm_{\min},\ell^\pm_{\max}]).
\end{equation}

\noindent 2. Assume that $f\in C^\infty(\partial\Omega)$ and define the functions
$\Upsilon^\pm_\lambda f$ on $(\ell^\pm_{\min},\ell^\pm_{\max})$ by
$$
\Upsilon^\pm_\lambda f(s):=\sum_{x\in\partial\Omega,\, \ell^\pm(x)=s}
f(x),\quad s\in (\ell^\pm_{\min},\ell^\pm_{\max}).
$$
Then $\Upsilon^\pm_\lambda f\in C^\infty([\ell^\pm_{\min},\ell^\pm_{\max}])$.
\end{lemm}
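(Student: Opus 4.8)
\emph{Proof strategy.} The plan is to exploit the Morse structure of $\ell^\pm$ on $\partial\Omega$ granted by $\lambda$-simplicity: the map $\ell^\pm\colon\partial\Omega\to[\ell^\pm_{\min},\ell^\pm_{\max}]$ is a Morse function with exactly two critical points, the nondegenerate minimum $x^\pm_{\min}$ and maximum $x^\pm_{\max}$. Hence $\partial\Omega\setminus\{x^\pm_{\min},x^\pm_{\max}\}$ is a disjoint union of two open arcs $A_1,A_2$, and on each $A_j$ the map $\ell^\pm$ is a diffeomorphism onto $(\ell^\pm_{\min},\ell^\pm_{\max})$ with smooth inverse $g_j$. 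The support statement in Part~1 is then immediate, since $\ell^\pm(\partial\Omega)=[\ell^\pm_{\min},\ell^\pm_{\max}]$ forces $\varphi\circ\ell^\pm\equiv0$ on $\partial\Omega$ whenever $\operatorname{supp}\varphi\cap[\ell^\pm_{\min},\ell^\pm_{\max}]=\emptyset$.

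\emph{Interior regularity.} Splitting the integral in \eqref{e:Pi-pm-def} over the two arcs and substituting $s=\ell^\pm(x)$ on each (pushing forward the smooth density $f\,d\theta$ by $\ell^\pm$) yields a pointwise formula
\[
\Pi^\pm_\lambda f(s)=\sum_{j=1,2} w_j(s)\,f(g_j(s)),\qquad
\Upsilon^\pm_\lambda f(s)=\sum_{j=1,2} f(g_j(s)),\qquad s\in(\ell^\pm_{\min},\ell^\pm_{\max}),
\]
where $w_j\in C^\infty((\ell^\pm_{\min},\ell^\pm_{\max}))$ is the positive Jacobian factor of the substitution. Both expressions are finite sums of smooth functions, hence smooth on the open interval; since $\sqrt{(s-\ell^\pm_{\min})(\ell^\pm_{\max}-s)}$ is also smooth and nonvanishing there, it remains only to examine the two endpoints, and by the obvious symmetry it suffices to treat $s\to\ell^\pm_{\min}$.

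\emph{Matching the branches at a critical value.} Near $x^\pm_{\min}$ use the normal form \eqref{e:factorize}: in the coordinate $u:=\theta^\pm_{\min}$ one has $\ell^\pm=\ell^\pm_{\min}+u^2$ and $d\theta=\rho(u)\,du$ with $\rho$ smooth and positive, and the two preimages $g_1(s),g_2(s)$ of $s$ correspond to $u=\pm\sqrt{s-\ell^\pm_{\min}}$. Setting $F(u):=f(x(u))$ and $h(u):=F(u)\rho(u)$, both smooth near $u=0$, the change of variables $t=u^2$ in the localized version of \eqref{e:Pi-pm-def} gives, for $s$ near $\ell^\pm_{\min}$,
\[
\Pi^\pm_\lambda f(s)=\frac{h(\sqrt{s-\ell^\pm_{\min}})+h(-\sqrt{s-\ell^\pm_{\min}})}{2\sqrt{s-\ell^\pm_{\min}}},\qquad
\Upsilon^\pm_\lambda f(s)=F(\sqrt{s-\ell^\pm_{\min}})+F(-\sqrt{s-\ell^\pm_{\min}}).
\]
Now invoke the classical fact that an even $C^\infty$ function of $u$ is a $C^\infty$ function of $u^2$: applied to the even functions $h(u)+h(-u)$ and $F(u)+F(-u)$ this produces smooth $\widetilde h,\widetilde F$ with $h(u)+h(-u)=\widetilde h(u^2)$ and $F(u)+F(-u)=\widetilde F(u^2)$. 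Hence near $\ell^\pm_{\min}$ we get $\Upsilon^\pm_\lambda f(s)=\widetilde F(s-\ell^\pm_{\min})$, which is smooth up to the endpoint; together with the interior regularity and the symmetric analysis at $\ell^\pm_{\max}$ this proves Part~2. Likewise $\Pi^\pm_\lambda f(s)=\widetilde h(s-\ell^\pm_{\min})\big/\big(2\sqrt{s-\ell^\pm_{\min}}\big)$, and multiplying by $\sqrt{(s-\ell^\pm_{\min})(\ell^\pm_{\max}-s)}$, whose factor $\sqrt{\ell^\pm_{\max}-s}$ is smooth and nonvanishing near $\ell^\pm_{\min}$, cancels the half-power and yields \eqref{e:pushf-sqrt} near that endpoint, completing Part~1.

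\emph{Main obstacle.} The only nonroutine point is this endpoint analysis. Each individual branch $w_j(s)f(g_j(s))$ has an $(s-\ell^\pm_{\min})^{-1/2}$ blow-up (and $f(g_j(s))$ has an infinite $s$-derivative at $\ell^\pm_{\min}$), so one must exhibit the precise cancellation obtained by summing the two branches; this is exactly where nondegeneracy of the critical points enters, through the normal form \eqref{e:factorize} and the even-function lemma. Everything else reduces to changes of variables.
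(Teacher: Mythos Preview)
Your proof is correct and follows essentially the same route as the paper: both use the Morse normal form \eqref{e:factorize} near the critical points to reduce to the even-function lemma applied to the sum of the two branch contributions. The only cosmetic difference is that the paper absorbs your Jacobian factor $\rho$ by first observing that the claim is independent of the choice of parametrization $\theta$ and then taking $\theta=\theta^\pm_{\min}$ directly, whereas you carry $\rho$ explicitly.
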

%%%%%%%%%%%%%%%%%%%%%%%%%%%%%%%%%%%%%%%%%%%%%%%%%%%%%%%%%%%%%%%%%%%%%%%%%%%%%%%%
\begin{proof}
1. The support property follows immediately from the definition:
if $\supp\varphi\cap [\ell^\pm_{\min},\ell^\pm_{\max}]=\emptyset$,
then $\varphi\circ\ell^\pm=0$ on $\partial\Omega$ and thus
$\int (\Pi_\lambda^\pm f)\varphi=0$.

To show~\eqref{e:pushf-sqrt}, we compute
\begin{equation}
  \label{e:pushf-1}
\Pi_\lambda^\pm f(s)=\sum_{x\in\partial\Omega,\, \ell^\pm(x)=s}{f(x)\over |\partial_\theta\ell^\pm(x)|}\quad\text{for all}\quad
s\in (\ell^\pm_{\min},\ell^\pm_{\max}).
\end{equation}
It follows that $\Pi_\lambda^\pm f$ is smooth on the open interval
$(\ell^\pm_{\min},\ell^\pm_{\max})$. Next, note that~\eqref{e:pushf-sqrt} does not
depend on the choice of the parametrization~$\theta$ since changing
the parametrization amounts to multiplying~$f$ by a smooth positive function.
Thus we can use the local coordinate $\theta=\theta^\pm_{\min}$ near $x^\pm_{\min}$
introduced in~\eqref{e:factorize}. With this choice
we have $\ell^\pm(x)=\ell^\pm_{\min}+\theta^2$ and the formula~\eqref{e:pushf-1}
gives for $s$ near $\ell^\pm_{\min}$
$$
\Pi_\lambda^\pm f(s)={f(\sqrt{s-\ell^\pm_{\min}})+f(-\sqrt{s-\ell^\pm_{\min}})\over
2\sqrt{s-\ell^\pm_{\min}}}
$$
where we view $f$ as a function of~$\theta$. It follows that
$\sqrt{s-\ell^\pm_{\min}}\,\Pi_\lambda^\pm f(s)$ is smooth at
the left endpoint of the interval $(\ell^\pm_{\min},\ell^\pm_{\max})$.
Similar analysis shows that
$\sqrt{\ell^\pm_{\max}-s}\,\Pi_\lambda^\pm f(s)$ is smooth at the right endpoint
of this interval.

\noindent 2. This is proved similarly to part~1, where we
no longer have $|\partial_\theta \ell^\pm(x)|$ in the denominator in~\eqref{e:pushf-1}.\end{proof}
%%%%%%%%%%%%%%%%%%%%%%%%%%%%%%%%%%%%%%%%%%%%%%%%%%%%%%%%%%%%%%%%%%%%%%%%%%%%%%%%

%%%%%%%%%%%%%%%%%%%%%%%%%%%%%%%%%%%%%%%%%%%%%%%%%%%%%%%%%%%%%%%%%%%%%%%%%%%%%%%%
\subsubsection{Dynamics of the chess billiard}

We now give a description of the dynamics of the orientation preserving diffeomorphism~$b=\gamma^+\circ\gamma^-$ in the presense of periodic points.
%%%%%%%%%%%%%%%%%%%%%%%%%%%%%%%%%%%%%%%%%%%%%%%%%%%%%%%%%%%%%%%%%%%%%%%%%%%%%%%%
\begin{lemm}
\label{l:global-dynamics}
Assume that $\Sigma_\lambda\neq\emptyset$ (see~\eqref{e:periodic-points}). Then:
\begin{enumerate}
\item all periodic points of~$b$ have the same minimal period;
\item for each $x\in\partial\Omega$, the trajectory
$b^k(x)$ converges to $\Sigma_\lambda$ as $k\to\pm \infty$;
\item if $\partial_x b^n\neq 1$ on $\Sigma_\lambda$
where~$n$ denotes the minimal period, then the set $\Sigma_\lambda$ is finite.
\end{enumerate}
\end{lemm}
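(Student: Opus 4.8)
The statement is the classical Poincaré–Denjoy picture for a circle diffeomorphism, adapted to the chess billiard, so the plan is to exploit the structure $b=\gamma^+\circ\gamma^-$ with $\gamma^\pm$ orientation-reversing involutions, together with the conjugation identity \eqref{e:b-inversor}. For part (1), suppose $x_0$ is periodic of minimal period $n_0$ and $x_1$ is periodic of minimal period $n_1$; passing to the lift $\widetilde b:\mathbb R\to\mathbb R$ of $b$ to the universal cover of $\partial\Omega\cong S^1$, a periodic point of $b$ of minimal period $m$ forces $\widetilde b^{\,m}(\widetilde x)=\widetilde x+p$ for some integer $p$, and the rotation number $\rho(b)=\lim \widetilde b^{\,k}(\widetilde x)/k$ equals $p/m$ and is independent of the orbit. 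So $n_0=n_1=$ the denominator of $\rho(b)$ in lowest terms — but I should double-check the "lowest terms" point: if $p/m$ is not reduced, i.e. $\gcd(p,m)=d>1$, then $\widetilde b^{\,m/d}$ would have to translate by $p/d$, which need not hold pointwise; the correct statement is that the minimal period is the same for all orbits, and I would argue this by noting that between two consecutive points of one periodic orbit (in the cyclic order on $S^1$) there can be no point of a periodic orbit of different minimal period, since $b$ preserves cyclic order and maps the arc between consecutive orbit points to the arc between their images. This ordering argument is the cleanest route and avoids number-theoretic subtleties.

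**Part (2): convergence to $\Sigma_\lambda$.** Since $\Sigma_\lambda\neq\emptyset$, the rotation number is rational, $\rho(b)=p/n$. Fix $x\in\partial\Omega$. The complement $\partial\Omega\setminus\Sigma_\lambda$ is a finite or countable union of open arcs, but — here is where I need to be a little careful before part (3) is available — a priori $\Sigma_\lambda$ could be infinite (e.g. an interval of fixed points of $b^n$, or a Cantor-type set). In all cases, for $x\notin\Sigma_\lambda$, $x$ lies in a maximal open arc $J$ of $\partial\Omega\setminus\Sigma_\lambda$ with endpoints $x_-,x_+\in\Sigma_\lambda$ (allowing $x_-=x_+$ if the arc wraps; but $\rho$ rational with a periodic point rules that out for $n\ge1$, as $\Sigma_\lambda$ separates the circle). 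The map $b^n$ fixes $x_\pm$ and maps $J$ to itself (preserving orientation), so $(b^n)^k(x)$ is a monotone sequence in $\overline J$, hence converges to a fixed point of $b^n$ in $\overline J$, which lies in $\Sigma_\lambda$; the same holds as $k\to-\infty$. Since the full orbit $b^j(x)$, $j\in\mathbb Z$, is the union of $n$ such subsequences $(b^n)^k(b^i(x))$ for $i=0,\dots,n-1$, and each converges to $\Sigma_\lambda$, the whole trajectory converges to $\Sigma_\lambda$ as $k\to\pm\infty$. The one subtlety to address is that "converges to $\Sigma_\lambda$" should mean $\operatorname{dist}(b^k(x),\Sigma_\lambda)\to0$, which is exactly what the above gives for each of the finitely many subsequences.

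**Part (3): finiteness under hyperbolicity.** Assume $\partial_x b^n\neq1$ on $\Sigma_\lambda$. Every $x\in\Sigma_\lambda$ is a fixed point of $g:=b^n$ with $g'(x)\neq1$, hence an isolated fixed point of $g$ (by the inverse function theorem applied to $g-\mathrm{id}$ near $x$). Thus $\Sigma_\lambda=\operatorname{Fix}(g)$ is a discrete subset of the compact manifold $\partial\Omega$, hence finite. This is short and I expect no obstacle here.

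**Main obstacle.** The genuinely delicate point is the ordering/lifting bookkeeping in parts (1) and (2): making precise, without yet knowing $\Sigma_\lambda$ is finite, that the rotation number is well-defined and rational, that periodic orbits all have the same minimal period, and that the gaps between periodic points are invariant under an appropriate iterate so that monotone-convergence applies. I would handle this uniformly by working on the universal cover with the lift $\widetilde b$, using that $\widetilde b - \mathrm{id}$ is $1$-periodic and continuous, that $\widetilde b$ is increasing, and the standard fact $\rho(b)=\lim_{k\to\infty}(\widetilde b^{\,k}(\widetilde x)-\widetilde x)/k$ exists and is independent of $\widetilde x$; existence of a periodic point is equivalent to $\rho(b)\in\mathbb Q$. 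Once the rotation-number formalism is in place, (1), (2) follow from the invariant-gap argument and Poincaré–Denjoy-type monotonicity, and (3) is immediate. The involution structure \eqref{e:b-inversor} is not strictly needed for this lemma but reassuringly makes the attractive/repulsive dichotomy in $\Sigma_\lambda=\Sigma_\lambda^+\sqcup\Sigma_\lambda^-$ symmetric under $\gamma^\pm$.
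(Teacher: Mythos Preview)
Your proposal is correct and follows the standard rotation-number/Poincar\'e classification argument for circle diffeomorphisms; this is precisely the content the paper defers to the references \cite{Melo-Strien} and \cite{Walsh-circle} for parts~(1) and~(2), while your argument for~(3) (isolated zeros of $b^n-\mathrm{id}$ on a compact set) is identical to the paper's one-line proof. The only part where you could tighten the exposition is the ``lowest terms'' step in~(1): the cleanest route is to observe that $b$ acts on any minimal periodic orbit of length $m$ as a cyclic shift by some $k$ steps in the cyclic order, and transitivity of the orbit forces $\gcd(k,m)=1$, so $\rho(b)=k/m$ is already reduced and $m$ is determined by $\rho(b)$ alone.
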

%%%%%%%%%%%%%%%%%%%%%%%%%%%%%%%%%%%%%%%%%%%%%%%%%%%%%%%%%%%%%%%%%%%%%%%%%%%%%%%%
\begin{proof}
See for example~\cite[\S1.1]{Melo-Strien} or~\cite{Walsh-circle}
for the proof of the first two claims. The last claim follows
from the fact that $\Sigma_\lambda$ is the set of solutions
to $b^n(x)=x$ and thus $ \partial_x b^n ( x ) \neq 0 $ implies that it consists of isolated points.
\end{proof}
%%%%%%%%%%%%%%%%%%%%%%%%%%%%%%%%%%%%%%%%%%%%%%%%%%%%%%%%%%%%%%%%%%%%%%%%%%%%%%%%
We finally discuss the rotation number of~$b$.
Fix a positively oriented parametrization on~$\partial\Omega$ which identifies
it with the circle~$\mathbb S^1=\mathbb R/\mathbb Z$ and
denote by~$\pi:\mathbb R\to \partial\Omega$ the covering map.
Consider a lift of $b(\bullet,\lambda)$ to $\mathbb R$, that is, an orientation preserving
diffeomorphism $\mathbf b(\bullet, \lambda):\mathbb R\to \mathbb R$ such that
$$
\pi(\mathbf b(\theta,\lambda))=b(\pi(\theta),\lambda)\quad\text{for all}\quad \theta\in\mathbb R.
$$
Denote by $\mathbf b^k(\bullet,\lambda)$ the $k$-th iterate of $\mathbf b(\bullet,\lambda)$.
Define the \emph{rotation number} of $b(\bullet,\lambda)$ as
\begin{equation}
  \label{e:rotation-number}
\mathbf r(\lambda):=\lim_{k\to\infty} {\mathbf b^k(\theta,\lambda)-\theta\over k}\mod \mathbb Z\ \in\ \mathbb R/\mathbb Z.
\end{equation}
The limit exists and is independent of the choice of $\theta\in\mathbb R$
and of the lift $\mathbf b$. We refer to~\cite{Walsh-circle} for a proof of this fact
as well that of the following
%%%%%%%%%%%%%%%%%%%%%%%%%%%%%%%%%%%%%%%%%%%%%%%%%%%%%%%%%%%%%%%%%%%%%%%%%%%%%%%%
\begin{lemm}
  \label{l:rotation-periodic}
The rotation number $\mathbf r(\lambda)$ is rational if and only if $\Sigma_\lambda\neq\emptyset$. In this case $\mathbf r(\lambda)={q\over n}\bmod \mathbb Z$ where
$n>0$ is the minimal period of the periodic points and $q\in\mathbb Z$ is coprime with~$n$.
\end{lemm}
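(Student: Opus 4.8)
The plan is to deduce the statement from Poincar\'e's classical theory of rotation numbers for orientation preserving circle diffeomorphisms, combined with Lemma~\ref{l:global-dynamics}; as the text already notes, one may alternatively just quote~\cite{Walsh-circle}. Throughout I fix a lift $\mathbf b$ of $b(\bullet,\lambda)$ and work with the real number $\rho(\mathbf b):=\lim_{k\to\infty}k^{-1}(\mathbf b^k(\theta)-\theta)$, whose existence and independence of $\theta$ is the content of the sentence preceding the lemma, so that $\mathbf r(\lambda)=\rho(\mathbf b)\bmod\mathbb Z$ as in~\eqref{e:rotation-number}. Replacing $\mathbf b$ by $\mathbf b+m$ with $m\in\mathbb Z$ changes $\rho$ by $m$, so I am free to normalize the lift. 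The one elementary input I would record first is: if $G:\mathbb R\to\mathbb R$ is a lift of an orientation preserving circle diffeomorphism and $\rho(G)=0$, then $G$ has a fixed point. Indeed $G(\theta)-\theta$ is $1$-periodic and continuous, so if it were everywhere positive it would be bounded below by some $\delta>0$, giving $G^k(\theta)\geq\theta+k\delta$ and hence $\rho(G)\geq\delta>0$; the everywhere negative case is excluded symmetrically.

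First I would prove the equivalence $\Sigma_\lambda\neq\emptyset\iff\mathbf r(\lambda)\in\mathbb Q$. For the forward implication, pick $x\in\Sigma_\lambda$ of minimal period $n$ (all periodic points share this $n$ by Lemma~\ref{l:global-dynamics}(1)) and a lift $\theta$ of $x$; since $b^n(x)=x$ and $\mathbf b^n$ is a lift of $b^n$, we get $\mathbf b^n(\theta)=\theta+q$ for some $q\in\mathbb Z$, whence $\mathbf b^{kn}(\theta)=\theta+kq$ and $\rho(\mathbf b)=q/n$. For the converse, write $\mathbf r(\lambda)=a/c\bmod\mathbb Z$ with $\gcd(a,c)=1$, $c\geq1$, and normalize $\mathbf b$ so that $\rho(\mathbf b)=a/c$; then $\rho(\mathbf b^c)=c\,\rho(\mathbf b)=a\in\mathbb Z$, so $\mathbf b^c-a$ is a lift of $b^c$ with zero rotation number and by the elementary input has a fixed point $\theta_0$. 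Then $\mathbf b^c(\theta_0)=\theta_0+a$, so $b^c(\pi(\theta_0))=\pi(\theta_0)$ and $\pi(\theta_0)\in\Sigma_\lambda$.

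It remains to identify the denominator and prove coprimality when $\Sigma_\lambda\neq\emptyset$. With $n,q$ as above and $a/c=\mathbf r(\lambda)\bmod\mathbb Z$ in lowest terms, the relation $q/n\equiv a/c\bmod\mathbb Z$ gives an integer $m$ with $qc=na+nmc$, so $c\mid na$ and hence $c\mid n$ since $\gcd(a,c)=1$; thus $c\leq n$. On the other hand, the periodic point $\pi(\theta_0)$ produced in the converse direction satisfies $b^c(\pi(\theta_0))=\pi(\theta_0)$, so its minimal period divides $c$; but that minimal period equals $n$ by Lemma~\ref{l:global-dynamics}(1), so $n\mid c$ and $n\leq c$. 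Therefore $c=n$, and then $q\equiv a\bmod n$ together with $\gcd(a,n)=\gcd(a,c)=1$ yields $\gcd(q,n)=1$; since $\mathbf r(\lambda)=q/n\bmod\mathbb Z$, this is exactly the claimed formula. The only point requiring care anywhere in this argument is the bookkeeping with lifts and the $\bmod\,\mathbb Z$ ambiguity in~\eqref{e:rotation-number} — there is no genuine analytic obstacle, which is why the text is content to cite~\cite{Walsh-circle}.
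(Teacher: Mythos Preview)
Your proof is correct: it is the standard Poincar\'e argument, and the use of Lemma~\ref{l:global-dynamics}(1) to force $n=c$ is exactly the right move. The paper itself does not give a proof of this lemma but simply refers to~\cite{Walsh-circle}, so your explicit write-up is more detailed than, but entirely consistent with, what the paper provides.
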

%%%%%%%%%%%%%%%%%%%%%%%%%%%%%%%%%%%%%%%%%%%%%%%%%%%%%%%%%%%%%%%%%%%%%%%%%%%%%%%%

We remark that $b(\bullet,\lambda)$ cannot have fixed points:
indeed, if $x\in\partial\Omega$ and $b(x)=x$, then
$\gamma^+(x)=\gamma^-(x)$ which is impossible.
We then fix the lift $\mathbf b$ for which
\begin{equation}
  \label{e:fixed-lift}
0<\mathbf b(0,\lambda)<1.
\end{equation}
With this choice 
we have $0<\mathbf b^k(0,\lambda)<k$ for all $k\geq 0$
and thus~\eqref{e:rotation-number}
defines the rotation number $\mathbf r(\lambda)$ which satisfies
$0<\mathbf r(\lambda)<1$.

%%%%%%%%%%%%%%%%%%%%%%%%%%%%%%%%%%%%%%%%%%%%%%%%%%%%%%%%%%%%%%%%%%%%%%%%%%%%%%%%
\subsection{Dependence on \texorpdfstring{$\lambda$}{\unichar{"03BB}}}

We now discuss the dependence of the dynamics of the chess billiard map
$b(\bullet,\lambda)$ on~$\lambda$. We first give a stability result:
%%%%%%%%%%%%%%%%%%%%%%%%%%%%%%%%%%%%%%%%%%%%%%%%%%%%%%%%%%%%%%%%%%%%%%%%%%%%%%%%
\begin{lemm}
  \label{l:perturb-MS}
The set of $\lambda\in (0,1)$ satisfying the Morse--Smale conditions
(see Definition~\ref{d:2}) is open.
Moreover, the maps $\gamma^\pm(x,\lambda)$ and $b(x,\lambda)$, as well as
the sets $\Sigma_\lambda$, depend smoothly on~$\lambda$
as long as $\lambda$ satisfies the Morse--Smale conditions.
\end{lemm}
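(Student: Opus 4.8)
\emph{Overview.} The plan is to treat the three conditions of Definition~\ref{d:2} in turn, in each case reducing matters to the implicit function theorem and using compactness of $\partial\Omega$ to control global behavior.

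\emph{Step 1: openness of $\lambda$-simplicity.} Fix $\lambda_0$ for which $\Omega$ is $\lambda_0$-simple. The critical points of $x\mapsto\ell^\pm(x,\lambda)$ are the zeros of the smooth function $(x,\lambda)\mapsto\partial_\theta\ell^\pm(x,\lambda)$ on $\partial\Omega\times(0,1)$, and nondegeneracy means $\partial_\theta^2\ell^\pm\neq 0$ there. Applying the implicit function theorem near $x^\pm_{\min}(\lambda_0)$ and $x^\pm_{\max}(\lambda_0)$ produces smooth families $x^\pm_{\min}(\lambda),x^\pm_{\max}(\lambda)$ of critical points for $\lambda$ near $\lambda_0$, with nondegeneracy preserved by continuity of $\partial_\theta^2\ell^\pm$. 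To see there are no others, I would run a standard compactness argument: a sequence of extra critical points $y_k$ of $\ell^\pm(\cdot,\lambda_k)$ with $\lambda_k\to\lambda_0$ would, along a subsequence, converge to a critical point of $\ell^\pm(\cdot,\lambda_0)$, hence to $x^\pm_{\min}(\lambda_0)$ or $x^\pm_{\max}(\lambda_0)$, contradicting the local uniqueness from the implicit function theorem. Thus $\lambda$-simplicity is an open condition and $x^\pm_{\min}(\lambda),x^\pm_{\max}(\lambda)$ depend smoothly on $\lambda$.

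\emph{Step 2: smoothness of $\gamma^\pm$ and $b$.} On the open set where $\Omega$ is $\lambda$-simple, away from the critical set the relation $\ell^\pm(\gamma^\pm(x,\lambda),\lambda)=\ell^\pm(x,\lambda)$ defines $\gamma^\pm$ as a smooth function of $(x,\lambda)$ by the implicit function theorem, since the other root of $\ell^\pm(\cdot,\lambda)=\ell^\pm(x,\lambda)$ is simple. Near the critical points I would use the Morse lemma with parameters to make the factorization~\eqref{e:factorize} depend smoothly on $\lambda$, producing coordinate functions $\theta^\pm_{\min}(x,\lambda),\theta^\pm_{\max}(x,\lambda)$ with $\ell^\pm(x,\lambda)=\ell^\pm(x^\pm_{\min}(\lambda),\lambda)+\theta^\pm_{\min}(x,\lambda)^2$ near $x^\pm_{\min}(\lambda)$ and similarly at $x^\pm_{\max}$; then $\gamma^\pm$ is characterized near the critical points by $\theta^\pm_{\min}(\gamma^\pm(x,\lambda),\lambda)=-\theta^\pm_{\min}(x,\lambda)$, which is smooth. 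Hence $\gamma^\pm(x,\lambda)$, and therefore $b(x,\lambda)=\gamma^+\circ\gamma^-$, are smooth jointly in $(x,\lambda)$.

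\emph{Step 3: persistence of periodic points and hyperbolicity.} Now assume $\lambda_0$ also satisfies conditions (2), (3). Then $\Sigma_{\lambda_0}$ is finite (Lemma~\ref{l:global-dynamics}), with common minimal period $n$ and, by Lemma~\ref{l:rotation-periodic}, $\mathbf r(\lambda_0)=q/n$, $\gcd(q,n)=1$; in the lift normalized by~\eqref{e:fixed-lift} the periodic points are the zeros of $\theta\mapsto\mathbf b^n(\theta,\lambda_0)-\theta-q$, and hyperbolicity says these zeros are nondegenerate. The implicit function theorem continues each to a smooth family $\theta_i(\lambda)$, with hyperbolicity $\partial_x b^n(\theta_i(\lambda),\lambda)\neq 1$ preserved by continuity of $\partial_x b^n$. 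The step I expect to be the main obstacle is ruling out the creation of new periodic points and a jump in the minimal period; I would resolve it by showing $\mathbf r(\lambda)$ is locally constant. Since $\gamma^\pm$ exchanges $\Sigma^+_{\lambda_0}$ with $\Sigma^-_{\lambda_0}$, both are nonempty, so there is an attractive fixed point of $\mathbf b^n(\cdot,\lambda_0)-q$; near it one has strict inequalities $\mathbf b^n(\theta_-,\lambda_0)-q>\theta_-$ and $\mathbf b^n(\theta_+,\lambda_0)-q<\theta_+$ with $\theta_-<\theta_+<\theta_-+1$, and these persist for $\lambda$ near $\lambda_0$, forcing $\mathbf b^n(\theta^*,\lambda)-\theta^*=q$ for some $\theta^*$ and hence $\mathbf r(\lambda)=q/n$. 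With the rotation number pinned, $\Sigma_\lambda$ is exactly the zero set of $\mathbf b^n(\cdot,\lambda)-\theta-q$; as this function is bounded away from $0$ off small neighborhoods of $\Sigma_{\lambda_0}$ by compactness, its zeros are precisely the $\theta_i(\lambda)$. Finally, if along $\lambda_k\to\lambda_0$ some $\theta_i(\lambda_k)$ had minimal period $m\mid n$ with $m<n$, passing to a subsequence with $m$ fixed would give $b^m(\theta_i(\lambda_0),\lambda_0)=\theta_i(\lambda_0)$, contradicting minimality at $\lambda_0$. This yields openness of the Morse--Smale conditions and smooth dependence of $\Sigma_\lambda$ on $\lambda$.
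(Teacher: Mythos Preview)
Your proof is correct and follows the same overall strategy as the paper: smoothness of $\ell^\pm$ in $(x,\lambda)$ gives openness of $\lambda$-simplicity and smooth dependence of $\gamma^\pm,b$, and the Implicit Function Theorem applied to the periodic-point equation handles conditions~(2)--(3). The paper's proof is much terser: after noting smoothness of $\gamma^\pm,b$, it simply asserts that since $\partial_x b^n(x,\lambda_0)\neq 1$ on $\Sigma_{\lambda_0}$, the equation $b^n(x,\lambda)=x$ has exactly $m$ solutions for $\lambda$ near $\lambda_0$, all smooth in $\lambda$, and that this suffices.

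Your Step~3 differs in that you explicitly pin down the minimal period by first showing $\mathbf r(\lambda)=q/n$ via an intermediate-value argument at an attractive periodic point, and only then identify $\Sigma_\lambda$ with the zero set of $\mathbf b^n(\cdot,\lambda)-\theta-q$. This is a genuine alternative to what the paper leaves implicit: the paper's ``exactly $m$ solutions'' and the persistence of minimal period~$n$ really rest on the fact that (in the lift) $\mathbf b^n(\theta,\lambda_0)-\theta$ crosses integer levels transversally at finitely many points and avoids integers elsewhere, and that $\mathbf b^{m'}(\theta,\lambda_0)-\theta$ avoids integers entirely for each $m'<n$; both are open conditions. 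Your rotation-number route makes the same conclusions explicit but packages them differently. Note also that your final paragraph (the limit argument ruling out a drop in minimal period) is redundant once $\mathbf r(\lambda)=q/n$ with $\gcd(q,n)=1$ is established, since that already forces the minimal period to be~$n$.
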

%%%%%%%%%%%%%%%%%%%%%%%%%%%%%%%%%%%%%%%%%%%%%%%%%%%%%%%%%%%%%%%%%%%%%%%%%%%%%%%%
\begin{proof}
Assume that $\lambda_0$ satisfies the Morse--Smale conditions.
We need to show that all~$\lambda$ close enough to~$\lambda_0$
satisfy this condition as well.
From~\eqref{eq:dual2} we see that the functions $\ell^\pm(x,\lambda)$
depend smoothly on $x\in\partial\Omega,\lambda\in (0,1)$. Therefore,
$\Omega$ is $\lambda$-simple for $\lambda$ close to $\lambda_0$.
Moreover, $\gamma^\pm(x,\lambda)$ and $b(x,\lambda)$ depend smoothly
on $\lambda$ as long as $\Omega$ is $\lambda$-simple.

Next, let $m>0$ be the number of points in~$\Sigma_{\lambda_0}$
and let $n$ be their minimal period under $b(\bullet,\lambda_0)$. 
Since $\partial_x b^n(x,\lambda_0)\neq 1$ on~$\Sigma_{\lambda_0}$,
by the Implicit Function Theorem
for $\lambda$ close to $\lambda_0$ the equation $b^n(x,\lambda)=x$
has exactly $m$ solutions, which depend smoothly on $\lambda$.
It follows that $\lambda$ satisfies the Morse--Smale conditions.
\end{proof}
%%%%%%%%%%%%%%%%%%%%%%%%%%%%%%%%%%%%%%%%%%%%%%%%%%%%%%%%%%%%%%%%%%%%%%%%%%%%%%%%
Lemmas~\ref{l:rotation-periodic} and~\ref{l:perturb-MS} imply in particular that when $\lambda_0$ satisfies the Morse--Smale conditions,
the rotation number $\mathbf r$ is constant in a neighborhood of~$\lambda_0$:
indeed, the rotation number is determined by the combinatorial structure
of the map $b$ on each closed orbit (if the rotation
number is equal to $q/n$ with $q,n$ coprime, then each
closed orbit has period $n$ and the action of $b$ on this orbit is the shift by $q$ points), which varies continuously with~$\lambda$.
A partial converse to this fact is given by the second part of the following
%%%%%%%%%%%%%%%%%%%%%%%%%%%%%%%%%%%%%%%%%%%%%%%%%%%%%%%%%%%%%%%%%%%%%%%%%%%%%%%%
\begin{lemm}
Assume that $\mathcal J\subset (0,1)$ is an open interval such that
$\Omega$ is $\lambda$-simple for each $\lambda\in \mathcal J$. Then:
\begin{enumerate}
\item $\mathbf r(\lambda)$ is a continuous increasing function of $\lambda\in \mathcal J$;
\item if $\mathbf r$ is constant on~$\mathcal J$, then
this constant is rational and
the Morse--Smale conditions hold for Lebesgue almost every $\lambda\in \mathcal J$.
\end{enumerate}
\end{lemm}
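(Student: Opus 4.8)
The plan is to prove the two parts separately, with part (1) — continuity and monotonicity of the rotation number — coming first since the analysis of part (2) leans on it.

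For part (1), I would work with the fixed lift $\mathbf b(\bullet,\lambda)$ normalized by~\eqref{e:fixed-lift}. Continuity of $\mathbf r$ in $\lambda$ is a general fact about the rotation number of circle diffeomorphisms depending continuously on a parameter: since $\ell^\pm(x,\lambda)$ depend smoothly on $(x,\lambda)$ whenever $\Omega$ is $\lambda$-simple (as in the proof of Lemma~\ref{l:perturb-MS}), the same is true of $\gamma^\pm(\bullet,\lambda)$ and hence of $\mathbf b(\bullet,\lambda)$, uniformly on compact subsets of $\mathcal J$; then $\mathbf b^k(0,\lambda)/k$ converges uniformly in $\lambda$ to $\mathbf r(\lambda)$, so $\mathbf r$ is continuous. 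For monotonicity the key is to show that $\mathbf b(\theta,\lambda)$ is strictly increasing in $\lambda$ for each fixed $\theta$; once that is known, $\mathbf b^k(\theta,\lambda)$ is increasing in $\lambda$ (composition of increasing-in-$\theta$, increasing-in-$\lambda$ maps), and passing to the limit gives that $\mathbf r$ is nondecreasing, while the fact that a rational value $q/n$ is attained on at most a single point when $\mathbf r$ is not locally constant — or, more directly, an explicit strict-inequality argument at one step — upgrades this to strictly increasing. So the real content is: \emph{as $\lambda$ increases, each of $\gamma^\pm$ moves points around $\partial\Omega$ in the positive direction}. Geometrically, the lines defining $\gamma^\pm$ have slopes $\mp 1/c$ with $c=\lambda/\sqrt{1-\lambda^2}$ increasing in $\lambda$, and one checks using~\eqref{e:l-pm-differential} and the sign identities of Lemma~\ref{l:identitor-1} that the $\lambda$-derivative of $\ell^\pm(\gamma^\pm(x,\lambda),\lambda) - \ell^\pm(x,\lambda)=0$, solved for $\partial_\lambda\gamma^\pm$, has the sign forcing positive displacement; composing the two gives $\partial_\lambda\mathbf b>0$. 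I expect this monotonicity computation — getting the signs exactly right from~\eqref{e:l-pm-differential} and the geometry of the chords — to be the main obstacle, though it is a finite calculation.

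For part (2), suppose $\mathbf r\equiv c$ on $\mathcal J$. If $c$ were irrational, then for each $\lambda\in\mathcal J$ the map $b(\bullet,\lambda)$ has no periodic points (Lemma~\ref{l:rotation-periodic}); but by part (1), strict monotonicity of $\mathbf r$ would be violated on any interval — more precisely, a nonconstant continuous increasing function takes rational values on a set of positive measure, so constancy forces... actually the cleanest contradiction: the strict monotonicity established in part (1) shows $\mathbf r$ cannot be constant at all unless we are in a degenerate situation; so the only way $\mathbf r$ is constant on $\mathcal J$ is if the "strictness" fails, which (tracing back the argument) happens precisely when $b(\bullet,\lambda)$ has periodic points, i.e.\ $c$ is rational — this is the content of the Poincar\'e/Denjoy picture: the rotation number is locally constant exactly on the "mode-locking" intervals where periodic orbits exist. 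Thus $c=q/n$ is rational. It then remains to show the Morse--Smale conditions hold for a.e.\ $\lambda\in\mathcal J$. Here I would argue: $\Omega$ is $\lambda$-simple throughout $\mathcal J$ by hypothesis, and $\Sigma_\lambda\neq\emptyset$ for all $\lambda\in\mathcal J$ since $\mathbf r(\lambda)=q/n$ is rational; the only condition that can fail is hyperbolicity, $\partial_x b^n(x,\lambda)=1$ for some periodic $x$. Fix $\lambda_0\in\mathcal J$. The periodic points solve $b^n(x,\lambda)=x$; on the mode-locking interval they form a one-parameter family (or finitely many such families) $x=x(\lambda)$, and I would show that the "exceptional" set $\{\lambda : \partial_x b^n(x(\lambda),\lambda)=1 \text{ for all periodic } x(\lambda)\}$ is finite — or at least of measure zero — because at a $\lambda$ where \emph{some} periodic point is hyperbolic the Morse--Smale condition already holds (all periodic points have the same period $n$, and a non-hyperbolic periodic point would have to be a non-isolated zero of $b^n(\cdot,\lambda)-\mathrm{id}$, but on a mode-locking plateau the periodic set can be a whole interval, so I must be careful). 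The correct statement: on a mode-locking interval $\mathcal J_{q/n}$, the endpoints of the plateau are saddle-node bifurcation points where $\partial_x b^n=1$, and in the interior of the plateau — away from at most countably many further bifurcations — the periodic orbits are hyperbolic. So I would localize, decompose $\mathcal J$ (minus its countably many "bad" points) into the mode-locking intervals, and on each, use that non-hyperbolicity of an \emph{entire} periodic interval is a codimension-$\geq 1$ phenomenon, so occurs only on a measure-zero set of $\lambda$. The delicate point — and the second potential obstacle — is handling the case where the periodic set $\Sigma_\lambda$ is a nontrivial interval (not just finitely many points), which is not excluded a priori by $\lambda$-simplicity alone; I would handle it by noting such $\lambda$ form a closed set with empty interior inside each plateau unless the plateau is entirely of this type, and then an analyticity or transversality argument rules out a positive-measure set of fully-degenerate $\lambda$.
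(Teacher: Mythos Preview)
Your part~(1) has the right skeleton but a conceptual slip that infects part~(2). The lemma only claims $\mathbf r$ is \emph{increasing}, i.e.\ nondecreasing; it is \emph{not} strictly increasing --- indeed, the paragraph just before this lemma shows $\mathbf r$ is locally constant near any Morse--Smale $\lambda_0$. Your argument via $\partial_\lambda \mathbf b>0$ gives nondecreasing correctly (a warning on the heuristic: one gets $\partial_\lambda\gamma^+>0$ but $\partial_\lambda\gamma^-<0$; the positivity of $\partial_\lambda b$ comes because $\gamma^+$ is orientation reversing, so $\partial_\theta\gamma^+<0$). There is no upgrade to strictness, so drop that claim.

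For part~(2), your rationality argument then collapses: you cannot derive a contradiction from ``strict monotonicity of $\mathbf r$'' since there is none. The paper's route is: assume $\mathbf r\equiv c$ irrational on $\mathcal J$. Then for each $\lambda$ there are no periodic points, and by Denjoy's theorem every orbit of $b(\bullet,\lambda)$ is dense. Now use the \emph{quantitative} positivity $\partial_\lambda b^n\geq c_0>0$: for any $\theta_0$, density gives $n,m$ with $\mathbf b^n(\theta_0,\lambda_0)\in[\theta_0+m-\varepsilon,\theta_0+m]$, and since $\mathbf b^n(\theta_0,\lambda_0+\varepsilon/c_0)\geq \mathbf b^n(\theta_0,\lambda_0)+\varepsilon$, the Intermediate Value Theorem produces $\lambda\in\mathcal J$ with $\mathbf b^n(\theta_0,\lambda)=\theta_0+m$, a periodic point --- contradiction.

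For the Morse--Smale a.e.\ statement you are missing the key idea entirely. Use $\partial_\lambda b^n>0$ again: it implies the set $\Sigma_{\mathcal J}=\{(x,\lambda):b^n(x,\lambda)=x\}$ is (locally) a smooth graph $\lambda=\psi(x)$ over the $x$-variable, with $\partial_x\psi=(1-\partial_x b^n)/\partial_\lambda b^n$. Thus $\lambda$ fails Morse--Smale iff $\lambda$ is a critical value of $\psi$, and Morse--Sard gives measure zero. This one-line argument dissolves your worry about interval-valued $\Sigma_\lambda$: such an interval is a level set of $\psi$ at a critical value, already accounted for. Your proposed bifurcation/codimension argument would at best give countably many bad $\lambda$ under extra transversality hypotheses you have no way to verify.
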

%%%%%%%%%%%%%%%%%%%%%%%%%%%%%%%%%%%%%%%%%%%%%%%%%%%%%%%%%%%%%%%%%%%%%%%%%%%%%%%%
\begin{proof}
1. Fix a positively oriented coordinate~$\theta$
on $\partial\Omega$. Using~\eqref{e:gamma-pm-def}, \eqref{e:l-pm-differential} 
we compute 
$$
\partial_\lambda\gamma^\pm(x,\lambda)={\partial_\lambda\ell^\pm(x-\gamma^\pm(x,\lambda),\lambda)\over\partial_\theta \ell^\pm(\gamma^\pm(x,\lambda),\lambda)} = 
\frac{ \ell^\mp ( x - \gamma^\pm(x,\lambda) , \lambda)  }
{2 \lambda ( 1 - \lambda^2 )\partial_\theta\ell^\pm ( \gamma^\pm ( x , \lambda ),\lambda)  }. 
$$
By~\eqref{e:sign-identity} and \eqref{e:flip-sign} we have
$$
\partial_\lambda\gamma^+>0,\quad
\partial_\lambda\gamma^-<0.
$$
We then compute
$$
\partial_\lambda b(x,\lambda)=\partial_\lambda\gamma^+(\gamma^-(x,\lambda),\lambda)
+\partial_\theta\gamma^+(\gamma^-(x,\lambda),\lambda)\partial_\lambda\gamma^-(x,\lambda).
$$
Since $ \partial \Omega \ni x \mapsto \gamma^+ ( x, \lambda ) \in \partial \Omega  $ is orientation reversing this gives
\begin{equation}
  \label{e:b-diff}
\partial_\lambda b(x,\lambda)>0\quad\text{for all}\quad
x\in \partial\Omega,\
\lambda\in \mathcal J.
\end{equation}
Fix the lift $\mathbf b(\theta,\lambda)$ satisfying~\eqref{e:fixed-lift}.
Then \eqref{e:b-diff} gives $\partial_\lambda\mathbf b(\theta,\lambda)>0$. This implies that
for each two points $\lambda_1<\lambda_2$ in~$\mathcal J$ and every $k\geq 1$
$$
\mathbf b^k(\theta,\lambda_1)<\mathbf b^k(\theta,\lambda_2).
$$
Recalling the definition~\eqref{e:rotation-number}
of $\mathbf r(\lambda)$, we see that $\mathbf r(\lambda_1)\leq \mathbf r(\lambda_2)$,
that is $\mathbf r(\lambda)$ is an increasing function of~$\lambda\in \mathcal J$.

\noindent 2. We now show that $\mathbf r(\lambda)$ is a continuous function
of $\lambda\in \mathcal J$. Fix arbitrary $\lambda_0\in \mathcal J$ and~$\varepsilon>0$;
since $\mathbf r$ is an increasing function
it suffices to show that there exists $\delta>0$ such that
$$
\mathbf r(\lambda_0+\delta)<\mathbf r(\lambda_0)+\varepsilon,\quad
\mathbf r(\lambda_0-\delta)>\mathbf r(\lambda_0)-\varepsilon.
$$
We show the first statement, with the second one proved similarly.
Choose a rational number ${q\over n}\in (\mathbf r(\lambda_0),\mathbf r(\lambda_0)+\varepsilon)$
where $n\in\mathbb N$ and $q\in\mathbb Z$ are coprime. Since
$\mathbf r(\lambda_0)<{q\over n}$, the definition~\eqref{e:rotation-number} implies that there exists
$k_0> 0$ such that
$$
{\mathbf b^{k_0n}(0,\lambda_0)\over nk_0}<{q\over n},
$$
that is $\mathbf b^{k_0n}(0,\lambda_0)<k_0q$.
Since $\mathbf b^{k_0n}(0,\lambda)$ is continuous in~$\lambda$, 
we can choose $\delta>0$ small enough so that
\begin{equation}
\label{eq:ind1} 
\mathbf b^{k_0n}(0,\lambda_0+\delta)<k_0q.
\end{equation}
By induction on $ j $ we see that 
\begin{equation}
\label{eq:indj}
\mathbf b^{jk_0n}(0,\lambda_0+\delta)<jk_0q\quad\text{for all}\quad j\geq 1.
\end{equation}
Here the inductive step is proved as follows: using $ \mathbf b^{p} ( r ) = \mathbf  b^{p} ( 0 ) + r  $, $ r \in \mathbb Z $ 
($  \mathbf b^{p } $ is a lift of the orientation preserving diffeomorphism 
$  b^{p } $; we dropped $ \lambda_0 + \delta $ in the notation),
\[ \mathbf b^{ (j+1) k_0 n } ( 0  ) =
 \mathbf b^{ k_0 n } ( \mathbf  b^{ jk_0n } ( 0  )  ) 
<  \mathbf b^{k_0 n } ( j k_0 q ) = \mathbf b^{k_0 n } ( 0 ) + j k_0 q  < (j+1) k_0 q . \]
Now, the definition~\eqref{e:rotation-number} and \eqref{eq:indj} imply that
$\mathbf r(\lambda_0+\delta)\leq {q\over n}<\mathbf r(\lambda_0)+\varepsilon$
as needed.

\noindent 3. Assume now that $\mathbf r$ is constant on~$\mathcal J$. We first show
that this constant is a rational number. Assume the contrary and take
arbitrary $\lambda_0\in \mathcal J$. 
By~\eqref{e:b-diff} (shrinking $\mathcal J$ slightly if necessary) we may assume that
$\partial_\lambda b(x,\lambda)\geq c>0$ for some $c>0$ and all $x\in\partial\Omega$,
$\lambda\in \mathcal J$. Then $\partial_\lambda b^n(x,\lambda)\geq c$ for all $n\geq 0$ as well.
Fix $\varepsilon>0$ such that $\lambda_1:=\lambda_0+\varepsilon/c$ lies in~$\mathcal J$.
Then $\mathbf b^n(\theta,\lambda_1)\geq \mathbf b^n(\theta,\lambda_0)+\varepsilon$
for all $\theta\in\mathbb R$.

Fix arbitrary $x_0=\pi(\theta_0)\in \partial\Omega$, $\theta_0\in\mathbb R$.
Since $\mathbf r(\lambda_0)$ is irrational and $b(\bullet,\lambda_0)$ is smooth,
by Denjoy's Theorem~\cite[\S I.2]{Melo-Strien} every orbit of $b(\bullet,\lambda_0)$
is dense, in particular the orbit $\{b^n(x_0,\lambda_0)\}_{n\geq 1}$ intersects
the $\varepsilon$-sized interval on $\partial\Omega$ whose right endpoint
is~$x_0$. That is, there exist $n\in\mathbb N$, $m\in\mathbb Z$ such that
$$
\theta_0+m-\varepsilon\leq \mathbf b^n(\theta_0,\lambda_0)\leq \theta_0+m.
$$
It follows that
$$
\mathbf b^n(\theta_0,\lambda_0)\leq \theta_0+m\leq \mathbf b^n(\theta_0,\lambda_0)+\varepsilon
\leq \mathbf b^n(\theta_0,\lambda_1).
$$
By the Intermediate Value Theorem, there exists $\lambda\in [\lambda_0,\lambda_1]\subset\mathcal  J$
such that $\mathbf b^n(\theta_0,\lambda)=\theta_0+m$. Then
$x_0=\pi(\theta_0)$ is a periodic orbit of $b(\bullet,\lambda)$,
which contradicts our assumption that $\mathbf r(\lambda)$ is irrational
for all $r\in \mathcal J$.

\noindent 4. Under the assumption of Step~3, we now have
$\mathbf r(\lambda)={q\over n}\bmod\mathbb Z$ for some coprime
$q\in\mathbb Z$, $n\in\mathbb N$ and all $\lambda\in \mathcal J$. By Lemma~\ref{l:rotation-periodic},
for each $\lambda\in \mathcal J$ the set of periodic points $\Sigma_\lambda$ is nonempty
and each such point has minimal period~$n$. Define
$$
\Sigma_{\mathcal J}:=\{(x,\lambda)\mid \lambda\in \mathcal J,\ x\in\Sigma_\lambda\}
=\{(x,\lambda)\in \partial\Omega\times \mathcal J\mid b^n(x,\lambda)=x\}.
$$
From~\eqref{e:b-diff} we see that $\partial_\lambda b^n(x,\lambda)>0$
for all $x\in\partial\Omega$, $\lambda\in \mathcal J$. Shrinking $\mathcal J$ if needed, we may assume
that~$\Sigma_{\mathcal J}$ is a one-dimensional submanifold of $\mathcal J\times\partial\Omega$
projecting diffeomorphically onto the~$x$ variable, that is
$\Sigma_{\mathcal J}=\{(x,\psi(x))\mid x\in U\}$
for some open set~$U\subset\partial\Omega$ and smooth function $\psi:U\to \mathcal J$,
$ \partial_x \psi ( x ) = (1 - \partial_x b^n ( x, \lambda ) )/ \partial_\lambda b^n ( x, \lambda ) $,
$ \lambda = \psi ( x ) $.
Then $\lambda\in \mathcal J$ satisfies the Morse--Smale conditions if and only if $\lambda$
is a regular value of~$\psi$, which by the Morse--Sard theorem happens for Lebesgue
almost every $\lambda\in \mathcal J$.
\end{proof}
%%%%%%%%%%%%%%%%%%%%%%%%%%%%%%%%%%%%%%%%%%%%%%%%%%%%%%%%%%%%%%%%%%%%%%%%%%%%%%%%

%%%%%%%%%%%%%%%%%%%%%%%%%%%%%%%%%%%%%%%%%%%%%%%%%%%%%%%%%%%%%%%%%%%%%%%%%%%%%%%%
\subsection{Escape functions}
\label{s:escape-functions}

We now construct an adapted parametrization of $\partial\Omega$
and a family of escape functions, which are used  \S \ref{s:abo} below.
Throughout this section we assume that $\lambda\in (0,1)$
satisfies the Morse--Smale conditions of Definition~\ref{d:2}.
Recall the sets $\Sigma_\lambda^\pm$ of attractive/repulsive
periodic points of the map $b(\bullet,\lambda)$ defined in~\eqref{e:Sigma-pm-def}.
Let $n\in\mathbb N$ be the minimal period of the corresponding
trajectories of $b$.

We first construct a parametrization of $\partial\Omega$
with a bound on $\partial_x b|_{\Sigma_\lambda^\pm}$
rather than on the derivative of the $n$-th iterate $\partial_x b^n|_{\Sigma_\lambda^\pm}$:
%%%%%%%%%%%%%%%%%%%%%%%%%%%%%%%%%%%%%%%%%%%%%%%%%%%%%%%%%%%%%%%%%%%%%%%%%%%%%%%%
\begin{lemm}
  \label{l:adapted-theta}
Let $ \Sigma_\lambda^\pm $ be given by~\eqref{e:Sigma-pm-def}. There exists a positively oriented
coordinate $\theta:\partial\Omega\to\mathbb S^1$ such that,
taking derivatives on $\partial\Omega$ with respect to~$\theta$,
\begin{equation}
  \label{e:adapted-theta}
\begin{aligned}
\partial_x b(x,\lambda)<1&\quad\text{for all}\quad x\in\Sigma^+_\lambda,\\
\partial_x b(x,\lambda)>1&\quad\text{for all}\quad x\in\Sigma^-_\lambda.
\end{aligned}
\end{equation}
\end{lemm}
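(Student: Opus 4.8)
The plan is to build the coordinate $\theta$ by modifying an arbitrary positively oriented parametrization near the periodic points, using the multiplicative cocycle structure of $\partial_x b^n$. First I would recall that, for $x\in\Sigma_\lambda^\pm$ with minimal period $n$, the chain rule gives
\[
\partial_x b^n(x,\lambda)=\prod_{k=0}^{n-1}\partial_x b\big(b^k(x,\lambda),\lambda\big),
\]
and that this product is a conjugation invariant: it is independent of the choice of coordinate on $\partial\Omega$ (a change of parametrization multiplies each factor $\partial_x b(b^k(x),\lambda)$ by $\mu'(b^{k+1}(x))/\mu'(b^k(x))$ for the transition function $\mu$, and the product telescopes). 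Hence the Morse--Smale hypothesis $\partial_x b^n<1$ on $\Sigma_\lambda^+$ and $>1$ on $\Sigma_\lambda^-$ is coordinate-free, whereas the individual derivatives $\partial_x b$ at the points of a cycle are not; the goal is to choose coordinates that spread the contraction/expansion evenly along each cycle.

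The key step is the following construction. Fix any positively oriented coordinate $\theta_0$ and work with the finitely many points of $\Sigma_\lambda=\Sigma_\lambda^+\sqcup\Sigma_\lambda^-$ (finite by Lemma~\ref{l:global-dynamics}). For a single cycle $\{x_0,x_1,\dots,x_{n-1}\}$ with $x_{k+1}=b(x_k)$, let $a_k:=\partial_x b(x_k,\lambda)$ computed in the coordinate $\theta_0$, so $\prod_k a_k=\partial_x b^n(x_0)=:A$, with $A<1$ for an attractive cycle and $A>1$ for a repulsive one. I want a new coordinate in which the derivative at $x_k$ becomes $A^{1/n}$ for all $k$; then $A^{1/n}<1$ resp. $>1$ gives exactly~\eqref{e:adapted-theta}. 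Writing the new coordinate as $\theta=\mu\circ\theta_0$ near the cycle with $\mu$ a local diffeomorphism and $c_k:=\mu'(\theta_0(x_k))>0$, the transformed derivative at $x_k$ is $a_k c_{k+1}/c_k$, so I need $a_k c_{k+1}/c_k=A^{1/n}$, i.e. $c_{k+1}/c_k=A^{1/n}/a_k$. This recursion determines $c_1,\dots,c_{n-1}$ from $c_0$ (set $c_0:=1$, say), and the cocycle condition $\prod_k(A^{1/n}/a_k)=A/A=1$ guarantees consistency around the cycle, i.e. $c_n=c_0$. One then chooses $\mu$ to be any smooth orientation-preserving function defined near the finite set $\Sigma_\lambda$ that has the prescribed values and prescribed first derivatives $c_k$ at the points $\theta_0(x_k)$ (interpolation at finitely many points, possible since the points are distinct); extend $\mu$ to a global positively oriented coordinate change on $\mathbb S^1$ by a partition of unity, being orientation preserving, which is possible since we only impose the normalization at isolated points.

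I expect the main technical point — though it is more bookkeeping than a genuine obstacle — to be verifying that one can simultaneously arrange the derivative condition at \emph{every} periodic point of \emph{every} cycle while keeping $\theta$ a single globally defined smooth positively oriented coordinate on $\partial\Omega\cong\mathbb S^1$. Since $\Sigma_\lambda$ is finite, the constraints are at finitely many distinct points and concern only zeroth and first order jets of the transition function, so a standard partition-of-unity interpolation produces $\mu$; positivity of $\mu'$ can be maintained because all the prescribed values $c_k$ are positive and one only needs $\mu'>0$ globally, which is an open condition easily enforced away from the constraint points. A minor subtlety worth noting explicitly is the choice of the $n$-th root $A^{1/n}$: since $A>0$, the positive real root is the natural choice and it lies on the correct side of $1$. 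I would close by remarking that the resulting coordinate is not canonical but any such $\theta$ will serve for the escape function construction in \S\ref{s:abo}, and that by Lemma~\ref{l:perturb-MS} one may even choose $\theta$ locally uniformly in $\lambda$ when $\lambda$ ranges over a compact subset of the Morse--Smale set, which is what the later sections will use.
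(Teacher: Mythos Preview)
Your argument is correct and complete. You take a genuinely different route from the paper, so let me contrast the two.

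The paper uses an averaged (adapted) metric: starting from any metric $g_0$ on $\partial\Omega$, it sets
\[
|v|_{g(x)}:=\sum_{j=0}^{n-1}\big|\partial_x b^j(x)\,v\big|_{g_0(b^j(x))},
\]
so that a one-line telescoping computation gives $|\partial_x b(x)v|_{g(b(x))}-|v|_{g(x)}=|\partial_x b^n(x)v|_{g_0(b^n(x))}-|v|_{g_0(x)}$, and then chooses $\theta$ so that $|\partial_\theta|_g$ is constant. This is the standard Mather/adapted-metric trick from hyperbolic dynamics: it is global from the outset, needs no reference to the individual periodic points, and is manifestly smooth in~$\lambda$ because $b(\bullet,\lambda)$ and $n$ are. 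Your approach instead solves the problem at the level of first jets on the finite set $\Sigma_\lambda$, using the coboundary equation $c_{k+1}/c_k=A^{1/n}/a_k$ to equalize the derivative along each cycle to the geometric mean $A^{1/n}$, and then interpolates to a global orientation-preserving diffeomorphism. Both are short; the averaged-metric argument buys you globality and $\lambda$-smoothness for free, while your construction is more explicit about what the new coordinate actually does at the periodic points (it literally makes $\partial_x b\equiv A^{1/n}$ there, not merely $<1$ or $>1$). The one place where your write-up could be tightened is the global extension step: on $\mathbb S^1$ a diffeomorphism must satisfy $\int_{\mathbb S^1}\mu'\,d\theta_0=1$, so after prescribing $\mu'$ near the finitely many periodic points you should note that you still have enough freedom away from them to meet this normalization while keeping $\mu'>0$; this is routine but worth one sentence.
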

%%%%%%%%%%%%%%%%%%%%%%%%%%%%%%%%%%%%%%%%%%%%%%%%%%%%%%%%%%%%%%%%%%%%%%%%%%%%%%%%
\begin{proof}
Fix any Riemannian metric $g_0$ on $\partial\Omega$ and consider the 
metric $g$ on~$\partial\Omega$ given by
$$
|v|_{g(x)}:=\sum_{j=0}^{n-1} |\partial_x b^j(x)v|_{g_0(b^j(x))}\quad\text{for all}\quad
(x,v)\in T(\partial\Omega).
$$
We have for all $(x,v)\in T(\partial\Omega)$
$$
|\partial_x b(x)v|_{g(b(x))}-|v|_{g(x)}=|\partial_x b^n(x)v|_{g_0(b^n(x))}-|v|_{g_0(x)}.
$$
Thus by~\eqref{e:Sigma-pm-def} we have for $v\neq 0$
$$
\begin{aligned}
|\partial_x b(x)v|_{g(b(x))} < |v|_{g(x)}&\quad\text{when}\quad x\in \Sigma^+_\lambda;\\
|\partial_x b(x)v|_{g(b(x))} > |v|_{g(x)}&\quad\text{when}\quad x\in \Sigma^-_\lambda.\end{aligned}
$$
It remains to choose the coordinate $\theta$ so that
$|\partial_\theta|_{g}$ is constant.
\end{proof}
%%%%%%%%%%%%%%%%%%%%%%%%%%%%%%%%%%%%%%%%%%%%%%%%%%%%%%%%%%%%%%%%%%%%%%%%%%%%%%%%
We next use the global dynamics of~$b(\bullet,\lambda)$ described in Lemma~\ref{l:global-dynamics} to construct an \emph{escape function} in Lemma~\ref{l:escape-function} below. Fix a parametrization on $\partial\Omega$ which satisfies~\eqref{e:adapted-theta}
and denote by
$$
\Sigma_\lambda^\pm(\delta)\ \subset\ \partial\Omega
$$
the open $\delta$-neighborhoods of the sets~$\Sigma_\lambda^\pm$ with respect to this parametrization.
Here $\delta>0$ is a constant small enough
so that the closures $\overline{\Sigma_\lambda^+(\delta)}$ and $\overline{\Sigma_\lambda^-(\delta)}$
do not intersect each other. We also choose $\delta$ small enough so that
\begin{equation}
  \label{e:neighborhoods-shrink}
b\big(\overline{\Sigma_\lambda^+(\delta)}\big)\subset \Sigma_\lambda^+(\delta),\quad
b^{-1}\big(\overline{\Sigma_\lambda^-(\delta)}\big)\subset \Sigma_\lambda^-(\delta);
\end{equation}
this is possible by~\eqref{e:adapted-theta} and since $\Sigma_\lambda^\pm$ are
$b$-invariant.
%%%%%%%%%%%%%%%%%%%%%%%%%%%%%%%%%%%%%%%%%%%%%%%%%%%%%%%%%%%%%%%%%%%%%%%%%%%%%%%%
\begin{lemm}
\label{l:escape-function}
Let $\alpha_+<\alpha_-$ be two real numbers. Then there exists
a function $g\in C^\infty(\partial\Omega;\mathbb R)$ such that:
\begin{enumerate}
\item $g(b(x))\leq g(x)$ for all $x\in \partial\Omega$;
\item $g(b(x))<g(x)$ for all $x\in\partial\Omega\setminus (\Sigma_\lambda^+(\delta)\cup\Sigma_\lambda^-(\delta))$;
\item $g(x)\geq \alpha_+$ for all $x\in\partial\Omega$;
\item $g(x)\geq \alpha_-$ for all $x\in\partial\Omega\setminus\Sigma_\lambda^+(\delta)$;
\item $g=\alpha_+$ on some neighbourhood of~$\Sigma_\lambda^+$;
\item for $ M\gg 1 $, 
 $ M ( g ( b( x ) )  - g (  x) ) + g ( x )  \leq   \alpha_+  $ for all 
$ x \in \partial \Omega \setminus \Sigma^-_\lambda  ( \delta ) $.
\end{enumerate}
See Figure~\ref{f:escape}.
\end{lemm}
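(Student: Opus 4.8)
\emph{Plan.} The plan is to first construct an escape function for the map $b^n$ (which fixes $\Sigma_\lambda$ pointwise) and then transfer it to $b$ by averaging over $b^0,\dots,b^{n-1}$. All derivatives on $\partial\Omega$ are taken with respect to the adapted coordinate of Lemma~\ref{l:adapted-theta}, and $\delta$, $\Sigma^\pm_\lambda(\delta)$ are as fixed before the statement; $n$ denotes the minimal period. Since the fixed points of $b^n$ are finite in number and hyperbolic (Definition~\ref{d:2}), and on each component of $\partial\Omega\setminus\Sigma_\lambda$ the orientation preserving map $b^n$ is monotone with no interior fixed point, the points of $\Sigma_\lambda$ are, around $\partial\Omega$, alternately attracting (those in $\Sigma^+_\lambda$) and repelling (those in $\Sigma^-_\lambda$), and $b^n$ flows strictly from the repelling to the attracting endpoint of each component arc. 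Fix $\delta_0\in(0,\delta)$ small enough that $\partial_x b<1$ and $\partial_x b^n<1$ on $\Sigma^+_\lambda(\delta_0)$; then by~\eqref{e:adapted-theta} both $b$ and $b^n$ strictly contract the distance to $\Sigma^+_\lambda$ on $N_0:=\Sigma^+_\lambda(\delta_0)$, so $b^j(\overline{N_0})\subset N_0$ for all $j\ge 0$, and by Lemma~\ref{l:global-dynamics} together with~\eqref{e:neighborhoods-shrink} the $b^n$--orbit of every $x\in\partial\Omega\setminus\Sigma^-_\lambda(\delta)$ enters $N_0$ within a number of steps bounded uniformly by some $K<\infty$ (compactness).

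\emph{Construction of $G$.} I would take $G:=\alpha_++v\,\widetilde F$ with $v>0$ large (fixed at the end) and $\widetilde F\in C^\infty(\partial\Omega;[0,1])$ of the following shape: $\widetilde F\equiv 0$ on $N_0$; on a neighborhood $\Sigma^+_\lambda(\delta_1)$ of $\Sigma^+_\lambda$ (with $\delta_0<\delta_1<\delta$ still inside the contraction region) one sets $\widetilde F(x)=\widetilde\psi(\operatorname{dist}(x,\Sigma^+_\lambda))$, where $\widetilde\psi\ge 0$ is smooth, even, vanishes on $[-\delta_0,\delta_0]$ ($\widetilde\psi$ even and flat at $0$ makes $\widetilde F$ smooth at $\Sigma^+_\lambda$), is increasing for $r>\delta_0$, and obeys $\widetilde\psi(\rho r)\le(1-c_0)\widetilde\psi(r)$ for some $c_0>0$, where $\rho:=\sup_{N_0}\partial_x b^n<1$ — such $\widetilde\psi$ exists, e.g.\ $\widetilde\psi(r)=\exp\!\big(C-C/(r-\delta_0)\big)$ for $r$ slightly above $\delta_0$, suitably truncated; on the rest of $\partial\Omega$ one takes $\widetilde F$ monotone along each component arc of $\partial\Omega\setminus\Sigma_\lambda$, increasing from the $\Sigma^+_\lambda$--endpoint towards the $\Sigma^-_\lambda$--endpoint, equal to $1$ on a small closed neighborhood of $\Sigma^-_\lambda$ inside $\Sigma^-_\lambda(\delta)$, strictly increasing off that neighborhood and off $N_0$, and matched $C^\infty$ to the profile near $\Sigma^+_\lambda$ (an interpolation between prescribed jets, hence available). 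By construction $\widetilde F\circ b^n\le\widetilde F$ with equality only on $N_0$ and on $\{\widetilde F=1\}$; on the compact complement $\mathcal K$ of fixed small neighborhoods of $\Sigma^+_\lambda$ and $\Sigma^-_\lambda$ one gets $\widetilde F\circ b^n-\widetilde F\le -c_1<0$; on $\Sigma^+_\lambda(\delta_1)$ the inequality $\operatorname{dist}(b^nx,\Sigma^+_\lambda)\le\rho\operatorname{dist}(x,\Sigma^+_\lambda)$ with $\widetilde\psi(\rho r)\le(1-c_0)\widetilde\psi(r)$ yields $\widetilde F\circ b^n\le(1-c_0)\widetilde F$, and $b^j(\Sigma^+_\lambda(\delta_1))\subset\Sigma^+_\lambda(\delta_1)$ with $\widetilde\psi$ increasing yields $\widetilde F\circ b^j\le\widetilde F$ there for $0\le j\le n-1$; finally $\widetilde F\ge\widetilde F_1>0$ off $\Sigma^+_\lambda(\delta)$ (the minimum over each arc is attained at its endpoints), so fixing $v\ge n(\alpha_--\alpha_+)/\widetilde F_1$ gives $G\ge\alpha_-$ off $\Sigma^+_\lambda(\delta)$ and $G\ge\alpha_+$ throughout.

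\emph{Averaging and verification.} Set $g:=\tfrac1n\sum_{j=0}^{n-1}G\circ b^j\in C^\infty(\partial\Omega;\mathbb R)$, so that $g\circ b-g=\tfrac1n(G\circ b^n-G)=\tfrac vn(\widetilde F\circ b^n-\widetilde F)$. Then~(1)--(2) follow from monotonicity and strict bulk decrease of $\widetilde F$ along $b^n$; ~(3) from $G\ge\alpha_+$; ~(4) from $g-\alpha_+\ge\tfrac vn\widetilde F\ge\tfrac vn\widetilde F_1\ge\alpha_--\alpha_+$ off $\Sigma^+_\lambda(\delta)$; and~(5) because on the neighborhood $\bigcap_{j<n}b^{-j}(N_0)$ of $\Sigma^+_\lambda$ every summand of $g$ equals $\alpha_+$. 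For~(6), on $\partial\Omega\setminus\Sigma^-_\lambda(\delta)$ one splits into two regions: on the part meeting $\mathcal K$ one has $g\circ b-g\le-\tfrac{vc_1}{n}<0$ while $0\le g-\alpha_+\le v$, so $M(g\circ b-g)+(g-\alpha_+)\le 0$ once $M\ge n/c_1$; on $\Sigma^+_\lambda(\delta_1)$, where $\widetilde F\circ b^j\le\widetilde F$ for $j<n$ gives $g-\alpha_+\le v\widetilde F$, one gets $g-g\circ b=\tfrac vn(\widetilde F-\widetilde F\circ b^n)\ge\tfrac{vc_0}{n}\widetilde F\ge\tfrac{c_0}{n}(g-\alpha_+)$, hence $M(g-g\circ b)\ge g-\alpha_+$ once $M\ge n/c_0$; these regions cover $\partial\Omega\setminus\Sigma^-_\lambda(\delta)$. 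The crux is the compatibility of~(5) and~(6): ~(5) forces $g-\alpha_+$ to vanish to infinite order near $\Sigma^+_\lambda$, which rules out the naive quadratic profile, and it is precisely the exponentially flat $\widetilde\psi$ — together with the uniform bound on the $b^n$--return time to $N_0$ away from $\Sigma^-_\lambda$ — that produces the geometric decay $\widetilde F\circ b^n\le(1-c_0)\widetilde F$ demanded by~(6); the remaining subtlety is that the averaging transfers~(6) from $b^n$ to $b$ only because $N_0$ can be taken $b$--invariant, using the contraction in~\eqref{e:adapted-theta}.
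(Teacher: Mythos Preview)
Your argument is essentially correct, but it takes a genuinely different route from the paper. You build a Lyapunov function $\widetilde F$ for the period map $b^n$ by hand (with a carefully engineered exponentially flat profile $\widetilde\psi$ near $\Sigma_\lambda^+$ to make~(5) compatible with~(6)), then average over one period to transfer the properties to~$b$. The paper instead averages a \emph{simple} cutoff $\chi_+\in\CIc(\Sigma_\lambda^+(\delta);[0,1])$, equal to~$1$ on~$\Sigma_\lambda^+(\delta_1)$, over $N$ iterates of~$b$ itself, where $N$ is chosen so that $b^N(\partial\Omega\setminus\Sigma_\lambda^-(\delta))\subset\Sigma_\lambda^+(\delta_1)$. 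The telescoping identity $\tilde g(b(x))-\tilde g(x)=\tfrac1N(\chi_+(b^Nx)-\chi_+(x))$ then yields all six properties by elementary casework, with no need for a designed profile: the flat region near $\Sigma_\lambda^+$ appears automatically because every $b^j(x)$ lands in $\{\chi_+=1\}$ when $x$ is close enough to~$\Sigma_\lambda^+$, and the quantitative decrease $g(b(x))-g(x)=\alpha_+-\alpha_-$ on the bulk region comes directly from $\chi_+(b^Nx)=1$, $\chi_+(x)=0$ there. The paper's construction is shorter and more robust; your construction is more geometric and averages over fewer iterates, but the price is the handcrafted $\widetilde\psi$ and the monotone interpolation along the arcs.

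A few places in your write-up are looser than they should be. Your contraction ratio $\rho$ should be $\sup_{\Sigma_\lambda^+(\delta_1)}\partial_x b^n$, not $\sup_{N_0}\partial_x b^n$, since you invoke $\widetilde F\circ b^n\le(1-c_0)\widetilde F$ on all of $\Sigma_\lambda^+(\delta_1)$. The set $\mathcal K$ in your verification of~(6) needs to be specified as $\partial\Omega\setminus(\Sigma_\lambda^+(\delta_1)\cup\Sigma_\lambda^-(\delta))$ (or with the neighborhoods chosen so that $\mathcal K\cup\Sigma_\lambda^+(\delta_1)\supset\partial\Omega\setminus\Sigma_\lambda^-(\delta)$ and the uniform gap~$c_1$ really holds on $\mathcal K$); as written, ``small neighborhoods'' is ambiguous and the coverage claim depends on the choice. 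Finally, the uniform return time~$K$ in your Plan paragraph is never used. None of these affect the correctness of the strategy.
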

%%%%%%%%%%%%%%%%%%%%%%%%%%%%%%%%%%%%%%%%%%%%%%%%%%%%%%%%%%%%%%%%%%%%%%%%%%%%%%%%
\begin{figure}
\includegraphics{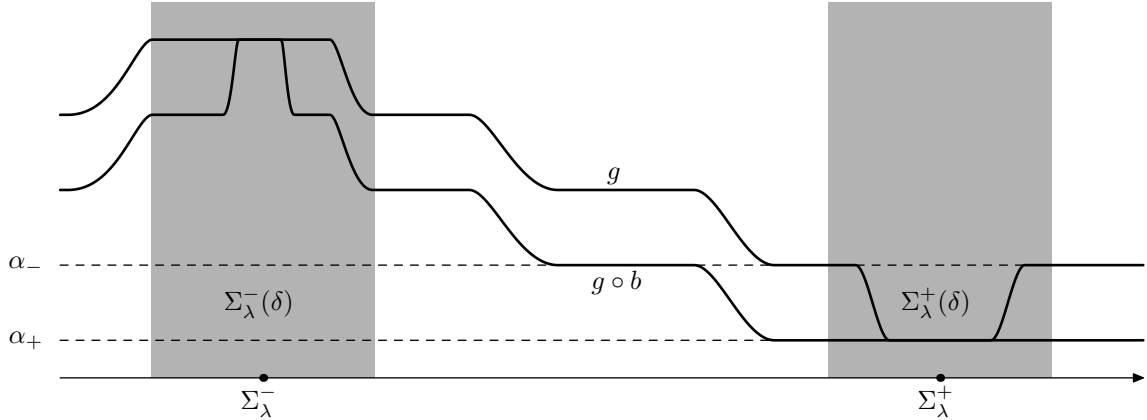}
\caption{The escape function $g$ constructed in Lemma~\ref{l:escape-function}
and the function $g\circ b$, where for simplicity we replace $\Sigma^\pm_\lambda$
by fixed points of the map $b$. The shaded regions correspond to
$\Sigma^\pm_\lambda(\delta)$ and the dashed lines correspond to $\alpha_\pm$.}
\label{f:escape}
\end{figure}
%%%%%%%%%%%%%%%%%%%%%%%%%%%%%%%%%%%%%%%%%%%%%%%%%%%%%%%%%%%%%%%%%%%%%%%%%%%%%%%%
\Remark We note that the same construction works for $ b^{-1} $ with 
the roles of $ \Sigma^\pm_\lambda $ reversed.
 Hence for any real numbers $ \alpha_- < \alpha_+ $
we can find $ g \in C^\infty ( \partial \Omega ; \mathbb R ) $ such that
\begin{enumerate}
\item $g(x) \leq g(b(x))$ for all $x\in \partial\Omega$;
\item $g(x)<g(b(x))$ for all $x\in\partial\Omega\setminus (\Sigma_\lambda^+(\delta)\cup\Sigma_\lambda^-(\delta))$;
\item $g(x)\geq \alpha_-$ for all $x\in\partial\Omega$;
\item $g(x)\geq \alpha_+$ for all $x\in\partial\Omega\setminus\Sigma_\lambda^-(\delta)$;
\item $g=\alpha_-$ on some neighbourhood of~$\Sigma_\lambda^-$; 
\item for $ M\gg 1 $, 
 $ M ( g ( x ) - g ( b ( x ) )) + g ( b(x) )  \leq   \alpha_- $ for all 
$ x \in \partial \Omega \setminus \Sigma^+_\lambda ( \delta ) $.
\end{enumerate}
%%%%%%%%%%%%%%%%%%%%%%%%%%%%%%%%%%%%%%%%%%%%%%%%%%%%%%%%%%%%%%%%%%%%%%%%%%%%%%%%
\begin{proof}
In view of \eqref{e:neighborhoods-shrink} there exists
$ 0 <  \delta_1 < \delta $ such that 
\begin{equation}
\label{eq:defd1}  b ( \Sigma_\lambda^+ ( \delta ) ) \subset \Sigma_\lambda^+ ( 
\delta_1 ) .
\end{equation}

\noindent 1. We first show that there exists $N\geq 0$ such that
\begin{equation}
  \label{e:more-contraction}
b^N\big(\partial\Omega\setminus\Sigma_\lambda^-(\delta)\big)\subset
\Sigma_\lambda^+(\delta_1).
\end{equation}
We argue by contradiction. Assume that~\eqref{e:more-contraction}
does not hold for any~$N$. Then there exist sequences
\begin{equation}
  \label{e:morecon-con}
x_j\in \partial\Omega\setminus\Sigma_\lambda^-(\delta),\quad
m_j\to\infty,\quad
b^{m_j}(x_j)\not\in \Sigma_\lambda^+(\delta_1).
\end{equation}
Passing to subsequences, we may assume that
$x_j\to x_\infty$ for some $x_\infty\in\partial\Omega$.
Since $x_j\not\in\Sigma_\lambda^-(\delta)$, we have $x_\infty\not\in\Sigma_\lambda^-(\delta)$
as well. Then by~\eqref{e:neighborhoods-shrink} the trajectory
$b^k(x_\infty)$, $k\geq 0$, does not intersect $\Sigma_\lambda^-(\delta)$.
On the other hand, by Lemma~\ref{l:global-dynamics}
this trajectory converges to $\Sigma_\lambda=\Sigma_\lambda^-\sqcup\Sigma_\lambda^+$
as $k\to\infty$. Thus this trajectory converges to $\Sigma_\lambda^+$, in particular
$$
\text{there exists}\quad k\geq 0\quad\text{such that}\quad
b^k(x_\infty)\in\Sigma_\lambda^+(\delta_1).
$$
Since $x_j\to x_\infty$, we have $b^k(x_j)\to b^k(x_\infty)$ as $j\to\infty$.
Since $\Sigma_\lambda^+(\delta_1)$ is an open set, there exists $j\geq 0$
such that $m_j\geq k$ and $b^k(x_j)\in \Sigma_\lambda^+(\delta_1)$.
But then by~\eqref{eq:defd1} we have
$b^{m_j}(x_j)\in\Sigma_\lambda^+(\delta_1)$ which contradicts~\eqref{e:morecon-con}.

\noindent 2. Choose $N$ such that~\eqref{e:more-contraction} holds and fix
a cutoff function
$$
\chi_+\in \CIc(\Sigma_\lambda^+(\delta);[0,1]),\quad
\chi_+=1\quad\text{on}\quad \Sigma_\lambda^+(\delta_1).
$$
Define the function $\tilde g\in C^\infty(\partial\Omega;\mathbb R)$ 
as an ergodic average of~$\chi_+$:
$$
\tilde g(x):={1\over N}\sum_{j=0}^{N-1} \chi_+(b^j(x))\quad\text{for all}\quad
x\in\partial\Omega.
$$
It follows from the definition and~\eqref{eq:defd1} that
\begin{equation}
  \label{e:tilde-g-works-1}
\begin{aligned}
0\leq \tilde g(x)\leq 1&\quad\text{for all}\quad x\in\partial\Omega,\\
\tilde g(x)=1&\quad\text{for all}\quad x\in \Sigma_\lambda^+(\delta_1),\\
\tilde g(x)\leq 1-\textstyle{1\over N}&\quad\text{for all}\quad x\in\partial\Omega\setminus\Sigma_\lambda^+(\delta).
\end{aligned}
\end{equation}
Next, we compute
$$
\tilde g(b(x))-\tilde g(x)=\textstyle{1\over N}\big(\chi_+(b^N(x))-\chi_+(x)\big).
$$
It follows that
\begin{equation}
  \label{e:tilde-g-works-2}
\begin{aligned}
\tilde g(b(x))\geq \tilde g(x)&\quad\text{for all}\quad x\in\partial\Omega,\\
\tilde g(b(x))=\tilde g(x)+\textstyle{1\over N}&\quad\text{for all}\quad x\in\partial\Omega\setminus (\Sigma^+_\lambda(\delta)\cup\Sigma^-_\lambda(\delta)).
\end{aligned}
\end{equation}
Indeed, take arbitrary $x\in\partial\Omega$. We have $\chi_+(x)=0$ unless
$x\in\Sigma^+_\lambda(\delta)$. By~\eqref{e:more-contraction},
we have $\chi_+(b^N(x))=1$ unless $x\in\Sigma^-_\lambda(\delta)$.
Recalling that $0\leq\chi_+\leq 1$ and $\Sigma^+_\lambda(\delta)\cap\Sigma^-_\lambda(\delta)=\emptyset$,
we get~\eqref{e:tilde-g-works-2}.

\noindent 3. Now put
\begin{equation}
\label{eq:defg}
g(x):=N\alpha_--(N-1)\alpha_+ - N(\alpha_--\alpha_+) \tilde g(x).
\end{equation}
Using~\eqref{e:tilde-g-works-1} and~\eqref{e:tilde-g-works-2}, we see
that the function~$g$ satisfies the first five properties, 
with the following quantitative versions of parts~(2) and~(5):
\begin{equation}
\label{eq:quant}
\begin{aligned}
g(b(x))-g(x)=\alpha_+-\alpha_-<0&\quad\text{for all}\quad
x\in \partial\Omega\setminus(\Sigma^+_\lambda(\delta)\cup\Sigma^-_\lambda(\delta)),\\
g(x)=\alpha_+&\quad\text{for all}\quad x\in \Sigma^+_\lambda(\delta_1).
\end{aligned}
\end{equation} 
To prove part~(6) 
we first use \eqref{eq:defg} and \eqref{eq:quant} to see that
for all $M\geq N$ and $ x \in \partial \Omega \setminus 
( \Sigma_\lambda^+ ( \delta ) \cup \Sigma_\lambda^- ( \delta )) $, 
\begin{equation}
\label{eq:Mgb}
   M \big( g ( b( x )) - g ( x ) \big) + g ( x ) \leq \alpha_+ . 
\end{equation}
To establish \eqref{eq:Mgb} for $ x \in \Sigma_\lambda^+ ( \delta ) $ we use 
\eqref{eq:defd1} and the fact that $ g |_{ \Sigma_\lambda^+ ( \delta_1 ) } = \alpha_+$ by~\eqref{eq:quant}.
Then, for $ M \geq 1 $ and $ x \in \Sigma^+_\lambda( \delta )  $,  property (1) gives
\[
M \big( g ( b( x )) - g ( x ) \big) + g ( x ) 
\leq g( b ( x ) ) = \alpha_+ ,
\]
which completes the proof of the lemma.
\end{proof}
%%%%%%%%%%%%%%%%%%%%%%%%%%%%%%%%%%%%%%%%%%%%%%%%%%%%%%%%%%%%%%%%%%%%%%%%%%%%%%%%
\Remark
We discuss here the dependence
of the objects in this section on the parameter $\lambda$.
The parametrization~$\theta$ constructed in Lemma~\ref{l:adapted-theta} depends
smoothly on $\lambda$ as follows immediately from its construction
(recalling from the proof of Lemma~\ref{l:perturb-MS} that the period~$n$
is locally constant in~$\lambda$). Next, for each $\lambda_0\in (0,1)$
satisfying the Morse--Smale conditions there exists a neighborhood
$U(\lambda_0)$ such that we can construct a function~$g(x,\lambda)$ for each
$\lambda\in U(\lambda_0)$ satisfying the conclusions of Lemma~\ref{l:escape-function}
in such a way that it is smooth in~$\lambda$. 
Indeed,
the sets $\Sigma^\pm_\lambda$ depend smoothly on~$\lambda$ by Lemma~\ref{l:perturb-MS}, so the cutoff function $\chi_+$ can
be chosen $\lambda$-independent. The function $g(x,\lambda)$
is constructed explicitly using this cutoff, the map
$b(\bullet,\lambda)$, and the number~$N$. The latter can be chosen
$\lambda$-independent as well: if~\eqref{e:more-contraction} holds
for some $\lambda$, then it holds with the same $N$ and all nearby~$\lambda$.

%%%%%%%%%%%%%%%%%%%%%%%%%%%%%%%%%%%%%%%%%%%%%%%%%%%%%%%%%%%%%%%%%%%%%%%%%%%%%%%%
\subsection{Domains with corners}
\label{s:domc}

We now discuss the case when the boundary of $\partial\Omega$
has corners. This includes the situation when $\partial\Omega$
is a convex polygon, which is the setting of the experiments.
Our results do not apply to such domains, however they apply to appropriate
`roundings' of these domains described below.

We first define domains with corners. Let $\Omega\subset\mathbb R^2$
be an open set of the form
$$
\Omega=\{x\in\mathbb R^2\mid F_1(x)> 0,\dots,F_k(x)> 0\}
$$
where $F_1,\dots,F_k:\mathbb R^2\to \mathbb R$ are $C^\infty$ functions such that:
\begin{enumerate}
\item the set $\overline\Omega:=\{F_1\geq 0,\dots,F_k\geq 0\}$
is compact and simply connected, and
\item for each $x\in\overline\Omega$, at most 2 of the functions
$F_1,\dots,F_k$ vanish at~$x$.
\end{enumerate}
If only one of the functions $F_1,\dots,F_k$ vanishes at
$x\in\overline\Omega$, then we call
$x$ a \emph{regular point} of the boundary~$\partial\Omega:=\overline\Omega\setminus\Omega$.
If two of the functions $F_1,\dots,F_k$ vanish at $x\in\overline\Omega$,
then we call $x$ a \emph{corner} of $\Omega$. We make the following natural
nondegeneracy assumptions:
\begin{enumerate}
\setcounter{enumi}{2}
\item if $x\in\partial\Omega$ is a regular point and $F_j(x)=0$,
then $dF_j(x)\neq 0$;
\item if $x\in\partial\Omega$ is a corner and $F_j(x)=F_{j'}(x)=0$
where $j\neq j'$, then $dF_j(x),dF_{j'}(x)$ are linearly independent.
\end{enumerate}
We call $\Omega$ a \emph{domain with corners} if it satisfies the assumptions~(1)--(4) above.

Since $\Omega$ is simply connected, the boundary $\partial\Omega$ is a Lipschitz continuous
piecewise smooth curve. We parametrize $\partial\Omega$ in the positively oriented direction
by a Lipschitz continuous map
\begin{equation}
  \label{e:corner-par}
\theta\in\mathbb S^1:=\mathbb R/\mathbb Z\quad\mapsto\quad
\mathbf x(\theta)\in\partial\Omega\ \subset\ \mathbb R^2
\end{equation}
where the corners are given by $\mathbf x(\theta_j)$ for some $\theta_1<\dots<\theta_m$
and the map~\eqref{e:corner-par} is smooth on each interval
$[\theta_j,\theta_{j+1}]$. See Figure~\ref{f:corners}.
%%%%%%%%%%%%%%%%%%%%%%%%%%%%%%%%%%%%%%%%%%%%%%%%%%%%%%%%%%%%%%%%%%%%%%%%%%%%%%%%
\begin{figure}
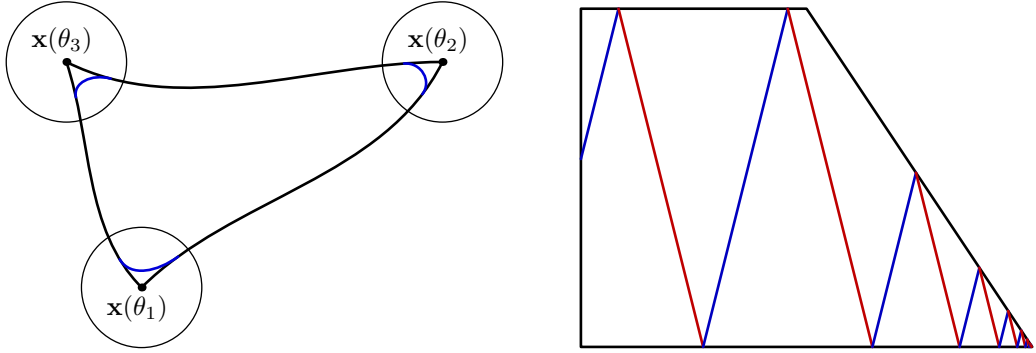

\includegraphics{bflop.3}
\qquad
\includegraphics{bflop.4}
\caption{Left: a domain with corners and its $\varepsilon$-rounding (in blue). The circles
have radius~$\varepsilon$.
Right: a trajectory on a trapezium which converges to a corner.}
\label{f:corners}
\end{figure}
%%%%%%%%%%%%%%%%%%%%%%%%%%%%%%%%%%%%%%%%%%%%%%%%%%%%%%%%%%%%%%%%%%%%%%%%%%%%%%%%

We next extend the concept of $\lambda$-simplicity to domains with corners.
Let $\ell\in C^\infty(\mathbb R^2;\mathbb R)$ and $x=\mathbf x(\theta_j)$ be a corner of~$\Omega$.
Consider the one-sided derivatives $\partial_\theta (\ell\circ \mathbf x)(\theta_j\pm 0)$.
There are three possible cases:
%%%%%%%%%%%%%%%%%%%%%%%%%%%%%%%%%%%%%%%%%%%%%%%%%%%%%%%%%%%%%%%%%%%%%%%%%%%%%%%%
\begin{enumerate}
\item Both derivatives are nonzero and have the same sign~--
then we call $x$ \emph{not a critical point} of~$\ell$;
\item Both derivatives are nonzero and have opposite signs~--
then we call $x$ a \emph{nondegenerate critical point} of~$\ell$.
\item At least one of the derivatives is zero~--
then we call $x$ a \emph{degenerate critical point} of~$\ell$. 
\end{enumerate}
%%%%%%%%%%%%%%%%%%%%%%%%%%%%%%%%%%%%%%%%%%%%%%%%%%%%%%%%%%%%%%%%%%%%%%%%%%%%%%%%
If $x=\mathbf x(\theta)$ is instead a regular point of the boundary,
then we use the standard definition of critical points:
$x$ is a critical point of~$\ell$ if
$\partial_\theta (\ell\circ\mathbf x)(\theta)= 0$, and a critical point 
is nondegenerate if $\partial_\theta^2(\ell\circ\mathbf x)(\theta)\neq 0$.
With the above convention for critical points, we follow Definition~\ref{d:1}:
we say that a domain with corners~$\Omega$ is \emph{$\lambda$-simple}
if each of the functions $\ell^\pm(\bullet,\lambda)$ defined in~\eqref{eq:dual2}
has exactly 2 critical points on $\partial\Omega$, which are both nondegenerate.

If $\Omega$ is~$\lambda$-simple, then the involutions $\gamma^\pm(\bullet,\lambda):\partial\Omega\to\partial\Omega$
from~\eqref{e:gamma-pm-def} are well-defined and Lipschitz continuous.
Thus $b=\gamma^+\circ\gamma^-$ is an orientation preserving bi-Lipschitz
homeomorphism of $\partial\Omega$. We now revise the Morse--Smale conditions of Definition~\ref{d:2} as follows:
%%%%%%%%%%%%%%%%%%%%%%%%%%%%%%%%%%%%%%%%%%%%%%%%%%%%%%%%%%%%%%%%%%%%%%%%%%%%%%%%
\begin{defi}
\label{d:2-corners}
Let $\Omega$ be a domain with corners. We say that $\lambda\in (0,1)$ satisfies
the \emph{Morse--Smale conditions} if:
\begin{enumerate}
\item $\Omega$ is $\lambda$-simple;
\item the set $\Sigma_\lambda$ of periodic points of the map $b(\bullet,\lambda)$
is nonempty;
\item the set $\Sigma_\lambda$ does not contain any corners of~$\Omega$;
\item for each $x\in\Sigma_\lambda$, $\partial_x b^n(x,\lambda)\neq 1$
where $n$ is the minimal period.
\end{enumerate}
\end{defi}
%%%%%%%%%%%%%%%%%%%%%%%%%%%%%%%%%%%%%%%%%%%%%%%%%%%%%%%%%%%%%%%%%%%%%%%%%%%%%%%%
The new condition~(3) in Definition~\ref{d:2-corners} ensures that
$b$ is smooth near the $\gamma^\pm$-invariant set $\Sigma_\lambda$, so condition~(4) makes sense.
Without this condition we could have trajectories of $b$ converging to a corner,
see Figure~\ref{f:corners}.

We finally show that if $\Omega$ is a domain with corners satisfying
the Morse--Smale conditions of Definition~\ref{d:2-corners}
then an appropriate `rounding' of~$\Omega$ satisfies the Morse--Smale conditions
of Definition~\ref{d:2}:
%%%%%%%%%%%%%%%%%%%%%%%%%%%%%%%%%%%%%%%%%%%%%%%%%%%%%%%%%%%%%%%%%%%%%%%%%%%%%%%%
\begin{prop}
  \label{l:rounder}
Let $\Omega$ be a domain with corners and $\lambda\in (0,1)$ satisfy the Morse--Smale conditions for~$\Omega$. Then there exists $\varepsilon>0$ such that for any open simply connected
$\widehat\Omega\subset\mathbb R^2$ with $C^\infty$ boundary and such that:
\begin{itemize}
\item $\widehat\Omega$ is an \emph{$\varepsilon$-rounding} of $\Omega$
in the sense that for each $x\in\mathbb R^2$ which lies distance
$\geq\varepsilon$ from all the corners of $\Omega$, we have
$x\in\Omega\iff x\in\widehat\Omega$; and
\item the domain $\widehat\Omega$ is $\lambda$-simple in the sense of Definition~\ref{d:1},
\end{itemize}
the Morse--Smale conditions is satisfied for $\lambda$ and $\widehat\Omega$.
\end{prop}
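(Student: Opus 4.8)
The plan is to show that the chess billiard dynamics on the rounded domain $\widehat\Omega$ is a small perturbation of the dynamics on the original domain $\Omega$, uniformly away from the corners, and that the periodic orbits of $b$ for $\Omega$ all stay at a definite positive distance from the corners (by condition (3) of Definition~\ref{d:2-corners}). Since the relevant objects — the involutions $\gamma^\pm$, the map $b$, and the orbits $\Gamma^\pm_\lambda(\Sigma_\lambda)$ — live entirely in the region where $\Omega$ and $\widehat\Omega$ coincide once $\varepsilon$ is taken small enough, they (and their linearizations along periodic orbits) are literally unchanged, up to the issue that the definition of $\gamma^\pm$ involves the whole boundary curve. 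So the main content is a continuity/robustness argument for $\gamma^\pm$ and $b^n$ under the $C^0$ (indeed piecewise-$C^\infty$, $\varepsilon\to 0$) perturbation of $\partial\Omega$ near the corners.

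First I would fix $\lambda$ and recall that, by Definition~\ref{d:2-corners}, $\Omega$ is $\lambda$-simple, $\Sigma_\lambda\neq\emptyset$ and contains no corners, and all periodic points are hyperbolic. Let $n$ be the common minimal period and let $\rho>0$ be the distance from the (finite) set $\Sigma_\lambda\cup\gamma^+(\Sigma_\lambda)\cup\gamma^-(\Sigma_\lambda)$ — equivalently from the closed chess-billiard orbit $\Gamma_\lambda(\Sigma_\lambda)$ and its boundary vertices — to the set of corners of $\Omega$; note $\rho>0$ precisely because of condition (3). Next I would observe that the critical points $x^\pm_{\min}(\lambda),x^\pm_{\max}(\lambda)$ of $\ell^\pm$ on $\partial\Omega$ are, by definition of $\lambda$-simplicity, nondegenerate; for $\varepsilon$ small, an $\varepsilon$-rounding $\widehat\Omega$ which is also $\lambda$-simple has its own critical points $\widehat x^\pm_{\min},\widehat x^\pm_{\max}$, and I would check that these converge to those of $\Omega$ as $\varepsilon\to 0$ — either because a given critical point is a regular point of $\partial\Omega$ lying at distance $\geq\varepsilon$ from the corners, hence unchanged, or because one passes to a subsequence and uses that a corner critical point of $\Omega$ (in the sense of the trichotomy in \S\ref{s:domc}) perturbs to a nearby smooth critical point of $\widehat\Omega$. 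From this, and from the fact that $\ell^\pm$ extends smoothly to all of $\mathbb R^2$ and the level sets $\{\ell^\pm=\text{const}\}$ are straight lines, I would conclude that $\gamma^\pm(\bullet,\lambda)$ for $\widehat\Omega$ converges uniformly on $\partial\Omega\cap\partial\widehat\Omega$ to $\gamma^\pm(\bullet,\lambda)$ for $\Omega$ as $\varepsilon\to 0$, and that on the $(\rho/2)$-neighborhood of $\Gamma_\lambda(\Sigma_\lambda)$ the convergence is in $C^\infty$ (since there both boundaries coincide and the implicit-function-theorem formulas for $\gamma^\pm$ hold with uniformly bounded derivatives).

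Then I would run the following perturbation argument. On the (finitely many) closed $b$-orbits for $\Omega$, iterate $\gamma^-$ then $\gamma^+$ a total of $n$ times: since $\Sigma_\lambda$ and all intermediate points $\gamma^-(\Sigma_\lambda)$ lie in the region where $\Omega=\widehat\Omega$ with room to spare (a $(\rho/2)$-neighborhood), the map $b^n(\bullet,\lambda)$ for $\widehat\Omega$ is $C^1$-close to $b^n(\bullet,\lambda)$ for $\Omega$ in a fixed neighborhood $V$ of $\Sigma_\lambda$, for $\varepsilon$ small. Because $\partial_x b^n(x,\lambda)\neq 1$ at each $x\in\Sigma_\lambda$, a standard implicit-function / contraction-mapping argument (exactly as in the proof of Lemma~\ref{l:perturb-MS}) shows that for $\varepsilon$ small the equation $\widehat b^n(x,\lambda)=x$ has exactly the same number $m=\#\Sigma_\lambda$ of solutions in $V$, all hyperbolic, with $\partial_x\widehat b^n$ close to $\partial_x b^n$ — so conditions (1)--(3) of Definition~\ref{d:2} hold for $\widehat\Omega$ with these orbits. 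The one remaining point is to rule out \emph{extra} periodic points of $\widehat b$ outside $V$: here I would invoke Lemma~\ref{l:rotation-periodic} together with the rotation number. The rotation number $\mathbf r(\lambda)$ for $\Omega$ equals $q/n$; since $\widehat b(\bullet,\lambda)$ is $C^0$-close to $b(\bullet,\lambda)$ uniformly on $\partial\Omega\cap\partial\widehat\Omega$ and $\partial_\lambda b>0$ gives monotonicity, the rotation number of $\widehat b(\bullet,\lambda)$ is still $q/n$ for $\varepsilon$ small (the combinatorial structure of the orbits found in $V$ forces it, and a nearby value of $\lambda$ would change $\mathbf r$ strictly), so by Lemma~\ref{l:rotation-periodic} every periodic point of $\widehat b$ has minimal period $n$ and the action on each orbit is the shift by $q$; hence every periodic orbit is realized by the solutions found in $V$. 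This yields $\Sigma_\lambda(\widehat\Omega)\subset V$, all hyperbolic, completing the verification of Definition~\ref{d:2}.

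The main obstacle I anticipate is the behavior of $\gamma^\pm$ \emph{near the rounded corners}: a priori the rounding could create new near-critical behavior of $\ell^\pm$ on $\partial\widehat\Omega$, and one must be sure that $\gamma^\pm$ does not develop spurious dynamics there or fail to converge. The hypothesis that $\widehat\Omega$ is itself $\lambda$-simple is exactly what is imposed to sidestep this — it guarantees $\gamma^\pm$ are globally well-defined smooth involutions on $\partial\widehat\Omega$ — but one still has to argue that, as $\varepsilon\to0$, the chess billiard orbit of $\widehat b$ through any point cannot be "trapped" near a corner region for unboundedly many iterates (which is what could destroy the uniform rotation-number control). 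This is handled by the corner condition (3) for $\Omega$ plus Lemma~\ref{l:global-dynamics}: the limiting dynamics has all orbits converging to $\Sigma_\lambda$, which is bounded away from the corners, so for $\varepsilon$ small a compactness argument (as in Step~1 of the proof of Lemma~\ref{l:escape-function}) shows the same uniform attraction for $\widehat b$, precluding any corner-trapped orbit. Packaging this uniform attraction is the technically delicate step; everything else is a routine perturbation of the hyperbolic periodic set.
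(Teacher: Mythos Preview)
Your strategy is close to the paper's, but you overcomplicate the first half and leave a genuine gap in the second.

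\textbf{The simplification you miss.} Since the parametrizations of $\partial\Omega$ and $\partial\widehat\Omega$ can be taken to coincide outside $C\varepsilon$-intervals around the corners, and since the entire $\gamma^\pm$-orbit of $\Sigma_\lambda$ avoids the corners (condition~(3) of Definition~\ref{d:2-corners}), one has $\hat\gamma^\pm=\gamma^\pm$ and hence $\hat b=b$ \emph{exactly} in a fixed neighborhood of $\Sigma_\lambda$, not merely $C^1$-close. So $\Sigma_\lambda\subset\widehat\Sigma_\lambda$ with the same period~$n$ and $\partial_\theta\hat b^n=\partial_\theta b^n\neq 1$ on the nose; no implicit function theorem is needed, and the discussion of convergence of critical points of $\ell^\pm$ is unnecessary ($\lambda$-simplicity of $\widehat\Omega$ is assumed).

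\textbf{The gap.} Your rotation-number step does not yield what you claim. Knowing $\mathbf r(\hat b)=q/n$ tells you only that every periodic point of $\hat b$ has minimal period~$n$; it does \emph{not} force every periodic orbit to lie in your neighborhood~$V$ --- there could in principle be additional period-$n$ orbits elsewhere. Your proposed patch via ``uniform attraction to $\Sigma_\lambda$ for $\hat b$'' is essentially circular: showing that every $\hat b$-orbit converges to $\Sigma_\lambda$ already presupposes there are no other periodic orbits, and the structural-stability mechanism that would transfer attraction from $b$ to $\hat b$ requires global $C^1$-closeness, which fails near the rounded corners.

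\textbf{How the paper closes this.} You already observed that $\hat\gamma^\pm\to\gamma^\pm$ uniformly; the paper makes this quantitative. From the Lipschitz continuity of $\gamma^\pm$ and the monotonicity of $\gamma^\pm,\hat\gamma^\pm$ one gets $|\hat\gamma^\pm(\theta)-\gamma^\pm(\theta)|\leq C\varepsilon$ for all $\theta$, and iterating, $|\hat b^n(\theta)-b^n(\theta)|\leq C\varepsilon$ for all $\theta\in\mathbb S^1$. Since $b^n(\theta)-\theta$ is bounded away from zero on any compact set disjoint from $\Sigma_\lambda$, for $\varepsilon$ small $\hat b^n$ has no fixed points outside a neighborhood of $\Sigma_\lambda$, where we already know $\hat b^n=b^n$ exactly. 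This elementary $C^0$ estimate replaces both the rotation-number detour and the attraction argument, and avoids any delicate analysis near the corners.
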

%%%%%%%%%%%%%%%%%%%%%%%%%%%%%%%%%%%%%%%%%%%%%%%%%%%%%%%%%%%%%%%%%%%%%%%%%%%%%%%%
\begin{proof}
Fix a parametrization $\mathbf x(\theta)$ of~$\partial\Omega$ as in~\eqref{e:corner-par}.
Take a parametrization
$$
\theta\in \mathbb S^1\quad\mapsto\quad \hat{\mathbf x}(\theta)\in\partial\widehat\Omega
$$
which coincides with $\mathbf x(\theta)$ except $\varepsilon$-close
to the corners:
\begin{equation}
\label{e:rounder-int-0}
\hat{\mathbf x}(\theta)=\mathbf x(\theta)\quad\text{for all}\quad
\theta\notin\bigcup_{j=1}^m I_j(\varepsilon),\quad
I_j(\varepsilon):=[\theta_j-C\varepsilon,\theta_j+C\varepsilon].
\end{equation}
Here $C$ denotes a constant depending on $\Omega$ and the parametrization $\mathbf x(\theta)$,
but not on~$\widehat\Omega$ or~$\varepsilon$, whose precise value might change from place to place in the proof.

Denote by $\gamma^\pm,\hat\gamma^\pm$ the involutions~\eqref{e:gamma-pm-def}
corresponding to $\Omega,\widehat\Omega$, and consider them as homeomorphisms
of $\mathbb S^1$ using the parametrizations~$x,\hat x$. Then by~\eqref{e:rounder-int-0}
\begin{equation}
\label{e:rounder-int-1}
\gamma^\pm(\theta)=\hat\gamma^\pm(\theta)\quad\text{if}\quad
\theta,\gamma^\pm(\theta)\not\in \bigcup_{j=1}^m I_j(\varepsilon).
\end{equation}
Let $b=\gamma^+\circ\gamma^-$, $\hat b=\hat\gamma^+\circ\hat\gamma^-$
be the chess billiard maps of $\Omega,\widehat\Omega$
and $\Sigma_\lambda,\widehat\Sigma_\lambda$ be the corresponding sets
of periodic trajectories.
Choose $\varepsilon>0$ such that
the intervals $I_j(\varepsilon)$ do not intersect~$\Sigma_\lambda$;
this is possible since $\Sigma_\lambda$ does not contain any corners of~$\Omega$.
Since $\Sigma_\lambda$ is invariant under $\gamma^\pm$, we see
from~\eqref{e:rounder-int-1} that $b=\hat b$ in a neighborhood of $\Sigma_\lambda$
and thus $\Sigma_\lambda\subset\widehat\Sigma_\lambda$. That is, the periodic
points for the original domain $\Omega$ are also periodic points for the rounded domain $\widehat\Omega$, with the same period~$n$. It also follows that $\partial_x \hat b^n(x,\lambda)=\partial_x b^n(x,\lambda)\neq 1$ for all $x\in\Sigma_\lambda$.

It remains to show that $\widehat\Sigma_\lambda\subset\Sigma_\lambda$,
that is the rounding does not create any new periodic points for $\hat b$. Note that
all periodic points have the same period $n$,
and it is enough to show that
\begin{equation}
\label{e:rounder-int-2}
\hat b^n(\theta)\neq \theta\quad\text{for all}\quad \theta\in \bigcup_{j=1}^m I_j(\varepsilon).
\end{equation}
From~\eqref{e:rounder-int-1}, the monotonicity of $\gamma^\pm,\hat\gamma^\pm$, and the Lipschitz continuity of~$\gamma^\pm$ we have
$$
|\gamma^\pm(\theta)-\hat\gamma^\pm(\theta)|\leq C\varepsilon\quad\text{for all}\quad
\theta\in\mathbb S^1.
$$
Iterating this and using the Lipschitz continuity of~$\gamma^\pm$ again, we get
$$
|b^n(\theta)-\hat b^n(\theta)|\leq C\varepsilon\quad\text{for all}\quad\theta\in\mathbb S^1.
$$
Since $b^n(\theta_j)\neq\theta_j$ for all $j=1,\dots,m$,
taking $\varepsilon$ small enough we get~\eqref{e:rounder-int-2}, finishing the proof.
\end{proof}
%%%%%%%%%%%%%%%%%%%%%%%%%%%%%%%%%%%%%%%%%%%%%%%%%%%%%%%%%%%%%%%%%%%%%%%%%%%%%%%%

%%%%%%%%%%%%%%%%%%%%%%%%%%%%%%%%%%%%%%%%%%%%%%%%%%%%%%%%%%%%%%%%%%%%%%%%%%%%%%%%
\subsection{Examples of Morse--Smale chess billiards}
\label{s:exms}

Here we present two examples of Morse--Smale chess billiards.

%%%%%%%%%%%%%%%%%%%%%%%%%%%%%%%%%%%%%%%%%%%%%%%%%%%%%%%%%%%%%%%%%%%%%%%%%%%%%%%%
\begin{figure}[hb]
\includegraphics{bflop.8}
\quad
\includegraphics[scale=0.32]{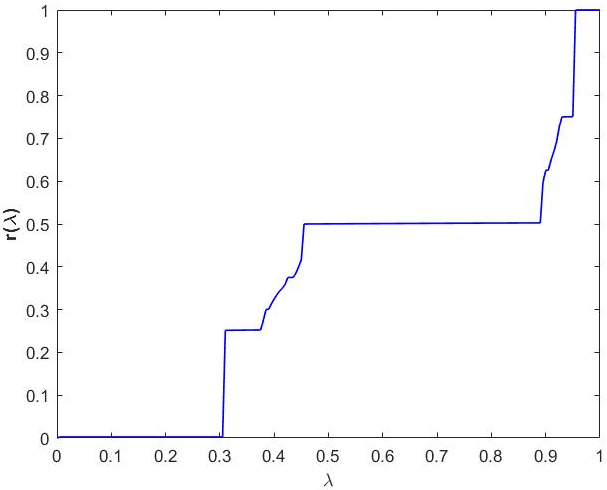}
\caption{Left: the chess billiard for $\Omega_{\alpha}$ in Example 1.
The numbers in bold
mark the values of the coordinate $\theta$ at the vertices.
Right: rotation numbers as functions of $ \lambda $ for $\Omega_{\pi/10}$.}
\label{f:examples-1}
\end{figure}
%%%%%%%%%%%%%%%%%%%%%%%%%%%%%%%%%%%%%%%%%%%%%%%%%%%%%%%%%%%%%%%%%%%%%%%%%%%%%%%%

\noindent
\textbf{Example 1.}
For $\alpha\in (0,\frac{\pi}{2})$, let $\Omega_{\alpha}\subset \mathbb R^2$ be the open square with vertices 
$(0,0)$, $(\cos{\alpha}, \sin{\alpha})$, $\sqrt 2(\cos(\alpha+\frac{\pi}{4}), \sin(\alpha+\frac{\pi}{4}))$, $(\cos(\alpha+\frac{\pi}{2}), \sin(\alpha+\frac{\pi}{2}))$.
(See Figure~\ref{f:examples-1}.)
We parametrize $\partial \Omega_{\alpha}$ by $\theta\in \mathbb R/4\mathbb Z$ 
so that the parametrization $\mathbf x(\theta)$ is affine on each side of the square
and the vertices listed above correspond to $\theta=0,1,2,3$ respectively.
For $\lambda\in (0,1)$, we define
\[ 
\beta\in (0, {\pi}/{2}), \quad \tan{\beta}= {\sqrt{ 1-\lambda^2 }} / \lambda, \quad t_1:=\tan(\beta-\alpha), \quad t_2:=\tan(\beta+\alpha). 
\]
We will show that if
\begin{equation*}
0<\alpha<{\pi}/{8}, \ \ 
 {\pi}/{4}-\alpha <\beta <{\pi}/{4}+\alpha, 
\end{equation*}
or equivalently
\begin{equation} 
\label{lambda_omega}
    0<\alpha<{\pi}/{8}, \ \
    \cos( {\pi}/{4} + \alpha ) < \lambda < \cos( {\pi}/{4} - \alpha ),
\end{equation}
then $\lambda$ and $ \Omega_\alpha $ satisfy the Morse--Smale conditions (Definition 
\ref{d:2-corners}). Moreover, for $\alpha$, $\lambda$ satisfying \eqref{lambda_omega}, we have
(identifying $\theta$ with $\mathbf x(\theta)$)
\begin{equation}
\label{eq:sigma_square} 
\Sigma_{\lambda}=\left\{\tfrac{1-t_1}{t_2-t_1}, \ \ 1+\tfrac{t_1(t_2-1)}{t_2-t_1}, \ \ 2+\tfrac{1-t_1}{t_2-t_1},  \ \ 3+\tfrac{t_1(t_2-1)}{t_2-t_1} \right\}, 
\end{equation}
and the rotation number is $\mathbf r(\lambda)=\tfrac12$.

In fact, assume $ \alpha $, $\lambda$ satisfy \eqref{lambda_omega},
then $\ell^+(\bullet,\lambda)$ has exactly two nondegenerate critical points 
$\mathbf x(0)$, $\mathbf x(2)$ on $\partial\Omega_{\alpha}$; $\ell^-(\bullet,\lambda)$ also has two nondegenerate
critial points $\mathbf x(1), \mathbf x(3)$ on $\partial\Omega$. This shows that 
$\Omega_{\alpha}$ is $\lambda$-simple.

We have the following partial computation of the reflection maps~$\gamma^\pm$
(note that $0<t_1<1<t_2<\infty$ by~\eqref{lambda_omega}):
\begin{equation}
  \label{e:tilts-1}
\begin{aligned}
\gamma^+(\theta)&=\begin{cases}
t_2^{-1}(2-\theta)+2,& 1\leq\theta\leq 2,\\
t_2^{-1}(4-\theta),& 3\leq\theta\leq 4;
\end{cases}\\
\gamma^-(\theta)&=\begin{cases}
t_1(1-\theta)+1,& 0\leq \theta\leq 1,\\
t_1(3-\theta)+3,& 2\leq\theta\leq 3.
\end{cases}
\end{aligned}
\end{equation}
This in particular implies that we have the mapping properties
\begin{equation}
  \label{e:tilt-mapper}
[0,1]\xrightarrow{\gamma^-}
[1,2]\xrightarrow{\gamma^+}
[2,3]\xrightarrow{\gamma^-}
[3,4]\xrightarrow{\gamma^+}
[0,1].
\end{equation}
Recall that $b=\gamma^+\circ\gamma^-$.
We compute
\begin{equation}
  \label{e:tilts-2}
b^2(\theta)=\left({t_1}/{t_2}\right)^2 \theta +{(t_1+t_2)(1-t_1)}/{t_2^2}, \ \ \theta\in [0,1].
\end{equation}
By solving $b^2(\theta_0)=\theta_0$, $\theta_0\in [0,1]$, we find $\theta_0= ({1-t_1})/({t_2-t_1})$ and 
\[
\{\theta_0,\ \gamma^-(\theta_0),\ b(\theta_0),\ \gamma^+(\theta_0)\}\subset \Sigma_{\lambda}.
\]
This shows that the right-hand side of~\eqref{eq:sigma_square} lies in $\Sigma_\lambda$
and that the rotation number is $\mathbf r(\lambda)={1\over 2}$.
On the other hand, suppose $\theta_1\in \mathbb R/4\mathbb Z$ and $\theta_1\in \Sigma_{\lambda}$.
% By Lemma \ref{l:global-dynamics}, we have $b^2(\theta_1)=\theta_1$.
If $\theta_1\in [0,1]$, then $\theta_1=\theta_0$ by~\eqref{e:tilts-2}.
If $\theta_1\in [2,3]$, then $b(\theta_1)\in \Sigma_\lambda\cap [0,1]$
and thus $\theta_1=b(\theta_0)$. If $\theta_1\in [1,2]$, then $\gamma^+(\theta_1)\in \Sigma_\lambda\cap [2,3]$
and thus $\theta_1=\gamma^-(\theta_0)$. Finally, if $\theta_1\in[3,4]$, then $\gamma^+(\theta_1)\in \Sigma_\lambda\cap [0,1]$ and thus $\theta_1=\gamma^+(\theta_0)$. This shows~\eqref{eq:sigma_square}.

Using~\eqref{e:tilts-2} and the fact that $b^2$ commutes with $b$ and is conjugated
by $\gamma^\pm$ to $b^{-2}$ we compute
$$
\partial_\theta b^2(\theta)=\begin{cases}
t_1^2/t_2^2<1,& \theta\in \{\theta_0,b(\theta_0)\};\\
t_2^2/t_1^2>1,& \theta\in \{\gamma^-(\theta_0),\gamma^+(\theta_0)\}.
\end{cases}
$$
We have now checked that under the condition~\eqref{lambda_omega}, $\Omega_{\alpha}$ and $\lambda$ satisfy all conditions in Definition~\ref{d:2-corners}.

%%%%%%%%%%%%%%%%%%%%%%%%%%%%%%%%%%%%%%%%%%%%%%%%%%%%%%%%%%%%%%%%%%%%%%%%%%%%%%%%
\begin{figure}
\includegraphics{bflop.9}
\includegraphics[scale=0.32]{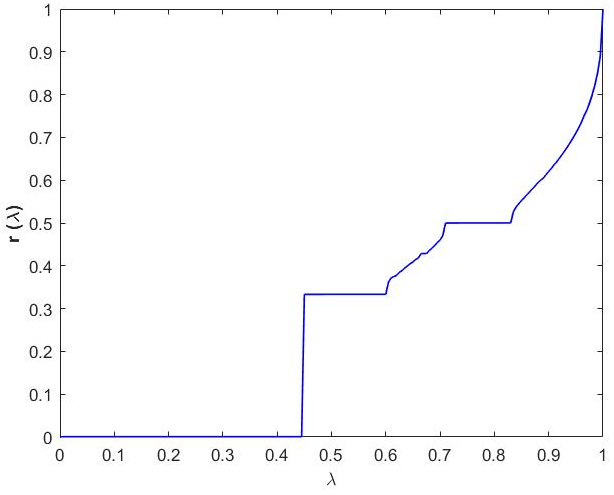}
\caption{Left: the chess billiard for 
$\mathcal T_{d}$ in Example 2.
Right: rotation numbers as functions of $ \lambda $ for 
$\mathcal T_{1/2}$.}
\label{f:examples-2}
\end{figure}
%%%%%%%%%%%%%%%%%%%%%%%%%%%%%%%%%%%%%%%%%%%%%%%%%%%%%%%%%%%%%%%%%%%%%%%%%%%%%%%%

\noindent
\textbf{Example 2.}
Let $\mathcal T_d\subset \mathbb R^2$ be the open trapezium with vertices $(0,0)$, $(1+d,0)$, $(1,1)$, $(0,1)$, $d>0$. (See Figure~\ref{f:examples-2}.)
We parametrize $\partial \mathcal T_{d}$ by $\theta\in \mathbb R/4\mathbb Z$ 
so that the parametrization $\mathbf x(\theta)$ is affine on each side of the trapezium
and the vertices listed above correspond to $\theta=0,1,2,3$ respectively.

For $\lambda\in (0,1)$, we put $c=\lambda/\sqrt{1-\lambda^2}$. We assume that
\begin{equation}
\label{11att} 
    \max ( 1,d ) < c < d+1. 
\end{equation}
Under the condition \eqref{11att} we know $\ell^+(\bullet, \lambda)$ has exactly two 
nondegenerate critial points $\mathbf x(0)$, $\mathbf x(2)$; 
$\ell^-(\bullet, \lambda)$ also has two nondegenerate critical points $\mathbf x(1)$,
$\mathbf x(3)$. Hence $\mathcal T_d$ is $\lambda$-simple. 

We have the following partial computation of the reflection maps~$\gamma^\pm$:
\begin{equation}
  \label{e:trapz-1}
\begin{aligned}
\gamma^+(\theta)&=\begin{cases}
2+(c-d)(2-\theta),& 1\leq\theta\leq 2,\\
{c\over 1+d}(4-\theta),& 3\leq\theta\leq 4;
\end{cases}\\
\gamma^-(\theta)&=\begin{cases}
{1+d\over c+d}(1-\theta)+1,& 0\leq \theta\leq 1,\\
{1\over c}(3-\theta)+3,& 2\leq\theta\leq 3.
\end{cases}
\end{aligned}
\end{equation}
This in particular implies that we again have the mapping properties~\eqref{e:tilt-mapper}.
From here we compute
\[ b^2(\theta)=\frac{c-d}{c+d}\theta+\frac{ 2c(c-1) }{(1+d)(c+d)}, \quad \theta\in [0,1]. \]
The fixed point of this map is
\[ \theta_0=\frac{c(c-1)}{d(1+d)}. \]
Arguing as in Example~1, we see that $\mathcal T_d$, $\lambda$ satisfy the conditions
of Definition~\ref{d:2-corners}, with
$$
\Sigma_\lambda=\{\textstyle{c(c-1)\over d(1+d)},\ 2-{c-1\over d},\
3-c+{c(c-1)\over d},\
4-{c-1\over d}\}.
$$

%%%%%%%%%%%%%%%%%%%%%%%%%%%%%%%%%%%%%%%%%%%%%%%%%%%%%%%%%%%%%%%%%%%%%%%%%%%%%%%%
%%%%%%%%%%%%%%%%%%%%%%%%%%%%%%%%%%%%%%%%%%%%%%%%%%%%%%%%%%%%%%%%%%%%%%%%%%%%%%%%
\section{Microlocal preliminaries}
\label{s:microlocal-prelim}

In this section we present some general results needed in the proof.
Most of the microlocal analysis in this paper takes place on the one dimensional 
boundary $ \partial \Omega $; we review the basic notions in~\S\ref{s:micro}.
In~\S\ref{s:con} we review definitions and basic
properties of conormal distributions (needed in dimensions one and two).
These are used to prove and formulate Theorem \ref{t:1}:  
the singularities of $ ( P - \lambda^2 \mp i 0 )^{-1} f $  using conormal distributions. In our approach, 
this structure of $ ( P - \lambda^2 \mp i 0 )^{-1} f $ is essential for describing the long time
evolution profile in Theorem \ref{t:2}. Finally, \S\S\ref{s:conv-log}--\ref{s:micro-inv} contain technical
results needed in~\S\ref{s:blp}.

%%%%%%%%%%%%%%%%%%%%%%%%%%%%%%%%%%%%%%%%%%%%%%%%%%%%%%%%%%%%%%%%%%%%%%%%%%%%%%%%
\subsection{Microlocal analysis on \texorpdfstring{$ \partial \Omega $}{\unichar{"2202}\unichar{"03A9}}}
\label{s:micro}

We first briefly discuss pseudodifferential operators on the circle $\mathbb S^1=\mathbb R/\mathbb Z$,
referring to~\cite[\S18.1]{Hormander3} for a detailed introduction to the theory of pseudodifferential operators.
Pseudodifferential operators on~$\mathbb S^1$
are given by quantizations of 1-periodic symbols. More precisely, if $0\leq \delta<{1\over 2}$
and $m\in\mathbb R$, then we say
that $a\in C^\infty(\mathbb R^2)$ lies in $S^m_\delta(T^*\mathbb S^1)$ if
(denoting $\langle\xi\rangle:=\sqrt{1+|\xi|^2}$)
\begin{equation}
\label{eq:defad}
a ( x+ 1 , \xi ) = a( x , \xi ) , \ \  
| \partial_x^\alpha \partial_\xi^\beta a ( x, \xi ) | \leq C_{\alpha\beta} 
\langle \xi \rangle^{ m + \delta  \alpha - ( 1 - \delta ) \beta  } .
\end{equation}
For brevity we just write $S^m_\delta:=S^m_\delta(T^*\mathbb S^1)$.
Each $a\in S^m_\delta$ is quantized by the operator $\Op(a):C^\infty(\mathbb S^1)\to C^\infty(\mathbb S^1)$,
$\mathcal D'(\mathbb S^1)\to\mathcal D'(\mathbb S^1)$
defined by
\begin{equation}
  \label{e:Op-a-def}
  \Op ( a ) u ( x ) = 
\frac{1}{2 \pi } \int_{\mathbb R^2} e^{ i ( x - y ) \xi  }a (x  , \xi )  u ( y ) \,dy d \xi ,
\end{equation}
where $ u\in C^\infty(\mathbb R) $ is 1-periodic and the integral is understood in the sense of 
oscillatory integrals \cite[\S 7.8]{Hormander1}. 
We introduce the following spaces of pseudodifferential operators:
\begin{gather*} 
\Psi_\delta^m := \{ 
\Op ( a ) : a \in S^m_\delta \},  \quad \Psi^{m+}_\delta = \bigcap_{m'>m }  \Psi^{m'}_\delta, \quad
\Psi^m_{\delta+} = \bigcap_{ \delta' > \delta} \Psi^m_{\delta'} , \\
   S^{m+}_\delta = \bigcap_{m' > m }  S^{m'}_\delta, \quad
S^m_{\delta+} = \bigcap_{ \delta' > \delta} S^m_{\delta'} .
\end{gather*}
We remark that $S^m_{\delta+}\subset S^{m+}_\delta$; moreover,
$a\in S^{m+}_{\delta}$ lies in $S^m_{\delta+}$ if and only
if $a(x,\xi)=\mathcal O(\langle\xi\rangle^m)$. 
We henceforth denote $\Psi^m := \Psi^m_0$. The
space $\Psi^{-\infty}:=\bigcap_m \Psi^m$ consists of smoothing operators.

In terms of Fourier series on $\mathbb S^1$, we have
\begin{equation}
\label{eq:Opa}
\begin{gathered} \Op ( a ) u ( x ) =  \sum_{k,n \in \mathbb Z} 
e^{2\pi i n x }  a_{n - k} ( k ) u_k  , \\
a_\ell (k ) :=  \int_{0}^{1} a ( x , 2\pi k ) e^{ - 2\pi i \ell x}\, dx , \ \ \
u_k  :=  \int_{0}^{1} u ( x ) e^{ - 2\pi i kx}\, dx . \end{gathered}
\end{equation}
This shows that $ \Op ( a ) $ does not determine $ a $ uniquely.
This representation also shows boundedness on Sobolev spaces 
$ \Op ( a ) : H^s ( \mathbb S^1 ) \to 
H^{s-m} ( \mathbb S^1 ) $, $ s \in \mathbb R $,
$a\in S^m_\delta$.
Indeed, smoothness of $ a $ in $ x $ shows that $ a_\ell ( k ) = 
\mathcal O ( \langle \ell \rangle^{-\infty } \langle k \rangle^{m} ) $ 
and the bound on the norm follows from the Schur criterion
\cite[(A.5.3)]{DZ-Book}.
Despite the fact that $A := \Op(a) $ does not determine
$ a $ uniquely, it does determine its {\em essential support}, which is the right hand side in
the definition of the wave front set of a pseudodifferential operator:
\[  \WF ( A ) := \complement \{ ( x, \xi )  : \xi \neq 0, \, 
\exists \, \rho>0:  a ( y , \eta) = \mathcal O ( \langle \eta \rangle^{-\infty} ) \text{ when } 
|x-y| < \rho,\
\tfrac{\eta}{\xi}>0 \}, \]
see \cite[\S E.2]{DZ-Book}. We refer to that section and \cite[\S 18.1]{Hormander3}
for a discussion of wave front sets. We also recall a definition of the wave front 
set of a distribution,
\begin{equation*}
\begin{gathered}
 \WF ( u ) :=  \bigcap_{ A u \in C^\infty, A \in \Psi^0 } {\rm{Char}} ( A) , \\ 
{\rm{Char}}  ( A ) := \complement\{ ( x, \xi ) : \xi \neq 0 , \, \exists \, \rho, c>0 : |a ( y , \eta) | > c , \ \
|x-y| < \rho , \ \tfrac{\eta}{\xi}>0 ,\ |\eta| > 1/\rho \} 
\end{gathered}
\end{equation*}

The symbol calculus on~$\mathbb S^1$ translates directly from the symbol calculus 
of pseudodifferential operators on $ \mathbb R $. We record in particular
the composition formula \cite[Theorem 18.1.8]{Hormander3}:
 for $ b_1 \in S^{m_1}_\delta $, $ b_2 \in S^{m_2}_\delta $, 
\begin{equation}
\label{eq:compo}
\begin{gathered} 
\Op ( b_1 ) \Op ( b_2 ) = \Op ( b ) , \ \ \ b \in S^{m_1 + m_2 }_\delta , \\
b ( x, \xi ) = %b_1 \# b_2 (x, \xi ) := 
\exp ( -i   \partial_y \partial_\eta  )
\left[ b_1 ( x, \eta ) b_2 ( y, \xi ) \right] |_{ ( y , \eta )=( x, \xi ) }, \\
b( x, \xi ) = \sum_{ 0\leq k<N} \frac{(-i)^{k}}{ k! } \partial_\xi^k b_1 ( x, \xi ) 
\partial_x^k b_2 ( x, \xi ) + b_N ( x, \xi ) , \ \ b_N \in S^{m-N(1-2 \delta) }_\delta . 
  \end{gathered} 
\end{equation}
where expanding the exponential gives an asymptotic expansion of~$ b$.

We record here a norm bound for pseudodifferential operators at high frequency:
%%%%%%%%%%%%%%%%%%%%%%%%%%%%%%%%%%%%%%%%%%%%%%%%%%%%%%%%%%%%%%%%%%%%%%%%%%%%%%%%
\begin{lemm}
  \label{l:psi0-norm}
Assume that $a\in S^0_\delta$, $r\in S^{-1+}$, and $\sup|a|\leq R$. Then for all~$N$, $\nu>0$, and $u\in L^2(\mathbb S^1)$
we have
\begin{equation}
  \label{e:psi0-norm}
\|\Op(a+r)u\|_{L^2}\leq (R+\nu)\|u\|_{L^2}+C\|u\|_{H^{-N}}
\end{equation}
where the constant $C$ depends on $R$, $\nu$, $N$, and some seminorms of~$a$ and~$r$
but not on~$u$.
\end{lemm}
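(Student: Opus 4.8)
The plan is to prove Lemma~\ref{l:psi0-norm} by the standard ``square root of $R^2-|a|^2$'' trick, working with the operator $B^*B$ where $B=\Op(a+r)$. First I would reduce to the case $r=0$: since $r\in S^{-1+}\subset S^{0}_{\delta+}$ and in particular $\Op(r):L^2\to L^2$ is bounded with norm that can be absorbed after a high-frequency cutoff, we may split $\Op(r)=\Op(r)\Op(\chi)+\Op(r)(1-\Op(\chi))$ where $\chi$ is supported in $|\xi|\geq 1/\rho$; the first term is $O(\|u\|_{H^{-N}})$ for $\rho$ large, the second has small $L^2\to L^2$ norm. So it suffices to prove $\|\Op(a)u\|_{L^2}\leq (R+\nu)\|u\|_{L^2}+C\|u\|_{H^{-N}}$ for $a\in S^0_\delta$ with $\sup|a|\leq R$.

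Next I would set $q:=\sqrt{(R+\nu/2)^2-|a|^2}$. Because $|a|\leq R<R+\nu/2$, the quantity under the square root is bounded below by a positive constant, so $q$ is a smooth, bounded, elliptic symbol; the subtlety is that derivatives of $a\in S^0_\delta$ only gain powers of $\langle\xi\rangle^{-(1-\delta)}$ rather than $\langle\xi\rangle^{-1}$, but since $0\le\delta<\tfrac12$ this still places $q\in S^0_\delta$ and, crucially, makes the calculus~\eqref{eq:compo} applicable with remainders in $S^{-(1-2\delta)}_\delta\subset S^{-1+}$. Then the composition formula gives
\[
(R+\nu/2)^2-\Op(a)^*\Op(a)=\Op(q)^*\Op(q)+\Op(e),\qquad e\in S^{-1+},
\]
where I have used that the principal symbol of $\Op(a)^*\Op(a)$ is $|a|^2$ and that of $\Op(q)^*\Op(q)$ is $q^2=(R+\nu/2)^2-|a|^2$, so the two subprincipal-and-lower terms combine into something of order $-(1-2\delta)$, hence in $S^{-1+}$. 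Here one uses that all symbols and their adjoints stay in the $S^\bullet_\delta$ calculus, which is closed under adjoints and compositions by~\eqref{eq:compo}.

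Finally I would pair this identity against $u$ and $u$: since $\Op(q)^*\Op(q)\geq 0$,
\[
\|\Op(a)u\|_{L^2}^2\leq (R+\nu/2)^2\|u\|_{L^2}^2+|\langle \Op(e)u,u\rangle|.
\]
The error term is controlled by splitting $\Op(e)=\Op(e)\Op(\psi)+\Op(e)(1-\Op(\psi))$ with $\psi$ a frequency cutoff to $|\xi|\geq T$: for $T$ large the first piece has $L^2\to L^2$ norm $\leq \nu/4$ (because $e$ decays, being $O(\langle\xi\rangle^{-1+})$), contributing at most $(\nu/4)\|u\|_{L^2}^2$, while the second piece maps into smoothing and is bounded by $C_T\|u\|_{H^{-N}}^2$. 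Combining and using $\sqrt{x+y}\leq\sqrt x+\sqrt y$ gives $\|\Op(a)u\|_{L^2}\leq (R+\nu)\|u\|_{L^2}+C\|u\|_{H^{-N}}$ after adjusting constants, which is~\eqref{e:psi0-norm}. The main obstacle is purely bookkeeping: making sure that every symbol produced by taking adjoints and composing within the exotic class $S^\bullet_\delta$ with $\delta<\tfrac12$ still lands in the stated classes so that the remainder genuinely lies in $S^{-1+}$ (equivalently, that all the ``lost'' derivatives cost only $\langle\xi\rangle^{-(1-2\delta)}$ per composition), and that the two frequency-cutoff splittings are done with cutoffs depending only on the allowed data $R,\nu,N$ and seminorms of $a,r$, not on $u$.
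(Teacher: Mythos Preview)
Your approach is essentially the same as the paper's: both use the approximate square-root trick underlying the sharp G\aa rding inequality. The paper is more economical because it cites \cite[Lemma~4.6]{Grigis-Sjostrand}, which iterates the construction to get
\[
(R+\nu)^2I=\Op(a+r)^*\Op(a+r)+\Op(b)^*\Op(b)+\Op(q),\qquad b\in S^0_\delta,\quad q\in S^{-\infty},
\]
so the remainder is genuinely smoothing and the bound follows immediately without a second frequency cutoff or a separate reduction to $r=0$.

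Two small corrections to your write-up. First, the inclusion $S^{-(1-2\delta)}_\delta\subset S^{-1+}$ is false for $\delta>0$: derivatives in $x$ lose $\langle\xi\rangle^{\delta}$, so you never land in any $S^m_0$ class. What you actually need and have is the pointwise decay $|e(x,\xi)|\le C\langle\xi\rangle^{-(1-2\delta)}$, and since $1-2\delta>0$ this suffices for your high-frequency cutoff argument; just state the remainder as $e\in S^{-(1-2\delta)}_\delta$. Second, in your reduction to $r=0$ the roles of the two pieces of $\Op(r)$ are swapped: if $\chi$ is a low-frequency cutoff (supported in $|\xi|\le T$), then $\Op(r)\Op(\chi)$ is the smoothing piece bounded by $C\|u\|_{H^{-N}}$, and $\Op(r)(1-\Op(\chi))$ has small $L^2\to L^2$ norm for $T$ large because $r\in S^{-1+}$. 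With these fixes your argument is complete.
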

%%%%%%%%%%%%%%%%%%%%%%%%%%%%%%%%%%%%%%%%%%%%%%%%%%%%%%%%%%%%%%%%%%%%%%%%%%%%%%%%
\begin{proof}
By~\cite[Lemma~4.6]{Grigis-Sjostrand} we can write
$$
(R+\nu)^2I=\Op(a+r)^*\Op(a+r)+\Op(b)^*\Op(b)+\Op(q)
$$
for some $b\in S^0_{\delta}$ and $q\in S^{-\infty}$. The bound~\eqref{e:psi0-norm} follows.
\end{proof}
%%%%%%%%%%%%%%%%%%%%%%%%%%%%%%%%%%%%%%%%%%%%%%%%%%%%%%%%%%%%%%%%%%%%%%%%%%%%%%%%

Although $ a $ in \eqref{eq:Opa} is not unique, the  principal {\em symbol} of $ \Op ( a ) $
defined as 
\begin{equation}
\label{eq:Opas}
\sigma ( \Op ( a ) ) = [a ] \in S^{m}_\delta / S^{m-1+2\delta}_\delta 
\end{equation}
is, and we have a short exact sequence 
$ 0 \to \Psi_\delta^{m-1+2 \delta}  \to \Psi_\delta^{m} 
\xrightarrow{ \sigma } S^{m}_\delta / S^{m-1+2\delta}_\delta \to 0 $. Somewhat informally, 
we write $ \sigma(\Op ( a )) = b $ for any $ b $ satisfying  $ a - b \in S^{m-1+2 \delta}_\delta $. 

In our analysis, 
we also consider families $ \varepsilon \mapsto a_\varepsilon $, $ \varepsilon \geq 0 $, 
such that $ a_\varepsilon \in S^{-\infty} $ for $ \varepsilon > 0 $ and $ a_0 \in S^m_\delta $.
In that case, for $ A_\varepsilon = \Op ( a_\varepsilon ) $, 
\begin{equation}
\label{eq:Opasu}  \sigma ( A_\varepsilon ) = [ b_\varepsilon ] , \ \ \ b_\varepsilon - 
a_\varepsilon \in S^{m-1+2 \delta }_\delta \ \text{ uniformly for $ \varepsilon \geq 0 $.} 
\end{equation}
Again, we drop $ [ \bullet ] $ when writing $ \sigma ( A) $ for a specific operator.

We will crucially use mild exponential weights which result in 
pseudodifferential operators of varying order -- see \cite{Unter}, and in 
a related context, \cite{FRS}.
%%%%%%%%%%%%%%%%%%%%%%%%%%%%%%%%%%%%%%%%%%%%%%%%%%%%%%%%%%%%%%%%%%%%%%%%%%%%%%%%
\begin{lemm}
\label{l:expwe1}
Suppose that (in the sense of~\eqref{eq:defad}) $ m_j \in S^0 $, $m_0$ is real-valued, and 
\begin{equation} 
\label{eq:formG} G ( x, \xi ) := m_0 ( x, \xi ) \log \langle \xi \rangle + m_1 ( x, \xi )  , \ \ \
m_0 ( x, t \xi ) = m_0 ( x, \xi) , \ \ t, |\xi| \geq 1 . \end{equation}
Then 
\begin{equation}
\label{eq:Gexp}  
e^G \in S^{M}_{0+} , \quad e^{-G} \in S^{-m}_{0+} , \ \ \ 
  M := \max_{|\xi|=1 } m_0 ( x, \xi ) , \ \ \  m := \min _{|\xi|=1 } m_0 ( x, \xi ), 
  \end{equation} 
and there exists $ r_G \in S^{-1+}  $ such that
\begin{equation}
\begin{gathered} 
\label{eq:OpbG}     \Op ( e^ G ) \Op ( e^{-G } ( 1 + r_G )  ) - I , \ \Op ( e^{-G} ( 1 + r_G)  )  \Op ( e^ G ) - I \in 
\Psi^{-\infty }  . 
\end{gathered}
\end{equation}
Also, if 
 $ G_j ( x, \xi ) $ are given by \eqref{eq:formG} with $ m_0 $ and $ m_1 $ replaced by 
$ m_{0j}$, $ m_{1j}$, respectively, then
for $a_j\in S^0$, $ r_j \in S^{-1+} $, $ j = 1,2 $, there exists $ r_3 \in S^{-1+ } $ such that
\begin{equation}
\label{eq:wecomp}
\Op ( e^{ G_1 } ( a_1 + r_1 ) ) \Op ( e^{ G_2} ( a_2 +r_2 ) )  = \Op ( e^{ G_1 + G_2 } ( a_1a_2 + r_3 ) ).
\end{equation}
\end{lemm}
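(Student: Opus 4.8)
\emph{Strategy and the symbol bounds \eqref{eq:Gexp}.} I would treat $\Op(e^{\pm G})$ as elliptic operators of \emph{variable} (logarithmic) order and run the standard ellipticity--parametrix and symbol-composition arguments, the one new point being to track the powers of $\log\langle\xi\rangle$ produced by differentiation and to check that they are always absorbed into the ``$+$'' in the symbol classes. Since $m_0$ is $0$-homogeneous for $|\xi|\ge1$ we have $\partial_x^\alpha\partial_\xi^\beta m_0\in S^{-\beta}$ (and likewise for $m_1$), while $\log\langle\xi\rangle\in S^{0+}$ with $\partial_\xi\log\langle\xi\rangle=\xi\langle\xi\rangle^{-2}\in S^{-1}$; the Leibniz rule then gives $\partial_x^\alpha\partial_\xi^\beta G\in S^{-\beta+}$ for $\beta\ge1$ and $\partial_x^\alpha G\in S^{0+}$. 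Writing $e^{G}=\langle\xi\rangle^{m_0}e^{m_1}=\mathcal O(\langle\xi\rangle^{M})$ and applying the Fa\`a di Bruno formula, each $\partial_x^\alpha\partial_\xi^\beta e^{G}$ is a finite sum of terms $e^{G}\prod_j\partial_x^{\alpha_j}\partial_\xi^{\beta_j}G$, the product of the $G$-factors being $\mathcal O(\langle\xi\rangle^{-\beta+\epsilon})$ for every $\epsilon>0$ (only a bounded number of logarithms appear); hence $e^{G}\in S^{M}_{0+}$, and the identical computation with $e^{-G}=\mathcal O(\langle\xi\rangle^{-m})$ gives $e^{-G}\in S^{-m}_{0+}$. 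The same calculation records the structural fact used throughout: $\partial_\xi^k e^{\pm G}=e^{\pm G}P_k^{\pm}$ with $P_k^{\pm}\in S^{-k+}$ for $k\ge1$, $P_0^{\pm}=1$, and $\partial_x^k e^{\pm G}=e^{\pm G}Q_k^{\pm}$ with $Q_k^{\pm}\in S^{0+}$.

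\emph{The parametrix \eqref{eq:OpbG}.} Applying the composition formula \eqref{eq:compo} (with any small $\delta>0$) to $e^{G}\in S^{M}_{\delta}$ and $e^{-G}\in S^{-m}_{\delta}$, the $k$-th term $\partial_\xi^k(e^{G})\,\partial_x^k(e^{-G})=P_k^{+}Q_k^{-}$ has the exponentials cancel and lies in $S^{-k+}$ for $k\ge1$, while the remainder is of arbitrarily negative order for $N$ large and $\delta$ small; thus $\Op(e^{G})\Op(e^{-G})=I+Q$ with $Q\in\Psi^{-1+}$. Asymptotically summing the Neumann series $\sum_{k\ge1}(-Q)^k$ (whose $k$-th term is in $\Psi^{-k+}$) gives $Q'=\Op(q')\in\Psi^{-1+}$ with $\Op(e^{-G})\Op(1+q')$ a right parametrix of $\Op(e^{G})$. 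By \eqref{eq:compo} the full symbol of $\Op(e^{-G})\Op(1+q')$ expands as $e^{-G}$ times the asymptotic series with leading term $1+q'$ and $k$-th correction $P_k^{-}\partial_x^k q'\in S^{-1-k+}$ ($k\ge1$), so it equals $e^{-G}(1+r_G)$ modulo $S^{-\infty}$ for some $r_G\in S^{-1+}$; together with $(I+Q)(I+Q')=I$ modulo $\Psi^{-\infty}$ this gives the first relation in \eqref{eq:OpbG}. The symmetric construction, starting from $\Op(e^{-G})\Op(e^{G})=I+Q_L$, yields a left parametrix of the form $\Op(e^{-G}(1+\ell_G))$ with $\ell_G\in S^{-1+}$; the usual identity $\Op(e^{-G}(1+\ell_G))\equiv\Op(e^{-G}(1+\ell_G))\Op(e^{G})\Op(e^{-G}(1+r_G))\equiv\Op(e^{-G}(1+r_G))$ modulo $\Psi^{-\infty}$ forces the two parametrices to agree, which gives the second relation in \eqref{eq:OpbG} with the same $r_G$.

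\emph{The composition \eqref{eq:wecomp}.} Apply \eqref{eq:compo} to $b_1:=e^{G_1}(a_1+r_1)\in S^{M_1}_{\delta}$ and $b_2:=e^{G_2}(a_2+r_2)\in S^{M_2}_{\delta}$, where $M_j=\max_{|\xi|=1}m_{0j}$. The $k=0$ term is $e^{G_1+G_2}(a_1+r_1)(a_2+r_2)=e^{G_1+G_2}(a_1a_2+s_0)$ with $s_0=a_1r_2+r_1a_2+r_1r_2\in S^{-1+}$. For $k\ge1$, the Leibniz rule together with the structural fact from Step~1 gives $\partial_\xi^k b_1=e^{G_1}u_k$ with $u_k\in S^{-k+}$ (each $\xi$-derivative costs one order, whether it lands on $e^{G_1}$ through $P^{+}$ or on $a_1+r_1\in S^0$) and $\partial_x^k b_2=e^{G_2}v_k$ with $v_k\in S^{0+}$, so the $k$-th term is $e^{G_1+G_2}u_kv_k\in e^{G_1+G_2}S^{-k+}$. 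Hence $\sum_{k<N}$ of these equals $e^{G_1+G_2}$ times $a_1a_2$ plus a symbol in $S^{-1+}$, while the remainder has order $M_1+M_2-N(1-2\delta)$; for any target order $-L$, taking $N$ large and $\delta$ small puts both the remainder and $e^{G_1+G_2}$ times the tail of the series into $S^{-L}$. Since the composition is independent of $N$, its full symbol equals $e^{G_1+G_2}(a_1a_2+r_3)$ modulo $S^{-\infty}$ for some $r_3\in S^{-1+}$, which is \eqref{eq:wecomp}.

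\emph{Main obstacle.} The technical heart lies in the remainder bookkeeping of the last two steps: one must check that the logarithms generated by differentiating $e^{\pm G}$ never accumulate to degrade a symbol order past a ``$+$'', and that the composition-formula remainders --- which a priori sit in the variable-order classes $S^{M_1+M_2}_{\delta}$ rather than in a fixed class --- can be normalized, by factoring out $e^{G_1+G_2}$ (legitimate precisely because of the ellipticity established in the parametrix step), into the fixed class $S^{-1+}$. Beyond this, only the routine pseudodifferential calculus \eqref{eq:compo} is used.
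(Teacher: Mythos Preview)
Your argument is correct and rests on the same key estimate as the paper's: what you call the ``structural fact'' $\partial_x^\alpha\partial_\xi^\beta e^{\pm G}=e^{\pm G}\cdot S^{-\beta+}$ is exactly the paper's bound $|\partial_x^\alpha\partial_\xi^\beta e^{\pm G}|\le C_{\alpha\beta\varepsilon}\,e^{\pm G}\langle\xi\rangle^{\varepsilon-\beta}$, and once this is in hand both proofs are routine applications of the composition formula~\eqref{eq:compo}.

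The one organizational difference worth noting is that the paper proves \eqref{eq:wecomp} \emph{first} and then obtains \eqref{eq:OpbG} as a corollary: applying \eqref{eq:wecomp} with $G_1=G$, $G_2=-G$, $a_j=1$, $r_j=0$ immediately gives $\Op(e^G)\Op(e^{-G})=I-\Op(r_\pm)$ with $r_\pm\in S^{-1+}$; a parametrix $I+\Op(b_\pm)$ for $I-\Op(r_\pm)$ then yields left and right inverses, and a second application of \eqref{eq:wecomp} (with one of $G_1,G_2$ equal to~$0$) puts these into the form $\Op(e^{-G}(1+r_G))$. This ordering is a bit more economical than yours---it avoids the explicit left/right parametrix matching and reuses \eqref{eq:wecomp} rather than redoing the composition analysis---but the content is the same.
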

%%%%%%%%%%%%%%%%%%%%%%%%%%%%%%%%%%%%%%%%%%%%%%%%%%%%%%%%%%%%%%%%%%%%%%%%%%%%%%%%
\begin{proof}
Since $ \log \langle \xi \rangle = \mathcal O_\varepsilon  ( \langle \xi \rangle^\varepsilon )$ for
all $ \varepsilon > 0 $, \eqref{eq:Gexp} follows from \eqref{eq:defad}. In fact, we have the stronger bound
\begin{equation}
  \label{e:bounder}
|\partial^\alpha_x \partial^\beta_\xi (e^{\pm G(x,\xi)})|\leq C_{\alpha\beta\varepsilon}
e^{\pm G(x,\xi)}\langle\xi\rangle^{\varepsilon-|\beta|},\quad \varepsilon>0.
\end{equation}
This gives~\eqref{eq:wecomp}. Indeed, the remainder in the expansion~\eqref{eq:compo} is in $S^{M_1+M_2-N+}$
and the $k$-th term is in $e^{G_1+G_2}S^{-k+}$ by~\eqref{e:bounder};
it suffices to take $N\geq M+M_2+1$.

To obtain \eqref{eq:OpbG} we note that~\eqref{eq:wecomp} gives
$ \Op ( e^{\pm G} ) \Op ( e^{\mp G } ) = I - \Op (r_\pm )  $, $ r_\pm \in S^{-1+} $.
We then have parametrices for the operators $I-\Op(r_\pm)$ \cite[Theorem 18.1.9]{Hormander3},
$ I + \Op ( b_\pm ) $, which give left and right approximate inverses (in the sense
of \eqref{eq:OpbG})  $ (I + \Op ( b_- )  ) \Op ( e^{ - G} ) $, $ \Op ( e^{ - G } )( I + 
\Op ( b_+)  ) $. Those have the required form by~\eqref{eq:wecomp}
(where one of $G_1,G_2$ is equal to~$-G$ and the other one is equal to~0).
\end{proof}
%%%%%%%%%%%%%%%%%%%%%%%%%%%%%%%%%%%%%%%%%%%%%%%%%%%%%%%%%%%%%%%%%%%%%%%%%%%%%%%%
We also record a change of variables formula. Suppose $ f : \mathbb R /  \mathbb Z 
\to  \mathbb R /\mathbb Z  $ is a diffeomorphism with a lift $ \mathbf f : \mathbb
R \to \mathbb R $, $ \mathbf f ( x + 1) = \mathbf f ( x) \pm 1 $
(with the $+$ sign for orientation preserving $f$ and the $-$ sign otherwise). For symbols
1-periodic in $ x $ we can use the standard formula given in 
\cite[Theorem 18.1.17]{Hormander3} and an argument similar to~\eqref{eq:wecomp}.
That gives, for $ G $ given by 
\eqref{eq:formG}, and $ r \in S^{-1+} $, 
\begin{equation}
\label{eq:chava}
\begin{gathered} 
f^* \circ \Op ( e^{ G } ( 1 + r ) ) = \Op ( e^{ G_f } ( 1 + r_f ) ) \circ f^* , \\
G_f ( x, \xi ) := G ( f ( x ) , f' ( x )^{-1} \xi ) , \ \ r_f \in S^{-1+} . 
\end{gathered}
\end{equation}

In~\S\ref{s:restricted-slp} below we will use pseudodifferential operators acting on
1-forms on~$\mathbb S^1$.
Using the canonical 1-form $dx$, $x\in\mathbb S^1=\mathbb R/\mathbb Z$,
we identify 1-forms with functions,
and this gives an identification of the class $\Psi^m_\delta(\mathbb S^1;T^*\mathbb S^1)$
(operators acting on 1-forms) with $\Psi^m_\delta(\mathbb S^1)$ (operators
acting on functions). This defines the principal symbol map, which
we still denote by~$\sigma$.

Fixing a positively oriented coordinate $\theta: \partial \Omega \to\mathbb S^1$, 
we can identify functions / distributions on 
$ \partial \Omega $ with functions / distributions on~$\mathbb S^1$.
The change of variables formula used for \eqref{eq:chava} also shows the
invariance of $ \sigma ( A )$ under changes of variables 
and allows pseudodifferential operators acting on section of bundles 
-- see \cite[Definition~18.1.32]{Hormander3}. 
In particular, we can define the class of pseudodifferential
operators $\Psi^m_\delta(\partial\Omega;T^*\partial\Omega)$ acting
on 1-forms on $\partial\Omega$ and the symbol map
\begin{equation}
\label{eq:Psim}
   \sigma : \Psi^m_\delta ( \partial\Omega ; T^*\partial\Omega ) \to 
   S^m_\delta ( T^* \partial\Omega   )/ S^{m-1+2 \delta}_\delta
   ( T^* \partial\Omega),
\end{equation}
with the class $\Psi^m_\delta$ and the map $\sigma$ independent
of the choice of coordinate on $\partial\Omega$.

%%%%%%%%%%%%%%%%%%%%%%%%%%%%%%%%%%%%%%%%%%%%%%%%%%%%%%%%%%%%%%%%%%%%%%%%%%%%%%%%
\subsection{Conormal distributions}
\label{s:con}

We now review conormal distributions
associated to hypersurfaces, referring the reader to~\cite[\S18.2]{Hormander3}
for details. Although we consider the case of manifolds with boundaries, 
the hypersurfaces are assumed to be transversal to the boundaries and 
conormal distributions are defined as restrictions of conormal distributions in the
no-boundary case.

Let $M$ be a compact $m$-dimensional manifold with boundary and
$\Sigma\subset M$ be a compact hypersurface transversal 
to the boundary (that is,  $\Sigma$ is a compact codimension 1 submanifold of $M$
with boundary $\partial\Sigma=\Sigma\cap\partial M$ and $T_x\Sigma\neq T_x\partial M$
for all $x\in \partial\Sigma$). We should emphasize that in our case, 
the hypersurfaces $ \Sigma $ take a particularly simple form: we either have $ M = \overline \Omega $
and $ \Sigma $ given by straight lines transversal to $ \partial \Omega $ (see Theorems~\ref{t:1} and~\ref{t:2}) or $ M = \partial \Omega\simeq \mathbb S^1 $ and $ \Sigma $ is given by points (see Propositions~\ref{l:boundv} and~\ref{p:boundvder}).

The conormal bundle to $ \Sigma $ is given by 
$ N^*\Sigma:=\{(x,\xi)\in T^* M\colon x\in \Sigma,\ \xi|_{T_x \Sigma }=0\} $,
which is a Lagrangian submanifold of $T^*M$ and a one-dimensional vector bundle
over~$\Sigma$.
For $k\in\mathbb R$, define the symbol class $S^k(N^*\Sigma)$ consisting of
functions $a\in C^\infty(N^*\Sigma)$ satisfying the derivative bounds
\begin{equation}
\label{eq:symbk}
|\partial_x^\alpha \partial_\theta ^\beta a(x,\theta)|\leq C_{\alpha\beta} \langle\theta \rangle^{k-|\beta|}
\end{equation}
where we use local coordinates $ ( x , \theta ) \in \mathbb R^{m-1} \times \mathbb R  \simeq N^* \Sigma $. Here
$x$ is a coordinate on~$\Sigma$ and $\theta$ is a linear coordinate on the fibers
of $N^*\Sigma$; $\langle\theta\rangle:=\sqrt{1+|\theta|^2}$.
The estimates \eqref{eq:symbk} are supposed to be valid uniformly up to the boundary of $ \Sigma$. In other
words we can consider $ a $ as a restriction of a symbol defined on an extension of~$ \Sigma $. 

Denote by $I^s(M,N^*\Sigma) \subset \overline{\mathcal D' } ( M^\circ) $ the space of extendible distributions on the interior~$M^\circ$ (see \cite[\S B.2]{Hormander3}) 
which are conormal to $\Sigma$ of order~$s\in\mathbb R$ smoothly up to the boundary of~$M$. To describe the class $I^s$ we first consider two
model cases:
\begin{itemize}
\item if $M=\mathbb R^m$, we write points in $\mathbb R^m$ as $(x_1,x')\in \mathbb R\times\mathbb R^{m-1}$, and $\Sigma=\{x_1=0\}$, then a compactly
supported distribution $u\in\mathcal E'(\mathbb R^m)$ lies in $I^s(M,N^*\Sigma)$
if and only its Fourier transform in the $x_1$ variable, $\check u(\xi_1,x')$,
lies in $S^{{m\over 4}-{1\over 2}+s}(N^*\Sigma)$ where $N^*\Sigma=\{(0,x',\xi_1,0)\mid
x'\in\mathbb R^{m-1},\ \xi_1\in\mathbb R\}$.
\item if $M=\mathbb R_{x_1}\times [0,\infty)_{x_2}\times\mathbb R^{m-2}_{x''}$
and $\Sigma=\{x_1=0,\ x_2\geq 0\}$, then a distribution $u\in\mathcal D'(M^\circ)$ with bounded support lies in $I^s(M,N^*\Sigma)$ if and only if
$u=\tilde u|_{M^\circ}$ for some $\tilde u\in\mathcal E'(\mathbb R^m)$
which lies in $I^s(\mathbb R^m, N^*\widetilde\Sigma)$, with $\widetilde\Sigma:=\{x_1=0\}\subset \mathbb R^m$. Alternatively, $\check u(\xi_1,x')$ lies in $S^{{m\over 4}-{1\over 2}+s}(N^*\Sigma)$
where the derivative bounds are uniform up to the boundary.
\end{itemize}
In those model cases, elements of $ I^s ( M,N^* \Sigma ) $ are given by the oscillatory integrals
(where we use the prefactor from~\cite[Theorem~18.2.9]{Hormander3})
\begin{equation}
\label{eq:uosc}    u ( x ) = (2 \pi )^{-\frac m 4 - \frac12} \int_{\mathbb R} e^{ i x_1 \xi_1 } a ( x' , \xi_1 ) \,d\xi_1 , \ \ 
a \in S^{ \frac m 4 - \frac12 + s } ( N^* \{ x_1 = 0 \} ) .
\end{equation}
We note that in both of the above cases the distribution $u$ is in $C^\infty(M)$ (up to the boundary in the second case) outside of any neighborhood of $\Sigma$.

For the case of general compact manifold $M$ and hypersurface $\Sigma$ transversal to the boundary of~$M$,
we say that $u\in I^s(M,N^*\Sigma)$ if (see~\cite[Theorem~18.2.8]{Hormander3})
\begin{enumerate}
\item $u$ is in $C^\infty(M)$ (up to the boundary) outside of any neighborhood of $\Sigma$; and
\item the localizations of $u$ to the model cases using coordinates lie in $I^s$ as defined
above.
\end{enumerate}
Note that the wavefront set of any $u\in I^s(M,N^*\Sigma)$,
considered as a distribution on the interior $M^\circ$,
is contained in~$N^*\Sigma$.

In addition we define the space
$$
I^{s+}(M,N^*\Sigma):=\bigcap_{s'>s} I^{s'}(M,N^*\Sigma).
$$
Such spaces are characterized in terms of the Sobolev spaces (where for simplicity
assume that $M$ has no boundary, since this is the only case used in this paper),
$H^{s-}(M):=\bigcap_{s'<s}H^{s'}(M)$,
as follows:
\begin{equation}
\label{eq:iter}
u\in I^{s+}(M,N^*\Sigma) \ \Longleftrightarrow \  \left\{ \begin{array}{l} \text{ For any  
vector fields on~$M$ tangent to $\Sigma$,} \  X_1,\dots,X_\ell, \\
\quad X_1\dots X_\ell \, u\in H^{-{m\over 4}-s-}(M) , \end{array} \right. 
\end{equation}
see \cite[Definition~18.2.6 and Theorem~18.2.8]{Hormander3}. 

Assume now that the conormal bundle $N^*\Sigma$ is oriented;
for $(x,\xi)\in N^*\Sigma\setminus 0$ we say that
$\xi>0$ if $\xi$ is positively oriented and $\xi<0$ if $\xi$ is negatively oriented.
This gives the splitting
\begin{equation}
\label{eq:N2pm}
N^*\Sigma\setminus 0=N^*_+\Sigma\sqcup N^*_-\Sigma,\quad
N^*_\pm\Sigma:=\{(x,\xi)\in N^*\Sigma\mid \pm\xi>0\}.
\end{equation}
Denote by $I^s(M,N_\pm^*\Sigma)$ the space of distributions
$u\in I^s(M,N^*\Sigma)$ such that $\WF(u)\subset N_\pm^*\Sigma$,
up to the boundary. Since $ \Sigma $ is transversal to the boundary this means that 
an extension of $ u $ satisfies this condition. 
In the model case (and effectively in the cases considered in this paper) $M=\mathbb R^m$, $\Sigma=\{x_1=0\}$ they can be characterized as follows:
$\check u(\xi_1,x')$ lies in $S^{{m\over 4}-{1\over 2}+s}(N^*\Sigma)$
and $\check u(\xi_1,x')=\mathcal O(\langle\xi_1\rangle^{-\infty})$
as $\xi_1\to \mp\infty$.

In the present paper we will often study the case when $M=\partial\Omega$,
identified with $\mathbb S^1$ by a coordinate~$\theta$,
and we are given two finite sets $\Sigma^+,\Sigma^-\subset \partial\Omega$ with 
$\Sigma^+\cap\Sigma^-=\emptyset$. We denote
\begin{equation}
  \label{e:I-s-pm}
I^s(\partial\Omega,N^*_+\Sigma^-\sqcup N^*_-\Sigma^+):=
I^s(\partial\Omega,N^*_+\Sigma^-)+I^s(\partial\Omega,N^*_-\Sigma^+).
\end{equation}
Put $\Sigma:=\Sigma^+\sqcup\Sigma^-$, then $I^s(\partial\Omega,N^*_+\Sigma^-\sqcup N^*_-\Sigma^+)$
consists of the elements of $I^s(\partial\Omega;N^*\Sigma)$ with
wavefront set contained in $N^*_+\Sigma^-\sqcup N^*_-\Sigma^+$.

Assume that $\Sigma$ has an even number of points (which will be the case
in our application) and fix a defining function $\rho\in C^\infty(\partial\Omega;\mathbb R)$
of~$\Sigma$: that is, $\Sigma=\rho^{-1}(0)$ and $d\rho\neq 0$ on~$\Sigma$.
Fix also a pseudodifferential operator $A_\Sigma\in\Psi^0(\partial\Omega)$ such that
$\WF(A_\Sigma)\cap (N^*_+\Sigma^-\sqcup N^*_-\Sigma^+)=\emptyset$
and $A_\Sigma$ is elliptic on $N^*_-\Sigma^-\sqcup N^*_+\Sigma^+$.
Using~\eqref{eq:iter}, we see that $u\in \mathcal D'(\partial\Omega)$ lies in
$I^{s+}(\partial\Omega,N^*_+\Sigma^-\sqcup N^*_-\Sigma^+)$
if and only if the following seminorms are finite:
\begin{equation}
  \label{e:I-s-pm-seminorms}
\|(\rho\partial_\theta)^N u\|_{H^{-\frac14-s-\beta}},\quad
\|A_\Sigma u\|_{H^N}\quad\text{for all}\quad N\in\mathbb N_0,\
\beta>0.
\end{equation}
Choosing different $\rho$ and $A_\Sigma$ leads to an equivalent family of seminorms~\eqref{e:I-s-pm-seminorms}.
In particular, if $\rho,A_\Sigma$ are as above and $\widetilde A_\Sigma\in\Psi^0(\partial\Omega)$
satisfies $\WF(\widetilde A_\Sigma)\cap (N^*_+\Sigma^-\sqcup N^*_-\Sigma^+)=\emptyset$ then $\WF(\widetilde A_\Sigma)$
lies in the union of $\{\rho\neq 0\}$ and the elliptic set of $A_\Sigma$, thus by the elliptic estimate we have
for $N_0\geq N+\frac 14+s+\beta$
\begin{equation}
  \label{e:I-s-pm-sameor}
\|\widetilde A_\Sigma u\|_{H^N}\leq C\big(\|A_\Sigma u\|_{H^N}+ \|(\rho\partial_\theta)^{N_0}u\|_{H^{-\frac14-s-\beta}}
+\|u\|_{H^{-\frac14-s-\beta}}\big).
\end{equation}
Moreover, the operator $\rho\partial_\theta$ is bounded with respect to the seminorms~\eqref{e:I-s-pm-seminorms},
as are pseudodifferential operators in $\Psi^0(\partial\Omega)$~\cite[Theorem~18.2.7]{Hormander3}.

We will also need the notion of conormal distributions depending smoothly on a parameter~-- see \cite[Lemma 4.4]{DZ-FLOP} for a more general Lagrangian version. 
Here we restrict ourselves to the specific conormal distributions appearing in this paper
and define relevant smooth families of conormal distributions in Proposition~\ref{p:boundvder} and Lemma~\ref{l:conor}.

We will not discuss principal symbols of general conormal distributions to avoid introducing half-densities, however
we give here a special case of the way the principal symbol changes under pseudodifferential operators
and under pullbacks:
%%%%%%%%%%%%%%%%%%%%%%%%%%%%%%%%%%%%%%%%%%%%%%%%%%%%%%%%%%%%%%%%%%%%%%%%%%%%%%%%
\begin{lemm}
  \label{l:conormal-symbol}
Assume that $u\in\mathcal E'(\mathbb R)$ lies in $I^s(\mathbb R,\{0\})$, that is $\widehat u\in S^{s-\frac14}(\mathbb R)$.
Then:
\begin{enumerate}
\item If $a(x,\xi)\in S^0(T^*\mathbb R)$ is compactly supported in the $x$ variable and
$\Op(a)$ is defined by~\eqref{e:Op-a-def}, then
\begin{equation}
  \label{e:conormal-symbol-1}
\widehat{\Op(a)u}(\xi)=a(0,\xi)\widehat u(\xi)+S^{s-\frac54}(\mathbb R).
\end{equation}
\item If $f$ is a diffeomorphism of open subsets of $\mathbb R$ such that $f(0)=0$ and the range of $f$ contains $\supp u$, then
\begin{equation}
  \label{e:conormal-symbol-2}
\widehat{f^*u}(\xi)={1\over |f'(0)|}\widehat u\Big({\xi\over f'(0)}\Big)+S^{s-\frac54}(\mathbb R).
\end{equation}
\end{enumerate}
\end{lemm}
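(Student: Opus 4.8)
The plan is to prove both parts by direct computations with one-dimensional Fourier transforms, reducing matters to symbol estimates; both statements are special cases of the calculus of conormal distributions in~\cite[\S18.2]{Hormander3}.

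\emph{Part~1.} I would begin from the identity $\widehat{\Op(a)u}(\eta)={1\over 2\pi}\int_{\mathbb R}\check a(\eta-\xi,\xi)\widehat u(\xi)\,d\xi$, where $\check a(\zeta,\xi)$ denotes the Fourier transform of $a$ in its first slot; this follows from~\eqref{e:Op-a-def} after integrating in~$y$. The point is that compact support of $a$ in~$x$ together with $a\in S^0$ gives $\check a(\zeta,\xi)=\mathcal O(\langle\zeta\rangle^{-\infty})$ uniformly in~$\xi$, with each $\xi$-derivative gaining a factor $\langle\xi\rangle^{-1}$. Fourier inversion rewrites $a(0,\eta)\widehat u(\eta)={1\over 2\pi}\int_{\mathbb R}\check a(\eta-\xi,\eta)\widehat u(\eta)\,d\xi$ in the same form; subtracting, substituting $\zeta=\eta-\xi$, and writing the integrand as $\int_0^1{d\over dt}\big(\check a(\zeta,\eta-t\zeta)\widehat u(\eta-t\zeta)\big)\,dt$, I obtain an integrand carrying an explicit factor of~$\zeta$ and either a $\xi$-derivative of $\check a$ or a derivative of~$\widehat u$. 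Since $\widehat u\in S^{s-1/4}$ forces $\widehat u'\in S^{s-5/4}$, Peetre's inequality and the rapid $\zeta$-decay of $\check a$ make the $\zeta$-integral converge with bound $\mathcal O(\langle\eta\rangle^{s-5/4})$, and differentiating under the integral in~$\eta$ (each derivative either landing on $\widehat u^{(\ell)}$, lowering its order, or on $\check a$, preserving rapid $\zeta$-decay) upgrades this to the full symbol estimate~\eqref{e:conormal-symbol-1}.

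\emph{Part~2.} I would first localize: with $\chi\in\CIc(\mathbb R)$ equal to~$1$ near~$0$, the function $(1-\chi)u$ is in $\CIc(\mathbb R)$, so its contribution to both sides of~\eqref{e:conormal-symbol-2} has Schwartz Fourier transform and is negligible; hence one may assume $u$ supported in an arbitrarily small neighbourhood of~$0$. Writing $c:=f'(0)$ and $f(x)=cx\rho(x)$ with $\rho$ smooth, $\rho(0)=1$, I would interpolate by $f_t(x):=(1-t)cx+tf(x)=cx\rho_t(x)$, $\rho_t(0)=1$, which for $t\in[0,1]$ is a diffeomorphism of a fixed small neighbourhood of~$0$ with $f_0(x)=cx$ and $f_1=f$. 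As $\widehat{f_0^*u}(\eta)={1\over|c|}\widehat u(\eta/c)$ exactly, it remains to show $\partial_t\widehat{f_t^*u}\in S^{s-5/4}$ uniformly in~$t$; integrating over $t\in[0,1]$ then gives~\eqref{e:conormal-symbol-2}. For this I would use the chain-rule identity $\partial_t(f_t^*u)=w_t\,\partial_x(f_t^*u)$, with $w_t(x):=(f(x)-cx)/f_t'(x)$ smooth and vanishing to second order at~$0$ (valid for smooth~$u$ by the chain rule and extending by density). Knowing $f_t^*u\in I^s(\mathbb R,\{0\})$ from the diffeomorphism invariance of conormal distributions~\cite[\S18.2]{Hormander3}, uniformly in~$t$, and using that $\partial_x$ maps $I^s(\mathbb R,\{0\})$ into $I^{s+1}(\mathbb R,\{0\})$ while multiplication by a smooth function vanishing to second order at~$0$ maps $I^{s+1}(\mathbb R,\{0\})$ into $I^{s-1}(\mathbb R,\{0\})$, I conclude $\partial_t(f_t^*u)\in I^{s-1}(\mathbb R,\{0\})$, that is $\partial_t\widehat{f_t^*u}\in S^{s-5/4}$ uniformly in~$t$.

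I expect the bookkeeping to be routine, with two points deserving care: in part~1, carrying the $\eta$-derivatives through the $\zeta$-integral so as to obtain genuine symbol estimates and not merely the leading bound; and in part~2, justifying the identity $\partial_t(f_t^*u)=w_t\partial_x(f_t^*u)$ at the level of distributions and checking that the conormal seminorms of $f_t^*u$ are uniform over the compact $t$-interval (which holds since they depend only on finitely many derivatives of $f_t$, bounded uniformly in~$t$, and on a neighbourhood of~$0$ that contains $\supp u$ for all~$t$). The uniformity in~$t$ in part~2 is the main thing to be careful about, though it is not a serious difficulty.
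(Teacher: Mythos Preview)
Your proposal is correct, and both parts take a somewhat different route from the paper.

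For part~1, the paper writes $\widehat{\Op(a)u}(\xi)$ as the double oscillatory integral $\frac{1}{2\pi}\int e^{ix(\eta-\xi)}a(x,\eta)\widehat u(\eta)\,d\eta\,dx$, splits according to whether $|\eta-\xi|$ is comparable to $|\xi|$ or not, and applies integration by parts in~$x$ on the first piece and stationary phase on the second. Your argument instead carries out the $x$-integral first to obtain $\check a$, then uses a one-step Taylor expansion along the segment from $(\eta,\eta)$ to $(\eta,\eta-\zeta)$ together with the rapid $\zeta$-decay of $\check a$. The two approaches are close in spirit; yours is a bit more elementary since it avoids invoking stationary phase and makes the symbol-remainder estimates explicit.

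For part~2 the approaches are genuinely different. The paper again writes an oscillatory integral $\frac{1}{2\pi}\int e^{i(f(x)\eta-x\xi)}\chi(f(x))\widehat u(\eta)\,d\eta\,dx$ and applies stationary phase at the critical point $x=0$, $\eta=\xi/f'(0)$, reading off the leading term from the Hessian. Your homotopy argument instead reduces to the exact linear case $f_0(x)=f'(0)x$ and controls the $t$-derivative via the identity $\partial_t(f_t^*u)=w_t\,\partial_x(f_t^*u)$ with $w_t$ vanishing to second order at~$0$, which places $\partial_t(f_t^*u)$ one full order lower in the conormal filtration. This is a clean and self-contained alternative that bypasses stationary phase entirely; the paper's approach is more uniform across the two parts and connects more directly to the general symbol calculus for conormal distributions in~\cite[Theorems~18.2.9 and~18.2.12]{Hormander3}. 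The two points you flag (carrying $\eta$-derivatives through the $\zeta$-integral in part~1, and uniformity in~$t$ of the conormal seminorms in part~2) are indeed the only places requiring care, and both are routine.
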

%%%%%%%%%%%%%%%%%%%%%%%%%%%%%%%%%%%%%%%%%%%%%%%%%%%%%%%%%%%%%%%%%%%%%%%%%%%%%%%%
\begin{proof}
Since these statements are standard, we only sketch the proofs, referring to~\cite[Theorems~18.2.9 and~18.2.12]{Hormander3} for details.
To see~\eqref{e:conormal-symbol-1} we use the formula
$$
\widehat{\Op(a)u}(\xi)={1\over 2\pi}\int_{\mathbb R^2} e^{ix(\eta-\xi)}a(x,\eta)\widehat u(\eta)\,d\eta dx.
$$
Assume that $|\xi|\geq 1$.
Using a smooth partition of unity, we split the integral above into 2 pieces: one where $|\eta-\xi|\geq \frac 14|\xi|$
and another one where $|\eta-\xi|\leq \frac12|\xi|$. The first piece is rapidly decaying in~$\xi$ by integration by parts in~$x$.
The second piece is equal to $a(0,\xi)\widehat u(\xi)+S^{s-\frac54}(\mathbb R)$ by the method of stationary phase.

To see~\eqref{e:conormal-symbol-2} we fix a cutoff $\chi\in \CIc(\mathbb R)$ such that $\supp \chi$ lies in the range of~$f$
and $\chi=1$ near $\supp u$. Using the Fourier inversion formula, we write
$$
\widehat{f^*u}(\xi)={1\over 2\pi}\int_{\mathbb R^2} e^{i(f(x)\eta-x\xi)}\chi(f(x))\widehat u(\eta)\,d\eta dx.
$$
Now~\eqref{e:conormal-symbol-2} is proved similarly to~\eqref{e:conormal-symbol-1}. Here in the application
of the method of stationary phase, the critical point is given by $x=0$, $\eta=\xi/f'(0)$ and the Hessian
of the phase at the critical point has signature~0 and determinant $-f'(0)^2$.
\end{proof}
%%%%%%%%%%%%%%%%%%%%%%%%%%%%%%%%%%%%%%%%%%%%%%%%%%%%%%%%%%%%%%%%%%%%%%%%%%%%%%%%

%%%%%%%%%%%%%%%%%%%%%%%%%%%%%%%%%%%%%%%%%%%%%%%%%%%%%%%%%%%%%%%%%%%%%%%%%%%%%%%%
\subsection{Convolution with logarithm}
\label{s:conv-log}

In \S \ref{s:blp} we need information about mapping properties between
spaces of conormal distributions on the boundary and conormal distributions in the interior.
In preparation for Lemma~\ref{l:singl-2} below we now prove the following
%%%%%%%%%%%%%%%%%%%%%%%%%%%%%%%%%%%%%%%%%%%%%%%%%%%%%%%%%%%%%%%%%%%%%%%%%%%%%%%%
\begin{lemm}
  \label{l:log-conv}
Let $f\in \CIc(\mathbb R)$ and define
\begin{equation}
  \label{e:log-conv}
g(x):=\int_0^\infty \log|x-y|\,{f(y)\over \sqrt y}\,dy,\quad x>0.
\end{equation}
Then $g\in C^\infty([0,\infty))$.
\end{lemm}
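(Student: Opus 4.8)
The plan is to strip off the $y^{-1/2}$ weight by the substitution $y=t^2$, then, after a matching substitution $x=s^2$, to recognise $g$ as (twice) the convolution of $\log|\cdot|$ with a compactly supported smooth function. Carrying out the first substitution in \eqref{e:log-conv} (so that $dy=2t\,dt$ and $y^{-1/2}=t^{-1}$ for $t>0$) gives
$$
g(x)=2\int_0^\infty \log|x-t^2|\,h(t)\,dt=\int_{\mathbb R}\log|x-t^2|\,h(t)\,dt,\qquad h(t):=f(t^2)\in\CIc(\mathbb R),
$$
where the second equality uses that $h$ is even and $t\mapsto\log|x-t^2|$ is even. The integral converges absolutely, locally uniformly in $x\geq 0$, so $g$ is well-defined and continuous on $[0,\infty)$ by dominated convergence.

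Next I would exploit the factorization. For $x>0$ write $x=s^2$ with $s>0$; then $|x-t^2|=|s-t|\,|s+t|$, hence $\log|x-t^2|=\log|s-t|+\log|s+t|$, and replacing $t$ by $-t$ in the second term (using once more that $h$ is even) yields
$$
g(s^2)=2\int_{\mathbb R}\log|s-t|\,h(t)\,dt=2\,(L*h)(s),\qquad L(t):=\log|t|,\quad s>0.
$$

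The conclusion then follows from standard facts. Since $L\in L^1_{\mathrm{loc}}(\mathbb R)$ and $h\in\CIc(\mathbb R)$, the function $G:=2\,L*h$ lies in $C^\infty(\mathbb R)$: differentiation under the integral sign gives $G^{(k)}=2\,L*h^{(k)}$, each an absolutely convergent integral. Moreover $G$ is even (the same change-of-variables argument). An even function in $C^\infty(\mathbb R)$ can be written as $G(s)=\widetilde G(s^2)$ with $\widetilde G\in C^\infty(\mathbb R)$ (Taylor expansion, all odd coefficients vanishing; or Whitney's theorem). Therefore $g(x)=G(\sqrt x)=\widetilde G(x)$ for all $x>0$, and by the continuity of $g$ at $0$ established above, also for $x=0$. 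Hence $g=\widetilde G|_{[0,\infty)}\in C^\infty([0,\infty))$, as claimed.

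The argument is essentially a sequence of changes of variables, so I do not expect any real obstacle; the only mild points requiring care are the justification of differentiation under the integral sign (local integrability of $\log$, together with compact support of $h^{(k)}$) and the standard passage from an even $C^\infty$ function to a $C^\infty$ function of the square.
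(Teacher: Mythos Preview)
Your proof is correct and in fact cleaner than the paper's own argument. Both proofs begin with the same substitution $y=t^2$, arriving at
\[
g(x)=\int_{\mathbb R}\log|x-t^2|\,h(t)\,dt,\qquad h(t)=f(t^2)\in\CIc(\mathbb R),
\]
which is the paper's equation~\eqref{e:logconv-1}. From there the approaches diverge. You make the matching substitution $x=s^2$, recognise $g(s^2)=2(L*h)(s)$ with $L=\log|\cdot|$, observe that $G:=2L*h\in C^\infty(\mathbb R)$ is even, and invoke Whitney's theorem on differentiable even functions to write $G(s)=\widetilde G(s^2)$ with $\widetilde G\in C^\infty$. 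The paper instead first treats the special case where $f$ extends holomorphically to a disk: it introduces a multivalued logarithm $\psi_x(z)=\log(z-\sqrt x)+\log(z+\sqrt x)$ and deforms the real contour in~\eqref{e:logconv-1} into the upper half-plane, away from the branch cuts, so that $\psi_x$ and all its $x$-derivatives become bounded on the new contour. For general $f$ it then subtracts a Taylor polynomial (which is entire, hence covered by the holomorphic case) and uses that the remainder $x_+^{-1/2}f_2(x)\in C^N$ gives a $C^N$ convolution with $\log|x|$.

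Your route is shorter and more elementary; it replaces the contour deformation and the Taylor-remainder splitting by a single appeal to Whitney's theorem. The paper's approach, on the other hand, is self-contained in the sense that it does not rely on that external result, and the contour-deformation idea makes the mechanism of cancellation at the square-root branch points more explicit.
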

%%%%%%%%%%%%%%%%%%%%%%%%%%%%%%%%%%%%%%%%%%%%%%%%%%%%%%%%%%%%%%%%%%%%%%%%%%%%%%%%
\Remark In general $ g $ is {\em not} smooth on $ (-\infty,0]$. In fact, changing variables
 $ y = s^2 |x| $, we obtain (see \eqref{e:logconv-1} below)
\[
g' ( x ) = - 2|x|^{-\frac12} \int_0^\infty \frac {f ( s^2 |x| ) }{ 1 + s^2 }\, ds , \quad x < 0 ,
\]
which blows up as $ x \to 0 - $ if $ f(0)\neq 0 $. This, and the conclusion of
Lemma~\ref{l:log-conv}, can also be seen from analysis on the Fourier transform side. 
%%%%%%%%%%%%%%%%%%%%%%%%%%%%%%%%%%%%%%%%%%%%%%%%%%%%%%%%%%%%%%%%%%%%%%%%%%%%%%%%
\begin{proof}
Denote $x_+^{-1/2}:=H(x)x^{-1/2}$ where $H(x)$ is the Heaviside function:
$H(x)=1$ for $x>0$ and $H(x)=0$ for $x<0$.
Since $g$ is the convolution of $\log|x|$ and $x_+^{-1/2}f(x)$, which are both smooth except at $x=0$, the function $g$ is smooth on $(0,\infty)$. Thus it suffices
to prove that $g$ is smooth on $[0,1]$ up to the boundary.

\noindent 1. Assume first that $f$ is real valued and extends holomorphically
to the disk $\{|z|<4\}$.
Making the change of variables $y:=t^2$, we write
\begin{equation}
  \label{e:logconv-1}
g(x)=2\int_0^\infty\log|t^2-x|f(t^2)\,dt=
\int_{\mathbb R}\log|t^2-x|f(t^2)\,dt.
\end{equation}
Assume that $x\in (0,1]$ and consider the holomorphic function
$$
\psi_x(z):=\log (z-\sqrt{x})+\log(z+\sqrt{x}),\quad
z\in\mathbb C\setminus \big((\sqrt x-i[0,\infty))\cup (-\sqrt x-i[0,\infty))\big),
$$
where we use the branch of the logarithm on $\mathbb C\setminus -i[0,\infty)$
which takes real values on~$(0,\infty)$. Then
$\Re\psi_x(t)=\log|t^2-x|$ for all~$t\in\mathbb R\setminus \{\sqrt x,-\sqrt x\}$.

%%%%%%%%%%%%%%%%%%%%%%%%%%%%%%%%%%%%%%%%%%%%%%%%%%%%%%%%%%%%%%%%%%%%%%%%%%%%%%%%
\begin{figure}
\includegraphics{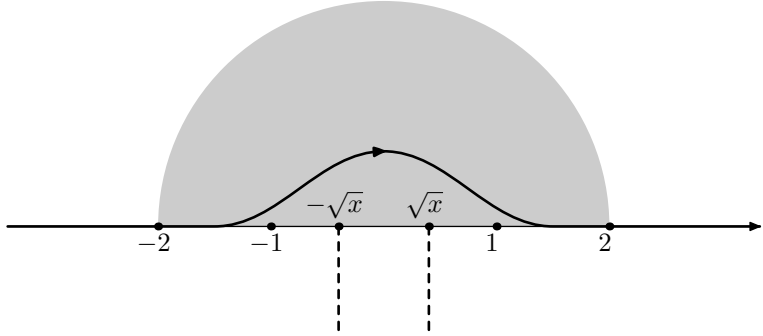}
\caption{The contour $\Gamma$ used in the proof of Lemma~\ref{l:log-conv}.
The dashed lines are the cuts needed to define the function $\psi_x(z)$.}
\label{f:contour}
\end{figure}
%%%%%%%%%%%%%%%%%%%%%%%%%%%%%%%%%%%%%%%%%%%%%%%%%%%%%%%%%%%%%%%%%%%%%%%%%%%%%%%%

Fix an $x$-independent contour $\Gamma=\{t+iw(t)\mid t\in\mathbb R\}\subset\mathbb C$
such that $w(t)\geq 0$ everywhere, $w(t)=0$ for $|t|\geq {3\over 2}$,
$|t+iw(t)|<2$ for $|t|\leq {3\over 2}$, and
$w(t)>0$ for $|t|\leq 1$. (See Figure~\ref{f:contour}.)
Deforming the contour in~\eqref{e:logconv-1}, we get
$$
g(x)=\Re\int_\Gamma \psi_x(z)f(z^2)\,dz\quad\text{for all}\quad
x\in (0,1].
$$
Since $\partial_x\psi_x(z)=(x-z^2)^{-1}$, the function $\psi_x(z)$
and all its $x$-derivatives are bounded uniformly in $x\in (0,1]$
and locally uniformly in $z\in\Gamma$. It follows that
$g$ is smooth on the interval $[0,1]$.

\noindent 2. For the general case, fix a cutoff $\chi\in\CIc(\mathbb R)$
such that $\chi=1$ near $[-4,4]$. Take arbitrary $N\in\mathbb N$.
Using the Taylor expansion of~$f$ at~0, we write
$$
f(x)=f_1(x)+f_2(x),\quad
f_1(x)=p(x)\chi(x)
$$
where $p$ is a polynomial of degree at most~$N$ and $f_2\in \CIc(\mathbb R)$
satisfies $f_2(x)=\mathcal O(|x|^{N+1})$ as $x\to 0$.
We write $g=g_1+g_2$ where $g_j$ are constructed from~$f_j$ using~\eqref{e:log-conv}.

By Step~1 of the present proof,
we see that $g_1$ is smooth on~$[0,1]$.
On the other hand, $x_+^{-1/2}f_2(x)\in C^N(\mathbb R)$;
since $\log|x|$ is locally integrable we get $g_2\in C^N([0,1])$.
Since $N$ can be chosen arbitrary, this gives $g\in C^\infty([0,1])$
and finishes the proof.
\end{proof}
%%%%%%%%%%%%%%%%%%%%%%%%%%%%%%%%%%%%%%%%%%%%%%%%%%%%%%%%%%%%%%%%%%%%%%%%%%%%%%%%
We also give the following general mapping property of convolution with logarithm
on conormal spaces used in  Lemma~\ref{l:con} below:
%%%%%%%%%%%%%%%%%%%%%%%%%%%%%%%%%%%%%%%%%%%%%%%%%%%%%%%%%%%%%%%%%%%%%%%%%%%%%%%%
\begin{lemm}
  \label{l:log-conormal}
Let $\Sigma\subset\mathbb R$ be a finite set 
and put
$\log_\pm x:=\log(x\pm i0)$. 
Then 
\[
f\in  I^s(\mathbb R,N^*_\pm\Sigma) \cap \mathcal E'  ( \mathbb R )  \quad \Longrightarrow\quad  
\begin{cases}
\log_\pm*f\in I^{s-1}(\mathbb R,N^*_\pm\Sigma),\\
\log_\mp*f\in C^\infty(\mathbb R).
\end{cases}
\]
\end{lemm}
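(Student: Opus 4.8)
The plan is to reduce everything to the Fourier transform side and to the single-point model, then combine the behaviour at the two cuts $\pm i0$. First I would recall that $\log_\pm x = \log(x\pm i0)$ has a known Fourier transform: up to an entire (in fact smooth, rapidly decaying after cutoff) error, $\widehat{\log_\pm}(\xi)$ behaves like a constant times $(\xi\mp i0)^{-1}$, and in particular $\widehat{\log_\pm}(\xi)=\mathcal O(\langle\xi\rangle^{-1})$ as $\xi\to\mp\infty$ while $\widehat{\log_\pm}(\xi)$ is $\mathcal O(\langle\xi\rangle^{-1})$ but not rapidly decaying as $\xi\to\pm\infty$. Since $f\in I^s(\mathbb R,N^*_\pm\Sigma)\cap\mathcal E'(\mathbb R)$, by the model-case description in \S\ref{s:con} (after a partition of unity localizing near each point of $\Sigma$ and a translation to $0$) we have $\widehat f\in S^{s-\frac14}(\mathbb R)$ with $\widehat f(\xi)=\mathcal O(\langle\xi\rangle^{-\infty})$ as $\xi\to\mp\infty$. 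Then $\widehat{\log_\pm * f}=\widehat{\log_\pm}\,\widehat f$, and I would check directly that this product lies in $S^{s-\frac 54}(\mathbb R)$ and is rapidly decaying as $\xi\to\mp\infty$: the decay in the bad direction for $\widehat{\log_\pm}$ is killed by the rapid decay of $\widehat f$ there, and in the good direction both factors decay at least one order, giving the gain from $s$ to $s-1$. This yields $\log_\pm * f\in I^{s-1}(\mathbb R,N^*_\pm\Sigma)$ after summing the localized pieces and reinstating the translations (convolution commutes with translation, so the conormal points are preserved).

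For the second conclusion, $\log_\mp * f$, the same product representation gives $\widehat{\log_\mp}\,\widehat f$, where now \emph{both} factors are rapidly decaying on a common half-line? No — rather, $\widehat{\log_\mp}(\xi)$ is rapidly decaying as $\xi\to\pm\infty$ and $\widehat f(\xi)$ is rapidly decaying as $\xi\to\mp\infty$, so the product is rapidly decaying as $\xi\to+\infty$ and as $\xi\to-\infty$, hence $\widehat{\log_\mp * f}\in\mathcal S(\mathbb R)$ (using that $\widehat f$ is a symbol, so polynomially bounded, times something Schwartz), and therefore $\log_\mp * f\in C^\infty(\mathbb R)$. Here I should be a little careful: $f$ being compactly supported and conormal means $\widehat f$ is a \emph{classical} symbol estimate, not Schwartz, but multiplying a polynomially-bounded smooth function by a Schwartz function gives a Schwartz function, and the product of the two cutoff-adjusted transforms is genuinely rapidly decaying in both directions, so the conclusion stands. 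The compact support of $f$ also ensures $\log_\mp * f$ is well-defined as a tempered distribution (the logarithm is locally integrable and polynomially bounded) and that the Fourier-transform manipulation is justified.

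The one genuinely delicate point, and the place I would be most careful, is the precise behaviour of $\widehat{\log_\pm}$ — in particular that it is a symbol of order $-1$ with a one-sided rapid-decay property, and that the pointwise product of two symbol-type functions with matching one-sided decay again satisfies the symbol estimates \eqref{eq:symbk} with the improved order; this is where the ``$-1$'' in $I^{s-1}$ comes from and where one must track that no logarithmic losses appear. I would handle this by writing $\log(x\pm i0)=\log|x|\mp i\pi H(\mp x)$ and using the standard formulas for $\widehat{\log|x|}$ (a multiple of $\mathrm{p.v.}\,|\xi|^{-1}$ plus a delta and a constant) and $\widehat{H}$ (a multiple of $(\xi\mp i0)^{-1}$), so that after a cutoff separating $\xi$ near $0$ from $|\xi|$ large one reads off both the order $-1$ symbol bound and the one-sided smoothness/decay directly; then the product estimate is the Leibniz rule on symbols. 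Everything else — the partition of unity on $\Sigma$, commuting convolution with translations, reassembling the pieces — is routine and follows the model-case setup already recalled in \S\ref{s:con}.
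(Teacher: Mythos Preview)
Your approach is correct and is at heart the same Fourier-side argument the paper uses. The paper takes one shortcut that avoids your ``delicate point'': rather than computing $\widehat{\log_\pm}$ directly (with its awkward behaviour at $\xi=0$), it observes that since $\partial_x$ is elliptic it suffices to analyse $\partial_x(\log_\pm * f)=(x\pm i0)^{-1}*f$, and then invokes the clean formula $\widehat{(x\pm i0)^{-1}}(\xi)=\mp 2\pi i H(\pm\xi)$. Multiplying $\widehat f$ by a Heaviside function immediately gives both conclusions without any cutoff near $\xi=0$ or Leibniz-rule bookkeeping. Your route works too---the singularity of $\widehat{\log_\pm}$ at the origin is harmless because $\widehat f$ is smooth there and only large-$|\xi|$ behaviour matters for conormal/smooth regularity---but the differentiation trick is shorter. (Incidentally, your decomposition should read $\log(x\pm i0)=\log|x|\pm i\pi H(-x)$; the sign slip does not affect the argument.)
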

%%%%%%%%%%%%%%%%%%%%%%%%%%%%%%%%%%%%%%%%%%%%%%%%%%%%%%%%%%%%%%%%%%%%%%%%%%%%%%%%
\begin{proof}
Assume that $f\in I^s(\mathbb R,N^*_+\Sigma) \cap \mathcal E'  ( \mathbb R )$
(the case of $N^*_-\Sigma$ is handled similarly). We may reduce to the case $\Sigma=\{0\}$.
Since $ \partial_x $ is an elliptic operator, the local definition \eqref{eq:uosc} (with no $ x' $ variable)
shows that it is enough to show that
$$
\partial_x \log_+ * f\in  I^{s}(\mathbb R,N^*_+ \{0\}),\quad
\partial_x \log_- * f\in  C^\infty(\mathbb R).
$$
It remains to use that $ \partial_x \log_\pm = 
( x \pm i 0 )^{-1} $ and we have the Fourier transform formula
(see \cite[Example 7.1.17]{Hormander1}; here $ H $ is the Heaviside function)
\begin{equation}
  \label{e:x-i0-FT}
\mathbf u_{\pm0}(x):=(x\pm i0)^{-1}\quad\Longrightarrow\quad
\widehat{\mathbf u}_{\pm0}(\xi)=\mp 2 \pi i H ( \pm \xi ).
\end{equation}
 \end{proof}
%%%%%%%%%%%%%%%%%%%%%%%%%%%%%%%%%%%%%%%%%%%%%%%%%%%%%%%%%%%%%%%%%%%%%%%%%%%%%%%%

%%%%%%%%%%%%%%%%%%%%%%%%%%%%%%%%%%%%%%%%%%%%%%%%%%%%%%%%%%%%%%%%%%%%%%%%%%%%%%%%
\subsection{Microlocal structure of \texorpdfstring{$(x\pm i\varepsilon\psi(x))^{-1}$}{the inverse of (x\unichar{"00B1}i\unichar{"03B5}\unichar{"03C8}(x))}}
\label{s:micro-inv}

In this section we study the behaviour as $\varepsilon\to 0+$ of functions of the form
\begin{equation}
  \label{e:our-inv}
\chi(x,\varepsilon)(x\pm i\varepsilon\psi(x,\varepsilon))^{-1}\ \in\ C^\infty(J),\quad
0<\varepsilon<\varepsilon_0
\end{equation}
where $J\subset\mathbb R$ is an open interval containing~0 and
\begin{equation}
  \label{e:inv-psi-ass}
\chi,\psi\in C^\infty(J\times [0,\varepsilon_0);\mathbb C),\quad
\Re\psi>0\quad\text{on}\quad J\times [0,\varepsilon_0).
\end{equation}
We first decompose~\eqref{e:our-inv}
into the sum of $r(x\pm i\varepsilon z)^{-1}$, where $r,z\in\mathbb C$ depend on~$\varepsilon$ but not on~$x$,
and a function which is smooth uniformly in~$\varepsilon$: 
%%%%%%%%%%%%%%%%%%%%%%%%%%%%%%%%%%%%%%%%%%%%%%%%%%%%%%%%%%%%%%%%%%%%%%%%%%%%%%%%
\begin{lemm}
\label{l:prepare}
Under the conditions~\eqref{e:inv-psi-ass} we have for all $\varepsilon\in (0,\varepsilon_0)$
\begin{equation}
  \label{e:prepare}
\chi(x,\varepsilon)(x\pm i\varepsilon\psi(x,\varepsilon))^{-1}=r^\pm(\varepsilon)(x\pm i\varepsilon z^\pm(\varepsilon))^{-1}+q^\pm(x,\varepsilon)
\end{equation}
where $r^\pm,z^\pm\in C^\infty([0,\varepsilon_0))$ and $q^\pm\in C^\infty(J\times [0,\varepsilon_0))$
are complex valued, $\Re z^\pm>0$ on $[0,\varepsilon_0)$, $z^\pm(0)=\psi(0,0)$,
and $\chi(x,0)=xq^\pm(x,0)+r^\pm(0)$.
\end{lemm}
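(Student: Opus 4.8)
It suffices to treat the ``$+$'' case: the ``$-$'' case follows by conjugating, since $x-i\varepsilon\psi(x,\varepsilon)=\overline{x+i\varepsilon\overline{\psi(x,\varepsilon)}}$ for real $x$ and $\Re\overline{\psi}>0$. Once $r^+,z^+$ are defined with $\Re z^+>0$, the functions $x+i\varepsilon\psi(x,\varepsilon)$ and $x+i\varepsilon z^+(\varepsilon)$ are bounded away from $0$ for $(x,\varepsilon)$ away from $(0,0)$, so $q^+:=\chi(\bullet,\varepsilon)(x+i\varepsilon\psi(\bullet,\varepsilon))^{-1}-r^+(\varepsilon)(x+i\varepsilon z^+(\varepsilon))^{-1}$ is automatically smooth there; hence everything is local near $(x,\varepsilon)=(0,0)$, and we may shrink $\varepsilon_0$ (which is harmless for the use in~\S\ref{s:blp}). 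The plan is to isolate the single pole of $\chi/(x+i\varepsilon\psi)$ which migrates to $x=0$ as $\varepsilon\to0+$ by a residue subtraction, made rigorous with almost analytic extensions.

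Fix a small interval $J_0\ni0$ with $\overline{J_0}\subset J$ and choose almost analytic extensions $\tilde\chi(x,\varepsilon),\tilde\psi(x,\varepsilon)$ in $x$, smooth for $x$ in a complex neighbourhood of $\overline{J_0}$ and $\varepsilon\in[0,\varepsilon_0)$, restricting to $\chi,\psi$ on $J_0$, with $\Re\tilde\psi>0$ and with $\bar\partial_x\tilde\chi,\bar\partial_x\tilde\psi$ and all their derivatives vanishing to infinite order on $\mathbb R$, uniformly in $\varepsilon$. Put $\tilde A(x,\varepsilon):=x+i\varepsilon\tilde\psi(x,\varepsilon)$. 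Since $\tilde A(x,0)=x$, the map $x\mapsto\tilde A(x,\varepsilon)$ between open subsets of $\mathbb C$ has differential near the identity for small $\varepsilon$, hence is an injective local diffeomorphism on a fixed disk about $0$; thus $\tilde A(\bullet,\varepsilon)$ has a unique zero $x_0(\varepsilon)$ there, smooth in $\varepsilon$, with $x_0(0)=0$. Writing $x_0(\varepsilon)=-i\varepsilon z^+(\varepsilon)$ with $z^+$ smooth, the relation $x_0=-i\varepsilon\tilde\psi(x_0,\varepsilon)$ gives $z^+(0)=\psi(0,0)$, so $\Re z^+>0$ on $[0,\varepsilon_0)$ after shrinking $\varepsilon_0$; set $B(x,\varepsilon):=x+i\varepsilon z^+(\varepsilon)=x-x_0(\varepsilon)$. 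The key quantitative point is that $x_0(\varepsilon)$ lies at distance $\varepsilon\Re z^+(\varepsilon)\asymp\varepsilon$ from $\mathbb R$, so $|x-x_0(\varepsilon)|\gtrsim\varepsilon$ for real $x$.

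Taylor expanding $\tilde A$ at its zero, $\tilde A(x,\varepsilon)=(x-x_0)\tilde g(x,\varepsilon)+\overline{(x-x_0)}\,\tilde r(x,\varepsilon)$ with $\tilde g:=\int_0^1\partial_x\tilde A(x_0+t(x-x_0),\varepsilon)\,dt$ and $\tilde r:=i\varepsilon\int_0^1\bar\partial_x\tilde\psi(x_0+t(x-x_0),\varepsilon)\,dt$. Here $\tilde g(x_0,\varepsilon)=1+i\varepsilon\partial_x\tilde\psi(x_0,\varepsilon)\to1$, so $\tilde g$ is nonvanishing for small $\varepsilon$; and for real $x\in J_0$ the integration points have imaginary part $(1-t)\Im x_0(\varepsilon)=O(\varepsilon)$, so $\tilde r$ and all its $(x,\varepsilon)$-derivatives are $O(\varepsilon^\infty)$ on $J_0\times[0,\varepsilon_0)$. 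Set $r^+(\varepsilon):=\tilde\chi(x_0(\varepsilon),\varepsilon)/\tilde g(x_0(\varepsilon),\varepsilon)$, which is smooth with $r^+(0)=\chi(0,0)$. Expanding a geometric series in $\overline{(x-x_0)}\,\tilde r/((x-x_0)\tilde g)$ and repeatedly using $|x-x_0|\gtrsim\varepsilon$ to absorb negative powers of $|x-x_0|$ against $O(\varepsilon^\infty)$ gives $\tilde\chi/\tilde A=\tilde\chi/((x-x_0)\tilde g)+e_1$ with $e_1\in C^\infty(J_0\times[0,\varepsilon_0))$ and $e_1=O(\varepsilon^\infty)$, hence
\[
q^+=\frac{1}{x-x_0}\Big(\frac{\tilde\chi}{\tilde g}-r^+(\varepsilon)\Big)+e_1 .
\]
The function $F:=\tilde\chi/\tilde g-r^+(\varepsilon)$ is smooth, almost analytic in $x$, and vanishes at $x=x_0(\varepsilon)$ by the choice of $r^+$; writing $F=(x-x_0)F_1+\overline{(x-x_0)}F_2$ as above, $F_2$ and its derivatives are $O(\varepsilon^\infty)$ on $J_0\times[0,\varepsilon_0)$ while $F_1|_{x\in\mathbb R}$ is smooth in $(x,\varepsilon)$, so $q^+=F_1+\overline{(x-x_0)}(x-x_0)^{-1}F_2+e_1$. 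Since $\overline{(x-x_0)}(x-x_0)^{-1}$ has modulus $1$ and, with its $(x,\varepsilon)$-derivatives, is bounded by $C_k\varepsilon^{-k}$ for real $x$, the middle term is smooth up to $\varepsilon=0$ and vanishes there; thus $q^+\in C^\infty(J_0\times[0,\varepsilon_0))$, and hence on all of $J\times[0,\varepsilon_0)$. Finally, at $\varepsilon=0$ we have $x_0=0$, $\tilde g\equiv1$, $e_1\equiv0$, $F_2\equiv0$, so $q^+(x,0)=F_1(x,0)=(\chi(x,0)-\chi(0,0))/x$, i.e. $\chi(x,0)=xq^+(x,0)+r^+(0)$.

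The main obstacle is exactly this uniform smoothness as $\varepsilon\to0+$: naive choices such as $z^+(\varepsilon)=\psi(0,\varepsilon)$, $r^+(\varepsilon)=\chi(0,\varepsilon)$ fail because functions like $x/(x+i\varepsilon)$ are not even continuous at $(0,0)$, so the poles of $\chi/A$ and $r^+/B$ must cancel \emph{exactly} rather than approximately. This forces $z^+(\varepsilon)$ to be, in effect, the solution of the implicit equation $z=\psi(-i\varepsilon z,\varepsilon)$, a genuinely complex object, which is what makes the almost analytic framework natural; the computation then closes because this complex zero lies at distance comparable to $\varepsilon$ from the real axis, so the $\bar\partial$-errors, which are $O(\varepsilon^\infty)$ there, dominate the $O(\varepsilon^{-k})$ growth of the difference quotients that appear.
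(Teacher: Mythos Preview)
Your argument is correct and takes a genuinely different route from the paper. The paper simply invokes the Malgrange Preparation Theorem twice: once on $F^\pm(x,\varepsilon)=x\pm i\varepsilon\psi(x,\varepsilon)$ to write $x=q_1^\pm(x,\varepsilon)F^\pm(x,\varepsilon)+r_1^\pm(\varepsilon)$ (which gives $z^\pm(\varepsilon)=\pm i\varepsilon^{-1}r_1^\pm(\varepsilon)$ and the factorization $(x\pm i\varepsilon\psi)^{-1}=q_1^\pm(x\pm i\varepsilon z^\pm)^{-1}$), and once on $x\pm i\varepsilon z^\pm(\varepsilon)$ to divide $\chi q_1^\pm$ with remainder. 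This is considerably shorter and avoids all the $O(\varepsilon^\infty)$ bookkeeping.

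Your approach, by contrast, is constructive: you locate the actual complex zero $x_0(\varepsilon)$ of an almost analytic extension of the denominator, define $z^+$ and $r^+$ as the pole location and residue there, and then control the non-holomorphic errors using that $\bar\partial$ of the extension is $O(|\Im x|^\infty)$ while $|\Im x_0|\asymp\varepsilon$. This has the advantage of explaining \emph{why} the lemma is true (pole subtraction) and of making transparent that $z^+$ solves $z=\psi(-i\varepsilon z,\varepsilon)$ to infinite order in~$\varepsilon$, which is essentially the content of the uniqueness Lemma~\ref{l:prepare-unique} that follows. The price is that the verification that $e_1$ and $\overline{(x-x_0)}(x-x_0)^{-1}F_2$ extend smoothly across $\varepsilon=0$ requires tracking derivative bounds of the form $C_k\varepsilon^{-k}$ against the $O(\varepsilon^\infty)$ decay, which is routine but longer than citing Malgrange.
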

%%%%%%%%%%%%%%%%%%%%%%%%%%%%%%%%%%%%%%%%%%%%%%%%%%%%%%%%%%%%%%%%%%%%%%%%%%%%%%%%
\begin{proof}
Since $(x\pm i\varepsilon\psi(x,\varepsilon))^{-1}$ is a smooth function of $(x,\varepsilon)\in J\times[0,\varepsilon_0)$ outside of $(0,0)$,
it is enough to show that~\eqref{e:prepare} holds for $|x|,\varepsilon$ small enough.

The complex valued function $F^\pm(x,\varepsilon):=x\pm i\varepsilon\psi(x,\varepsilon)$
is smooth in $(x,\varepsilon)\in J\times [0,\varepsilon_0)$ and satisfies $F^\pm(0,0)=0$
and $\partial_x F^\pm(0,0)=1$. Thus by the Malgrange Preparation Theorem~\cite[Theorem~7.5.6]{Hormander1},
we have for $(x,\varepsilon)$ in some neighbourhood of~$(0,0)$ in $J\times [0,\varepsilon_0)$
$$
x=q_1^\pm(x,\varepsilon)(x\pm i\varepsilon\psi(x,\varepsilon))+r^\pm_1(\varepsilon)
$$
where $q_1^\pm$, $r_1^\pm$ are smooth. Taking $\varepsilon=0$ we get
$r^\pm_1(0)=0$ and $q_1^\pm(x,0)=1$; differentiating in~$\varepsilon$ and then putting~$x=\varepsilon=0$ we get
$\partial_\varepsilon r_1^\pm(0)=\mp i\psi(0,0)$. We put
$z^\pm(\varepsilon):=\pm i\varepsilon^{-1}r_1^\pm(\varepsilon)$, so that when $\varepsilon>0$
\begin{equation}
  \label{e:prepare-1}
(x\pm i\varepsilon \psi(x,\varepsilon))^{-1}=q_1^\pm(x,\varepsilon)(x\pm i\varepsilon z^\pm(\varepsilon))^{-1}.
\end{equation}
Note that $z^\pm(0)=\psi(0,0)$ and thus $\Re z^\pm(\varepsilon)>0$ for small $\varepsilon$.

Now, we use the Malgrange Preparation Theorem again, this time for the function
$F^\pm(x,\varepsilon):=x\pm i\varepsilon z^\pm(\varepsilon)$, to get
for $(x,\varepsilon)$ in some neighbourhood of~$(0,0)$ in $J\times [0,\varepsilon_0)$
$$
\chi(x,\varepsilon)q_1^\pm(x,\varepsilon)=q^\pm(x,\varepsilon)(x\pm i\varepsilon z^\pm(\varepsilon))+r^\pm(\varepsilon)
$$
where $q^\pm$, $r^\pm$ are again smooth. Taking $\varepsilon=0$ we get
$\chi(x,0)=xq^\pm(x,0)+r^\pm(0)$.
Together with~\eqref{e:prepare-1} this gives the decomposition~\eqref{e:prepare}.
\end{proof}
%%%%%%%%%%%%%%%%%%%%%%%%%%%%%%%%%%%%%%%%%%%%%%%%%%%%%%%%%%%%%%%%%%%%%%%%%%%%%%%%
As an application of Lemma~\ref{l:prepare}, we give
%%%%%%%%%%%%%%%%%%%%%%%%%%%%%%%%%%%%%%%%%%%%%%%%%%%%%%%%%%%%%%%%%%%%%%%%%%%%%%%%
\begin{lemm}
\label{l:inverse-converger}
Assume that $\psi$ satisfies~\eqref{e:inv-psi-ass}. Then
we have for all $s<-{1\over 2}$
\begin{equation}
  \label{e:inverse-converger}
(x\pm i\psi(x,\varepsilon))^{-1}\ \to\ 
(x\pm i0)^{-1}\quad\text{as}\quad \varepsilon\to 0+\quad\text{in}\quad H^s_{\loc}(J).
\end{equation}
\end{lemm}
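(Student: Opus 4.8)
\emph{Plan.} Since $x$ is real and $\Re\overline{\psi}=\Re\psi>0$, we have $(x-i\varepsilon\psi(x,\varepsilon))^{-1}=\overline{(x+i\varepsilon\,\overline{\psi(x,\varepsilon)})^{-1}}$ and $(x-i0)^{-1}=\overline{(x+i0)^{-1}}$, so it suffices to treat the ``$+$'' sign. The idea is to use Lemma~\ref{l:prepare} with $\chi\equiv 1$ to strip off the $x$-dependence of the perturbation, reducing to an $x$-independent model inverse, and then to pass to the limit on the Fourier transform side. Applying Lemma~\ref{l:prepare} with $\chi\equiv1$ gives, for $0<\varepsilon<\varepsilon_0$,
\[
(x+i\varepsilon\psi(x,\varepsilon))^{-1}=r(\varepsilon)\,(x+i\varepsilon z(\varepsilon))^{-1}+q(x,\varepsilon),
\]
with $r,z\in C^\infty([0,\varepsilon_0))$, $q\in C^\infty(J\times[0,\varepsilon_0))$, $\Re z>0$, and $1=x\,q(x,0)+r(0)$. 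Putting $x=0$ gives $r(0)=1$, whence $x\,q(x,0)\equiv0$ and thus $q(\cdot,0)\equiv0$; since $q$ is smooth up to $\varepsilon=0$ it follows that $q(\cdot,\varepsilon)\to0$ in $C^\infty_{\loc}(J)$, hence in $H^s_{\loc}(J)$, while $r(\varepsilon)\to1$. Also $(x+i0)^{-1}\in H^s_{\loc}(J)$ for every $s<-\tfrac12$ (by~\eqref{e:x-i0-FT} its localized Fourier transform is bounded, and $\langle\xi\rangle^{2s}$ is integrable). So the proof reduces to showing $(x+i\varepsilon z(\varepsilon))^{-1}\to(x+i0)^{-1}$ in $H^s_{\loc}(J)$ as $\varepsilon\to0+$.

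\emph{The model inverse.} Write $x+i\varepsilon z(\varepsilon)=(x+\alpha(\varepsilon))+i\beta(\varepsilon)$ with $\alpha(\varepsilon),\beta(\varepsilon)$ real, $\beta(\varepsilon)=\varepsilon\Re z(\varepsilon)>0$, and $\alpha(\varepsilon),\beta(\varepsilon)\to0$ as $\varepsilon\to0+$. For $\beta>0$ a contour integration gives $\widehat{(x+\alpha+i\beta)^{-1}}(\xi)=-2\pi i\,e^{i\alpha\xi-\beta\xi}H(\xi)$, which reduces to~\eqref{e:x-i0-FT} when $\alpha=\beta=0$. Fix $\varphi\in\CIc(J)$. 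With $g_{\alpha,\beta}(\eta):=e^{i\alpha\eta-\beta\eta}H(\eta)$, one has, up to a fixed nonzero constant independent of $(\alpha,\beta)$,
\[
\widehat{\varphi\cdot(x+\alpha+i\beta)^{-1}}(\xi)=(\widehat\varphi*g_{\alpha,\beta})(\xi),
\]
and similarly with $g_{0,0}=H$ for $\varphi\cdot(x+i0)^{-1}$. Since $|g_{\alpha,\beta}|\le H$ and $g_{\alpha,\beta}\to H$ pointwise as $(\alpha,\beta)\to0$, dominated convergence gives $\widehat\varphi*g_{\alpha,\beta}\to\widehat\varphi*H$ pointwise, with the $\varepsilon$-uniform bound $|\widehat\varphi*(g_{\alpha,\beta}-H)|\le2\|\widehat\varphi\|_{L^1}$. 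Splitting $\int\langle\xi\rangle^{2s}|\widehat\varphi*(g_{\alpha,\beta}-H)|^2\,d\xi$ at $|\xi|=R$: the piece $|\xi|>R$ is at most $4\|\widehat\varphi\|_{L^1}^2\int_{|\xi|>R}\langle\xi\rangle^{2s}\,d\xi$, which tends to $0$ as $R\to\infty$ uniformly in $(\alpha,\beta)$ because $2s<-1$; and the piece $|\xi|\le R$ tends to $0$ as $(\alpha,\beta)\to0$ by dominated convergence. Hence $\varphi\cdot(x+i\varepsilon z(\varepsilon))^{-1}\to\varphi\cdot(x+i0)^{-1}$ in $H^s(\mathbb R)$, and since $\varphi\in\CIc(J)$ is arbitrary this is the required convergence in $H^s_{\loc}(J)$; combined with the first paragraph this proves~\eqref{e:inverse-converger}.

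\emph{Main obstacle.} No step is deep; the point requiring the most care is the Fourier-side argument, specifically that allowing $\varepsilon z(\varepsilon)$ to be genuinely complex rather than purely imaginary is harmless — because $|e^{i\alpha\eta-\beta\eta}H(\eta)|\le H(\eta)$, the dominated convergence argument goes through unchanged — together with the $|\xi|>R$ tail estimate, which is exactly where the hypothesis $s<-\tfrac12$ enters.
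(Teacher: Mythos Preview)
Your proof is correct and follows essentially the same approach as the paper: apply Lemma~\ref{l:prepare} with $\chi\equiv1$, deduce $r(0)=1$ and $q(\cdot,0)=0$, and reduce to the model inverse $(x+i\varepsilon z(\varepsilon))^{-1}$, whose convergence is handled on the Fourier side via dominated convergence. The only difference is that the paper works directly with the global Fourier transform, using~\eqref{e:x-ieps-FT} to get $\widehat{\mathbf u}_{+\varepsilon z(\varepsilon)}(\xi)=-2\pi i H(\xi)e^{-\varepsilon z(\varepsilon)\xi}$ and obtaining convergence in $H^s(\mathbb R)$ (not just $H^s_{\loc}$) in one step, whereas you first localize by a cutoff $\varphi$ and then run a convolution/splitting argument; both are fine, but the paper's route is a bit shorter.
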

%%%%%%%%%%%%%%%%%%%%%%%%%%%%%%%%%%%%%%%%%%%%%%%%%%%%%%%%%%%%%%%%%%%%%%%%%%%%%%%%
\begin{proof}
Put $\chi\equiv 1$ and let $z^\pm(\varepsilon)$, $r^\pm(\varepsilon)$, $q^\pm(x,\varepsilon)$ be given
by Lemma~\ref{l:prepare}; note that $1=xq^\pm(x,0)+r^\pm(0)$, thus $r^\pm(0)=1$ and $q^\pm(x,0)=0$.
We have
$$
(x\pm i\varepsilon z^\pm(\varepsilon))^{-1}\to (x\pm i0)^{-1}\quad\text{in}\quad H^s(\mathbb R).
$$
Indeed, the Fourier transform of the right-hand side is equal to~$\mp 2\pi iH(\pm \xi)$ by~\eqref{e:x-i0-FT}
and the Fourier transform of the left-hand side is equal to~$\mp 2\pi i H(\pm \xi)e^{-\varepsilon z^\pm(\varepsilon)|\xi|}$ by
\begin{equation}
  \label{e:x-ieps-FT}
\mathbf u_{\pm z}(x):=(x\pm iz)^{-1},\quad
\Re z>0\quad\Longrightarrow\quad
\widehat{\mathbf u}_{\pm z}(\xi)=\mp 2\pi i H(\pm \xi)e^{- z|\xi|}.
\end{equation}
We have convergence of these Fourier transforms in $L^2(\mathbb R;\langle\xi\rangle^{2s}\,d\xi)$
by the Dominated Convergence Theorem.

By~\eqref{e:prepare} this implies that the left-hand side of~\eqref{e:inverse-converger} converges in $H^s_{\loc}(J)$
to
$$
r^\pm(0)(x\pm i0)^{-1}+q^\pm(x,0)=(x\pm i0)^{-1}
$$
which finishes the proof.
\end{proof}
%%%%%%%%%%%%%%%%%%%%%%%%%%%%%%%%%%%%%%%%%%%%%%%%%%%%%%%%%%%%%%%%%%%%%%%%%%%%%%%%
The functions $r^\pm,z^\pm,q^\pm$ in Lemma~\ref{l:prepare} are not uniquely determined
by $\chi,\psi$, however they are unique up to $\mathcal O(\varepsilon^\infty)$:
%%%%%%%%%%%%%%%%%%%%%%%%%%%%%%%%%%%%%%%%%%%%%%%%%%%%%%%%%%%%%%%%%%%%%%%%%%%%%%%%
\begin{lemm}
  \label{l:prepare-unique}
Assume that $r^\pm_j,z^\pm_j\in C^\infty([0,\varepsilon_0))$, $j=1,2$, are complex valued functions
such that $\Re z^\pm_j>0$ on $[0,\varepsilon_0)$, $r^\pm_j(0)\neq 0$, and
$$
\tilde q^\pm(x,\varepsilon):=r^\pm_1(\varepsilon)(x\pm i\varepsilon z^\pm_1(\varepsilon))^{-1}
-r^\pm_2(\varepsilon)(x\pm i\varepsilon z^\pm_2(\varepsilon))^{-1}\ \in\ C^\infty(J\times [0,\varepsilon_0)).
$$
Then $r_1^\pm(\varepsilon)-r_2^\pm(\varepsilon)$, $z^\pm_1(\varepsilon)-z^\pm_2(\varepsilon)$,
and $\tilde q^\pm(x,\varepsilon)$
are $\mathcal O(\varepsilon^\infty)$, that is all their derivatives in~$\varepsilon$ vanish at~$\varepsilon=0$.
\end{lemm}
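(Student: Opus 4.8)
The plan is to clear denominators and reduce the whole statement to the claim that a single function, affine in $x$, vanishes to infinite order in $\varepsilon$ at $\varepsilon=0$. Write $A(\varepsilon):=r_1^\pm(\varepsilon)-r_2^\pm(\varepsilon)$ and $B(\varepsilon):=r_1^\pm(\varepsilon)z_2^\pm(\varepsilon)-r_2^\pm(\varepsilon)z_1^\pm(\varepsilon)$, both in $C^\infty([0,\varepsilon_0))$. For real $x\in J$ and $\varepsilon>0$ the denominators do not vanish (because $\Re z^\pm_j>0$), so a direct algebraic computation gives
\[
\tilde q^\pm(x,\varepsilon)\,\big(x\pm i\varepsilon z_1^\pm(\varepsilon)\big)\big(x\pm i\varepsilon z_2^\pm(\varepsilon)\big)=A(\varepsilon)x\pm i\varepsilon B(\varepsilon).
\]
Both sides extend to $C^\infty$ functions on $J\times[0,\varepsilon_0)$: the left one because $\tilde q^\pm\in C^\infty(J\times[0,\varepsilon_0))$ by hypothesis and the other factor is a polynomial in $x$ with $\varepsilon$-smooth coefficients, the right one trivially. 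Since they agree for $\varepsilon>0$, they agree on all of $J\times[0,\varepsilon_0)$. Abbreviating $P^\pm(x,\varepsilon):=(x\pm i\varepsilon z_1^\pm)(x\pm i\varepsilon z_2^\pm)$, which is a polynomial in $x$ with $P^\pm(x,0)=x^2$, and $R^\pm(x,\varepsilon):=A(\varepsilon)x\pm i\varepsilon B(\varepsilon)$, this reads $\tilde q^\pm P^\pm=R^\pm$ on $J\times[0,\varepsilon_0)$.

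The core of the argument is the following claim, proved by induction on $k\ge 0$: for every $k$, both $\partial_\varepsilon^k R^\pm(\cdot,0)$ and $\partial_\varepsilon^k\tilde q^\pm(\cdot,0)$ vanish identically on $J$. For $k=0$, setting $\varepsilon=0$ in $\tilde q^\pm P^\pm=R^\pm$ gives $\tilde q^\pm(x,0)\,x^2=R^\pm(x,0)$; the right-hand side is affine in $x$ while the left-hand side vanishes to order $\ge 2$ at $x=0$ (as $\tilde q^\pm(\cdot,0)$ is smooth), so both are identically $0$, and then $\tilde q^\pm(x,0)=0$ for $x\neq 0$, hence $\tilde q^\pm(\cdot,0)\equiv 0$ by continuity. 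For the inductive step, apply $\partial_\varepsilon^{k+1}$ to $\tilde q^\pm P^\pm=R^\pm$ and set $\varepsilon=0$; by the Leibniz rule and the inductive hypothesis every term in which at most $k$ derivatives land on $\tilde q^\pm$ drops out, leaving $\partial_\varepsilon^{k+1}\tilde q^\pm(x,0)\cdot x^2=\partial_\varepsilon^{k+1}R^\pm(x,0)$. Again the right-hand side is affine in $x$ and the left-hand side vanishes to order $\ge 2$ at $x=0$, so both vanish identically, closing the induction.

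From the claim the three conclusions follow quickly. Evaluating $\partial_\varepsilon^{k+1}R^\pm(\cdot,0)\equiv 0$ at $x=0$ gives $\partial_\varepsilon^{k+1}(\varepsilon B(\varepsilon))|_{\varepsilon=0}=(k+1)B^{(k)}(0)=0$ for all $k\ge 0$, hence $B=\mathcal O(\varepsilon^\infty)$; the $x$-linear part then gives $A^{(k+1)}(0)=0$ for all $k$, and since $A(0)=0$ as well, $A=r_1^\pm-r_2^\pm=\mathcal O(\varepsilon^\infty)$. Next, from $B-z_2^\pm A=r_2^\pm(z_2^\pm-z_1^\pm)$ and the fact that $r_2^\pm(0)\neq 0$ (so $1/r_2^\pm$ is smooth near $0$) we get $z_1^\pm-z_2^\pm=\mathcal O(\varepsilon^\infty)$. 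Finally, the vanishing of all $\partial_\varepsilon^k\tilde q^\pm(\cdot,0)$ is precisely the assertion that $\tilde q^\pm=\mathcal O(\varepsilon^\infty)$.

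I expect the only genuine subtlety to be the passage from the pointwise identity valid on $\{\varepsilon>0\}$ to an identity of $C^\infty$ functions across $\varepsilon=0$ — that is, recognizing that after clearing denominators the expression $\tilde q^\pm\cdot P^\pm$ is manifestly smooth up to $\varepsilon=0$ even though the individual terms $r_j^\pm(x\pm i\varepsilon z_j^\pm)^{-1}$ blow up — together with the bookkeeping in the Leibniz step, where it is essential to carry the vanishing of $\tilde q^\pm(\cdot,0)$ and its $\varepsilon$-derivatives alongside that of $R^\pm$ so that the induction actually closes. The remaining computations are elementary.
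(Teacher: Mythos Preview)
Your proof is correct and follows a genuinely different route from the paper's. The paper differentiates $\tilde q^\pm$ repeatedly in $x$ and evaluates at $x=0$: from $\partial_x^{k-1}\tilde q^\pm(0,\varepsilon)\in C^\infty([0,\varepsilon_0))$ it extracts that $r_1^\pm/(\varepsilon z_1^\pm)^k - r_2^\pm/(\varepsilon z_2^\pm)^k$ is smooth, hence that the first $k$ Taylor coefficients in $\varepsilon$ of $r_1^\pm/(z_1^\pm)^k - r_2^\pm/(z_2^\pm)^k$ vanish; an induction on $\ell$ using the pair $k=\ell+1,\ell+2$ then pins down $\partial_\varepsilon^\ell r_j^\pm(0)$ and $\partial_\varepsilon^\ell z_j^\pm(0)$ one order at a time, with $\tilde q^\pm=\mathcal O(\varepsilon^\infty)$ deduced afterward from these. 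Your argument instead clears denominators once to obtain the identity $\tilde q^\pm\cdot P^\pm=R^\pm$ with $R^\pm$ affine in $x$ and $P^\pm(x,0)=x^2$, and runs a single induction in the $\varepsilon$-derivative order driven by the degree mismatch: a smooth function times $x^2$ cannot equal a nonzero affine function. This is more elementary---there is no need to track the specific form of $\partial_x^{k-1}(x\pm i\varepsilon z)^{-1}$---and the $\mathcal O(\varepsilon^\infty)$ conclusion for $\tilde q^\pm$ comes out of the same induction rather than as a separate step. The paper's approach has the minor advantage of working directly with the hypothesis on $\tilde q^\pm$ without the (easy) continuity argument you use to extend the cleared-denominator identity across $\varepsilon=0$.
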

%%%%%%%%%%%%%%%%%%%%%%%%%%%%%%%%%%%%%%%%%%%%%%%%%%%%%%%%%%%%%%%%%%%%%%%%%%%%%%%%
\begin{proof}
Differentiating $k-1$ times in~$x$ and then putting $x=0$ we see that for all $k\geq 1$
$$
{r_1^\pm(\varepsilon)\over \varepsilon^k z_1^\pm(\varepsilon)^k}-
{r_2^\pm(\varepsilon)\over \varepsilon^k z_2^\pm(\varepsilon)^k}\in C^\infty([0,\varepsilon_0)).
$$
Therefore
\begin{equation}
  \label{e:freakor}
\partial_{\varepsilon}^\ell|_{\varepsilon=0} \bigg({r_1^\pm(\varepsilon)\over z_1^\pm(\varepsilon)^k}
-{r_2^\pm(\varepsilon)\over z_2^\pm(\varepsilon)^k}\bigg)=0\quad\text{for all}\quad
0\leq \ell<k.
\end{equation}
Taking $\ell=0$ and $k=1,2$, we see that
$r_1^\pm(0)=r_2^\pm(0)$ and $z_1^\pm(0)=z_2^\pm(0)$. Arguing by induction on~$\ell$
and using $k=\ell+1,\ell+2$ in~\eqref{e:freakor}
we see that $\partial_\varepsilon^\ell r_1^\pm(0)=\partial_\varepsilon^\ell r_2^\pm(0)$
and $\partial_\varepsilon^\ell z_1^\pm(0)=\partial_\varepsilon^\ell z_2^\pm(0)$.
Thus $r_1^\pm(\varepsilon)-r_2^\pm(\varepsilon)$ and $z_1^\pm(\varepsilon)-z_2^\pm(\varepsilon)$
are $\mathcal O(\varepsilon^\infty)$, which implies that $\tilde q^\pm(x,\varepsilon)$
is $\mathcal O(\varepsilon^\infty)$ as well.
\end{proof}
%%%%%%%%%%%%%%%%%%%%%%%%%%%%%%%%%%%%%%%%%%%%%%%%%%%%%%%%%%%%%%%%%%%%%%%%%%%%%%%%
\Remark  If $\chi$ and $\psi$ depend smoothly
on some additional parameter~$y$, then the proof of Lemma~\ref{l:prepare} shows that
$r^\pm$, $z^\pm$, $q^\pm$ can be chosen
to depend smoothly on~$y$ as well. 
Lemma~\ref{l:inverse-converger} also holds, with convergence locally uniform in~$y$,
as does Lemma~\ref{l:prepare-unique}.
In~\S\ref{s:restricted-slp} below we use this to study expressions of the form
\begin{equation}
  \label{e:prepare-remark}
\chi(\theta,\theta',\varepsilon)(\theta-\theta'\pm i\varepsilon\psi(\theta,\theta',\varepsilon))^{-1},
\end{equation}
where $(\theta,\theta')$ is in some neighbourhood of~0 and we put $x:=\theta-\theta'$,
$y:=\theta$.

For the use in~\S\ref{s:blp} we record the fact that operators with Schwartz kernels of the form~\eqref{e:prepare-remark} are pseudodifferential:
%%%%%%%%%%%%%%%%%%%%%%%%%%%%%%%%%%%%%%%%%%%%%%%%%%%%%%%%%%%%%%%%%%%%%%%%%%%%%%%%
\begin{lemm}
  \label{l:inversor-psi0}
Assume that $c^\pm_\varepsilon(\theta')$
and $z^\pm_\varepsilon(\theta')$
are complex valued functions smooth in $\theta'\in\mathbb S^1:=\mathbb R/\mathbb Z$
and $\varepsilon\in [0,\varepsilon_0)$
and such that $\Re z^\pm_\varepsilon>0$.
Let $\chi\in C^\infty(\mathbb S^1\times\mathbb S^1)$ be supported in a neighbourhood
of the diagonal and equal to~1 on a smaller neighbourhood of the diagonal.
Consider the operator $A^\pm_\varepsilon$ on $C^\infty(\mathbb S^1)$
given by
\begin{equation}
\label{eq:K2A}
\begin{aligned}
A^\pm_\varepsilon f(\theta)&=\int_{\mathbb S^1}K^\pm_\varepsilon(\theta,\theta')f(\theta')\,d\theta',\\
K^\pm_\varepsilon(\theta,\theta')&=\begin{cases}
\chi(\theta,\theta')c^\pm_\varepsilon(\theta')(\theta-\theta'\pm i\varepsilon z^\pm_\varepsilon(\theta'))^{-1},& \varepsilon>0;\\
\chi(\theta,\theta')c^\pm_\varepsilon(\theta')(\theta-\theta'\pm i0)^{-1},& \varepsilon=0.
\end{cases}
\end{aligned}
\end{equation}
Then $A^\pm_\varepsilon\in\Psi^0(\mathbb S^1)$ uniformly in $\varepsilon$ and we have, uniformly in~$\varepsilon$,
\begin{equation}
\label{eq:WFAs}
\WF(A^\pm_\varepsilon)\subset \{\pm \xi>0\},\quad
\sigma(A^\pm_\varepsilon)(\theta,\xi)=\mp 2\pi i c^\pm_\varepsilon ( \theta) e^{ - \varepsilon z^\pm_\varepsilon (\theta) |\xi| } H (\pm  \xi ) ,
\end{equation}
where for $ \varepsilon > 0 $ the principal symbol is understood as in \eqref{eq:Opasu}
and $ H $ is the Heaviside function (with the symbol considered for $ |\xi| > 1 $).
\end{lemm}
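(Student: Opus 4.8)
The plan is to present $A^\pm_\varepsilon$ directly as the quantization of an explicit amplitude, obtained by Fourier-transforming the Schwartz kernel \eqref{eq:K2A} in the difference variable. The starting point is the identity
\[
(x\pm i\varepsilon z)^{-1}=\mp i\int_0^\infty e^{\pm ix\eta}e^{-\varepsilon z\eta}\,d\eta,\qquad \Re z>0,\ \varepsilon\geq 0,
\]
which is the inverse Fourier transform of \eqref{e:x-ieps-FT} (and reduces to \eqref{e:x-i0-FT} at $\varepsilon=0$). Applying it with $x=\theta-\theta'$, $z=z^\pm_\varepsilon(\theta')$ on the neighbourhood of the diagonal where $\chi$ is supported and $\theta-\theta'$ is well defined, one rewrites the kernel exactly as $K^\pm_\varepsilon(\theta,\theta')=\frac{1}{2\pi}\int_{\mathbb R}e^{i(\theta-\theta')\xi}\tilde a^\pm_\varepsilon(\theta,\theta',\xi)\,d\xi$ with amplitude
\[
\tilde a^\pm_\varepsilon(\theta,\theta',\xi):=\mp 2\pi i\,\chi(\theta,\theta')\,c^\pm_\varepsilon(\theta')\,e^{-\varepsilon z^\pm_\varepsilon(\theta')|\xi|}H(\pm\xi).
\]
Thus it suffices to show that $\tilde a^\pm_\varepsilon$ is an amplitude in $S^0$ uniformly in $\varepsilon$, supported in $\{\pm\xi\geq 0\}$, and then to reduce it to a left symbol by the standard calculus. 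Here we shrink $\varepsilon_0$ so that $\Re z^\pm_\varepsilon\geq c>0$ for all $\varepsilon\in[0,\varepsilon_0)$, which is possible by continuity.

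The key estimate --- and the only genuine point of the proof --- is that the exponential weight $e^{-\varepsilon z^\pm_\varepsilon(\theta')|\xi|}$ behaves like a symbol of order $0$ \emph{uniformly} in $\varepsilon$. Each $\partial_\xi$ produces a factor $\mp\varepsilon z^\pm_\varepsilon(\theta')$, so with $t:=\varepsilon|\xi|$ one has $|\partial_\xi^k e^{-\varepsilon z^\pm_\varepsilon(\theta')|\xi|}|\leq (\varepsilon|z^\pm_\varepsilon|)^k e^{-ct}\leq C_k\langle\xi\rangle^{-k}$, because $t^k e^{-ct}$ is bounded on $[0,\infty)$; differentiation in $\theta'$ brings down further factors $\varepsilon|\xi|$, always paired with $e^{-c\varepsilon|\xi|}$ and hence bounded. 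Together with boundedness of $\chi,c^\pm_\varepsilon,z^\pm_\varepsilon$ and their derivatives this gives $\tilde a^\pm_\varepsilon\in S^0$ uniformly in $\varepsilon\in[0,\varepsilon_0)$. The factor $H(\pm\xi)|\xi|$ fails to be smooth at $\xi=0$, but this is harmless: fixing $\rho_0\in\CIc(\mathbb R)$ with $\rho_0\equiv 1$ near $[-1,1]$, replacing $\tilde a^\pm_\varepsilon$ by $(1-\rho_0(\xi))\tilde a^\pm_\varepsilon$ changes $A^\pm_\varepsilon$ by the operator with kernel $\mp 2\pi i\,\chi(\theta,\theta')c^\pm_\varepsilon(\theta')\,g^\pm_{\theta',\varepsilon}(\theta-\theta')$, where $g^\pm_{\theta',\varepsilon}$ is the inverse Fourier transform of the compactly supported $L^1$ function $\rho_0(\xi)H(\pm\xi)e^{-\varepsilon z^\pm_\varepsilon(\theta')|\xi|}$, hence real-analytic by the Paley--Wiener theorem, with all derivatives bounded uniformly in $(\theta',\varepsilon)$; so this error is in $\Psi^{-\infty}$ uniformly.

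It then remains to reduce the genuine amplitude $(1-\rho_0(\xi))\tilde a^\pm_\varepsilon(\theta,\theta',\xi)$ to a left symbol by the standard asymptotic expansion (see \cite[\S18.1]{Hormander3}), carried out on the universal cover and checked to be 1-periodic: this gives $A^\pm_\varepsilon=\Op(a^\pm_\varepsilon)$ modulo $\Psi^{-\infty}$ with $a^\pm_\varepsilon(\theta,\xi)=\tilde a^\pm_\varepsilon(\theta,\theta,\xi)+S^{-1}$ uniformly in $\varepsilon$, and since $\chi\equiv 1$ on the diagonal this is $\mp 2\pi i\,c^\pm_\varepsilon(\theta)e^{-\varepsilon z^\pm_\varepsilon(\theta)|\xi|}H(\pm\xi)+S^{-1}$, giving the symbol formula. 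Since every term of the expansion, as well as the amplitude itself, vanishes for $\pm\xi$ sufficiently negative, we get $\WF(A^\pm_\varepsilon)\subset\{\pm\xi>0\}$. Finally, for each fixed $\varepsilon>0$ the factor $e^{-\varepsilon(\Re z^\pm_\varepsilon)|\xi|}$ decays exponentially in $|\xi|$, so $\tilde a^\pm_\varepsilon\in S^{-\infty}$ and $A^\pm_\varepsilon\in\Psi^{-\infty}$; this places us precisely in the situation of \eqref{eq:Opasu}, so the stated principal symbol is read in that sense when $\varepsilon>0$ and literally when $\varepsilon=0$. The main obstacle is the uniform-in-$\varepsilon$ symbolic control of the exponential weight; everything else is routine bookkeeping with the symbol calculus on $\mathbb S^1$.
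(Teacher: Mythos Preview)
Your proof is correct and follows essentially the same route as the paper: write the kernel as an oscillatory integral via the Fourier transform formulas \eqref{e:x-i0-FT}--\eqref{e:x-ieps-FT}, cut off a compactly supported piece in~$\xi$ to dispose of the non-smoothness at $\xi=0$, check that the remaining amplitude is in $S^0$ uniformly in~$\varepsilon$, and read off the principal symbol as the restriction to the diagonal. You give more detail on the uniform $S^0$ estimate for the exponential weight (and correctly note the need for a uniform lower bound on $\Re z^\pm_\varepsilon$), but the architecture is the same.
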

%%%%%%%%%%%%%%%%%%%%%%%%%%%%%%%%%%%%%%%%%%%%%%%%%%%%%%%%%%%%%%%%%%%%%%%%%%%%%%%%
\Remark We note that the definition \eqref{eq:Opasu} of the symbol of a family of operators
and Lemma \ref{l:prepare-unique} show that the principal symbol is independent of the (not unique) $ c^\pm_\varepsilon,z^\pm_\varepsilon $. 
%%%%%%%%%%%%%%%%%%%%%%%%%%%%%%%%%%%%%%%%%%%%%%%%%%%%%%%%%%%%%%%%%%%%%%%%%%%%%%%
\begin{proof}
Using the formulas~\eqref{e:x-i0-FT} and~\eqref{e:x-ieps-FT}, we write the kernel
$K_\varepsilon^\pm(\theta,\theta')$ as an oscillatory integral
(where $a_\varepsilon(\theta,\theta',\xi)$ is supported near $\{\theta=\theta'\}$
where $\theta-\theta'\in\mathbb R$ is well-defined)
$$
\begin{aligned}
K_\varepsilon^\pm(\theta,\theta')&={1\over 2\pi}\int_{\mathbb R}e^{i(\theta-\theta')\xi}a_\varepsilon(\theta,\theta',\xi)\,d\xi,\\
a_\varepsilon(\theta,\theta',\xi)&=\mp 2\pi i\chi(\theta,\theta')c^\pm_\varepsilon(\theta')e^{-\varepsilon z_\varepsilon^\pm(\theta')|\xi|}H(\pm\xi).
\end{aligned}
$$
Fix a cutoff function $\widetilde\chi\in\CIc(\mathbb R)$ such that $\widetilde\chi=1$ near~0
and split $a_\varepsilon=\widetilde\chi(\xi)a_\varepsilon+(1-\widetilde\chi(\xi))a_\varepsilon$.
The integral corresponding to $\widetilde\chi(\xi) a_\varepsilon$ gives a kernel which is in
$C^\infty$ in $\theta$, $\theta'$, and $\varepsilon\in [0,\varepsilon_0)$. Next,
$(1-\widetilde\chi(\xi))a_\varepsilon$ is a symbol of class $S^0$: for each
$\alpha,\beta$ there exists $C_{\alpha\beta}$
such that for all $\theta,\theta',\xi$ and $\varepsilon\in [0,\varepsilon_0)$, we have
$$
\big|\partial^\alpha_{(\theta,\theta')}\partial^\beta_\xi\big((1-\widetilde\chi(\xi))a_\varepsilon(\theta,\theta',\xi)\big)\big|
\leq C_{\alpha\beta}\langle\xi\rangle^{-\beta}.
$$
Therefore (see~\cite[Lemma~18.2.1]{Hormander3} or~\cite[Theorem~3.4]{Grigis-Sjostrand})
we see that $A^\pm_\varepsilon\in\Psi^0$ uniformly in~$\varepsilon$,
its wavefront set is contained in~$\{\pm\xi>0\}$, and
its principal symbol is the equivalence class of $a_\varepsilon(\theta,\theta,\xi)$.
\end{proof}
%%%%%%%%%%%%%%%%%%%%%%%%%%%%%%%%%%%%%%%%%%%%%%%%%%%%%%%%%%%%%%%%%%%%%%%%%%%%%%%%
\Remark The fine analysis in~\S\ref{s:micro-inv} is strictly speaking not
necessary for our application in~\S\ref{s:restricted-slp}: indeed,
in the proof of Proposition~\ref{p:summa} one could instead use a version of Lemma~\ref{l:inversor-psi0}
which allows $c_\varepsilon^\pm$ and $z_\varepsilon^\pm$ to depend on both $\theta$ and $\theta'$.
Moreover, ultimately one just uses that the exponential in~\eqref{eq:WFAs} is
 bounded in absolute value by~1, see the proof of Lemma~\ref{l:TG}.
However, we feel that using the results of~\S\ref{s:micro-inv}
leads to nicer expressions for the kernels of the restricted single layer potentials
in~\S\S\ref{s:slp-ref}--\ref{s:slp-char} which could be useful elsewhere.

%%%%%%%%%%%%%%%%%%%%%%%%%%%%%%%%%%%%%%%%%%%%%%%%%%%%%%%%%%%%%%%%%%%%%%%%%%%%%%%%
%%%%%%%%%%%%%%%%%%%%%%%%%%%%%%%%%%%%%%%%%%%%%%%%%%%%%%%%%%%%%%%%%%%%%%%%%%%%%%%%
\section{Boundary layer potentials}
\label{s:blp}

In this section we describe microlocal properties of boundary layer potentials for 
the operator $ P - \omega^2 = \partial_{x_2}^2 \Delta_\Omega^{-1} - \omega^2 $,
or rather for the related partial differential operator $P(\omega)$ defined in~\eqref{e:P-lambda-def}. The
key issue is the transition from elliptic to hyperbolic behaviour as $ \Im \omega \to 0 $.
To motivate the results we explain the analogy with the standard boundary layer potentials
in \S \ref{s:mot}. In~\S\ref{s:fun} we compute fundamental solutions for $P(\omega)$ on $\mathbb R^2$
and in~\S\ref{s:redbone} we use these to study the Dirichlet problem for $P(\omega)$ on~$\Omega$.
This will lead us to single layer potentials: in~\S\ref{s:slp} we study their mapping properties
(in particular relating Lagrangian distributions on the 
boundary to Lagrangian distributions in the interior)
and in~\S\ref{s:restricted-slp} we give a microlocal description of their restriction to~$\partial\Omega$
uniformly as $ \Im \omega \to 0 $, which is crucially used in~\S\ref{s:abo}.

In \S\S\ref{s:blp}--\ref{s:liap} we generally use the letter $\lambda$ to denote the spectral parameter when it is
real and the letter $\omega$ for complex values of the spectral parameter, often taking the limit $\omega\to \lambda\pm i0$.

%%%%%%%%%%%%%%%%%%%%%%%%%%%%%%%%%%%%%%%%%%%%%%%%%%%%%%%%%%%%%%%%%%%%%%%%%%%%%%%%
\subsection{Basic properties}

Consider the second order constant coefficient differential operator on $\mathbb R^2_{x_1,x_2}$
\begin{equation}
  \label{e:P-lambda-def}
P(\omega):=(1-\omega^2)\partial_{x_2}^2-\omega^2\partial_{x_1}^2\quad\text{where}\quad
\omega\in\mathbb C,\quad
0<\Re\omega<1.
\end{equation}
Formally,
\begin{equation}
\label{eq:form}  P ( \omega ) = ( P - \omega^2 ) \Delta_\Omega , \ \ \ 
P ( \omega )^{-1} = \Delta_\Omega^{-1} ( P - \omega^2 )^{-1} . \end{equation}
We note that $P(\omega)$ is hyperbolic when $\omega\in(0,1)$ and elliptic
otherwise.
We factorize $P(\omega)$ as follows:
\begin{equation}
  \label{e:L-pm-def}
P(\omega)=4L_\omega^+L_\omega^-,\quad
L^\pm_\omega:=\textstyle{1\over 2}(\pm\omega\,\partial_{x_1}+\sqrt{1-\omega^2}\,\partial_{x_2}).
\end{equation}
Here $\sqrt{1-\omega^2}$ is defined by taking the branch of the square root
on $\mathbb C\setminus (-\infty,0]$ which takes positive values on $(0,\infty)$.
We note that for $\lambda\in(0,1)$ the operators $L^\pm_\lambda$
are two linearly independent constant vector fields on $\mathbb R^2$.
For $\Im\omega\neq 0$, $L^\pm_\omega$ are Cauchy--Riemann type operators.

The definition~\eqref{eq:dual2} of the functions $\ell^\pm(x,\lambda)$ extends
to complex values of $\lambda$:
$$
\ell^\pm(x,\omega): =\pm{x_1\over\omega}+{x_2\over\sqrt{1-\omega^2}},\quad
x=(x_1,x_2)\in\mathbb R^2,\quad
\omega\in\mathbb C,\quad
0<\Re\omega<1.
$$
Then the linear functions $\ell^\pm(\bullet,\omega)$ are dual to the
operators $L^\pm_\omega$:
\begin{equation}
\label{eq:L2l}
L^\pm_\omega \ell^\pm(x,\omega)=1,\quad
L^\mp_\omega \ell^\pm(x,\omega)=0.
\end{equation}
We record here the following statement:
%%%%%%%%%%%%%%%%%%%%%%%%%%%%%%%%%%%%%%%%%%%%%%%%%%%%%%%%%%%%%%%%%%%%%%%%%%%%%%%%
\begin{lemm}
  \label{l:which-rotate}
Assume that $\Im\omega>0$. Then the map $x\in \mathbb R^2\mapsto\ell^\pm(x,\omega)\in\mathbb C$
is orientation preserving in the case of $\ell^+$ and orientation reversing
in the case of $\ell^-$.
If $\Im\omega<0$ then a similar statement holds with the roles of $\ell^\pm$ switched.
\end{lemm}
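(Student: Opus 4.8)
The plan is to reduce the orientation question to the sign of a single imaginary part and then to pin that sign down by a connectedness argument. Recall that an $\mathbb R$-linear map $\mathbb R^2\ni(x_1,x_2)\mapsto ax_1+bx_2\in\mathbb C\simeq\mathbb R^2$, with $a,b\in\mathbb C$, has Jacobian determinant $\Im(\bar a b)$, hence is orientation preserving exactly when $\Im(\bar a b)>0$. Before using this I would check that $\mu(\omega):=\omega/\sqrt{1-\omega^2}$ is well defined and holomorphic on $\{0<\Re\omega<1\}$: there $1-\omega^2\in(-\infty,0]$ would force $\omega^2\in[1,\infty)$, that is $\omega\in\mathbb R\cup i\mathbb R$ with $|\omega|\ge1$, which is excluded. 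Applying the determinant formula to $\ell^\pm(x,\omega)=\pm x_1/\omega+x_2/\sqrt{1-\omega^2}$ (so $a=\pm 1/\omega$, $b=1/\sqrt{1-\omega^2}$, and $\overline{1/\omega}=\omega/|\omega|^2$) shows that the Jacobian determinant of $x\mapsto\ell^\pm(x,\omega)$ equals $\pm|\omega|^{-2}\Im\mu(\omega)$. Thus the lemma is equivalent to the claim that $\Im\mu(\omega)>0$ whenever $0<\Re\omega<1$ and $\Im\omega>0$, and $\Im\mu(\omega)<0$ whenever $0<\Re\omega<1$ and $\Im\omega<0$.

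To prove this claim I would first note that, since the chosen branch of the square root satisfies $\overline{\sqrt z}=\sqrt{\bar z}$ off $(-\infty,0]$, one has $\mu(\bar\omega)=\overline{\mu(\omega)}$, hence $\Im\mu(\bar\omega)=-\Im\mu(\omega)$, so it suffices to treat the connected set $R_+:=\{0<\Re\omega<1,\ \Im\omega>0\}$. On $R_+$ the continuous function $\Im\mu$ has no zeros: if $\mu(\omega)\in\mathbb R$ then $\mu(\omega)^2=\omega^2/(1-\omega^2)\in\mathbb R$, and multiplying numerator and denominator by $\overline{1-\omega^2}$ shows that $\omega^2/(1-\omega^2)\in\mathbb R$ is equivalent to $\omega^2-|\omega|^4\in\mathbb R$, hence to $\omega^2\in\mathbb R$, hence to $\omega\in\mathbb R\cup i\mathbb R$, which is impossible on $R_+$. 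Therefore $\Im\mu$ has constant sign on $R_+$, and it remains to evaluate it at one point. A short computation gives $\mu'(\omega)=(1-\omega^2)^{-3/2}$, so for $\lambda\in(0,1)$ we have $\mu(\lambda+i\varepsilon)=\mu(\lambda)+i\varepsilon(1-\lambda^2)^{-3/2}+\mathcal O(\varepsilon^2)$ with $\mu(\lambda)$ real; since $(1-\lambda^2)^{-3/2}>0$ this gives $\Im\mu>0$ at $\lambda+i\varepsilon\in R_+$ for small $\varepsilon>0$, hence on all of $R_+$. Combined with the reduction above this proves the lemma, the roles of $\ell^+$ and $\ell^-$ being interchanged when $\Im\omega<0$.

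The only step that calls for any care is the verification that $\Im\mu$ never vanishes on $R_+$ — equivalently, that $\ell^\pm(\bullet,\omega)$ is a real-linear isomorphism whenever $\omega\notin\mathbb R\cup i\mathbb R$ — but as indicated this is itself an elementary computation, so I do not expect a genuine obstacle here.
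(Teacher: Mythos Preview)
Your proof is correct and reduces the statement to the same sign identity as the paper, namely $\sgn\Im\bigl(\omega/\sqrt{1-\omega^2}\bigr)=\sgn\Im\omega$ for $0<\Re\omega<1$. The only difference is in how that identity is verified: the paper does it by the direct observation that $\Re\sqrt{1-\omega^2}>0$ and $\sgn\Im\sqrt{1-\omega^2}=-\sgn\Im\omega$ (from which the sign of $\Im\mu$ follows by one line of algebra), whereas you argue by connectedness, showing $\Im\mu$ never vanishes on the upper strip and then checking the sign at a boundary point via $\mu'(\lambda)=(1-\lambda^2)^{-3/2}>0$. Both routes are elementary; the paper's is marginally shorter, yours avoids computing with $\sqrt{1-\omega^2}$ directly.
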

%%%%%%%%%%%%%%%%%%%%%%%%%%%%%%%%%%%%%%%%%%%%%%%%%%%%%%%%%%%%%%%%%%%%%%%%%%%%%%%%
\begin{proof}
This follows immediately from the sign identity
\begin{equation}
\sgn\Im{\omega\over\sqrt{1-\omega^2}}=\sgn\Im\omega,\quad
\omega\in\mathbb C,\quad
0<\Re\omega<1
\end{equation}
which can be verified by noting that $\Re\sqrt{1-\omega^2}>0$
and $\sgn\Im\sqrt{1-\omega^2}=-\sgn\Im\omega$.
\end{proof}
%%%%%%%%%%%%%%%%%%%%%%%%%%%%%%%%%%%%%%%%%%%%%%%%%%%%%%%%%%%%%%%%%%%%%%%%%%%%%%%%

%%%%%%%%%%%%%%%%%%%%%%%%%%%%%%%%%%%%%%%%%%%%%%%%%%%%%%%%%%%%%%%%%%%%%%%%%%%%%%%%
\subsection{Motivational discussion}
\label{s:mot}

When $ \Im \omega > 0 $ the decomposition \eqref{e:L-pm-def} is similar to the factorization 
of the Laplacian,
$$
\Delta = 4 \partial_z \partial_{\bar z },\quad
\partial_z = \tfrac12 (\partial_x - i \partial_y ),\quad
\partial_{\bar z} = \tfrac12 (\partial_x + i \partial_y ).
$$ 
The functions $z=x+iy$ and $\bar z$ play the role of $ \mathcal \ell^\pm ( x, \omega ) $ 
for $ \pm $, respectively (which matches the orientation in Lemma~\ref{l:which-rotate})
and $\partial_z,\partial_{\bar z}$ play the role of $L_\omega^+$, $L_\omega^-$.
Hence to explain the structure of the fundamental solution of $ P ( \omega ) $
and to motivate the restricted boundary layer potential in \S \ref{s:restricted-slp}
we review the basic
case when $\Omega=\{y>0\}$ and $P(\omega)$ is replaced by $\Delta$.
The fundamental solution is given by (see e.g.~\cite[Theorem~3.3.2]{Hormander1})
\[ 
\Delta E = \delta_0 , \ \   E := c \log (z \bar z ) , \ \  \partial_z E = c/z, \ \ \partial_{\bar z} E = c/\bar z , \ \  c = 1/{4 \pi}.
\]
We consider the {\em single layer} potential $S:\CIc(\mathbb R)\to \mathcal D'(\mathbb R^2)\cap C^\infty(\mathbb R^2\setminus \{y=0\})$,
\[ 
 S v ( x , y ) := \int_{\mathbb R}  E ( x - x' , y ) v ( x' ) \,dx' , \quad v \in C^\infty_{\rm{c}} ( {\mathbb R} ) . 
\]
We then have limits as $ y \to 0\pm$, 
\[
\mathcal C_\pm  v ( x ) = \mathcal C v ( x ) := \frac 1{ 2 \pi }  \int_{\mathbb R} \log | x - x ' | v ( x' ) dx' ,
\]
and we consider
\[ \partial_x \mathcal C v ( x ) =  \lim_{y \to 0\pm } \partial_x S v ( x, y ) = \lim_{y \to  0\pm } 
( \partial_z + \partial_{\bar z } ) S v (x, y) . \]
Then (where we recall $ c = 1/4 \pi $)
\[ \lim_{y \to 0 \pm } \partial_z S v (x,y ) = \lim_{y \to 0 \pm } 
\int_{{\mathbb R} }\frac{c} { x - x' +i y } v ( x' ) dx' = \int_{{\mathbb R} }\frac{c} { x - x' \pm i 0 } v ( x' ) dx,
\]
and similarly,
\[ \lim_{y \to 0 \pm } \bar \partial_z S v (x,y ) = \lim_{y \to 0 \pm } 
\int_{{\mathbb R} }\frac{c} { x - x' - i y } v ( x' ) dx' = \int_{{\mathbb R} }\frac{c} { x - x' \mp i 0 } v ( x' ) dx,
\]
where the right hand sides are understood as distributional pairings.
This gives
\begin{equation}
\label{eq:modelC} \partial_x \mathcal C v ( x ) = \frac{1}{4 \pi}  \int_{\mathbb R} \sum_\pm ( x - x'  \pm i 0 )^{-1} v(x' ) dx' 
\end{equation}
which is ${1\over 2}$ times the Hilbert transform, that is, the Fourier multiplier with symbol $ - i \sgn ( \xi ) $. (We note that
$ \sum_\pm ( x - x'  \pm i 0 )^{-1} = 2 \partial_x \log |x| $ which is the principal value of $ 2/x $.)

%In the case of $ \lim_{y \to 0 \pm } \partial_y S v $ the sign matters and we obtain
%\[ \begin{split} \lim_{ y \to 0 \pm } \partial_y S v (x , y ) & = 
%\frac{1}{ 2 \pi i }  \int_{\mathbb R} \left( ( x - x' \mp i 0 )^{-1} - ( x -x'  \pm i 0 )^{-1} \right) v ( x') dx'
%\\
%& = \pm %\tfrac 12
 %\int_{\mathbb R} \delta ( x - x' ) v ( x') dx' = \pm 
 %\tfrac12 
 %v ( x ) . \end{split}  \]

In \S \ref{s:restricted-slp} we describe the analogue of $ \partial_x \mathcal C $ in our 
case. It is similar to \eqref{eq:modelC} when $ \Im \omega > 0 $ but when $ \Im \omega \to 0 + $
it has additional singularities described using the chess billiard map $ b ( x, \lambda ) $,
$\lambda=\Re\omega$, 
or rather its building components $ \gamma^\pm $. The operator
becomes an elliptic operator of order $ 0 $ (just as is the case in \eqref{eq:modelC} if we restrict our
attention to compact sets) plus a Fourier integral operator -- see Proposition \ref{p:summa}.
 
%%%%%%%%%%%%%%%%%%%%%%%%%%%%%%%%%%%%%%%%%%%%%%%%%%%%%%%%%%%%%%%%%%%%%%%%%%%%%%%%
\subsection{Fundamental solutions}
\label{s:fun}

We now construct a \emph{fundamental solution} of the operator $P(\omega)$
defined in~\eqref{e:P-lambda-def},
that is a distribution $E_\omega\in\mathcal D'(\mathbb R^2)$ such that
\begin{equation}
  \label{e:fund-sol-eqn}
P(\omega)E_\omega=\delta_0.
\end{equation}
For that we use the complex valued quadratic form
$$
A(x,\omega):=\ell^+(x,\omega)\ell^-(x,\omega)=-{x_1^2\over\omega^2}+{x_2^2\over 1-\omega^2}.
$$
Since $0<\Re\omega<1$ we have
$\sgn\Im (-\omega^{-2})=\sgn\Im ((1-\omega^2)^{-1})=\sgn\Im\omega$, thus
\begin{equation}
  \label{e:Im-A-sign}
\sgn \Im A(x,\omega)= 
\sgn\Im\omega\quad\text{for all}\quad
x\in\mathbb R^2\setminus \{0\}.
\end{equation}

%%%%%%%%%%%%%%%%%%%%%%%%%%%%%%%%%%%%%%%%%%%%%%%%%%%%%%%%%%%%%%%%%%%%%%%%%%%%%%%%
\subsubsection{The non-real case}

We first consider the case $\Im\omega\neq 0$. In this case
our fundamental solution is the locally integrable function
\begin{equation}
  \label{e:fund-sol-imag}
E_\omega(x):=c_\omega\log A(x,\omega),\quad
x\in\mathbb R^2\setminus\{0\},\quad
c_\omega:={i\sgn\Im\omega\over 4\pi\omega\sqrt{1-\omega^2}}.
\end{equation}
Here we use the branch of logarithm on $\mathbb C\setminus (-\infty,0]$
which takes real values on $(0,\infty)$. Note that the function $E_\omega$
is smooth on $\mathbb R^2\setminus \{0\}$.
%%%%%%%%%%%%%%%%%%%%%%%%%%%%%%%%%%%%%%%%%%%%%%%%%%%%%%%%%%%%%%%%%%%%%%%%%%%%%%%%
\begin{lemm}
The function $E_\omega$ defined in~\eqref{e:fund-sol-imag} solves~\eqref{e:fund-sol-eqn}.
\end{lemm}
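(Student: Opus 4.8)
The plan is to verify the distributional identity $P(\omega)E_\omega = \delta_0$ directly, exploiting the factorization $P(\omega) = 4L_\omega^+L_\omega^-$ from \eqref{e:L-pm-def} together with the fact that $\ell^\pm(\bullet,\omega)$ are linear coordinates dual to $L_\omega^\pm$ in the sense of \eqref{eq:L2l}. First I would pass to the coordinates $w^\pm := \ell^\pm(x,\omega)$, which is a complex-linear change of variables on $\mathbb R^2$ with Jacobian that I will compute explicitly (it equals $\pm\tfrac{2}{\omega\sqrt{1-\omega^2}}$ up to sign bookkeeping, the factor governing the constant $c_\omega$). In these coordinates $L_\omega^\pm$ become $\partial_{w^\pm}$, the quadratic form $A(x,\omega)$ becomes simply $w^+w^-$, and the problem reduces to the model computation that $\log(w^+w^-) = \log w^+ + \log w^-$ is, up to the right constant, a fundamental solution of $4\partial_{w^+}\partial_{w^-}$ — which is the analogue of the classical $\Delta = 4\partial_z\partial_{\bar z}$ and $E = \tfrac{1}{4\pi}\log(z\bar z)$ recalled in \S\ref{s:mot}. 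The sign conditions matter here: by Lemma~\ref{l:which-rotate} and \eqref{e:Im-A-sign}, when $\Im\omega > 0$ the map $x\mapsto w^+$ is orientation preserving and $x\mapsto w^-$ orientation reversing, and $\Im(w^+w^-)$ has the sign of $\Im\omega$, so the branch of $\log$ on $\mathbb C\setminus(-\infty,0]$ is unambiguous on the relevant region; the prefactor $c_\omega$ with its $\sgn\Im\omega$ is exactly what compensates for the orientation-reversing factor in the Jacobian.

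Concretely, I would compute $L_\omega^- E_\omega$ first: by \eqref{eq:L2l} we have $L_\omega^-\ell^+ = 0$ and $L_\omega^-\ell^- = 1$, so away from the origin $L_\omega^- E_\omega = c_\omega\, \ell^-(x,\omega)/A(x,\omega) = c_\omega/\ell^+(x,\omega)$. One must check this holds distributionally (not just pointwise), i.e.\ that no spurious distribution supported at $0$ appears; this is a standard local-integrability argument since $\log A$ and $1/\ell^+$ are both in $L^1_{\mathrm{loc}}$ and the formal computation is valid on $\mathbb R^2\setminus\{0\}$, a set whose complement has measure zero and the discrepancy would have to be a derivative-of-delta supported at $0$, ruled out by testing against an approximate identity. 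Then I would apply $4L_\omega^+$ to $c_\omega/\ell^+(x,\omega)$. Since $L_\omega^+\ell^+ = 1$, this is, after the linear change of variables $w = \ell^+(x,\omega)$, exactly $4c_\omega$ times $\partial_w(1/w)$ in the appropriate half-plane — which by the Cauchy–Green / Cauchy–Pompeiu formula (or the computation $\partial_{\bar z}(1/z) = \pi\delta_0$ in the elliptic model, transported here) produces a multiple of $\delta_0$. Tracking the Jacobian of $x\mapsto\ell^+(x,\omega)$ and the orientation from Lemma~\ref{l:which-rotate} pins down the constant; the choice of $c_\omega = i\sgn\Im\omega/(4\pi\omega\sqrt{1-\omega^2})$ is precisely what makes the total equal $\delta_0$.

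The main obstacle is the careful bookkeeping of the distributional identity $L_\omega^+(1/\ell^+(x,\omega)) = (\text{const})\cdot\delta_0$ with the correct constant, including the branch and orientation data: this is where all three of the factors $\omega$, $\sqrt{1-\omega^2}$, and $\sgn\Im\omega$ enter, and a sign error anywhere propagates. I expect the cleanest route is to reduce to the single complex model identity $\partial_{\bar\zeta}\log|\zeta|^2 = \partial_{\bar\zeta}(\log\zeta + \log\bar\zeta)$ and $\partial_\zeta\partial_{\bar\zeta}\log|\zeta|^2 = \pi\delta_0$ by an explicit affine substitution, rather than recomputing the delta from scratch; the nontrivial linear algebra is just identifying which real-linear map sends $(L_\omega^+,L_\omega^-)$ to $(\partial_\zeta,\partial_{\bar\zeta})$ up to scalars when $\Im\omega\neq 0$, and its determinant. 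Everything else — local integrability of $\log A$, smoothness of $E_\omega$ away from $0$, and the pointwise computations — is routine and I would state it as such.
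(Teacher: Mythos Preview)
Your strategy matches the paper's --- factor $P(\omega)=4L_\omega^+L_\omega^-$, compute one derivative of $E_\omega$ explicitly using \eqref{eq:L2l}, then recognize the second derivative as a Cauchy--Riemann type operator acting on $1/\ell^\mp$ --- but you have swapped the indices in the intermediate identity, and as written the final step is false.

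Concretely: from $A=\ell^+\ell^-$ and \eqref{eq:L2l} one gets $L_\omega^- A=\ell^+$, hence $L_\omega^-\log A = 1/\ell^-$ (this is the paper's \eqref{eq:LlogA}), not $1/\ell^+$ as you write. With your version you are left with $L_\omega^+(1/\ell^+)$, but since $L_\omega^+\ell^+=1$ this equals $-1/(\ell^+)^2$ away from the origin --- not zero, hence not supported at $0$. Your sentence ``$\partial_w(1/w)$ \dots\ produces a multiple of $\delta_0$'' conflates $\partial_w(1/w)=-1/w^2$ with the Cauchy--Pompeiu identity $\partial_{\bar w}(1/w)=\pi\delta_0$. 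Relatedly, the claim that in the ``coordinates $w^\pm=\ell^\pm$'' the operators $L_\omega^\pm$ become $\partial_{w^\pm}$ is not right: $(w^+,w^-)$ are not simultaneously real coordinates on $\mathbb R^2$, and in the single complex coordinate $w=\ell^+$ the operator $L_\omega^+$ has a nonzero $\partial_{\bar w}$ component.

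The fix is simply to restore the correct indices: $L_\omega^- E_\omega = c_\omega/\ell^-$, and then apply $4L_\omega^+$. Since $L_\omega^+\ell^-=0$, in the complex coordinate $\zeta=\ell^-$ the operator $L_\omega^+$ \emph{is} a scalar multiple of $\partial_{\bar\zeta}$, and now $\partial_{\bar\zeta}(1/\zeta)=\pi\delta_0$ applies; tracking the Jacobian and the orientation from Lemma~\ref{l:which-rotate} yields the constant. The paper carries out exactly this computation but packages the determination of the constant as a boundary integral $\int_{\partial B_\varepsilon} d\ell^+/\ell^+$ obtained via two applications of the divergence theorem on $\mathbb R^2\setminus B_\varepsilon$, which is the same Cauchy--Pompeiu argument in integral form.
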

%%%%%%%%%%%%%%%%%%%%%%%%%%%%%%%%%%%%%%%%%%%%%%%%%%%%%%%%%%%%%%%%%%%%%%%%%%%%%%%%
\begin{proof}
We first check that $P(\omega)E_\omega=0$ on $\mathbb R^2\setminus \{0\}$:
this follows from~\eqref{e:L-pm-def}, \eqref{eq:L2l}, and the identities
\begin{equation}
\label{eq:LlogA}
L^\pm_\omega \log A(x,\omega)={1\over \ell^\pm(x,\omega)}\quad\text{for all}\quad
x\in\mathbb R^2\setminus \{0\}.
\end{equation}
Next, denote by $B_\varepsilon$ the ball of radius $\varepsilon>0$ centered at~0
and orient $\partial B_\varepsilon$ in the counterclockwise direction.
Using the Divergence Theorem twice, we compute for each~$\varphi\in \CIc(\mathbb R^2)$
$$
\begin{aligned}
\int_{\mathbb R^2} E_\omega(x)(P(\omega)\varphi(x))\,dx &=
4\lim_{\varepsilon\to 0+} \int_{\mathbb R^2\setminus B_\varepsilon}
E_\omega(x)(L^+_\omega L^-_\omega\varphi(x))\,dx\\
&=-4c_\omega\lim_{\varepsilon\to 0+}\int_{\mathbb R^2\setminus B_\varepsilon}
{L^-_\omega\varphi(x)\over \ell^+(x,\omega)}\,dx\\
&=-2c_\omega\lim_{\varepsilon\to 0+}\int_{\partial B_\varepsilon}
{\varphi(x)(\sqrt{1-\omega^2}\,dx_1+\omega\,dx_2)\over\ell^+(x,\omega)}\\
&=-2c_\omega \omega\sqrt{1-\omega^2}\,\varphi(0)\lim_{\varepsilon\to 0+}\int_{\partial B_\varepsilon}{d\ell^+(x,\omega)\over\ell^+(x,\omega)}=\varphi(0)
\end{aligned}
$$
which gives~\eqref{e:fund-sol-eqn}. Here in the last
equality we make the change of variables $z=\ell^+(x,\omega)$
and use Lemma~\ref{l:which-rotate}.
\end{proof}
%%%%%%%%%%%%%%%%%%%%%%%%%%%%%%%%%%%%%%%%%%%%%%%%%%%%%%%%%%%%%%%%%%%%%%%%%%%%%%%%

%%%%%%%%%%%%%%%%%%%%%%%%%%%%%%%%%%%%%%%%%%%%%%%%%%%%%%%%%%%%%%%%%%%%%%%%%%%%%%%%
\subsubsection{The real case}

%\pagebreak

We now discuss the case $\lambda\in (0,1)$. Define
the fundamental solutions $E_{\lambda\pm i0}\in L^1_{\loc}(\mathbb R^2)$
as follows:
\begin{equation}
\label{eq:Elapm}
\begin{gathered}
E_{\lambda\pm i0}(x):=\pm c_\lambda\log(A(x,\lambda)\pm i0),\quad
c_\lambda:={i\over 4\pi \lambda\sqrt{1-\lambda^2}},\\
\log(A(x,\lambda)\pm i0)=\begin{cases}
\log A(x,\lambda),& A(x,\lambda)>0;\\
\log(-A(x,\lambda))\pm i\pi,& A(x,\lambda)<0.
\end{cases}
\end{gathered}
\end{equation}
The next lemma shows that $E_{\lambda\pm i\varepsilon}\to E_{\lambda\pm i0}$
in $\mathcal D'(\mathbb R^2)$ as $\varepsilon\to 0+$. In fact it gives a stronger convergence statement with
derivatives in $\lambda$. To make this statement we introduce the following notation:
if $\mathcal J\subset (0,1)$ is an open interval then
\begin{equation}
  \label{e:O-analytic}
\mathscr O(\mathcal J+i [0,\infty))\ \subset\ C^\infty(\mathcal J+i[0,\infty))
\end{equation}
consists of $C^\infty$ functions on $\mathcal J+i[0,\infty)$ which are holomorphic in the interior
$\mathcal J+i(0,\infty)$.
Similarly one can define $\mathscr O(\mathcal J-i[0,\infty))$.
%%%%%%%%%%%%%%%%%%%%%%%%%%%%%%%%%%%%%%%%%%%%%%%%%%%%%%%%%%%%%%%%%%%%%%%%%%%%%%%%
\begin{lemm}
  \label{l:loc}
The maps
\begin{equation}
  \label{e:loc-mapp}
\omega\in (0,1)\pm i[0,\infty)\ \mapsto\ \begin{cases}
E_{\omega},&\Im\omega\neq 0,\\
E_{\lambda\pm i0},&\omega=\lambda\in (0,1)
\end{cases}
\end{equation}
lie in $\mathscr O((0,1)\pm i[0,\infty);\mathcal D'(\mathbb R^2))$
in the following sense: the distributional pairing of~\eqref{e:loc-mapp}
with any $\varphi\in \CIc(\mathbb R^2)$
lies in~$\mathscr O((0,1)\pm i[0,\infty))$.
\end{lemm}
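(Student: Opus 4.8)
The plan is to show that for fixed $\varphi\in\CIc(\mathbb R^2)$ the function $\omega\mapsto\langle E_\omega,\varphi\rangle$ is holomorphic on each open half-plane $\{0<\Re\omega<1,\ \pm\Im\omega>0\}$ and extends to a $C^\infty$ function up to the boundary segment $\{\Im\omega=0\}$ with boundary value $\langle E_{\lambda\pm i0},\varphi\rangle$; by definition of $\mathscr O((0,1)\pm i[0,\infty))$ this is exactly what has to be proved. I treat the $+$ case, the $-$ case being identical with the roles of $\ell^+,\ell^-$ interchanged (Lemma~\ref{l:which-rotate}). For the holomorphy in the open region: for $\Im\omega>0$ and each fixed $x\neq 0$, $\omega\mapsto E_\omega(x)=c_\omega\log A(x,\omega)$ is holomorphic, since $c_\omega$ is holomorphic on $\{0<\Re\omega<1,\ \Im\omega>0\}$ and, by~\eqref{e:Im-A-sign}, $A(x,\omega)$ avoids the cut $(-\infty,0]$; since $|\log A(x,\omega)|\leq C(1+|\log|x||)$ locally uniformly in $\omega$ and $\log|x|\in L^1_{\loc}(\mathbb R^2)$, differentiation under the integral sign (or Morera's theorem) gives holomorphy of $\langle E_\omega,\varphi\rangle$ there.

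For the smooth extension up to $\{\Im\omega=0\}$ I would use a partition of unity to localize $\varphi$ near a point $x_0\in\overline{\supp\varphi}$ and distinguish, relative to the characteristic variety $\{\ell^+(x,\lambda_0)\ell^-(x,\lambda_0)=0\}$, three cases. In case (i), $x_0\neq 0$ and $x_0$ is not characteristic: then $A(x,\omega)$ stays in a compact subset of $\mathbb C\setminus(-\infty,0]$ for $(x,\omega)$ near $(x_0,\lambda_0)$, so $E_\omega(x)$ is jointly smooth in $(x,\omega)$ and holomorphic in $\omega$, and the claim is immediate. In case (iii), $x_0=0$: I would use the homogeneity $\log A(x,\omega)=2\log|x|+\log A(x/|x|,\omega)$; the term $2c_\omega\langle\log|x|,\varphi\rangle$ is smooth up to the boundary because $c_\omega=\tfrac{i}{4\pi\omega\sqrt{1-\omega^2}}$ (for $\Im\omega>0$) extends smoothly there, while the second term, after integrating out the radial variable, becomes the pairing of $\log A(\theta,\omega)$ against a smooth function on $\mathbb S^1$, which near each of the four zeros of $A(\cdot,\lambda_0)$ is the one-dimensional instance of case (ii).

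Case (ii) — $x_0\neq 0$ characteristic, say $\ell^+(x_0,\lambda_0)=0$, $\ell^-(x_0,\lambda_0)\neq 0$ — is the main obstacle, as it is exactly where the operator passes from elliptic to hyperbolic. On a small ball $B\ni x_0$ I would write $E_\omega=c_\omega\big(\log\ell^-(x,\omega)+\log\ell^+(x,\omega)\big)$, where $\log\ell^-$ has a branch jointly smooth in $(x,\omega)$ up to $\Im\omega=0$ (as $\ell^-$ stays close to $\ell^-(x_0,\lambda_0)\neq 0$) and $\log\ell^+(x,\omega):=\log A(x,\omega)-\log\ell^-(x,\omega)$ is holomorphic in $\omega$ and smooth in $x$ away from $\{\ell^+=0\}$. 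Writing $\omega=\lambda+i\varepsilon$ and using that $\ell^+(x,\cdot)$ is real-valued on $(0,1)$, one gets $\ell^+(x,\lambda+i\varepsilon)=u(x,\varepsilon;\lambda)+i\varepsilon\,v(x,\varepsilon;\lambda)$ with $u,v$ real and real-analytic, $u|_{\varepsilon=0}=\ell^+(x,\lambda)$, $\partial_x u\neq 0$, and $v|_{\varepsilon=0}=\partial_\omega\ell^+(x,\lambda)$, which by~\eqref{e:l-pm-differential} is a combination of $\ell^+(x,\lambda)$ and $\ell^-(x,\lambda)$ and hence has a definite sign near $x_0$. Changing variables so that $\tau:=u(x,\varepsilon;\lambda)$ becomes a coordinate transverse to $\{\ell^+=0\}$, $\ell^+(x,\omega)$ takes the form $\tau+i\varepsilon\,\psi(\tau,\rho,\varepsilon;\lambda)$ with $\psi$ smooth and $\Re\psi>0$ — precisely the model treated in~\S\ref{s:micro-inv}. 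Applying Lemma~\ref{l:prepare} (with $\chi\equiv 1$, and $\rho$ as the parameter~$y$) and then integrating once in $\tau$, the pairing $\langle\log\ell^+(x,\omega),\varphi\rangle$ reduces, modulo terms smooth in $(\lambda,\varepsilon)$, to pairings of the explicit distributions $\log(\tau+i\varepsilon z)$ with $z$ constant in $\tau$ and $\Re z>0$; by the Fourier-transform formulas~\eqref{e:x-i0-FT}--\eqref{e:x-ieps-FT} these depend smoothly on the complex parameter $\varepsilon z$ ranging over $\{\Re\geq 0\}$, holomorphically on $\{\Re>0\}$. Combined with the holomorphy on $\{\Im\omega>0\}$ already established, this gives the required smooth extension; the boundary value is then identified as $\langle E_{\lambda+i0},\varphi\rangle$ by the dominated-convergence argument ($k=0$) together with~\eqref{eq:Elapm}. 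Thus the hard part is the transition in case (ii), as $\Im\omega\to 0$, from $\ell^+(\cdot,\omega)$ being a genuinely complex-valued function of $x$ to being a real coordinate, and the higher-order poles produced by $\omega$-differentiation of $\log$; both are absorbed by the preparation lemmas of~\S\ref{s:micro-inv}.
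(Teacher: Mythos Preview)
Your proposal is correct but takes a genuinely different route from the paper. The paper does not prove smoothness up to the boundary directly; instead it establishes only (a) holomorphy of $F_\varphi(\omega)=\langle E_\omega,\varphi\rangle$ in the open half-plane, (b) mere \emph{continuity} of $F_\varphi$ up to the real axis, via dominated convergence after the linear change of variables $y=(\ell^+(x,\lambda),\ell^-(x,\lambda))$ which pins the zero set of $A$ and furnishes a uniform integrable majorant, and (c) smoothness of the \emph{restriction} $\lambda\mapsto F_\varphi(\lambda)$ on the real axis via the same change of variables. It then observes that $F_\varphi$, being holomorphic, is harmonic, and invokes boundary regularity for the Dirichlet problem for the Laplacian to upgrade (a)$+$(b)$+$(c) to $F_\varphi\in C^\infty$ up to the boundary. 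Your approach instead attacks the smooth extension head-on, localizing and reducing the characteristic case~(ii) via the Malgrange-based preparation of \S\ref{s:micro-inv} to the explicit model $\log(\tau+i\varepsilon z)$. This is more hands-on and reuses machinery the paper develops anyway (for the restricted single layer potentials), at the price of more bookkeeping; the paper's argument is shorter, handles the singularity at $x=0$ and the characteristic lines in one stroke, but relies on an external elliptic regularity fact. One small point: your claim $\Re\psi>0$ in case~(ii) need not hold literally --- the sign of $v|_{\varepsilon=0}=\partial_\lambda\ell^+(x_0,\lambda_0)$ equals that of $\ell^-(x_0,\lambda_0)$ by~\eqref{e:l-pm-differential} and can be either --- but since it is definite you simply land in one of the two $\pm$ alternatives of Lemma~\ref{l:prepare}, and the argument goes through.
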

%%%%%%%%%%%%%%%%%%%%%%%%%%%%%%%%%%%%%%%%%%%%%%%%%%%%%%%%%%%%%%%%%%%%%%%%%%%%%%%%
\begin{proof}
We consider the case of $\Im\omega\geq 0$, with the case $\Im\omega\leq 0$ handled similarly.
Fix $\varphi\in\CIc(\mathbb R^2)$.

\noindent 1. We will prove the following limiting statement: for each $\lambda\in (0,1)$
\begin{equation}
  \label{e:loc-convergor}
\int_{\mathbb R^2}E_{\omega_j}(x)\varphi(x)\,dx\to\int_{\mathbb R^2}E_{\lambda+i0}(x)\varphi(x)\,dx
\quad\text{for all}\quad \omega_j\to\lambda,\quad \Im\omega_j>0.
\end{equation}
We write $\omega_j=\lambda_j+i\varepsilon_j$ where $\lambda_j\to\lambda$ and $\varepsilon_j\to 0+$.
We first show a bound on $E_{\omega_j}(x)=c_{\omega_j}\log A(x,\omega_j)$ which is uniform in~$j$. 
Taking the Taylor expansion of $\ell^+(x,\lambda+i\varepsilon)$
in $\varepsilon$, we get
$$
\ell^+(x,\omega_j)=\ell^+(x,\lambda_j)+i\varepsilon_j\partial_\lambda\ell^+(x,\lambda_j)+
\mathcal O(\varepsilon_j^2|x|),
$$
where the constant in $\mathcal O(\bullet)$ is independent of~$j$.
Since $\partial_\lambda\ell^+(x,\lambda_j)$ is real, we bound
\begin{equation}
\label{e:log-convergir-1}
|\ell^+(x,\omega_j)|\geq \tfrac{1}{2}\big(|\ell^+(x,\lambda_j)|+\varepsilon_j|\partial_\lambda\ell^+(x,\lambda_j)|\big)
-C\varepsilon_j^2|x|.
\end{equation}
As $\ell^+(x,\lambda_j),\partial_\lambda\ell^+(x,\lambda_j)$ are linearly independent linear forms in $x$
(see~\eqref{e:l-pm-differential}), we have
\begin{equation}
\label{e:log-convergir-2}
|x|\leq C\big(|\ell^+(x,\lambda_j)|+|\partial_\lambda \ell^+(x,\lambda_j)|\big).
\end{equation}
Together~\eqref{e:log-convergir-1} and~\eqref{e:log-convergir-2} show that for $j$ large enough
\begin{equation}
  \label{e:limit-E-bdor-0}
|\ell^+(x,\omega_j)|\geq \tfrac{1}{3}|\ell^+(x,\lambda_j)|.
\end{equation}
The same bound holds for $\ell^-$. Since $A(x,\omega_j)=\ell^+(x,\omega_j)\ell^-(x,\omega_j)$, we then have
$$
C^{-1}|A(x,\lambda_j)|\leq |A(x,\omega_j)|\leq C|x|^2
$$
which implies the
following bound for $j$ large enough, some $j$-independent constant $C$, and all~$x\in\mathbb R^2$:
\begin{equation}
  \label{e:limit-E-bdor}
|E_{\omega_j}(x)|\leq C(|E_{\lambda_j+i0}(x)|+\log(2+|x|)).
\end{equation}

\noindent 2. To pin the zero set of $A(x,\lambda_j)$, which depends on~$\lambda_j$,
we introduce the linear isomorphism $\Phi_\lambda:\mathbb R^2_y\to\mathbb R^2_x$
such that $\Phi_\lambda^{-1}(x)=(\ell^+(x,\lambda),\ell^-(x,\lambda))$. Then
$A(\Phi_\lambda(y),\lambda)=y_1y_2$, so
the pullback of $E_{\lambda_j+i0}$ by $\Phi_{\lambda_j}$ is given by
\begin{equation}
  \label{e:limit-E-Phior}
\Phi_{\lambda_j}^* E_{\lambda_j+i0}(y)=c_{\lambda_j}\log(y_1y_2+i0)
\end{equation}
which is a locally integrable function on $\mathbb R^2$.

We can now show~\eqref{e:loc-convergor}. For each $y\in\mathbb R^2$, we have
$A(\Phi_{\lambda_j}(y),\omega_j)\to A(\Phi_\lambda(y),\lambda)=y_1y_2$
and $\varphi(\Phi_{\lambda_j}(y))\to \varphi(\Phi_\lambda(y))$.
Using~\eqref{e:Im-A-sign} we then get the pointwise limit
$$
\Phi_{\lambda_j}^* (E_{\omega_j}\varphi)(y)\to\Phi_\lambda^*(E_{\lambda+i0}\varphi)(y)\quad\text{for all}\quad
y\in\mathbb R^2,\ y_1y_2\neq 0.
$$
Now~\eqref{e:loc-convergor} follows from the Dominated Convergence Theorem applied to the sequence of functions $\Phi_{\lambda_j}^*(E_{\omega_j}\varphi)$, where the dominant is given by the locally integrable function $C(1+|\log(y_1y_2+i0)|)$
as follows from the bound~\eqref{e:limit-E-bdor} and the identity~\eqref{e:limit-E-Phior}.

\noindent 3. Denote by $F_\varphi(\omega)$ the pairing of~\eqref{e:loc-mapp}
with $\varphi$.
Since $A(x,\omega)$ is a quadratic form depending holomorphically on~$\omega\in (0,1)+i(0,\infty)$
which has a positive definite imaginary part by~\eqref{e:Im-A-sign},
we see that $F_\varphi$ is holomorphic on $(0,1)+i(0,\infty)$.
Moreover, the restriction of~$F_\varphi$ to $(0,1)$ is smooth, as can be seen by writing
$$
F_\varphi(\lambda)=\int_{\mathbb R^2}E_{\lambda+i0}(x)\varphi(x)\,dx=c_\lambda |\det\Phi_\lambda|\int_{\mathbb R^2}
\log(y_1y_2+i0) \varphi(\Phi_\lambda(y))\,dy,\quad
\lambda\in (0,1)
$$
and using that the function $(y,\lambda)\mapsto\varphi(\Phi_\lambda(y))$ is smooth in $(y,\lambda)$.
By~\eqref{e:loc-convergor} $F_\varphi$ is continuous at the boundary interval $(0,1)$.
Since $F_\varphi$ is holomorphic, it is harmonic, so by boundary regularity
for the Dirichlet problem for the Laplacian (see the references in the proof of Lemma~\ref{l:elliptic-solved} below) we see
that $F_\varphi\in C^\infty((0,1)+i[0,\infty))$.
\end{proof}
%%%%%%%%%%%%%%%%%%%%%%%%%%%%%%%%%%%%%%%%%%%%%%%%%%%%%%%%%%%%%%%%%%%%%%%%%%%%%%%%
Passing to the limit in~\eqref{e:fund-sol-eqn} we see that
\begin{equation}
  \label{e:fund-sol-eqn-2}
P(\lambda)E_{\lambda\pm i0}=\delta_0\quad\text{for all}\quad \lambda\in (0,1).
\end{equation}
Note that $E_{\lambda\pm i0}(x)$ is smooth except
on the union of the two lines $\{\ell^+(x,\lambda)=0\}$
and $\{\ell^-(x,\lambda)=0\}$. We remark that $ E_{\lambda \pm i 0 } $ are the
Feynman propagators in dimension one, see \cite[(6.2.1) and page 141]{Hormander1} for the
formula in all dimensions.

%%%%%%%%%%%%%%%%%%%%%%%%%%%%%%%%%%%%%%%%%%%%%%%%%%%%%%%%%%%%%%%%%%%%%%%%%%%%%%%%
\subsection{Reduction to the boundary}
\label{s:redbone}

We now let $\Omega\subset\mathbb R^2$ be a bounded open set with
$C^\infty$ boundary and consider the elliptic boundary value problem
\begin{equation}
  \label{eq:ellip}
P ( \omega   ) u  = f, \quad u |_{\partial \Omega } = 0  , \quad
\Re \omega \in ( 0 , 1 ), \quad \Im \omega \neq 0 .
\end{equation}
%The ultimate goal is obtaining uniform estimates.
%This is non-standard since the boundary has characteristic points and the problem
%changes from elliptic to hyperbolic as $ \Im \omega \to 0+ $.
%%%%%%%%%%%%%%%%%%%%%%%%%%%%%%%%%%%%%%%%%%%%%%%%%%%%%%%%%%%%%%%%%%%%%%%%%%%%%%%%
\begin{lemm}
  \label{l:elliptic-solved}
For each $f\in\CIc(\Omega)$, the problem~\eqref{eq:ellip} has a unique
solution $u\in C^\infty(\overline\Omega)$.  
\end{lemm}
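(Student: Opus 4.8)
The plan is to solve the elliptic boundary value problem~\eqref{eq:ellip} variationally via the Lax--Milgram theorem, and then to upgrade the resulting $H^1_0$ solution to an element of $C^\infty(\overline\Omega)$ by elliptic regularity up to the boundary. The only substantive point is to verify coercivity of the relevant sesquilinear form: although $P(\omega)$ is not formally positive, it is elliptic and, as it turns out, ``one-signed up to a unimodular twist'' precisely when $\Im\omega\neq 0$.

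First I would record ellipticity: for $\Re\omega\in(0,1)$, $\Im\omega\neq 0$ the principal symbol of $P(\omega)$ at $\xi=(\xi_1,\xi_2)\in\mathbb R^2\setminus 0$ is $\omega^2\xi_1^2-(1-\omega^2)\xi_2^2$, which cannot vanish, since otherwise $\xi_2\neq 0$ and $(\xi_1/\xi_2)^2=(1-\omega^2)/\omega^2=\omega^{-2}-1$ would be a nonnegative real number, contradicting $\Im(\omega^{-2})\neq 0$ (valid as $\Re\omega\neq 0\neq\Im\omega$); this is also immediate from the factorization~\eqref{e:L-pm-def} into the elliptic first order operators $L^\pm_\omega$. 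Next, multiplying $P(\omega)u=f$ by $\bar v$, $v\in H^1_0(\Omega)$, and integrating by parts gives the weak formulation $B_\omega(u,v)=\int_\Omega f\bar v\,dx$, where
\[
B_\omega(u,v):=\int_\Omega\big(\omega^2\,\partial_{x_1}u\,\partial_{x_1}\bar v-(1-\omega^2)\,\partial_{x_2}u\,\partial_{x_2}\bar v\big)\,dx .
\]
This form is bounded on $H^1_0(\Omega)$. For coercivity, set $\zeta:=-i\,\sgn\Im\omega$; using $\Im(\omega^2)=2\Re\omega\,\Im\omega$ and $\Im(1-\omega^2)=-\Im(\omega^2)$ one computes
\[
\Re\big(\zeta\, B_\omega(u,u)\big)=2\,\Re\omega\,|\Im\omega|\,\big(\|\partial_{x_1}u\|_{L^2(\Omega)}^2+\|\partial_{x_2}u\|_{L^2(\Omega)}^2\big)=2\,\Re\omega\,|\Im\omega|\,\|\nabla u\|_{L^2(\Omega)}^2,
\]
which, since $\Omega$ is bounded and $\Re\omega>0$, is $\geq c_\omega\|u\|_{H^1_0(\Omega)}^2$ by the Poincar\'e inequality. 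Hence $B_\omega$ is coercive, and Lax--Milgram yields a unique $u\in H^1_0(\Omega)$ with $B_\omega(u,v)=\int_\Omega f\bar v\,dx$ for all $v\in H^1_0(\Omega)$; this $u$ solves $P(\omega)u=f$ in $\mathcal D'(\Omega)$ with $u|_{\partial\Omega}=0$.

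It then remains to see that $u\in C^\infty(\overline\Omega)$. Since $f\in\CIc(\Omega)\subset C^\infty(\overline\Omega)$, the operator $P(\omega)$ has constant (hence smooth) coefficients and is elliptic, and $\partial\Omega$ is $C^\infty$, standard elliptic regularity up to the boundary for the Dirichlet problem (obtained, e.g., by bootstrapping interior and boundary $H^2$ estimates via difference quotients) gives $u\in C^\infty(\overline\Omega)$. Uniqueness within $C^\infty(\overline\Omega)$ is then automatic, since any such solution lies in $H^1_0(\Omega)$ and the weak solution is unique by the coercivity established above. One could alternatively first subtract off the particular solution $E_\omega*f\in C^\infty(\mathbb R^2)$ built from the fundamental solution of~\S\ref{s:fun}, reducing to a homogeneous Dirichlet problem with smooth boundary data, but this does not simplify matters. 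I expect no real obstacle: the one step needing care is the coercivity computation, i.e.\ the choice of the rotation $\zeta=-i\,\sgn\Im\omega$ that makes $B_\omega$ coercive; everything else is standard.
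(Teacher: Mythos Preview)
Your proof is correct and takes a genuinely different route from the paper's. The paper invokes the general Fredholm theory for elliptic boundary value problems (H\"ormander, Theorem~20.1.2): it verifies the Shapiro--Lopatinski condition for $\Delta-\mu\partial_{x_2}^2$ with $\mu=\omega^{-2}$ by a continuity argument from $\mu=0$, obtains index~$0$ by another continuity argument, and proves injectivity via the integration-by-parts identity $\|\nabla u\|^2=\mu\|\partial_{x_2}u\|^2$ together with $\mu\notin[1,\infty)$. Your approach sidesteps all of this: the single observation that $\Re\big(-i\,\sgn(\Im\omega)\,p(\xi)\big)=2\,\Re\omega\,|\Im\omega|\,|\xi|^2$ makes the rotated operator strongly elliptic, so Lax--Milgram gives existence and uniqueness in $H^1_0$ in one stroke, and the same strong ellipticity drives the difference-quotient boundary regularity (tangential second derivatives from coercivity, the normal--normal one from the equation since $p(\nu)\neq 0$). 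The paper's injectivity identity and your coercivity are really the same integration-by-parts computation read two ways. What you gain is a self-contained argument that avoids the Shapiro--Lopatinski machinery; what the paper's route buys is a statement that sits inside the general elliptic BVP framework. One small remark: the ``standard'' difference-quotient references are usually phrased for real uniformly elliptic operators, but the extension to complex coefficients with a coercive (after rotation) form is straightforward, and your strong ellipticity of $\zeta P(\omega)$ is exactly what is needed.
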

%%%%%%%%%%%%%%%%%%%%%%%%%%%%%%%%%%%%%%%%%%%%%%%%%%%%%%%%%%%%%%%%%%%%%%%%%%%%%%%%
\Remark The proof shows that if $f$ is fixed, then $u\in C^\infty(\overline\Omega)$ depends holomorphically on~$\omega\in (0,1)\pm i(0,\infty)$.
%%%%%%%%%%%%%%%%%%%%%%%%%%%%%%%%%%%%%%%%%%%%%%%%%%%%%%%%%%%%%%%%%%%%%%%%%%%%%%%%
\begin{proof}
1. We first show that for each $\mu\in \mathbb C\setminus [1,\infty)$ and $s\geq 2$, the map
\begin{equation}
  \label{e:els-1}
\overline H^{s} ( \Omega ) \ni u\ \mapsto\ \big( (\Delta-\mu\partial_{x_2}^2) u, u|_{\partial \Omega } \big) 
\in \overline H^{s-2} ( \Omega ) \oplus H^{s-\frac12} ( \partial \Omega )  
\end{equation}
is a Fredholm operator. (Here $\overline H^s(\Omega)$ denotes
the space of distributions on $\Omega$ which extend to $H^s$
distributions on $\mathbb R^2$.) We apply~\cite[Theorem~20.1.2]{Hormander3}.
The operator $\Delta-\mu\partial_{x_2}^2$ is elliptic, so it remains
to verify that the Shapiro--Lopatinski condition~\cite[Definition~20.1.1(ii)]{Hormander3}
holds for any domain $\Omega$. (An example of an operator for which this
condition fails is $(\partial_{x_1}+i\partial_{x_2})^2$.) In our specific case the Shapiro--Lopatinski condition can
be reformulated as follows: for each basis $(\xi,\eta)$ of $\mathbb R^2$,
if we denote by $\mathcal M$ the space of all bounded solutions on $[0,\infty)$ to the ODE
$$
p(\xi-i\eta\partial_t)u(t)=0,\quad
p(\xi):=\xi_1^2+(1-\mu)\xi_2^2
$$
then the map $u\in\mathcal M\mapsto u(0)$ is an isomorphism. This is equivalent to the requirement that
the quadratic equation $p(\xi+z\eta)=0$ have two roots,
one with $\Im z>0$ and one with $\Im z<0$. To see that the latter condition
holds, we argue by continuity: since $\Delta-\mu\partial_{x_2}^2$
is elliptic, the equation $p(\xi+z\eta)=0$ cannot have any real roots~$z$,
so the condition either holds for all $\mu,\xi,\eta$ or fails for all~$\mu,\xi,\eta$.
However, it is straightforward to check that the condition holds
when $\mu=0$, $\xi=(1,0)$, $\eta=(0,1)$, as the roots
are~$\pm i$. 

\noindent 2.
We next claim that the Fredholm operator~\eqref{e:els-1} is invertible.
We first show that it has index~0, arguing by continuity: since
the operator~\eqref{e:els-1} is continuous in $\mu$ in the operator norm topology,
its index should be independent of~$\mu$. However, for $\mu=0$ we get
the Dirichlet problem for the Laplacian, where \eqref{e:els-1}
is invertible.  

To show that~\eqref{e:els-1} is invertible it remains to prove injectivity,
namely
\begin{equation}
  \label{e:els-2}
u\in H^2(\Omega),\quad
(\Delta-\mu\partial_{x_2}^2)u=0,\quad
u|_{\partial\Omega}=0\quad\Longrightarrow\quad
u=0.
\end{equation}
Multiplying the equation $(\Delta-\mu\partial_{x_2}^2)u=0$ by $\overline u$
and integrating by parts over~$\Omega$, we get
$\|\nabla u\|_{L^2(\Omega)}^2=\mu \|\partial_{x_2}u\|_{L^2(\Omega)}^2$.
Since $0\leq \|\partial_{x_2}u\|^2_{L^2(\Omega)}\leq \|\nabla u\|^2_{L^2(\Omega)}$
and $\mu\not\in [1,\infty)$, we see that $\|\nabla u\|_{L^2(\Omega)}=0$,
which implies that $u=0$, giving~\eqref{e:els-2}.

\noindent 3. Writing
$$
P(\omega)=\partial_{x_2}^2-\omega^2\Delta=-\omega^2(\Delta-\mu\partial_{x_2}^2),\qquad
\mu:=\omega^{-2}\in\mathbb C\setminus [1,\infty)
$$
and using the invertibility of~\eqref{e:els-1}, we see that for each $s\geq 2$ and $f\in \overline H^{s-2}(\Omega)$, 
 the problem~\eqref{eq:ellip}
has a unique solution $u\in\overline H^s(\Omega)$.
When $f\in \CIc(\Omega)$, we may take arbitrary $s$ which
gives that $u\in C^\infty(\overline\Omega)$.
\end{proof}
%%%%%%%%%%%%%%%%%%%%%%%%%%%%%%%%%%%%%%%%%%%%%%%%%%%%%%%%%%%%%%%%%%%%%%%%%%%%%%%%
We will next express the solution to~\eqref{eq:ellip} in terms of boundary data and single layer potentials.
Let us first define the operators used below. Let $T^*\partial\Omega$
be the cotangent bundle of the boundary $\partial\Omega$. Sections of this bundle are differential
1-forms on~$\partial\Omega$ (where we use the positive orientation on $\partial\Omega$);
they can be identified with functions on $\partial\Omega$ by fixing a coordinate $\theta$.
Define the operator
$\mathcal I:\mathcal D'(\partial\Omega;T^*\partial\Omega)\to\mathcal E'(\mathbb R^2)$ as follows:
for $v\in \mathcal D'(\partial\Omega;T^*\partial\Omega)$ and $\varphi\in C^\infty(\mathbb R^2)$,
\begin{equation}
  \label{e:I-def}
\int_{\mathbb R^2}\mathcal I v(x)\varphi(x)\,dx
:=\int_{\partial\Omega} \varphi v.
\end{equation}
Note that $\supp(\mathcal Iv)\subset\partial\Omega$ and we can think
of $\mathcal Iv$ as multiplying $v$ by the delta function on $\partial\Omega$.
Next, let $ E_\omega   $ be the fundamental solution constructed in~\eqref{e:fund-sol-imag} and define
the convolution operator
\begin{equation}
\label{eq:Rlag}
R_\omega  : \mathcal E' ( {\mathbb R}^2 ) \to \mathcal D' ( {\mathbb R}^2 ) ,\quad
R_\omega g := E_\omega * g.
\end{equation}
Using the limiting fundamental solutions $E_{\lambda\pm i0}$ constructed in~\eqref{eq:Elapm}, 
we similarly define the operators $R_{\lambda\pm i0}$ for $\lambda\in (0,1)$ which will be used later.
Finally, for $\omega\in (0,1)+i\mathbb R$, define the `Neumann data' operator
\begin{equation}
  \label{e:N-omega-def}
\mathcal N_\omega:C^\infty(\overline\Omega)\to C^\infty(\partial\Omega;T^*\partial\Omega),\quad
\mathcal N_\omega u:=-2\omega\sqrt{1-\omega^2}\,\mathbf j^*(L^+_\omega u\,d\ell^+(\bullet,\omega)),
\end{equation}
where $\mathbf j:\partial\Omega\to\overline\Omega$ is the embedding map
and $\mathbf j^*$ is the pullback on 1-forms.
We can now reduce the problem~\eqref{eq:ellip} to the boundary:
%%%%%%%%%%%%%%%%%%%%%%%%%%%%%%%%%%%%%%%%%%%%%%%%%%%%%%%%%%%%%%%%%%%%%%%%%%%%%%%%
\begin{lemm}
  \label{l:reduction-to-boundary}
Assume that $u\in C^\infty(\overline\Omega)$ is the solution to~\eqref{eq:ellip}
for some $f\in\CIc(\Omega)$.
Put $U:=\indic_\Omega u\in\mathcal E'(\mathbb R^2)$ and $v:=\mathcal N_\omega u$.
Then
\begin{align}
 \label{eq:Plauf} 
P(\omega)U &= f - \mathcal I v,\\
  \label{eq:U2u}
U &= R_\omega f - R_\omega\mathcal Iv.
\end{align}
\end{lemm}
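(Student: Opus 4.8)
The plan is to derive \eqref{eq:Plauf} from a Green-type identity built on the factorization $P(\omega)=4L^+_\omega L^-_\omega$ in \eqref{e:L-pm-def}, and then to obtain \eqref{eq:U2u} by convolving with the fundamental solution $E_\omega$. The starting observation is that by Lemma~\ref{l:elliptic-solved} we have $u\in C^\infty(\overline\Omega)$, so $L^+_\omega u\in C^\infty(\overline\Omega)$ restricts to the boundary and the $1$-form $v=\mathcal N_\omega u$ of \eqref{e:N-omega-def} is a well-defined smooth $1$-form on $\partial\Omega$; also $U=\indic_\Omega u\in\mathcal E'(\mathbb R^2)$.

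For \eqref{eq:Plauf} I would fix $\varphi\in C^\infty(\mathbb R^2)$, write $\langle P(\omega)U,\varphi\rangle=\int_\Omega u\,P(\omega)\varphi\,dx=4\int_\Omega u\,L^+_\omega L^-_\omega\varphi\,dx$ (using that the constant vector fields $L^\pm_\omega$ commute), and integrate by parts twice. The key elementary fact is that for a constant vector field $L$ on $\mathbb R^2$ one has $\int_\Omega (L\psi)\,dx_1dx_2=\int_{\partial\Omega}\psi\,\eta_L$ by Stokes' theorem, where $\eta_L$ is the constant-coefficient $1$-form with $d(\psi\,\eta_L)=(L\psi)\,dx_1\wedge dx_2$; a direct computation using the formulas for $\ell^\pm(\bullet,\omega)$ and $L^\pm_\omega$ gives $\eta_{L^+_\omega}=\tfrac12\omega\sqrt{1-\omega^2}\,d\ell^-(\bullet,\omega)$ and $\eta_{L^-_\omega}=-\tfrac12\omega\sqrt{1-\omega^2}\,d\ell^+(\bullet,\omega)$. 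Moving $L^+_\omega$ off $\varphi$ first produces a boundary term proportional to $\int_{\partial\Omega}u\,(L^-_\omega\varphi)\,d\ell^-$, which vanishes because $u|_{\partial\Omega}=0$, leaving $4\int_\Omega u\,L^+_\omega L^-_\omega\varphi=-4\int_\Omega(L^+_\omega u)(L^-_\omega\varphi)$. Moving $L^-_\omega$ off $\varphi$ then produces the boundary term $2\omega\sqrt{1-\omega^2}\int_{\partial\Omega}(L^+_\omega u)\,\varphi\,d\ell^+$ together with the interior term $4\int_\Omega(L^+_\omega L^-_\omega u)\varphi=\int_\Omega(P(\omega)u)\varphi=\langle f,\varphi\rangle$. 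By \eqref{e:N-omega-def} and \eqref{e:I-def} the boundary term equals exactly $-\langle\mathcal I v,\varphi\rangle$, which is \eqref{eq:Plauf}.

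For \eqref{eq:U2u} I would use that $P(\omega)E_\omega=\delta_0$ by \eqref{e:fund-sol-eqn}, so $R_\omega=E_\omega*\,\cdot\,$ is a right inverse of $P(\omega)$. Since $f-\mathcal I v=P(\omega)U$ is a compactly supported distribution (both $f$ and $\mathcal I v$ are supported in $\overline\Omega$), the convolution $R_\omega(f-\mathcal I v)=E_\omega*P(\omega)U$ is well defined, and by the standard rule $P(\omega)(T*S)=(P(\omega)T)*S$ for a distribution $T$ and a compactly supported $S$ we get $E_\omega*P(\omega)U=(P(\omega)E_\omega)*U=\delta_0*U=U$. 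Splitting $R_\omega(f-\mathcal I v)=R_\omega f-R_\omega\mathcal I v$ — each term a convolution of $E_\omega$ with a compactly supported distribution — yields \eqref{eq:U2u}. The only delicate points are purely bookkeeping: getting the constants $\pm\omega\sqrt{1-\omega^2}$ and the orientation of $\partial\Omega$ right so that the boundary term is precisely $\mathcal I(\mathcal N_\omega u)$, and noting for \eqref{eq:U2u} that the convolution manipulations are legitimate even though $E_\omega$ and $R_\omega f$ are not compactly supported, because $P(\omega)U$ is.
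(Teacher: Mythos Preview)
Your proof is correct and follows essentially the same approach as the paper's: two integrations by parts using the factorization $P(\omega)=4L^+_\omega L^-_\omega$, with the Dirichlet condition killing the first boundary term and the second boundary term producing exactly $-\mathcal I v$, followed by the convolution identity $E_\omega*P(\omega)U=(P(\omega)E_\omega)*U=U$ for \eqref{eq:U2u}. Your version is slightly more explicit than the paper's in identifying the boundary $1$-forms $\eta_{L^\pm_\omega}$ via Stokes' theorem, but the computations match line for line.
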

%%%%%%%%%%%%%%%%%%%%%%%%%%%%%%%%%%%%%%%%%%%%%%%%%%%%%%%%%%%%%%%%%%%%%%%%%%%%%%%%
\Remark Note that we also have
$$
v=2\omega\sqrt{1-\omega^2}\,\mathbf j^*(L^-_\omega u\,d\ell^-(\bullet,\omega)).
$$
Indeed, $0=\mathbf j^*du=\mathbf j^* (L^+_\omega u\, d\ell^+
+L^-_\omega u\, d\ell^-)$ since $u|_{\partial\Omega}=0$ and by~\eqref{eq:L2l}.
%%%%%%%%%%%%%%%%%%%%%%%%%%%%%%%%%%%%%%%%%%%%%%%%%%%%%%%%%%%%%%%%%%%%%%%%%%%%%%%%
\begin{proof}
Let $\varphi\in \CIc(\mathbb R^2)$, then
by~\eqref{e:L-pm-def}
$$
\begin{aligned}
\int_{\mathbb R^2}(P(\omega)U)\varphi\,dx &=
4\int_{\Omega}u L^+_\omega L^-_\omega \varphi\,dx =
-4\int_\Omega(L^+_\omega u)(L^-_\omega \varphi)\,dx \\ &=
\int_\Omega f\varphi\,dx
-4\int_\Omega L^-_\omega(\varphi L^+_\omega u)\,dx \\ & =
\int_\Omega f\varphi\,dx
+2\omega\sqrt{1-\omega^2}\int_{\partial\Omega} \varphi L^+_\omega u\, d\ell^+ \\ & =
\int_{\Omega} (f-\mathcal Iv)\varphi\,dx
\end{aligned}
$$
which gives~\eqref{eq:Plauf}.
The identity~\eqref{eq:U2u} follows from~\eqref{eq:Plauf}, the fundamental solution equation~\eqref{e:fund-sol-eqn},
and the fact that $U$ is a compactly supported distribution:
$
E_\omega * P(\omega)U=(P(\omega)E_\omega) * U=U
$.
\end{proof}
%%%%%%%%%%%%%%%%%%%%%%%%%%%%%%%%%%%%%%%%%%%%%%%%%%%%%%%%%%%%%%%%%%%%%%%%%%%%%%%%
In the notation of Lemma~\ref{l:reduction-to-boundary},
define $S_\omega v:=(R_\omega\mathcal I v)|_{\Omega}\in\mathcal D'(\Omega)$.
Then~\eqref{eq:U2u} implies that
\begin{equation}
  \label{e:u-lambda-form}
u=(R_\omega f)|_{\Omega}-S_\omega v.
\end{equation}
Since $R_\omega f\in C^\infty(\mathbb R^2)$, we have $S_\omega v\in C^\infty(\overline{\Omega})$.
Denote by $\mathcal C_\omega v:=(S_\omega v)|_{\partial\Omega}$ its boundary trace, then
the boundary condition $u|_{\partial\Omega}=0$ gives the following equation on~$v$:
\begin{equation}
  \label{e:C-lambda-equator}
\mathcal C_\omega v=(R_\omega f)|_{\partial\Omega}.
\end{equation}
This motivates the study of the operator $S_\omega$ in~\S\ref{s:slp}
and of the operator $\mathcal C_\omega$ in~\S\ref{s:restricted-slp}.

%%%%%%%%%%%%%%%%%%%%%%%%%%%%%%%%%%%%%%%%%%%%%%%%%%%%%%%%%%%%%%%%%%%%%%%%%%%%%%%%
\subsection{Single layer potentials}
\label{s:slp}

We now introduce single layer potentials. 
For $\omega\in\mathbb C$ with $0<\Re\omega<1$
and $\Im\omega\neq 0$
the single layer potential is the operator $S_\omega:\mathcal D'(\partial\Omega;T^*\partial\Omega)\to \mathcal D'(\Omega)$ given by
\begin{equation}
  \label{e:s-lambda-def}
S_\omega v:=(E_\omega*\mathcal Iv)|_\Omega,\quad
v\in \mathcal D'(\partial\Omega;T^*\partial\Omega).
\end{equation}
Here $E_\omega\in\mathcal D'(\mathbb R^2)$ is the fundamental solution defined in~\eqref{e:fund-sol-imag}
and the operator $\mathcal I:\mathcal D'(\partial\Omega;T^*\partial\Omega)\to\mathcal E'(\mathbb R^2)$
is defined in~\eqref{e:I-def}.
Similarly, if $\lambda\in (0,1)$ and $\Omega$ is $\lambda$-simple (see Definition~\ref{d:1}) then we can define
operators
\begin{equation}
  \label{e:s-lambda-def-real}
S_{\lambda\pm i0}:\mathcal D'(\partial\Omega;T^*\partial\Omega)\to \mathcal D'(\Omega)
\end{equation}
by the formula~\eqref{e:s-lambda-def}, using the limiting distributions $E_{\lambda\pm i0}$ defined
in~\eqref{eq:Elapm}.

If we fix a positively oriented coordinate $\theta$ on $\partial\Omega$
and use it to identify $\mathcal D'(\partial\Omega;T^*\partial\Omega)$
with $\mathcal D'(\partial\Omega)$, then the action of $S_\omega$ on smooth functions
is given by
\begin{equation}
  \label{e:s-lambda-integrated}
S_\omega (f\,d\theta)(x)=\int_{\partial\Omega}E_\omega(x-y)f(y)\,d\theta(y),\quad
f\in C^\infty(\partial\Omega),\quad
x\in\Omega
\end{equation}
and similarly for $S_{\lambda\pm i0}$.

We now discuss the mapping properties of $S_\omega$,
in particular showing that $S_\omega v,S_{\lambda\pm i0}v\in C^\infty(\overline\Omega)$
when $v\in C^\infty(\partial\Omega;T^*\partial\Omega)$.
We break the latter into two cases:

%%%%%%%%%%%%%%%%%%%%%%%%%%%%%%%%%%%%%%%%%%%%%%%%%%%%%%%%%%%%%%%%%%%%%%%%%%%%%%%%
\subsubsection{The non-real case}

We first consider the case $\Im\omega\neq 0$. We use the following standard result,
which is a version of the Sochocki--Plemelj theorem:
%%%%%%%%%%%%%%%%%%%%%%%%%%%%%%%%%%%%%%%%%%%%%%%%%%%%%%%%%%%%%%%%%%%%%%%%%%%%%%%%
\begin{lemm}
  \label{l:sokhotski}
Assume that $\Omega_0\subset\mathbb C$ is a bounded open set with $C^\infty$
boundary (oriented in the positive direction). For $f\in C^\infty(\partial\Omega_0)$, define
$u\in C^\infty(\Omega_0)$ by
$$
u(z)=\int_{\partial\Omega_0} {f(w)\,dw\over w-z},\quad
z\in\Omega_0.
$$
Then $u$ extends smoothly to the boundary and the operator $f\mapsto u$
is continuous $C^\infty(\partial\Omega_0)\to C^\infty(\overline{\Omega}_0)$.
\end{lemm}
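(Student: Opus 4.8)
The plan is to reduce the statement to the classical Sochocki--Plemelj jump formula for Cauchy integrals, which I would rather deduce from the mapping properties of the single layer potential for the Laplacian (already essentially present in the motivational discussion of \S\ref{s:mot}) than reprove from scratch. First I would observe that $u$ is holomorphic on $\Omega_0$, so $\bar\partial u=0$ in $\Omega_0$, and that $2\,du = (\partial_x - i\partial_y)u\,dz$; the point is to control the Cauchy transform $\mathcal C f(z):=\int_{\partial\Omega_0}\log|z-w|\,f(w)\,|dw|$-type object and its derivative. Concretely, write $dw = w'(t)\,dt$ for a smooth positive parametrization $t\mapsto w(t)$ of $\partial\Omega_0$, so that the kernel is $(w(t)-z)^{-1}w'(t)$, and split the parametrization into a neighborhood of each point of $\partial\Omega_0$.

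Next I would localize near a boundary point $w_0$ and straighten the boundary by a smooth change of variables so that, locally, $\partial\Omega_0$ becomes a segment of $\mathbb R$ and $\Omega_0$ becomes the upper half plane; the Cauchy kernel transforms into $(x-x')^{-1}$ plus a smooth error, because the Jacobian factor and the discrepancy between the curve and its tangent line contribute only bounded, smooth kernels (the difference quotient of a smooth function is smooth). On the upper half plane the model computation is exactly the one recorded around \eqref{eq:modelC}: for $f\in\CIc(\mathbb R)$,
$$
\lim_{y\to 0+}\int_{\mathbb R}{f(x')\,dx'\over x'-(x+iy)}=\int_{\mathbb R}{f(x')\,dx'\over x'-x-i0},
$$
and $(x-x'-i0)^{-1}$ is, up to the smooth even part $\log|x-x'|$ convolution, the principal value $\mathrm{p.v.}\,1/(x-x')$ plus $i\pi\delta$. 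Thus the boundary trace operator $f\mapsto u|_{\partial\Omega_0}$ is, in these local coordinates, $\tfrac12$ of identity plus a constant times the Hilbert transform plus a smoothing operator; in particular it is a pseudodifferential operator of order $0$ (with principal symbol a bounded function of $\xi$ given by the Heaviside-type expression in \eqref{eq:WFAs}), hence maps $C^\infty$ to $C^\infty$. This is precisely the kind of statement packaged in Lemma~\ref{l:inversor-psi0}, which I would invoke (with $\varepsilon$ playing the role of $y\to 0+$) to conclude that the operator with kernel $(\theta-\theta'\pm i0)^{-1}$, cut off near the diagonal, lies in $\Psi^0(\mathbb S^1)$ and hence acts continuously $C^\infty(\partial\Omega_0)\to C^\infty(\partial\Omega_0)$.

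To finish I would assemble the pieces: away from the diagonal the kernel $(w(t)-z)^{-1}w'(t)$ is smooth in $(z,t)$ down to $z\in\partial\Omega_0$, so the corresponding operator is smoothing and continuous into $C^\infty(\overline\Omega_0)$; near the diagonal the local analysis above, transported back by the change of variables, shows the boundary trace is $\Psi^0$ and the interior values $u(z)$ for $z$ near $\partial\Omega_0$ depend smoothly on $z$ up to the boundary (this is the uniform-in-$\varepsilon$ statement of Lemma~\ref{l:inversor-psi0}, with the transversal variable $y=\Im z$ after straightening). Continuity $C^\infty(\partial\Omega_0)\to C^\infty(\overline\Omega_0)$ then follows from the closed graph theorem together with the explicit $\Psi^0$ bounds, or more directly from the uniform symbol estimates. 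The main obstacle, and the only genuinely technical point, is the local straightening step: one must check that the change of variables turns the Cauchy kernel into $(x-x')^{-1}$ plus a kernel that is honestly smooth (not merely bounded) uniformly up to the boundary, so that the model computation with $x\pm i0$ applies; once that reduction is in place, everything else is either the classical jump formula or an appeal to Lemma~\ref{l:inversor-psi0}.
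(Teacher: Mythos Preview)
Your approach is workable but takes a substantially longer route than the paper's. The paper gives a two-line proof: pick an almost analytic extension $\tilde f\in\CIc(\mathbb C)$ of $f$ (so $\partial_{\bar z}\tilde f$ vanishes to infinite order on $\partial\Omega_0$) and apply the Cauchy--Green formula to get
\[
u(z)=2\pi i\,\tilde f(z)+2i\int_{\Omega_0}\frac{\partial_{\bar w}\tilde f(w)}{w-z}\,dm(w),\qquad z\in\Omega_0.
\]
The first term is visibly smooth on $\mathbb C$, and the second is the convolution of the locally integrable $-2i/z$ with the function $\indic_{\Omega_0}\partial_{\bar z}\tilde f\in\CIc(\mathbb C)$, hence smooth on all of $\mathbb C$. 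No localization, no straightening, no pseudodifferential calculus is needed; continuity $C^\infty\to C^\infty$ is immediate from the construction of $\tilde f$. Your Plemelj/$\Psi^0$ route is the classical one and is in spirit what the Remark following the lemma describes for the half-plane model, but the paper deliberately avoids it here.

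There is also a genuine gap in your argument as written. Lemma~\ref{l:inversor-psi0} gives that the family $A_\varepsilon$ lies in $\Psi^0$ \emph{uniformly} in $\varepsilon$; it does not assert that $\varepsilon\mapsto A_\varepsilon$ is smooth as a map into $\Psi^0$ (and indeed $\partial_\varepsilon$ of the symbol $e^{-\varepsilon z(\theta')|\xi|}$ brings down a factor of $|\xi|$, so the $\varepsilon$-derivative is only in $\Psi^1$, not uniformly in $\Psi^0$ as $\varepsilon\to 0$). Thus ``uniform-in-$\varepsilon$'' alone does not yield smoothness of $u$ in the transversal variable $y$. You can repair this in either of two ways: (i) use that $u$ is holomorphic, integrate by parts on $\partial\Omega_0$ to write $\partial_z^k u$ again as a Cauchy integral with $f$ replaced by a $k$-th tangential derivative, and then invoke your $\Psi^0$ boundary-trace argument for each $k$; or (ii) observe that for fixed $f\in C^\infty$ the loss of one order per $\varepsilon$-derivative is harmless, since $\partial_\varepsilon^k A_\varepsilon\in\Psi^k$ still maps $C^\infty$ to $C^\infty$ uniformly. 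Either fix is routine, but one of them must be stated for the proof to be complete.
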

%%%%%%%%%%%%%%%%%%%%%%%%%%%%%%%%%%%%%%%%%%%%%%%%%%%%%%%%%%%%%%%%%%%%%%%%%%%%%%%%
\Remark In the (unbounded) model case $\Omega_0=\{\Im z>0\}$,
we have for each $f\in\CIc(\mathbb R)$
$$
u(x+iy)=\int_{\mathbb R} {f(t)\,dt\over t-x-iy},
\quad
y>0.
$$
We see in particular that the function $x\mapsto\lim_{y\to 0+}\partial_y^k u(x+iy)$ is given by
the convolution of $f$ with $(-1)^{k+1}i^kk!(x+i0)^{-k-1}$.
%%%%%%%%%%%%%%%%%%%%%%%%%%%%%%%%%%%%%%%%%%%%%%%%%%%%%%%%%%%%%%%%%%%%%%%%%%%%%%%%
\begin{proof}
Let $\tilde f\in\CIc(\mathbb C)$ be an almost analytic extension of $f$:
that is, $\tilde f|_{\partial\Omega_0}=f$ and $\partial_{\bar z}\tilde f$
vanishes to infinite order on $\partial\Omega_0$. (See for example~\cite[Lemma~4.30]{DZ-Book}
for the existence of such an extension.)
Denote by $dm$ the Lebesgue measure on $\mathbb C$. By the
Cauchy--Green formula (see for instance~\cite[(3.1.11)]{Hormander1}), we have
\[
u ( z ) =  2   \pi i \tilde f ( z ) + 2i 
\int_{\Omega_0 } \frac{ \partial_{\bar w}  \tilde f ( w )}{ w-z} \,dm ( w),\quad
z\in \Omega_0
\]
and this extends smoothly to $ z\in\mathbb C$: indeed,
the second term on the right-hand side is the convolution of
the distribution $-2iz^{-1}\in L^1_{\loc}(\mathbb C)$ with $\indic_{\Omega_0}\partial_{\bar z}\tilde f\in \CIc(\mathbb C)$.
\end{proof}
%%%%%%%%%%%%%%%%%%%%%%%%%%%%%%%%%%%%%%%%%%%%%%%%%%%%%%%%%%%%%%%%%%%%%%%%%%%%%%%%
We now come back to the mapping properties of single layer potentials:
%%%%%%%%%%%%%%%%%%%%%%%%%%%%%%%%%%%%%%%%%%%%%%%%%%%%%%%%%%%%%%%%%%%%%%%%%%%%%%%%
\begin{lemm}
  \label{l:singl-1}
Assume that $0<\Re\omega<1$ and $\Im\omega\neq 0$. Then
$S_\omega$ is a continuous operator from~$C^\infty(\partial\Omega;T^*\partial\Omega)$
to~$C^\infty(\overline\Omega)$.
\end{lemm}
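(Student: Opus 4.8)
The plan is to reduce the statement to the model case handled by Lemma~\ref{l:sokhotski} via the change of variables $\Phi_\omega$ that straightens out the characteristic lines. Recall from~\eqref{e:fund-sol-imag} that $E_\omega(x)=c_\omega\log A(x,\omega)=c_\omega\big(\log\ell^+(x,\omega)+\log\ell^-(x,\omega)\big)$ modulo the branch-cut ambiguity, and from~\eqref{eq:LlogA} that $L^\pm_\omega\log A(\bullet,\omega)=1/\ell^\pm(\bullet,\omega)$. Since $L^+_\omega$ and $L^-_\omega$ span the tangent space at every point, it suffices to show that both $L^+_\omega S_\omega v$ and $L^-_\omega S_\omega v$ extend continuously to $C^\infty(\overline\Omega)$, together with $S_\omega v$ itself being continuous up to the boundary; iterating this argument for higher derivatives (applying $L^\pm_\omega$ repeatedly and commuting them, which is trivial as they are constant vector fields) then gives smoothness up to the boundary with continuous dependence on the data.

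First I would compute, using~\eqref{e:s-lambda-integrated} and differentiating under the integral sign (legitimate away from $\partial\Omega$),
\[
L^\pm_\omega S_\omega(f\,d\theta)(x)=c_\omega\int_{\partial\Omega}\frac{f(y)}{\ell^\pm(x-y,\omega)}\,d\theta(y),\qquad x\in\Omega.
\]
Next I would introduce the linear isomorphism $\Phi_\omega:\mathbb C\to\mathbb R^2$ with $\Phi_\omega^{-1}(x)=\ell^+(x,\omega)+i\,c\,\ell^-(x,\omega)$ for a suitable real constant $c$ chosen so that $\Phi_\omega$ is orientation preserving (this is possible by Lemma~\ref{l:which-rotate}, since $\Im\omega\neq 0$ forces $\ell^+,\ell^-$ to be genuinely complex-linearly-independent directions, i.e.\ $\ell^+$ and $\ell^-$ are not proportional over $\mathbb R$ after the rescaling). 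Writing $\Omega_0:=\Phi_\omega^{-1}(\Omega)$, a bounded open set with $C^\infty$ boundary, the integral for $L^+_\omega S_\omega$ becomes, after the substitution $z=\Phi_\omega^{-1}(x)$, $w=\Phi_\omega^{-1}(y)$, precisely of the form $\int_{\partial\Omega_0}\frac{g(w)\,dw}{w-z}$ up to a nonvanishing constant and a change of the smooth density (the Jacobian of the parametrization is absorbed into the smooth function $g$), so Lemma~\ref{l:sokhotski} applies and yields that $L^+_\omega S_\omega v$ extends to $C^\infty(\overline\Omega)$ with continuous dependence on $v$. A parallel argument with the roles of $\ell^+,\ell^-$ interchanged (using a second linear isomorphism, orientation reversing this time, again by Lemma~\ref{l:which-rotate}) handles $L^-_\omega S_\omega v$. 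Finally $S_\omega v$ itself is bounded near $\partial\Omega$ because $\log A(x-y,\omega)$ is locally integrable and its singularity is only logarithmic; combined with the boundary regularity of its $L^\pm_\omega$-derivatives, and the fact that $\Omega$ is connected with smooth boundary, one concludes $S_\omega v\in C^\infty(\overline\Omega)$.

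The main obstacle I anticipate is bookkeeping with the branch cuts of $\log\ell^\pm(\bullet,\omega)$ and the orientations: one must verify that the change of variables $\Phi_\omega$ genuinely converts $\int_{\partial\Omega}f(y)/\ell^\pm(x-y,\omega)\,d\theta(y)$ into a Cauchy-type integral over $\partial\Omega_0$ of the kind in Lemma~\ref{l:sokhotski}, including checking that $z\in\Omega_0$ (not its complement) and that the orientation of $\partial\Omega_0$ is the positive one, so that no spurious sign or residue term appears. This is a routine but careful computation; everything else follows from Lemma~\ref{l:sokhotski} and the fact that $L^\pm_\omega$ form a basis of constant vector fields. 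Continuity of the operator $v\mapsto S_\omega v$ from $C^\infty(\partial\Omega;T^*\partial\Omega)$ to $C^\infty(\overline\Omega)$ is inherited directly from the corresponding continuity statement in Lemma~\ref{l:sokhotski}.
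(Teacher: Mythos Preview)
Your approach is essentially the paper's: compute $L^\pm_\omega S_\omega v(x)=c_\omega\int_{\partial\Omega} v(y)/\ell^\pm(x-y,\omega)$ and reduce to the Cauchy integral of Lemma~\ref{l:sokhotski} by a linear change of variables. One correction: your proposed map $\Phi_\omega^{-1}(x)=\ell^+(x,\omega)+ic\,\ell^-(x,\omega)$ does not do the job, since after this substitution the denominator becomes $\ell^+(x-y)+ic\,\ell^-(x-y)\neq \ell^+(x-y)$ and you do not get a Cauchy kernel. The point (and this is exactly what the paper does) is that when $\Im\omega\neq 0$ the single linear functional $x\mapsto\ell^\pm(x,\omega)$ is already a real-linear isomorphism $\mathbb R^2\to\mathbb C$, so one simply puts $z=\ell^\pm(x,\omega)$, $w=\ell^\pm(y,\omega)$, $\Omega_\pm=\ell^\pm(\Omega)$, and Lemma~\ref{l:which-rotate} supplies the orientation sign $\pm\sgn(\Im\omega)$. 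No combination of $\ell^+$ and $\ell^-$ is needed. With that fix your argument goes through, and the separate remark about iterating for higher derivatives is unnecessary: once $L^+_\omega S_\omega v,\,L^-_\omega S_\omega v\in C^\infty(\overline\Omega)$ you have all first partials smooth up to the boundary, hence $S_\omega v\in C^\infty(\overline\Omega)$.
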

%%%%%%%%%%%%%%%%%%%%%%%%%%%%%%%%%%%%%%%%%%%%%%%%%%%%%%%%%%%%%%%%%%%%%%%%%%%%%%%%
\Remark With more work, it is possible to show that $S_\omega$ is actually continuous $C^\infty(\partial\Omega;T^*\partial\Omega)\to C^\infty(\overline\Omega)$
\emph{uniformly} as $\Im\omega\to 0$, with limits being the operators $S_{\lambda\pm i0}$,
$\lambda=\Re\omega$.
However, our proof of Lemma~\ref{l:singl-1} only shows the mapping property for any fixed
non-real $\omega$. This is enough for our purposes since we have weak convergence of
$S_{\lambda\pm i\varepsilon}$ to $S_{\lambda\pm i0}$ (Lemma~\ref{l:loc}, see also Lemmas~\ref{l:C-converges}
and~\ref{l:C-lambda-strong-limit} below)
and in~\S\ref{s:restricted-slp} we analyse the behaviour of the \emph{restricted} single layer potentials
uniformly as $\Im\omega\to 0$.
%%%%%%%%%%%%%%%%%%%%%%%%%%%%%%%%%%%%%%%%%%%%%%%%%%%%%%%%%%%%%%%%%%%%%%%%%%%%%%%%
\begin{proof}
Let $v\in C^\infty(\partial\Omega;T^*\partial\Omega)$. Since $E_\omega$ is smooth on~$\mathbb R^2\setminus\{0\}$
and $\mathcal Iv$ is supported on~$\partial\Omega$,
we have $S_\omega v\in C^\infty(\Omega)$. It remains to show that $S_\omega v$
is smooth up to the boundary, and for this it is enough
to verify the smoothness of the derivatives $L^\pm_\omega S_\omega v$
where $L^\pm_\omega$ are defined in~\eqref{e:L-pm-def}. By~\eqref{eq:LlogA} we have (suppressing the dependence of $\ell^\pm$ on~$\omega$ in the notation)
$$
L^\pm_\omega S_\omega v(x)=c_\omega\int_{\partial\Omega}{v(y)\over\ell^\pm(x-y)},\quad
x\in\Omega.
$$
Since $\Im\omega\neq 0$, the maps $x\mapsto\ell^\pm(x)$ are linear isomorphisms
from $\mathbb R^2$ onto $\mathbb C$ (considered as a real vector space).
Using this we write
\begin{equation}
  \label{e:singl-1-1}
L^\pm_\omega S_\omega v(x)=\pm \sgn(\Im\omega)c_\omega \int_{\partial\Omega_\pm}
{f_\pm(w)\,dw\over z-w},\qquad
z:=\ell^\pm(x)\in \Omega_\pm
\end{equation}
where we put $\Omega_\pm:=\ell^\pm(\Omega)\subset\mathbb C$
and define the functions $f_\pm\in C^\infty(\partial\Omega_\pm)$ by the equality
of differential forms $v(y)=f_\pm(\ell^\pm(y))\,d\ell^\pm(y)$
on $\partial\Omega$. Here $\partial\Omega_\pm$
are positively oriented and the sign factor $\pm\sgn(\Im\omega)$ accounts for
the orientation of the map $\ell^\pm$, see Lemma~\ref{l:which-rotate}.

Now $L^\pm_\omega S_\omega v$ extends smoothly to the boundary
by Lemma~\ref{l:sokhotski}.
\end{proof}
%%%%%%%%%%%%%%%%%%%%%%%%%%%%%%%%%%%%%%%%%%%%%%%%%%%%%%%%%%%%%%%%%%%%%%%%%%%%%%%%

%%%%%%%%%%%%%%%%%%%%%%%%%%%%%%%%%%%%%%%%%%%%%%%%%%%%%%%%%%%%%%%%%%%%%%%%%%%%%%%%
\subsubsection{The real case}

We now consider the case $\lambda\in (0,1)$:
%%%%%%%%%%%%%%%%%%%%%%%%%%%%%%%%%%%%%%%%%%%%%%%%%%%%%%%%%%%%%%%%%%%%%%%%%%%%%%%%
\begin{lemm}
  \label{l:singl-2}
Assume that $\lambda\in (0,1)$ and $\Omega$ is $\lambda$-simple
(see Definition~\ref{d:1}). Then
$S_{\lambda\pm i0}$ are continuous operators from $C^\infty(\partial\Omega;T^*\partial\Omega)$
to $C^\infty(\overline\Omega)$.
\end{lemm}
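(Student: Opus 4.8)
The plan is to follow the scheme of the proof of Lemma~\ref{l:singl-1}, isolating the one–dimensional phenomena created by the degeneracy of $\ell^\pm(\bullet,\lambda):\mathbb R^2\to\mathbb R$ (which, unlike in the non-real case, are no longer isomorphisms onto $\mathbb C$). First, interior regularity $S_{\lambda\pm i0}v\in C^\infty(\Omega)$ for any $v\in\mathcal D'(\partial\Omega;T^*\partial\Omega)$ is immediate: $E_{\lambda\pm i0}$ is smooth off the two characteristic lines $\{\ell^\pm(\bullet,\lambda)=0\}$, and by $\lambda$-simplicity their conormal directions meet those of $\partial\Omega$ only over the finitely many critical points of $\ell^\pm|_{\partial\Omega}$, at which the relevant characteristic lines are tangent to $\partial\Omega$ and do not enter $\Omega$; a wavefront set computation for the convolution in~\eqref{e:s-lambda-def} therefore produces no singularities inside $\Omega$. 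So it suffices to fix $v=g\,d\theta$ with $g\in C^\infty(\partial\Omega)$ and show $S_{\lambda+i0}v$ extends smoothly to $\overline\Omega$ — the case of $S_{\lambda-i0}$ then follows from $E_{\lambda-i0}=\overline{E_{\lambda+i0}}$ (see~\eqref{eq:Elapm}), which gives $S_{\lambda-i0}v=\overline{S_{\lambda+i0}\bar v}$.

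Next I would split $E_{\lambda+i0}$ using the pointwise (hence distributional, all terms being locally integrable) identity $\log(\alpha\beta+i0)=\log|\alpha|+\log|\beta|+\tfrac{i\pi}{2}(1-\sgn\alpha\,\sgn\beta)$, taken with $\alpha=\ell^+(x-y,\lambda)=\ell^+(x)-\ell^+(y)$ and $\beta=\ell^-(x-y,\lambda)$. This writes $S_{\lambda+i0}v$ as a constant multiple of $\int_{\partial\Omega}g\,d\theta$, plus the two logarithmic terms $\int_{\partial\Omega}\log|\ell^\pm(x)-\ell^\pm(y)|\,g(y)\,d\theta(y)$, plus the sign term
\[
T(x):=\int_{\partial\Omega}\sgn\big(\ell^+(x-y,\lambda)\big)\,\sgn\big(\ell^-(x-y,\lambda)\big)\,g(y)\,d\theta(y).
\]
Each logarithmic term is a function of $\ell^\pm(x)$ alone; pushing it forward along $\ell^\pm$ (Lemma~\ref{l:pushforwards}) it equals $\int_{\mathbb R}\log|\ell^\pm(x)-t|\,(\Pi^\pm_\lambda g)(t)\,dt$, and since $\Pi^\pm_\lambda g$ is smooth on $(\ell^\pm_{\min},\ell^\pm_{\max})$ with at worst a $(\mathrm{dist}\text{ to endpoint})^{-1/2}$ blow-up at each endpoint, Lemma~\ref{l:log-conv} (applied near each endpoint, plus smoothness of convolution with $\log|\cdot|$ away from the endpoints) shows it is a smooth function of $\ell^\pm(x)$ on the closed interval $[\ell^\pm_{\min},\ell^\pm_{\max}]$. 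Composing with the linear map $\ell^\pm$ and using $\ell^\pm(\overline\Omega)\subset[\ell^\pm_{\min},\ell^\pm_{\max}]$ yields smoothness on $\overline\Omega$. The restriction to $\overline\Omega$ is essential here: $[\ell^\pm_{\min},\ell^\pm_{\max}]$ is exactly the closed half-line on which Lemma~\ref{l:log-conv} gives smoothness, per the Remark following it, and this is why the statement is for $\overline\Omega$ rather than $\mathbb R^2$.

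The remaining, and most delicate, point is $T\in C^\infty(\overline\Omega)$. Since $T$ is continuous up to $\overline\Omega$ (dominated convergence) and $(L^+_\lambda,L^-_\lambda)$ is a basis of constant vector fields dual to $(\ell^+,\ell^-)$ by~\eqref{eq:L2l}, it suffices to show $L^\pm_\lambda T\in C^\infty(\overline\Omega)$. Using $L^+_\lambda\ell^+=1$, $L^+_\lambda\ell^-=0$ and $\partial_w\sgn(w)=2\delta(w)$, one gets $L^+_\lambda T(x)=2\sum_{y:\,\ell^+(y)=\ell^+(x)}|\partial_\theta\ell^+(y)|^{-1}\sgn(\ell^-(x)-\ell^-(y))\,g(y)$, and symmetrically for $L^-_\lambda$. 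For $x$ near a regular point $x_0$ of $\ell^+|_{\partial\Omega}$ the two points of $\{\ell^+(y)=\ell^+(x)\}$ depend smoothly on $x$ with $|\partial_\theta\ell^+|$ bounded below, and the only threat to smoothness — a jump of $\sgn(\ell^-(x)-\ell^-(y))$ — occurs only when $x\in\partial\Omega$ (so that $y=x$ is one of the two points), where it is harmless because $\Omega$ lies locally on one side of the tangent line to $\partial\Omega$ at $x$ (at points of nonzero curvature; an inflection point requires one more term of the expansion). Near a critical point, say $x_0=x^+_{\min}$, the two points merge: in the coordinate $\theta=\theta^+_{\min}$ from~\eqref{e:factorize}, for which $\ell^+(y(\theta))=\ell^+_{\min}+\theta^2$ and $\partial_\theta\big(\ell^+(y(\theta))\big)=2\theta$, they are $\theta=\pm\sqrt a$ with $a:=\ell^+(x)-\ell^+_{\min}\ge 0$, and
\[
L^+_\lambda T(x)=\tfrac1{\sqrt a}\Big(\sgn\big(b-\beta(\sqrt a)\big)\,g(y(\sqrt a))+\sgn\big(b-\beta(-\sqrt a)\big)\,g(y(-\sqrt a))\Big),
\]
where $b:=\ell^-(x)-\ell^-(x_0)$ and $\beta(\theta):=\ell^-(y(\theta))-\ell^-(x_0)$, $\beta'(0)\neq 0$. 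The key geometric fact is that $\Omega$ near $x_0$ is exactly the region $\{\beta(-\sqrt a)<b<\beta(\sqrt a)\}$ (reorder if $\beta'(0)<0$), so the two signs are $-1$ and $+1$ and the parenthesis equals $g(y(-\sqrt a))-g(y(\sqrt a))=O(\sqrt a)$, which exactly cancels the $a^{-1/2}$; an induction carrying out the same cancellation for all derivatives in $(a,b)$ shows $L^+_\lambda T,L^-_\lambda T$ are smooth up to $\overline\Omega$ near $x_0$, and the points $x^+_{\max}$ and the critical points of $\ell^-$ are handled symmetrically. I expect this critical-point analysis — where smoothness hinges on $\Omega$ lying strictly between the two boundary branches issuing from $x^\pm_{\min/\max}$, mirroring the one-sidedness in Lemma~\ref{l:log-conv} — to be the main obstacle.
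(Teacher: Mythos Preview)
Your decomposition of $E_{\lambda+i0}$ and your treatment of the two logarithmic pieces via Lemma~\ref{l:pushforwards} and Lemma~\ref{l:log-conv} are exactly what the paper does. The only substantive difference is in how you handle the Heaviside term $T$ (the paper's $S^0_\lambda$). The paper takes an antiderivative: it reduces (locally) to $v=dF$ with $F\in C^\infty(\partial\Omega)$ and then observes directly that
\[
S^0_\lambda v(x)=F_-(\ell^-(x))-F_+(\ell^+(x)),\qquad F_\pm=\Upsilon^\pm_\lambda F,
\]
so smoothness on $\overline\Omega$ follows at once from part~2 of Lemma~\ref{l:pushforwards}. Your route instead differentiates $T$, obtaining $L^\pm_\lambda T$ as a sum over the level set $\{\ell^\pm(y)=\ell^\pm(x)\}$; you then recognise this as a function of $\ell^\pm(x)$ alone and argue smoothness of that one-variable function by hand, separately at regular and at critical values. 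This is correct and essentially amounts to reproving Lemma~\ref{l:pushforwards}(2) in place; the paper's antiderivative trick packages the same cancellation more cleanly and avoids the case analysis.

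Two small remarks on your write-up. First, the justification at regular boundary points via ``$\Omega$ lies locally on one side of the tangent line'' and the caveat about inflection points is a red herring: the sign $\sgn(\ell^-(x)-\ell^-(y))$ is constant on all of $\overline\Omega$ simply because for $x\in\Omega$ on the segment $\Gamma^+_\lambda(y)$ one has $\ell^-(x)$ strictly between $\ell^-(y)$ and $\ell^-(\gamma^+(y))$; no curvature is involved. Second, near the critical point the quantity $L^+_\lambda T$ is a function of $a=\ell^+(x)-\ell^+_{\min}$ alone (not of $b$), so no ``induction in $(a,b)$'' is needed: the ratio $\big(G(\sqrt a)-G(-\sqrt a)\big)/\sqrt a$ with $G$ smooth is an even smooth function of $\sqrt a$, hence smooth in $a\ge 0$, and you are done. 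Your opening wavefront-set remark about $S_{\lambda\pm i0}v\in C^\infty(\Omega)$ for arbitrary distributional $v$ is not needed for the lemma (which concerns $v\in C^\infty$) and would require more care to justify.
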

%%%%%%%%%%%%%%%%%%%%%%%%%%%%%%%%%%%%%%%%%%%%%%%%%%%%%%%%%%%%%%%%%%%%%%%%%%%%%%%%
\begin{proof}
1. We focus on the operator $S_{\lambda+i0}$,
noting that $S_{\lambda-i0}$ is related to it by the identity
$$
S_{\lambda-i0} \overline v=\overline{S_{\lambda+i0}v}\quad\text{for all}\quad
v\in \mathcal D'(\partial\Omega;T^*\partial\Omega).
$$
We again suppress the dependence on $\lambda$ in the notation,
writing simply $\ell^\pm(x)$ and~$A(x)$.
Denoting by $H(x)=\indic_{(0,\infty) }(x) $ the Heaviside function, we can rewrite~\eqref{eq:Elapm}
as
$$
\log(A(x)+i0)=\log |\ell^+(x)|+\log|\ell^-(x)|+i\pi H(-A(x)).
$$
We then decompose
\begin{equation}
  \label{e:decker}
S_{\lambda+i0}=c_\lambda(S^+_\lambda+S^-_\lambda+i\pi S^0_\lambda)
\end{equation}
where for all $x\in\Omega$ and $v\in C^\infty(\partial\Omega;T^*\partial\Omega)$
$$
\begin{aligned}
S_\lambda^\pm v(x)&=\int_{\partial\Omega}\log|\ell^\pm(x-y)|v(y),\\
S_\lambda^0 v(x)&=\int_{\partial\Omega}H(-A(x-y))v(y).
\end{aligned}
$$

\noindent 2. Let $v\in C^\infty(\partial\Omega;T^*\partial\Omega)$.
Fix a positively oriented coordinate $\theta$ on $\partial\Omega$ and
write $v=f\,d\theta$ for some $f\in C^\infty(\partial\Omega)$.
We first analyse $S_\lambda^\pm v$, writing it as
$$
S_\lambda^\pm v(x)=g_\pm(\ell^\pm(x)),\quad
g_\pm(t):=\int_{\mathbb R} (\Pi_\lambda^\pm f)(s)\log |t-s|\,ds 
$$
where $\Pi_\lambda^\pm f\in\mathcal E'(\mathbb R)$ are the pushforwards of $f$ by the maps
$\ell^\pm$ defined in~\eqref{e:Pi-pm-def}.
Let $\ell^\pm_{\min}<\ell^\pm_{\max}$ be defined in~\eqref{e:ell-pm-min-max}.
By part~1 of Lemma~\ref{l:pushforwards},
$\Pi_\lambda^\pm f$ is supported in $[\ell^\pm_{\min},\ell^\pm_{\max}]$ and
$$
\sqrt{(s-\ell^\pm_{\min})(\ell^\pm_{\max}-s)}\,\Pi_\lambda^\pm f(s)\ \in\ C^\infty([\ell^\pm_{\min},\ell^\pm_{\max}]).
$$
Using Lemma~\ref{l:log-conv}, we then get
$$
g_\pm\in C^\infty([\ell^\pm_{\min},\ell^\pm_{\max}])
$$
which implies that $S^\pm_\lambda v\in C^\infty(\overline\Omega)$.

\noindent 3. It remains to show that $S^0_\lambda v\in C^\infty(\overline\Omega)$.
We may assume that $v=dF$ for some $F\in C^\infty(\partial\Omega)$,
that is $\int_{\partial\Omega}v=0$.
Indeed, if we are studying $S^0_\lambda v$ near some point $x_0\in\overline\Omega$
then we may take $y_0\in\partial\Omega$ such that $A(x_0-y_0)>0$
and change $v$ in a small neighborhood of $y_0$ so that $S^0_\lambda v(x)$
does not change for $x$ near $x_0$ and $v$ integrates to~0. 

For $s\in (\ell^\pm_{\min},\ell^\pm_{\max})$,
define $x^\pm_{(1)}(s),x^\pm_{(2)}(s)\in\partial\Omega$ by
$$
\ell^\pm(x^\pm_{(1)}(s))=\ell^\pm(x^\pm_{(2)}(s))=s,\quad
\ell^\mp(x^\pm_{(1)}(s))<\ell^\mp(x^\pm_{(2)}(s)).
$$
%%%%%%%%%%%%%%%%%%%%%%%%%%%%%%%%%%%%%%%%%%%%%%%%%%%%%%%%%%%%%%%%%%%%%%%%%%%%%%%%
\begin{figure}
\includegraphics{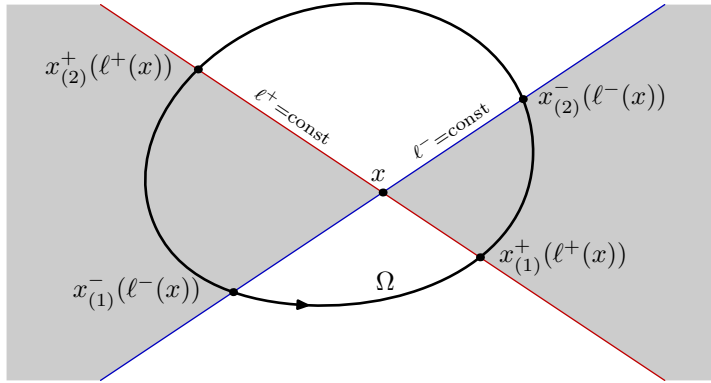}
\caption{A point $x\in\Omega$ and the corresponding projections
$x^\pm_{(j)}(\ell^\pm(x))\in\partial\Omega$.
The shaded region is the set of $y\in\mathbb R^2$ such that
$A(x-y)< 0$.}
\label{f:heaviside}
\end{figure}%
%%%%%%%%%%%%%%%%%%%%%%%%%%%%%%%%%%%%%%%%%%%%%%%%%%%%%%%%%%%%%%%%%%%%%%%%%%%%%%%%
Then for any $x\in\Omega$, the set of $y\in \partial\Omega$ such that $A(x-y)<0$
consists of two intervals of the circle $\partial\Omega$, from $x_{(1)}^+(\ell^+(x))$
to $x_{(2)}^-(\ell^-(x))$ (with respect to the positive orientation on $\partial\Omega$) and from $x_{(2)}^+(\ell^+(x))$
to $x_{(1)}^-(\ell^-(x))$~-- see Figure~\ref{f:heaviside}.
Since $v=dF$, we compute for $x\in\Omega$
$$
S^0_\lambda v(x)=F_-(\ell^-(x))-F_+(\ell^+(x)),\quad
F_\pm (s):=F(x_{(1)}^\pm(s))+F(x_{(2)}^\pm(s)).
$$
By part~2 of Lemma~\ref{l:pushforwards}, we have
$F_\pm=\Upsilon^\pm_\lambda F\in C^\infty([\ell^\pm_{\min},\ell^\pm_{\max}])$.
Thus $S^0_\lambda v\in C^\infty(\overline\Omega)$ as needed.
\end{proof}
%%%%%%%%%%%%%%%%%%%%%%%%%%%%%%%%%%%%%%%%%%%%%%%%%%%%%%%%%%%%%%%%%%%%%%%%%%%%%%%%

%%%%%%%%%%%%%%%%%%%%%%%%%%%%%%%%%%%%%%%%%%%%%%%%%%%%%%%%%%%%%%%%%%%%%%%%%%%%%%%%
\subsubsection{Conormal singularities}

We now study the action of $S_{\lambda +i0}$ on conormal distributions (see~\S\ref{s:con}):
%%%%%%%%%%%%%%%%%%%%%%%%%%%%%%%%%%%%%%%%%%%%%%%%%%%%%%%%%%%%%%%%%%%%%%%%%%%%%%%%
\begin{lemm}
\label{l:con}
Assume that $\lambda\in (0,1)$ and $\Omega$ is $\lambda$-simple. Fix
$y_0\in \partial\Omega\setminus \mathscr C_\lambda$, where the characteristic set $ \mathscr C_\lambda $
was defined in~\eqref{e:no-characteristic}. Then for each $v\in\mathcal D'(\partial\Omega;T^*\partial\Omega)$
we have
$$
v\in I^s(\partial\Omega,N^*_\pm\{y_0\})\quad\Longrightarrow\quad
S_{\lambda+i0}v\in I^{s-{5\over 4}}(\overline\Omega,N_\pm^* \Gamma^\pm_\lambda(y_0)).
$$
Here the positive/negative halves of the conormal bundle $N^*_\pm\{y_0\}\subset T^*\partial\Omega$ are defined using the positive orientation on~$\partial\Omega$; the line segments $\Gamma^\pm_\lambda(y_0)\subset \Omega$ are  defined in~\eqref{e:Gamma-def}
and transverse to the boundary $\partial\Omega$; and $N_\pm^*\Gamma^\pm_\lambda(y_0)$ are defined in~\eqref{e:n-pm-gamma-def}.
\end{lemm}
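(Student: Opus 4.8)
The plan is to localize the problem near $y_0$, reduce the single layer potential to the composition of a pushforward under $\ell^\pm(\bullet,\lambda)$, a convolution with a branch of the logarithm, and a pullback under $\ell^\pm(\bullet,\lambda)$, and then to match the $i0$-prescription of the Feynman fundamental solution $E_{\lambda+i0}$ against the conormal half of~$v$. It suffices to treat the case $v\in I^s(\partial\Omega,N^*_+\{y_0\})$, the case $N^*_-$ being obtained by interchanging $+\leftrightarrow-$ throughout. First I would write $v=v_0+v_1$ with $v_0\in\CIc(\partial\Omega\setminus\{y_0\};T^*\partial\Omega)$ and $v_1$ supported in a small neighbourhood $U$ of~$y_0$; by Lemma~\ref{l:singl-2} we have $S_{\lambda+i0}v_0\in C^\infty(\overline\Omega)$, so we may assume $v$ is supported in~$U$, and, fixing a positively oriented coordinate $\theta$ on $\partial\Omega$ with $\theta(y_0)=0$ and writing $v=f\,d\theta$, that $f\in I^s(\mathbb R,N^*_+\{0\})$ is supported near~$0$. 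Since $y_0\notin\mathscr C_\lambda$, after shrinking $U$ the maps $\ell^\pm(\bullet,\lambda)$ are diffeomorphisms on $\overline U$, $\gamma^\pm(y_0)\notin\overline U$, and $\Gamma^\pm_\lambda(y_0)$ are transverse to $\partial\Omega$. As $E_{\lambda+i0}$ is smooth off $\{\ell^+=0\}\cup\{\ell^-=0\}$, the kernel $E_{\lambda+i0}(x-y)$ with $y\in U$ is smooth unless $x$ is near $\Gamma^+_\lambda(y)$ or $\Gamma^-_\lambda(y)$, so $S_{\lambda+i0}v\in C^\infty$ away from $\overline{\Gamma^+_\lambda(y_0)}\cup\overline{\Gamma^-_\lambda(y_0)}$ and it remains to analyse it microlocally near these two segments.

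Near $\Gamma^\mp_\lambda(y_0)$ the factor $\ell^\mp(x-y)$ is small, so the fact that $\Omega$ does not meet the ray $\{x:\ell^-(x-y)=0,\ \nu^-(y)\ell^+(x-y)\geq 0\}$ (established in the proof of Lemma~\ref{l:identitor-2}; see also~\eqref{e:signs-inside} and~\eqref{e:sign-identity}) forces the other factor to be nonvanishing with a definite sign: for $x\in\Omega$ near $\Gamma^-_\lambda(y_0)$ and $y\in U\cap\partial\Omega$ one has $\sgn\ell^+(x-y)=-\nu^-(y_0)$, and near $\Gamma^+_\lambda(y_0)$ one has $\sgn\ell^-(x-y)=\nu^+(y_0)$. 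Substituting this into the branch formula~\eqref{eq:Elapm} for $\log(\ell^+\ell^-+i0)$, the logarithm $\log|\ell^\pm(x-y)|$ of the nonvanishing component is smooth in $(x,y)$ on the relevant set, so modulo such a smooth remainder $E_{\lambda+i0}(x-y)$ equals a single branch $c_\lambda\log(\ell^\mp(x-y)+i\sigma^\mp0)$ with $\sigma^-:=-\nu^-(y_0)$ near $\Gamma^-_\lambda(y_0)$ and $\sigma^+:=\nu^+(y_0)$ near $\Gamma^+_\lambda(y_0)$. Writing $v=f\,d\theta$ and using the pushforward $\Pi^\mp_\lambda$ of~\eqref{e:Pi-pm-def}, this identifies $S_{\lambda+i0}v$, modulo functions smooth near the segment, with
\begin{equation*}
c_\lambda\,\big(\ell^\mp(\bullet,\lambda)\big)^{*}\big(\log(\bullet+i\sigma^\mp0)*\Pi^\mp_\lambda f\big)\qquad\text{near }\Gamma^\mp_\lambda(y_0).
\end{equation*}

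It then remains to do the sign bookkeeping. Since $\ell^\pm(\bullet,\lambda)$ restricts to a diffeomorphism on $U$ and the second $\ell^\pm$-preimage of $\ell^\pm(y_0)$ lies outside $\supp f$, Lemma~\ref{l:pushforwards} and Lemma~\ref{l:conormal-symbol}(2) show that $\Pi^\pm_\lambda f$ is conormal at $\ell^\pm(y_0)$ of the same order~$s$, lying in $N^*_+$ if $\nu^\pm(y_0)>0$ and in $N^*_-$ if $\nu^\pm(y_0)<0$, i.e. in $N^*_{\sgn\nu^\pm(y_0)}\{\ell^\pm(y_0)\}$. For the $\Gamma^-$ segment the prescription $\sigma^-=-\nu^-(y_0)$ has the \emph{opposite} sign to this half, so by Lemma~\ref{l:log-conormal} the convolution lies in $C^\infty$ and $S_{\lambda+i0}v$ has no singularity along $\Gamma^-_\lambda(y_0)$. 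For the $\Gamma^+$ segment the prescription $\sigma^+=\nu^+(y_0)$ \emph{matches}, so Lemma~\ref{l:log-conormal} gives $\log(\bullet+i\sigma^+0)*\Pi^+_\lambda f\in I^{s-1}(\mathbb R,N^*_{\sgn\nu^+(y_0)}\{\ell^+(y_0)\})$; pulling this back under the submersion $\overline\Omega\ni x\mapsto\ell^+(x,\lambda)$, whose fibre through $\Gamma^+_\lambda(y_0)$ is the line extending that segment and which is transverse to $\partial\Omega$ by $\lambda$-simplicity, gives a distribution conormal to $\Gamma^+_\lambda(y_0)$ up to the boundary; the model description~\eqref{eq:uosc} shows that pullback from a one-dimensional to a two-dimensional base lowers the conormal order by $\tfrac14$, so the order is $s-\tfrac54$, and reading off the phase $\ell^+(x,\lambda)\xi_1$ shows the covectors are $\xi_1\,d\ell^+(x)$ with $\sgn\xi_1=\sgn\nu^+(y_0)$, which by~\eqref{e:n-pm-gamma-def} is exactly $N^*_+\Gamma^+_\lambda(y_0)$. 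This yields $S_{\lambda+i0}v\in I^{s-5/4}(\overline\Omega,N^*_+\Gamma^+_\lambda(y_0))$.

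The delicate point, which I expect to be the main obstacle, is the behaviour near the common endpoint $y_0$ of $\overline{\Gamma^+_\lambda(y_0)}$ and $\overline{\Gamma^-_\lambda(y_0)}$: there both $\ell^+(x-y)$ and $\ell^-(x-y)$ can vanish simultaneously (the two characteristic lines through $y_0$ both meet $\partial\Omega$ at~$y_0$), so the splitting of $E_{\lambda+i0}(x-y)$ into a single branch plus a $(x,y)$-smooth remainder breaks down. I would handle this by passing to the model half-space used to define conormal distributions transverse to the boundary, using the transversality supplied by $y_0\notin\mathscr C_\lambda$ together with the emptiness of $\Omega\cap\{\ell^-(x-y)=0,\ \nu^-(y)\ell^+(x-y)\geq 0\}$ to run the same $i0$-matching there, thereby showing the $\Gamma^-$-contribution still vanishes near $y_0$ and the $\Gamma^+$-contribution keeps the stated conormal order and half up to the boundary; near the other endpoint $\gamma^\pm(y_0)$ no such issue arises since $\supp f$ is disjoint from it.
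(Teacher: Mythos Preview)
Your strategy---localize near $y_0$, reduce $S_{\lambda+i0}$ to pushforward by $\ell^\pm$, convolution with a branch $\log_\pm$, and pullback by $\ell^\pm$, then invoke Lemma~\ref{l:log-conormal}---is exactly what the paper does, and your sign bookkeeping matches the paper's.

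The gap you flag in the last paragraph is real, and the paper does not close it by ``passing to the model half-space'' but by replacing your \emph{local} sign determination with a \emph{global} branch identity. Your argument fixes the $i0$-prescription near $\Gamma^\mp_\lambda(y_0)$ by saying the other factor $\ell^\pm(x-y)$ has a definite sign there; this fails for $x\in\Omega$ close to $y_0$, where both $\ell^+(x-y)$ and $\ell^-(x-y)$ can be small and change sign as $y$ varies in $U$. The paper instead uses Lemma~\ref{l:identitor-2} directly: for every $x\in\Omega$ and $y\in U$ one has $\nu^+\ell^-(x-y)>0$ \emph{or} $\nu^-\ell^+(x-y)<0$, so there exist $\alpha^\pm>0$ with $\alpha^+\nu^+\ell^--\alpha^-\nu^-\ell^+=1$, and then
\[
A(x-y)+i\varepsilon=(\ell^+(x-y)+i\alpha^+\nu^+\varepsilon)(\ell^-(x-y)-i\alpha^-\nu^-\varepsilon)+\mathcal O(\varepsilon^2),
\]
which upon taking $\varepsilon\to0+$ gives the single identity
\[
\log(A(x-y)+i0)=\log(\ell^+(x-y)+i\nu^+0)+\log(\ell^-(x-y)-i\nu^-0)+c_0
\]
valid for \emph{all} $x\in\Omega$. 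This yields $S_{\lambda+i0}v(x)=c_\lambda\big(g_+(\ell^+(x))+g_-(\ell^-(x))+c_0\int f\,d\theta\big)$ globally on $\Omega$, with $g_+=(\Pi^+_\lambda f)*\log_{\nu^+}$ and $g_-=(\Pi^-_\lambda f)*\log_{-\nu^-}$. For $v\in I^s(\partial\Omega,N^*_+\{y_0\})$ Lemma~\ref{l:log-conormal} then gives $g_-\in C^\infty$ and $g_+\in I^{s-1}(\mathbb R,N^*_{\nu^+}\{\ell^+(y_0)\})$, and since the formula holds everywhere, $g_+\circ\ell^+$ is manifestly in $I^{s-5/4}(\overline\Omega,N^*_+\Gamma^+_\lambda(y_0))$ up to the boundary---the corner issue at $y_0$ simply never arises. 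Once you have this global identity (which follows from the lemma you already cite), your final paragraph is unnecessary.
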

%%%%%%%%%%%%%%%%%%%%%%%%%%%%%%%%%%%%%%%%%%%%%%%%%%%%%%%%%%%%%%%%%%%%%%%%%%%%%%%%
\begin{proof}
\noindent 1. By Lemma~\ref{l:singl-2} and since $v$ is smooth away from $y_0$, we may assume that
\begin{equation}
  \label{e:con-supp-v}
\supp v\subset U:=\{y\in\partial\Omega\mid \nu^+(y)=\nu^+(y_0),\ \nu^-(y)=\nu^-(y_0)\}
\end{equation}
where  $\nu^\pm (y) = \sgn\partial_\theta \ell^\pm(y)$, see~\eqref{e:omega-pm-def}. 
We denote $\nu^+:=\nu^+(y_0)$, $\nu^-:=\nu^-(y_0)$.

We claim that for all $y\in U$ and $x\in\Omega\setminus \Gamma_\lambda(y)$, where
$\Gamma_\lambda(y):=\Gamma^+_\lambda(y)\cup \Gamma^-_\lambda(y)$,
\begin{equation}
  \label{e:loggor}
\begin{gathered}
\log(A(x-y)+i0)=
\log(\ell^+(x-y)+i\nu^+0)+\log(\ell^-(x-y)-i\nu^-0)
+c_0,\\
c_0=\begin{cases}
2\pi i,&\text{if}\quad
\nu^+=-1\quad\text{and}\quad
\nu^-=1;\\
0,&\text{otherwise.}
\end{cases}
\end{gathered}
\end{equation}
(Here as always we use the branch of 
$ \log $ real on the positive real axis.)
Indeed, fix $x$ and $y$. By Lemma~\ref{l:identitor-2}, we have
$$
\nu^+\ell^-(x-y)>0\quad\text{or}\quad
\nu^-\ell^+(x-y)<0\quad\text{(or both).}
$$
Then there exist $\alpha^+,\alpha^->0$ such that
$\alpha^+\nu^+\ell^-(x-y)-\alpha^-\nu^-\ell^+(x-y)=1$. This implies that
for all $\varepsilon>0$
$$
\begin{aligned}
A(x-y)+i\varepsilon &=
\ell^+(x-y)\ell^-(x-y)+i\varepsilon \\
&= (\ell^+(x-y)+i\alpha^+\nu^+\varepsilon)(\ell^-(x-y)-i\alpha^-\nu^-\varepsilon)+\mathcal O(\varepsilon^2).
\end{aligned}
$$
Letting $\varepsilon\to 0+$, we obtain~\eqref{e:loggor}.

\noindent 2. Fix a coordinate $\theta$ on~$\partial\Omega$ and write $v=f(\theta)\,d\theta$.
Similarly to Step~2 in the proof of Lemma~\ref{l:singl-2} we get from~\eqref{e:loggor}
$$
S_{\lambda+i0}v(x)=c_\lambda\bigg(g_+(\ell^+(x))+g_-(\ell^-(x)) + c_0\int_{\partial\Omega}f(\theta)\,d\theta \bigg)
$$
where, denoting $\log_+ x:=\log(x+i0)$, $\log_- x:=\log(x-i0)$ and using \eqref{e:Pi-pm-def},
\begin{equation}
\label{eq:defgpm} 
g_+:=(\Pi_\lambda^+f)*\log_{\nu^+},\quad
g_-:=(\Pi_\lambda^-f)*\log_{-\nu^-}.
\end{equation}
(Here, $  \pm \nu^\bullet $ is meant as $ \pm $ if $ \nu^\bullet = 1 $ and $ \mp $ 
when $ \nu^\bullet = -1 $.)

By~\eqref{e:con-supp-v}, we have $\supp v\subset U$
where $ \ell^\pm : U \to \mathbb R $ are diffeomorphisms onto their ranges.
Since $v\in I^s(\partial\Omega,N^*_\pm\{y_0\})$ and recalling~\eqref{e:pushf-1}, 
we then have
$$
\Pi_\lambda^+ f\in I^s(\mathbb R,N^*_{\pm\nu^+}\{\ell^+(y_0)\}),\quad
\Pi_\lambda^- f\in I^s(\mathbb R,N^*_{\pm\nu^-}\{\ell^-(y_0)\}).
$$
By Lemma~\ref{l:log-conormal}, we see that
$$
g_\pm\in I^{s-1}(\mathbb R,N^*_{\pm\nu^\pm}\{\ell^\pm(y_0)\}),\quad
g_\mp\in C^\infty.
$$
Using the Fourier characterization of conormal distributions reviewed in~\S\ref{s:con},
we see that $S_{\lambda+i0}f\in I^{s-{5\over 4}}(\overline\Omega,N_\pm^*\Gamma^\pm_\lambda(y_0))$
as needed.
\end{proof}
%%%%%%%%%%%%%%%%%%%%%%%%%%%%%%%%%%%%%%%%%%%%%%%%%%%%%%%%%%%%%%%%%%%%%%%%%%%%%%%%
\Remark In \S \ref{s:liap} we will apply this result to elements of 
$
I^{s} (\partial\Omega, N_+^* \Sigma^-_\lambda \sqcup N^*_-\Sigma^+_\lambda )$,
defined in~\eqref{e:I-s-pm}, where $\Sigma^\pm_\lambda$ are defined in~\eqref{e:Sigma-pm-def}. Lemma~\ref{l:con} gives
\begin{equation}
\label{eq:con-new} 
\begin{gathered}
S_{\lambda+i0} :  I^s( \partial\Omega,N_+^* \Sigma^-_\lambda \sqcup N^*_-\Sigma^+_\lambda ) 
\to I^{s-{5\over 4}}(\overline\Omega,\Lambda^-( \lambda ) )
\end{gathered}
\end{equation}
where $\Lambda^-(\lambda)=N_-^*\Gamma^+_\lambda(\Sigma^+_\lambda)\sqcup N_+^*\Gamma^-_\lambda(\Sigma^-_\lambda)
=N_+^* \Gamma^+_\lambda ( \Sigma^-_\lambda ) \sqcup 
N_-^* \Gamma^-_\lambda ( \Sigma^+_\lambda )$ is defined in~\eqref{eq:defLa}.
Here we define the conormal spaces on the right-hand side similarly to~\eqref{e:I-s-pm}:
$$
I^{s}(\overline\Omega,\Lambda^-( \lambda )):=
I^{s}(\overline\Omega,N_+^* \Gamma^+_\lambda ( \Sigma^-_\lambda ) )
+
I^{s}(\overline\Omega,N_-^* \Gamma^-_\lambda ( \Sigma^+_\lambda )).
$$

%%%%%%%%%%%%%%%%%%%%%%%%%%%%%%%%%%%%%%%%%%%%%%%%%%%%%%%%%%%%%%%%%%%%%%%%%%%%%%%%
\subsection{The restricted single layer potentials}
  \label{s:restricted-slp}
  
We now study the restricted operators
\begin{equation}
 \label{eq:defsingle}
\mathcal C_\omega: C^\infty(\partial\Omega; T^* \partial \Omega )\to C^\infty(\partial\Omega),\quad
\mathcal C_\omega v :=(S_\omega v )|_{\partial\Omega},
\end{equation}
given by the boundary trace of $S_\omega v\in C^\infty(\overline\Omega)$, see Lemma~\ref{l:singl-1}.
When $\lambda$ is real and $ \Omega $ is $ \lambda$-simple (see Definition \ref{d:1})
we have two operators $\mathcal C_{\lambda\pm i0}$ obtained by restricting $S_{\lambda\pm i0}$, see Lemma~\ref{l:singl-2}.
From~\eqref{e:s-lambda-integrated} we have for $v\in C^\infty(\partial\Omega;T^*\partial\Omega)$
\begin{equation}
  \label{e:C-lambda-integrated}
\mathcal C_\omega v(x)=\int_{\partial\Omega}E_\omega(x-y)\,v(y),\quad
x\in\partial\Omega,
\end{equation}
with the integration in~$y$, and same is true for $\omega$ replaced with $\lambda\pm i0$.
Later in~\eqref{e:summa-bounder} we show that $\mathcal C_\omega$ and $\mathcal C_{\lambda\pm i0}$
extend to continuous operators $\mathcal D'(\partial\Omega;T^*\partial\Omega)\to \mathcal D'(\partial\Omega)$.

Composing $\mathcal C_\omega$ with the differential $d:C^\infty(\partial\Omega)\to C^\infty(\partial\Omega;T^*\partial\Omega)$
we get the operator
\[    
d \mathcal C_\omega  :
C^\infty ( \partial \Omega; T^* \partial \Omega )  \to C^\infty ( \partial \Omega; T^* \partial \Omega ).
\]
In this section we assume that
\begin{equation}
  \label{e:restrict-lambdas}
\omega=\lambda+i\varepsilon,\quad
\varepsilon>0,
\end{equation}
where $\lambda\in (0,1)$ is chosen so that $\Omega$ is $\lambda$-simple.
Our main result here is a microlocal description of $d\mathcal C_\omega$
uniformly as $\varepsilon\to 0+$, see Proposition~\ref{p:summa} below.
(This description is also locally uniform in~$\lambda$, see Remark~1 after Proposition~\ref{p:summa}.)

For convenience, we fix a positively oriented coordinate $\theta\in\mathbb S^1$ on~$\partial\Omega$
and identify 1-forms on $\partial\Omega$ with functions on~$\mathbb S^1$ by writing $v=f(\theta)\,d\theta$.
Let $\mathbf x:\mathbb S^1\to \partial\Omega$ be the corresponding parametrization map. Let
\begin{equation}
  \label{e:gamma-omega-def}
\gamma_\lambda^\pm:\mathbb S^1\to\mathbb S^1,\quad
\gamma^\pm(\mathbf x(\theta),\lambda)=\mathbf x(\gamma_\lambda^\pm(\theta))
\end{equation}
be the orientation reversing involutions on $\mathbb S^1$ induced by the maps $\gamma^\pm(\bullet,\lambda)$
defined in~\eqref{e:gamma-pm-def}.

%%%%%%%%%%%%%%%%%%%%%%%%%%%%%%%%%%%%%%%%%%%%%%%%%%%%%%%%%%%%%%%%%%%%%%%%%%%%%%%%
\subsubsection{A weak convergence statement}

Before starting the microlocal analysis of $d\mathcal C_\omega$, we show 
that $\mathcal C_{\lambda+i\varepsilon}\to \mathcal C_{\lambda+i0}$ as $\varepsilon\to 0+$
in a weak sense in $x,y$ but uniformly with all derivatives in $\lambda$.
A stronger convergence will be shown later in Lemma~\ref{l:C-lambda-strong-limit}.
We use the letter $\mathscr O(\mathcal J+i[0,\infty))$ for spaces of holomorphic
functions that are smooth up to the boundary interval
$\mathcal J$, introduced in~\eqref{e:O-analytic}
and in the statement of Lemma~\ref{l:loc}.
%%%%%%%%%%%%%%%%%%%%%%%%%%%%%%%%%%%%%%%%%%%%%%%%%%%%%%%%%%%%%%%%%%%%%%%%%%%%%%%%
\begin{lemm}
  \label{l:C-converges}
Let $\mathcal J\subset (0,1)$ be an open interval such that $\Omega$ is $\lambda$-simple
for all $\lambda\in\mathcal J$.
Then the Schwartz kernel of the operator
\begin{equation}
  \label{e:C-converges-family}
\omega \in \mathcal J+i[0,\infty)\ \mapsto\ \begin{cases}
\mathcal C_{\omega},&\Im\omega>0,\\
\mathcal C_{\lambda+i0},&\omega=\lambda\in\mathcal J
\end{cases}
\end{equation}
lies in $\mathscr O(\mathcal J+i[0,\infty);\mathcal D'(\partial\Omega\times\partial\Omega))$.
\end{lemm}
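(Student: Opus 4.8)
The Schwartz kernel of $\mathcal C_\omega$, after identifying $1$-forms on $\partial\Omega$ with functions via the positively oriented coordinate $\theta\in\mathbb S^1$ (with parametrization $\mathbf x:\mathbb S^1\to\partial\Omega$), is $K_\omega(\theta,\theta')=E_\omega(\mathbf x(\theta)-\mathbf x(\theta'))$ on $\mathbb S^1\times\mathbb S^1$, see~\eqref{e:C-lambda-integrated}; likewise $K_{\lambda\pm i0}(\theta,\theta')=E_{\lambda\pm i0}(\mathbf x(\theta)-\mathbf x(\theta'))$. Following the convention of Lemma~\ref{l:loc}, it suffices to show that for every $\Phi\in C^\infty(\mathbb S^1\times\mathbb S^1)$ the pairing $F_\Phi(\omega):=\int_{\mathbb S^1}\int_{\mathbb S^1}K_\omega(\theta,\theta')\Phi(\theta,\theta')\,d\theta\,d\theta'$ lies in $\mathscr O(\mathcal J+i[0,\infty))$. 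First I would check that $F_\Phi$ is well-defined and holomorphic on $\mathcal J+i(0,\infty)$. For $\Im\omega>0$ the map $\theta\mapsto\ell^+(\mathbf x(\theta),\omega)$ is injective (being $\ell^+(\cdot,\omega)$, a linear isomorphism by Lemma~\ref{l:which-rotate}, composed with $\mathbf x$), so $A(\mathbf x(\theta)-\mathbf x(\theta'),\omega)$ vanishes only on the diagonal and to second order there; hence $|K_\omega|\le C(1+|\log|\theta-\theta'||)$ uniformly for $\omega$ in compact subsets of $\mathcal J+i(0,\infty)$, while for $\omega=\lambda\in\mathcal J$, $\lambda$-simplicity and Lemma~\ref{l:pushforwards} give $K_{\lambda\pm i0}\in L^1(\mathbb S^1\times\mathbb S^1)$. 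Thus $F_\Phi$ is finite, and since $\omega\mapsto K_\omega(\theta,\theta')=c_\omega\log A(\mathbf x(\theta)-\mathbf x(\theta'),\omega)$ is holomorphic in $\omega$ for almost every fixed $(\theta,\theta')$ (as $\Im A>0$ when $\Im\omega>0$ by~\eqref{e:Im-A-sign}), holomorphy of $F_\Phi$ in the interior follows from Morera's theorem and Fubini.

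It remains to show $F_\Phi$ is continuous up to $\mathcal J$ and that $F_\Phi|_{\mathcal J}$ is smooth; together with harmonicity in the interior this gives $F_\Phi\in C^\infty(\mathcal J+i[0,\infty))$ by boundary regularity for the Dirichlet problem for the Laplacian, as in the last step of the proof of Lemma~\ref{l:loc} (see the references in the proof of Lemma~\ref{l:elliptic-solved}). For this I would use a partition of unity on $\mathbb S^1\times\mathbb S^1$ subordinate to a finite cover by small, $\omega$-independent coordinate patches, and study $F_\Phi$ patch by patch. For $\lambda_0\in\mathcal J$, the singular support of $K_{\lambda_0+i0}$ is the diagonal $\{\theta=\theta'\}$ together with the graphs of $\gamma^\pm_{\lambda_0}$; the two graphs are disjoint (a common point would be a fixed point of $b=\gamma^+\circ\gamma^-$ if off the diagonal, or a common critical point of $\ell^\pm(\cdot,\lambda_0)|_{\partial\Omega}$ if on it, both impossible), and each meets the diagonal precisely at the two points $(c,c)$ with $c$ critical for $\ell^\pm(\cdot,\lambda_0)|_{\partial\Omega}$. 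On a patch whose closure avoids this locus, $A(\mathbf x(\theta)-\mathbf x(\theta'),\lambda_0)\neq0$, so $K_\omega$ is jointly smooth up to $\varepsilon=0$ and holomorphic in $\omega$ — a contribution in $\mathscr O$. On the remaining patches I would introduce a $\lambda$-dependent change of coordinates straightening the relevant singular curve to a fixed model: near a non-critical diagonal point, $A(\mathbf x(\theta)-\mathbf x(\theta'),\omega)=(\theta-\theta')^2 g_\omega$ with $g_\omega$ smooth, nonvanishing, in the upper half-plane for $\Im\omega>0$, so $\log A=2\log|\theta-\theta'|+\log g_\omega$; near a point of $\operatorname{graph}(\gamma^+_{\lambda_0})$ off the diagonal, $\sigma:=\ell^+(\mathbf x(\theta)-\mathbf x(\theta'),\lambda)$ is a coordinate, $A=\sigma\,g_\lambda$ with $g_\lambda:=\ell^-(\mathbf x(\theta)-\mathbf x(\theta'),\lambda)$ smooth nonvanishing, and $\log A=\log(\sigma\pm i0)+\log|g_\lambda|+\mathrm{const}$ in the limit $\varepsilon\to0+$, with the signs dictated by~\eqref{e:loggor}; near a crossing point $(c,c)$ with $c$ critical for $\ell^+$, nondegeneracy makes $m^+_\lambda$ (defined by $\ell^+(\mathbf x(\theta)-\mathbf x(\theta'),\lambda)=(\theta-\theta')m^+_\lambda$) a coordinate transverse to $\theta-\theta'$, whence $A=(\theta-\theta')^2 m^+_\lambda g_\lambda$ and $\log A$ decomposes into $2\log|\theta-\theta'|$, $\log(m^+_\lambda\pm i0)$ and a smooth term; the $\gamma^-$ cases are symmetric. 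In each case the singular model factor is $\lambda$-independent once the straightening coordinates are in place, while the remaining data ($c_\omega$, the Jacobians, $g$, the pulled-back $\Phi$) are smooth in $\lambda$; so a Dominated Convergence argument with a fixed integrable dominant ($C(1+|\log|\theta-\theta'||)$, $C(1+|\log|\sigma||)$, etc.), exactly as in Step~2 of the proof of Lemma~\ref{l:loc}, gives continuity of $F_\Phi$ up to $\mathcal J$, and differentiating $F_\Phi|_{\mathcal J}$ in $\lambda$ under the integral — every $\lambda$-derivative landing on the smooth factors, and $\log$-singularities being integrable against smooth functions — gives $F_\Phi|_{\mathcal J}\in C^\infty(\mathcal J)$. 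Since $\Phi$ was arbitrary, this proves the lemma.

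The main obstacle is the local analysis near the singular support of $K_\omega$, and the most delicate spots are the crossing points $(c,c)$ where the diagonal meets a reflection graph: there one must simultaneously extract the diagonal singularity $2\log|\theta-\theta'|$ and the reflection singularity $\log(m^+_\lambda\pm i0)$ — for $\Im\omega>0$ the latter is the genuinely $\omega$-dependent $\log(\sigma+i\varepsilon\psi)$, whose behaviour as $\varepsilon\to0+$ (with $\Re\psi$ of a fixed sign, by~\eqref{e:l-pm-differential}) is exactly what the preparation results of~\S\ref{s:micro-inv}, notably Lemma~\ref{l:prepare}, are meant to control, although for the present weak statement the crude Dominated Convergence bound already suffices. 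A further bookkeeping point is that $\operatorname{graph}(\gamma^\pm_\lambda)$ and the critical points of $\ell^\pm(\cdot,\lambda)$ move with $\lambda$, so the straightening coordinates must themselves be tracked smoothly in $\lambda$, which uses the smooth $\lambda$-dependence of $\gamma^\pm(\cdot,\lambda)$ and $\Sigma_\lambda$ from Lemma~\ref{l:perturb-MS}.
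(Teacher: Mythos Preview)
Your proof is correct and follows essentially the same route as the paper: holomorphy in the interior, a partition of unity with $\lambda$-dependent local coordinates adapted to the singular locus (diagonal, reflection graphs, and their crossings at the characteristic points), Dominated Convergence for continuity up to~$\mathcal J$, smoothness of $F_\Phi|_{\mathcal J}$ by differentiating under the integral after straightening, and boundary regularity for harmonic functions to conclude. The only organizational difference is that the paper first splits $E_\omega=c_\omega\big(\log|\ell^+|+\log|\ell^-|+i\,\Im\log A\big)$, handling the bounded imaginary part trivially and reducing each of the remaining pieces to a single $\log|\ell^\pm|$ singularity, which makes the subsequent case analysis (your four cases become the paper's four cases for each of $E^\pm_\omega$) a bit lighter than your direct factorizations of $\log A$.
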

%%%%%%%%%%%%%%%%%%%%%%%%%%%%%%%%%%%%%%%%%%%%%%%%%%%%%%%%%%%%%%%%%%%%%%%%%%%%%%%%
\begin{proof}
1. The holomorphy of~\eqref{e:C-converges-family} when $\Im\omega>0$ follows by differentiating~\eqref{e:C-lambda-integrated}
(one can cut away from the singularity at $x=y$ and represent the pairing of~\eqref{e:C-converges-family}
with any element of $C^\infty(\partial\Omega\times\partial\Omega)$
as the locally uniform limit of a sequence of holomorphic functions).
The smoothness of the restriction of~\eqref{e:C-converges-family}
to~$\mathcal J$
can be shown using the decomposition~\eqref{e:decker} and the $\lambda$-dependent local coordinates
introduced in Step~2 of the present proof.
Arguing similarly to Step~3 in the proof of Lemma~\ref{l:loc}
and recalling~\eqref{e:C-lambda-integrated}, we then see that it suffices
to show the following convergence statement for all $\varphi\in C^\infty(\partial\Omega\times\partial\Omega)$:
$$
\begin{gathered}
\int\limits_{\partial\Omega\times\partial\Omega}E_{\omega_j}(x-y)\varphi(x,y)\,d\theta(x)d\theta(y)\to
\int\limits_{\partial\Omega\times\partial\Omega}E_{\lambda+i0}(x-y)\varphi(x,y)\,d\theta(x) d\theta(y)\\
\quad\text{for all}\quad
\omega_j\to\lambda\in\mathcal J,\quad
\Im\omega_j>0.
\end{gathered}
$$
Similarly to~\eqref{e:decker} we decompose
$$
E_{\omega}=E_{\omega}^++E_{\omega}^-+E_{\omega}^0,\quad
E_{\omega}^\pm(x):=c_{\omega}\log|\ell^\pm(x,\omega)|,\quad
E_{\omega}^0:=ic_{\omega}\Im\log A(x,\omega)
$$
and similarly for $E_{\lambda+i0}$.
It suffices to show that for $\bullet=+,-,0$ we have
\begin{equation}
  \label{e:C-decomcon}
\int\limits_{\partial\Omega\times\partial\Omega}E_{\omega_j}^\bullet(x-y)\varphi(x,y)\,d\theta(x)d\theta(y)\to
\int\limits_{\partial\Omega\times\partial\Omega}E_{\lambda+i0}^\bullet(x-y)\varphi(x,y)\,d\theta(x) d\theta(y).
\end{equation}

\noindent 2.
We have $E_{\omega_j}^\bullet(x-y)\to E_{\lambda+i0}^\bullet(x-y)$ for almost every $(x,y)\in\partial\Omega\times\partial\Omega$,
more specifically for all $(x,y)$ such that $y\not\in\{x,\gamma^+(x,\lambda),\gamma^-(x,\lambda)\}$.
This gives~\eqref{e:C-decomcon} for $\bullet=0$ by the Dominated Convergence Theorem since
$|\Im\log A(x,\omega_j)|\leq \pi$.

To see~\eqref{e:C-decomcon} for $\bullet=+$ (a similar argument works for $\bullet=-$),
we follow Step~2 of the proof of Lemma~\ref{l:loc}. Instead of the family of linear isomorphisms $\Phi_{\lambda}$
used there we choose a specific local coordinate $\theta_j$ on $\partial\Omega$ which depends
on $\lambda_j=\Re\omega_j$.
More precisely, using a partition of unity we see that it suffices to show
that each $(x_0,y_0)\in \partial\Omega\times\partial\Omega$ has a neighborhood~$U$
such that~\eqref{e:C-decomcon} holds for all $\varphi\in\CIc(U)$.
Now we consider four cases (corresponding to~\S\S\ref{s:slp-away}--\ref{s:slp-char} below):
%%%%%%%%%%%%%%%%%%%%%%%%%%%%%%%%%%%%%%%%%%%%%%%%%%%%%%%%%%%%%%%%%%%%%%%%%%%%%%%%
\begin{itemize}
\item $\ell^+(x_0,\lambda)\neq\ell^+(y_0,\lambda)$: we can use the Dominated Convergence Theorem
since $E_{\omega_j}^+(x-y)$ is bounded
uniformly in~$j$ and in $(x,y)\in U$ by~\eqref{e:limit-E-bdor-0}.
\item $y_0=x_0\neq \gamma^+(x_0,\lambda)$: we choose the coordinate $\theta_j=\ell^+(x,\lambda_j)$
near $x_0$. Then the argument in the proof of Lemma~\ref{l:loc}
goes through, using that $\log|\theta-\theta'|$ is a locally integrable function
of $(\theta,\theta')\in\mathbb R^2$.
\item $y_0=\gamma^+(x_0,\lambda)\neq x_0$: we again choose the coordinate
$\theta_j=\ell^+(x,\lambda_j)$ near $x_0$ and near $y_0$, and the argument goes through as in the previous case.
\item $x_0=y_0=\gamma^+(x_0,\lambda)$: assume that $x_0=x^+_{\min}(\lambda)$ is the minimum point of $\ell^+(\bullet,\lambda)$ on~$\partial\Omega$ (the case when $x_0$ is the maximum point is handled similarly).
We choose the coordinate $\theta_j$ near $x_0$ given by~\eqref{e:factorize}:
$$
\ell^+(x,\lambda_j)=\ell^+(x^+_{\min}(\lambda_j),\lambda_j)+\theta_j(x)^2.
$$
Then the argument in the proof of Lemma~\ref{l:loc}
goes through, using that $\log|\theta^2-(\theta')^2|$
is a locally integrable function of $(\theta,\theta')\in \mathbb R^2$.
\end{itemize}
%%%%%%%%%%%%%%%%%%%%%%%%%%%%%%%%%%%%%%%%%%%%%%%%%%%%%%%%%%%%%%%%%%%%%%%%%%%%%%%%
\end{proof}
%%%%%%%%%%%%%%%%%%%%%%%%%%%%%%%%%%%%%%%%%%%%%%%%%%%%%%%%%%%%%%%%%%%%%%%%%%%%%%%%

%%%%%%%%%%%%%%%%%%%%%%%%%%%%%%%%%%%%%%%%%%%%%%%%%%%%%%%%%%%%%%%%%%%%%%%%%%%%%%%%
\subsubsection{Decomposition into $T^\pm_\omega$}

Since the linear functions $ \ell^\pm ( x,\omega )$ are dual to the vector fields $ L^\pm_\omega $ (see~\eqref{eq:L2l}), we have
\begin{equation}
  \label{e:C-lambda-sum}
d\mathcal C_\omega =T^+_\omega+T^-_\omega
\end{equation}
where the operators $T^\pm_\omega:C^\infty(\partial\Omega;T^*\partial\Omega)\to C^\infty(\partial\Omega;T^*\partial\Omega)$
are given by (with $j$ the embedding map)
\begin{equation}
  \label{e:T-pm-def}
T^\pm_\omega v=j^*\big((L^\pm_\omega S_\omega v)d\ell^\pm\big),\qquad
j:\partial\Omega\to\overline\Omega.
\end{equation}
Let $ K_\omega^\pm ( \theta, \theta' )\in\mathcal D'(\mathbb S^1\times\mathbb S^1)$ be the Schwartz kernel of $ T_\omega^\pm $,
that is
\begin{equation}
  \label{e:schwartzer}
T^\pm_\omega v(\theta)=\big(\partial_\theta\ell^\pm(\mathbf x(\theta),\omega)\big){L^\pm_\omega S_\omega v(\mathbf x(\theta))}\,d\theta=\bigg(\int_{\mathbb S^1}K^\pm_\omega(\theta,\theta')f(\theta')\,d\theta'\bigg)d\theta, 
\end{equation}
where we put $v=f(\theta)\,d\theta$.
Recalling the integral definition~\eqref{e:s-lambda-integrated} of $S_\omega$, the formula~\eqref{e:fund-sol-imag}
for $E_\omega$ (which in particular shows that $E_\omega$ is smooth on $\mathbb R^2\setminus \{0\}$), and the identity~\eqref{eq:LlogA}, we see that
$K_\omega^\pm$ is smooth on $(\mathbb S^1\times\mathbb S^1)\setminus \{\theta\neq\theta'\}$ and
\begin{equation}
  \label{e:K-lambda-away-diagonal}
K_\omega^\pm ( \theta, \theta' ) = c_\omega
 \frac{ \partial_\theta  \ell^\pm ( \mathbf x ( \theta ) , \omega )  } {\ell^\pm  ( \mathbf x(\theta)  - \mathbf x(\theta')   , \omega )} , \quad \theta  \neq   \theta'    .
\end{equation}
 
%%%%%%%%%%%%%%%%%%%%%%%%%%%%%%%%%%%%%%%%%%%%%%%%%%%%%%%%%%%%%%%%%%%%%%%%%%%%%%%%
\subsubsection{Away from the singularities}
\label{s:slp-away}

Define the sets
\begin{equation}
\label{e:sing-sets-def}
\begin{aligned}
\Diag&:=\{(\theta,\theta)\mid \theta\in\mathbb S^1\},\\
\Refl^\pm_\lambda&:=\{(\theta,\gamma_\lambda^\pm(\theta))\mid \theta\in\mathbb S^1\}.
\end{aligned}
\end{equation}
Note that the intersection
\begin{equation}
  \label{e:diag-ref-int}
\Diag\cap\Refl^\pm_\lambda=\{(\theta,\theta)\mid \theta\in\mathbb S^1,\ \partial_\theta \ell^\pm(\mathbf x(\theta),\lambda)=0\}
\end{equation}
corresponds to the critical points $x^\pm_{\min}(\lambda),x^\pm_{\max}(\lambda)$ of $\ell^\pm(\bullet,\lambda)$
on $\partial\Omega$ (see Definition~\ref{d:1}). At these points the operator $P(\lambda)$ is characteristic with respect to~$\partial\Omega$.

We start the analysis of the uniform behaviour of~$K_\omega^\pm$ as $\varepsilon=\Im\omega\to 0$
by showing that the singularities are contained in $\Diag\cup\Refl^\pm_\lambda$:
%%%%%%%%%%%%%%%%%%%%%%%%%%%%%%%%%%%%%%%%%%%%%%%%%%%%%%%%%%%%%%%%%%%%%%%%%%%%%%%%
\begin{lemm}
\label{l:smooth}
We have
$$
K^\pm_\omega|_{(\mathbb S^1\times\mathbb S^1)\setminus (\Diag\cup\Refl^\pm_\lambda)}
\ \in\ C^\infty\big((\mathbb S^1\times\mathbb S^1)\setminus (\Diag\cup\Refl^\pm_\lambda)\big)
$$
smoothly in $\varepsilon$ up to $\varepsilon=0$.
\end{lemm}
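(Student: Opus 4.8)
\emph{Proof plan.} The plan is to reduce everything to the explicit formula~\eqref{e:K-lambda-away-diagonal} and a compactness argument. For $\omega=\lambda+i\varepsilon$ with $\varepsilon>0$ the fundamental solution $E_\omega$ of~\eqref{e:fund-sol-imag} is smooth on $\mathbb R^2\setminus\{0\}$, so differentiating under the integral sign as in~\eqref{e:schwartzer} and using~\eqref{eq:LlogA} gives that $K^\pm_\omega$ is the smooth function
$$
K^\pm_\omega(\theta,\theta')=c_\omega\,\frac{\partial_\theta\ell^\pm(\mathbf x(\theta),\omega)}{\ell^\pm(\mathbf x(\theta)-\mathbf x(\theta'),\omega)}
$$
on $\{\theta\neq\theta'\}$. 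The only way smoothness (uniformly down to $\varepsilon=0$) can fail on the set where $\theta\neq\theta'$ is through the denominator: as $\varepsilon\to0+$, $\ell^\pm(\mathbf x(\theta)-\mathbf x(\theta'),\omega)\to\ell^\pm(\mathbf x(\theta)-\mathbf x(\theta'),\lambda)$, and by linearity of $\ell^\pm$ the latter equals $\ell^\pm(\mathbf x(\theta),\lambda)-\ell^\pm(\mathbf x(\theta'),\lambda)$. First I would identify the vanishing set of this difference. By $\lambda$-simplicity (Definition~\ref{d:1}), a value $s\in(\ell^\pm_{\min},\ell^\pm_{\max})$ has exactly two preimages under $\ell^\pm$ on $\partial\Omega$, interchanged by the involution $\gamma^\pm$, while $\ell^\pm_{\min},\ell^\pm_{\max}$ have one preimage each, fixed by $\gamma^\pm$; hence $\ell^\pm(\mathbf x(\theta),\lambda)=\ell^\pm(\mathbf x(\theta'),\lambda)$ holds precisely when $\theta'\in\{\theta,\gamma^\pm_\lambda(\theta)\}$, i.e. exactly on $\Diag\cup\Refl^\pm_\lambda$ in the notation~\eqref{e:sing-sets-def}.

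Next I would run the compactness argument. Fix a compact set $W\subset(\mathbb S^1\times\mathbb S^1)\setminus(\Diag\cup\Refl^\pm_\lambda)$. The function $(\theta,\theta',\omega)\mapsto\ell^\pm(\mathbf x(\theta)-\mathbf x(\theta'),\omega)$ is smooth (indeed holomorphic in $\omega$, using that for $\omega=\lambda+i\varepsilon$ with $\varepsilon\geq0$ small the quantity $1-\omega^2$ stays in the right half-plane, so the branch of $\sqrt{1-\omega^2}$ in~\eqref{eq:dual2} is well defined), and by the previous paragraph it does not vanish on the compact set $W\times\{\lambda\}$; by continuity there are $c>0$ and $\varepsilon_0>0$ with $|\ell^\pm(\mathbf x(\theta)-\mathbf x(\theta'),\omega)|\geq c$ for all $(\theta,\theta')\in W$ and $0\leq\varepsilon\leq\varepsilon_0$. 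The numerator $c_\omega\,\partial_\theta\ell^\pm(\mathbf x(\theta),\omega)$ is smooth in $(\theta,\varepsilon)$ up to $\varepsilon=0$ for the same reason ($c_\omega=\tfrac{i}{4\pi\omega\sqrt{1-\omega^2}}$ extends holomorphically across the interval $(0,1)$). Therefore $K^\pm_\omega$ is a quotient of two functions smooth in $(\theta,\theta',\varepsilon)$ on $W\times[0,\varepsilon_0)$ whose denominator is bounded away from $0$, hence is itself $C^\infty$ on $W\times[0,\varepsilon_0)$. Since $W$ is an arbitrary compact subset, this gives the desired smoothness of $K^\pm_\omega$ on $(\mathbb S^1\times\mathbb S^1)\setminus(\Diag\cup\Refl^\pm_\lambda)$ smoothly up to $\varepsilon=0$, with the $\varepsilon=0$ value being~\eqref{e:K-lambda-away-diagonal} evaluated at $\omega=\lambda$.

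Finally I would check that this limiting value is indeed the Schwartz kernel of $T^\pm_{\lambda+i0}$ restricted to the open set: by Lemma~\ref{l:C-converges} the Schwartz kernels $K^\pm_{\lambda+i\varepsilon}$ converge to $K^\pm_{\lambda+i0}$ in $\mathcal D'(\mathbb S^1\times\mathbb S^1)$, and on the open set just treated we have locally uniform convergence of the $C^\infty$ functions, so the two limits coincide there. (Alternatively, one may differentiate $E_{\lambda+i0}$ directly via~\eqref{eq:LlogA}, which is valid away from the two characteristic lines $\{\ell^\pm(x,\lambda)=0\}$.) I do not expect a genuine obstacle in this lemma: the only points requiring care are the correct identification of the limiting singular set with $\Diag\cup\Refl^\pm_\lambda$, where $\lambda$-simplicity enters, and the uniformity in $\varepsilon$, which is the elementary compactness argument above; the real work of this subsection lies in the subsequent analysis of $K^\pm_\omega$ near $\Diag$ and near $\Refl^\pm_\lambda$.
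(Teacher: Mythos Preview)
Your proposal is correct and follows essentially the same approach as the paper's proof: both rely on the explicit formula~\eqref{e:K-lambda-away-diagonal} and the observation that its denominator $\ell^\pm(\mathbf x(\theta)-\mathbf x(\theta'),\omega)$ is nonvanishing at $\varepsilon=0$ precisely away from $\Diag\cup\Refl^\pm_\lambda$. The paper's proof is a two-sentence sketch of exactly this; you have simply spelled out the compactness step and the identification of the zero set via $\lambda$-simplicity, and added a (not strictly required but harmless) consistency check with the distributional limit.
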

%%%%%%%%%%%%%%%%%%%%%%%%%%%%%%%%%%%%%%%%%%%%%%%%%%%%%%%%%%%%%%%%%%%%%%%%%%%%%%%%
\begin{proof}
This follows immediately from~\eqref{e:K-lambda-away-diagonal}. Indeed,
for $(\theta,\theta')\notin\Diag\cup\Refl^\pm_\lambda$, we have $\ell^\pm(\mathbf x(\theta),\lambda)\neq
\ell^\pm(\mathbf x(\theta'),\lambda)$ and thus the denominator in~\eqref{e:K-lambda-away-diagonal}
is nonvanishing when $\varepsilon=0$.
\end{proof}
%%%%%%%%%%%%%%%%%%%%%%%%%%%%%%%%%%%%%%%%%%%%%%%%%%%%%%%%%%%%%%%%%%%%%%%%%%%%%%%%

%%%%%%%%%%%%%%%%%%%%%%%%%%%%%%%%%%%%%%%%%%%%%%%%%%%%%%%%%%%%%%%%%%%%%%%%%%%%%%%%
\subsubsection{Noncharacteristic diagonal}

We next consider the singularities of $ K_\omega^\pm ( \theta , \theta' ) $ near the diagonal but
away from the characteristic set $\Diag\cap\Refl^\pm_\lambda$.
In that case the structure of the kernel is similar to the model case
\eqref{eq:modelC}:
%%%%%%%%%%%%%%%%%%%%%%%%%%%%%%%%%%%%%%%%%%%%%%%%%%%%%%%%%%%%%%%%%%%%%%%%%%%%%%%%
\begin{lemm}
\label{l:diag}
Take $\theta_0\in\mathbb S^1$ such that $\gamma^\pm_\lambda(\theta_0)\neq\theta_0$.
Then for $ \theta, \theta'  $ in some neighbourhood $U$ of~$\theta_0 $
and $\varepsilon=\Im\omega>0$ small enough, we have
\begin{equation}
\label{eq:diag}
K_\omega^\pm ( \theta, \theta' ) = c_\omega
( \theta - \theta' \pm i 0 )^{-1} 
+\mathscr K_\omega^\pm(\theta,\theta')
, 
\end{equation}
where $ \mathscr K_\omega^\pm \in C^\infty ( U\times U ) $ is smooth in $\theta,\theta',\varepsilon$
up to $\varepsilon=0$.
\end{lemm}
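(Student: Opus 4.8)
The plan is to start from the explicit off‑diagonal formula~\eqref{e:K-lambda-away-diagonal}, factor out the singularity at $\theta=\theta'$ by Hadamard's lemma, and then pin down the distributional extension of the kernel across the diagonal by computing the boundary value of $L^\pm_\omega S_\omega v$ as the interior point approaches $\partial\Omega$ along the normal; the sign of the resulting $i0$‑prescription will be read off from the orientation statement of Lemma~\ref{l:which-rotate}. Write $h^\pm(\theta,\omega):=\ell^\pm(\mathbf x(\theta),\omega)$, which is smooth in $(\theta,\omega)$ up to $\Im\omega=0$, holomorphic in $\omega$ for $\Im\omega>0$, and real for real $\omega$. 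Since $\ell^\pm(\bullet,\omega)$ is linear, $\ell^\pm(\mathbf x(\theta)-\mathbf x(\theta'),\omega)=h^\pm(\theta,\omega)-h^\pm(\theta',\omega)=(\theta-\theta')\,G^\pm(\theta,\theta',\omega)$ with $G^\pm(\theta,\theta',\omega):=\int_0^1\partial_\theta h^\pm(\theta'+t(\theta-\theta'),\omega)\,dt$, again smooth up to $\Im\omega=0$, and $G^\pm(\theta_0,\theta_0,\lambda)=\partial_\theta h^\pm(\theta_0,\lambda)\neq 0$ because $\gamma^\pm_\lambda(\theta_0)\neq\theta_0$ means $\theta_0$ is a noncharacteristic point of $\ell^\pm(\bullet,\lambda)$, cf.~\eqref{e:diag-ref-int}. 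Shrinking the neighbourhood $U$ of $\theta_0$ and $\varepsilon_0>0$, we may assume $G^\pm$ is nonvanishing on $U\times U\times\{\lambda+i\varepsilon\mid 0\le\varepsilon<\varepsilon_0\}$.

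To identify $K^\pm_\omega$ on $U\times U$ it suffices to compute $T^\pm_\omega v$ for $v$ supported in $U$. For such $v$ and $x\in\Omega$, differentiating~\eqref{e:s-lambda-integrated} under the integral sign and using~\eqref{eq:LlogA} gives $L^\pm_\omega S_\omega v(x)=c_\omega\int_{\partial\Omega}\ell^\pm(x-y,\omega)^{-1}v(y)$; by Lemma~\ref{l:singl-1} this lies in $C^\infty(\overline\Omega)$, so its trace at $\mathbf x(\theta)$ equals the limit as $t\to 0+$ of its values at $x=\mathbf x(\theta)-t\mathbf n(\theta)$, where $\mathbf n$ is an outward–pointing vector field along $\partial\Omega$. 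Parametrising $\supp v$ by $\theta'\in U$ and using $\ell^\pm(x-\mathbf x(\theta'),\omega)=(\theta-\theta')G^\pm(\theta,\theta',\omega)-t\,\ell^\pm(\mathbf n(\theta),\omega)$, we obtain from~\eqref{e:schwartzer}, for $(\theta,\theta')\in U\times U$,
\[
K^\pm_\omega(\theta,\theta')=\lim_{t\to 0+}\,\frac{c_\omega\,\partial_\theta h^\pm(\theta,\omega)}{(\theta-\theta')\,G^\pm(\theta,\theta',\omega)-t\,\ell^\pm(\mathbf n(\theta),\omega)}\qquad\text{in }\mathcal D'(U\times U).
\]
Dividing numerator and denominator by $G^\pm$ and using that $(x-t\mu(x))^{-1}\to(x-i0\,\sgn\Im\mu(0))^{-1}$ in $\mathcal D'$ as $t\to 0+$ whenever $\mu$ is smooth with $\Im\mu(0)\neq 0$ (the elementary mechanism behind~\eqref{e:x-i0-FT} and Lemma~\ref{l:inverse-converger}), the limit equals $c_\omega\,\partial_\theta h^\pm(\theta,\omega)\,G^\pm(\theta,\theta',\omega)^{-1}\bigl(\theta-\theta'-i0\,\sgn\Im\mu^\pm\bigr)^{-1}$, where $\mu^\pm:=\ell^\pm(\mathbf n(\theta),\omega)/G^\pm(\theta,\theta',\omega)$; this is legitimate jointly in $(\theta,\theta')$ because, as shown next, $\sgn\Im\mu^\pm$ is constant on $U\times U$ for $0<\varepsilon<\varepsilon_0$.

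It remains to prove $\sgn\Im\mu^\pm=\mp1$ on $U\times U$ for small $\varepsilon>0$, which turns the prescription into $\pm i0$. Since $\Im\mu^\pm$ vanishes at $\varepsilon=0$ (all quantities are then real), we differentiate in $\varepsilon$ at $\varepsilon=0$: using $\partial_\varepsilon\ell^\pm(\mathbf v,\lambda+i\varepsilon)|_{\varepsilon=0}=i\,\partial_\lambda\ell^\pm(\mathbf v,\lambda)$ together with~\eqref{e:l-pm-differential}, a short computation reduces $\partial_\varepsilon\Im\mu^\pm|_{\varepsilon=0}$, up to the positive factor $\bigl(2\lambda(1-\lambda^2)G^\pm(\theta,\theta',\lambda)^2\bigr)^{-1}$, to $\mp\det\!\bigl(\begin{smallmatrix}\ell^+(\mathbf n(\theta))&G^+(\theta,\theta',\lambda)\\ \ell^-(\mathbf n(\theta))&G^-(\theta,\theta',\lambda)\end{smallmatrix}\bigr)$ (with $G^\mp$ defined as $G^\pm$ but with $\ell^\mp$); at $(\theta_0,\theta_0)$ this determinant is $\det(\ell^+,\ell^-)$ applied to the positively oriented frame $(\mathbf n(\theta_0),\mathbf x'(\theta_0))$ (outward vector, then positively oriented tangent), hence positive since $(\ell^+,\ell^-)$ is a positively oriented dual basis (see the proof of Lemma~\ref{l:identitor-1}), and by continuity it stays positive on $U\times U$. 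Thus $\Im\mu^\pm=\mp c(\theta,\theta')\,\varepsilon+O(\varepsilon^2)$ with $c>0$ bounded below, so $\sgn\Im\mu^\pm=\mp1$ for $\varepsilon\in(0,\varepsilon_0)$, $\varepsilon_0$ small — this is the computational form of Lemma~\ref{l:which-rotate}. Finally, the smooth ratio $\partial_\theta h^\pm(\theta,\omega)/G^\pm(\theta,\theta',\omega)$ equals $1$ on the diagonal, so writing it as $1+(\theta-\theta')R^\pm_\omega(\theta,\theta')$ with $R^\pm_\omega$ smooth up to $\varepsilon=0$ and using $(\theta-\theta')(\theta-\theta'\pm i0)^{-1}=1$ yields~\eqref{eq:diag} with $\mathscr K^\pm_\omega=c_\omega R^\pm_\omega$, smooth in $\theta,\theta'$ and in $\varepsilon$ up to $\varepsilon=0$. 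The main obstacle is this last paragraph — getting $\pm i0$ rather than $\mp i0$ and, above all, making the sign uniform as $\varepsilon\downarrow 0$; once the sign is fixed, the rest is Hadamard's lemma and the boundary‑trace identity for $S_\omega$ supplied by Lemma~\ref{l:singl-1}.
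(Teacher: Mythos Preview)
Your proof is correct and follows essentially the same route as the paper's own argument. Both approaches factor $\ell^\pm(\mathbf x(\theta)-\mathbf x(\theta'),\omega)=(\theta-\theta')G^\pm$ and then identify the $i0$ prescription by pushing the evaluation point into~$\Omega$ along a transversal vector field, reducing the sign question to the positivity of the determinant $\ell^+\wedge\ell^-$ on an oriented frame at~$\theta_0$; the only cosmetic differences are that the paper uses an inward vector $\mathbf v$ (you use the outward normal~$\mathbf n$, which just flips a sign twice) and that the paper packages the distributional limit as an appeal to Lemma~\ref{l:inverse-converger} rather than stating it directly.
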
 
%%%%%%%%%%%%%%%%%%%%%%%%%%%%%%%%%%%%%%%%%%%%%%%%%%%%%%%%%%%%%%%%%%%%%%%%%%%%%%%%
\begin{proof}
1.
Fix some smooth vector field $\mathbf v(\theta)$ on $\partial\Omega$ which points inwards.
We have for all $v=f(\theta)\,d\theta\in C^\infty(\partial\Omega;T^*\partial\Omega)$,
$$
\begin{aligned}
\int_{\mathbb S^1}K^\pm_\omega(\theta,\theta')f(\theta')\,d\theta'
&=\big(\partial_\theta\ell^\pm(\mathbf x(\theta),\omega)\big)\lim_{\delta\to 0+}
L^\pm_\omega S_\omega v(\mathbf x(\theta)+\delta\mathbf v(\theta))\\
&=c_\omega\big(\partial_\theta\ell^\pm(\mathbf x(\theta),\omega)\big)\lim_{\delta\to 0+}
\int_{\mathbb S^1} {f(\theta')\over \ell^\pm(\mathbf x(\theta)-\mathbf x(\theta')+\delta\mathbf v(\theta),\omega)}\,d\theta'
\end{aligned}
$$
where the limit is in $C^\infty(\mathbb S^1)$.
Here in the first equality we use the definition~\eqref{e:schwartzer}
of~$K_\omega^\pm$ (recalling that $S_\omega v\in C^\infty(\overline\Omega)$ by
Lemma~\ref{l:singl-1}).
In the second equality we use the definition~\eqref{e:s-lambda-integrated} of $S_\omega$, the formula~\eqref{e:fund-sol-imag} for $E_\omega$, and the identity~\eqref{eq:LlogA}.
   
Since $\partial_\theta\ell^\pm(\mathbf x(\theta),\lambda)\neq 0$
at $\theta=\theta_0$, we factorize for $\theta,\theta'$ in some neighborhood $U$ of~$\theta_0$
and $\varepsilon=\Im\omega$ small enough
$$
\ell^\pm(\mathbf x(\theta)-\mathbf x(\theta'),\omega)=
G^\pm_\omega(\theta,\theta')(\theta-\theta')
$$
where $G^\pm_\omega(\theta,\theta')$ is a nonvanishing smooth function of $\theta,\theta',\varepsilon$
up to $\varepsilon=0$ and
\begin{equation}
  \label{e:diag-G-1}
G_\omega^\pm(\theta,\theta)=\partial_\theta\ell^\pm(\mathbf x(\theta),\omega),\quad
\theta\in U.
\end{equation}
Therefore, for $(\theta,\theta')\in U\times U$ we have
\begin{equation}
  \label{e:diagor-1}
K^\pm_\omega(\theta,\theta')={c_\omega\partial_\theta\ell^\pm(\mathbf x(\theta),\omega)\over G^\pm_\omega(\theta,\theta')}\lim_{\delta\to 0+}
\bigg(\theta-\theta'+\delta{\ell^\pm(\mathbf v(\theta),\omega)\over G^\pm_\omega(\theta,\theta')}\bigg)^{-1}
\end{equation}
with the limit in $\mathcal D'(U\times U)$.

\noindent 2.
We next claim that if $U$ is a small enough
neighborhood of $\theta_0$, then for all $(\theta,\theta')\in U\times U$ and $\Im\omega=\varepsilon>0$ small enough
\begin{equation}
  \label{e:diagor-2}
\pm\Im{\ell^\pm(\mathbf v(\theta),\omega)\over G^\pm_\omega(\theta,\theta')}>0.
\end{equation}
When $\omega=\lambda$ is real, the expression~\eqref{e:diagor-2} is equal to~0. Thus it suffices
to check that for all $(\theta,\theta')\in U\times U$
\begin{equation}
  \label{e:diagor-3}
\pm\partial_\varepsilon|_{\varepsilon=0}\Im{\ell^\pm(\mathbf v(\theta),\lambda+i\varepsilon)\over
G^\pm_{\lambda+i\varepsilon}(\theta,\theta')}>0.
\end{equation}
It is enough to consider the case $\theta=\theta'=\theta_0$, in which case the left-hand side
of~\eqref{e:diagor-3} equals
$$
\pm\partial_\varepsilon|_{\varepsilon=0}\Im{\ell^\pm(\mathbf v(\theta_0),\lambda+i\varepsilon)\over
\ell^\pm(\partial_\theta\mathbf x(\theta_0),\lambda+i\varepsilon)}.
$$
By~\eqref{e:l-pm-differential} and since $\ell^\pm$ is holomorphic in $\omega$ it then suffices to check that
\begin{equation}
  \label{e:diagor-4}
\pm\big(\ell^\mp ( \mathbf v(\theta_0) , \lambda ) \ell^\pm ( \partial_\theta \mathbf x(\theta_0) , \lambda ) - \ell^\pm ( \mathbf v(\theta_0) , 
\lambda ) \ell^\mp ( \partial_\theta \mathbf  x(\theta_0)  , \lambda )\big)>0.
\end{equation}
The inequality~\eqref{e:diagor-4} follows from the fact that $x\mapsto (\ell^+(x,\lambda),\ell^-(x,\lambda))$
is an orientation preserving linear map on $\mathbb R^2$ and $\partial_\theta\mathbf x(\theta_0),\mathbf v(\theta_0)$
form a positively oriented basis of $\mathbb R^2$ since the parametrization $\mathbf x(\theta)$ is positively
oriented and $\mathbf v(\theta)$ points inside $\Omega$. This finishes the proof of~\eqref{e:diagor-2}.

\noindent 3.
By Lemma~\ref{l:inverse-converger} (see also~\eqref{e:prepare-remark}),
with $\delta$ taking the role of~$\varepsilon$, the distributional limit on the right-hand side of~\eqref{e:diagor-1}
is equal to $(\theta-\theta'\pm i0)^{-1}$. Therefore 
\begin{equation}
  \label{e:kopper}
K^\pm_\omega(\theta,\theta')={c_\omega\partial_\theta\ell^\pm(\mathbf x(\theta),\omega)\over G^\pm_\omega(\theta,\theta')}(\theta-\theta'\pm i0)^{-1}.
\end{equation}
By~\eqref{e:diag-G-1} we can write for some $\mathscr K^\pm_\omega(\theta,\theta')$ which is smooth in $\theta,\theta',\varepsilon$ up to~$\varepsilon=0$, 
$$
{c_\omega\partial_\theta\ell^\pm(\mathbf x(\theta),\omega)\over G^\pm_\omega(\theta,\theta')}
=c_\omega+\mathscr K^\pm_\omega(\theta,\theta')(\theta-\theta')
$$
which gives~\eqref{eq:diag} since $(\theta-\theta')(\theta-\theta'\pm i0)^{-1}=1$.
\end{proof}
%%%%%%%%%%%%%%%%%%%%%%%%%%%%%%%%%%%%%%%%%%%%%%%%%%%%%%%%%%%%%%%%%%%%%%%%%%%%%%%%

%%%%%%%%%%%%%%%%%%%%%%%%%%%%%%%%%%%%%%%%%%%%%%%%%%%%%%%%%%%%%%%%%%%%%%%%%%%%%%%%
\subsubsection{Noncharacteristic reflection}
  \label{s:slp-ref}

We now move to the singularities on the reflection sets $\Refl^\pm_\lambda$, again
staying away from the characteristic set $\Diag\cap\Refl^\pm_\lambda$:
%%%%%%%%%%%%%%%%%%%%%%%%%%%%%%%%%%%%%%%%%%%%%%%%%%%%%%%%%%%%%%%%%%%%%%%%%%%%%%%%
\begin{lemm}
\label{l:sing}
Take $\theta_0\in\mathbb S^1$ such that $\gamma^\pm_\lambda(\theta_0)\neq\theta_0$.
Then there exists neighborhoods $U,U'=\gamma^\pm_\lambda(U)$ of $\gamma^\pm_\lambda(\theta_0),\theta_0$ such that
for $(\theta,\theta')\in U\times U'$ and $\varepsilon=\Im\omega>0$ small enough, we have 
\begin{equation}
\label{eq:sing}
\begin{gathered}  K_\omega^\pm ( \theta, \theta' ) = \tilde c^\pm_\omega(\theta') \big( \gamma^\pm_\lambda ( \theta)  - \theta'   \pm  i \varepsilon 
z^\pm_\omega(\theta') \big)^{-1} 
+\mathscr K_\omega^\pm(\theta,\theta'),
\end{gathered}
\end{equation}
where $\mathscr K^\pm_\omega\in C^\infty(U\times U')$ is smooth in~$\theta,\theta',\varepsilon$ up to $\varepsilon=0$,
the functions $\tilde c^\pm_\omega(\theta')$ and $z^\pm_\omega(\theta')$ are smooth in $\theta',\varepsilon$
up to~$\varepsilon=0$,
and
\begin{equation}
\label{eq:Gpm} \tilde c^\pm_\omega(\theta') = { c_\omega \over
\partial_{\theta'} \gamma^\pm_\lambda ( \theta' )} + \mathcal O ( \varepsilon ),\quad
\Re z^\pm_\omega ( \theta' ) \geq c>0
\end{equation}
where $c$ is independent of~$\varepsilon,\theta'$.
\end{lemm}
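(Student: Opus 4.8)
The plan is to localize $K^\pm_\omega$ near the reflection point, reduce by a change of variables to the non-characteristic situation of Lemma~\ref{l:diag}, and then read off the sign of the $i\varepsilon$-shift from the identities of Lemma~\ref{l:identitor-1}. By Lemma~\ref{l:smooth}, $K^\pm_\omega$ is smooth up to $\varepsilon=0$ away from $\Diag\cup\Refl^\pm_\lambda$, so only the behaviour near $(\gamma^\pm_\lambda(\theta_0),\theta_0)\in\Refl^\pm_\lambda$ needs to be analysed. Since $\gamma^\pm_\lambda(\theta_0)\neq\theta_0$ we may take $U\ni\gamma^\pm_\lambda(\theta_0)$ and $U'=\gamma^\pm_\lambda(U)\ni\theta_0$ so small that $\overline U\cap\overline{U'}=\emptyset$; then on $U\times U'$ the kernel is given by the pointwise formula~\eqref{e:K-lambda-away-diagonal} (the boundary trace defining $T^\pm_\omega$ being a genuine restriction by Lemma~\ref{l:singl-1}), and for $\varepsilon>0$ this is an honest smooth function since its only real zero, $\theta=\gamma^\pm_\lambda(\theta')$, has moved off the real axis. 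Next I would substitute $\vartheta:=\gamma^\pm_\lambda(\theta)$ (so $\theta=\gamma^\pm_\lambda(\vartheta)$) and use the defining identity $\ell^\pm(\mathbf x(\gamma^\pm_\lambda(\vartheta)),\lambda)=\ell^\pm(\mathbf x(\vartheta),\lambda)$ from~\eqref{e:gamma-pm-def} to rewrite the denominator in~\eqref{e:K-lambda-away-diagonal} as $F^\pm_\omega(\vartheta,\theta'):=\ell^\pm(\mathbf x(\gamma^\pm_\lambda(\vartheta))-\mathbf x(\theta'),\omega)$; for fixed $\lambda$ this is holomorphic in $\varepsilon$ near $\varepsilon=0$, vanishes at $\vartheta=\theta'$ when $\varepsilon=0$, and $\partial_\vartheta F^\pm_\lambda(\theta_0,\theta_0)=\partial_\theta\ell^\pm(\mathbf x(\theta_0),\lambda)\neq0$ because $\theta_0\notin\mathscr C^\pm_\lambda$.

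For the factorization I would proceed as in the proof of Lemma~\ref{l:prepare}: apply the Malgrange preparation theorem~\cite[Theorem~7.5.6]{Hormander1} to $F^\pm_\omega$ in the variable $\vartheta$ with parameters $(\theta',\varepsilon)$, obtaining $F^\pm_\omega(\vartheta,\theta')=\widetilde G^\pm_\omega(\vartheta,\theta')(\vartheta-\rho^\pm_\omega(\theta'))$ with $\widetilde G^\pm_\omega$ smooth and nonvanishing, $\rho^\pm_\omega$ smooth up to $\varepsilon=0$, and $\rho^\pm_\lambda(\theta')=\theta'$; setting $z^\pm_\omega(\theta'):=\pm i\varepsilon^{-1}(\rho^\pm_\omega(\theta')-\theta')$ (smooth up to $\varepsilon=0$ since $\rho^\pm_\lambda(\theta')=\theta'$) gives $\vartheta-\rho^\pm_\omega(\theta')=\gamma^\pm_\lambda(\theta)-\theta'\pm i\varepsilon z^\pm_\omega(\theta')$, so that $K^\pm_\omega(\theta,\theta')=\chi^\pm_\omega(\vartheta,\theta')(\gamma^\pm_\lambda(\theta)-\theta'\pm i\varepsilon z^\pm_\omega(\theta'))^{-1}$ with $\chi^\pm_\omega:=c_\omega\,\partial_\theta\ell^\pm(\mathbf x(\gamma^\pm_\lambda(\vartheta)),\omega)/\widetilde G^\pm_\omega$ smooth up to $\varepsilon=0$. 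One cannot naively Taylor-expand $\chi^\pm_\omega$ in $\vartheta-\theta'$, since $(\vartheta-\theta')(\vartheta-\theta'\pm i\varepsilon z^\pm_\omega)^{-1}$ fails to be smooth up to $\varepsilon=0$ at $\vartheta=\theta'$ (unlike the $\pm i0$ case in Lemma~\ref{l:diag}); instead I would invoke Lemma~\ref{l:prepare} with $x:=\gamma^\pm_\lambda(\theta)-\theta'$ and additional parameter $y:=\theta'$ (see the Remark after Lemma~\ref{l:prepare-unique}), which produces the decomposition~\eqref{eq:sing} with $\tilde c^\pm_\omega:=r^\pm_\omega$, $\mathscr K^\pm_\omega:=q^\pm_\omega\in C^\infty(U\times U')$, and $z^\pm_\omega$ the function furnished by that lemma (its $\Re>0$ inherited from the sign check below). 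Finally $\tilde c^\pm_\lambda(\theta')=\chi^\pm_\lambda(\theta',\theta')=c_\lambda\,\partial_\theta\ell^\pm(\mathbf x(\gamma^\pm_\lambda(\theta')),\lambda)/\partial_\theta\ell^\pm(\mathbf x(\theta'),\lambda)$, and differentiating $\ell^\pm(\mathbf x(\theta'),\lambda)=\ell^\pm(\mathbf x(\gamma^\pm_\lambda(\theta')),\lambda)$ in $\theta'$ gives $\tilde c^\pm_\lambda(\theta')=c_\lambda/\partial_{\theta'}\gamma^\pm_\lambda(\theta')$, whence the first part of~\eqref{eq:Gpm}.

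The main obstacle is the sign of $z^\pm_\omega$. Differentiating $F^\pm_\omega=\widetilde G^\pm_\omega(\vartheta-\rho^\pm_\omega)$ in $\varepsilon$ at $\vartheta=\theta'$, $\varepsilon=0$, and using $\partial_\varepsilon F^\pm_\omega|_{\varepsilon=0}=i\,\partial_\lambda\ell^\pm$ together with $\widetilde G^\pm_\lambda(\theta',\theta')=\partial_\theta\ell^\pm(\mathbf x(\theta'),\lambda)$, one finds $z^\pm_\omega(\theta')|_{\varepsilon=0}=\pm\,\partial_\lambda\ell^\pm\big(\mathbf x(\gamma^\pm_\lambda(\theta'))-\mathbf x(\theta'),\lambda\big)\big/\partial_\theta\ell^\pm(\mathbf x(\theta'),\lambda)$. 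Since $\ell^\pm(\mathbf x(\gamma^\pm_\lambda(\theta'))-\mathbf x(\theta'),\lambda)=0$, the identity~\eqref{e:l-pm-differential} reduces the numerator to $(2\lambda(1-\lambda^2))^{-1}\ell^\mp(\mathbf x(\gamma^\pm_\lambda(\theta'))-\mathbf x(\theta'),\lambda)$, whose sign is $\pm\nu^\pm(\mathbf x(\theta'))$ by~\eqref{e:sign-identity}; as $\sgn\partial_\theta\ell^\pm(\mathbf x(\theta'),\lambda)=\nu^\pm(\mathbf x(\theta'))$ by~\eqref{e:omega-pm-def}, the ratio has sign $\pm1$, and the overall prefactor $\pm$ makes $z^\pm_\omega(\theta')|_{\varepsilon=0}$ a \emph{positive real} number. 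Since $\overline{U'}$ contains no critical point of $\ell^\pm$, this positivity is uniform on $U'$, so $\Re z^\pm_\omega(\theta')\geq c>0$ for $\varepsilon$ small, uniformly in $\theta'$ --- which both justifies the use of Lemma~\ref{l:prepare} and gives the second part of~\eqref{eq:Gpm}, completing the proof. I expect the bookkeeping of orientation conventions here --- matching the $\pm$ in $z^\pm_\omega$ with those in~\eqref{e:sign-identity} and~\eqref{e:omega-pm-def} --- to be the only genuinely delicate point.
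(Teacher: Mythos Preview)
Your argument is correct and follows essentially the same route as the paper. The only cosmetic difference is that you apply Malgrange preparation directly to the full complex denominator $F^\pm_\omega(\vartheta,\theta')$ to extract the root $\rho^\pm_\omega(\theta')$, whereas the paper first Taylor-expands $\ell^\pm(\bullet,\omega)$ in~$\varepsilon$ and factors the real part $\ell^\pm(\mathbf x(\theta)-\mathbf x(\theta'),\lambda)=G^\pm_\lambda(\theta,\theta')(\gamma^\pm_\lambda(\theta)-\theta')$ by hand, arriving at $K^\pm_\omega=F^\pm_\omega(\theta,\theta')\big(\gamma^\pm_\lambda(\theta)-\theta'\pm i\varepsilon\psi^\pm_\omega(\theta,\theta')\big)^{-1}$ with a $(\theta,\theta')$-dependent $\psi^\pm_\omega$; both then finish with Lemma~\ref{l:prepare}, and the sign verification via~\eqref{e:l-pm-differential} and~\eqref{e:sign-identity} is identical in the two proofs.
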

%%%%%%%%%%%%%%%%%%%%%%%%%%%%%%%%%%%%%%%%%%%%%%%%%%%%%%%%%%%%%%%%%%%%%%%%%%%%%%%%
\begin{proof}
1. Recall that $\omega=\lambda+i\varepsilon$. We take Taylor expansions of $\ell^\pm(x,\omega)$ at
$\varepsilon=0$, using its holomorphy in $\omega$:
\begin{equation}
  \label{e:ell-pm-expanded}
\ell^\pm(x,\omega)=\ell^\pm(x, \lambda)+i\varepsilon \ell^\pm_1(x, \lambda)+\varepsilon^2\ell^\pm_2(x,\lambda, \varepsilon),\quad
\ell_1^\pm ( x, \lambda ) := \partial_\lambda \ell^\pm ( x, \lambda ) 
\end{equation}
where the coefficients of the linear maps $ x \mapsto \ell^\pm_2 ( x, \lambda, \varepsilon) $ are smooth in~$\varepsilon$
up to~$\varepsilon=0$.
Since $\partial_\theta \ell^\pm(\mathbf x(\theta),\lambda)\neq 0$ at $\theta=\theta_0$, we
factorize for $\theta,\theta'$ in some neighborhoods $U,U'=\gamma^\pm_\lambda(U)$ of $\gamma^\pm_\lambda(\theta_0),\theta_0$
$$
\ell^\pm(\mathbf x(\theta)-\mathbf x(\theta'),\lambda)
=\ell^\pm(\mathbf x(\gamma^\pm_\lambda(\theta))-\mathbf x(\theta'),\lambda)
=G^\pm_\lambda(\theta,\theta')(\gamma^\pm_\lambda(\theta)-\theta')
$$
where $G^\pm_\lambda\in C^\infty(U\times U';\mathbb R)$ is nonvanishing and
\begin{equation}
  \label{e:refor-1}
G^\pm_\lambda(\gamma^\pm_\lambda(\theta'),\theta')=\partial_{\theta'}\ell^\pm(\mathbf x(\theta'),\lambda).
\end{equation}
Hence for $(\theta,\theta')\in U\times U'$
$$
\begin{aligned}
\ell^\pm(\mathbf x(\theta)-\mathbf x(\theta'),\omega)
&=G^\pm_\lambda(\theta,\theta')(\gamma^\pm_\lambda(\theta)-\theta'\pm i\varepsilon \psi^\pm_\omega(\theta,\theta')),\\
\psi^\pm_\omega(\theta,\theta')&:=\pm{\ell_1^\pm(\mathbf x(\theta)-\mathbf x(\theta'),\lambda)
-i\varepsilon\ell_2^\pm(\mathbf x(\theta)-\mathbf x(\theta'),\lambda,\varepsilon)\over G^\pm_\lambda(\theta,\theta')}.
\end{aligned}
$$
By~\eqref{e:K-lambda-away-diagonal} we have for $(\theta,\theta')\in U\times U'$
$$
K^\pm_\omega(\theta,\theta')=F^\pm_\omega(\theta,\theta')\big(\gamma^\pm_\lambda(\theta)-\theta'\pm i\varepsilon \psi^\pm_\omega(\theta,\theta')\big)^{-1},\quad
F^\pm_\omega(\theta,\theta'):=c_\omega{\partial_\theta\ell^\pm(\mathbf x(\theta),\omega)\over G^\pm_\lambda(\theta,\theta')}.
$$
Note that $\psi^\pm_\omega(\theta,\theta')$ and $F^\pm_\omega(\theta,\theta')$ are 
smooth in $\theta,\theta',\varepsilon$ up to $\varepsilon=0$.

\noindent 2. We next claim that $\Re \psi^\pm_\omega(\theta,\theta')\geq c>0$ for $\varepsilon$ small enough
and $(\theta,\theta')\in U\times U'$, if $U,U'$ are sufficiently small neighborhoods of $\gamma^\pm_\lambda(\theta_0),\theta_0$.
For that it suffices to show that
\begin{equation}
  \label{e:refor-2}
\pm {\ell_1^\pm(\mathbf x(\gamma^\pm_\lambda(\theta_0))-\mathbf x(\theta_0),\lambda)\over G^\pm_\lambda(\gamma^\pm_\lambda(\theta_0),\theta_0)}>0.
\end{equation}
By~\eqref{e:l-pm-differential} and~\eqref{e:refor-1}, and since
$\ell^\pm(\mathbf x(\gamma^\pm_\lambda(\theta_0))-\mathbf x(\theta_0),\lambda)=0$,
the left-hand side of~\eqref{e:refor-2} has the same sign as
$$
\pm {\ell^\mp(\mathbf x(\gamma^\pm_\lambda(\theta_0))-\mathbf x(\theta_0),\lambda)\over \partial_{\theta}\ell^\pm(\mathbf x(\theta),\lambda)|_{\theta=\theta_0}}
$$
which is positive by~\eqref{e:sign-identity} with $x:=\mathbf x(\theta_0)$.

\noindent 3.
Now~\eqref{eq:sing} and the second part of~\eqref{eq:Gpm} follow from Lemma~\ref{l:prepare}, see also the remark
following Lemma~\ref{l:prepare-unique}
where we replace $\theta$ with $\gamma^\pm_\lambda(\theta)$.
Finally, by~\eqref{e:refor-1} and differentiating the identity $\ell^\pm(\mathbf x(\gamma^\pm_\lambda(\theta')),\lambda)=\ell^\pm(\mathbf x(\theta'),\lambda)$ in~$\theta'$ we compute
$$
F^\pm_\omega(\gamma^\pm_\lambda(\theta'),\theta')=c_\omega{\partial_\theta\ell^\pm(\mathbf x(\theta),\lambda)|_{\theta=\gamma^\pm_\lambda(\theta')}\over \partial_{\theta'}\ell^\pm(\mathbf x(\theta'),\lambda)}+\mathcal O(\varepsilon)=
{c_\omega\over\partial_{\theta'}\gamma^\pm_\lambda(\theta')}+\mathcal O(\varepsilon)
$$
which gives the first part of~\eqref{eq:Gpm}.
\end{proof}
%%%%%%%%%%%%%%%%%%%%%%%%%%%%%%%%%%%%%%%%%%%%%%%%%%%%%%%%%%%%%%%%%%%%%%%%%%%%%%%%

%%%%%%%%%%%%%%%%%%%%%%%%%%%%%%%%%%%%%%%%%%%%%%%%%%%%%%%%%%%%%%%%%%%%%%%%%%%%%%%%
\subsubsection{Characteristic points}
  \label{s:slp-char}

We finally study the singularities of $K^\pm_\omega$ near the characteristic set
$\Diag\cap\Refl^\pm_\lambda$. Recalling~\eqref{e:diag-ref-int}, we see that
this set consists of two points $(\theta_{\min}^\pm,\theta_{\min}^\pm)$ and $(\theta_{\max}^\pm,\theta_{\max}^\pm)$
where $\mathbf x(\theta_{\min}^\pm)=x_{\min}^\pm(\lambda)$, $\mathbf x(\theta_{\max}^\pm)=x_{\max}^\pm(\lambda)$
are the critical points of $\ell^\pm(\bullet,\lambda)$ (see Definition~\ref{d:1}).
%%%%%%%%%%%%%%%%%%%%%%%%%%%%%%%%%%%%%%%%%%%%%%%%%%%%%%%%%%%%%%%%%%%%%%%%%%%%%%%%
\begin{lemm}
\label{l:char}
Assume that $\theta_0\in\{\theta^\pm_{\min},\theta^\pm_{\max}\}$.
Then there exists a neighborhood $U=\gamma^\pm_\lambda(U)$ of~$\theta_0$ such that 
for $(\theta,\theta')\in U\times U$ and $\varepsilon=\Im\omega>0$ small enough, we have
\begin{equation}
\label{eq:char}   K^\pm_\omega ( \theta, \theta' ) = 
c_\omega ( \theta - \theta' \pm i 0)^{-1} +
\tilde c^\pm_\omega(\theta') \big( \gamma_\lambda^\pm (\theta ) - \theta' \pm 
i \varepsilon z_\omega^\pm (  \theta ' ) \big)^{-1} 
+\mathscr K^\pm_\omega(\theta,\theta')
\end{equation}
where $\mathscr K^\pm_\omega\in C^\infty(U\times U)$
is smooth in~$\theta,\theta',\varepsilon$ up to $\varepsilon=0$,
$\tilde c^\pm_\omega(\theta')$ and $z^\pm_\omega(\theta')$
are smooth in $\theta',\varepsilon$ up to~$\varepsilon=0$, and~\eqref{eq:Gpm} holds.
\end{lemm}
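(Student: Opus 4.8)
The plan is to combine the two previous local analyses near $\theta_0$: Lemma~\ref{l:diag} handles the noncharacteristic diagonal and Lemma~\ref{l:sing} handles the noncharacteristic reflection, but at a characteristic point $\theta_0 \in \{\theta^\pm_{\min},\theta^\pm_{\max}\}$ these two loci merge since $\gamma^\pm_\lambda(\theta_0) = \theta_0$. The strategy is therefore to redo the factorization of the denominator $\ell^\pm(\mathbf x(\theta) - \mathbf x(\theta'),\omega)$, but this time extracting \emph{both} the factor $\theta - \theta'$ and the factor $\gamma^\pm_\lambda(\theta) - \theta'$, taking advantage of the fact that near a nondegenerate critical point both $\theta \mapsto \ell^\pm(\mathbf x(\theta),\lambda) - \ell^\pm(\mathbf x(\theta_0),\lambda)$ and $\gamma^\pm_\lambda$ have a simple and explicit local structure (a square, respectively a smooth involution fixing $\theta_0$ with derivative $-1$, via~\eqref{e:factorize}).

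First I would work in the local coordinate $\theta^\pm_{\bullet}$ from~\eqref{e:factorize} — assume for definiteness $\theta_0 = \theta^\pm_{\min}$, so that $\ell^\pm(\mathbf x(\theta),\lambda) = \ell^\pm_{\min} + \theta^2$ in this coordinate and $\gamma^\pm_\lambda(\theta) = -\theta$. Then for the real parameter $\lambda$,
$$
\ell^\pm(\mathbf x(\theta) - \mathbf x(\theta'),\lambda) = \theta^2 - (\theta')^2 = (\theta - \theta')(\theta + \theta') = (\theta - \theta')(\theta - \gamma^\pm_\lambda(\theta')),
$$
so the two singular factors appear on an equal footing. For $\omega = \lambda + i\varepsilon$ I would Taylor-expand in $\varepsilon$ as in~\eqref{e:ell-pm-expanded} and write
$$
\ell^\pm(\mathbf x(\theta) - \mathbf x(\theta'),\omega) = G^\pm_\omega(\theta,\theta')\,(\theta - \theta' \pm i\varepsilon \psi^\pm_{1,\omega})(\theta - \gamma^\pm_\lambda(\theta') \pm i\varepsilon \psi^\pm_{2,\omega}),
$$
where $G^\pm_\omega$ is a nonvanishing smooth function (smooth up to $\varepsilon = 0$), and $\psi^\pm_{1,\omega}, \psi^\pm_{2,\omega}$ are smooth up to $\varepsilon = 0$ with positive real part. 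The sign of the imaginary perturbation should be the same $\pm$ for both factors, consistent with Lemmas~\ref{l:diag} and~\ref{l:sing}, which is exactly what the sign identities~\eqref{e:sign-identity}, \eqref{e:l-pm-differential} (used in those proofs via~\eqref{e:diagor-4} and~\eqref{e:refor-2}) deliver; I would verify this by a continuity/sign argument at the critical point, where the two conditions~\eqref{e:diagor-4} and~\eqref{e:refor-2} degenerate into a single positivity statement. Then a partial-fractions decomposition
$$
\frac{1}{(\theta - \theta' \pm i\varepsilon\psi_1)(\theta - \gamma^\pm_\lambda(\theta') \pm i\varepsilon\psi_2)} = \frac{A}{\theta - \theta' \pm i\varepsilon\psi_1} + \frac{B}{\theta - \gamma^\pm_\lambda(\theta') \pm i\varepsilon\psi_2}
$$
with $A, B$ smooth in $\theta,\theta',\varepsilon$ (the denominator $\theta - \theta' - (\theta - \gamma^\pm_\lambda(\theta')) = \gamma^\pm_\lambda(\theta') - \theta'$ appearing in $A,B$ is nonvanishing for $\theta' \neq \theta_0$, and the apparent singularity at $\theta' = \theta_0$ is removable because the numerator $\gamma^\pm_\lambda(\theta) - \theta \to 0$ there — this is the step requiring a little care) reduces the kernel to a sum of two terms of the types already handled. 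Applying Lemma~\ref{l:prepare} (and the remark after Lemma~\ref{l:prepare-unique}) to each term, with $x := \theta - \theta'$ resp. $x := \gamma^\pm_\lambda(\theta) - \theta'$ and $y := \theta'$, converts each $(\cdots \pm i\varepsilon\psi)^{-1}$ into $r^\pm(\varepsilon)(\cdots \pm i\varepsilon z^\pm)^{-1}$ plus a smooth remainder, yielding~\eqref{eq:char}. The coefficient identities~\eqref{eq:Gpm} then follow by evaluating $G^\pm_\omega$, $A$, $B$ on the diagonal $\theta = \theta'$ and on the reflection $\theta = \gamma^\pm_\lambda(\theta')$ respectively, exactly as in the proofs of Lemmas~\ref{l:diag} and~\ref{l:sing}; in particular the diagonal term's coefficient is $c_\omega + \mathcal O(\theta-\theta')$, which after multiplying by $(\theta-\theta')(\theta-\theta'\pm i0)^{-1}=1$ gives the clean leading coefficient $c_\omega$ in~\eqref{eq:char}.

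The main obstacle I anticipate is making the double factorization clean and checking that the \emph{signs} of the two $i\varepsilon$-perturbations genuinely agree at the characteristic point — away from it Lemmas~\ref{l:diag} and~\ref{l:sing} prove the two signs separately via~\eqref{e:diagor-4} and~\eqref{e:refor-2}, but at $\theta_0$ those two quadratic forms collapse, the velocity vector $\partial_\theta\mathbf x(\theta_0)$ becomes tangent to the characteristic direction, and one must argue that the limiting sign is still the right one (uniformly, so that $\Re z^\pm_\omega \geq c > 0$ persists). A secondary technical point is verifying that the partial-fraction coefficients $A, B$ and the remainder $\mathscr K^\pm_\omega$ are genuinely smooth up to $\varepsilon = 0$ and across $\theta' = \theta_0$, which I would do by writing $\gamma^\pm_\lambda(\theta) - \theta = \theta'\text{-independent smooth factor} \times (\gamma^\pm_\lambda(\theta') - \theta')$-type cancellation, i.e.\ exploiting that $\gamma^\pm_\lambda$ is a smooth involution with a fixed point at $\theta_0$. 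Everything else is a routine repackaging of the two preceding lemmas.
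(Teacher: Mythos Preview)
There is a genuine gap at the partial-fractions step. After the double factorization (which, as~\eqref{eq:ellexp} in the paper shows, is really $\ell^\pm(\mathbf x(\theta)-\mathbf x(\theta'),\omega)=(\theta-\theta')\,G_0\big[(\gamma^\pm_\lambda(\theta)-\theta')\pm i\varepsilon\psi\big]$ with no $i\varepsilon$-shift in the first factor), your coefficients $A,B$ are proportional to $1/(Q-P)$ with $Q-P=\theta'-\gamma^\pm_\lambda(\theta')\pm i\varepsilon(\cdots)$. At $\varepsilon=0$ this vanishes at $\theta'=\theta_0$. After multiplying by the kernel's numerator $c_\omega\partial_\theta\ell^\pm(\mathbf x(\theta),\omega)/G$, the coefficient of $(\theta-\theta')^{-1}$ in the adapted coordinate is $c_\lambda\,\theta/\theta'+\mathcal O(\varepsilon)$: the zero of the numerator is in the $\theta$-variable while the zero of $Q-P$ is in the $\theta'$-variable, so there is no cancellation. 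Thus neither partial-fraction term has a coefficient that is smooth in $(\theta,\theta',\varepsilon)$ up to $\varepsilon=0$, and Lemma~\ref{l:prepare} cannot be applied to them separately. The singularities do cancel when the two terms are added back together, but then you are simply back to the undecomposed kernel; your proposed fix (``the numerator $\gamma^\pm_\lambda(\theta)-\theta\to 0$'') does not match the actual numerator and does not address the $\theta'$-singularity.

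The paper avoids this entirely by a logarithmic-derivative identity rather than partial fractions. Writing $\ell^\pm(\mathbf x(\theta)-\mathbf x(\theta'),\omega)=(\theta-\theta')\,H$ with $H=G_0(\gamma^\pm_\lambda(\theta)-\theta')+i\varepsilon G_1+\varepsilon^2G_2$ and observing that the kernel's numerator is exactly $c_\omega\,\partial_\theta$ of the denominator, one gets
\[
K^\pm_\omega=c_\omega\,\partial_\theta\log\big[(\theta-\theta')H\big]
=c_\omega(\theta-\theta'\pm i0)^{-1}+c_\omega\,\frac{\partial_\theta H}{H}.
\]
The first term already has the constant coefficient $c_\omega$; the second term has denominator $H$ with $G_0,G_1$ both \emph{nonvanishing} near $\theta_0$, so it is of the form $F(\theta,\theta',\varepsilon)\big(\gamma^\pm_\lambda(\theta)-\theta'\pm i\varepsilon\psi\big)^{-1}$ with $F,\psi$ genuinely smooth up to $\varepsilon=0$, and Lemma~\ref{l:prepare} applies directly. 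The sign condition $\Re\psi>0$ is then checked at the single point $(\theta,\theta')=(\theta_0,\theta_0)$ using~\eqref{e:l-pm-differential} and the orientation of $(\partial_\theta\mathbf x,\mathbf v)$, which is where your anticipated ``sign at the characteristic point'' issue is resolved.
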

%%%%%%%%%%%%%%%%%%%%%%%%%%%%%%%%%%%%%%%%%%%%%%%%%%%%%%%%%%%%%%%%%%%%%%%%%%%%%%%%
\Remarks 1. Note that Lemma~\ref{l:char} implies Lemmas~\ref{l:diag} and~\ref{l:sing}
in a neighborhood of the characteristic set, since the first term on the right-hand side
of~\eqref{eq:char} is smooth away from the diagonal $\Diag$ and
the second term is smooth (uniformly in~$\varepsilon$) away from the reflection set $\Refl^\pm_\lambda$.

\noindent 2. Since keeping track of the signs is frustrating we present a model situation:
$ \ell^+ ( x ) = x_1 + i \varepsilon x_2 $, $ \ell^- ( x ) = x_2 + i \varepsilon x_1 $
(which is compatible with Lemma~\ref{l:which-rotate}) and 
$ \partial \Omega $ which near~$ ( 0 , 0 ) $ is given by
\[
x_1 =  q(x_2),\quad
  q(0)=q'(0)=0,\quad
  q''(0)<0.
\]
This corresponds to the point $\theta^+_{\max}$, since
when $\varepsilon=0$ the function $\ell^+(x)=x_1$ has a nondegenerate maximum
on~$\partial\Omega$.

We can use $ \theta = x_2 $ as a positively oriented parametrization of~$\partial\Omega$ near $ (0, 0 ) $.   In that
case the involution $ \gamma^+ ( \theta ) $ is given by
$$
q(\gamma^+(\theta))=q(\theta),\quad
\gamma^+(\theta)=-\theta+\mathcal O(\theta^2).
$$
This gives
\[
q ( \theta ) - q ( \theta' ) = Q ( \theta, \theta' ) ( \theta - \theta' ) ( \gamma^+ ( \theta ) - 
\theta' ) ,\quad  Q ( 0 , 0 ) = -  {q''(0)\over 2}>0.
\]
The Schwartz kernel of the model restricted single layer potential $ \mathcal C $ is given by (with $ Q = Q ( \theta, \theta' ) $ and neglecting the overall constant $ c_\omega $ in~\eqref{e:fund-sol-imag})
\[
\begin{split} K ( \theta, \theta' ) & =   \log \big( \ell^+ ( \mathbf x ( \theta ) - \mathbf x ( \theta' ) ) \ell^- ( \mathbf x ( \theta ) - 
\mathbf x ( \theta' ) )\big) \\
& =  \log \Big( \big( q (\theta ) - q ( \theta' )  + i \varepsilon ( \theta - \theta ' ) \big) \big( 
\theta - \theta' + i \varepsilon ( q ( \theta ) - q ( \theta' )) \big) \Big) \\
& =   \log \big( (\theta - \theta' )^2 ( Q  ( \gamma^+ (\theta ) - \theta' ) + i\varepsilon ) ( 
1 + i \varepsilon  Q  ( \gamma^+ ( \theta ) - \theta' ) ) \big)
\\ 
&  = 2   \log | \theta - \theta' | +  \log (  \gamma^+ ( \theta ) - \theta'  + i  
\varepsilon Q^{-1} ) \\
& \quad +  \log ( 1 + i \varepsilon  Q  ( \gamma^+ ( \theta ) - \theta' )) +  \log  Q . 
\end{split}
\]
Hence (see \S \ref{s:mot}) the Schwartz kernel of $ \partial_\theta \mathcal C $ is 
\[  \partial_\theta K ( \theta, \theta' ) =  \sum_{ \pm } ( \theta - \theta' \pm i 0 )^{-1} +
  {\partial_\theta \gamma^+(\theta)+i\varepsilon\partial_\theta Q^{-1}(\theta,\theta')\over \gamma^+ (\theta)  - \theta' + i\varepsilon Q^{-1}( \theta , \theta')}  + 
\mathscr K ( \theta, \theta' ) \]
where  $ Q ( 0,0) > 0$ and $ \mathscr K \in C^\infty $ uniformly in~$\varepsilon$.
This is consistent with~\eqref{eq:char} and~\eqref{eq:Gpm}, where we use Lemma~\ref{l:prepare} and
recall that by~\eqref{e:C-lambda-sum} we have $\partial_\theta K=K^++K^-$.
%%%%%%%%%%%%%%%%%%%%%%%%%%%%%%%%%%%%%%%%%%%%%%%%%%%%%%%%%%%%%%%%%%%%%%%%%%%%%%%%
\begin{proof}[Proof of Lemma~\ref{l:char}]
1. Recall that $ \omega = \lambda + i \varepsilon $, $ \varepsilon > 0 $
and consider the expansion~\eqref{e:ell-pm-expanded}:
$$
\ell^\pm(x,\omega)=\ell^\pm(x,\lambda)+i\varepsilon\ell^\pm_1(x,\lambda)+\varepsilon^2\ell^\pm_2(x,\lambda,\varepsilon).
$$
We have for $\theta,\theta'$ in a sufficiently small neighborhood $U$ of~$\theta_0\in\{\theta^\pm_{\min},\theta^\pm_{\max}\}$
\begin{equation}
\label{eq:defGG}
\begin{aligned}
\ell^\pm(\mathbf x(\theta) -\mathbf x( \theta' ) , \lambda )&=G_0(\theta, \theta' )(\gamma^\pm_\lambda(\theta)-\theta')(\theta-\theta'),\\
\ell^\pm_1(\mathbf x(\theta) -\mathbf x(\theta' ) , \lambda )&=G_1(\theta,\theta' )(\theta-\theta'),\\
\ell_2^\pm(\mathbf x(\theta) -\mathbf x ( \theta' ), \lambda,\varepsilon )&= G_2(\theta,\theta',\varepsilon)(\theta-\theta'), 
\end{aligned}
\end{equation}
where $G_0,G_1,G_2$
are smooth in~$\theta,\theta',\varepsilon$ up to~$\varepsilon=0$,
and $G_0,G_1$ are real-valued and nonvanishing.
Indeed, the first decomposition follows from~\eqref{e:factorize} and
the second one, from~\eqref{e:l-pm-differential} and the fact that $\partial_\theta\ell^\mp(\mathbf x(\theta),\lambda)\neq 0$
at $\theta=\theta_0$. We have now (with $G_j=G_j(\theta,\theta')$)
\begin{equation}
\label{eq:ellexp}
\ell^\pm(\mathbf x(\theta) -\mathbf x( \theta' ) ,\omega)=(\theta-\theta')
\big(G_0(\gamma^\pm_\lambda(\theta)-\theta')+i\varepsilon G_1+\varepsilon^2 G_2\big).
\end{equation}

\noindent 2.
The argument in the proof of Lemma~\ref{l:diag} (see~\eqref{e:kopper}) shows that
for any fixed small $\varepsilon>0$
\begin{equation}
\label{eq:Kpmla}
K^\pm_\omega (\theta,\theta')={c_{\omega}
\partial_\theta \ell^\pm ( \mathbf x ( \theta ) , \omega )
\over G_0 (\gamma^\pm_\lambda(\theta)-\theta')+i\varepsilon G_1+\varepsilon^2 G_2}(\theta-\theta'\pm i0)^{-1}.
\end{equation}
To apply this argument we need to check the condition~\eqref{e:diagor-2}, which we rewrite as
\begin{equation}
  \label{e:diagor-char}
\pm\Im{G_0 (\gamma^\pm_\lambda(\theta)-\theta')+i\varepsilon G_1+\varepsilon^2 G_2\over \ell^\pm(\mathbf v(\theta),\omega)}<0
\end{equation}
for $\theta,\theta'$ near~$\theta_0$, $\varepsilon=\Im\omega>0$ small enough, and $\mathbf v(\theta)$ an inward pointing vector field
on $\partial\Omega$. Here the denominator is separated away from zero since
$\ell^\pm(\mathbf v(\theta_0),\lambda)\neq 0$.
   
For $\varepsilon=0$, the expression~\eqref{e:diagor-char} is equal to~0.
Thus is suffices to check the sign of its derivative in $\varepsilon$ at $\varepsilon=0$
and $\theta=\theta'=\theta_0$, that is,  show that (where we use~\eqref{e:l-pm-differential})
\begin{equation}
  \label{e:signs-again}
\pm \ell^\pm(\mathbf v(\theta_0),\lambda)\ell^\mp(\partial_\theta \mathbf x(\theta_0),\lambda)<0.
\end{equation}
The latter follows from the fact that $\ell^\pm(\partial_\theta\mathbf x(\theta_0),\lambda)=0$,
$x\mapsto (\ell^+(x,\lambda),\ell^-(x,\lambda))$ is an orientation
preserving linear map on $\mathbb R^2$, and $\partial_\theta\mathbf x(\theta_0),\mathbf v(\theta_0)$
form a positively oriented basis of~$\mathbb R^2$.

\noindent 3. Differentiating~\eqref{eq:ellexp} in $\theta$ to get a formula for $\partial_\theta\ell^\pm(\mathbf x(\theta),\omega)$
and substituting into~\eqref{eq:Kpmla} we get the following identity for $\theta,\theta'\in U$:
\begin{equation}
  \label{e:gorilla}
K_\omega^\pm(\theta,\theta')=c_\omega (\theta-\theta'\pm i0)^{-1}+{c_\omega \partial_\theta\big(G_0 (\gamma^\pm_\lambda(\theta)-\theta')+i\varepsilon G_1+\varepsilon^2 G_2\big)\over G_0 (\gamma^\pm_\lambda(\theta)-\theta')+i\varepsilon G_1+\varepsilon^2 G_2}
\end{equation}
where as before, $G_j=G_j(\theta,\theta')$. Dividing the numerator and denominator of the last term on the right-hand side
by~$G_0$, we see that the second term on the right-hand side of~\eqref{e:gorilla} is
equal to
$F^\pm_\omega(\theta,\theta')(\gamma^\pm_\lambda(\theta)-\theta'\pm i\varepsilon\psi^\pm_\omega(\theta,\theta'))^{-1}$
where the functions
$$
\begin{aligned}
\psi^\pm_\omega(\theta,\theta'):=\,&\pm{G_1(\theta,\theta')-i\varepsilon G_2(\theta,\theta',\varepsilon)\over G_0(\theta,\theta')},
\\
F^\pm_\omega(\theta,\theta'):=\,&{c_\omega\partial_\theta\big(G_0(\theta,\theta')(\gamma^\pm_\lambda(\theta)-\theta')
+i\varepsilon G_1(\theta,\theta')+\varepsilon^2G_2(\theta,\theta',\varepsilon)\big)\over G_0(\theta,\theta')}.
\end{aligned}
$$
are smooth in $\theta,\theta',\varepsilon$ up to $\varepsilon=0$
and $\psi^\pm_\omega$ is real and nonzero when $\varepsilon=0$.

To get~\eqref{eq:char} we can now use Lemma~\ref{l:prepare}
(and the remark
following Lemma~\ref{l:prepare-unique}) similarly to Step~3 in the proof of Lemma~\ref{l:sing}.
Here the sign condition $\Re\psi^\pm_\omega\geq c>0$ and~\eqref{eq:Gpm} can be verified
%by comparing~\eqref{eq:char} with~\eqref{eq:sing},
%or alternatively 
by a direct computation using~\eqref{e:l-pm-differential}, definitions \eqref{eq:defGG} and~\eqref{e:signs-again};
note that for the sign condition it suffices to check the sign of $G_1/G_0$ at $\theta=\theta'=\theta_0$.
\end{proof}
%%%%%%%%%%%%%%%%%%%%%%%%%%%%%%%%%%%%%%%%%%%%%%%%%%%%%%%%%%%%%%%%%%%%%%%%%%%%%%%%

%%%%%%%%%%%%%%%%%%%%%%%%%%%%%%%%%%%%%%%%%%%%%%%%%%%%%%%%%%%%%%%%%%%%%%%%%%%%%%%%
\subsubsection{Summary}

We summarize the findings of this section in microlocal terms. Consider
the pullback operator by $\gamma^\pm_\lambda$ on 1-forms on~$\mathbb S^1$,
$$
(\gamma^\pm_\lambda)^*:C^\infty(\mathbb S^1;T^*\mathbb S^1)\to C^\infty(\mathbb S^1;T^*\mathbb S^1).
$$
In terms of the identification of functions with 1-forms, $f\mapsto f\,d\theta$,
we have
\begin{equation}
  \label{e:1-forms-pulling}
(\gamma^\pm_\lambda)^*(f\,d\theta)=\big((f\circ\gamma^\pm_\lambda)\partial_\theta\gamma^\pm_\lambda)\,d\theta.
\end{equation}
%%%%%%%%%%%%%%%%%%%%%%%%%%%%%%%%%%%%%%%%%%%%%%%%%%%%%%%%%%%%%%%%%%%%%%%%%%%%%%%%
\begin{prop}
\label{p:summa}
Assume that $\omega=\lambda+i\varepsilon$ where $\lambda\in (0,1)$,
 $\varepsilon\geq 0$, and
$\Omega$ is $\lambda$-simple in the sense
of Definition~\ref{d:1}.
Let $\mathcal C_\omega$ be the operator defined in~\eqref{eq:defsingle},
where for $\varepsilon=0$ we understand it as the operator $\mathcal C_{\lambda+i0}$.
Using the coordinate~$\theta$, we treat $d\mathcal C_\omega$
as an operator on $C^\infty(\mathbb S^1;T^*\mathbb S^1)$.
Then for all $\varepsilon$ small enough, we can write
\begin{equation}
  \label{eq:summa}
\mathcal E_\omega d \mathcal C_\omega=I+(\gamma^+_\lambda)^*A^+_\omega+(\gamma^-_\lambda)^*A^-_\omega
\end{equation}
where $\mathcal E_\omega,A^\pm_\omega$ are pseudodifferential operators in~$\Psi^0(\mathbb S^1;T^*\mathbb S^1)$ bounded uniformly in~$\varepsilon$ and such that,
uniformly in~$\varepsilon$ (see~\eqref{eq:Opasu})
$$
\sigma(\mathcal E_\omega)(\theta,\xi)={i\sgn\xi\over 2\pi c_\omega},\quad
\WF(A^\pm_\omega)\subset \{\pm\xi >0\},\quad
\sigma(A^\pm_\omega)(\theta,\xi)=a^\pm_\omega(\theta)H(\pm\xi)e^{-\varepsilon z^\pm_\omega(\theta)|\xi|}
$$
where $H(\xi)$ denotes the Heaviside function,
$a^\pm_\omega$ and $z^\pm_\omega$ are smooth in $\theta,\varepsilon$ up to $\varepsilon=0$,
$\Re z^\pm_\omega(\theta)\geq c>0$, and $a^\pm_\omega(\theta)=-1+\mathcal O(\varepsilon)$.
\end{prop}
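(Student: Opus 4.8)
The plan is to assemble Proposition~\ref{p:summa} directly from the four local normal forms established in Lemmas~\ref{l:smooth}, \ref{l:diag}, \ref{l:sing}, and~\ref{l:char}, using a partition of unity on $\mathbb S^1\times\mathbb S^1$ subordinate to the covering by: the complement of $\Diag\cup\Refl^\pm_\lambda$; neighbourhoods of diagonal points $(\theta_0,\theta_0)$ with $\gamma^\pm_\lambda(\theta_0)\neq\theta_0$; neighbourhoods of reflection points $(\gamma^\pm_\lambda(\theta_0),\theta_0)$ with $\gamma^\pm_\lambda(\theta_0)\neq\theta_0$; and neighbourhoods of the two characteristic points $(\theta^\pm_{\min},\theta^\pm_{\min})$, $(\theta^\pm_{\max},\theta^\pm_{\max})$. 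First I would record from \eqref{e:C-lambda-sum} that $d\mathcal C_\omega=T^+_\omega+T^-_\omega$ with Schwartz kernels $K^\pm_\omega$, and observe that in each chart the kernel $K^\pm_\omega$ is, up to a $C^\infty$ (uniformly in $\varepsilon$) remainder whose contribution is a uniformly smoothing operator, a finite sum of a ``diagonal'' term $c_\omega(\theta-\theta'\pm i0)^{-1}$ and a ``reflected'' term $\tilde c^\pm_\omega(\theta')(\gamma^\pm_\lambda(\theta)-\theta'\pm i\varepsilon z^\pm_\omega(\theta'))^{-1}$.

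Next I would identify the operator types. For the diagonal terms: cutting off near $\Diag$, the operator with kernel $\chi(\theta,\theta')c_\omega(\theta-\theta'\pm i0)^{-1}$ is, by Lemma~\ref{l:inversor-psi0} (with $c^\pm_\varepsilon\equiv c_\omega$, $z^\pm_\varepsilon\equiv 0$), a pseudodifferential operator in $\Psi^0(\mathbb S^1)$, uniformly in $\varepsilon$, with $\WF\subset\{\pm\xi>0\}$ and symbol $\mp 2\pi i\,c_\omega H(\pm\xi)$. Summing over $\pm$ gives an operator $D_\omega$ whose symbol is $-2\pi i c_\omega\sgn\xi$ for $|\xi|>1$, hence elliptic; its microlocal inverse (modulo $\Psi^{-\infty}$, uniformly in $\varepsilon$) has symbol $\dfrac{i\sgn\xi}{2\pi c_\omega}$. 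Call this parametrix $\mathcal E_\omega\in\Psi^0(\mathbb S^1;T^*\mathbb S^1)$. For the reflected terms: the operator with kernel $\chi(\theta,\theta')\tilde c^\pm_\omega(\theta')(\gamma^\pm_\lambda(\theta)-\theta'\pm i\varepsilon z^\pm_\omega(\theta'))^{-1}$ factors as $(\gamma^\pm_\lambda)^*\circ B^\pm_\omega$, where $B^\pm_\omega$ has kernel $\chi(\gamma^\pm_\lambda{}^{-1}(\theta),\theta')\tilde c^\pm_\omega(\theta')(\theta-\theta'\pm i\varepsilon z^\pm_\omega(\theta'))^{-1}\cdot(\text{Jacobian factor})$ — here I am using that $\gamma^\pm_\lambda$ is an orientation-reversing diffeomorphism near the relevant points and that $(\gamma^\pm_\lambda)^*$ acting on $1$-forms is as in \eqref{e:1-forms-pulling}. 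Thus by Lemma~\ref{l:inversor-psi0} again, $B^\pm_\omega\in\Psi^0(\mathbb S^1)$ uniformly in $\varepsilon$, with $\WF(B^\pm_\omega)\subset\{\pm\xi>0\}$ and principal symbol $\mp 2\pi i\,\tilde c^\pm_\omega(\theta) e^{-\varepsilon z^\pm_\omega(\theta)|\xi|}H(\pm\xi)$ (the orientation-reversing pullback flips the fibre sign, so after composing with $(\gamma^\pm_\lambda)^*$ the net wavefront direction is accounted for consistently with the final statement).

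Then I would combine: $d\mathcal C_\omega=D_\omega+(\gamma^+_\lambda)^*B^+_\omega+(\gamma^-_\lambda)^*B^-_\omega+R_\omega$ with $R_\omega$ uniformly smoothing, where I must use that pseudodifferential operators compose well with the pullback $(\gamma^\pm_\lambda)^*$ (via the change-of-variables formula \eqref{eq:chava} and the invariance of $\sigma$). Applying $\mathcal E_\omega$ on the left gives $\mathcal E_\omega d\mathcal C_\omega=I+\mathcal E_\omega(\gamma^+_\lambda)^*B^+_\omega+\mathcal E_\omega(\gamma^-_\lambda)^*B^-_\omega+\mathcal E_\omega R_\omega$. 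Since $\mathcal E_\omega\in\Psi^0$ and $(\gamma^\pm_\lambda)^*$ conjugates $\Psi^0$ to $\Psi^0$, I can write $\mathcal E_\omega(\gamma^\pm_\lambda)^*=(\gamma^\pm_\lambda)^*\widetilde{\mathcal E}^\pm_\omega$ for some $\widetilde{\mathcal E}^\pm_\omega\in\Psi^0(\mathbb S^1;T^*\mathbb S^1)$ with symbol computed from $\sigma(\mathcal E_\omega)$ via \eqref{eq:chava}; then I set $A^\pm_\omega:=\widetilde{\mathcal E}^\pm_\omega B^\pm_\omega+(\text{absorb }(\gamma^\pm_\lambda)^*{}^{-1}\mathcal E_\omega R_\omega\text{ into the }\pm\text{ term})$, using that $(\gamma^\pm_\lambda)^*$ is invertible. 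The symbol of $A^\pm_\omega$ is then $\sigma(\widetilde{\mathcal E}^\pm_\omega)\sigma(B^\pm_\omega)$; since $\sgn\xi$ is constant on $\{\pm\xi>0\}$, the factor $\sigma(\mathcal E_\omega)$ contributes the scalar $\pm\dfrac{i}{2\pi c_\omega}$ there, and multiplying by $\mp 2\pi i\,\tilde c^\pm_\omega(\theta)$ and using $\tilde c^\pm_\omega(\theta)=\dfrac{c_\omega}{\partial_{\theta}\gamma^\pm_\lambda(\theta)}+\mathcal O(\varepsilon)$ from \eqref{eq:Gpm} (together with the Jacobian factor from the pullback, which is exactly $\partial_\theta\gamma^\pm_\lambda$) yields $\sigma(A^\pm_\omega)(\theta,\xi)=a^\pm_\omega(\theta)H(\pm\xi)e^{-\varepsilon z^\pm_\omega(\theta)|\xi|}$ with $a^\pm_\omega(\theta)=-1+\mathcal O(\varepsilon)$; the bound $\Re z^\pm_\omega\geq c>0$ is inherited from Lemmas~\ref{l:sing} and~\ref{l:char}. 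The $\WF(A^\pm_\omega)\subset\{\pm\xi>0\}$ claim follows since $B^\pm_\omega$ has this property and $\widetilde{\mathcal E}^\pm_\omega$ is a $\Psi^0$ whose precomposition preserves it. Finally I would note uniformity in $\varepsilon$ (and local uniformity in $\lambda$) propagates through each step because every local model and every symbol estimate was stated uniformly up to $\varepsilon=0$.

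\textbf{Main obstacle.} The bookkeeping of signs and orientation: specifically, verifying that after conjugating by the orientation-reversing pullback $(\gamma^\pm_\lambda)^*$, the wavefront set of $A^\pm_\omega$ lands in $\{\pm\xi>0\}$ (rather than $\{\mp\xi>0\}$) and that the symbol normalization $a^\pm_\omega=-1+\mathcal O(\varepsilon)$ comes out with the correct sign. This requires carefully tracking how $(\gamma^\pm_\lambda)^*$ transforms the cotangent fibre, combining the kernel rewriting $(\theta-\theta'\pm i\varepsilon z)^{-1}$ vs. $(\gamma^\pm_\lambda(\theta)-\theta'\pm i\varepsilon z)^{-1}$ with the Fourier transform formulas \eqref{e:x-i0-FT}, \eqref{e:x-ieps-FT}, the pullback Jacobian $\partial_\theta\gamma^\pm_\lambda<0$, and the constant $c_\omega$. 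The model computation in Remark~2 after Lemma~\ref{l:char} is the template for getting these signs right, and I would use it as a consistency check at the end.
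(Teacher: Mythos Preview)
Your proposal is correct and follows essentially the same route as the paper: split $d\mathcal C_\omega=T^+_\omega+T^-_\omega$, isolate the diagonal part $T_{\Diag}$ (elliptic in $\Psi^0$ with symbol $-2\pi i c_\omega\sgn\xi$) and the reflected part $(\gamma^\pm_\lambda)^*\widehat T^\pm_{\Refl}$ via the four local Lemmas~\ref{l:smooth}--\ref{l:char} and Lemma~\ref{l:inversor-psi0}, then left-multiply by the parametrix $\mathcal E_\omega$ of $T_{\Diag}$ and set $A^\pm_\omega=(\gamma^\pm_\lambda)^*\mathcal E_\omega(\gamma^\pm_\lambda)^*\,\widehat T^\pm_{\Refl}$ (your $\widetilde{\mathcal E}^\pm_\omega$ is exactly this conjugate, since $\gamma^\pm_\lambda$ is an involution). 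The one step you gloss over is that the locally defined $\tilde c^\pm_\omega,z^\pm_\omega$ from Lemmas~\ref{l:sing} and~\ref{l:char} need to be patched into \emph{globally} smooth functions on $\mathbb S^1$, for which the paper invokes the uniqueness statement Lemma~\ref{l:prepare-unique}.
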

%%%%%%%%%%%%%%%%%%%%%%%%%%%%%%%%%%%%%%%%%%%%%%%%%%%%%%%%%%%%%%%%%%%%%%%%%%%%%%%%
\Remarks 1. Proposition~\ref{p:summa} is stated for a fixed value of~$\lambda=\Re\omega$. However, its proof
still works when $\lambda$ varies in some open interval $\mathcal J\subset (0,1)$ such that
$\Omega$ is $\lambda$-simple for all $\lambda\in \mathcal J$. The conclusions of Proposition~\ref{p:summa}
hold locally uniformly in~$\lambda\in \mathcal J$ and the functions $a^\pm_\omega(\theta)$,
$z^\pm_\omega(\theta)$ can be chosen depending smoothly on $\theta\in \mathbb S^1$, $\lambda\in \mathcal J$, and $\varepsilon=\Im\omega\geq 0$. Moreover, the operators $A^\pm_\omega$ and $\mathcal E_\omega$ depend smoothly on~$\lambda$
and all their $\lambda$-derivatives are in $\Psi^0$ uniformly in~$\varepsilon$;
same is true for the pseudodifferential operators featured in the decomposition~\eqref{e:C-lambda-supersplit} below.

\noindent 2. One can formulate a version of~\eqref{eq:summa} directly
on $\partial\Omega$ which does not depend on the choice of the (positively oriented) coordinate~$\theta$, using the fact that the principal symbol~\eqref{eq:Psim}
is invariantly defined.
%%%%%%%%%%%%%%%%%%%%%%%%%%%%%%%%%%%%%%%%%%%%%%%%%%%%%%%%%%%%%%%%%%%%%%%%%%%%%%%%
\begin{proof}
1. Recall from~\eqref{e:C-lambda-sum} that $d\mathcal C_\omega=T^+_\omega+T^-_\omega$,
where $T^\pm_\omega$ are defined in~\eqref{e:T-pm-def}.
As with $d\mathcal C_\omega$, we use the coordinate~$\theta$ to think
of $T^\pm_\omega$ as operators on $C^\infty(\mathbb S^1;T^*\mathbb S^1)$.
We will write $T^\pm_\omega$ as a sum of a pseudodifferential
operator and a composition of a pseudodifferential operator with $(\gamma^\pm_\lambda)^*$,
see~\eqref{e:C-lambda-supersplit} below.
The singular supports
of the Schwartz kernels of these two operators will lie in the sets
$\Diag$ and $\Refl^\pm_\lambda$ defined in~\eqref{e:sing-sets-def}.

Fix a cutoff $\chi_{\Diag}\in C^\infty(\mathbb S^1\times\mathbb S^1)$
supported in a small neighborhood of the diagonal $\Diag$ and equal to~1
on a smaller neighborhood of $\Diag$. Define the ($\omega$-dependent) operator
$$
T^\pm_{\Diag}:C^\infty(\mathbb S^1;T^*\mathbb S^1)
\to C^\infty(\mathbb S^1;T^*\mathbb S^1)
$$
with the Schwartz kernel $c_\omega\chi_{\Diag}(\theta,\theta')(\theta-\theta'\pm i0)^{-1}$.
Here Schwartz kernels are defined in~\eqref{e:schwartzer}.
By Lemma~\ref{l:inversor-psi0} we have
\begin{equation}
  \label{e:T-diag-props}
T^\pm_{\Diag}\in \Psi^0(\mathbb S^1;T^*\mathbb S^1),\quad
\sigma(T^\pm_{\Diag})(\theta,\xi)=\mp 2\pi i c_\omega H(\pm\xi).
\end{equation}

\noindent 2. Next, define the reflected operators
$$
T^\pm_{\Refl}:=T^\pm_{\omega}-T^\pm_{\Diag},\quad
\widehat T^\pm_{\Refl}:=(\gamma^\pm_\lambda)^*T^\pm_{\Refl}.
$$
Denote by $K^\pm_{\Refl},\widehat K^\pm_{\Refl}$ the corresponding Schwartz kernels.
Combining Lemmas~\ref{l:smooth}, \ref{l:diag}, \ref{l:sing}, and~\ref{l:char}
we see that, putting $\chi_{\Refl}^\pm(\theta,\theta'):=\chi_{\Diag}(\gamma^\pm_\lambda(\theta),\theta')$,
$$
K^\pm_{\Refl}(\theta,\theta')=\chi_{\Refl}^\pm(\theta,\theta')\tilde c_\omega^\pm(\theta')\big(\gamma^\pm_\lambda(\theta)-\theta'\pm i\varepsilon z^\pm_\omega(\theta')\big)^{-1}+\mathscr K_\omega^\pm(\theta,\theta'),\quad
0<\varepsilon<\varepsilon_0
$$
where $\mathscr K^\pm_\omega$ is smooth in $\theta,\theta',\varepsilon$ up to~$\varepsilon=0$,
$\tilde c^\pm_\omega(\theta')$ and $z^\pm_\omega(\theta')$ are smooth in $\theta',\varepsilon$ up to $\varepsilon=0$, $\Re z^\pm_\omega(\theta')\geq c>0$
for some constant~$c$, and $\tilde c^\pm_\omega(\theta')=c_\omega/\partial_{\theta'}\gamma^\pm_\lambda(\theta')+\mathcal O(\varepsilon)$. Here we use a partition of unity and Lemma~\ref{l:prepare-unique} to patch together different local representations from Lemmas~\ref{l:sing} and~\ref{l:char} and get globally defined $\tilde c_\omega^\pm,z^\pm_\omega$.
Recalling~\eqref{e:1-forms-pulling}, we have
$$
\widehat K^\pm_{\Refl}(\theta,\theta')=(\partial_\theta\gamma^\pm_\lambda(\theta))K^\pm_{\Refl}(\gamma^\pm_\lambda(\theta),\theta').
$$
Thus by Lemma~\ref{l:inversor-psi0}
the operator $\widehat T^\pm_{\Refl}$ is pseudodifferential:
we have uniformly in $\varepsilon>0$
\begin{equation}
  \label{e:T-pm-prop}
\begin{gathered}
\widehat T^\pm_{\Refl}\in \Psi^0(\mathbb S^1;T^*\mathbb S^1),\quad
\WF(\widehat T^\pm_{\Refl})\subset \{\pm \xi>0\},\\
\sigma(\widehat T^\pm_{\Refl})(\theta,\xi)=\mp 2\pi i\tilde c^\pm_\omega(\theta) (\partial_\theta\gamma^\pm_\lambda(\theta))e^{-\varepsilon z^\pm_\omega(\theta) |\xi|}H(\pm \xi).
\end{gathered}
\end{equation}

\noindent 3. We now have the decomposition for $\varepsilon>0$
\begin{equation}
  \label{e:C-lambda-supersplit}
d\mathcal C_\omega=T^+_{\Diag}+T^-_{\Diag}+(\gamma_\lambda^+)^* \widehat T^+_{\Refl}
+(\gamma_\lambda^-)^* \widehat T^-_{\Refl}.
\end{equation}
Taking the limit as $\varepsilon\to 0+$ and using Lemma~\ref{l:inverse-converger}
(see also~\eqref{e:prepare-remark})
and Lemma~\ref{l:C-converges} we see that the same decomposition holds for $\varepsilon=0$,
where we have
$$
K^\pm_{\Refl}(\theta,\theta')=\chi_{\Refl}^\pm(\theta,\theta')\tilde c_\omega^\pm(\theta')(\gamma^\pm_\lambda(\theta)-\theta'\pm i0)^{-1}+\mathscr K_\omega^\pm(\theta,\theta')\quad\text{when}\quad \varepsilon=0
$$ 
and by Lemma~\ref{l:inversor-psi0} the properties~\eqref{e:T-pm-prop} hold for $\varepsilon=0$.

The operator $T_{\Diag}:=T^+_{\Diag}+T^-_{\Diag}$ lies in $\Psi^0(\mathbb S^1;T^*\mathbb S^1)$
and has principal symbol $-2\pi i c_\omega \sgn \xi$ (away from $\xi=0$), which is elliptic.
Let $\mathcal E_\omega$ be the elliptic parametrix of~$T_{\Diag}$,
so that $\mathcal E_\omega T_{\Diag}=I+\Psi^{-\infty}$
(see~\cite[Theorem~18.1.9]{Hormander3}).
We have $\sigma(\mathcal E_\omega)=1/\sigma(T_{\Diag})=i\sgn\xi/(2\pi c_\omega)$.
Multiplying~\eqref{e:C-lambda-supersplit} on the left by $\mathcal E_\omega$ we get~\eqref{eq:summa}
where the operators $A^\pm_\omega$ have the following form:
$$
A^\pm_\omega=(\gamma^\pm_\lambda)^*\mathcal E_\omega(\gamma^\pm_\lambda)^* \widehat T^\pm_{\Refl}.
$$
By~\cite[Theorem~18.1.17]{Hormander3}, $(\gamma^\pm_\lambda)^*\mathcal E_\omega(\gamma^\pm_\lambda)^*\in\Psi^0(\mathbb S^1;T^*\mathbb S^1)$
has the principal symbol $-i\sgn\xi/(2\pi c_\omega)$ (as $\gamma^\pm_\lambda$ is orientation reversing), so
from~\eqref{e:T-pm-prop}
we get the needed properties of~$A^\pm_\omega$, with
$$
a^\pm_\omega(\theta)=-{\tilde c^\pm_\omega(\theta)\over c_\omega}\partial_\theta \gamma^\pm_\lambda(\theta)=-1+\mathcal O(\varepsilon).
$$
\end{proof}
%%%%%%%%%%%%%%%%%%%%%%%%%%%%%%%%%%%%%%%%%%%%%%%%%%%%%%%%%%%%%%%%%%%%%%%%%%%%%%%%

%%%%%%%%%%%%%%%%%%%%%%%%%%%%%%%%%%%%%%%%%%%%%%%%%%%%%%%%%%%%%%%%%%%%%%%%%%%%%%%%
\subsubsection{A strong convergence statement}

A corollary of Lemma~\ref{l:C-converges} and Proposition~\ref{p:summa} is the following limiting statement:
%%%%%%%%%%%%%%%%%%%%%%%%%%%%%%%%%%%%%%%%%%%%%%%%%%%%%%%%%%%%%%%%%%%%%%%%%%%%%%%%
\begin{lemm}
  \label{l:C-lambda-strong-limit}
Assume that $\lambda\in (0,1)$, $\Omega$ is $\lambda$-simple, $k\in\mathbb N_0$, and $s+1>t$. Then
\begin{equation}
  \label{e:C-lambda-strong-limit}
\|\partial_\omega^k\mathcal C_{\omega_j}-\partial_\lambda^k\mathcal C_{\lambda+i0}\|_{H^{s+k}(\partial\Omega;T^*\partial\Omega)\to H^t(\partial\Omega)}\to 0\quad\text{for all}\quad \omega_j\to\lambda,\quad
\Im\omega_j>0.
\end{equation}
\end{lemm}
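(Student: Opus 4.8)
The plan is to upgrade the weak convergence of Lemma~\ref{l:C-converges} to operator-norm convergence between Sobolev spaces by exploiting the microlocal decomposition of Proposition~\ref{p:summa}. First I would observe that it suffices to prove the statement for $k=0$; the case of general $k$ follows by differentiating the decomposition~\eqref{e:C-lambda-supersplit} (and the parametrix construction) in $\omega$, noting that Proposition~\ref{p:summa} and its Remark~1 guarantee that all $\omega$-derivatives of the pseudodifferential factors remain in $\Psi^0$ uniformly in $\varepsilon$, and that the symbols $a^\pm_\omega, z^\pm_\omega$ depend smoothly on $\omega$ up to $\varepsilon=0$. So the core task is: $\|\mathcal C_{\omega_j}-\mathcal C_{\lambda+i0}\|_{H^{s}\to H^t}\to 0$ whenever $\omega_j\to\lambda$ with $\Im\omega_j>0$, under $s+1>t$.

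Next I would reduce from $\mathcal C_\omega$ to $d\mathcal C_\omega$. Since $d:H^s\to H^{s-1}(\partial\Omega;T^*\partial\Omega)$ and, conversely, one recovers $\mathcal C_\omega v$ from $d\mathcal C_\omega v$ up to the constant $\int_{\partial\Omega}\mathcal C_\omega v$, which is controlled by Lemma~\ref{l:C-converges} (the $0$-frequency part converges), it is enough to show $\|d\mathcal C_{\omega_j}-d\mathcal C_{\lambda+i0}\|_{H^{s}\to H^{t-1}}\to 0$ when $s+1>t$, i.e. with a gain of one derivative relative to a $\Psi^0$ bound. Now apply~\eqref{e:C-lambda-supersplit}: $d\mathcal C_\omega=T^+_{\Diag}+T^-_{\Diag}+(\gamma^+_\lambda)^*\widehat T^+_{\Refl}+(\gamma^-_\lambda)^*\widehat T^-_{\Refl}$. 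The pullback operators $(\gamma^\pm_\lambda)^*$ are $\omega$-independent (they depend only on $\lambda$, which is fixed), so it suffices to prove the norm convergence for each of $T^\pm_{\Diag,\omega}$ and $\widehat T^\pm_{\Refl,\omega}$ separately.

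For each such family of operators I would use its explicit Schwartz kernel. For $T^\pm_{\Diag,\omega}$ the kernel is $c_\omega\chi_{\Diag}(\theta,\theta')(\theta-\theta'\pm i0)^{-1}$ at $\varepsilon=0$ and $c_\omega\chi_{\Diag}(\theta,\theta')(\theta-\theta')^{-1}$ regularized via $+i\varepsilon z$ for $\varepsilon>0$; by Lemma~\ref{l:inversor-psi0} these lie in $\Psi^0$ uniformly, with principal symbols $\mp 2\pi i c_\omega e^{-\varepsilon z^\pm_\omega(\theta)|\xi|}H(\pm\xi)$. The difference $T^\pm_{\Diag,\omega_j}-T^\pm_{\Diag,\lambda+i0}$ has a symbol which, on $|\xi|\le R$, is $\mathcal O(|\omega_j-\lambda|)+\mathcal O(\varepsilon_j|\xi|)=o(1)$ uniformly, and on $|\xi|\ge R$ is bounded by $C|\omega_j-\lambda|+C(1-e^{-\varepsilon_j \Re z|\xi|})$, which is bounded by $C|\omega_j-\lambda|+C\varepsilon_j^{\beta}|\xi|^\beta$ for any $\beta\in(0,1)$ by the elementary bound $1-e^{-t}\le t^\beta$. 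Thus the difference of symbols is bounded by $o(1)\langle\xi\rangle^\beta$ uniformly, so by the $\Psi^\beta$ boundedness $H^s\to H^{s-\beta}$ (combined with the $\Psi^0$ uniform bound to handle the fixed part), choosing $\beta<1$ small enough that $s-\beta>t-1$, we get $\|T^\pm_{\Diag,\omega_j}-T^\pm_{\Diag,\lambda+i0}\|_{H^s\to H^{t-1}}\to 0$. The same argument, verbatim, applies to $\widehat T^\pm_{\Refl,\omega}$ using~\eqref{e:T-pm-prop} and the smoothness in $\omega$ of $\tilde c^\pm_\omega,z^\pm_\omega$; for the remainder term $\mathscr K^\pm_\omega$, which is smooth in $\theta,\theta',\varepsilon$ up to $\varepsilon=0$, convergence in every $C^N$ norm (hence in every operator norm) is immediate.

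The main obstacle is the sharp bookkeeping of the endpoint: the operators are genuinely order $0$, so norm convergence can only hold with a strictly positive loss of derivatives, which is exactly why the hypothesis $s+1>t$ (equivalently $t-1<s$) appears — it leaves room to insert the factor $\langle\xi\rangle^\beta$ with $\beta=s-(t-1)-\delta>0$. The technical heart is therefore establishing, uniformly in $\varepsilon\ge 0$ and in a neighborhood of $\lambda$, the symbol-difference estimate $|\partial_\theta^\alpha\partial_\xi^\gamma(a_{\omega_j}(\theta)e^{-\varepsilon_j z_{\omega_j}(\theta)|\xi|}-a_{\lambda}(\theta))|\le \eta(j)\langle\xi\rangle^{\beta-|\gamma|}$ with $\eta(j)\to 0$; once this is in hand the $L^2$-boundedness theorem for $\Psi^\beta$ (Calderón--Vaillancourt, or the Schur-type argument already used in~\S\ref{s:micro}) finishes the proof. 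One must be slightly careful that $e^{-\varepsilon z|\xi|}$ differentiated in $\xi$ produces factors of $\varepsilon\langle\xi\rangle^{-1}\cdot\varepsilon|\xi|\lesssim 1$ type, so the decaying exponential only helps and never hurts; this is the point already emphasized in the final Remark of~\S\ref{s:micro-inv}.
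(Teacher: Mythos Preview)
Your approach is valid and takes a genuinely different route from the paper's.  The paper argues softly: it first proves the uniform bound $\|\partial_\omega^k\mathcal C_{\omega_j}\|_{H^{s+k}\to H^{s+1}}\le C_s$ by combining the $\Psi^0$ structure of~\eqref{e:C-lambda-supersplit} with a Banach--Steinhaus argument on the weak convergence of Lemma~\ref{l:C-converges} (plus a duality trick, since $\mathcal C_\omega$ is its own transpose, to cover all~$s$); from this and weak convergence it extracts strong-operator convergence on each fixed $v$ via precompactness; finally it upgrades strong to norm convergence by a contradiction argument exploiting the compact embedding $H^{s+k}\hookrightarrow H^{t-1+k}$.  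Your route instead controls the full symbol of the difference directly and shows it is small in $S^\beta$ for $0<\beta<s-(t-1)$.  This does work: writing $p_\varepsilon(\theta,\xi)=a(\theta)(e^{-\varepsilon z(\theta)|\xi|}-1)$, one has $|\partial_\theta^\alpha\partial_\xi^\gamma p_\varepsilon|\le C_{\alpha\gamma}\varepsilon^\beta\langle\xi\rangle^{\beta-|\gamma|}$ because each differentiation in $\xi$ or $\theta$ produces a factor of $\varepsilon|\xi|$ (at worst) times the surviving exponential, and $\sup_{t\ge0}t^{m}e^{-ct}<\infty$ absorbs it.  The paper's method is cleaner and reusable (no symbol bookkeeping); yours yields an explicit rate $O(\varepsilon_j^\beta+|\omega_j-\lambda|)$.

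Two points to tighten.  First, in~\eqref{e:C-lambda-supersplit} the pullbacks are by $\gamma^\pm_{\Re\omega}$, not by $\gamma^\pm_\lambda$ with $\lambda$ fixed, so for sequences $\omega_j$ with varying real part they do move; but since $\mu\mapsto\gamma^\pm_\mu$ is smooth, $(\gamma^\pm_{\Re\omega_j})^*-(\gamma^\pm_\lambda)^*$ is $O(|\Re\omega_j-\lambda|)$ as a map $H^s\to H^{s-1}$, and interpolating with the uniform $H^s\to H^s$ bound on pullbacks handles the range $s-1<t-1<s$.  Second, the same $\mu$-dependence of $(\gamma^\pm_\mu)^*$ is exactly what produces the loss of $k$ derivatives when you differentiate $k$ times in $\omega$ (each $\partial_\mu(\gamma^\pm_\mu)^*$ is a first-order operator times a pullback), which is why the domain in the statement is $H^{s+k}$; your reduction to $k=0$ should say this explicitly rather than asserting that all $\omega$-derivatives of the pseudodifferential factors stay in $\Psi^0$.
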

%%%%%%%%%%%%%%%%%%%%%%%%%%%%%%%%%%%%%%%%%%%%%%%%%%%%%%%%%%%%%%%%%%%%%%%%%%%%%%%%
\begin{proof}
1. Fix~$k$. We first show the following uniform bound: for each $s$ there exists $C_s$ such that for all
large $j$,
\begin{equation}
  \label{e:summa-bounder}
\|\partial_\omega^k\mathcal C_{\omega_j}\|_{H^{s+k}(\partial\Omega;T^*\partial\Omega)\to H^{s+1}(\partial\Omega)}\leq C_s.
\end{equation}
Indeed, Proposition~\ref{p:summa} (more precisely, \eqref{e:C-lambda-supersplit}) and Remark 1 after it
imply that 
\begin{equation}
  \label{e:summa-bounder-1}
\|d\partial_\omega^k \mathcal C_{\omega_j}\|_{H^{s+k}(\partial\Omega;T^*\partial\Omega)\to H^{s}(\partial\Omega;T^*\partial\Omega)}\leq C_s\quad\text{for all }s,
\end{equation}
where the loss of $k$ derivatives comes from differentiating the pullback operators $\gamma^\pm_\lambda$
in $\lambda=\Re\omega$.
On the other hand Lemma~\ref{l:C-converges} shows that
for each $\varphi\in C^\infty(\partial\Omega\times\partial\Omega)$, and denoting
by $\partial_\omega^k\mathcal C_\omega(x,y)$ the Schwartz kernel
of the operator $\partial^k_\omega\mathcal C_\omega$, the sequence
$$
\int_{\partial\Omega\times\partial\Omega} \partial_\omega^k \mathcal C_{\omega_j}(x,y) \varphi(x,y)\,d\theta(x)d\theta(y)
$$
converges (to the same integral
for $\partial^k_\lambda\mathcal C_{\lambda+i0}$) and thus in particular is bounded. By the Banach--Steinhaus Theorem
in the Fr\'echet space $C^\infty(\partial\Omega\times\partial\Omega)$, we see that
there exists $N_k$ such that
\begin{equation}
  \label{e:summa-bounder-2}
\|\partial^k_\omega\mathcal C_{\omega_j}\|_{H^s(\partial\Omega;T^*\partial\Omega)\to H^{t}(\partial\Omega)}\leq C_{s,t}\quad\text{for all }s\geq N_k,\
t\leq -N_k.
\end{equation}
(Another way to show~\eqref{e:summa-bounder-2}, avoiding Banach--Steinhaus, would be to carefully
examine the proof of Lemma~\ref{l:C-converges}.)

Together~\eqref{e:summa-bounder-1}, \eqref{e:summa-bounder-2}, and the elliptic estimate for $\partial_\theta$
imply that~\eqref{e:summa-bounder} holds for all~$s\geq N_k-k$. The operator
$\partial_\omega^k\mathcal C_{\omega_j}$ is its own transpose under the natural bilinear pairing
on $C^\infty(\partial\Omega;T^*\partial\Omega)\times C^\infty(\partial\Omega)$. 
Since $H^{-s}$ is dual to~$H^s$ under this pairing, \eqref{e:summa-bounder} holds for all $s\leq -N_k-1$. Then~\eqref{e:summa-bounder-2} holds for all $s,t$ such that $t\leq\min(s+1-k,-N_k)$.
Together with~\eqref{e:summa-bounder-1} and the elliptic estimate for~$\partial_\theta$ this implies that~\eqref{e:summa-bounder} holds in general. Same bound holds for
the operator $\partial_\lambda^k\mathcal C_{\lambda+i0}$.

\noindent 2. We now show that
\begin{equation}
  \label{e:summa-bounder-3}
\partial_\omega^k\mathcal C_{\omega_j} v\to\partial_\lambda^k\mathcal C_{\lambda+i0}v\quad\text{in }C^\infty(\partial\Omega)\quad\text{for all }
v\in C^\infty(\partial\Omega;T^*\partial\Omega).
\end{equation}
Indeed, by~\eqref{e:summa-bounder} the sequence $\partial_\omega^k\mathcal C_{\omega_j}v$ is precompact
in $H^s$ for every~$s$, and any convergent subsequence has to converge
to $\partial_\lambda^k\mathcal C_{\lambda+i0}v$ since $\partial_\omega^k\mathcal C_{\omega_j} v\to\partial_\lambda^k\mathcal C_{\lambda+i0}v$
in~$\mathcal D'$ by Lemma~\ref{l:C-converges}.

Since $C^\infty$ is dense in $H^{s+k}$, we get from~\eqref{e:summa-bounder}, \eqref{e:summa-bounder-3},
and a standard argument in functional analysis %~\cite[\S1.73, formula~(11)]{Shilov-Functional}
the strong-operator convergence
\begin{equation}
  \label{e:summa-bounder-4}
\partial_\omega^k\mathcal C_{\omega_j} v\to\partial_\lambda^k\mathcal C_{\lambda+i0}v\quad\text{in }H^{s+1}(\partial\Omega)\quad\text{for all }
v\in H^{s+k}(\partial\Omega;T^*\partial\Omega).  
\end{equation}
We are now ready to prove~\eqref{e:C-lambda-strong-limit}. Let $s+1> t$. Assume that~\eqref{e:C-lambda-strong-limit}
fails, then by passing to a subsequence we may assume that there exists some $c>0$ and a sequence
$$
v_j\in H^{s+k}(\partial\Omega;T^*\partial\Omega),\quad
\|v_j\|_{H^{s+k}}=1,\quad
\|(\partial_\omega^k\mathcal C_{\omega_j}-\partial_\lambda^k\mathcal C_{\lambda+i0})v_j\|_{H^t}\geq c.
$$
Since $H^{s+k}$ embeds compactly into $H^{t-1+k}$, passing to a subsequence we may assume
that $v_j\to v_0$ in $H^{t-1+k}$. But then
$$
\|(\partial_\omega^k\mathcal C_{\omega_j}-\partial_\lambda^k\mathcal C_{\lambda+i0})v_j\|_{H^t}\leq
\|(\partial_\omega^k\mathcal C_{\omega_j}-\partial_\lambda^k\mathcal C_{\lambda+i0})(v_j-v_0)\|_{H^t}+
\|(\partial_\omega^k\mathcal C_{\omega_j}-\partial_\lambda^k\mathcal C_{\lambda+i0})v_0\|_{H^t}.
$$
Now the first term on the right-hand side goes to~0 as $j\to \infty$ by~\eqref{e:summa-bounder},
and the second term goes to~0 by~\eqref{e:summa-bounder-4}, giving a contradiction.
\end{proof}
%%%%%%%%%%%%%%%%%%%%%%%%%%%%%%%%%%%%%%%%%%%%%%%%%%%%%%%%%%%%%%%%%%%%%%%%%%%%%%%%

%%%%%%%%%%%%%%%%%%%%%%%%%%%%%%%%%%%%%%%%%%%%%%%%%%%%%%%%%%%%%%%%%%%%%%%%%%%%%%%%
\subsubsection{Action on conormal distributions}
  \label{s:C-conormal}

We finish this section by showing  that $\mathcal C_\omega$
is bounded uniformly as $\Im\omega\to 0+$
on conormal spaces $I^s(\partial\Omega;N^*_+\Sigma^-_\lambda\sqcup N^*_-\Sigma^+_\lambda)$
defined in~\eqref{e:I-s-pm},
where $\Sigma^\pm_\lambda$ are the attractive/repulsive sets of the chess billiard~$b(\bullet,\lambda)$
defined in~\eqref{e:Sigma-pm-def}
and $\lambda=\Re\omega$ -- see Lemma~\ref{l:C-conormal} below. Moreover, we get similar estimates on all the derivatives $\partial^k_\omega\mathcal C_\omega$.
This is used in the proof of Proposition~\ref{p:boundvder} below.

Since the conormal spaces above depend on~$\lambda$, we introduce a $\lambda$-dependent system
of coordinates which maps $\Sigma^\pm_\lambda$ to $\lambda$-independent sets.
Assume that $\mathcal J\subset (0,1)$ is an open interval such that
the Morse--Smale conditions hold for each $\lambda\in \mathcal J$ (see Definition~\ref{d:2}).
Recall from Lemma~\ref{l:perturb-MS} that the points in the sets $\Sigma^\pm_\lambda$ depend smoothly on~$\lambda\in\mathcal J$.
Fix any finite set $\widetilde\Sigma\subset\mathbb S^1$
with the same number of points as $\Sigma_\lambda=\Sigma^+_\lambda\sqcup\Sigma^-_\lambda$ and a family of orientation preserving diffeomorphisms
depending smoothly on~$\lambda$
$$
\Theta_\lambda:\mathbb S^1\to\partial\Omega,\quad \lambda\in\mathcal J,\qquad
\Theta_\lambda(\widetilde\Sigma)=\Sigma_\lambda.
$$
We may decompose $\widetilde\Sigma=\widetilde\Sigma^+\sqcup\widetilde\Sigma^-$ where
$\widetilde\Sigma^\pm$ are $\lambda$-independent sets and
\begin{equation}
  \label{e:Theta-mapper}
\Theta_\lambda(\widetilde\Sigma^\pm)=\Sigma^\pm_\lambda\quad\text{for all}\quad\lambda\in\mathcal J.
\end{equation}
Note that for any fixed $\lambda\in\mathcal J$ the pullback $\Theta_\lambda^*$ gives an isomorphism
$$
\Theta_\lambda^*:I^s(\partial\Omega,N^*_+\Sigma^-_\lambda\sqcup N^*_-\Sigma^+_\lambda)
\to I^s(\mathbb S^1,N^*_+\widetilde\Sigma^-\sqcup N^*_-\widetilde\Sigma^+)
$$
and the space on the right-hand side is independent of~$\lambda$.

For $\omega\in \mathcal J+i(0,\infty)$ define the conjugated operator
(here $\Theta_\lambda^{-*}$ is the pullback by~$\Theta_\lambda^{-1}$)
\begin{equation}
  \label{e:tilde-C-omega}
\widetilde{\mathcal C}_\omega:=\Theta_\lambda^* \mathcal C_\omega \Theta_\lambda^{-*}:
C^\infty(\mathbb S^1;T^*\mathbb S^1)\to C^\infty(\mathbb S^1)\quad\text{where}\quad \lambda:=\Re\omega.
\end{equation}
We write $\omega=\lambda+i\varepsilon$ and define $\partial_\lambda^k\widetilde{\mathcal C}_\omega$ 
by differentiating in~$\lambda$ with $\varepsilon$ fixed.
(Note that $\mathcal C_\omega$ is holomorphic in~$\omega$ by Lemma~\ref{l:C-converges}
but $\widetilde{\mathcal C}_\omega$ is not holomorphic.)

We say that a sequence of operators
$$
T_j:I^{s+}(\mathbb S^1,N^*_+\widetilde\Sigma^-\sqcup N^*_-\widetilde\Sigma^+)\to
I^{t+}(\mathbb S^1,N^*_+\widetilde\Sigma^-\sqcup N^*_-\widetilde\Sigma^+)
$$
is bounded uniformly in~$j$
if for each sequence $\widetilde v_j\in I^{s+}(\mathbb S^1,N^*_+\widetilde\Sigma^-\sqcup N^*_-\widetilde\Sigma^+)$
with every seminorm~\eqref{e:I-s-pm-seminorms} bounded uniformly in~$j$,
the sequence $T_j\widetilde v_j$ also has all the seminorms~\eqref{e:I-s-pm-seminorms}
bounded uniformly in~$j$. 
Similarly we consider operators acting on differential forms on $\mathbb S^1$, which are identified
with functions using the canonical coordinate $\theta$.
%%%%%%%%%%%%%%%%%%%%%%%%%%%%%%%%%%%%%%%%%%%%%%%%%%%%%%%%%%%%%%%%%%%%%%%%%%%%%%%%
\begin{lemm}
  \label{l:C-conormal}
Assume that $\lambda\in\mathcal J$ and $\omega_j\to\lambda$, $\Im\omega_j>0$.
Then for each $k$ and~$s$, the sequence of operators
$$
\partial^k_\lambda \widetilde{\mathcal C}_{\omega_j}:
I^{s+}(\mathbb S^1,N^*_+\widetilde\Sigma^-\sqcup N^*_-\widetilde\Sigma^+)
\to I^{s-1+}(\mathbb S^1,N^*_+\widetilde\Sigma^-\sqcup N^*_-\widetilde\Sigma^+)
$$
is bounded uniformly in~$j$.
\end{lemm}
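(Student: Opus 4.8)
The plan is to reduce the statement to the microlocal decomposition of Proposition~\ref{p:summa} and then propagate conormal regularity through the building blocks appearing there. First I would note that the conjugation by $\Theta_\lambda$ is harmless: pulling back by a $\lambda$-dependent diffeomorphism is bounded on the fixed conormal spaces $I^{s+}(\mathbb S^1,N^*_+\widetilde\Sigma^-\sqcup N^*_-\widetilde\Sigma^+)$, with all $\lambda$-derivatives bounded uniformly as well (this follows from the coordinate-invariance of the class $I^{s+}$ and the seminorm characterization~\eqref{e:I-s-pm-seminorms}, since $\Theta_\lambda$ maps $\widetilde\Sigma$ to $\Sigma_\lambda$ and differentiation in $\lambda$ just produces vector fields tangent to these sets modulo lower order terms). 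So it suffices to prove that, with $d$ understood via the fixed coordinate $\theta$,
\[
\partial_\omega^k(d\mathcal C_{\omega_j}):I^{s+}(\partial\Omega,N^*_+\Sigma^-_\lambda\sqcup N^*_-\Sigma^+_\lambda)\to I^{s+}(\partial\Omega,N^*_+\Sigma^-_\lambda\sqcup N^*_-\Sigma^+_\lambda)
\]
is bounded uniformly in $j$ (with the extra loss of one derivative coming from the identification of $d\mathcal C_\omega$ with $\mathcal C_\omega$, exactly as in~\eqref{e:summa-bounder}); here $\lambda=\Re\omega_j$ may depend on $j$ but stays in a compact subinterval.

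The key step uses~\eqref{e:C-lambda-supersplit}: $d\mathcal C_\omega=T^+_{\Diag}+T^-_{\Diag}+(\gamma_\lambda^+)^*\widehat T^+_{\Refl}+(\gamma_\lambda^-)^*\widehat T^-_{\Refl}$, where $T^\pm_{\Diag},\widehat T^\pm_{\Refl}\in\Psi^0(\mathbb S^1;T^*\mathbb S^1)$ uniformly in $\varepsilon$. The zeroth-order pseudodifferential operators $T^\pm_{\Diag},\widehat T^\pm_{\Refl}$ act boundedly on $I^{s+}$ by the last sentence of~\S\ref{s:con} (pseudodifferential operators in $\Psi^0$ are bounded on the seminorms~\eqref{e:I-s-pm-seminorms}); here the crucial point, already recorded in the proof of Lemma~\ref{l:inversor-psi0} and in~\eqref{eq:WFAs}, is that the symbols of $\widehat T^\pm_{\Refl}$ carry the factor $e^{-\varepsilon z^\pm_\omega|\xi|}$, which is bounded by $1$ in absolute value with all $\theta,\xi$-derivatives controlled uniformly in $\varepsilon\ge 0$, so the $\Psi^0$ bounds are genuinely $\varepsilon$-uniform. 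For the pullback operators $(\gamma_\lambda^\pm)^*$ I would use that $\gamma_\lambda^\pm$ are orientation-reversing involutions with $\gamma_\lambda^\pm(\Sigma^\mp_\lambda)=\Sigma^\pm_\lambda$ (see~\eqref{e:b-inversor} and the discussion after it) and that they reverse the positive/negative splitting of the conormal bundle; hence $(\gamma_\lambda^\pm)^*$ maps $I^{s+}(\partial\Omega,N^*_+\Sigma^-_\lambda)$ and $I^{s+}(\partial\Omega,N^*_-\Sigma^+_\lambda)$ into $I^{s+}(\partial\Omega,N^*_+\Sigma^+_\lambda)$ and $I^{s+}(\partial\Omega,N^*_-\Sigma^-_\lambda)$ respectively — but one then observes, as in~\eqref{eq:con-new}, that $N^*_-\Gamma^\pm_\lambda(\Sigma^+_\lambda)=N^*_+\Gamma^\pm_\lambda(\Sigma^-_\lambda)$ at the boundary level, i.e. the relevant conormal directions over $\Sigma^+_\lambda$ and $\Sigma^-_\lambda$ are interchanged in precisely the way that makes the target land back in $I^{s+}(\partial\Omega,N^*_+\Sigma^-_\lambda\sqcup N^*_-\Sigma^+_\lambda)$. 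This is where the specific geometry of the attractive/repulsive decomposition is essential and where one has to be careful with signs.

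For the $\lambda$-derivatives $\partial_\omega^k$ I would invoke Remark~1 after Proposition~\ref{p:summa}: the operators $A^\pm_\omega,\mathcal E_\omega$, the amplitudes $a^\pm_\omega,z^\pm_\omega$, and the parametrization data all depend smoothly on $\lambda$ with all $\lambda$-derivatives lying in $\Psi^0$ uniformly in $\varepsilon$. Differentiating~\eqref{e:C-lambda-supersplit} $k$ times in $\omega$ (equivalently in $\lambda$, since we may differentiate the holomorphic object and then restrict) by the Leibniz rule produces a finite sum of terms of the same shape: a $\Psi^0$ operator, or a composition of $\Psi^0$ operators with $(\gamma_\lambda^\pm)^*$ and with finitely many $\lambda$-derivatives of $(\gamma_\lambda^\pm)^*$ distributed among them. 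Each $\lambda$-derivative of a pullback operator is again a first-order operator of the form (vector field tangent to $\Sigma_\lambda$) composed with a pullback, hence preserves the conormal class modulo the one-derivative loss already built into the statement; since we are mapping $I^{s+}$ to $I^{s-1+}$ this loss is accounted for. The main obstacle, as I see it, is not any single estimate but the bookkeeping: one must check that every term produced by the Leibniz expansion respects the $+/-$ decomposition $N^*_+\Sigma^-_\lambda\sqcup N^*_-\Sigma^+_\lambda$ — i.e. that no term accidentally sends mass into the "wrong" half $N^*_-\Sigma^-_\lambda\sqcup N^*_+\Sigma^+_\lambda$ — and this rests on the wavefront localizations $\WF(A^\pm_\omega)\subset\{\pm\xi>0\}$ together with the conormal-interchange identity above, propagated through all the $\lambda$-differentiations. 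Once that combinatorics is in place the conclusion follows from the uniform $\Psi^0$ mapping properties on the seminorms~\eqref{e:I-s-pm-seminorms}.
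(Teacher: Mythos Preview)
Your overall architecture is right, but there is a genuine gap in the treatment of the $\lambda$-derivatives of the pullback operators. You dismiss the conjugation by $\Theta_\lambda$ as ``harmless'' and then work with the unconjugated $(\gamma^\pm_\lambda)^*$, claiming that ``each $\lambda$-derivative of a pullback operator is again a first-order operator of the form (vector field tangent to $\Sigma_\lambda$) composed with a pullback.'' This is false in the unconjugated frame: one has $\partial_\lambda(\gamma^\pm_\lambda)^* = (\gamma^\pm_\lambda)^* Z_\lambda$ with $Z_\lambda(\theta')=\partial_\lambda\gamma^\pm_\lambda\big((\gamma^\pm_\lambda)^{-1}(\theta')\big)\,\partial_{\theta'}$, and at $\theta'\in\Sigma_\lambda$ the coefficient $\partial_\lambda\gamma^\pm_\lambda$ is generically nonzero because the periodic set $\Sigma_\lambda$ itself moves with~$\lambda$. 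So $Z_\lambda$ is \emph{not} tangent to~$\Sigma_\lambda$, each application costs a full derivative, and after $k$ differentiations you would land in $I^{s+k-1+}$, not $I^{s-1+}$.

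The paper's proof does not separate the conjugation from the pullback. It writes
\[
d\widetilde{\mathcal C}_\omega=\widetilde T_{\Diag,\omega}+(\widetilde\gamma^+_\lambda)^*\widetilde T^+_{\Refl,\omega}+(\widetilde\gamma^-_\lambda)^*\widetilde T^-_{\Refl,\omega},
\qquad \widetilde\gamma^\pm_\lambda:=\Theta_\lambda^{-1}\circ\gamma^\pm(\bullet,\lambda)\circ\Theta_\lambda,
\]
and differentiates \emph{this} expression in~$\lambda$. The whole point of the conjugation is that $\widetilde\gamma^\pm_\lambda(\widetilde\Sigma^\pm)=\widetilde\Sigma^\mp$ for every~$\lambda$, so $\widetilde\gamma^\pm_\lambda$ preserves the $\lambda$-\emph{independent} set $\widetilde\Sigma$. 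Then $\partial_\lambda(\widetilde\gamma^\pm_\lambda)^*=X^\pm_\lambda(\widetilde\gamma^\pm_\lambda)^*$ with a vector field $X^\pm_\lambda$ that genuinely vanishes on~$\widetilde\Sigma$, hence can be written as $a\rho\,\partial_\theta$ and is bounded on $I^{s+}(\mathbb S^1,N^*_+\widetilde\Sigma^-\sqcup N^*_-\widetilde\Sigma^+)$ with no loss. Iterating gives that $\partial_\lambda^k(\widetilde\gamma^\pm_\lambda)^*$ is bounded on the same space for every~$k$. In short, the conjugation is not harmless bookkeeping --- it is exactly what produces the tangency you need.

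A secondary point: your wavefront-set tracking for $(\gamma^\pm_\lambda)^*$ is miswired. Since $\gamma^\pm_\lambda$ is orientation reversing and swaps $\Sigma^+_\lambda$ with $\Sigma^-_\lambda$, the symplectic lift sends $N^*_+\Sigma^-_\lambda$ to $N^*_-\Sigma^+_\lambda$ (sign and set both flip), so the space $N^*_+\Sigma^-_\lambda\sqcup N^*_-\Sigma^+_\lambda$ is preserved directly; there is no need to invoke the interior identity $N^*_-\Gamma^\pm_\lambda(\Sigma^+_\lambda)=N^*_+\Gamma^\pm_\lambda(\Sigma^-_\lambda)$, which lives in $T^*\Omega$ and is not relevant here.
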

%%%%%%%%%%%%%%%%%%%%%%%%%%%%%%%%%%%%%%%%%%%%%%%%%%%%%%%%%%%%%%%%%%%%%%%%%%%%%%%%
\begin{proof}
1.
From~\eqref{e:summa-bounder} we see that for each~$r$,
$\partial^k_\lambda \widetilde{\mathcal C}_{\omega_j}$ is bounded
$H^r\to H^{r-k+1}$ uniformly in~$j$.
By elliptic regularity, it then suffices to show that the sequence of operators
$$
d\partial^k_\lambda \widetilde{\mathcal C}_{\omega_j}:
I^{s+}(\mathbb S^1,N^*_+\widetilde\Sigma^-\sqcup N^*_-\widetilde\Sigma^+)
\to I^{s+}(\mathbb S^1,N^*_+\widetilde\Sigma^-\sqcup N^*_-\widetilde\Sigma^+)
$$
is bounded uniformly in~$j$.
Using the decomposition~\eqref{e:C-lambda-supersplit}, we write
\begin{equation}
  \label{e:C-lambda-conjee}
d\widetilde{\mathcal C}_{\omega}=\widetilde T_{\Diag,\omega}
+(\widetilde\gamma^+_\lambda)^*\widetilde T_{\Refl,\omega}^+
+(\widetilde\gamma^-_\lambda)^*\widetilde T_{\Refl,\omega}^-
\end{equation}
where $\omega=\lambda+i\varepsilon$,
$$
\widetilde T_{\Diag,\omega}:=\Theta_\lambda^* (T_{\Diag}^++T_{\Diag}^-)\Theta_\lambda^{-*},\quad
\widetilde T_{\Refl,\omega}^\pm:=\Theta_\lambda^* \widehat T_{\Refl}^\pm \Theta_\lambda^{-*}
$$
are families of pseudodifferential operators in $\Psi^0(\mathbb S^1;T^*\mathbb S^1)$
smooth in $\lambda\in\mathcal J$ uniformly in~$\varepsilon$ (see Remark~1 following Proposition~\ref{p:summa}),
and
$$
\widetilde\gamma^\pm_\lambda:=\Theta_\lambda^{-1}\circ\gamma^\pm(\bullet,\lambda)\circ \Theta_\lambda
$$
is a family of orientation reversing involutive diffeomorphisms of $\mathbb S^1$ depending smoothly on~$\lambda\in\mathcal J$ and
such that by~\eqref{e:b-inversor} and~\eqref{e:Theta-mapper}
\begin{equation}
  \label{e:tilde-gamma-saved}
\widetilde\gamma^\pm_\lambda(\widetilde \Sigma^+)=\widetilde \Sigma^-,\quad
\widetilde\gamma^\pm_\lambda(\widetilde \Sigma^-)=\widetilde \Sigma^+.
\end{equation}

\noindent 2. Differentiating~\eqref{e:C-lambda-conjee} in $\lambda=\Re\omega$, we see that it suffices to show
that for each~$k$ and~$s$ the sequences of operators
$$
\partial^k_\lambda \widetilde T_{\Diag,\omega_j},\
\partial^k_\lambda \widetilde T_{\Refl,\omega_j}^\pm,\
\partial^k_\lambda (\widetilde\gamma^\pm_{\lambda_j})^*:
I^{s+}(\mathbb S^1,N^*_+\widetilde\Sigma^-\sqcup N^*_-\widetilde\Sigma^+)
\to I^{s+}(\mathbb S^1,N^*_+\widetilde\Sigma^-\sqcup N^*_-\widetilde\Sigma^+)
$$
are bounded uniformly in~$j$. The operators $\partial^k_\lambda \widetilde T_{\Diag,\omega_j}$
and $\partial^k_\lambda \widetilde T_{\Refl,\omega_j}^\pm$ are bounded in $\Psi^0$ uniformly in~$j$
and thus bounded on any space of conormal distributions~\cite[Theorem~18.2.7]{Hormander3},
so it remains to show the boundedness of $\partial^k_\lambda (\widetilde\gamma^\pm_{\lambda_j})^*$.

Instead of pullback on 1-forms we study pullback on functions, since the two differ by a multiplication
operator which can be put into $\widetilde T_{\Refl,\omega}^\pm$.
We then have for all $\lambda\in\mathcal J$
$$
\partial_\lambda(\widetilde\gamma^\pm_{\lambda})^*=X^\pm_\lambda (\widetilde\gamma^\pm_\lambda)^*
$$
where $X^\pm_\lambda$ is the vector field on $\mathbb S^1$ given by
$$
X^\pm_\lambda(\theta)={\partial_\lambda\widetilde\gamma^\pm_\lambda(\theta)\over
\partial_\theta\widetilde\gamma^\pm_\lambda(\theta)}\,\partial_\theta.
$$
We note that $X^\pm_\lambda$ vanishes on $\widetilde\Sigma$ by~\eqref{e:tilde-gamma-saved}.

It follows that $\partial_\lambda^k (\widetilde\gamma^\pm_\lambda)^*$ is a linear combination with constant coefficients
of operators of the form
$$
(\partial_\lambda^{k_1} X^\pm_\lambda) \cdots (\partial_\lambda^{k_\ell} X^\pm_\lambda) (\widetilde\gamma^\pm_\lambda)^*,\quad
k_1+\dots+k_\ell+\ell=k.
$$
Thus it remains to show that for all $k$ the operators
\begin{equation}
  \label{e:finita-la}
\partial^k_\lambda X^\pm_{\lambda_j},\ 
(\widetilde\gamma^\pm_{\lambda_j})^*:I^{s+}(\mathbb S^1,N^*_+\widetilde\Sigma^-\sqcup N^*_-\widetilde\Sigma^+)
\to I^{s+}(\mathbb S^1,N^*_+\widetilde\Sigma^-\sqcup N^*_-\widetilde\Sigma^+)
\end{equation}
are bounded uniformly in~$j$. 

Each $\partial^k_\lambda X^\pm_{\lambda}$
is a vector field which vanishes on~$\widetilde\Sigma$ and thus can be written in the form
$a\rho\partial_\theta$ for some $a\in C^\infty(\mathbb S^1)$ depending smoothly on~$\lambda$ and $\rho$ which is a defining function
of $\widetilde\Sigma$ (see the discussion preceding~\eqref{e:I-s-pm-seminorms}).
Thus $\partial^k_\lambda X^\pm_{\lambda_j}$ is bounded on the spaces~\eqref{e:finita-la} uniformly in~$j$.
Finally, $(\widetilde\gamma^\pm_{\lambda_j})^*$ is bounded on these spaces uniformly in~$j$ by the mapping property~\eqref{e:tilde-gamma-saved} and since $\widetilde\gamma^\pm_{\lambda_j}$ is orientation reversing, thus its symplectic lift
maps $N^*_+\widetilde\Sigma^-$ and $N^*_-\widetilde\Sigma^+$ to each other.
\end{proof}
%%%%%%%%%%%%%%%%%%%%%%%%%%%%%%%%%%%%%%%%%%%%%%%%%%%%%%%%%%%%%%%%%%%%%%%%%%%%%%%%

%%%%%%%%%%%%%%%%%%%%%%%%%%%%%%%%%%%%%%%%%%%%%%%%%%%%%%%%%%%%%%%%%%%%%%%%%%%%%%%%
%%%%%%%%%%%%%%%%%%%%%%%%%%%%%%%%%%%%%%%%%%%%%%%%%%%%%%%%%%%%%%%%%%%%%%%%%%%%%%%%
\section{High frequency analysis on the boundary}
\label{s:abo}

In this section, we take
$$
\omega=\lambda+i\varepsilon,\quad
0<\varepsilon\ll 1,
$$
where $\lambda\in(0,1)$ satisfies the Morse--Smale conditions on $\Omega$ (see Definition~\ref{d:2}), and consider
the elliptic boundary value problem~\eqref{eq:ellip}:
$$
P(\omega)u_\omega=f,\quad
u_\omega|_{\partial\Omega}=0.
$$
Here $f\in \CIc(\Omega)$ is fixed and the solution $u_\omega$ lies in $C^\infty(\overline\Omega)$
(see Lemma~\ref{l:elliptic-solved}). Our goal is to prove high frequency
estimates on $u_\omega$ which are uniform in the limit $\varepsilon\to 0+$, when
the operator $P(\omega)$ becomes hyperbolic. To do this we
combine the detailed analysis of~\S\ref{s:restricted-slp} with
the dynamical properties following from the Morse--Smale conditions.

%%%%%%%%%%%%%%%%%%%%%%%%%%%%%%%%%%%%%%%%%%%%%%%%%%%%%%%%%%%%%%%%%%%%%%%%%%%%%%%%
\subsection{Splitting into positive and negative frequencies}
\label{s:abo-prepare}

Fix a positively oriented coordinate $\theta:\partial\Omega\to \mathbb S^1$
to identify $\partial\Omega$ with $\mathbb S^1$.
Recall from~\eqref{e:C-lambda-equator} that
\begin{equation}
  \label{e:C-lambda-equator-2}
\mathcal C_\omega v_\omega=G_\omega:=(R_\omega f)|_{\partial\Omega}.
\end{equation}
Here the 1-form $v_\omega:=\mathcal N_\omega u_\omega\in C^\infty(\mathbb S^1;T^*\mathbb S^1)$ is the `Neumann data' of $u_\omega$ defined using~\eqref{e:N-omega-def}; however, we do not
have uniform bounds on $v_\omega$ in $C^\infty$ as $\varepsilon=\Im\omega\to 0+$. The function~$G_\omega$ lies in $C^\infty(\mathbb S^1)$
uniformly in $\varepsilon$ since $f\in \CIc$ and
$R_\omega$ is the convolution operator with the fundamental solution~$E_\omega$, which has
a distributional limit as $\varepsilon\to 0+$ by Lemma~\ref{l:loc}.

Let $\gamma^\pm_\lambda$ be defined in~\eqref{e:gamma-omega-def}.
By Proposition~\ref{p:summa} we have
$$
\mathcal E_\omega dG_\omega=\mathcal E_\omega d\mathcal C_\omega v_\omega=v_\omega+(\gamma_\lambda^+)^*A^+_\omega v_\omega+
(\gamma_\lambda^-)^* A^-_\omega v_\omega.
$$
We rewrite this equation as
\begin{equation}
  \label{e:Cle-1}
v_\omega=-\mathcal A_\omega v_\omega+\mathcal E_\omega dG_\omega,\quad
\mathcal A_\omega:=(\gamma_\lambda^+)^*A^+_\omega+(\gamma_\lambda^-)^*A^-_\omega. 
\end{equation}
The operator $\mathcal A_\omega$ exchanges positive and negative frequencies, since $A^\pm_\omega$
are pseudodifferential and the maps $\gamma_\lambda^\pm$ are orientation reversing.
We thus study the square of $\mathcal A_\omega$, which maps positive and negative frequencies to themselves.
It is expressed in terms of the pullback of the chess billiard map $b=\gamma^+\circ\gamma^-$
to $\mathbb S^1$:
\begin{equation}
  \label{e:b-omega-def}
b_\lambda:=\gamma^+_\lambda\circ\gamma^-_\lambda,\qquad
b_\lambda^{-1}=\gamma^-_\lambda\circ\gamma^+_\lambda,
\end{equation}
which is an orientation preserving diffeomorphism of $\mathbb S^1$.
Denote the pullback operators by $b_\lambda$ and $b^{-1}_\lambda$ on 1-forms by 
$$
b_\lambda^*,b_\lambda^{-*}:C^\infty(\mathbb S^1;T^*\mathbb S^1)\to C^\infty(\mathbb S^1;T^*\mathbb S^1).
$$
%%%%%%%%%%%%%%%%%%%%%%%%%%%%%%%%%%%%%%%%%%%%%%%%%%%%%%%%%%%%%%%%%%%%%%%%%%%%%%%%
\begin{lemm}
  \label{l:A-squared}
We have
\begin{equation}
  \label{e:A-squared}
\mathcal A_\omega^2=B^+_\omega b_\lambda^*+B^-_\omega b_\lambda^{-*}
\end{equation}
where $B^\pm_\omega$ are pseudodifferential,
more precisely we have uniformly in~$\varepsilon\geq 0$
(see~\eqref{eq:Opasu})
\begin{equation}
  \label{e:B-props}
\begin{gathered}
B^\pm_\omega\in \Psi^0(\mathbb S^1;T^*\mathbb S^1),\quad
\WF(B^\pm_\omega)\subset \{\pm\xi>0\},\\
\sigma(B^\pm_\omega)(\theta,\xi)=\tilde a^\pm_\omega(\theta)H(\pm\xi)e^{-\varepsilon\tilde z^\pm_\omega(\theta)|\xi|}
\end{gathered}
\end{equation}
where $H$ denotes the Heaviside function, the functions $\tilde a^\pm_\omega(\theta),\tilde z^\pm_\omega(\theta)$ are smooth
in $\theta\in\mathbb S^1$ and $\varepsilon\geq 0$, $\Re \tilde z^\pm_\omega\geq c>0$, and
$\tilde a^\pm_\omega(\theta)=1+\mathcal O(\varepsilon)$.
\end{lemm}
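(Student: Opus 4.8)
The plan is to compute $\mathcal A_\omega^2$ directly from the definition $\mathcal A_\omega = (\gamma^+_\lambda)^* A^+_\omega + (\gamma^-_\lambda)^* A^-_\omega$ in~\eqref{e:Cle-1} and collect the four resulting terms. Expanding,
$$
\mathcal A_\omega^2 = (\gamma^+_\lambda)^* A^+_\omega (\gamma^+_\lambda)^* A^+_\omega
+ (\gamma^+_\lambda)^* A^+_\omega (\gamma^-_\lambda)^* A^-_\omega
+ (\gamma^-_\lambda)^* A^-_\omega (\gamma^+_\lambda)^* A^+_\omega
+ (\gamma^-_\lambda)^* A^-_\omega (\gamma^-_\lambda)^* A^-_\omega .
$$
The point is that $\gamma^\pm_\lambda$ are orientation reversing involutions, so $(\gamma^\pm_\lambda)^*(\gamma^\pm_\lambda)^* = I$, while $(\gamma^+_\lambda)^*(\gamma^-_\lambda)^* = b_\lambda^*$ and $(\gamma^-_\lambda)^*(\gamma^+_\lambda)^* = b_\lambda^{-*}$ by~\eqref{e:b-omega-def}. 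Using the conjugation rule for pseudodifferential operators under a diffeomorphism (the change of variables formula behind~\eqref{eq:chava}, i.e.~\cite[Theorem 18.1.17]{Hormander3}), each operator $(\gamma^\pm_\lambda)^* A^\mp_\omega (\gamma^\pm_\lambda)^*$ is again pseudodifferential in $\Psi^0(\mathbb S^1;T^*\mathbb S^1)$, uniformly in $\varepsilon$, with wavefront set and symbol obtained by pulling back those of $A^\mp_\omega$ along $\gamma^\pm_\lambda$.

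Concretely, I would move all pullback operators to the right in each of the four terms. The two ``diagonal'' terms, $(\gamma^+_\lambda)^* A^+_\omega (\gamma^+_\lambda)^* A^+_\omega$ and $(\gamma^-_\lambda)^* A^-_\omega (\gamma^-_\lambda)^* A^-_\omega$, become $\big((\gamma^+_\lambda)^* A^+_\omega (\gamma^+_\lambda)^*\big) A^+_\omega$ and $\big((\gamma^-_\lambda)^* A^-_\omega (\gamma^-_\lambda)^*\big) A^-_\omega$, each a product of two $\Psi^0$ operators, hence in $\Psi^0$. The two ``cross'' terms produce the pullbacks $b_\lambda^*$ and $b_\lambda^{-*}$ respectively, after commuting one pullback past a pseudodifferential operator; e.g.\ $(\gamma^+_\lambda)^* A^+_\omega (\gamma^-_\lambda)^* A^-_\omega = (\gamma^+_\lambda)^*(\gamma^-_\lambda)^* \big((\gamma^-_\lambda)^* A^+_\omega (\gamma^-_\lambda)^*\big) A^-_\omega = b_\lambda^* \big((\gamma^-_\lambda)^* A^+_\omega (\gamma^-_\lambda)^*\big) A^-_\omega$. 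Since $A^+_\omega$ has wavefront set in $\{\xi>0\}$ and $\gamma^-_\lambda$ is orientation reversing, $(\gamma^-_\lambda)^* A^+_\omega (\gamma^-_\lambda)^*$ has wavefront set in $\{\xi<0\}$, matching that of $A^-_\omega$; so the product with $A^-_\omega$ is a $\Psi^0$ operator with wavefront set in $\{\xi<0\}$, which is the correct frequency for the coefficient of $b_\lambda^{-*}$ once we note $b_\lambda^*$ is also orientation preserving — wait, I should be careful: I want the coefficient $B^\pm_\omega$ of $b_\lambda^{\pm*}$ to have $\WF \subset\{\pm\xi>0\}$, so I would pull the pseudodifferential factors through $b_\lambda^*$ instead, using that $b_\lambda$ is orientation \emph{preserving} and hence preserves the sign of $\xi$. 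Doing this bookkeeping carefully for all four terms, and combining the diagonal term with $\WF\subset\{+\xi>0\}$ together with the appropriate cross term into $B^+_\omega b_\lambda^*$ and likewise for $B^-_\omega b_\lambda^{-*}$, gives~\eqref{e:A-squared}.

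For the symbol computation~\eqref{e:B-props}: the principal symbol of $(\gamma^\pm_\lambda)^* A^\mp_\omega (\gamma^\pm_\lambda)^*$ at $(\theta,\xi)$ is $\sigma(A^\mp_\omega)(\gamma^\pm_\lambda(\theta), \partial_\theta\gamma^\pm_\lambda(\theta)^{-1}\xi)$, and since $\sigma(A^\mp_\omega)(\theta,\xi) = a^\mp_\omega(\theta)H(\mp\xi)e^{-\varepsilon z^\mp_\omega(\theta)|\xi|}$ with $a^\mp_\omega = -1 + \mathcal O(\varepsilon)$ from Proposition~\ref{p:summa}, the product of two such symbols in each term of $B^\pm_\omega$ produces the leading coefficient $(-1)(-1) + \mathcal O(\varepsilon) = 1 + \mathcal O(\varepsilon)$, giving $\tilde a^\pm_\omega(\theta) = 1+\mathcal O(\varepsilon)$. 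The exponential weights multiply: $e^{-\varepsilon z|\xi|}e^{-\varepsilon z'|\xi|} = e^{-\varepsilon(z+z')|\xi|}$, so $\tilde z^\pm_\omega$ is (a pullback-adjusted) sum of two functions each with real part $\geq c>0$, hence $\Re\tilde z^\pm_\omega \geq c>0$; smoothness in $\theta$ and $\varepsilon$ up to $\varepsilon=0$ is inherited from Proposition~\ref{p:summa}. Uniformity in $\varepsilon\geq 0$ throughout follows since all the operators in Proposition~\ref{p:summa} are uniform in $\varepsilon$ and the change of variables formulas preserve this. I do not expect a genuine obstacle here — the proof is essentially a careful bookkeeping of orientations and symbol composition; the only place demanding attention is keeping the frequency-sign conventions and the orientation-reversing nature of $\gamma^\pm_\lambda$ versus the orientation-preserving nature of $b_\lambda$ straight when deciding which pseudodifferential factor to conjugate through which pullback.
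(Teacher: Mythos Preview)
Your overall strategy matches the paper's, but there is a real gap in your handling of the two ``diagonal'' terms $\big((\gamma^\pm_\lambda)^* A^\pm_\omega (\gamma^\pm_\lambda)^*\big) A^\pm_\omega$. You correctly observe that conjugation by the orientation-reversing $\gamma^\pm_\lambda$ flips the sign of~$\xi$ in the wavefront set, but you apply this only to the cross terms. Apply it to the diagonal terms too: $(\gamma^\pm_\lambda)^* A^\pm_\omega (\gamma^\pm_\lambda)^*$ has $\WF\subset\{\mp\xi>0\}$, while the remaining factor $A^\pm_\omega$ has $\WF\subset\{\pm\xi>0\}$. These are disjoint, so the product lies in $\Psi^{-\infty}$ uniformly in~$\varepsilon$. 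That is the point the paper makes first, and it is what allows the diagonal terms to be absorbed into $B^\pm_\omega b_\lambda^{\pm*}$ (a smoothing operator $R$ can be written as $(R b_\lambda^{\mp*})\,b_\lambda^{\pm*}$ with $R b_\lambda^{\mp*}\in\Psi^{-\infty}$). Merely saying the diagonal terms are in $\Psi^0$ is not enough: a genuine $\Psi^0$ operator with no pullback factor cannot be ``combined'' into $B^+_\omega b_\lambda^*$, and your symbol computation (product of exactly two factors giving $(-1)(-1)+\mathcal O(\varepsilon)$) already tacitly assumes they do not contribute.

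Two smaller points. First, $(\gamma^+_\lambda)^*(\gamma^-_\lambda)^* = (\gamma^-_\lambda\circ\gamma^+_\lambda)^* = b_\lambda^{-*}$, not $b_\lambda^*$; so your cross term $(\gamma^+_\lambda)^* A^+_\omega (\gamma^-_\lambda)^* A^-_\omega$ feeds into $B^-_\omega b_\lambda^{-*}$, and the other cross term into $B^+_\omega b_\lambda^{*}$. The paper writes this out by pulling the pseudodifferential factors to the \emph{left} of the pullback, obtaining
\[
B^\pm_\omega=\big((\gamma^\mp_\lambda)^*A^\mp_\omega (\gamma^\mp_\lambda)^*\big)
\big((b^{\pm 1}_\lambda)^* A^\pm_\omega (b^{\mp 1}_\lambda)^*\big)+\Psi^{-\infty},
\]
from which $\WF(B^\pm_\omega)\subset\{\pm\xi>0\}$ and the symbol formula follow directly. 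Second, in the exponential weight you must rescale $|\xi|$ by the derivative of the relevant diffeomorphism when conjugating, so $\tilde z^\pm_\omega$ is a sum of $z^\mp_\omega\circ\gamma^\mp_\lambda$ and $z^\pm_\omega\circ b_\lambda^{\pm 1}$ each divided by the appropriate positive Jacobian factor; this still gives $\Re\tilde z^\pm_\omega\geq c>0$, but it is worth stating.
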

%%%%%%%%%%%%%%%%%%%%%%%%%%%%%%%%%%%%%%%%%%%%%%%%%%%%%%%%%%%%%%%%%%%%%%%%%%%%%%%%
\Remark From Remark~1 after Proposition~\ref{p:summa} we see that
$B^\pm_\omega$ are smooth in~$\lambda$
(where $\omega=\lambda+i\varepsilon$), with $\lambda$-derivatives of all orders
lying in~$\Psi^0$ uniformly in~$\varepsilon$.
%%%%%%%%%%%%%%%%%%%%%%%%%%%%%%%%%%%%%%%%%%%%%%%%%%%%%%%%%%%%%%%%%%%%%%%%%%%%%%%%
\begin{proof}
From Proposition~\ref{p:summa} and the change of variables
formula for pseudodifferential operators~\cite[Theorem~18.1.17]{Hormander3} we see that
$(\gamma^\pm_\lambda)^*A^\pm_\omega(\gamma^\pm_\lambda)^*$
lies in $\Psi^0(\mathbb S^1;T^*\mathbb S^1)$ and has wavefront set inside $\{\mp\xi>0\}$
uniformly in~$\varepsilon$. Since products of pseudodifferential operators
with nonintersecting wavefront sets are smoothing, we see that
$$
\big((\gamma^\pm_\lambda)^*A^\pm_\omega\big)^2\in \Psi^{-\infty}(\mathbb S^1;T^*\mathbb S^1)\quad\text{uniformly in}\quad\varepsilon\geq 0.
$$
Recalling~\eqref{e:Cle-1} we see that (with $\Psi^{-\infty}$ denoting smoothing operators uniformly in~$\varepsilon$)
$$
\mathcal A_\omega^2=(\gamma^+_\lambda)^*A^+_\omega(\gamma^-_\lambda)^*A^-_\omega
+(\gamma^-_\lambda)^*A^-_\omega(\gamma^+_\lambda)^*A^+_\omega+\Psi^{-\infty}.
$$
This gives the decomposition~\eqref{e:A-squared} with
$$
B^\pm_\omega=\big((\gamma^\mp_\lambda)^*A^\mp_\omega (\gamma^\mp_\lambda)^*\big)
\big((b^{\pm 1}_\lambda)^* A^\pm_\omega (b^{\mp 1}_\lambda)^*\big)+\Psi^{-\infty}.
$$
Using the properties of $A^\pm_\omega$ in Proposition~\ref{p:summa} together
with the product formula and the change of variables formula for pseudodifferential operators,
we see that $B^\pm_\omega\in\Psi^0(\mathbb S^1;T^*\mathbb S^1)$ and
$\WF(B^\pm_\omega)\subset \{\pm \xi>0\}$ uniformly in~$\varepsilon$.
This also gives
$$
\sigma(B^\pm_\omega)(\theta,\xi)=
\sigma(A^\mp_\omega)\bigg(\gamma^\mp_\lambda(\theta),{\xi\over\partial_\theta\gamma^\mp_\lambda(\theta)}\bigg)
\sigma(A^\pm_\omega)\bigg(b^{\pm 1}_\lambda(\theta),{\xi\over\partial_\theta b^{\pm 1}_\lambda(\theta)}\bigg)
$$
in the sense of~\eqref{eq:Opasu}, 
which implies the formula for the principal symbol in~\eqref{e:B-props} with
$$
\tilde a^\pm_\omega(\theta)=a^\mp_\omega(\gamma^\mp_\lambda(\theta))a^\pm_\omega(b^{\pm 1}_\lambda(\theta)),\quad
\tilde z^\pm_\omega(\theta)={z^\mp_\omega(\gamma^\mp_\lambda(\theta))\over |\partial_\theta \gamma^\mp_\lambda(\theta)|}
+{z^\pm_\omega(b^{\pm 1}_\lambda(\theta))\over \partial_\theta b^{\pm 1}_\lambda(\theta)}
$$
where $a^\pm_\omega$, $z^\pm_\omega$ are given in Proposition~\ref{p:summa}.
\end{proof}
%%%%%%%%%%%%%%%%%%%%%%%%%%%%%%%%%%%%%%%%%%%%%%%%%%%%%%%%%%%%%%%%%%%%%%%%%%%%%%%%
Applying~\eqref{e:Cle-1} twice, we get the equation
\begin{equation}
  \label{e:Cle-2}
v_\omega=B^+_\omega b_\lambda^*v_\omega+B^-_\omega b_\lambda^{-*}v_\omega+g_\omega
\end{equation}
where
$$
g_\omega:=(I-\mathcal A_\omega)\mathcal E_\omega dG_\omega
$$
is in $C^\infty(\mathbb S^1;T^*\mathbb S^1)$ uniformly in~$\varepsilon>0$.

We now split $v_\omega$ into positive and negative frequencies. Consider a pseudodifferential
partition of unity 
\begin{equation}
  \label{e:Pi-pm}
\begin{aligned}
I=\Pi^++\Pi^-,\quad
&\Pi^\pm\in\Psi^0(\mathbb S^1,T^*\mathbb S^1),\\
\WF(\Pi^\pm)\subset \{\pm \xi>0\},\quad
&\sigma(\Pi^\pm)(\theta,\xi)=H(\pm\xi).
\end{aligned}
\end{equation}
Put
\begin{equation}
\label{eq:defvla}
v^\pm_\omega:=\Pi^\pm v_\omega,\quad
g^\pm_\omega:=\Pi^\pm g_\omega,
\end{equation}
with $g^\pm_\omega$ in $C^\infty(\mathbb S^1;T^*\mathbb S^1)$ uniformly in~$\varepsilon$,
and apply $\Pi^\pm$ to~\eqref{e:Cle-2} to get
\begin{equation}
  \label{e:Cle-3}
v^\pm_\omega=B^\pm_\omega(b^{\pm 1}_\lambda)^*v^\pm_\omega+\mathscr R^\pm_\omega v_\omega +g^\pm_\omega
\end{equation}
where the operator
$$
\mathscr R^\pm_\omega:=\big([\Pi^\pm,B^\pm_\omega]+B^\pm_\omega(\Pi^\pm-(b^{\pm 1}_\lambda)^*\Pi^\pm (b^{\mp 1}_\lambda)^*)\big)
(b^{\pm 1}_\lambda)^*+\Pi^\pm B^\mp_\omega(b^{\mp1}_\lambda)^*
$$
is in $\Psi^{-\infty}(\mathbb S^1;T^*\mathbb S^1)$ uniformly in~$\varepsilon$,
as follows from~\eqref{e:Pi-pm} and the fact that $\WF(B^\pm_\omega)\subset \{\pm \xi>0\}$.

%%%%%%%%%%%%%%%%%%%%%%%%%%%%%%%%%%%%%%%%%%%%%%%%%%%%%%%%%%%%%%%%%%%%%%%%%%%%%%%%
\subsection{Microlocal Lasota--Yorke inequalities}

We now show that 
$B^\pm_\omega (b^{\pm 1}_\lambda)^*$ featured in the equation~\eqref{e:Cle-3} are contractions at high frequencies
on appropriately chosen inhomogeneous Sobolev spaces,
and use this to prove a high frequency estimate on~$v_\omega$,
see Proposition~\ref{p:dC-soft} below.
 This is reminiscent of Lasota–Yorke inequalities (see
\cite{balad} and references given there)
and could be considered a simple version of radial estimates
(see \cite[\S E.4.3]{DZ-Book} and references given there) for Fourier integral operators.
It is also related to microlocal weights used by Faure--Roy--Sj\"ostrand \cite{FRS}. 

Unlike applications to volume preserving Anosov maps in~\cite{balad,FRS}, where critical regularity
is given by $L^2$, for us the critical regularity space is $H^{-{1\over 2}}$. This can be
informally explained as follows: if we have $v^\pm_\omega=df^\pm$ for some functions $f^\pm$
then the {\em flux}  $\Im\int_{\mathbb S^1} \overline{f^\pm}\, df^\pm$, is invariant under replacing $f^\pm$ with the pullback $(b^{\pm 1}_\lambda)^* f^\pm$ and is well defined for 
$ f^\pm \in H^{\frac12} $. When $ \WF ( f^\pm ) \subset \{ \pm \xi > 0 \}$, the flux is
related to $\|f^\pm\|_{H^{1\over 2}}^2\sim \|v^\pm_\omega\|_{H^{-{1\over 2}}}^2$.

To simplify notation, we only study in detail the case of the `$+$' sign. The case of the `$-$' sign 
is handled similarly by replacing $b_\lambda$ with $b_\lambda^{-1}$, switching $\Sigma^+_\lambda$
with $\Sigma^-_\lambda$, and using the escape function in Lemma~\ref{l:escape-function}
(rather than in the remark following it).

We identify $\partial\Omega$ with $\mathbb S^1$ using the adapted coordinate $\theta$ constructed in Lemma~\ref{l:adapted-theta},
which satisfies for $\delta>0$ small enough
\begin{equation}
\label{eq:logdb} 
\mp \log \partial_\theta b_\lambda > 0\quad\text{on}\quad \overline{\Sigma^\pm_\lambda(\delta)}
\end{equation}
where $ \Sigma^\pm_\lambda\subset\mathbb S^1$ are the attractive ($+$) and repulsive ($-$) periodic points of 
$ b_\lambda$ defined in~\eqref{e:Sigma-pm-def}
and $\Sigma^\pm_\lambda(\delta)$ are their open $\delta$-neighborhoods. 

Take arbitrary $ \alpha_-  < \alpha_+ $ and small $\delta>0$ (in particular, so that~\eqref{eq:logdb} holds).
Let $\mathbf g\in C^\infty(\mathbb S^1;\mathbb R)$ be the escape function defined in the remark 
following Lemma~\ref{l:escape-function}. 
We have
\begin{equation}
  \label{e:N-0-def}
\alpha_-\leq \mathbf g(\theta)\leq N_0\quad\text{for some}\quad N_0.
\end{equation}
Define the symbol
\begin{equation}
\label{eq:defG}    G ( \theta, \xi ) := \mathbf g ( \theta ) (1-\chi_0( \xi )) \log  |\xi|  , \quad ( \theta, \xi ) \in T^*\mathbb S^1 
\end{equation}
where $\chi_0\in \CIc((-1,1))$ is equal to~1 near~0.
We use Lemma~\ref{l:expwe1} to construct
\begin{equation}
\begin{aligned}
\label{eq:EtG}  E_G := \Op ( e^G )\in \Psi^{N_0}_{0+}(\mathbb S^1;T^*\mathbb S^1) ,& \quad \widetilde E_{-G} := \Op ( e^{-G } ( 1 + r_G ) )
\in \Psi^{-\alpha_-}_{0+}(\mathbb S^1;T^*\mathbb S^1) ,\\
 r_G \in S^{-1+},& \quad
\widetilde E_{-G} E_G - I,
E_G\widetilde E_{-G}-I \in \Psi^{-\infty } .
\end{aligned}
\end{equation}
By property~(4) in the remark following Lemma~\ref{l:escape-function} we have
$\mathbf g\geq \alpha_+$ on $\mathbb S^1\setminus \Sigma^-_\lambda(\delta)$. Therefore by~\eqref{eq:wecomp}
\begin{equation}
  \label{e:EtG-extra}
\chi \widetilde E_{-G}\in\Psi^{-\alpha_+}_{0+}(\mathbb S^1;T^*\mathbb S^1)\quad\text{for all}\quad
\chi\in C^\infty(\mathbb S^1),\quad
\supp \chi\cap \overline{\Sigma^-_\lambda(\delta)}=\emptyset.
\end{equation}
We now apply $E_G$ to~\eqref{e:Cle-3} (with the `$+$' sign) to get
\begin{equation}
\label{eq:v2vG2}
\begin{gathered}
v_G = T_G  v_G + \mathscr R_G v_\omega+g_G \qquad\text{where}\quad
v_G := E_G v^+_\omega,\quad
g_G := E_G g^+_\omega,\\
T_G:=E_G B^+_\omega b_\lambda^* \widetilde E_{-G},\quad
\mathscr R_G:= E_GB^+_\omega b_\lambda^* (I-\widetilde E_{-G}E_G)\Pi^++E_G\mathscr R^+_\omega.
\end{gathered}  
\end{equation}
Here $g_G\in C^\infty(\mathbb S^1;T^*\mathbb S^1)$
and $\mathscr R_G\in\Psi^{-\infty}(\mathbb S^1;T^*\mathbb S^1)$,
both uniformly in~$\varepsilon$. The function $v_G$ lies in $C^\infty(\mathbb S^1;T^*\mathbb S^1)$
for $\varepsilon>0$, but it is not bounded in this space uniformly in~$\varepsilon$.
We also have the following bounds for each~$N$, which follow from~\eqref{eq:EtG}
and~\eqref{e:EtG-extra}
(writing $v^+_\omega=\widetilde E_{-G}v_G+(I-\widetilde E_{-G}E_G)v^+_\omega$):
\begin{align}
  \label{e:anisotroper-1}
\|v^+_\omega\|_{H^{\alpha_-}}&\leq C\|v_G\|_{L^2}+C_N\|v_\omega\|_{H^{-N}},\\
  \label{e:anisotroper-2}
\|\chi v^+_\omega\|_{H^{\alpha_+}}&\leq   C\|v_G\|_{L^2}+C_N\|v_\omega\|_{H^{-N}}\quad\text{if}\quad
\supp \chi\cap \overline{\Sigma^-_\lambda(\delta)}=\emptyset,\\
  \label{e:anisotroper-3}
\|g_G\|_{L^2}&\leq C\|g_\omega\|_{H^{N_0}}.
\end{align}

The key result in this section is the following lemma. The point is that for
$\alpha_-<-{1\over 2}<\alpha_+$,
we can obtain a contraction property of the microlocally conjugated operator~$ T_G $:
%%%%%%%%%%%%%%%%%%%%%%%%%%%%%%%%%%%%%%%%%%%%%%%%%%%%%%%%%%%%%%%%%%%%%%%%%%%%%%%%
\begin{lemm}
\label{l:TG}
Suppose that $ G $ is given by \eqref{eq:defG} (using a coordinate~$\theta$ in which 
\eqref{eq:logdb} holds) with $ \mathbf g $ defined with parameters
$ \alpha_- < \alpha_+ $, $ \delta > 0 $, and that $ T_G $ is defined in~\eqref{eq:v2vG2}. Define the norm on $L^2(\mathbb S^1;T^*\mathbb S^1)$
using the coordinate~$\theta$.
Then for any $ N$ and $\nu>0$ there exists~$ C_N $ such that for all
small $\varepsilon=\Im\omega>0$ and all
 $ w \in C^\infty ( \mathbb S^1; T^* \mathbb S^1 ) $, 
\begin{equation}
\label{eq:TG}     \|  T_G w \|_{ L^2 } \leq   \big( \max_{\pm} \sup_{\Sigma^\pm_\lambda(\delta)} (\partial_\theta b_\lambda) ^{\frac12 + \alpha_\pm }
 + \nu \big) 
\| w\|_{ L^2} + C_N \| w \|_{ H^{- N }} .
\end{equation}
\end{lemm}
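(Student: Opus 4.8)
The plan is to reduce the operator norm of $T_G = E_G B^+_\omega b_\lambda^* \widetilde E_{-G}$ on $L^2(\mathbb S^1;T^*\mathbb S^1)$ to a symbolic computation using the calculus of exponential weights from Lemma~\ref{l:expwe1}, Lemma~\ref{l:psi0-norm}, and the escape function estimates of Lemma~\ref{l:escape-function}. First I would observe that $T_G$ acts on positive frequencies: $\WF(B^+_\omega)\subset\{\xi>0\}$, so modulo $\Psi^{-\infty}$ (uniformly in $\varepsilon$) we may insert $\Pi^+$ and work on the $\{\xi>0\}$ component. Composing through the change of variables formula~\eqref{eq:chava} for the pullback $b_\lambda^*$ acting on the weight $e^{-G}$, and using the composition formula~\eqref{eq:wecomp} for products of exponentially weighted operators, I would write $T_G = \Op(e^{H}(a+r))\,b_\lambda^* + \Psi^{-\infty}$ where, on $\{\xi>0\}$, the exponent is
\[
H(\theta,\xi) = G(\theta,\xi) - G(b_\lambda(\theta),\partial_\theta b_\lambda(\theta)^{-1}\xi) = \big(\mathbf g(\theta)-\mathbf g(b_\lambda(\theta))\big)\log|\xi| + \mathbf g(b_\lambda(\theta))\log\partial_\theta b_\lambda(\theta),
\]
up to bounded (order-zero-in-$\xi$) and lower-order terms, and $a = a^+_\omega\cdot(\text{symbol of }b_\lambda^*)$ with $|a| = 1+\mathcal O(\varepsilon)$ by Proposition~\ref{p:summa}; the factor $e^{-\varepsilon \tilde z|\xi|}$ in $\sigma(B^+_\omega)$ is bounded by $1$ in absolute value and only helps. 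The crucial point: by property~(1) of the escape function, $\mathbf g(b_\lambda(\theta))\le\mathbf g(\theta)$ everywhere; since $b_\lambda^*$ conjugates to unitary up to a Jacobian factor, the genuine $\xi$-dependence in $e^H$ forces $\Op(e^H(a+r))b_\lambda^*$ to have \emph{negative} order away from $\Sigma^+_\lambda(\delta)\cup\Sigma^-_\lambda(\delta)$, hence is compact there; the operator norm is then controlled by the symbol near $\Sigma^\pm_\lambda(\delta)$.

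Next I would localize with a partition of unity $1 = \chi_+ + \chi_- + \chi_0$ subordinate to $\Sigma^+_\lambda(\delta)$, $\Sigma^-_\lambda(\delta)$, and their complement. On $\supp\chi_0$, by Lemma~\ref{l:escape-function}(2) we have $\mathbf g(\theta)-\mathbf g(b_\lambda(\theta))\ge c_0>0$, so the corresponding piece maps into $\Psi^{-c_0}$ and contributes $\mathcal O(\|w\|_{H^{-N}})$. On $\supp\chi_\pm\subset\Sigma^\pm_\lambda(\delta)$, using $\mathbf g=\alpha_\pm$ on a neighborhood of $\Sigma^\pm_\lambda$ (properties~(5) and its mirror) together with the adapted-coordinate bound~\eqref{eq:logdb}, the exponent $H$ restricted there is, to leading order, $\alpha_\pm\log\partial_\theta b_\lambda(\theta)$ plus a term of order $(\mathbf g(\theta)-\mathbf g(b_\lambda(\theta)))\log|\xi|$ which is $\le 0$; and the Jacobian factor from $b_\lambda^*$ contributes $\partial_\theta b_\lambda(\theta)^{1/2}$ to the $L^2$ operator norm after symmetrizing (this is where the shift from $L^2$ to $H^{-1/2}$ enters, giving the $\frac12+\alpha_\pm$ exponent rather than $\alpha_\pm$). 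Combining, the full symbol of the conjugated operator near $\Sigma^\pm_\lambda(\delta)$ has absolute value $\le (1+\mathcal O(\varepsilon))(\partial_\theta b_\lambda)^{\frac12+\alpha_\pm}$, and by shrinking $\delta$ the supremum over $\Sigma^\pm_\lambda(\delta)$ is as close as we like to its value on $\Sigma^\pm_\lambda$. Applying the sharp $L^2$-bound of Lemma~\ref{l:psi0-norm} (absorbing the $\mathcal O(\varepsilon)$ and remainders into $\nu$ and $C_N\|w\|_{H^{-N}}$) yields~\eqref{eq:TG}.

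The main obstacle I expect is bookkeeping the precise constant: one must verify that conjugation by $b_\lambda^*$ produces exactly the Jacobian weight $(\partial_\theta b_\lambda)^{1/2}$ in the $L^2$ operator norm (not $(\partial_\theta b_\lambda)^1$ or a symmetrized geometric mean), and that the $\xi$-dependent part of $H$ near $\Sigma^\pm_\lambda(\delta)$ has the correct sign so that $e^{H}$ does not inflate the bound — this is where the choice of the \emph{adapted} coordinate of Lemma~\ref{l:adapted-theta} (controlling $\partial_\theta b_\lambda$ rather than $\partial_\theta b_\lambda^n$) and the sign condition~\eqref{eq:logdb} are used decisively, since $\mathbf g$ is increasing along $b_\lambda$-orbits while $\log\partial_\theta b_\lambda$ has a definite sign on each of $\Sigma^\pm_\lambda(\delta)$. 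A secondary technical point is that $e^H$ is only a symbol of class $S^{\bullet}_{0+}$ (variable order), so one must invoke Lemma~\ref{l:expwe1} carefully to make sense of $\Op(e^H)$ and its composition with $b_\lambda^*$, and check that the remainder terms $r$, $r_G$ are genuinely of order $-1+$ and hence negligible after applying Lemma~\ref{l:psi0-norm}.
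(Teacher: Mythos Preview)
Your approach is the paper's: right-multiply $T_G$ by the unitary $b_\lambda^{-*}(\partial_\theta b_\lambda)^{1/2}$ to obtain a single pseudodifferential operator $\widetilde T_G\in\Psi^0_{0+}$ with principal symbol
\[
\sigma(\widetilde T_G)(\theta,\xi)=\tilde a^+_\omega(\theta)H(\xi)e^{-\varepsilon\tilde z^+_\omega(\theta)\xi}(\partial_\theta b_\lambda)^{1/2}e^{G(\theta,\xi)-G_b(\theta,\xi)},
\]
then bound this pointwise and apply Lemma~\ref{l:psi0-norm}. The exponent $G-G_b$ is exactly what you wrote, and the $\tfrac12$ does come from the Jacobian.

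There is one genuine gap. On $\Sigma^-_\lambda(\delta)$ you claim the constant part of $H$ is $\alpha_-\log\partial_\theta b_\lambda$, using ``$\mathbf g=\alpha_\pm$ near $\Sigma^\pm_\lambda$''. But the escape function of the Remark after Lemma~\ref{l:escape-function} only gives $\mathbf g=\alpha_-$ on a neighborhood of $\Sigma^-_\lambda$ (property~(5)) and $\mathbf g\ge\alpha_+$ on $\Sigma^+_\lambda(\delta)$ (property~(4)); there is \emph{no} property making $\mathbf g$ constant $=\alpha_+$ near $\Sigma^+_\lambda$. On $\Sigma^+_\lambda(\delta)$ this is harmless: $\log\partial_\theta b_\lambda<0$ there, so $\mathbf g(b_\lambda(\theta))\ge\alpha_+$ gives the inequality in the right direction. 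On $\Sigma^-_\lambda(\delta)$, however, $\log\partial_\theta b_\lambda>0$ and $b_\lambda$ is expanding, so $b_\lambda(\theta)$ may leave the small set where $\mathbf g=\alpha_-$; then $\mathbf g(b_\lambda(\theta))>\alpha_-$ and your bound $(\partial_\theta b_\lambda)^{\alpha_-}$ on the constant part fails. Saying ``the $\log|\xi|$ term is $\le0$'' does not fix this, because you need it to be \emph{negative enough} to compensate, uniformly in~$\theta$. This is exactly what property~(6) does: with $M=\log\xi/\log\partial_\theta b_\lambda(\theta)\gg1$ it gives
\[
(\mathbf g(\theta)-\mathbf g(b_\lambda(\theta)))\log\xi+\mathbf g(b_\lambda(\theta))\log\partial_\theta b_\lambda(\theta)\le\alpha_-\log\partial_\theta b_\lambda(\theta),
\]
which is the missing pointwise bound. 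The paper carries out the three-region case analysis (away from $\Sigma^\pm_\lambda(\delta)$, on $\Sigma^+_\lambda(\delta)$, on $\Sigma^-_\lambda(\delta)$) directly on the symbol; no partition of unity is needed.

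A minor point: your ``shrinking $\delta$'' remark is unnecessary, since $\delta$ is fixed in the statement and the bound already reads $\sup_{\Sigma^\pm_\lambda(\delta)}$ rather than the value on $\Sigma^\pm_\lambda$.
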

%%%%%%%%%%%%%%%%%%%%%%%%%%%%%%%%%%%%%%%%%%%%%%%%%%%%%%%%%%%%%%%%%%%%%%%%%%%%%%%%
\begin{proof}
1. Recalling the formula~\eqref{e:1-forms-pulling} for pullback operators on 1-forms, we see
that the operator
$$
(b_\lambda^{-*})(\partial_\theta b_\lambda)^{1\over 2}:L^2(\mathbb S^1;T^*\mathbb S^1)\to L^2(\mathbb S^1;T^*\mathbb S^1)
$$
is unitary. Multiplying $T_G$ by this operator on the right, we see that it suffices to show that
\begin{equation}
  \label{e:TG-2}
\begin{gathered}
\|\widetilde T_G w\|_{L^2}\leq   \big( \max_{\pm} \sup_{\Sigma^\pm_\lambda(\delta)} (\partial_\theta b_\lambda) ^{\frac12 + \alpha_\pm }
 + \nu \big) 
\| w\|_{ L^2} + C_N \| w \|_{ H^{- N }}\\
\text{where}\quad
\widetilde T_G:=E_G B^+_\omega b_\lambda^* \widetilde E_{-G}b_\lambda^{-*}(\partial_\theta b_\lambda)^{1\over 2}.
\end{gathered}
\end{equation}
By~\eqref{eq:chava} we have
$b_\lambda^* \widetilde E_{-G}b_\lambda^{-*}=\Op(e^{-G_b}(1+r))$ for
$G_b(\theta,\xi):=G(b_\lambda(\theta),\xi/\partial_\theta b_\lambda(\theta))$
and some $r\in S^{-1+}$.
Recalling the definition~\eqref{eq:defG} of~$G$, we compute for $|\xi|$ large enough
\begin{equation}
  \label{e:G-subtracted}
G(\theta,\xi)-G_b(\theta,\xi)=\big(\mathbf g(\theta)-\mathbf g(b_\lambda(\theta))\big)\log|\xi|+\mathbf g(b_\lambda(\theta))\log\partial_\theta b_\lambda(\theta).
\end{equation}
Since $\mathbf g(\theta)-\mathbf g(b_\lambda(\theta))\leq 0$ by property~(1) in the remark following Lemma~\ref{l:escape-function},
we see that $G-G_b$ is bounded above by some constant.
By~\eqref{eq:wecomp} and Lemma~\ref{l:A-squared}
we then see that $\widetilde T_G\in\Psi^0_{0+}(\mathbb S^1;T^*\mathbb S^1)$ uniformly in~$\varepsilon$
and its principal symbol is (in the sense of~\eqref{eq:Opasu})
$$
\sigma(\widetilde T_G)(\theta,\xi)=\tilde a^+_\omega(\theta)H(\xi)e^{-\varepsilon \tilde z^+_\omega(\theta)\xi}(\partial_\theta b_\lambda(\theta))^{1\over 2}e^{G(\theta,\xi)-G_b(\theta,\xi)},\quad
|\xi|\geq 1.
$$
Thus~\eqref{e:TG-2} follows from Lemma~\ref{l:psi0-norm} once we
show that there exists $C_1>0$ such that for all $\xi\geq C_1$
\begin{equation}
  \label{e:TG-3}
|\tilde a^+_\omega(\theta)|e^{-\varepsilon \Re\tilde z^+_\omega(\theta)\xi}(\partial_\theta b_\lambda(\theta))^{1\over 2}e^{G(\theta,\xi)-G_b(\theta,\xi)}\leq
\max_{\pm} \sup_{\Sigma^\pm_\lambda(\delta)} (\partial_\theta b_\lambda) ^{\frac12 + \alpha_\pm }.
\end{equation}

\noindent 2. Since $\tilde a^+_\omega(\theta)=1+\mathcal O(\varepsilon)$ and $\Re \tilde z^+_\omega(\theta)\geq c>0$,
for $\xi\geq C_1$ and $C_1$ large enough we have
$|\tilde a^+_\omega(\theta)|e^{-\varepsilon \Re\tilde z^+_\omega(\theta)\xi}\leq 1$.
Thus \eqref{e:TG-3} reduces to showing that for all $\xi\geq C_1$
\begin{equation}
  \label{e:TG-4}
\begin{gathered}
\widetilde G(\theta,\xi)\leq \max_{\pm} \sup_{\Sigma^\pm_\lambda(\delta)}\big(\tfrac12 + \alpha_\pm)\log \partial_\theta b_\lambda
\\
\text{where}\quad \widetilde G(\theta,\xi):=\big(\mathbf g(\theta)-\mathbf g(b_\lambda(\theta))\big)\log\xi+\big(\tfrac12+\mathbf g(b_\lambda(\theta))\big) \log\partial_\theta b_\lambda(\theta).
\end{gathered}
\end{equation}
This in turn is proved if we show that there exists $c_0>0$ such that
for $\xi$ large enough
\begin{equation}
  \label{e:TG-5}
\widetilde G(\theta,\xi)\leq\begin{cases}
-c_0\log\xi,& \theta\in \mathbb S^1\setminus (\Sigma^-_\lambda(\delta)\cup \Sigma^+_\lambda(\delta)),\\
(\tfrac 12+\alpha_+)\log\partial_\theta b_\lambda(\theta),& \theta\in\Sigma^+_\lambda(\delta),\\
(\frac 12+\alpha_-)\log\partial_\theta b_\lambda(\theta),&  \theta\in\Sigma^-_\lambda(\delta).
\end{cases}
\end{equation}
We now prove~\eqref{e:TG-5} using properties~(1)--(6) in Lemma~\ref{l:escape-function} (or rather the 
remark which follows it).
The first inequality follows from property~(2), since $\mathbf g(\theta)-\mathbf g(b_\lambda(\theta))\leq -2c_0$
for some $c_0>0$.
The second inequality follows from properties~(1) and~(4)
together with~\eqref{eq:logdb}.
Finally, the third inequality follows from property~(6) with
$M:=(\log\xi)/(\log\partial_\theta b_\lambda(\theta))\gg 1$, 
where we again use~\eqref{eq:logdb}.
\end{proof}
%%%%%%%%%%%%%%%%%%%%%%%%%%%%%%%%%%%%%%%%%%%%%%%%%%%%%%%%%%%%%%%%%%%%%%%%%%%%%%%%
With Lemma~\ref{l:TG} in place we give a basic high frequency estimate
on solutions to~\eqref{e:Cle-2} which is uniform as $\Im\omega\to 0$.
An upgraded version of this estimate (Proposition~\ref{p:dC}) is used in the proof of Limiting Absorption Principle
in~\S\ref{s:liap} below.
%%%%%%%%%%%%%%%%%%%%%%%%%%%%%%%%%%%%%%%%%%%%%%%%%%%%%%%%%%%%%%%%%%%%%%%%%%%%%%%%
\begin{prop}
\label{p:dC-soft}
Fix $\beta>0$, $N$, and some functions $\chi^\pm\in C^\infty(\mathbb S^1)$
such that $\supp\chi^\pm\cap \Sigma^\mp_\lambda=\emptyset$.
Then there exist $N_0$ and $C$ such that
for all small $\varepsilon=\Im\omega>0$ and each solution $v_\omega\in C^\infty(\mathbb S^1;T^*\mathbb S^1)$ 
to~\eqref{e:Cle-2} we have
\begin{align}
  \label{e:dC-soft}
\|v_\omega\|_{H^{-{1\over 2}-\beta}}&\leq C\big(\|g_\omega\|_{H^{N_0}}+\|v_\omega\|_{H^{-N}}\big),\\
  \label{e:dC-soft-2}
\|\chi^\pm \Pi^\pm v_\omega\|_{H^{N}}&\leq C\big(\|g_\omega\|_{H^{N_0}}+\|v_\omega\|_{H^{-N}}\big).
\end{align}
\end{prop}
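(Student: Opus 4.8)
The plan is to prove Proposition~\ref{p:dC-soft} by bootstrapping the microlocal Lasota--Yorke inequality of Lemma~\ref{l:TG} (and its `$-$' sign analogue) through the coupled system~\eqref{e:Cle-3}. The key observation is that the critical exponent $-\frac12$ lies strictly between the values $\alpha_\pm$ that we are free to choose in the escape function: once we fix $\alpha_-<-\frac12<\alpha_+$, the quantity $\sup_{\Sigma^\pm_\lambda(\delta)}(\partial_\theta b_\lambda)^{\frac12+\alpha_\pm}$ can be made $<1$ for $\delta$ small, because $\partial_\theta b_\lambda<1$ on $\Sigma^+_\lambda$ and $>1$ on $\Sigma^-_\lambda$ by~\eqref{eq:logdb} and the Morse--Smale conditions (this is exactly where hyperbolicity of the periodic points enters). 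Thus $T_G$ is a genuine contraction modulo smoothing, uniformly in $\varepsilon$.

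\textbf{Step 1: the contraction estimate for each frequency half.} Choose $\alpha_-<-\frac12<\alpha_+$ and $\delta>0$ small enough that~\eqref{eq:logdb} holds and that $\kappa:=\max_\pm\sup_{\Sigma^\pm_\lambda(\delta)}(\partial_\theta b_\lambda)^{\frac12+\alpha_\pm}<1$. Apply Lemma~\ref{l:TG} with $\nu:=\frac{1-\kappa}{2}$ to the function $w=v_G=E_Gv^+_\omega$ in~\eqref{eq:v2vG2}: this gives $\|T_Gv_G\|_{L^2}\leq(\kappa+\nu)\|v_G\|_{L^2}+C_N\|v_G\|_{H^{-N}}$. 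Feeding in~\eqref{eq:v2vG2} itself, $v_G=T_Gv_G+\mathscr R_Gv_\omega+g_G$, and absorbing the contraction term, we obtain
\begin{equation}
\label{eq:vG-bound}
\|v_G\|_{L^2}\leq C\big(\|\mathscr R_Gv_\omega\|_{L^2}+\|g_G\|_{L^2}+\|v_G\|_{H^{-N}}\big)\leq C\big(\|g_\omega\|_{H^{N_0}}+\|v_\omega\|_{H^{-N}}\big),
\end{equation}
using that $\mathscr R_G\in\Psi^{-\infty}$ uniformly, \eqref{e:anisotroper-3}, and that $\|v_G\|_{H^{-N}}\leq C\|v^+_\omega\|_{H^{N_0-N}}$ which for $N$ large is controlled by $\|v_\omega\|_{H^{-N'}}$ on a slightly worse scale (handled by an interpolation/iteration argument, increasing $N_0$). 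Then~\eqref{e:anisotroper-1} converts~\eqref{eq:vG-bound} into $\|v^+_\omega\|_{H^{\alpha_-}}\leq C(\|g_\omega\|_{H^{N_0}}+\|v_\omega\|_{H^{-N}})$, and since $\alpha_-<-\frac12$ we may arrange $\alpha_-\geq-\frac12-\beta$, giving the estimate on $v^+_\omega$ in~$H^{-\frac12-\beta}$. Running the parallel argument with $b_\lambda$ replaced by $b_\lambda^{-1}$ and $\Sigma^+_\lambda\leftrightarrow\Sigma^-_\lambda$ (using the escape function from the remark after Lemma~\ref{l:escape-function}, as indicated in the text) gives the same bound for $v^-_\omega$. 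Adding, and recalling $v_\omega=v^+_\omega+v^-_\omega$ modulo $\Psi^{-\infty}$, yields~\eqref{e:dC-soft}.

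\textbf{Step 2: the smoothing estimate~\eqref{e:dC-soft-2}.} Here we use the improved elliptic-weight bound~\eqref{e:anisotroper-2}: away from $\overline{\Sigma^-_\lambda(\delta)}$ we gain $\|\chi v^+_\omega\|_{H^{\alpha_+}}\leq C\|v_G\|_{L^2}+C_N\|v_\omega\|_{H^{-N}}$ with $\alpha_+>-\frac12$ arbitrary, so choosing $\alpha_+$ large (enlarging $N_0$ and $\kappa$ accordingly, which stays $<1$ as long as $\delta$ is shrunk) gives control of $\chi^+\Pi^+v_\omega$ in $H^N$ for any prescribed $N$; the `$-$' case is symmetric. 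One then bootstraps: having $v_\omega\in H^{-\frac12-\beta}$ uniformly, re-run the argument with the gain, iterating finitely many times to reach $H^N$ microlocally away from the repulsive/attractive sets.

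\textbf{The main obstacle} I anticipate is the careful bookkeeping of the varying-order weights: $E_G$ and $\widetilde E_{-G}$ belong to $\Psi^{N_0}_{0+}$ and $\Psi^{-\alpha_-}_{0+}$ respectively, so one must check that all commutators and remainders in $\mathscr R_G$ genuinely land in $\Psi^{-\infty}$ uniformly in $\varepsilon$ (this uses~\eqref{eq:wecomp}, the wavefront separation $\WF(B^\mp_\omega)\cap\{\pm\xi>0\}=\emptyset$, and~\eqref{e:Pi-pm}), and that the $\varepsilon$-dependent symbol factor $e^{-\varepsilon\tilde z^+_\omega(\theta)\xi}$ in $\sigma(\widetilde T_G)$ is harmless since $\Re\tilde z^+_\omega\geq c>0$ makes it $\leq1$ for $\xi\geq C_1$ — exactly the point already exploited in the proof of Lemma~\ref{l:TG}. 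A secondary subtlety is the self-improving nature of the low-regularity error term $\|v_\omega\|_{H^{-N}}$ on the right: since $v_\omega\in C^\infty$ for $\varepsilon>0$ this term is finite, and the estimate is used precisely to later deduce uniform bounds as $\varepsilon\to0+$ in~\S\ref{s:liap}; one must be sure the chain of Sobolev exponents closes without circularity, which it does because each application of Lemma~\ref{l:TG} strictly improves regularity and only finitely many iterations are needed.
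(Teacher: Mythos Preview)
Your proposal is correct and follows essentially the same approach as the paper: choose $\alpha_-\in[-\tfrac12-\beta,-\tfrac12)$ and $\alpha_+\geq N$, apply Lemma~\ref{l:TG} once to $w=v_G$ via~\eqref{eq:v2vG2}, absorb the contraction to bound $\|v_G\|_{L^2}$, and then read off both~\eqref{e:dC-soft} and~\eqref{e:dC-soft-2} simultaneously from~\eqref{e:anisotroper-1}--\eqref{e:anisotroper-2} (running the mirror argument for~$v^-_\omega$). Two small clean-ups: no bootstrap or iteration is needed---a single choice of $\alpha_\pm$ yields both estimates at once (and increasing $\alpha_+$ actually \emph{decreases} the $\Sigma^+_\lambda$ contribution to $\kappa$ since $\partial_\theta b_\lambda<1$ there, so no further shrinking of $\delta$ is required); and the lower-order term $\|v_G\|_{H^{-N}}$ is handled simply by applying Lemma~\ref{l:TG} with its $N$ taken large enough that $E_G\in\Psi^{N_0}_{0+}$ gives $\|v_G\|_{H^{-(N+N_0+1)}}\leq C\|v_\omega\|_{H^{-N}}$, with no interpolation needed.
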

%%%%%%%%%%%%%%%%%%%%%%%%%%%%%%%%%%%%%%%%%%%%%%%%%%%%%%%%%%%%%%%%%%%%%%%%%%%%%%%%
\Remarks 1. The a priori assumption that $v_\omega$ is smooth (without any uniformity
as $\varepsilon\to 0+$) is important in the argument because
it ensures that the norm $\|v_G\|_{L^2}$ is finite.

\noindent 2. Using the notation~\eqref{eq:N2pm}, we see that~\eqref{e:dC-soft-2} implies that,
assuming that the right-hand side of this inequality is bounded uniformly in~$\varepsilon$ for each $N_0$
and some~$N$, we have
$\WF(v_\omega)\subset N_+^*\Sigma^-_\lambda\sqcup N_-^*\Sigma^+_\lambda$
uniformly in~$\varepsilon$.
%%%%%%%%%%%%%%%%%%%%%%%%%%%%%%%%%%%%%%%%%%%%%%%%%%%%%%%%%%%%%%%%%%%%%%%%%%%%%%%%
\begin{proof}
1. Fix $\alpha_\pm$ satisfying
$$
-\tfrac12-\beta\leq \alpha_-<-\tfrac12<\alpha_+,\quad
\alpha_+\geq N.
$$
Next, fix $\delta>0$ in the construction of the escape function $\mathbf g$ small enough so that~\eqref{eq:logdb}
holds and $\supp\chi^+\cap \overline{\Sigma^-_\lambda(\delta)}=\emptyset$.
By~\eqref{eq:logdb} and since $\alpha_-<-\frac 12<\alpha_+$ we may choose $\tau$ such that
$$
\max_{\pm} \sup_{\Sigma^\pm_\lambda(\delta)} (\partial_\theta b_\lambda) ^{\frac12 + \alpha_\pm }
<\tau<1.
$$
Take $N_0$ so that~\eqref{e:N-0-def} holds.
We use the equation~\eqref{eq:v2vG2} and~\eqref{e:anisotroper-3} to get
$$
\|v_G\|_{L^2}\leq \|T_G v_G\|_{L^2}+C\big(\|g_\omega\|_{H^{N_0}}+\|v_\omega\|_{H^{-N}}\big).
$$
Applying Lemma~\ref{l:TG} to $w:=v_G$, we see that
$$
\|T_G v_G\|_{L^2}\leq\tau\|v_G\|_{L^2}+C \|v_\omega\|_{H^{-N}}.
$$
Since $\tau<1$, together these two inequalities give
\begin{equation}
  \label{e:v-G-bounded}
\|v_G\|_{L^2}\leq C\big(\|g_\omega\|_{H^{N_0}}+\|v_\omega\|_{H^{-N}}\big).
\end{equation}

\noindent 2. From~\eqref{e:v-G-bounded} and~\eqref{e:anisotroper-1}
we have
\begin{equation}
  \label{e:dcS-1}
\|v^+_\omega\|_{H^{-{1\over 2}-\beta}}\leq C\big(\|g_\omega\|_{H^{N_0}}+\|v_\omega\|_{H^{-N}}\big).
\end{equation}
The bound~\eqref{e:dC-soft-2} for the `$+$' sign follows from~\eqref{e:v-G-bounded} and~\eqref{e:anisotroper-2}.
Similar analysis (replacing $b_\lambda$ with $b_\lambda^{-1}$, switching the roles of $\Sigma^+_\lambda$
and~$\Sigma^-_\lambda$, and using Lemma~\ref{l:escape-function} instead of the remark
that follows it) shows that~\eqref{e:dcS-1} holds for $v^-_\omega$
and~\eqref{e:dC-soft-2} holds for the `$-$' sign.
Since $v_\omega=v^+_\omega+v^-_\omega$, we obtain~\eqref{e:dC-soft}.
\end{proof}
%%%%%%%%%%%%%%%%%%%%%%%%%%%%%%%%%%%%%%%%%%%%%%%%%%%%%%%%%%%%%%%%%%%%%%%%%%%%%%%%

%%%%%%%%%%%%%%%%%%%%%%%%%%%%%%%%%%%%%%%%%%%%%%%%%%%%%%%%%%%%%%%%%%%%%%%%%%%%%%%%
\subsection{Conormal regularity}

We now upgrade Proposition \ref{p:dC-soft} to obtain iterated conormal regularity
uniformly as 
$ \Im \omega \to 0 + $. 
We also relax the assumptions on the right-hand side $g_\omega$:
instead of being smooth uniformly in~$\varepsilon$ it only needs
to be bounded in a certain conormal space uniformly in~$\varepsilon$.
This is the high frequency estimate
used in the proof of Lemma~\ref{l:cvg} below.

As before
we identify $\partial\Omega$ with $\mathbb S^1$ and 1-forms on $\mathbb S^1$ with functions using the coordinate $\theta$ constructed in Lemma~\ref{l:adapted-theta},
which in particular makes it possible to define the operator $\partial_\theta$ on 1-forms.
Fix some defining function $\rho$ of $\Sigma_\lambda=\Sigma^+_\lambda\sqcup\Sigma^-_\lambda$
and an operator $A_{\Sigma_\lambda}\in \Psi^0(\mathbb S^1;T^*\mathbb S^1)$ such that
$\WF(A_{\Sigma_\lambda})\cap (N^*_+\Sigma^-_\lambda\sqcup N^*_-\Sigma^+_\lambda)=\emptyset$
and $A_{\Sigma_\lambda}$ is elliptic on $N^*_-\Sigma^-_\lambda\sqcup N^*_+\Sigma^+_\lambda$.
The estimate~\eqref{eq:ugl} below features the seminorms~\eqref{e:I-s-pm-seminorms}
for the space $I^{\frac 14+}(\mathbb S^1,N^*_+\Sigma^-_\lambda\sqcup N^*_-\Sigma^+_\lambda)$
defined in~\eqref{e:I-s-pm}. The proposition below applies to any $v_\omega,g_\omega\in C^\infty$
solving~\eqref{e:Cle-2}, not just to $v_\omega$ discussed in~\S\ref{s:abo-prepare}.
%%%%%%%%%%%%%%%%%%%%%%%%%%%%%%%%%%%%%%%%%%%%%%%%%%%%%%%%%%%%%%%%%%%%%%%%%%%%%%%%
\begin{prop}
\label{p:dC}
Fix $\beta>0$, $k\in\mathbb N_0$, and $N$.
Then there exist $N_0=N_0(\beta,k)$ and $C=C(\beta,k,N)$ such that
for all small $\varepsilon=\Im\omega>0$ and any solution $v_\omega\in C^\infty(\mathbb S^1;T^*\mathbb S^1)$ 
to~\eqref{e:Cle-2} we have
\begin{equation}
\label{eq:ugl}
\begin{aligned}
\| ( \rho\partial_\theta )^k v_\omega \|_{ H^{-\frac12 - \beta }}
+ \|A_{\Sigma_\lambda} v_\omega\|_{H^k} \leq 
C \big( &\max_{0\leq \ell\leq N_0}\|(\rho\partial_\theta)^\ell g_\omega\|_{H^{-\frac12-\beta}}
\\&  + \|A_{\Sigma_\lambda} g_\omega\|_{H^{N_0}} + \| v_\omega \|_{ H^{-N} } \big) .
\end{aligned}
\end{equation}
\end{prop}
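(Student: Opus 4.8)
The plan is to bootstrap from Proposition~\ref{p:dC-soft} by induction on~$k$, commuting the vector field $X:=\rho\partial_\theta$ (which vanishes on~$\Sigma_\lambda$) through the equation~\eqref{e:Cle-2}. At every stage we rerun the scheme of Proposition~\ref{p:dC-soft}: split $v_\omega=v^+_\omega+v^-_\omega$ with $v^\pm_\omega=\Pi^\pm v_\omega$, treat the ``$+$'' equation~\eqref{e:Cle-3} in detail (the ``$-$'' case is symmetric, with $b_\lambda$ replaced by $b_\lambda^{-1}$, $\Sigma^\pm_\lambda$ interchanged, and Lemma~\ref{l:escape-function} used in place of the remark following it), conjugate by a variable-order weight $E_G=\Op(e^G)$ as in~\eqref{eq:EtG}, \eqref{eq:v2vG2}, and close using the contraction Lemma~\ref{l:TG}.

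First I would record the base case $k=0$, which is only a mild extension of Proposition~\ref{p:dC-soft} allowing $g_\omega$ to be conormal rather than smooth. Inspecting~\eqref{e:anisotroper-3}: since $\alpha_-\le -\tfrac12-\beta<\alpha_+$, the weight $E_G$ has microlocal order~$\alpha_-$ on the half $N^*_+\Sigma^-_\lambda$ (exactly the direction where $v^+_\omega$ and the singular part of $g_\omega$ are only $H^{-\frac12-\beta}$) and order at most~$N_0$ elsewhere, while off that half $g_\omega$ is as regular as needed: in the good directions at $\Sigma_\lambda$ by $\|A_{\Sigma_\lambda}g_\omega\|_{H^{N_0}}$, and off $\Sigma_\lambda$ by $\|(\rho\partial_\theta)^\ell g_\omega\|_{H^{-\frac12-\beta}}$, $\ell\le N_0$, using ellipticity of $\rho\partial_\theta$ off $\Sigma_\lambda$. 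Hence $\|E_Gg^+_\omega\|_{L^2}$ is bounded by the conormal seminorms of $g_\omega$ through order~$N_0$, and Lemma~\ref{l:TG} together with~\eqref{e:anisotroper-1}--\eqref{e:anisotroper-2} gives $\|v_\omega\|_{H^{-\frac12-\beta}}$ and, for every~$M$ (take $\alpha_+=M$, which enlarges~$N_0$), $\|\chi^\pm\Pi^\pm v_\omega\|_{H^M}$; since $\WF(A_{\Sigma_\lambda})$ meets $\Sigma_\lambda$ only along $N^*_-\Sigma^-_\lambda\sqcup N^*_+\Sigma^+_\lambda$ and lies off $\Sigma_\lambda$ otherwise, the latter also controls $\|A_{\Sigma_\lambda}v_\omega\|_{H^M}$. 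We thus obtain, for any~$M$, the estimate $\|v_\omega\|_{H^{-\frac12-\beta}}+\|A_{\Sigma_\lambda}v_\omega\|_{H^M}\le C_M(\textrm{conormal seminorms of }g_\omega\textrm{ through order }N_0(\beta,M))+C_M\|v_\omega\|_{H^{-N}}$.

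For the inductive step I would apply $X^k$ to the conjugated ``$+$'' equation, now using a weight $E_G$ of order independent of~$k$ (only $\alpha_-\le-\tfrac12-\beta$ and $\alpha_+$ slightly above~$-\tfrac12$ are needed for the high-frequency contraction at the conormal level). The key dynamical input is the $b_\lambda$-invariance of $\Sigma^\pm_\lambda$: choosing $\rho$ so that $|\rho'|$ is constant on the finite set $\Sigma_\lambda$ (which only replaces the seminorms~\eqref{e:I-s-pm-seminorms} by equivalent ones) yields $X\circ b_\lambda^*=b_\lambda^*\circ(a_\lambda X)$ with $a_\lambda\in C^\infty$ nonvanishing and $a_\lambda\equiv1$ on $\Sigma_\lambda$, so $X^kb_\lambda^*=b_\lambda^*a_\lambda^kX^k+b_\lambda^*\sum_{j<k}(\textrm{smooth})X^j$. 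Combined with the facts that iterated commutators of $X$ against $B^\pm_\omega$ lie in $\Psi^0$ (uniformly in~$\varepsilon$, with wavefront in $\{\pm\xi>0\}$), that iterated commutators of $X$ against $E_G$ and $\widetilde E_{-G}$ stay in their respective weighted classes (Lemma~\ref{l:expwe1} and~\eqref{e:bounder}: differentiating $e^{\pm G}$ in $\theta$ costs only a $\log|\xi|=\mathcal O(\langle\xi\rangle^{0+})$ factor), and that $[X,\Pi^\pm]$ is uniformly smoothing (its principal symbol vanishes since $\sigma(\Pi^\pm)$ is fibre-constant), one gets for $\widetilde v_G:=E_GX^kv^+_\omega$ an equation
\[
\widetilde v_G=T^{(k)}_G\widetilde v_G+(\textrm{unif.\ smoothing})v_\omega+g^{(k)}_G+\sum_{j<k}\mathscr B_jX^jv^+_\omega,
\]
where $T^{(k)}_G$ is $T_G$ precomposed with multiplication by $a_\lambda^k\circ b_\lambda$, $g^{(k)}_G\in L^2$ is bounded by the conormal seminorms of $g_\omega$ through order $\sim N_0+k$, and each $\mathscr B_j$ has the form $E_GB^\pm_\omega(\textrm{smooth})b_\lambda^{\pm*}$. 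Since $a_\lambda^k\circ b_\lambda=1$ on $\Sigma_\lambda$, the symbol of $T^{(k)}_G$ differs from that of $T_G$ by a bounded factor equal to~1 on $\Sigma_\lambda$, so the sup-bounds~\eqref{e:TG-4}--\eqref{e:TG-5} from the proof of Lemma~\ref{l:TG} still give $\|T^{(k)}_G\|_{L^2\to L^2}\le\tau<1$ at high frequency for $\varepsilon$ small. The terms $\mathscr B_jX^jv^+_\omega$ are bounded in~$L^2$ by: on $N^*_+\Sigma^-_\lambda$ using that $E_G$ has order $\alpha_-\le-\tfrac12-\beta$ there and that $\|X^jv_\omega\|_{H^{-\frac12-\beta}}$ is controlled by the inductive hypothesis; on negative $\Pi^+$-frequencies using that $\Pi^+$ is smoothing; and on $N^*_+\Sigma^+_\lambda$ and off $\Sigma_\lambda$ using that $v^+_\omega$ is $H^M$-regular for $M$ as large as desired by the base case (this forces $N_0=N_0(\beta,k)$ to grow linearly in~$k$, with no circularity since the base case is invoked at an order that does not involve $N_0(\beta,k)$). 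Applying the $X^k$-analogue of~\eqref{e:anisotroper-1}--\eqref{e:anisotroper-2} to $\widetilde v_G$ and assembling with the ``$-$'' analysis and the argument of the base case gives~\eqref{eq:ugl} for~$k$.

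The step I expect to be the main obstacle is the bookkeeping in the inductive step: one must verify that commuting $X^k$ past the weighted pseudodifferential operators, the pullbacks $b_\lambda^{\pm*}$, and the cutoffs $\Pi^\pm$ produces only (i) uniformly smoothing errors, (ii) terms bounded in $L^2$ by conormal seminorms of $g_\omega$, and (iii) terms with fewer than~$k$ powers of~$X$ on $v_\omega$ that — despite the order-increasing conjugation by $E_G$ — can be absorbed, which is where one simultaneously uses the inductive hypothesis in the bad microlocal direction and the unconditional high-regularity of $v_\omega$ in the good directions from the base case; and, most delicately, that the leading operator $T^{(k)}_G$ retains the Lasota--Yorke contraction. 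It is precisely the $b_\lambda$-invariance of $\Sigma^\pm_\lambda$, together with the normalization of $\rho$ on the finite set $\Sigma_\lambda$, that makes the correction factor $a_\lambda^k\circ b_\lambda$ trivial on $\Sigma_\lambda$ and hence harmless; without such an input the iteration in~$k$ would not close.
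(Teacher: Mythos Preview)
Your proposal is correct and follows the same core strategy as the paper: commute powers of $X=\rho\partial_\theta$ through~\eqref{e:Cle-3}, observe that the resulting leading operator is $E_GB^+_\omega\varphi^kb_\lambda^*\widetilde E_{-G}$ (your $T^{(k)}_G$, with $\varphi=a_\lambda\circ b_\lambda$), and use that it still contracts on $L^2$ at high frequency because $|\varphi|=1$ on~$\Sigma_\lambda$. The commutation identity you write is the paper's~\eqref{e:udon-comm}, and the contraction step is exactly the paper's strengthening~\eqref{e:gatorade-4} of Lemma~\ref{l:TG}.

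The organizational difference is in how the lower-order terms $X^jv^+_\omega$, $j<k$, are absorbed. You keep $\alpha_+$ fixed just above $-\tfrac12$ and control these terms by combining the inductive hypothesis (for $H^{-\frac12-\beta}$ microlocally at $N^*_+\Sigma^-_\lambda$) with a separate invocation of the base case at high order $M$ (for regularity elsewhere). The paper instead takes $\alpha_+\geq k$ in the escape function and runs a \emph{coupled} system: with $v_G^j:=E_G(\rho\partial_\theta)^jv^+_\omega$ for $j=0,\dots,k$, each satisfies an equation of the form $v_G^j=T_G^{(j)}v_G^j+\sum_{i<j}Q_{j,i}v_G^i+(\text{good errors})$ with $Q_{j,i}\in\Psi^0_{0+}$ uniformly, so $\|v_G^j\|_{L^2}$ cascades down to $\|v_G^0\|_{L^2}$, and the single bound on $\|v_G^k\|_{L^2}$ then yields both $\|(\rho\partial_\theta)^kv^+_\omega\|_{H^{-\frac12-\beta}}$ and $\|\chi^+(\rho\partial_\theta)^kv^+_\omega\|_{H^k}$ via the analogues of~\eqref{e:anisotroper-1}--\eqref{e:anisotroper-2}. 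The paper's route avoids the microlocal splitting you need for the $\mathscr B_j$ terms, at the cost of a $k$-dependent weight; your route is a cleaner induction but requires re-entering the base case with growing $M$ and carrying the resulting growth of $N_0$ through the argument.
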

%%%%%%%%%%%%%%%%%%%%%%%%%%%%%%%%%%%%%%%%%%%%%%%%%%%%%%%%%%%%%%%%%%%%%%%%%%%%%%%%
\Remark From the Remark at the end of~\S\ref{s:escape-functions} we see that 
the statement of Proposition~\ref{p:dC} holds locally uniformly in $\lambda$.
More precisely, assume that $\mathcal J\subset (0,1)$ is an open interval such that
each $\lambda\in\mathcal J$ satisfies the Morse--Smale conditions of Definition~\ref{d:2}.
We may choose the coordinate $\theta$, the defining function~$\rho$ of $\Sigma_\lambda$, and the operator $A_{\Sigma_\lambda}$
depending smoothly on~$\lambda$. Then for each compact set $\mathcal K\subset \mathcal J$
we may choose constants $N_0$ and $C$ so that~\eqref{eq:ugl} holds
for all $\lambda=\Re\omega\in\mathcal K$. This can be seen from the remark following Lemma~\ref{l:A-squared}
and the fact that the escape function $\mathbf g$ can be chosen to depend smoothly on~$\lambda$.
%%%%%%%%%%%%%%%%%%%%%%%%%%%%%%%%%%%%%%%%%%%%%%%%%%%%%%%%%%%%%%%%%%%%%%%%%%%%%%%%
\begin{proof}
1. By the discussion following~\eqref{e:I-s-pm-seminorms}
it suffices to show~\eqref{eq:ugl} for one specific choice of~$\rho$.
We choose $\rho$ such that
\begin{equation}
  \label{eq:proprho}
\rho^{-1}(0)=\Sigma_\lambda,\quad
|\partial_\theta\rho|=1\quad\text{on }\Sigma_\lambda.
\end{equation}
Recalling the formula~\eqref{e:1-forms-pulling} for pullback on 1-forms we have the commutation identity of operators on $C^\infty(\mathbb S^1;T^*\mathbb S^1)$
\begin{equation}
  \label{e:udon-comm}
\rho\partial_\theta b^*_\lambda=\varphi b_\lambda^* \rho\partial_\theta
+\psi b_\lambda^*,\quad
\varphi(\theta)={\rho(\theta)\partial_\theta b_\lambda(\theta)\over \rho(b_\lambda(\theta))},\quad
\psi(\theta)={\rho(\theta)\partial^2_\theta b_\lambda(\theta)\over \partial_\theta b_\lambda(\theta)}.
\end{equation}
By~\eqref{eq:proprho} and since $b_\lambda(\Sigma_\lambda)=\Sigma_\lambda$ we have $|\varphi|=1$ on~$\Sigma_\lambda$.

As in~\eqref{eq:defvla}, let $v^\pm_\omega:=\Pi^\pm v_\omega$ and $g^\pm_\omega:=\Pi^\pm g_\omega$.
Since $\WF(A_{\Sigma_\lambda})\cap N^*_\pm\Sigma^\mp_\lambda=\emptyset$, we may fix $\chi^\pm\in C^\infty(\mathbb S^1)$ such that
$$
\supp\chi^\pm\cap \Sigma^\mp_\lambda=\emptyset,\quad
\chi^\pm=1\quad\text{near}\quad \{\theta\in\mathbb S^1\mid (\theta,\pm 1)\in\WF(A_{\Sigma_\lambda})\}.
$$
We will show that there exist $N_0=N_0(\beta,k)$ and $\widetilde\chi^\pm\in C^\infty(\mathbb S^1)$ such that
$\supp\widetilde\chi^\pm\cap \Sigma^\mp_\lambda=\emptyset$ and
\begin{equation}
  \label{e:udon-1}
\begin{aligned}
\|(\rho\partial_\theta)^kv^\pm_\omega\|_{H^{-\frac12-\beta}}
+ \|\chi^\pm v^\pm_\omega\|_{H^k}
\leq C \big( & \max_{0\leq\ell\leq k}\|(\rho\partial_\theta)^\ell g_\omega^\pm\|_{H^{-\frac 12-\beta}}
\\&
+\|\widetilde\chi^\pm g_\omega^\pm\|_{H^{N_0}} + \| v_\omega \|_{ H^{-N} } \big).
\end{aligned}
\end{equation}
Adding these together and using that $v_\omega=v^+_\omega+v^-_\omega$, we get
$$
\begin{gathered}
\|(\rho\partial_\theta)^kv_\omega\|_{H^{-\frac12-\beta}}
+ \|\chi^+v^+_\omega+\chi^-v^-_\omega\|_{H^k}\\
\leq C \big(  \max_{0\leq\ell\leq k}\|(\rho\partial_\theta)^\ell g_\omega\|_{H^{-\frac 12-\beta}}
+\|\widetilde\chi^+g^+_\omega\|_{H^{N_0}} + \|\widetilde\chi^- g^-_\omega\|_{H^{N_0}} + \| v_\omega \|_{ H^{-N} } \big).
\end{gathered}
$$
Since $\chi^+\Pi^++\chi^-\Pi^-$ is elliptic on~$\WF(A_{\Sigma_\lambda})$,
we may estimate $\|A_{\Sigma_\lambda}v_\omega\|_{H^k}$ in terms of $\|\chi^+v^+_\omega+\chi^-v^-_\omega\|_{H^k}$.
Since $\WF(\widetilde\chi^\pm \Pi^\pm)\cap (N^*_+\Sigma^-_\lambda\sqcup N^*_-\Sigma^+_\lambda)=\emptyset$,
we may estimate $\|\widetilde\chi^\pm g^\pm_\omega\|_{H^{N_0}}$ by~\eqref{e:I-s-pm-sameor}. Thus~\eqref{e:udon-1} implies~\eqref{eq:ugl} (possibly with a larger value of~$N_0$).

\noindent 2. It remains to show~\eqref{e:udon-1}. We show an estimate on $v^+_\omega$, with the case
of $v^-_\omega$ handled similarly. We start with the case $k=0$.
Let $E_G$, $\widetilde E_{-G}$ be constructed in~\eqref{eq:EtG}
where the escape function $\mathbf g$ is constructed using parameters
$\alpha_-<\alpha_+$, $\delta>0$ such that
\begin{align}
  \label{e:gatorade-1}
\alpha_-=-\tfrac 12-\beta,&\quad
\supp\chi^+\cap\overline{\Sigma^-_\lambda(\delta)}=\emptyset,\\
  \label{e:gatorade-2}
\alpha_+\geq 0,&\quad
\max_{\pm} \sup_{\Sigma^\pm_\lambda(\delta)} (\partial_\theta b_\lambda) ^{\frac12 + \alpha_\pm }
<1.
\end{align}
Using the equation~\eqref{eq:v2vG2} and Lemma~\ref{l:TG} similarly to the proof of Proposition~\ref{p:dC-soft}, we get the inequality
\begin{equation}
  \label{e:vivalaG-1}
\|v_G\|_{L^2}\leq C \big(\|g_G\|_{L^2}+\|v_\omega\|_{H^{-N}}\big)
\end{equation}
where $v_G:=E_G v^+_\omega$, $g_G:=E_G g^+_\omega$.
By~\eqref{e:anisotroper-1} and~\eqref{e:anisotroper-2} we have
\begin{equation}
  \label{e:vivalaG-2}
\|v^+_\omega\|_{H^{-\frac12-\beta}}+\|\chi^+v^+_\omega\|_{L^2}\leq C\big(\|v_G\|_{L^2}+\|v_{\omega}\|_{H^{-N}}\big).
\end{equation}
By  property~(5) in the remark following Lemma~\ref{l:escape-function} we have $\mathbf g=\alpha_-$ on some neighborhood of $\Sigma^-_\lambda$. Thus we can choose $\widetilde\chi^+\in C^\infty(\mathbb S^1)$ such that
$\supp\widetilde\chi^+\cap \Sigma^-_\lambda=\emptyset$ and
$$
\mathbf g=\alpha_-\quad\text{near}\quad\supp(1-\widetilde\chi^+).
$$
Then $E_G(1-\widetilde\chi^+)\in\Psi^{\alpha_-}_{0+}$ by~\eqref{eq:wecomp}.
Fix $N_0$ such that~\eqref{e:N-0-def} holds, so that
$E_G\in\Psi^{N_0}_{0+}$.
Writing $g_G=E_G(1-\widetilde\chi^+)g^+_\omega+E_G\widetilde\chi^+ g^+_\omega$,
we get
\begin{equation}
  \label{e:vivalaG-3}
\|g_G\|_{L^2}\leq C\big(\|g^+_\omega\|_{H^{-\frac12-\beta}}+\|\widetilde\chi^+ g^+_\omega\|_{H^{N_0}}\big).
\end{equation}
Putting together~\eqref{e:vivalaG-1}--\eqref{e:vivalaG-3}, we get~\eqref{e:udon-1} for~$k=0$.

\noindent 3. We next show~\eqref{e:udon-1} for $k=1$. 
Put for $j\in\mathbb N_0$
$$
v^j:=(\rho\partial_\theta)^j v^+_\omega\ \in\ C^\infty(\mathbb S^1;T^*\mathbb S^1),\quad
v_G^j:=E_G v^j,\quad
g_G^j:=E_G(\rho\partial_\theta)^j g^+_\omega.
$$
We apply $\rho\partial_\theta$ to~\eqref{e:Cle-3} and use~\eqref{e:udon-comm} to get a similar equation on $v^1=\rho\partial_\theta v^+_\omega$ which also involves $v^0=v^+_\omega$:
\begin{equation}
  \label{e:udon-2}
\begin{aligned}
v^1&=B^+_\omega \varphi b_\lambda^* v^1+Q^+_\omega b_\lambda^* v^0+\rho\partial_\theta(\mathscr R^+_\omega v_\omega
+g^+_\omega),\\
Q^+_\omega &=[\rho\partial_\theta,B^+_\omega]+B^+_\omega\psi\ \in\ \Psi^0(\mathbb S^1;T^*\mathbb S^1)\quad\text{uniformly in }\varepsilon.
\end{aligned}
\end{equation}
Applying $E_G$ to~\eqref{e:udon-2}, we get similarly to~\eqref{eq:v2vG2}
\begin{equation}
  \label{e:udon-3}
v_G^1=T_G^1v_G^1+Q_G v_G^0+\mathscr R_G^1 v_\omega+g_G^1
\end{equation}
where $\mathscr R_G^1\in \Psi^{-\infty}$ uniformly in~$\varepsilon$ and
$$
T_G^1:=E_GB^+_\omega\varphi b^*_\lambda \widetilde E_{-G},\qquad
Q_G:=E_GQ^+_\omega b_\lambda^* \widetilde E_{-G}.
$$
We fix the parameters $\alpha_\pm,\delta$ in the construction of the escape function $\mathbf g$
such that we have~\eqref{e:gatorade-1} and the following strengthening of~\eqref{e:gatorade-2}:
\begin{equation}
  \label{e:gatorade-3}
\alpha_+\geq 1,\quad
\max_{\pm} \sup_{\Sigma^\pm_\lambda(\delta)} \max(1,|\varphi|)(\partial_\theta b_\lambda) ^{\frac12 + \alpha_\pm }
<1.
\end{equation}
This is possible by~\eqref{eq:logdb} and since $|\varphi|=1$ on $\Sigma^\pm_\lambda$.

Arguing similarly to the proof of Lemma~\ref{l:TG}, we get the bounds for some $\tau<1$
$$
%\begin{aligned}
\|T_G^1 v_G^1\|_{L^2}\leq \tau\|v_G^1\|_{L^2}+C\|v_\omega\|_{H^{-N}},\quad
\|Q_Gv_G^0\|_{L^2}\leq C\|v_G^0\|_{L^2}.
%\end{aligned}
$$
Combining these with~\eqref{e:udon-3} and recalling~\eqref{e:vivalaG-1} we get
\begin{equation}
  \label{e:vivalaG-4}
\begin{aligned}
\|v_G^1\|_{L^2}&\leq C\big(\|v_G^0\|_{L^2}+\|g_G^1\|_{L^2}+\|v_\omega\|_{H^{-N}}\big),\\
\|v_G^0\|_{L^2}&\leq C\big(\|g_G^0\|_{L^2}+\|v_\omega\|_{H^{-N}}\big).
\end{aligned}
\end{equation}
Similarly to~\eqref{e:vivalaG-2}--\eqref{e:vivalaG-3} we have for $j=0,1$
\begin{equation}
  \label{e:vivalaG-5}
\begin{aligned}
\|v^j\|_{H^{-\frac12-\beta}}+\|\chi^+v^j\|_{H^1}&\leq C\big(\|v_G^j\|_{L^2}+\|v_{\omega}\|_{H^{-N}}\big),\\
\|g_G^j\|_{L^2}&\leq C\big(\|(\rho\partial_\theta)^jg^+_\omega\|_{H^{-\frac12-\beta}}+\|\widetilde\chi^+ g^+_\omega\|_{H^{N_0}}
\big).
\end{aligned}
\end{equation}
Together~\eqref{e:vivalaG-4}--\eqref{e:vivalaG-5} give~\eqref{e:udon-1} for $k=1$.

\noindent 4. The case of general $k$ is handled similarly to $k=1$.
We write similarly to~\eqref{e:udon-2}
$$
v^k=B^+_\omega\varphi^kb^*_\lambda v^k+\sum_{j=0}^{k-1}Q^+_{\omega,k,j}b_\lambda^* v^j+(\rho\partial_\theta)^k(\mathscr R^+_\omega v_\omega+g^+_\omega).
$$
Here $Q^+_{\omega,k,j}\in \Psi^0$ uniformly in~$\varepsilon$, is defined inductively as follows:
$$
\begin{aligned}
Q^+_{\omega,k,j}:=\,&
\big([\rho\partial_\theta,B^+_\omega\varphi^{k-1}]+B^+_\omega \varphi^{k-1}\psi\big)\delta_{j,k-1}\\
&
+Q^+_{\omega,k-1,j-1}\varphi+[\rho\partial_\theta,Q^+_{\omega,k-1,j}]+Q^+_{\omega,k-1,j}\psi
\end{aligned}
$$
and we use the notation $\delta_{a,b}=1$ if $a=b$ and $0$ otherwise, and $Q^+_{\omega,k-1,j}=0$ when $j\in \{-1,k-1\}$.
The argument in Step~3 of this proof now goes through, replacing~\eqref{e:gatorade-3} with
\begin{equation}
  \label{e:gatorade-4}
\alpha_+\geq k,\quad
\max_{\pm} \sup_{\Sigma^\pm_\lambda(\delta)} \max(1,|\varphi|^k)(\partial_\theta b_\lambda) ^{\frac12 + \alpha_\pm }
<1\end{equation}
and gives~\eqref{e:udon-1} for any value of~$k$.
\end{proof}
%%%%%%%%%%%%%%%%%%%%%%%%%%%%%%%%%%%%%%%%%%%%%%%%%%%%%%%%%%%%%%%%%%%%%%%%%%%%%%%%
We will also need a refinement concerning Lagrangian regularity.
Let $B^\pm_{\lambda+i0}$ be the operators $B^\pm_\omega$ from Lemma~\ref{l:A-squared} with
$\varepsilon:=0$.
%%%%%%%%%%%%%%%%%%%%%%%%%%%%%%%%%%%%%%%%%%%%%%%%%%%%%%%%%%%%%%%%%%%%%%%%%%%%%%%%
\begin{lemm}
\label{p:020}
Suppose that $ v\in \mathcal D'(\mathbb S^1;T^*\mathbb S^1)$ satisfies~\eqref{e:Cle-2} with $\varepsilon=0:$
\begin{equation}
  \label{e:Cle-2-limiting}
v=B^+_{\lambda+i0}b_\lambda^*v+B^-_{\lambda+i0}b_\lambda^{-*}v+g\quad\text{for some}\quad
g\in C^\infty(\mathbb S^1;T^*\mathbb S^1).
\end{equation}
Similarly to~\eqref{eq:defvla} define $v^\pm:=\Pi^\pm v$.
Then
\begin{equation}
\label{eq:lagra}
v^\pm \in I^{\frac14+} ( \mathbb S^1 , N^* \Sigma^\mp_\lambda ) 
\quad \Longrightarrow \quad v^\pm \in I^{\frac14} ( \mathbb S^1 , N^* \Sigma^\mp_\lambda ) .
\end{equation}
\end{lemm}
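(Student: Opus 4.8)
The plan is to exploit the fact that $I^{\frac14}$ versus $I^{\frac14+}$ differ only by a borderline Sobolev regularity gain, so this is a ``bootstrapping'' statement: we already know $v^\pm$ lies in every $I^{\frac14+\beta}$ with $\beta>0$, and we want to reach $\beta=0$. First I would reduce to a single sign, say $+$, the other being symmetric under replacing $b_\lambda$ by $b_\lambda^{-1}$ and switching the roles of $\Sigma^\pm_\lambda$, exactly as in the proof of Proposition~\ref{p:dC}. By the characterization of conormal spaces in terms of the seminorms~\eqref{e:I-s-pm-seminorms}, the hypothesis $v^+\in I^{\frac14+}(\mathbb S^1,N^*\Sigma^-_\lambda)$ gives, for every $k\in\mathbb N_0$ and every $\beta>0$,
\[
\|(\rho\partial_\theta)^k v^+\|_{H^{-\frac14-\beta}}<\infty,\qquad
\|A_{\Sigma_\lambda}v^+\|_{H^k}<\infty,
\]
and we want the same with $\beta=0$, i.e. $\|(\rho\partial_\theta)^k v^+\|_{H^{-\frac14}}<\infty$ for all $k$ (the $A_{\Sigma_\lambda}$ part is already $C^\infty$-controlled so causes no trouble). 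This is the ``non-critical'' half of Lagrangian regularity: the gain we need is precisely the $\tfrac14$ not $\tfrac14+$.

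The mechanism is the same contraction estimate as in Proposition~\ref{p:dC}, but now run at the critical exponent rather than the subcritical one. I would conjugate the $\varepsilon=0$ fixed-point equation~\eqref{e:Cle-2-limiting} by the exponential weight $E_G=\Op(e^G)$ with $G$ as in~\eqref{eq:defG}, built from an escape function $\mathbf g$ whose parameters $\alpha_-<\alpha_+$ are now chosen to \emph{straddle} $-\tfrac14$ in the way dictated by the conormal seminorm: concretely $\alpha_-$ slightly less than and $\alpha_+$ at least (a power of) the relevant weights so that, after commuting $(\rho\partial_\theta)^k$ through as in Step 3--4 of the proof of Proposition~\ref{p:dC}, the operator $T_G$ (resp. $T_G^k$) has operator norm bounded by
\[
\max_\pm\ \sup_{\Sigma^\pm_\lambda(\delta)}\ \max(1,|\varphi|^k)\,(\partial_\theta b_\lambda)^{\alpha_\pm+\text{shift}}\ <\ 1 ,
\]
where the ``shift'' comes from the order of the $I^{\frac14}$-seminorm (note this is $+\frac14$ rather than $+\frac12$, the difference from Proposition~\ref{p:dC} reflecting the exponent of the Sobolev space attached to $(\rho\partial_\theta)^k$ in~\eqref{e:I-s-pm-seminorms}). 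The key point, available because $\mp\log\partial_\theta b_\lambda>0$ on $\overline{\Sigma^\pm_\lambda(\delta)}$ by~\eqref{eq:logdb}, is that one can pick $\alpha_\pm$ with $\alpha_-$ equal to exactly the critical value $-\tfrac14$ (or whatever the seminorm forces) and still get strict contraction, \emph{without} needing the extra $-\beta$ slack. Lemma~\ref{l:psi0-norm} and the uniform-$\Psi^0_{0+}$ calculus of Lemma~\ref{l:expwe1} then turn this symbolic contraction into the $L^2$ bound $\|E_G(\rho\partial_\theta)^k v^+\|_{L^2}\le \tau\|\cdots\|_{L^2}+(\text{lower order})+\|g\|_{H^{N_0}}$, and since $\tau<1$ and all the lower-order terms are already finite by the $I^{\frac14+}$ hypothesis, we absorb and conclude $\|(\rho\partial_\theta)^k v^+\|_{H^{-\frac14}}<\infty$, i.e. $v^+\in I^{\frac14}(\mathbb S^1,N^*\Sigma^-_\lambda)$.

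The main obstacle I anticipate is checking that the contraction constant can genuinely be taken $<1$ \emph{at} the critical exponent $\alpha_-=-\tfrac14$ rather than only for $\alpha_-<-\tfrac14$, together with the bookkeeping of the correct power-shift in~\eqref{e:I-s-pm-seminorms} when commuting $k$ copies of $\rho\partial_\theta$ through $B^+_{\lambda+i0}b_\lambda^*$ (the coefficients $\varphi,\psi$ from~\eqref{e:udon-comm} reappear, and one uses $|\varphi|=1$ on $\Sigma_\lambda$ so that the $|\varphi|^k$ factor does not spoil strictness). A secondary subtlety is that here $\varepsilon=0$, so $B^\pm_{\lambda+i0}$ genuinely have wavefront set on $\{\pm\xi>0\}$ and one must be slightly careful that the off-diagonal cross terms $\Pi^\pm B^\mp_{\lambda+i0}(\cdots)$ are genuinely smoothing; but this is exactly as in~\S\ref{s:abo-prepare} and the remark after Lemma~\ref{l:A-squared}, so it should go through verbatim. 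Once the critical-exponent contraction is in hand, the argument is a routine repetition of the proof of Proposition~\ref{p:dC} with $\beta$ set to $0$ and the subcritical slack replaced by the already-known $I^{\frac14+}$ bounds.
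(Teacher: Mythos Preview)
Your approach has a genuine gap at the conceptual level. The passage from $I^{\frac14+}$ to $I^{\frac14}$ is \emph{not} a borderline Sobolev gain. Recall that in one dimension $u\in I^{s}(\mathbb S^1,N^*\{x_0\})$ means $\widehat u\in S^{s-\frac14}$, so $v^+\in I^{\frac14}$ asks for the pointwise symbol bound $\widehat{v^+}\in S^0$, whereas the Sobolev seminorms~\eqref{e:I-s-pm-seminorms} characterize only the $+$ spaces: $v^+\in I^{\frac14+}$ is equivalent to $(\rho\partial_\theta)^k v^+\in H^{-\frac12-\beta}$ for all $k$ and all $\beta>0$. Even if you could push the escape-function argument to $\beta=0$ and obtain $(\rho\partial_\theta)^k v^+\in H^{-\frac12}$, this would still not give $\widehat{v^+}\in S^0$ (indeed a bounded Fourier transform is not even in $H^{-\frac12}$ in general, since $\int\langle\xi\rangle^{-1}d\xi$ diverges). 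The anisotropic-Sobolev/Lasota--Yorke machinery of Proposition~\ref{p:dC} is simply the wrong tool: it produces Sobolev control, never a symbol estimate. A separate minor issue is that the critical exponent in Lemma~\ref{l:TG} is $-\frac12$, not $-\frac14$ (the factor is $(\partial_\theta b_\lambda)^{\frac12+\alpha_\pm}$), and the corresponding Sobolev exponent in the $I^{\frac14+}$ seminorm is $-\frac12-\beta$, not $-\frac14-\beta$; but this arithmetic slip is moot given the first point.

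The paper's argument is entirely different and works directly on the Fourier side. One first iterates~\eqref{e:Cle-2-limiting} $n$ times (the period) to obtain $v^+=Bf^*v^++g_2$ with $f=b_\lambda^n$, $g_2$ smooth, and $\sigma(B)=H(\xi)$. Localizing near a point $x_0\in\Sigma^-_\lambda$ (where $f(x_0)=x_0$ and $R:=f'(x_0)>1$) and applying the conormal symbol calculus of Lemma~\ref{l:conormal-symbol} turns this into the scalar recursion $\widehat u(\xi)=\widehat u(\xi/R)+q(\xi)$ with $q\in S^{-1+}$. Iterating gives $\widehat u(R^k\eta)=\widehat u(\eta)+\sum_{\ell=1}^k q(R^\ell\eta)$, and since $R>1$ the sum is absolutely convergent; this yields $\sup|\widehat u|<\infty$ and, after differentiating, $\widehat u\in S^0$. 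The mechanism is thus a genuine symbol-level fixed-point argument, not an $L^2$ contraction.
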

%%%%%%%%%%%%%%%%%%%%%%%%%%%%%%%%%%%%%%%%%%%%%%%%%%%%%%%%%%%%%%%%%%%%%%%%%%%%%%%%
\begin{proof}
1. Let us consider $v^+$, with $v^-$ handled similarly.
Similarly to~\eqref{e:Cle-3} we have from~\eqref{e:Cle-2-limiting}
$$
v^+=B^+_{\lambda+i0}b_\lambda^* v^++g_1\quad\text{where}\quad g_1\in C^\infty(\mathbb S^1;T^*\mathbb S^1).
$$
Iterating this $n$ times, where $n$ is the period of the closed trajectories of $b_\lambda$, we see that
\begin{equation}
  \label{e:lagra-1}
v^+=B f^* v^++g_2\quad\text{where}\quad
f:=b_\lambda^n:\mathbb S^1\to\mathbb S^1,
\quad g_2\in C^\infty(\mathbb S^1;T^*\mathbb S^1),
\end{equation}
and the pseudodifferential operator
$$
B:=B^+_{\lambda+i0}\big(b_\lambda^*B^+_{\lambda+i0}b_\lambda^{-*}\big)\cdots
\big((b_\lambda^{n-1})^*B^+_{\lambda+i0}(b_\lambda^{n-1})^{-*}\big)\ \in\ \Psi^0(\mathbb S^1;T^*\mathbb S^1)
$$
satisfies $\sigma(B)=H(\xi)$ by Lemma~\ref{l:A-squared}.

Take arbitrary $x_0\in \Sigma^-_\lambda$ and assume that the coordinate $\theta$ is chosen so that
$\theta(x_0)=0$. Note that $f(0)=0$.
Fix $\chi\in C^\infty(\mathbb R)$ supported on a small neighborhood
of~$0$ which does not contain any other point in $\Sigma^-_\lambda$, and such that $\chi=1$ near $0$.
We write
$$
\chi v^+=u(\theta)\,d\theta\quad\text{for some}\quad u\in \mathcal E'(\mathbb R).
$$
Then $u\in I^{\frac 14+}(\mathbb R,N^*\{0\})$ and by~\eqref{e:lagra-1} we have
\begin{equation}
  \label{e:lagra-2}
u=\widetilde B f^*u+g_3\quad\text{where}\quad 
g_3\in \CIc(\mathbb R)
\end{equation}
and $\widetilde B$ is a compactly supported operator in $\Psi^0(\mathbb R)$ such that
$\sigma(\widetilde B)(0,\xi)=f'(0)H(\xi)$.
Here in~\eqref{e:lagra-1} the operator $f^*$ is the pullback on 1-forms,
and in~\eqref{e:lagra-2} the same symbol denotes the pullback on functions, with the two
related by the formula~\eqref{e:1-forms-pulling}. We can take arbitrary $\widetilde B$ which is
equal to~$Bf'$ near $\theta=0$, since $u$ is smooth away from~0.

It suffices to show that $u\in I^{\frac 14}(\mathbb R,N^*\{0\})$, which
(recalling~\eqref{eq:uosc}) is equivalent to $\widehat u\in S^0(\mathbb R)$.
Note that $\widehat u(\xi)$ is rapidly decaying as $\xi\to-\infty$
since $\WF(v^+)\subset \{\xi>0\}$, so it suffices to study what happens for $\xi>1$.

\noindent 2. We now use the invariance of the principal symbol of $u$ coming from~\eqref{e:lagra-2}.
More precisely, by Lemma~\ref{l:conormal-symbol}, and since the Fourier transform $\widehat g_3$ is rapidly decaying,
the equation~\eqref{e:lagra-2} implies for $\xi>1$
$$
\widehat u(\xi)=\widehat u(\xi/R)+q(\xi)\quad\text{where}\quad
R:=f'(0)>1,\quad
q\in S^{-1+}(\mathbb R).
$$
Iterating this, we see that for any $k\in\mathbb N_0$ and $\eta\geq 1$
\begin{equation}
  \label{e:lagra-3}
\widehat u(R^k \eta)=\widehat u(\eta)+\sum_{\ell=1}^{k} q(R^\ell\eta).
\end{equation}
We now estimate (using for simplicity that $q\in S^{-\frac 12}$ rather than $q\in S^{-1+}$)
$$
\begin{aligned}
\sup_{\xi\geq 1}|\widehat u(\xi)|&=\sup_{k\in\mathbb N_0}\sup_{1\leq \eta\leq R}|\widehat u(R^k\eta)|\\
&\leq \sup_{1\leq\eta\leq R}|\widehat u(\eta)|+C\sum_{\ell=1}^\infty R^{-\frac\ell 2}<\infty.
\end{aligned}
$$
Differentiating~\eqref{e:lagra-3} $m$ times in~$\eta$, we similarly see that $\sup_{\xi\geq 1} \xi^m |\partial_\xi^m\widehat u(\xi)|<\infty$.
This gives $\widehat u\in S^0(\mathbb R)$ and finishes the proof.
\end{proof} 
%%%%%%%%%%%%%%%%%%%%%%%%%%%%%%%%%%%%%%%%%%%%%%%%%%%%%%%%%%%%%%%%%%%%%%%%%%%%%%%%

%%%%%%%%%%%%%%%%%%%%%%%%%%%%%%%%%%%%%%%%%%%%%%%%%%%%%%%%%%%%%%%%%%%%%%%%%%%%%%%%
%%%%%%%%%%%%%%%%%%%%%%%%%%%%%%%%%%%%%%%%%%%%%%%%%%%%%%%%%%%%%%%%%%%%%%%%%%%%%%%%
\section{Microlocal properties of Morse--Smale maps}
\label{s:microp}

Here we prove properties of distributions invariant under
Morse--Smale maps (see Definition \ref{d:2}). We start with a stand alone local result
about distributions invariant under contracting maps. The quantum flux 
defined below \eqref{eq:qufl} is reminiscent of similar quantities appearing 
in scattering theory -- see \cite[(3.6.17)]{DZ-Book}. The wave front condition 
\eqref{eq:WFc} is an analogue of the outgoing condition in scattering theory -- see
\cite[Theorem~3.37]{DZ-Book}. Although technically very different,
Lemma \ref{l:loca} and Proposition \ref{p:globe} are analogous to 
\cite[Lemma 2.3]{DZ-zazi} and play the role  of that lemma in showing
the absence of embedded eigenvalues -- see  \cite[\S 3.2]{DZ-FLOP}.

%%%%%%%%%%%%%%%%%%%%%%%%%%%%%%%%%%%%%%%%%%%%%%%%%%%%%%%%%%%%%%%%%%%%%%%%%%%%%%%%
\subsection{Local analysis}
\label{ss:loca}

In this section 
we assume that $ f: [-1,1] \to (-1,1)$ is a $C^\infty$ map such that
\begin{equation}
\label{eq:propf}
 f ( 0 ) = 0  ,   \ \ \  0 <  f' ( x ) < 1 .
\end{equation}
We also assume that 
\begin{equation}
\label{eq:assu} 
u \in \mathcal D' ( (-1,1) ),\quad
\singsupp u\subset \{0\},\quad
f^* u = u  \text{ on }(-1,1) .
\end{equation}
For $ \chi \in \CIc ( f( -1, 1 )  ) $, $ \chi = 1 $ near $ 0 $
we then define the {\em flux} of $ u $
(understood as an integral of a differential 1-form):
\begin{equation}
\label{eq:qufl}
\mathbf F ( u ) :=   i \int_{(-1,1)} ( f^* \chi  - \chi  ) \bar u\, du . 
\end{equation}
The integral is well defined since $ u $ is smooth on
$ \supp(f^*\chi - \chi) \subset (-1,1)\setminus \{0\}$.

We note that $ \mathbf F ( u ) $ is independent of
$ \chi $. In fact, if $ \chi_j \in C^\infty_{\rm{c} } ( f( - 1, 1 ) ) $, 
$ \chi_1 = \chi_2 $ near $ 0 $,  then the difference 
of the fluxes defined using $ \chi_j $'s in place of $ \chi$, is given by \eqref{eq:qufl} with $ \widetilde\chi = \chi_1 - \chi_2
\in \CIc ( f( -1 , 1) \setminus \{ 0 \} ) $, in place of $\chi $. Since $\widetilde\chi$ is supported
away from~0 we can split the integral:
\[ 
\int_{(-1,1)} (f^* \widetilde\chi - \widetilde\chi ) \bar u\, du = \int (f^*\widetilde\chi) \bar u\,du-\int f^*(\widetilde\chi\bar u\,du)
= \int (f^*\widetilde\chi)(\bar u\,du-f^*(\bar u\,du)) = 0 . 
\]
Here in the first equality we made a change of variables by $f:(-1,1)\to f(-1,1)$ and
in the last equality we used~\eqref{eq:assu}. In fact, this argument shows
that we could take $\chi$ in~\eqref{eq:qufl} to be the indicator function of some interval
$f(a_-,a_+)$ with $-1<a_-<0<a_+<1$, obtaining
\begin{equation}
  \label{e:F-interval}
\mathbf F(u)=i\int_{[a_-,f(a_-)]\sqcup [f(a_+),a_+]}\bar u\,du.
\end{equation}
Similarly we see that $ \mathbf F ( u ) $ is real. For that we take
$ \chi $ real valued so that
\[ \begin{split} 2  \Im  \mathbf F ( u ) & =2 \int_{(-1,1)} (  f^* \chi - \chi ) \Re ( \bar u\, du ) = 
 \int (  f^* \chi - \chi ) d ( |u|^2 ) \\
& =\int |u|^2 \, d (\chi - f^*\chi   )  = 
\int |u|^2 d \chi - \int |u|^2  f^* d \chi = 0,
\end{split}  \]
where in the last line we used~\eqref{eq:assu} and the fact that $ \chi' = 0 $ near $ 0 $. 

The key local result is given in 
%%%%%%%%%%%%%%%%%%%%%%%%%%%%%%%%%%%%%%%%%%%%%%%%%%%%%%%%%%%%%%%%%%%%%%%%%%%%%%%%
\begin{lemm}
\label{l:loca}
Suppose that \eqref{eq:propf} and \eqref{eq:assu} hold. Then
\begin{equation}
\label{eq:WFcc}   \WF ( u ) \subset \{ 0 \} \times \mathbb R_+ , \ \ 
\mathbf F ( u ) \geq 0 \ \ \Longrightarrow \ \ u =\const.
\end{equation}
\end{lemm}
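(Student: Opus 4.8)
The plan is to analyze the Fourier transform of $u$ near the fixed point $0$ using the self-similarity $f^*u = u$, in the spirit of Step~2 of the proof of Lemma~\ref{p:020}. First I would normalize: since $\WF(u) \subset \{0\}\times\mathbb R_+$ and $\singsupp u \subset \{0\}$, after cutting off with $\chi\in\CIc((-1,1))$ equal to $1$ near $0$ I may write $\chi u = v(x)\,dx$ (abusing notation, treating $u$ as a $0$-form) for some $v\in\mathcal E'(\mathbb R)$ with $\widehat v(\xi)$ rapidly decaying as $\xi\to-\infty$ and at worst polynomially bounded as $\xi\to+\infty$; in fact, since $u$ is a distribution, $\widehat v \in S^{m}(\mathbb R)$ for some $m$. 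The invariance gives $v = \widetilde B f^* v + g_3$ where $g_3\in\CIc$ and $\widetilde B\in\Psi^0$ with $\sigma(\widetilde B)(0,\xi) = f'(0) H(\xi)$, exactly as in~\eqref{e:lagra-2}. By Lemma~\ref{l:conormal-symbol} (which applies up to lower-order errors, iterating over symbol orders) this yields the renormalization identity $\widehat v(R\xi) = \widehat v(\xi) + q(\xi)$ for $\xi > 1$, where $R := 1/f'(0) > 1$ and $q\in S^{-1+}$.

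Next I would extract the consequence for the \emph{leading asymptotics} of $\widehat v$. Iterating the renormalization identity as in~\eqref{e:lagra-3} and using $q\in S^{-1+}$ shows that $\widehat v(\xi)$ is bounded, and moreover that $\widehat v(R^k\eta)$ converges as $k\to\infty$, locally uniformly in $\eta\in[1,R]$, to a limit $\Phi(\eta)$ satisfying $\Phi(R\eta)=\Phi(\eta)$, i.e. a genuine multiplicatively periodic ``leading symbol.'' The point is to relate $\mathbf F(u)$ to $\Phi$. Using the interval form~\eqref{e:F-interval} of the flux with $a_\pm$ chosen symmetrically, or more robustly by expressing $\mathbf F(u) = i\int (f^*\chi-\chi)\bar v\,\partial_x v\,dx$ via Parseval, I would compute $\mathbf F(u)$ in terms of $\widehat v$. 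Writing $\bar v \partial_x v$ on the Fourier side and using that $f^*\chi - \chi$ is a fixed smooth function supported away from $0$, the main contribution comes from the region where both frequencies are large and positive and nearly equal; a stationary-phase / Fourier-analysis computation should give $\mathbf F(u) = c\lim_{\xi\to\infty}|\widehat v(\xi)|^2$ for some positive constant $c$ (coming from $\int(f^*\chi-\chi)=0$ being false—rather $f^*\chi - \chi$ integrates to a definite nonzero amount because $\chi$ ``moves'' under $f$), modulo establishing that the limit of $|\widehat v(\xi)|^2$ exists along all of $\mathbb R_+$ (not just along geometric sequences), which follows from the periodicity of $\Phi$ together with the $S^{-1+}$ error.

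Then the hypothesis $\mathbf F(u)\ge 0$ does \emph{not} immediately kill $u$; rather, I expect that the correct sign convention makes $\mathbf F(u) = -c\lim|\widehat v(\xi)|^2$ with $c>0$ (this is the content of the flux being an ``incoming minus outgoing'' type quantity — cf.\ the remark comparing to scattering theory), so that $\mathbf F(u)\ge 0$ forces $\lim_{\xi\to+\infty}|\widehat v(\xi)| = 0$. Combined with $\widehat v(R\xi)=\widehat v(\xi)+O(\xi^{-1+})$ and periodicity of the limiting profile, $\Phi\equiv 0$, hence $\widehat v(\xi) = O(\xi^{-1+})$, i.e.\ $v\in I^{-3/4-}(\mathbb R,N^*\{0\})$. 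Bootstrapping: feeding this improved regularity back into $\widehat v(R\xi) - \widehat v(\xi) = q(\xi)$ with $q$ now of order $-2+$ (since each derivative of the symbol improves under the better a priori regularity — one re-examines the error in Lemma~\ref{l:conormal-symbol}), one gains order each time and concludes $\widehat v\in S^{-\infty}$, so $v\in C^\infty$. Therefore $u$ is smooth on $(-1,1)$; but then $f^*u = u$ with $0<f'<1$ forces $u$ to be constant (evaluate $u$ and its derivatives at $0$: differentiating $u(f(x)) = u(x)$ at $0$ gives $u'(0)f'(0) = u'(0)$, so $u'(0)=0$, and inductively all derivatives of $u$ at $0$ vanish beyond the constant term; combined with real-analyticity-free density/ODE-flow argument — any point $x$ flows to $0$ under iteration of $f$, and $u(x) = u(f^n(x)) \to u(0)$ by continuity — so $u\equiv u(0)$).

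The main obstacle I anticipate is the precise identification of $\mathbf F(u)$ with (a negative multiple of) $\lim_{\xi\to+\infty}|\widehat v(\xi)|^2$, including getting the sign right and justifying that the limit exists and is approached at a rate compatible with the $S^{-1+}$ errors; this requires care with the stationary-phase analysis of $i\int(f^*\chi-\chi)\bar v\,dv$ on the Fourier side and with the fact that the ``symbol at infinity'' $\Phi$ is only multiplicatively periodic rather than constant. A secondary technical point is making Lemma~\ref{l:conormal-symbol}'s error bounds quantitative enough to run the bootstrap; since that lemma is stated for $I^s$ with the error in $S^{s-5/4}$, iterating it requires tracking how the a priori order of $v$ feeds into the order of $q$, which is routine but must be done honestly.
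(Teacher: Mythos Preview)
Your approach has a genuine gap at the central step. The claimed identity $\mathbf F(u) = -c\lim_{\xi\to\infty}|\widehat v(\xi)|^2$ cannot hold as stated. Take the example from the Remark following the lemma: $f(x)=e^{-2\pi}x$, $u(x)=(x+i0)^{ik}$, $k\in\mathbb Z\setminus\{0\}$. Here $\widehat{(\chi u)}(\xi)$ decays like $\xi^{-1}$ as $\xi\to+\infty$, so your limit is $0$, yet $\mathbf F(u)=2\pi k(e^{-2\pi k}-1)$ is strictly negative. More generally, in the linearized model $f(x)=ax$ the exact scaling $\widehat v(\xi/a)=a\widehat v(\xi)$ shows that the relevant object is not $\widehat v$ but $\xi\widehat v(\xi)$, which is multiplicatively periodic and may oscillate; the flux is an \emph{integral} over a fundamental domain for the dilation, sensitive to this phase information, not a pointwise limit of a modulus. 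Your ``stationary-phase/Parseval'' sketch does not produce such a formula, and there is no mechanism in your outline by which $\mathbf F(u)\ge 0$ forces the multiplicatively periodic profile $\Phi$ to vanish. A secondary problem: you assume at the outset that $\widehat v\in S^m$, i.e.\ that $u$ is conormal at $0$; this is not given by the hypotheses (only $u\in\mathcal D'$ with a wavefront condition), so Lemma~\ref{l:conormal-symbol} does not apply directly and the renormalization identity $\widehat v(R\xi)=\widehat v(\xi)+q(\xi)$ is not available in the form you need.

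The paper's proof takes a different route. After linearizing $f$ to $x\mapsto ax$ via Sternberg's theorem (Lemma~\ref{l:linear}) and extending $u$ to a tempered distribution $v$ on $\mathbb R$ with $v(ax)=v(x)$, the wavefront condition is used to show $\widehat v|_{\mathbb R_-}\equiv 0$; this yields a holomorphic extension $V(z)$ to the upper half-plane satisfying $V(az)=V(z)$. The flux is then computed on the boundary of a semi-annular fundamental domain $\Gamma$ via Cauchy--Pompeiu as
\[
\mathbf F(v)\;=\;-2\int_\Gamma |\partial_z V(z)|^2\,dx\,dy\;\le\;0,
\]
so $\mathbf F(v)\ge 0$ forces $\partial_z V\equiv 0$, hence $V$ (and $v$) constant. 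The key point is that the wavefront hypothesis is converted into holomorphy, and the flux becomes a Dirichlet-type energy with a definite sign---an $L^2$ identity, not a high-frequency asymptotic.
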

%%%%%%%%%%%%%%%%%%%%%%%%%%%%%%%%%%%%%%%%%%%%%%%%%%%%%%%%%%%%%%%%%%%%%%%%%%%%%%%%
\Remark The wavefront set restriction to positive frequencies is crucial:
for example, if $u$ is the Heaviside function, then~\eqref{eq:assu} holds
and $\mathbf F(u)=0$. A nontrivial example when~\eqref{eq:propf}, \eqref{eq:assu},
and the wavefront set condition in~\eqref{eq:WFcc} hold is
$f(x)=e^{-2\pi}x$, $u(x)=(x+i0)^{ik}$, $k\in\mathbb Z\setminus \{0\}$,
where $\mathbf F(u)=2\pi k(e^{-2\pi k}-1)<0$.

To prove Lemma \ref{l:loca}  we use a standard one dimensional linearization result \cite{SS}. 
For the reader's convenience we present a variant of the 
proof from \cite[Appendice 4]{yocco}.
%%%%%%%%%%%%%%%%%%%%%%%%%%%%%%%%%%%%%%%%%%%%%%%%%%%%%%%%%%%%%%%%%%%%%%%%%%%%%%%%
\begin{lemm}
\label{l:linear}
Assume that $f$ satisfies~\eqref{eq:propf}. Then
there exists a unique $ C^\infty $ diffeomorphism $ h : [ -1,1] \to h ( [ -1 , 1] ) \subset \mathbb R $ 
such that for all $x\in [-1,1]$
\begin{equation}
\label{eq:Schroeder}
h ( f ( x ) ) = f'(0)  h ( x ) , \ \  \  h ( 0 ) = 0 , \ \ \ h' ( 0 ) = 1. 
\end{equation}
\end{lemm}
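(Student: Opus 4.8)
$\textbf{Proof strategy for Lemma \ref{l:linear} (Schr\"oder linearization).}$
The plan is to construct $h$ as the locally uniform limit of the renormalized iterates
$$
h_n(x) := \frac{f^n(x)}{(f'(0))^n},
$$
where $f^n$ denotes the $n$-th iterate of $f$. By \eqref{eq:propf} we have $0<\mu:=f'(0)<1$, and since $f(0)=0$ with $f'$ bounded away from $0$ and $1$, a routine estimate gives $|f^n(x)| \leq C\rho^n$ for some $\rho\in(\mu,1)$ and all $x\in[-1,1]$, $n\geq 0$. Writing $f(x)=\mu x + x^2 g(x)$ with $g\in C^\infty([-1,1])$, one computes
$$
\frac{h_{n+1}(x)}{h_n(x)} = \frac{f(f^n(x))}{\mu f^n(x)} = 1 + \frac{f^n(x)g(f^n(x))}{\mu},
$$
so $|h_{n+1}(x)-h_n(x)| \leq C\mu^{-(n+1)}|f^n(x)|^2 \leq C(\rho^2/\mu)^n$, which is summable since $\rho^2<\mu$ can be arranged (shrink the interval first so that $\rho$ is as close to $\mu$ as needed, then extend back; more simply, $\rho$ may be taken arbitrarily close to $\mu$ by working on a small enough neighborhood of $0$, and one then propagates to $[-1,1]$ using that $f$ maps $[-1,1]$ into itself). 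Hence $h_n\to h$ uniformly on $[-1,1]$.

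Next I would upgrade this to $C^\infty$ convergence. Differentiating the telescoping product $h_n = \prod_{k=0}^{n-1}(f'\circ f^k)/\mu$ (which follows from $h_n' = (f^n)'/\mu^n$ and the chain rule), and using $f'(f^k(x)) = \mu(1 + O(|f^k(x)|)) = \mu(1+O(\rho^k))$, one sees that $\log h_n' = \sum_{k=0}^{n-1}\log\big(f'(f^k(x))/\mu\big)$ converges uniformly with all derivatives by the same geometric-series argument (each term and its derivatives are $O(\rho^k)$, uniformly in $x$, since $f^k\to 0$ exponentially together with its derivatives — the latter again from the telescoping product and induction). Therefore $h\in C^\infty([-1,1])$, $h'(0)=\lim h_n'(0)=1$, and $h(0)=0$. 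The Schr\"oder equation \eqref{eq:Schroeder} is immediate from the defining recursion $h_{n}(f(x)) = \mu h_{n+1}(x)$ by passing to the limit. Since $h'(0)=1\neq 0$ and $h$ is continuous, $h$ is a diffeomorphism onto its image on a neighborhood of $0$; monotonicity of $f$ (hence of each $f^n$, hence of each $h_n$) shows $h$ is in fact monotone on all of $[-1,1]$, so it is a diffeomorphism onto $h([-1,1])$.

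For uniqueness, suppose $\tilde h$ is another such diffeomorphism. Then $\phi := \tilde h\circ h^{-1}$ conjugates the linear map $y\mapsto\mu y$ to itself near $0$, satisfies $\phi(0)=0$, $\phi'(0)=1$, and $\phi(\mu y)=\mu\phi(y)$. Writing $\phi(y)=y+\psi(y)$ with $\psi(y)=o(y)$, the functional equation forces $\psi(\mu y)=\mu\psi(y)$, hence $\psi(y)/y = \psi(\mu^n y)/(\mu^n y)\to \psi'(0)=0$ as $n\to\infty$ for every fixed $y$; thus $\psi\equiv 0$ near $0$, and then the conjugacy equation propagates $\phi=\mathrm{id}$ to the whole interval. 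This gives $\tilde h = h$.

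The main obstacle is purely technical: establishing the $C^\infty$ (not merely $C^0$) convergence of $h_n$, which requires uniform exponential decay of $f^k$ together with all its derivatives. This is handled by the telescoping-product formula for $h_n'$ combined with an induction on the order of the derivative, using that $\sup_{[-1,1]}|(f^k)^{(j)}|$ decays geometrically — a consequence of $0<f'<1$ and the Faà di Bruno expansion — so no genuinely new idea beyond careful bookkeeping is needed.
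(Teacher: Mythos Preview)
Your argument is correct and is the classical Koenigs linearization proof, but it is \emph{not} the route the paper takes. The paper instead follows Yoccoz: it first solves the Schr\"oder equation formally (matching Taylor coefficients at~$0$, which works because $0<f'(0)<1$), uses Borel's lemma to realize that formal series by a diffeomorphism~$h_0$, and thereby reduces to the case $f(x)=ax(1+g(x))$ with $g$ flat at~$0$. In that normalized situation the paper finds $h(x)=x(1+\varphi(x))$ by solving a fixed-point equation for~$\varphi$ via the contraction mapping theorem on the Banach space of $C^n$ functions vanishing to order~$n$ at~$0$; uniqueness for each $n$ then forces the $C^n$ solutions to agree and hence be $C^\infty$. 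Your approach avoids Borel's lemma entirely and is more elementary; the paper's approach trades the somewhat delicate bookkeeping of the higher-derivative estimates on $f^k$ for a clean contraction argument once the formal reduction is done. The uniqueness arguments are essentially the same in both.

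Two minor comments on your write-up. First, the detour through ``$\rho^2<\mu$, shrink the interval, then extend'' is not needed: once you observe $|h_n(x)|=|x|\prod_{k<n}\big|1+f^k(x)g(f^k(x))/\mu\big|\le C_0$ uniformly (the product converges since $|f^k(x)|\le\rho^k$), you get $|f^n(x)|=\mu^n|h_n(x)|\le C_0\mu^n$ directly, hence $|h_{n+1}-h_n|=O(\mu^n)$ on all of $[-1,1]$. Second, for the diffeomorphism claim you actually get $h'(x)=\prod_{k\ge0}f'(f^k(x))/\mu>0$ everywhere from the convergent log-product, which is stronger (and cleaner) than the monotonicity-of-limits argument.
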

%%%%%%%%%%%%%%%%%%%%%%%%%%%%%%%%%%%%%%%%%%%%%%%%%%%%%%%%%%%%%%%%%%%%%%%%%%%%%%%%
\begin{proof}
1. We first note that any $ C^1 $ diffeomorphism satisfying \eqref{eq:Schroeder} is
unique. In fact, suppose that $ h_j $, $ j = 1, 2 $ are two such 
diffeomorphisms. With $ a = f' (0)\in(0,1) $,  $ a h_j=h_j \circ f   $ we have
$ a h_1 \circ h_2^{-1} ( x) = h_1 \circ f h_2^{-1} ( x ) = h_1 \circ h_2^{-1} ( ax ) $ for all~$x\in h_2([-1,1])$, so that
\[ 
 h_1\circ h_2^{-1} ( x ) = a^{-n} h_1 \circ h_2^{-1} ( a^n x ) = \lim_{ n \to \infty }
a^{-n} h_1 \circ h_2^{-1} ( a^n x ) = ( h_1 \circ h_2^{-1} )' ( 0 ) x = x. 
\]
Hence it is enough to show that for every $ n $ there exists a $ C^n $ diffeomorphism
satisfying~\eqref{eq:Schroeder}. 

Using the fact that $ a= f' ( 0 ) \in(0, 1) $ we can 
construct a formal power series such that \eqref{eq:Schroeder} holds
for the Taylor series of $ f $ as an asymptotic expansion. 
Using Borel's Lemma \cite[Theorem  1.2.6]{Hormander1} we 
can then construct a diffeomorphism $ h_0 $ of $ [ - 1, 1 ] $ onto itself with that
formal series as Taylor series at $ 0$.
 Then $  h_0 \circ f \circ h_0^{-1} = ax ( 1 + g ( x ) ) $
where $ g \in C^\infty $ vanishes to infinite order at $ 0 $. Hence we can 
assume that
$$
f ( x ) = ax ( 1 + g ( x )).
$$
We might no longer have
$f'<1$ but $f$ is still eventually contracting: there exists $m>0$ such that
the $m$-th iterate $f^m$ satisfies
\begin{equation}
\label{e:eventually-contract}
\partial_x(f^m(x))<1\quad\text{for all }x\in [-1,1].
\end{equation}

\noindent 2. We are now looking for $ h ( x ) = x ( 1 + \varphi ( x ) ) $, $ \varphi ( 0 ) = 0 $ such
that $ h ( a x ( 1 + g ( x ) ) ) = a h  ( x ) $, that is 
$  a x ( 1 + g ( x ) ) ( 1 + \varphi ( f ( x ) ) ) = ax ( 1 + \varphi ( x ) ) $, or
\[     ( 1 + g ( x ) ) ( 1 + \varphi ( f ( x ) ) ) = 1 + \varphi ( x ) . \]
A formal solution is then given by 
%\begin{equation}
%\label{eq:varph}  
$ 1+   \varphi ( x ) = \prod_{\ell=0}^\infty ( 1 + g ( f^\ell ( x ) )) $.
%\end{equation}
Rather 
than analyse %\eqref{eq:varph}
this expression, we follow \cite[Appendice 4]{yocco}
and use the contraction mapping principle for Banach
spaces, $ B_n $, 
of $ C^n $ functions on $ [ - \delta, \delta ] $ vanishing to order $ n \geq 2 $ at $ 0 $: 
 we look for $ \varphi  \in B_n $ such that 
\begin{equation}
\label{eq:fixed}    g ( x ) + ( 1 + g ( x ) )  \varphi ( f(x) ) = \varphi ( x ) ,\quad
x\in [-\delta,\delta].
\end{equation}
We claim that 
for $ \delta>0 $ small enough, 
\[ \varphi ( x ) \mapsto ( T \varphi) ( x ) :=  ( 1 + g ( x ) )  \varphi ( f(x) )  \] 
is a contraction on $ B_n $. The norm on $ B_n $ is given by 
\begin{equation}
\label{eq:Bn2D}  \| \varphi \|_{ B_n } := \sup_{ |x| \leq \delta } | \partial^n \varphi ( x ) | , \ \ \ 
 \sup_{ |x| \leq \delta } | \partial^j \varphi ( x ) | \leq C_n \delta^{n-j} \| \varphi \|_{B_n} , 
 \ \ \  \varphi \in B_n ,\ j\leq n,
 \end{equation}
 where the last inequality follows from Taylor's formula.
Since $ f ( x ) = a x ( 1 + g ( x ) ) $, we have 
$  f' ( x)  =  a + \mathcal O ( x^\infty) $ and
$  f ^{(j)} ( x)  =  \mathcal O ( x^\infty )$ for $ j > 1$.
Hence, we obtain for $|x|\leq\delta$, using \eqref{eq:Bn2D} and with homogenous polynomials $ Q_j $, 
\[
\begin{split}  \partial^n [ \varphi ( f ( x ) ) ] & =  \partial^n \varphi ( f ( x ) ) (\partial f ( x ) )^n+ 
\sum_{j=1}^{ n-1 } \partial^j \varphi ( f ( x ) ) Q_j ( \partial f ( x ) , \dots , \partial^{n-j + 1} f ( x ) )\\
&  =  \partial^n \varphi ( f ( x ) ) a^n ( 1 + \mathcal O_n ( \delta ) ) + 
\sum_{j=1}^{n-1}  \mathcal O_n ( \delta^{n-j} ) \| \varphi \|_{ B_n } . 
\end{split} 
\]
It follows that 
$   \| T \varphi \|_{B_n }  \leq \left( a^n  + \mathcal O_n ( \delta )  \right) \| \varphi\|_{ B_n } $, 
which for $ \delta $ small enough (depending on~$ n $) shows that $ T $ is a contraction
on~$ B_n $. That gives a solution $ \varphi $ to \eqref{eq:fixed}. 
 Consequently, we have shown that for every $ n $ there exist $ \delta > 0 $  and
$ \varphi \in C^n ( [ - \delta, \delta ] ) $ such that for
$ h ( x ) = x ( 1 + \varphi ( x ) )$,
\[   h( f ( x ) ) = a h ( x ) , \ \    |x| \leq \delta, \ \ h  \in C^n ( [-\delta, \delta ] ) .
\] 
By~\eqref{e:eventually-contract}, there exists $N>0$ such that $f^N([-1,1])\subset [-\delta,\delta]$.
We extend $h$ to $[-1,1]$ by putting $h(x):=a^{-N}h(f^N(x))$, to obtain
a $C^n$ diffeomorphism $h:[-1,1]\to h([-1,1])$ satisfying~\eqref{eq:Schroeder}.
\end{proof}
%%%%%%%%%%%%%%%%%%%%%%%%%%%%%%%%%%%%%%%%%%%%%%%%%%%%%%%%%%%%%%%%%%%%%%%%%%%%%%%%
%
%%%%%%%%%%%%%%%%%%%%%%%%%%%%%%%%%%%%%%%%%%%%%%%%%%%%%%%%%%%%%%%%%%%%%%%%%%%%%%%%
\begin{proof}[Proof of Lemma \ref{l:loca}]
1. We first note that if $u\in C^\infty((-1,1))$ then $u$ is constant as follows
from~\eqref{eq:assu}: for each $x\in (-1,1)$ we have $u(x)=u(f^N(x))\to u(0)$
as $N\to\infty$. Since we assumed that $\singsupp u\subset \{0\}$
it suffices to show that $u$ is smooth in a neighborhood of~0. 

Making the change of variable given by Lemma~\ref{l:linear}, we may assume
that $f(x)=ax$ for small~$x$, where $a:=f'(0)\in(0,1)$.
Restricting to a neighborhood of~0, rescaling,
and using~\eqref{e:F-interval} we reduce to the following statement: if
\begin{gather}
  \label{e:assumor-1}
  u\in \mathcal D'((-a^{-1},a^{-1})),\quad
\WF(u)\subset \{0\}\times\mathbb R_+,\quad
u(ax)=u(x),\ |x|<a^{-1},\\
  \label{e:assumor-2}
\mathbf F(u):=i\int_{[-1,-a]\sqcup [a,1]} \overline u\,du\geq 0
\end{gather}
then $u\in C^\infty((-1,1))$.

\noindent 2. We next extend $u$ to a distribution on the entire $\mathbb R$.
Fix
\[
\psi \in C^\infty_{\rm{c}} ( (-a^{-1},a^{-1})\setminus [-a,a] ),\quad
    \sum_{ k \in \mathbb Z } \psi ( a^{-k} x ) = 1, \quad x \neq 0 .
%\ \ \ \sum_{ k \in \mathbb N } \psi ( a^{-k} x ) = 1, \ 0 < |x| < 1. 
\]
Then $\psi u\in\mathcal \CIc(\mathbb R\setminus\{0\})$. Define
\begin{equation}
\label{eq:uu2vv}  v ( x ) := \sum_{ k \in \mathbb Z } (\psi u ) ( a^{-k} x ) \in C^\infty(\mathbb R\setminus \{0\}) .
\end{equation}
Since $u(ax)=u(x)$ for $|x|<a^{-1}$,
we have $u=v$ on $(-a^{-1},a^{-1})\setminus \{0\}$. Thus we may extend $v$
to an element of~$\mathcal D'(\mathbb R)$ so that $u=v|_{(-a^{-1},a^{-1})}$. We note that
\begin{equation}
\label{eq:propv}
\begin{aligned}
v\in\mathscr S'(\mathbb R),&\qquad
v(ax)=v(x),\ x\in\mathbb R,\\
\WF(v)\subset \{0\}\times\mathbb R_+,&\qquad
\mathbf F(v)=\mathbf F(u)\geq 0.
\end{aligned}
\end{equation}
It remains to show that $v\in C^\infty$; in fact, we will show that $v$ is constant.

\noindent 3.
Fix $\chi\in\CIc(\mathbb R)$ such that $\chi=1$ near~$0$ and write
$$
v=v_1+v_2,\quad
v_1:=\chi v,\quad
v_2:=(1-\chi)v.
$$
From~\eqref{eq:uu2vv} we obtain uniformly in~$x\neq 0$,
\[
\partial_x^\ell v ( x ) 
= x^{-\ell } \sum_{k \in \mathbb Z  } ((\bullet)^\ell  (\psi u )^{(\ell)} ) ( a^{-k } x )
= \mathcal O ( x^{-\ell } ) ,
\]
since $ (\bullet)^\ell  (\psi u )^{(\ell)}:x\mapsto x^\ell(\psi u)^{(\ell)}(x) \in C^\infty_{\rm{c}} ( \mathbb R\setminus \{ 0 \} ) $ and the sum is locally finite with a uniformly bounded number of terms.
Hence
$  \partial_x^\ell v_2(x)  = \mathcal 
O ( \langle x \rangle^{-\ell } ) $
which implies that $\widehat v_2(\xi)$ (and thus $\widehat v(\xi)$) is smooth when $\xi\in \mathbb R\setminus \{0\}$
and
\[
\widehat v_2  ( \xi ) = \mathcal O ( \langle \xi \rangle ^{-\infty} ) , \ \ |\xi| \to \infty .
\]
On the other hand the assumption on $ \WF ( v ) $ and~\cite[Proposition~8.1.3]{Hormander1} shows that
$ \widehat v_1 ( \xi ) = \mathcal O ( \langle \xi \rangle^{-\infty} ) $, as $  \xi \to - \infty . $
From \eqref{eq:propv}  we obtain for $ \xi < 0 $ and $ k \in \mathbb N $, 
\begin{equation}
\label{eq:double}  \widehat v ( \xi ) = a^{-1}   \widehat v ( a^{-1}  \xi ) = a^{-k} \widehat v ( a^{-k} \xi ) = 
\mathcal O_\xi ( a^k )  \ \ \Longrightarrow \ \ \widehat v |_{ \mathbb R_-} \equiv 0 . 
\end{equation}

\noindent 4. It follows from~\eqref{eq:double} that the distributional pairing 
\begin{equation}
  \label{e:VV-def}
   V ( z ) := \widehat v( e^{ i z \bullet } ) / 2 \pi , \quad \Im z > 0 
\end{equation}
is well defined and holomorphic in $\{\Im z>0\}$ and $ | V ( z ) | \leq C \langle z \rangle^N / (\Im z)^M $, $  \Im z > 0 $ (with more precise bounds possible).
We also have $v(x)=V(x+i0)$ for $x\in\mathbb R\setminus \{0\}$,
and $V(az)=V(z)$ when $\Im z>0$ which follows from~\eqref{e:VV-def}.
We will now use $ V $ to calculate $ \mathbf F ( v ) $.
We have
\[
    \mathbf F ( v )  =  i \int_{ \gamma_0 } \overline { V ( z ) } \partial_z V( z )\, dz , \quad  \gamma_0 := [-1,-a] \cup [ a, 1 ] , 
\]
where the curve $ \gamma_0 $ is positively oriented. Let $ \gamma_\alpha $, $ \alpha > 0 $, be the
half circle $ |z|=\alpha $, $ \Im z > 0 $ oriented counterclockwise.  Since $ V ( a z ) = V ( z ) $,
\[  
   \int_{ \gamma_1 } \overline { V ( z ) } \partial_z V( z ) dz =
 \int_{\gamma_1 } \overline { V ( a z ) }(  \partial_z V ) ( a z) d ( az ) = 
 \int_{\gamma_a } \overline { V ( z ) } \partial_z V( z ) dz .
\]
%%%%%%%%%%%%%%%%%%%%%%%%%%%%%%%%%%%%%%%%%%%%%%%%%%%%%%%%%%%%%%%%%%%%%%%%%%%%%%%%
\begin{figure}%
\includegraphics{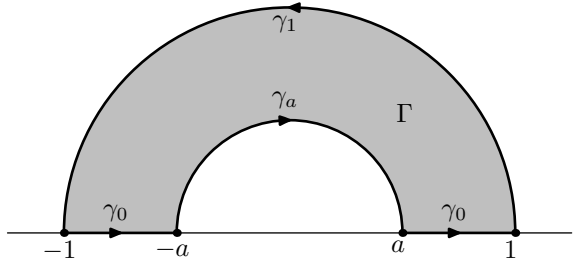}%
\caption{The domain $\Gamma$ used in the proof of Lemma~\ref{l:loca}.}%
\label{f:semi-annulus}%
\end{figure}%
%%%%%%%%%%%%%%%%%%%%%%%%%%%%%%%%%%%%%%%%%%%%%%%%%%%%%%%%%%%%%%%%%%%%%%%%%%%%%%%%
If $ \Gamma $ is the semi-annulus bounded by $ \partial \Gamma : = \gamma_0 + \gamma_1 - \gamma_a $
(see Figure~\ref{f:semi-annulus}) it follows
from the Cauchy--Pompeiu formula \cite[(3.1.9)]{Hormander1} that (with $ z = x + i y $)
\[
 \mathbf F ( v ) =  i \oint_{\partial \Gamma } \overline{ V ( z ) } \partial_z V ( z )\, dz = 
- 2 \int_{\Gamma } \partial_{\bar z}  ( \overline{ V ( z ) } \partial_z V ( z )) \,dx dy  = 
- 2 \int_{\Gamma } | \partial_z V ( z ) |^2 \,dx dy . 
\]
Since we assumed 
$ \mathbf F ( v ) \geq 0 $ it follows that $V$ is constant on $\Gamma$
and thus on the entire upper half-plane,
which implies that $v$ is constant on $\mathbb R\setminus \{0\}$. 
Since functions supported at~0 are linear combinations of derivatives of the delta function
and cannot solve the equation $v(ax)=v(x)$, we see that $v$ is constant on $\mathbb R$,
which finishes the proof.
\end{proof} 
%%%%%%%%%%%%%%%%%%%%%%%%%%%%%%%%%%%%%%%%%%%%%%%%%%%%%%%%%%%%%%%%%%%%%%%%%%%%%%%%

%%%%%%%%%%%%%%%%%%%%%%%%%%%%%%%%%%%%%%%%%%%%%%%%%%%%%%%%%%%%%%%%%%%%%%%%%%%%%%%%
\subsection{A global result}
\label{ss:globe}

We now use the local result in Lemma \ref{l:loca} to obtain a global result for
Morse--Smale diffeomorphisms of the circle.
%%%%%%%%%%%%%%%%%%%%%%%%%%%%%%%%%%%%%%%%%%%%%%%%%%%%%%%%%%%%%%%%%%%%%%%%%%%%%%%%
\begin{prop}
\label{p:globe}
Let $ b : \mathbb \partial \Omega \to \mathbb \partial \Omega $ be a Morse--Smale
diffeomorphism  (see Definition~\ref{d:2}). Denote by $\Sigma^+,\Sigma^-\subset \partial\Omega$
the sets of attractive, respectively repulsive, periodic points of~$b$,
and define $N^*_\pm\Sigma^\pm\subset T^*\partial\Omega$ by~\eqref{eq:N2pm}.
Suppose 
that $ u \in \mathcal D' ( \partial \Omega ) $
satisfies 
\begin{equation}
\label{eq:WFc}
b^*  u = u, \ \ \ \WF ( u ) \subset N^*_+ \Sigma^+ \sqcup N^*_- \Sigma^- .
\end{equation}
Then $ u $ is constant.
\end{prop}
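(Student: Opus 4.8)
The strategy is to reduce the global statement on $\partial\Omega$ to the local Lemma~\ref{l:loca} applied near each periodic point, and to glue the local information together using a flux argument. First I would replace $b$ by its $n$-th iterate $f:=b^n$, where $n$ is the minimal period of the periodic points (recall from Lemma~\ref{l:global-dynamics} that $\Sigma:=\Sigma^+\sqcup\Sigma^-$ is finite and all points have period~$n$). Since $b^*u=u$ we also have $f^*u=u$, and the wave front condition~\eqref{eq:WFc} is unchanged. At each attractive point $x_+\in\Sigma^+$ the map $f$ is a contraction near $x_+$ (by~\eqref{e:Sigma-pm-def}, $\partial_x f(x_+)<1$), so after choosing a positively oriented coordinate vanishing at $x_+$ and rescaling, $u$ restricted to a neighborhood of $x_+$ satisfies the hypotheses~\eqref{eq:propf}, \eqref{eq:assu} of~\S\ref{ss:loca}, with $\WF(u)\subset\{0\}\times\mathbb R_+$ coming from the $N^*_+\Sigma^+$ part of~\eqref{eq:WFc}. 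At each repulsive point $x_-\in\Sigma^-$ the same applies to $f^{-1}$, which is a contraction near $x_-$; here the condition $\WF(u)\subset N^*_-\Sigma^-$ translates, after orientation bookkeeping, into the positive-frequency condition required by Lemma~\ref{l:loca} for the map $f^{-1}$. The remark after Lemma~\ref{l:loca} (via $f(x)=e^{-2\pi}x$, $u=(x+i0)^{ik}$) shows the frequency side must be chosen correctly, and I would be careful that the sign conventions in~\eqref{e:n-pm-gamma-def} match.

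Next I would introduce the flux $\mathbf F_{x_\pm}(u)$ at each periodic point, defined as in~\eqref{eq:qufl} using local coordinates and the contracting map $f$ (respectively $f^{-1}$) near $x_\pm$; by the discussion after~\eqref{eq:qufl} this is well-defined, real, and given by the interval formula~\eqref{e:F-interval}. Away from $\Sigma$ the distribution $u$ is smooth (its wave front set is contained in $N^*_+\Sigma^+\sqcup N^*_-\Sigma^-$, hence $\singsupp u\subset\Sigma$), so the $1$-form $\overline u\,du$ is a smooth $1$-form on $\partial\Omega\setminus\Sigma$. The key observation is a \emph{balance identity}: the sum over all periodic points of the fluxes, with appropriate signs, equals the total integral of $\overline u\,du$ over $\partial\Omega$ minus its value, which telescopes to zero. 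Concretely, using~\eqref{e:F-interval} each local flux is $i$ times the integral of $\overline u\,du$ over a pair of small arcs adjacent to $x_\pm$; summing over a full fundamental domain for $f$ on $\partial\Omega\setminus\Sigma$ and using $f^*(\overline u\,du)=\overline u\,du$, the arcs cancel in pairs and one obtains
\begin{equation}
\label{eq:flux-balance}
\sum_{x_+\in\Sigma^+}\mathbf F_{x_+}(u)+\sum_{x_-\in\Sigma^-}\mathbf F_{x_-}(u)=0,
\end{equation}
where the sign with which a repulsive flux enters is opposite to that of an attractive one because near $x_-$ we use $f^{-1}$ rather than $f$, reversing the orientation of the relevant arcs. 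I would verify~\eqref{eq:flux-balance} by a direct computation with a partition of $\partial\Omega$ into the basins of attraction/repulsion, as in the proof that $\mathbf F$ is $\chi$-independent.

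Now I would apply Lemma~\ref{l:loca} at each point. At an attractive point, the hypotheses~\eqref{eq:WFcc} are: $\WF(u)\subset\{0\}\times\mathbb R_+$ (which holds) and $\mathbf F_{x_+}(u)\geq 0$. At a repulsive point, applying Lemma~\ref{l:loca} to $f^{-1}$ gives the conclusion $u=\const$ near $x_-$ provided the flux for $f^{-1}$, which equals $-\mathbf F_{x_-}(u)$ in the sign convention of~\eqref{eq:flux-balance}, is $\geq 0$, i.e. $\mathbf F_{x_-}(u)\leq 0$. By~\eqref{eq:flux-balance} the positive quantities $\sum_{x_+}\mathbf F_{x_+}(u)$ and $-\sum_{x_-}\mathbf F_{x_-}(u)$ sum to zero; but I do not yet know the individual signs. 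This is where I expect the \textbf{main obstacle}: one must show \emph{each} flux has the sign making Lemma~\ref{l:loca} applicable, not just that they sum to zero. The resolution is that the signs are forced: a careful local analysis — using the linearization of Lemma~\ref{l:linear} and expanding $u=\sum_k c_k(x+i0)^{i\kappa_k}$-type homogeneous pieces governed by $\WF(u)\subset\{0\}\times\mathbb R_+$ — shows that at an attractive point the flux is automatically $\leq 0$ and at a repulsive point automatically $\geq 0$ (consistent with the example in the remark, $\mathbf F(u)=2\pi k(e^{-2\pi k}-1)<0$ for the contraction). Then~\eqref{eq:flux-balance} forces every flux to vanish, and the equality case of Lemma~\ref{l:loca} (combined with the fact that $|\partial_zV|^2$ integrates to zero, so $u$ is locally constant) yields that $u$ is constant near every point of $\Sigma$, hence — since $u$ is already smooth off $\Sigma$ and $b$-invariant with dense-enough orbit structure (every orbit converges to $\Sigma$ by Lemma~\ref{l:global-dynamics}) — constant on all of $\partial\Omega$. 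The last gluing step uses that $u(b^k(x))=u(x)$ and $b^k(x)\to\Sigma$, so $u(x)$ equals the common constant value.
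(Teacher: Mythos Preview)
Your overall strategy is the paper's strategy: pass to $g:=b^n$, define a flux at each periodic point, establish a global balance identity, get a sign constraint on each local flux from Lemma~\ref{l:loca}, and combine the two to force all fluxes to vanish. But as written your argument does not close, for two related reasons.

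First, the balance identity. What the paper proves is not your additive identity~\eqref{eq:flux-balance} but the equality $\mathbf F_+(u)=\mathbf F_-(u)$, where $\mathbf F_\pm$ are the total fluxes built from cutoffs $\chi_\pm$ equal to~$1$ near $\Sigma^\pm$ and supported away from $\Sigma^\mp$ (using $g$ for $\mathbf F_+$ and $g^{-1}$ for $\mathbf F_-$). The proof is a one-line trick: since the fluxes are independent of the cutoff, take $\chi_+:=1-(g^{-1})^*\chi_-$; then $g^*\chi_+-\chi_+=(g^{-1})^*\chi_- -\chi_-$ identically. Your telescoping-arcs description is the right intuition, but the conclusion you wrote, $\sum_{x_+}\mathbf F_{x_+}+\sum_{x_-}\mathbf F_{x_-}=0$, combined with the signs you later claim ($\mathbf F_{x_+}\leq 0$, $\mathbf F_{x_-}\geq 0$), does \emph{not} force anything to vanish: $\mathbf F_{x_+}=-5$ at one point and $\mathbf F_{x_-}=+5$ at another is perfectly consistent. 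With the correct identity $\sum_{x_+}\mathbf F_{x_+}=\sum_{x_-}\mathbf F_{x_-}$ and those same signs, both sides are squeezed to zero.

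Second, the sign constraints themselves. You do not need to ``expand $u$ in $(x+i0)^{i\kappa}$ homogeneous pieces''; that is unnecessary and would not obviously work for general $u$ in the conormal class. The sign comes directly from Lemma~\ref{l:loca} read contrapositively: under the wavefront hypothesis, either $u$ is constant near the fixed point (then $\mathbf F=0$) or $\mathbf F<0$. This gives $\mathbf F_{x_+}(u)\leq 0$ at each attractive point with equality only if $u$ is locally constant. At a repulsive point the wavefront condition is $\WF(u)\subset N^*_-\{x_-\}$, i.e.\ \emph{negative} frequencies, so Lemma~\ref{l:loca} does not apply to $u$ directly; instead apply it to $\overline u$ and $g^{-1}$ (which is contracting at $x_-$), using $\WF(\overline u)=\{(x,-\xi):(x,\xi)\in\WF(u)\}$ and the identity $\mathbf F_\pm(\overline u)=-\mathbf F_\pm(u)$ (your ``orientation bookkeeping'' should be this complex conjugation). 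That yields $\mathbf F_{x_-}(u)\geq 0$. Once $\mathbf F_+(u)=\mathbf F_-(u)$, $\mathbf F_+\leq 0\leq \mathbf F_-$ gives equality, hence each local flux vanishes, hence $u$ is smooth near $\Sigma$, and then $u(b^k(x))=u(x)$ with $b^k(x)\to\Sigma^+$ finishes as you say.
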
 
%%%%%%%%%%%%%%%%%%%%%%%%%%%%%%%%%%%%%%%%%%%%%%%%%%%%%%%%%%%%%%%%%%%%%%%%%%%%%%%%
\Remark The same conclusion holds when the wavefront set condition in~\eqref{eq:WFc}
is replaced by $\WF ( u ) \subset N^*_+ \Sigma^- \sqcup N^*_- \Sigma^+ $,
as can be seen by applying Proposition~\ref{p:globe} to the complex conjugate $\overline u$.
%%%%%%%%%%%%%%%%%%%%%%%%%%%%%%%%%%%%%%%%%%%%%%%%%%%%%%%%%%%%%%%%%%%%%%%%%%%%%%%%
\begin{proof}
We introduce fluxes associated to $ g := b^n $, where $ n $ is the minimal period of periodic points of $ b $.
For that we take two arbitrary cutoff functions
$$
\chi_\pm \in C^\infty(\partial\Omega),\quad
\supp(1-\chi_\pm)\cap \Sigma^\pm=\emptyset,\quad
\supp\chi_\pm\cap \Sigma^\mp=\emptyset.
$$
Assume that $u$ satisfies~\eqref{eq:WFc}
and define the fluxes (where we again use positive orientation
on $\partial\Omega$ to define the integrals of 1-forms):
$$
\begin{aligned}
\mathbf F_+(u)&\,:=i \int_{\partial\Omega}(g^* \chi_+-\chi_+ )\,
\overline{u}\, du,\\
\mathbf F_-(u)&\,:= i \int_{\partial\Omega}(( g^{-1} )^*\chi_-- \chi_- )\,
\overline{u}\, du.
\end{aligned}
$$
The integrals above are well-defined since
$g^* \chi_+-\chi_+$ and $ (g^{-1} )^* \chi_-- \chi_- $
are supported in $\partial\Omega\setminus(\Sigma^+\sqcup\Sigma^-)$,
where $u$ is smooth.
Moreover as in the case of $ \mathbf F ( u)$ defined in \eqref{eq:qufl}, 
 $\mathbf F_\pm(u)$ are real and do not depend on the choice of~$\chi_\pm$.
 We also note that (by taking $ \chi_\pm $ real valued) 
 \begin{equation}
 \label{eq:u2bu}
 \mathbf F_\pm ( \bar u ) = - \overline{ \mathbf F_\pm  ( u ) } = - \mathbf F_\pm ( u ) . 
 \end{equation}
 Since $\mathbf F_\pm(u)$ are independent of~$\chi_\pm$, we may choose
$\chi_+ :=1- (g^{-1})^* \chi_- $ to get the identity
\begin{equation}
\label{e:flux-equal}
  \mathbf F_+(u)=\mathbf F_-(u).
\end{equation}
Let $\Sigma^+=\{x_1^+,\dots,x_m^+\}$.
By taking $\chi_+=\chi_+^1+\dots+\chi_+^m$ where
each $\chi_+^j$ is supported near $x_j^+$, we can write $\mathbf F_+(u)
=\mathbf F_{+}^1(u)+\dots+\mathbf F_{+}^m(u)$.
We may apply Lemma~\ref{l:loca} with $ f $ defined by $ g $ in local coordinates near 
$ x_j^+ \simeq 0 $ to see that $\mathbf F_{+}^j(u)\leq 0$ with equality only
if $u$ is constant near $x_j^+$. Adding these together, we see that
$\mathbf F_+(u)\leq 0$ with equality only if $u$ is locally constant near~$\Sigma^+$.

Arguing similarly near $\Sigma^-$, using $f:=g^{-1}$ and replacing $u$ by $\overline u$,
with $\WF(\overline u)=\{(x,-\xi)\mid (x,\xi)\in\WF(u)\}$,
we see that $\mathbf F_-(u)\geq 0$ with equality only if $u$ is locally constant near~$\Sigma^-$.
By~\eqref{e:flux-equal} we then see that $u$ is locally constant near $\Sigma^+\sqcup\Sigma^-$
and hence $ u \in C^\infty 
( \partial \Omega ) $. 
Since for $ x \in \partial \Omega \setminus \Sigma^- $, $ g^n ( x ) \to x_0 $, for some
$ x_0 $ in $ \Sigma^+ $, we conclude that $ u \in C^\infty $ takes finitely many values and hence
is constant.
\end{proof}
%%%%%%%%%%%%%%%%%%%%%%%%%%%%%%%%%%%%%%%%%%%%%%%%%%%%%%%%%%%%%%%%%%%%%%%%%%%%%%%%

%%%%%%%%%%%%%%%%%%%%%%%%%%%%%%%%%%%%%%%%%%%%%%%%%%%%%%%%%%%%%%%%%%%%%%%%%%%%%%%%
%%%%%%%%%%%%%%%%%%%%%%%%%%%%%%%%%%%%%%%%%%%%%%%%%%%%%%%%%%%%%%%%%%%%%%%%%%%%%%%%
\section{Limiting absorption principle}
\label{s:liap} 

In this section we consider operators
\begin{equation}
\label{eq:defP2}
\begin{gathered}   P := \partial_{x_2}^2\Delta_\Omega^{-1}  : H^{-1} ( \Omega )
 \to H^{-1} ( \Omega ) , \ \\
P ( \omega ) := \partial_{x_2}^2 - \omega^2 \Delta =
( P - \omega^2 ) \Delta_\Omega : H^1_0 ( \Omega ) \to H^{-1} ( \Omega ) .
\end{gathered}
\end{equation}
We prove the limiting absorption principle for $ P$ in the form presented in
Theorem~\ref{t:2}.
To do this we follow~\S\ref{s:redbone} to reduce the equation
$P(\omega)u_\omega=f$ to the boundary $\partial\Omega$. We next analyse the resulting
`Neumann data' $v_\omega=\mathcal N_\omega u_\omega$ (see~\eqref{e:N-omega-def}) uniformly as $\varepsilon=\Im\omega\to 0+$,
using the high frequency estimates of~\S\ref{s:abo} and the absense
of embedded spectrum following from the results of~\S\ref{s:microp}.
This is slightly non-standard since the boundary has characteristic points and 
the problem
changes from elliptic to hyperbolic as $ \Im \omega \to 0+ $.

%%%%%%%%%%%%%%%%%%%%%%%%%%%%%%%%%%%%%%%%%%%%%%%%%%%%%%%%%%%%%%%%%%%%%%%%%%%%%%%%
\subsection{Poincar\'e spectral problem}
\label{s:sap}

We recall (see for instance~\cite[Chapter~6]{Davies-Book}) that $ \Delta = \partial_{x_1}^2 + \partial_{x_2}^2 $ with the domain 
$ H^2 ( \Omega ) \cap H^1_0 ( \Omega ) $, ($ H^1_0 ( \Omega ) $ is the closure of
$ C_{\rm{c}}^\infty ( \Omega ) $ with respect to the norm $ \| \bullet \|_{ H^1_0 ( \Omega ) } $
below) is a negative definite unbounded self-adjoint operator on $ L^2 ( \Omega ) $.
Its inverse is an isometry,
\[  \Delta_\Omega^{-1} : H^{-1} ( \Omega ) \to H_0^1 ( \Omega )  ,\]
with inner products on these Hilbert spaces given by 
\[ \langle u , w \rangle_{ H_0^1 ( \Omega ) } := \int_\Omega  \nabla u  \cdot \overline{\nabla w} \,  dx, 
\ \ \ 
\langle U, W \rangle_{ H^{-1} ( \Omega ) } := \langle \Delta_\Omega^{-1} U , \Delta_\Omega^{-1} W 
\rangle_{H^1_0 ( \Omega )} . \]
Since $ \partial_{x_2}^2 : H^1_0 ( \Omega ) \to H^{-1} ( \Omega ) $ the operator 
$ P $ in \eqref{eq:defP2} is indeed bounded on $ H^{-1} ( \Omega ) $. 

Let $ \{ e_\alpha \}_{\alpha\in A} $ be an 
$L^2 (\Omega)$-orthonormal basis of eigenfunctions of $ - \Delta_\Omega $:
\[ - \Delta_\Omega e_\alpha = \mu_\alpha^2 e_\alpha,  \ \ e_\alpha |_{\partial \Omega } = 0 , \ \ 
\langle e_\alpha, e_\beta \rangle_{L^2 ( \Omega) } = \delta_{\alpha, \beta} .\]
Then  
$ \{ \mu_\alpha e_\alpha \}_{\alpha \in A } $ is an 
orthonormal basis of the Hilbert space $ H^{-1} ( \Omega ) $.  
 The matrix elements of $ P $ in this basis are given by 
\[  \begin{split}  \langle P \mu_\alpha e_\alpha, \mu_\beta e_\beta \rangle_{ H^{-1} } &=
 \langle \Delta_\Omega^{-1} \partial_{x_2}^2 \mu_\alpha^{-1} e_\alpha, \mu_\beta^{-1} e_\beta \rangle_{ H^1_0 } = -\mu_\alpha^{-1} \mu_\beta^{-1} \langle \partial_{x_2}^2 e_\alpha, e_\beta
\rangle_{L^2} \\
& =  \mu_\alpha^{-1} \mu_{\beta}^{-1} \langle \partial_{x_2} e_\alpha, \partial_{x_2} e_\beta \rangle_{L^2 ( \Omega ) } , \end{split} \]
where the last integration by parts is justified as $ e_\beta|_{\partial \Omega } = 0 $.
This shows that $ P $ is a bounded self-adjoint operator on~$H^{-1}(\Omega)$. This representation is particularly useful in numerical 
calculations needed to produce Figure~\ref{f:tilted}. Testing $ P$ against
$\Delta^2( \psi ( x) e^{ i \langle n , x \rangle }) $, $ \psi \in C_{\rm{c}}^\infty ( \Omega ) $,  
$ n \in \mathbb Z^2 $, shows that 
\[ \Spec ( P ) = [ 0 , 1] , \]
see \cite[Theorem 2]{RalstonR}. In particular, for  $ \omega^2 \in \mathbb C \setminus 
[ 0 , 1 ] $, 
\begin{equation}
\label{eq:resolve}
\| P ( \omega )^{-1}\|_{H^{-1}(\Omega)\to H^1_0(\Omega)} 
=\|(P - \omega^2)^{-1}\|_{H^{-1}(\Omega)\to H^{-1}(\Omega)}
=  {1\over d ( \omega^2 , [0,1])}.
\end{equation}
Limiting absorption principle in its most basic form means showing we have limiting operators
acting on smaller spaces with values in larger spaces: for $\lambda\in (0,1) $
satisfying the Morse--Smale conditions
\begin{equation}
\label{eq:lap}
(P - \lambda^2 -i0 )^{-1} : C^\infty_{\rm{c}}  ( \Omega ) \to H^{-\frac32-} ( \Omega ) , \ \ \ 
P ( \lambda + i 0  )^{-1} : C^\infty_{\rm{c}}  ( \Omega ) \to H^{\frac12-} ( \Omega ) .
\end{equation}

%%%%%%%%%%%%%%%%%%%%%%%%%%%%%%%%%%%%%%%%%%%%%%%%%%%%%%%%%%%%%%%%%%%%%%%%%%%%%%%%
\subsection{Regularity of limits as $\varepsilon\to0+$}
\label{s:anal-conv}

In this section we use the results of~\S\ref{s:abo} to get a conormal regularity statement for weak limits of boundary data.
In~\S\ref{s:anal-boundary} below we apply this to the Neumann data $v_\omega=\mathcal N_{\omega}u_\omega$,
$P(\omega)u_\omega=f$.

Since the conormal spaces used below depend on~$\lambda=\Re\omega$, we need to define what it means
for a sequence of distributions to be bounded in these spaces uniformly in~$\lambda$. 
Assume that $\mathcal J\subset (0,1)$
is an open interval such that each $\lambda\in\mathcal J$ satisfies the Morse--Smale conditions of Definition~\ref{d:2}.
Let $\Sigma^\pm_\lambda$ be defined in~\eqref{e:Sigma-pm-def} and $\Sigma_\lambda=\Sigma^+_\lambda\sqcup\Sigma^-_\lambda$.
Fix a defining function $\rho_\lambda\in C^\infty(\partial\Omega;\mathbb R)$ of $\Sigma_\lambda$
and a pseudodifferential operator $A_{\Sigma_\lambda}\in\Psi^0(\partial\Omega)$ such that
$\WF(A_{\Sigma_\lambda})\cap (N^*_+\Sigma^-_\lambda\sqcup N^*_-\Sigma^+_\lambda)=\emptyset$
and $A_{\Sigma_\lambda}$ is elliptic on $N^*_-\Sigma^-_\lambda\sqcup N^*_+\Sigma^+_\lambda$.
We choose both $\rho_\lambda$ and $A_{\Sigma_\lambda}$ depending smoothly on~$\lambda\in\mathcal J$.

Given two sequences $\lambda_j\to\lambda\in\mathcal J$ and
$v_j\in C^\infty(\partial\Omega)$, we say
that
$$
v_j\quad\text{is bounded in}\quad I^{s+}(\partial\Omega,N^*_+\Sigma^-_{\lambda_j}\sqcup N^*_-\Sigma^+_{\lambda_j})\quad\text{uniformly in~$j$}
$$
if each of the seminorms~\eqref{e:I-s-pm-seminorms} is bounded uniformly in~$j$.
We can similarly talk about uniform boundedness of sequences of 1-forms
$v_j\in C^\infty(\partial\Omega;T^*\partial\Omega)$, identifying these with scalar distributions
using a coordinate $\theta$.
%%%%%%%%%%%%%%%%%%%%%%%%%%%%%%%%%%%%%%%%%%%%%%%%%%%%%%%%%%%%%%%%%%%%%%%%%%%%%%%%
\begin{lemm}
  \label{l:cvg}
Assume that $\omega_j\to\lambda\in\mathcal J$, $\Im\omega_j>0$, and the sequence $v_j\in C^\infty(\partial\Omega;T^*\partial\Omega)$
has the following properties:
\begin{gather}
  \label{e:cvg-1}
v_j\to v_0\quad\text{in}\quad H^{-N}\quad\text{for some }N,\\
  \label{e:cvg-2}
\mathcal C_{\omega_j}v_j\quad\text{is bounded in}\quad I^{-\frac34+}(\partial\Omega,N^*_+\Sigma^-_{\lambda_j}\sqcup N^*_-\Sigma^+_{\lambda_j})
\quad\text{uniformly in $j$},
\end{gather}
where $\lambda_j=\Re\omega_j$ and $\mathcal C_{\omega_j}$ was defined in~\S\ref{s:restricted-slp}.
Then we have
\begin{gather}
  \label{e:cvg-3}
v_j\to v_0\quad\text{in}\quad H^{-\frac12-\beta}\quad\text{for all}\quad \beta>0,\\
  \label{e:cvg-4}
v_0\in I^{\frac14+}(\partial\Omega,N^*_+\Sigma^-_{\lambda}\sqcup N^*_-\Sigma^+_{\lambda}).
\end{gather}
\end{lemm}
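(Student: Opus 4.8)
The strategy is to reduce the equation $\mathcal C_{\omega_j}v_j = g_j$ (with $g_j$ bounded in the conormal space) to the fixed-point equation~\eqref{e:Cle-2} and then invoke the high frequency estimate of Proposition~\ref{p:dC}, followed by the Lagrangian improvement of Lemma~\ref{p:020}. First I would apply $\mathcal E_\omega d$ to the identity $\mathcal C_{\omega_j}v_j=g_j$ and use Proposition~\ref{p:summa} exactly as in~\S\ref{s:abo-prepare}: writing $v_j=v_{\omega_j}$, the equation becomes $v_j = B^+_{\omega_j}b_{\lambda_j}^* v_j + B^-_{\omega_j}b_{\lambda_j}^{-*}v_j + h_j$, where $h_j=(I-\mathcal A_{\omega_j})\mathcal E_{\omega_j}d(\mathcal C_{\omega_j}v_j)$. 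The point is that $\mathcal E_{\omega_j}d$ maps the conormal space $I^{-\frac34+}(\partial\Omega,N^*_+\Sigma^-_{\lambda_j}\sqcup N^*_-\Sigma^+_{\lambda_j})$ into $I^{\frac14+}(\partial\Omega,N^*_+\Sigma^-_{\lambda_j}\sqcup N^*_-\Sigma^+_{\lambda_j})$ (one derivative lost from $d$, and pseudodifferential operators preserve conormal spaces by~\cite[Theorem 18.2.7]{Hormander3}), and then $\mathcal A_{\omega_j}$, being built from $(\gamma^\pm_{\lambda_j})^*$ composed with pseudodifferential operators microsupported on one frequency sign, also preserves this space using the argument of Lemma~\ref{l:C-conormal} (the map $\gamma^\pm_{\lambda_j}$ is orientation reversing and exchanges $\Sigma^+_{\lambda_j}$ with $\Sigma^-_{\lambda_j}$ by~\eqref{e:b-inversor}). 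Hence $h_j$ is bounded in $I^{\frac14+}(\partial\Omega,N^*_+\Sigma^-_{\lambda_j}\sqcup N^*_-\Sigma^+_{\lambda_j})$ uniformly in~$j$; in particular all the seminorms $\|(\rho_{\lambda_j}\partial_\theta)^\ell h_j\|_{H^{-\frac12-\beta}}$ and $\|A_{\Sigma_{\lambda_j}}h_j\|_{H^{N_0}}$ are uniformly bounded.

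Next I would apply Proposition~\ref{p:dC} (in its $\lambda$-uniform form from the Remark following it, so that the constants work for $\lambda_j$ in a compact neighborhood of $\lambda$): for each fixed $\beta>0$ and $k$, $\|(\rho_{\lambda_j}\partial_\theta)^k v_j\|_{H^{-\frac12-\beta}} + \|A_{\Sigma_{\lambda_j}}v_j\|_{H^k}$ is bounded by a constant times the corresponding conormal seminorms of $h_j$ plus $\|v_j\|_{H^{-N}}$, all uniformly in~$j$. The a priori smoothness of $v_j\in C^\infty$ is what makes the estimate applicable (the weighted norm $\|v_G\|_{L^2}$ entering the proof of Proposition~\ref{p:dC} is finite). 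Using~\eqref{e:cvg-1} to control the $\|v_j\|_{H^{-N}}$ term, this shows that the sequence $v_j$ is bounded in $I^{\frac14+}(\partial\Omega,N^*_+\Sigma^-_{\lambda_j}\sqcup N^*_-\Sigma^+_{\lambda_j})$ uniformly in~$j$. In particular $v_j$ is bounded in $H^{-\frac12-\beta}$ for every $\beta>0$; since it already converges to $v_0$ in $H^{-N}$, interpolation (or a weak-compactness argument together with uniqueness of the limit) gives~\eqref{e:cvg-3}, convergence in $H^{-\frac12-\beta}$. Here I would have to be slightly careful that the $\lambda$-dependent weights $\rho_{\lambda_j}$ converge to $\rho_\lambda$ and $A_{\Sigma_{\lambda_j}}\to A_{\Sigma_\lambda}$ in the appropriate operator topologies, so that uniform boundedness of the $\lambda_j$-seminorms passes to boundedness of the $\lambda$-seminorms of the limit $v_0$; this gives $v_0\in I^{\frac14+}(\partial\Omega,N^*_+\Sigma^-_{\lambda}\sqcup N^*_-\Sigma^+_{\lambda})$, which is a first version of~\eqref{e:cvg-4}.

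The final step upgrades $I^{\frac14+}$ to $I^{\frac14}$. Passing to the limit $\varepsilon=\Im\omega_j\to0+$ in the fixed-point equation, using Lemma~\ref{l:C-converges} (weak convergence of $\mathcal C_{\omega_j}\to\mathcal C_{\lambda+i0}$) and Lemma~\ref{l:inverse-converger} to identify limiting symbols, I get that $v_0$ solves the limiting equation~\eqref{e:Cle-2-limiting}, $v_0 = B^+_{\lambda+i0}b_\lambda^* v_0 + B^-_{\lambda+i0}b_\lambda^{-*}v_0 + g_0$ with $g_0\in C^\infty(\partial\Omega;T^*\partial\Omega)$ (the limit of $h_j$, which is smooth since the conormal estimate plus the structure of $\mathcal A_\omega$ actually lands it in $C^\infty$ — here one should double-check that the conormal regularity of $\mathcal C_{\omega_j}v_j$ at the $+$ spectral level combined with $\mathcal E_\omega d$ gives a genuinely smooth right-hand side in the limit; alternatively one keeps $g_0$ merely conormal and iterates). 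Then Lemma~\ref{p:020} applied to $v_0^\pm=\Pi^\pm v_0$ promotes the regularity from $I^{\frac14+}$ to $I^{\frac14}$, which is precisely~\eqref{e:cvg-4}. \textbf{The main obstacle} I expect is the bookkeeping around the $\lambda$-dependence: making sure that ``bounded uniformly in $j$ in the $\lambda_j$-conormal spaces'' genuinely implies membership in the $\lambda$-conormal space of the limit, which requires the smooth-in-$\lambda$ choices of $\rho_\lambda$, $A_{\Sigma_\lambda}$, the adapted coordinate~$\theta$, the escape function, and the operators in Proposition~\ref{p:summa} to all fit together — this is exactly what the various ``smoothly in $\lambda$'' remarks (after Lemma~\ref{l:escape-function}, after Proposition~\ref{p:summa}, after Lemma~\ref{l:A-squared}, after Proposition~\ref{p:dC}) are designed to provide, so the proof is a matter of assembling them carefully rather than proving anything new.
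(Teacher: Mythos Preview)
Your main argument—reducing to equation~\eqref{e:Cle-2} via Proposition~\ref{p:summa}, bounding the right-hand side $h_j$ in $I^{\frac14+}$ using the mapping properties of $\mathcal E_\omega d$, $A^\pm_\omega$, and $(\gamma^\pm_{\lambda_j})^*$, then applying Proposition~\ref{p:dC} in its $\lambda$-uniform form and extracting convergence by compactness—is correct and is exactly what the paper does. Your remarks about the $\lambda$-dependent bookkeeping are also on target and match how the paper handles it.

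Your ``final step,'' however, is both unnecessary and unjustified in this generality. Reread the statement: conclusion~\eqref{e:cvg-4} asks only for $v_0\in I^{\frac14+}$, not $I^{\frac14}$. The upgrade to $I^{\frac14}$ via Lemma~\ref{p:020} is carried out later, in the last paragraph of the proof of Proposition~\ref{l:boundv}, where the input $\mathcal C_{\omega_j}v_{\omega_j}=(R_{\omega_j}f)|_{\partial\Omega}$ is genuinely smooth and hence so is the limiting $g_{\lambda+i0}$. Under the hypotheses of Lemma~\ref{l:cvg}, $\mathcal C_{\omega_j}v_j$ is only assumed bounded in $I^{-\frac34+}$, so the limit $g_0$ is at best conormal, not smooth; Lemma~\ref{p:020} requires $g\in C^\infty$ and does not apply here. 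Your own parenthetical hesitation (``one should double-check \dots\ alternatively one keeps $g_0$ merely conormal and iterates'') is well placed—but no iteration will manufacture smoothness from a merely conormal right-hand side. Simply delete that final paragraph and you have the paper's proof.
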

%%%%%%%%%%%%%%%%%%%%%%%%%%%%%%%%%%%%%%%%%%%%%%%%%%%%%%%%%%%%%%%%%%%%%%%%%%%%%%%%
\Remark In fact we have $v_j\to v_0$ in $I^{\frac14+}(\partial\Omega,N^*_+\Sigma^-_{\lambda_j}\sqcup N^*_-\Sigma^+_{\lambda_j})$
where convergence is defined using the seminorms~\eqref{e:I-s-pm-seminorms}~--
see the last paragraph of the proof below.
%%%%%%%%%%%%%%%%%%%%%%%%%%%%%%%%%%%%%%%%%%%%%%%%%%%%%%%%%%%%%%%%%%%%%%%%%%%%%%%%
\begin{proof}
The function $v_j$ satisfies the equation~\eqref{e:Cle-2}:
\begin{equation}
  \label{e:boundyc-2}
v_j=B^+_{\omega_j}b_{\lambda_j}^*v_j+B^-_{\omega_j}b_{\lambda_j}^{-*}v_j+g_j,\quad
g_j=(I-\mathcal A_{\omega_j})\mathcal E_{\omega_j}d\mathcal C_{\omega_j}v_j.
\end{equation}
Here the operator $\mathcal A_{\omega_j}=(\gamma^+_{\lambda_j})^*A^+_{\omega_j}+(\gamma^-_{\lambda_j})^*A^-_{\omega_j}$ is defined in~\eqref{e:Cle-1}.

Applying Proposition~\ref{p:dC} to $v_\omega:=v_j$ we see that for each $k\in\mathbb N_0$ and $\beta>0$ there exist $N_0,C$ independent of~$j$ such that
\begin{equation}
  \label{e:bouncy-castle-1}
\begin{aligned}
\|(\rho_{\lambda_j}\partial_\theta)^kv_j\|_{H^{-\frac12-\beta}}+\|A_{\Sigma_{\lambda_j}}v_j\|_{H^k}
\leq C\big(&
\max_{0\leq \ell\leq N_0}\|(\rho_{\lambda_j}\partial_\theta)^\ell g_j\|_{H^{-\frac12-\beta}}\\
&+\|A_{\Sigma_{\lambda_j}}g_j\|_{H^{N_0}}+\|v_j\|_{H^{-N}}\big).
\end{aligned}
\end{equation}
The pseudodifferential operators $A^\pm_{\omega_j},\mathcal E_{\omega_j}$ are bounded on $ I^{\frac14+}(\partial\Omega,N^*_+\Sigma^-_{\lambda_j}\sqcup N^*_-\Sigma^+_{\lambda_j})$ uniformly in~$j$, as are the pullback operators $(\gamma^\pm_{\lambda_j})^*$
(see Remark~1 after Proposition~\ref{p:summa} and the end of the proof of Lemma~\ref{l:C-conormal}).
Thus by~\eqref{e:cvg-2} we see that $g_j$ is bounded uniformly in~$j$ in the space $I^{\frac14+}(\partial\Omega,N^*_+\Sigma^-_{\lambda_j}\sqcup N^*_-\Sigma^+_{\lambda_j})$. Moreover, by~\eqref{e:cvg-1} $\|v_j\|_{H^{-N}}$ is bounded uniformly in~$j$ as well.
It follows that the right-hand side of~\eqref{e:bouncy-castle-1}, and thus its left-hand side as well, is bounded uniformly in~$j$ for any choice of $k\in\mathbb N_0,\beta>0$.

Take arbitrary $0<\beta'<\beta$.
Then $\|v_j\|_{H^{-\frac12-\beta'}}$ is bounded in~$j$.
Using compactness of the embedding 
$ H^{ -\frac12 - \beta' } \hookrightarrow H^{-\frac12 - \beta } $ 
we see that each subsequence of $\{v_j\}$ has a subsequence converging in $H^{-\frac12-\beta}$;
the limit of this further subsequence has to be equal to $v_0$ by~\eqref{e:cvg-1}.
This implies~\eqref{e:cvg-3}.

A similar argument using again the boundedness of the left-hand side of~\eqref{e:bouncy-castle-1} shows
that $(\rho_{\lambda_j}\partial_\theta)^kv_j\to (\rho_\lambda \partial_\theta)^kv_0$ in $H^{-\frac 12-\beta}$
for all~$k\in\mathbb N_0,\beta>0$ and $A_{\Sigma_{\lambda_j}}v_j\to A_{\Sigma_\lambda}v_0$ in $C^\infty$.
In particular, this implies that $(\rho_\lambda \partial_\theta)^kv_0\in H^{-\frac 12-}$
and $A_{\Sigma_\lambda}v_0\in C^\infty$ which by~\eqref{e:I-s-pm-seminorms} gives~\eqref{e:cvg-4}.
\end{proof}
%%%%%%%%%%%%%%%%%%%%%%%%%%%%%%%%%%%%%%%%%%%%%%%%%%%%%%%%%%%%%%%%%%%%%%%%%%%%%%%%

%%%%%%%%%%%%%%%%%%%%%%%%%%%%%%%%%%%%%%%%%%%%%%%%%%%%%%%%%%%%%%%%%%%%%%%%%%%%%%%%
\subsection{Uniqueness for the limiting problem}
\label{s:anal-uniq}

We next use the analysis of~\S\ref{s:microp}
to show a uniqueness result for the restricted single layer potential operator $\mathcal C_{\lambda+i0}$
(see~\S\ref{s:restricted-slp})
in the space of distributions satisfying additional conditions.
This will give us the lack of embedded spectrum for the operator~$P$ in the Morse--Smale case.
To formulate this result, we recall the operators $R_{\lambda+i0}:g\mapsto E_{\lambda+i0}*g$ defined in~\eqref{eq:Rlag}
and $\mathcal I:\mathcal D'(\partial\Omega;T^*\partial\Omega)\to \mathcal E'(\mathbb R^2)$ defined in~\eqref{e:I-def}.
%%%%%%%%%%%%%%%%%%%%%%%%%%%%%%%%%%%%%%%%%%%%%%%%%%%%%%%%%%%%%%%%%%%%%%%%%%%%%%%%
\begin{lemm}
  \label{l:unique-2}
Let $\lambda\in (0,1)$ satisfy the Morse--Smale conditions of Definition~\ref{d:2}.
Assume that $v\in \mathcal D'(\partial\Omega;T^*\partial\Omega)$
lies in $I^{s} ( \partial \Omega, N^*_+ \Sigma_\lambda^-  \sqcup
N^*_- \Sigma_\lambda^+)$ for some $s$ (see~\eqref{e:I-s-pm}),
where $\Sigma^\pm_\lambda$ are defined in~\eqref{e:Sigma-pm-def}. Then
\begin{equation}
  \label{e:unique-2}
\mathcal C_{\lambda+i0}v=0,\quad
\supp (R_{\lambda+i0}\mathcal Iv)\subset\overline\Omega
\quad\Longrightarrow\quad
v=0.
\end{equation}
\end{lemm}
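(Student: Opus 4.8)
The plan is to reduce the statement to Proposition~\ref{p:globe} by producing out of $v$ a distribution $w$ on $\partial\Omega$ which is invariant under the chess billiard map $b(\bullet,\lambda)$ and whose wavefront set is constrained to the attractive/repulsive conormal directions, so that $w$ must be constant, from which $v=0$ will follow. The geometric input is exactly the reduction to the boundary of~\S\ref{s:redbone}: starting from $v$ satisfying $\mathcal C_{\lambda+i0}v=0$ and $\supp(R_{\lambda+i0}\mathcal Iv)\subset\overline\Omega$, I would set $U:=R_{\lambda+i0}\mathcal Iv=S_{\lambda+i0}v$ extended by zero outside $\Omega$ (legitimate by the support hypothesis), so that $P(\lambda)U=\mathcal Iv$ is supported on $\partial\Omega$, $U|_\Omega\in C^\infty(\overline\Omega)$ by Lemma~\ref{l:singl-2}, and $U|_{\partial\Omega}=\mathcal C_{\lambda+i0}v=0$. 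Applying $L^\pm_\lambda$ and tracing to the boundary, the vanishing of $U$ on $\partial\Omega$ forces, as in the Remark after Lemma~\ref{l:reduction-to-boundary}, a single object $w$ on $\partial\Omega$: concretely $w:=\mathbf j^*(L^+_\lambda U\,d\ell^+)=-\mathbf j^*(L^-_\lambda U\,d\ell^-)$ up to normalization, equivalently $v=\mathcal N_\lambda U$ read both ways through $\ell^+$ and through $\ell^-$.

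The next step is to turn the two representations of $v$ through $\ell^\pm$ into the invariance identity $b_\lambda^* w=w$. Here the key points are: (i) $L^+_\lambda U$ is constant along the segments $\Gamma^+_\lambda(y)$ and $L^-_\lambda U$ is constant along $\Gamma^-_\lambda(y)$ (since $L^\mp_\lambda\ell^\pm=0$, see~\eqref{eq:L2l}, and $P(\lambda)U=4L^+_\lambda L^-_\lambda U$ vanishes in~$\Omega$); (ii) the boundary traces of these invariants from the two endpoints of a chord are related precisely by $\gamma^\pm_\lambda$. Combining the $\gamma^+$-relation with the $\gamma^-$-relation yields $w=(\gamma^-_\lambda)^*(\gamma^+_\lambda)^*w$ or its inverse, i.e. $b_\lambda^{\pm *}w=w$. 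The wavefront set of $w$ is controlled by the conormal structure of $v$: since $v\in I^{s}(\partial\Omega,N^*_+\Sigma^-_\lambda\sqcup N^*_-\Sigma^+_\lambda)$ and $w$ is obtained from $v$ by a pointwise (pseudodifferential, elliptic) manipulation together with the involutions $\gamma^\pm_\lambda$, which are orientation reversing and swap $\Sigma^+_\lambda\leftrightarrow\Sigma^-_\lambda$ (see~\eqref{e:b-inversor}), one gets $\WF(w)\subset N^*_+\Sigma^+_\lambda\sqcup N^*_-\Sigma^-_\lambda$ (or the sign-reversed version, which by the Remark after Proposition~\ref{p:globe} is equally good). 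Proposition~\ref{p:globe} then gives $w=\const$.

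Finally I would rule out the nonzero constant and deduce $v=0$. If $w\equiv c$, then $L^+_\lambda U$ (hence $L^-_\lambda U$) is locally constant on $\partial\Omega$ read through $\ell^+$ (resp. $\ell^-$), and tracking this back through the chords shows $U$ is, up to an affine function of $\ell^\pm$, determined by $c$; the boundary condition $U|_{\partial\Omega}=0$ plus $U$ being supported in $\overline\Omega$ then forces $c=0$, whence $L^\pm_\lambda U=0$ in $\Omega$, so $U$ is constant on $\Omega$, and $U|_{\partial\Omega}=0$ gives $U\equiv0$; then $\mathcal Iv=P(\lambda)U=0$, i.e. $v=0$. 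I expect the main obstacle to be Step~2: carefully setting up the single boundary function $w$ and verifying $b_\lambda^*w=w$ together with the exact wavefront inclusion, since this requires handling the characteristic points $\mathscr C_\lambda$ (where $\gamma^\pm_\lambda$ degenerates) and checking that the conormal singularities of $v$ at $\Sigma_\lambda$ map to the correct halves of the conormal bundle under the orientation-reversing involutions — essentially a bookkeeping of signs exactly of the kind flagged in the Remark after Lemma~\ref{l:char}, but now global on $\partial\Omega$.
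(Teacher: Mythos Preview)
Your high-level plan --- manufacture a $b$-invariant distribution on $\partial\Omega$ and invoke Proposition~\ref{p:globe} --- is the paper's, but two of your steps do not go through as written.

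\textbf{The invariant object.} The 1-form $w=\mathbf j^*(L^+_\lambda U\,d\ell^+)$ you build is, up to a scalar, $v$ itself (see~\eqref{e:N-omega-def}); no involution enters the construction, so the wavefront flip you claim does not occur. More seriously, the invariance you obtain is $b_\lambda^*w=w$ on \emph{1-forms}, whereas Proposition~\ref{p:globe} is for \emph{functions}: written out, $b_\lambda^*(f\,d\theta)=f\,d\theta$ reads $(b_\lambda^*f)\,\partial_\theta b_\lambda=f$, not $b_\lambda^*f=f$. The paper works one antiderivative up, via the d'Alembert decomposition: since $P(\lambda)u=0$ in $\Omega$ one writes $u(x)=u_+(\ell^+(x))-u_-(\ell^-(x))$ and sets $w_\pm:=u_\pm(\ell^\pm(\cdot))|_{\partial\Omega}$, which are functions satisfying $(\gamma^\pm)^*w_\pm=w_\pm$ tautologically. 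Then $\mathcal C_{\lambda+i0}v=u|_{\partial\Omega}=0$ forces $w_+=w_-=:w$, hence $b^*w=w$. The wavefront condition on $w$ is read off from $u\in I^{s-5/4}(\overline\Omega,\Lambda^-(\lambda))$, which comes from~\eqref{eq:con-new}; note that Lemma~\ref{l:singl-2} does not apply here since $v$ is only conormal, not smooth. Proposition~\ref{p:globe} (through the Remark following it) then gives $w$ constant, so $u_\pm$ are constant and $u=0$.

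\textbf{The final step.} From $u=U|_\Omega=0$ you only get $\supp U\subset\partial\Omega$, not $U=0$ on $\mathbb R^2$; distributions of the form $\delta_{\partial\Omega}$ are not excluded, so the implication $\mathcal Iv=P(\lambda)U=0$ is unjustified. The paper supplies the missing argument: at a non-characteristic boundary point one writes $U=\sum_{k\le K}u_k(y_2)\,\delta^{(k)}(y_1)$ in adapted coordinates, and matching the top term $a_{2,0}u_K\,\delta^{(K+2)}(y_1)$ of $P(\lambda)U$ against $\mathcal Iv=a(y_2)v(y_2)\,\delta(y_1)$ forces all $u_k=0$, hence $v=0$ away from the characteristic set $\mathscr C_\lambda$. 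Since $v$ is smooth off $\Sigma_\lambda$ and $\Sigma_\lambda\cap\mathscr C_\lambda=\emptyset$ by~\eqref{e:no-characteristic}, this yields $v=0$.
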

%%%%%%%%%%%%%%%%%%%%%%%%%%%%%%%%%%%%%%%%%%%%%%%%%%%%%%%%%%%%%%%%%%%%%%%%%%%%%%%%
\begin{proof}
1. Put $U:=R_{\lambda+i0}\mathcal Iv\in \mathcal D'(\mathbb R^2)$. Since $P(\lambda)E_{\lambda+i0}=\delta_0$ by~\eqref{e:fund-sol-eqn-2}, we have
\begin{equation}
  \label{e:uni2-1}
P(\lambda)U=\mathcal Iv.
\end{equation}
We first show that
\begin{equation}
  \label{e:uni2-2}
\supp U\subset\partial\Omega.
\end{equation}
By the second assumption in~\eqref{e:unique-2} we have $\supp U\subset \overline\Omega$, thus it suffices to show
that $u=0$ where $u:=U|_{\Omega}=S_{\lambda+i0}v$
and $S_{\lambda+i0}$ is the limiting single layer potential
defined in~\eqref{e:s-lambda-def-real}.

Since $\supp\mathcal Iv\subset\partial\Omega$, from~\eqref{e:uni2-1} we have $P(\lambda)u=0$.
As $ \lambda \in (0,1)$,
$ P ( \lambda ) $ is a constant coefficient hyperbolic operator. 
In view of~\eqref{e:L-pm-def} and~\eqref{eq:L2l} we then have,
denoting $\ell^\pm(x):=\ell^\pm(x,\lambda)$, $\ell^\pm_{\min}:=\ell^\pm ( x^\pm_{\min} )$,
$\ell^\pm_{\max}:=\ell^\pm ( x^\pm_{\max} )$
\begin{equation}
\label{eq:uxpm} 
 u ( x ) := u_+ ( \ell^+ ( x) )-u_-(\ell^-(x)), \quad  x \in \Omega, \quad
 u_\pm \in \mathcal D' ( (\ell^\pm_{\min},\ell^\pm_{\max} ) ).
\end{equation} 
From~\eqref{eq:con-new} we see that $u\in I^{s-\frac 54}(\overline\Omega,\Lambda^-(\lambda))$,
in particular by~\eqref{e:no-characteristic} $u$ is smooth up to the boundary near the
characteristic set $\mathscr C_\lambda$. It follows that
$u_\pm$ are smooth near the boundary points $\ell^\pm_{\min},\ell^\pm_{\max}$ up to the boundary.
Define the pullbacks of $u_\pm$ to $\partial\Omega$ by the maps $\ell^\pm$,
$$
w_\pm = u_\pm ( \ell^\pm ( x  ) ) |_{\partial \Omega }\in \mathcal D'(\partial\Omega),\quad
(\gamma^\pm)^*w_\pm=w_\pm.
$$
From the proof of conormal regularity of~$u$ in Lemma~\ref{l:con} we see that
$\WF(w_\pm)\subset N^*_+ \Sigma^-_\lambda  \sqcup 
N^*_- \Sigma^+_\lambda$. 

The restriction $u|_{\partial\Omega}$ is equal to both $w_+-w_-$ and
$\mathcal C_{\lambda+i0}v$. Thus by the first assumption in~\eqref{e:unique-2}
we have $w_+=w_-$. Denoting $w:=w_+=w_-$, we have
$$
( \gamma^\pm )^* w = w ,\quad  \WF ( w ) \subset N^*_+ \Sigma^-_\lambda  \sqcup 
N^*_- \Sigma^+_\lambda.
$$
This implies that $ b^* w = w $ and we can apply Proposition~\ref{p:globe} to see that $ w $ is constant. But then $ u_\pm $ are constant
and $ u=0 $, giving~\eqref{e:uni2-2}.

\noindent 2. We now show that $v=0$ away from the characteristic set
$\mathscr C_\lambda$ of $P(\lambda)$ on $\partial\Omega$
(see~\eqref{e:no-characteristic}). For each $x_0\in\partial\Omega\setminus\mathscr C_\lambda$ 
we can find a neighbourhood $ V \subset \mathbb R^2 $ of 
$ x_0 $ and coordinates
$(y_1,y_2)$ on~$V$
such that for some open interval $\mathscr I\subset\mathbb R$
\[ 
  \partial \Omega \cap V = \{ y_1 = 0,\ y_2\in\mathscr I\}, \quad P( \lambda )|_V = \sum_{ |\alpha| \leq 2} 
a_\alpha (y) \partial_y^\alpha , \quad  a_{2,0} \neq 0 . 
\]
(The non-characteristic property means that the conormal bundle of $\{ y_1 = 0 \} $ is 
disjoint from the set of zeros of $ \sum_{ |\alpha | = 2} a_\alpha \eta^\alpha $.)
Now, by~\cite[Theorem~2.3.5]{Hormander1} we see that~\eqref{e:uni2-2} implies
$ U|_{ V} = \sum_{ k \leq K } u_k (y_2 ) \delta^{(k)} ( y_1 ) $, $ u_k \in \mathcal D'  (\mathscr I ) $. Hence, 
for some $ \widetilde u_k \in \mathcal D' ( \mathscr I) $, 
\[
    P( \lambda ) U|_V = a_{2,0}(y) u_K ( y_2) \delta^{(K+2)} (y_1 ) + \sum_{ k \leq K +1 } \widetilde u_k (y_2 ) \delta^{ (k)} (y_1 )  . 
\]
By~\eqref{e:uni2-1} we have $ P ( \lambda ) U |_V = \mathcal I v |_V =  a(y_2) v ( y_2) \delta (y_1)$, $a\neq 0$.
Thus $ u_K = 0$.
(Here we use $y_2$ as a local coordinate on $\partial\Omega$ to identify $v|_{\partial\Omega\cap V}$
with a distribution on~$\mathscr I$.)
Iterating 
this argument shows that $ U |_ V =0 $ which means $ v |_{ V \cap \partial \Omega } = 0 $.

\noindent 3. We have shown that $\supp v$ is contained in the finite set $\mathscr C_\lambda$.
On the other hand, $v\in I^{s} ( \partial \Omega, N^*_+ \Sigma_\lambda^-  \sqcup
N^*_- \Sigma_\lambda^+ )$ is smooth away from
$\Sigma_\lambda$. Since $\Sigma_\lambda\cap\mathscr C_\lambda=\emptyset$ by~\eqref{e:no-characteristic}, we get $v=0$.
\end{proof}
%%%%%%%%%%%%%%%%%%%%%%%%%%%%%%%%%%%%%%%%%%%%%%%%%%%%%%%%%%%%%%%%%%%%%%%%%%%%%%%%
\Remark The proof would be simpler if we knew that the limiting single layer potential operators $S_{\lambda+i0}$ were injective
acting on the conormal spaces~\eqref{eq:con-new}~-- this would imply the injectivity
of $\mathcal C_{\lambda+i0}$ on $I^s(\partial\Omega,N^*_+\Sigma^-_\lambda\sqcup N^*_-\Sigma^+_\lambda)$
without the support condition in~\eqref{e:unique-2}
as follows from Step~1 of the proof above. However, that is not clear. 
Under the dynamical assumptions made here, the proof of Proposition~\ref{p:dC} shows that 
$ \ker S_{\lambda+i0}\subset \ker \mathcal C_{\lambda+i0} $ is finite dimensional but injectivity seems to 
be a curious open problem.

%%%%%%%%%%%%%%%%%%%%%%%%%%%%%%%%%%%%%%%%%%%%%%%%%%%%%%%%%%%%%%%%%%%%%%%%%%%%%%%%
\subsection{Boundary data analysis}
\label{s:anal-boundary}

Fix $f\in \CIc(\Omega)$ and let $\mathcal J\subset (0,1)$ be an open interval such that
each $\lambda\in\mathcal J$ satisfies the Morse--Smale conditions of Definition~\ref{d:2}.
Consider the solution to the boundary-value problem~\eqref{eq:ellip}:
$$
u_\omega\in C^\infty(\overline\Omega),\quad
P(\omega)u_\omega=f,\quad
u_\omega|_{\partial\Omega}=0,\quad
\omega\in \mathcal J+i(0,\infty).
$$
In this section, we combine the results of~\S\S\ref{s:anal-conv}--\ref{s:anal-uniq} to
study the behaviour as $\Im\omega\to 0+$ of the `Neumann data'
defined using~\eqref{e:N-omega-def}:
$$
v_\omega  := \mathcal N_\omega u_\omega\ \in\ C^\infty(\partial\Omega;T^*\partial\Omega).
$$
We first show the following convergence statement:
%%%%%%%%%%%%%%%%%%%%%%%%%%%%%%%%%%%%%%%%%%%%%%%%%%%%%%%%%%%%%%%%%%%%%%%%%%%%%%%%
\begin{prop}
\label{l:boundv}
Assume that $\omega_j\to\lambda\in\mathcal J$, $\Im\omega_j>0$.
Then for all $\beta>0$
\begin{equation}
  \label{e:limitator}
v_{\omega_j}\to v_{\lambda+i0}\quad\text{in }H^{-\frac 12-\beta}(\partial\Omega;T^*\partial\Omega)\quad\text{as }j\to\infty
\end{equation}
where $v_{\lambda+i0}\in H^{-\frac 12-}(\partial\Omega;T^*\partial\Omega)$ is the unique distribution such that
\begin{equation}
  \label{e:limitator-characterization}
\begin{gathered}
v_{\lambda+i0}\in I^{\frac14 + } ( \partial\Omega, N^*_+ \Sigma_\lambda^-  \sqcup
N^*_- \Sigma_\lambda^+ ),\\
\mathcal C_{\lambda+i0}v_{\lambda+i0}=(R_{\lambda+i0}f)|_{\partial\Omega},\quad
\supp R_{\lambda+i0}(f-\mathcal Iv_{\lambda+i0})\subset\overline\Omega.
\end{gathered}
\end{equation}
Moreover, $v_{\lambda+i0}\in I^{\frac 14}(\partial\Omega, N^*_+ \Sigma_\lambda^-  \sqcup
N^*_- \Sigma_\lambda^+ )$.
\end{prop}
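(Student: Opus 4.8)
The plan is to set up the boundary reduction of~\eqref{eq:ellip} from~\S\ref{s:redbone} and then push the uniform estimates of~\S\ref{s:abo} through a compactness argument, using the uniqueness Lemma~\ref{l:unique-2} to identify the limit. First I would record, following Lemma~\ref{l:reduction-to-boundary} and~\eqref{e:C-lambda-equator}, that $v_{\omega_j}$ solves $\mathcal C_{\omega_j}v_{\omega_j}=G_{\omega_j}:=(R_{\omega_j}f)|_{\partial\Omega}$ and, equivalently, the fixed-point equation~\eqref{e:Cle-2}: $v_{\omega_j}=B^+_{\omega_j}b_{\lambda_j}^*v_{\omega_j}+B^-_{\omega_j}b_{\lambda_j}^{-*}v_{\omega_j}+g_{\omega_j}$ with $g_{\omega_j}$ bounded in $C^\infty$ uniformly in~$j$ (this uses Lemma~\ref{l:loc} to control $R_{\omega_j}f$ and $\mathcal A_{\omega_j},\mathcal E_{\omega_j}\in\Psi^0$ uniformly). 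Since $G_{\omega_j}$ is bounded in $C^\infty(\partial\Omega)$, in particular it is bounded in $I^{-\frac34+}(\partial\Omega,N^*_+\Sigma^-_{\lambda_j}\sqcup N^*_-\Sigma^+_{\lambda_j})$, so hypothesis~\eqref{e:cvg-2} of Lemma~\ref{l:cvg} holds for $\mathcal C_{\omega_j}v_{\omega_j}$.

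The one missing ingredient for Lemma~\ref{l:cvg} is the a~priori $H^{-N}$ bound~\eqref{e:cvg-1}; this is the main obstacle, and it is exactly where the absence of embedded spectrum enters. I would argue by contradiction: if $\|v_{\omega_j}\|_{H^{-N}}$ is not bounded (for $N$ large, say $N$ as in Proposition~\ref{p:dC}), normalize $\tilde v_j:=v_{\omega_j}/\|v_{\omega_j}\|_{H^{-N}}$, which solves~\eqref{e:Cle-2} with right-hand side $\tilde g_j=g_{\omega_j}/\|v_{\omega_j}\|_{H^{-N}}\to 0$ in $C^\infty$. Applying Proposition~\ref{p:dC} (with $k=0$) gives $\|\tilde v_j\|_{H^{-\frac12-\beta}}\le C$, and by the conormal version~\eqref{eq:ugl} the seminorms~\eqref{e:I-s-pm-seminorms} for $I^{\frac14+}(\partial\Omega,N^*_+\Sigma^-_{\lambda_j}\sqcup N^*_-\Sigma^+_{\lambda_j})$ are bounded uniformly. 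Passing to a subsequence, by compactness $\tilde v_j\to\tilde v$ in $H^{-N}$ with $\|\tilde v\|_{H^{-N}}=1$, hence $\tilde v\ne0$, and (arguing as in Lemma~\ref{l:cvg}) $\tilde v\in I^{\frac14+}(\partial\Omega,N^*_+\Sigma^-_\lambda\sqcup N^*_-\Sigma^+_\lambda)$ and $\tilde v$ solves the limiting boundary equation $\mathcal C_{\lambda+i0}\tilde v=0$ with $\supp R_{\lambda+i0}\mathcal I\tilde v\subset\overline\Omega$ (the support condition follows because $\tilde v$ is a limit of Neumann data of functions $u_{\omega_j}$ supported in $\overline\Omega$, using that $U_{\omega_j}=R_{\omega_j}f-R_{\omega_j}\mathcal Iv_{\omega_j}$ is supported in $\overline\Omega$ together with the weak convergence $R_{\omega_j}\to R_{\lambda+i0}$ from Lemma~\ref{l:loc} and Lemma~\ref{l:C-lambda-strong-limit}). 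But then Lemma~\ref{l:unique-2} forces $\tilde v=0$, a contradiction. This establishes~\eqref{e:cvg-1}.

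With~\eqref{e:cvg-1} and~\eqref{e:cvg-2} in hand, Lemma~\ref{l:cvg} yields the convergence~\eqref{e:limitator} in $H^{-\frac12-\beta}$ (and in the conormal seminorms) to some $v_{\lambda+i0}\in I^{\frac14+}(\partial\Omega,N^*_+\Sigma^-_\lambda\sqcup N^*_-\Sigma^+_\lambda)$. Passing to the limit in $\mathcal C_{\omega_j}v_{\omega_j}=G_{\omega_j}$ using the strong convergence $\mathcal C_{\omega_j}\to\mathcal C_{\lambda+i0}$ from Lemma~\ref{l:C-lambda-strong-limit} (with $k=0$) and the weak convergence $E_{\omega_j}\to E_{\lambda+i0}$ from Lemma~\ref{l:loc}, we get $\mathcal C_{\lambda+i0}v_{\lambda+i0}=(R_{\lambda+i0}f)|_{\partial\Omega}$; the support statement $\supp R_{\lambda+i0}(f-\mathcal Iv_{\lambda+i0})\subset\overline\Omega$ follows from $\supp(\indic_\Omega u_{\omega_j})\subset\overline\Omega$ and the above convergences. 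Uniqueness of a distribution satisfying~\eqref{e:limitator-characterization} is precisely Lemma~\ref{l:unique-2} (applied to the difference of two such solutions, which lies in the conormal space and is annihilated by $\mathcal C_{\lambda+i0}$ with the support condition), which also shows the limit is independent of the sequence $\omega_j$. Finally, the improvement $v_{\lambda+i0}\in I^{\frac14}$ rather than just $I^{\frac14+}$ comes from Lemma~\ref{p:020}: since $v_{\lambda+i0}$ solves~\eqref{e:Cle-2-limiting} (the $\varepsilon=0$ fixed-point equation, obtained by taking $\omega_j\to\lambda$ in~\eqref{e:Cle-2} and using $B^\pm_{\omega_j}\to B^\pm_{\lambda+i0}$) and $v^\pm_{\lambda+i0}=\Pi^\pm v_{\lambda+i0}\in I^{\frac14+}(\mathbb S^1,N^*\Sigma^\mp_\lambda)$, the invariance-of-symbol argument in the proof of Lemma~\ref{p:020} upgrades this to $v^\pm_{\lambda+i0}\in I^{\frac14}$, hence $v_{\lambda+i0}\in I^{\frac14}(\partial\Omega,N^*_+\Sigma^-_\lambda\sqcup N^*_-\Sigma^+_\lambda)$, completing the proof.
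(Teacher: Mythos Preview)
Your approach is essentially the same as the paper's: boundary reduction, a normalization/contradiction argument invoking Lemma~\ref{l:unique-2} for the a~priori bound, identification of subsequential limits via Lemma~\ref{l:cvg} and Lemma~\ref{l:C-lambda-strong-limit}, and the upgrade to $I^{\frac14}$ via Lemma~\ref{p:020}. One small point: your contradiction argument (normalizing by $\|v_{\omega_j}\|_{H^{-N}}$ rather than the paper's $\|v_{\omega_j}\|_{H^{-\frac12-\beta}}$) establishes \emph{boundedness} in $H^{-N}$, not the convergence~\eqref{e:cvg-1}; you still need the standard ``precompact $+$ unique subsequential limit $\Rightarrow$ convergence'' step, which the paper spells out explicitly.
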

%%%%%%%%%%%%%%%%%%%%%%%%%%%%%%%%%%%%%%%%%%%%%%%%%%%%%%%%%%%%%%%%%%%%%%%%%%%%%%%%
\begin{proof} 
1. We start with a few general observations.
Recall the equation~\eqref{e:C-lambda-equator-2}:
\begin{equation}
  \label{e:C-lambda-eqrev}
\mathcal C_\omega v_\omega=(R_\omega f)|_{\partial\Omega},\quad
R_\omega f=E_\omega * f\in C^\infty(\mathbb R^2).
\end{equation}
Moreover, by~\eqref{eq:U2u} we have
\begin{equation}
  \label{e:bv-U}
\indic_{\Omega}u_\omega=R_\omega(f-\mathcal Iv_{\omega}).
\end{equation}
By Lemma~\ref{l:loc}, $E_{\omega_j}\to E_{\lambda+i0}$ in $\mathcal D'(\mathbb R^2)$.
Passing to the limit in~\eqref{e:C-lambda-eqrev} we see that
\begin{equation}
  \label{e:bv-Comega}
\mathcal C_{\omega_j}v_{\omega_j}\to (R_{\lambda+i0}f)|_{\partial\Omega}\quad\text{in}\quad C^\infty(\partial\Omega).
\end{equation}

\noindent 2. We now show a boundedness statement: for each $\beta>0$ there exists a constant $C$ (depending on~$f$ and~$\beta$)
such that for all $j$
\begin{equation}
\label{eq:boundv}
\| v_{\omega_j} \|_{ H^{-\frac12-\beta }(\partial\Omega;T^*\partial\Omega) } \leq C .
\end{equation}
We proceed by contradiction. If~\eqref{eq:boundv} fails then we may pass to a subsequence
to make $\|v_{\omega_j}\|_{H^{-\frac 12-\beta}}\to \infty$.
We then put
\[
v_j := {v_{ \omega_j } / \| v_{ \omega_j } \|_{ H^{-\frac12 - \beta} } } ,\quad
u_j := {u_{ \omega_j } / \| v_{ \omega_j } \|_{ H^{-\frac12 - \beta} } }.
\]
By~\eqref{e:bv-Comega} we have
\begin{equation}
  \label{e:bv-1}
\mathcal C_{\omega_j}v_j\to 0\quad\text{in}\quad C^\infty(\partial\Omega).
\end{equation}
By compactness of the embedding $ H^{ -\frac12 - \beta } \hookrightarrow H^{-N } $,
where we fix $N>\frac12+\beta$, we may pass to a subsequence
to make
$$
v_j\to v_0\quad\text{in}\quad H^{-N}\quad\text{for some}\quad
v_0\in H^{-N}(\partial\Omega;T^*\partial\Omega).
$$
Now Lemma~\ref{l:cvg} applies and gives
\begin{equation}
  \label{e:bv-2}
v_j\to v_0\quad\text{in}\quad H^{-\frac 12-\beta}(\partial\Omega;T^*\partial\Omega),\quad
v_0\in I^{\frac 14+}(\partial\Omega,N^*_+\Sigma^-_\lambda\sqcup N^*_-\Sigma^+_\lambda).
\end{equation}
By Lemma~\ref{l:C-lambda-strong-limit} and passing to the limit in~\eqref{e:bv-U} using Lemma~\ref{l:loc} we get
$$
\mathcal C_{\omega_j}v_j\to \mathcal C_{\lambda+i0}v_0\quad\text{in}\quad\mathcal D'(\partial\Omega),\qquad
\indic_\Omega u_j\to -R_{\lambda+i0}\mathcal I v_0\quad\text{in}\quad \mathcal D'(\mathbb R^2).
$$
Thus by~\eqref{e:bv-1} and since $\supp(\indic_\Omega u_j)\subset\overline\Omega$ for all $j$
we have
$$
\mathcal C_{\lambda+i0}v_0=0,\qquad
\supp(R_{\lambda+i0}\mathcal Iv_0)\subset\overline\Omega.
$$
Now Lemma~\ref{l:unique-2} gives $v_0=0$.
On the other hand the first part of~\eqref{e:bv-2} and the fact that $\|v_j\|_{H^{-\frac12-\beta}}=1$
imply that $\|v_0\|_{H^{-\frac12-\beta}}=1$, which gives a contradiction.

\noindent 3. 
Fix $\beta>0$ and take an arbitrary subsequence $v_{\omega_{j_\ell}}$ which converges to some
$v$ in $H^{-\frac12-\beta}$. 
By Lemma~\ref{l:cvg} and~\eqref{e:bv-Comega} we have $v\in I^{\frac14+}(\partial\Omega,N^*_+\Sigma^-_\lambda\sqcup N^*_-\Sigma^+_\lambda)$.
By Lemma~\ref{l:C-lambda-strong-limit} and~\eqref{e:bv-Comega}
we have $\mathcal C_{\lambda+i0}v=(R_{\lambda+i0}f)|_{\partial\Omega}$. Finally, passing to the limit in~\eqref{e:bv-U} using Lemma~\ref{l:loc}
we have
$\supp R_{\lambda+i0}(f-\mathcal Iv)\subset\overline\Omega$.
Thus $v$ satisfies~\eqref{e:limitator-characterization}.
By Lemma~\ref{l:unique-2} there is at most one distribution which satisfies~\eqref{e:limitator-characterization}.
This implies that all the limits of convergent subsequences of $v_{\omega_j}$ in $H^{-\frac12-\beta}$ have to be the same.

On the other hand by~\eqref{eq:boundv} and compactness of the embedding
$ H^{ -\frac12 - \beta' } \hookrightarrow H^{-\frac12 - \beta } $ when $0<\beta'<\beta$
we see that the sequence $v_{\omega_j}$ is precompact in $H^{-\frac12-\beta}$.
Together with uniqueness of limit of subsequences this implies the convergence statement~\eqref{e:limitator}.

We finally show that $v_{\lambda+i0}\in I^{\frac 14}(\partial\Omega, N^*_+ \Sigma_\lambda^-  \sqcup
N^*_- \Sigma_\lambda^+ )$. From~\eqref{e:limitator-characterization} we get similarly to~\eqref{e:Cle-2}
$$
v_{\lambda+i0}=B^+_{\lambda+i0}b_\lambda^* v_{\lambda+i0}+B^-_{\lambda+i0}b_\lambda^{-*}v_{\lambda+i0}+g_{\lambda+i0}\quad\text{where}\quad
g_{\lambda+i0}\in C^\infty(\mathbb S^1;T^*\mathbb S^1).
$$
It remains to apply Lemma~\ref{p:020}.
\end{proof}
%%%%%%%%%%%%%%%%%%%%%%%%%%%%%%%%%%%%%%%%%%%%%%%%%%%%%%%%%%%%%%%%%%%%%%%%%%%%%%%%
We now upgrade Proposition~\ref{l:boundv} to a convergence statement for all the derivatives~$\partial_\omega^k v_\omega$.
Here $v_{\omega}\in C^\infty(\partial\Omega;T^*\partial\Omega)$ is holomorphic in~$\omega\in \mathcal J+i(0,\infty)$:
indeed, $u_{\omega}\in C^\infty(\overline\Omega)$ is holomorphic by the Remark following Lemma~\ref{l:elliptic-solved}
and the operator $\mathcal N_\omega$ defined in~\eqref{e:N-omega-def} is holomorphic as well.

As in the proof of Proposition~\ref{l:boundv} we will use the spaces $I^s(\partial\Omega,N^*_+\Sigma^-_\lambda\sqcup N^*_-\Sigma^+_\lambda)$
which depend on $\lambda=\Re\omega$. We recall from~\S\ref{s:C-conormal} the family of diffeomorphisms
$$
\Theta_\lambda:\mathbb S^1\to \partial\Omega,\quad
\Theta_\lambda(\widetilde\Sigma^\pm)=\Sigma^\pm_\lambda,\quad
\lambda\in\mathcal J,
$$
with the pullback operator $\Theta_\lambda^*$ mapping $I^s(\partial\Omega,N^*_+\Sigma^-_\lambda\sqcup N^*_-\Sigma^+_\lambda)$
to the $\lambda$-independent space $I^s(\mathbb S^1,N^*_+\widetilde\Sigma^-\sqcup N^*_-\widetilde\Sigma^+)$. Denote
$$
\widetilde v_\omega:=\Theta_\lambda^* v_\omega\ \in\ C^\infty(\mathbb S^1;T^*\mathbb S^1),\quad
\omega\in \mathcal J+i(0,\infty),\quad
\lambda=\Re\omega.
$$
If $v_{\lambda+i0}$ is defined in~\eqref{e:limitator-characterization}, then we also put
$$
\widetilde v_{\lambda+i0}:=\Theta_\lambda^* v_{\lambda+i0}\ \in\ I^{\frac14+}(\mathbb S^1,N^*_+\widetilde\Sigma^-\sqcup N^*_-\widetilde\Sigma^+),\quad
\lambda\in\mathcal J.
$$ 
Writing $\omega=\lambda+i\varepsilon$, denote by $\partial_\lambda^\ell\widetilde v_\omega$ the $\ell$-th derivative
of $\widetilde v_\omega$ in $\lambda$ with $\varepsilon$ fixed. (Note that unlike $v_\omega$, the function $\widetilde v_\omega$
is not holomorphic in~$\omega$.)

We are now ready to give the main technical result of this section.
The proof is similar to that of Proposition~\ref{l:boundv} (which already contains the key ideas), using additionally Lemma~\ref{l:C-conormal}
which establishes regularity in~$\lambda$ of the operators $\mathcal C_\omega$ conjugated by $\Theta_\lambda$.
%%%%%%%%%%%%%%%%%%%%%%%%%%%%%%%%%%%%%%%%%%%%%%%%%%%%%%%%%%%%%%%%%%%%%%%%%%%%%%%%
\begin{prop}
  \label{p:boundvder}
We have
\begin{equation}
  \label{e:boundvder-1}
\widetilde v_{\lambda+i0}\in C^\infty(\mathcal J;I^{\frac14+}(\mathbb S^1,N^*_+\widetilde\Sigma^-\sqcup N^*_-\widetilde\Sigma^+))
\end{equation}
where the topology on $I^{\frac14+}(\mathbb S^1,N^*_+\widetilde\Sigma^-\sqcup N^*_-\widetilde\Sigma^+)$ is
defined using the seminorms~\eqref{e:I-s-pm-seminorms}.
Moreover, for each $\lambda\in\mathcal J$ and $\ell$ we have as $\varepsilon\to 0+$
\begin{equation}
  \label{e:boundvder-2}
\partial^\ell_\lambda \widetilde v_{\lambda+i\varepsilon}\to \partial^\ell_\lambda \widetilde v_{\lambda+i0}\quad\text{in}\quad
I^{\frac14+}(\mathbb S^1,N^*_+\widetilde\Sigma^-\sqcup N^*_-\widetilde\Sigma^+),
\end{equation}
with convergence locally uniform in~$\lambda$.
\end{prop}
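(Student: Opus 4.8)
The plan is to bootstrap from Proposition~\ref{l:boundv}, which handles the case $\ell=0$, to all derivatives in $\lambda$ by differentiating the defining relations under the limit. First I would record the two equations characterizing $v_\omega$ for $\Im\omega>0$, namely $\mathcal C_\omega v_\omega=(R_\omega f)|_{\partial\Omega}$ and $\indic_\Omega u_\omega=R_\omega(f-\mathcal Iv_\omega)$, and conjugate the first by $\Theta_\lambda$ to obtain $\widetilde{\mathcal C}_\omega\widetilde v_\omega=\Theta_\lambda^*(R_\omega f)|_{\partial\Omega}$. Since the right-hand side is smooth in $(\theta,\lambda)$ uniformly in $\varepsilon\geq 0$ (by Lemma~\ref{l:loc} and smoothness of $\Theta_\lambda$ in $\lambda$), and since $\widetilde{\mathcal C}_\omega$ together with all its $\lambda$-derivatives $\partial_\lambda^k\widetilde{\mathcal C}_\omega$ is bounded uniformly in $\varepsilon$ on the $\lambda$-independent conormal spaces $I^{s+}(\mathbb S^1,N^*_+\widetilde\Sigma^-\sqcup N^*_-\widetilde\Sigma^+)$ (Lemma~\ref{l:C-conormal}), I can differentiate the equation $k$ times in $\lambda$ to get, schematically,
\[
\widetilde{\mathcal C}_\omega(\partial_\lambda^k\widetilde v_\omega)=\partial_\lambda^k\big(\Theta_\lambda^*(R_\omega f)|_{\partial\Omega}\big)-\sum_{j=1}^{k}\binom{k}{j}(\partial_\lambda^j\widetilde{\mathcal C}_\omega)(\partial_\lambda^{k-j}\widetilde v_\omega).
\]
This exhibits $\partial_\lambda^k\widetilde v_\omega$ as the solution of a restricted-single-layer-potential equation whose right-hand side is controlled, by induction, in conormal norms uniformly in $\varepsilon$.

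The inductive step then runs exactly as the proof of Proposition~\ref{l:boundv}: assuming $\partial_\lambda^{j}\widetilde v_{\lambda+i\varepsilon}$ for $j<k$ converge (locally uniformly in $\lambda$) in $I^{\frac14+}$ to limits $\partial_\lambda^j\widetilde v_{\lambda+i0}$, I would first establish a uniform bound $\|\partial_\lambda^k\widetilde v_{\omega_j}\|_{H^{-1/2-\beta}}\leq C$ by a contradiction/rescaling argument. Here the normalized sequence $\partial_\lambda^k\widetilde v_{\omega_j}/\|\cdot\|$ satisfies $\widetilde{\mathcal C}_{\omega_j}$ applied to it tending to $0$ (the inhomogeneous terms being bounded by the inductive hypothesis, hence negligible after rescaling), its pushforward under $\Theta_{\lambda_j}$ back to $\partial\Omega$ satisfies the hypotheses of Lemma~\ref{l:cvg} (conormal boundedness of $\mathcal C_\omega v_\omega$ following from Lemma~\ref{l:C-conormal} and the bootstrap), and the $\Omega$-support condition is inherited by passing to the $\mathcal D'$-limit in $\indic_\Omega\partial_\lambda^k u_\omega=R_\omega(\partial_\lambda^k f - \dots)$—where I must be slightly careful that the $\lambda$-derivatives hitting $R_\omega$ produce $\partial_\lambda^j E_\omega * (\cdot)$, still supported appropriately since $\supp(\indic_\Omega\partial_\lambda^k u_\omega)\subset\overline\Omega$. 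Then Lemma~\ref{l:unique-2} forces the rescaled limit to vanish, contradicting the normalization, and gives the bound. Once bounded, Lemma~\ref{l:cvg} upgrades the $H^{-1/2-\beta}$ convergence and yields $\partial_\lambda^k\widetilde v_{\lambda+i0}\in I^{\frac14+}$, with convergence in the conormal seminorms~\eqref{e:I-s-pm-seminorms} as in the last paragraph of the proof of Lemma~\ref{l:cvg}; the limit is identified via Lemma~\ref{l:C-lambda-strong-limit} (which supplies convergence of $\partial_\omega^k\mathcal C_{\omega_j}$) as the unique solution of the differentiated characterization, and uniqueness of this solution on the relevant conormal space again comes from Lemma~\ref{l:unique-2} applied to differences. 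Locally uniform convergence in $\lambda$ comes from the precompactness argument (compact Sobolev embedding) combined with uniqueness of subsequential limits, exactly as in Proposition~\ref{l:boundv}.

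Finally, \eqref{e:boundvder-1}, i.e.\ smoothness of $\lambda\mapsto\widetilde v_{\lambda+i0}$ into the conormal space, follows by combining the convergences~\eqref{e:boundvder-2} for consecutive values of $\ell$: if $\partial_\lambda^{\ell}\widetilde v_{\lambda+i\varepsilon}$ and $\partial_\lambda^{\ell+1}\widetilde v_{\lambda+i\varepsilon}$ both converge locally uniformly in $\lambda$ (in the Fr\'echet topology of $I^{\frac14+}$) as $\varepsilon\to0+$, then the limit of $\partial_\lambda^{\ell}\widetilde v_{\lambda+i\varepsilon}$ is $C^1$ in $\lambda$ with derivative the limit of $\partial_\lambda^{\ell+1}\widetilde v_{\lambda+i\varepsilon}$, by the standard fact that locally uniform convergence of functions together with locally uniform convergence of their derivatives implies differentiability of the limit. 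Iterating gives $\widetilde v_{\lambda+i0}\in C^\infty$.

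The main obstacle I anticipate is bookkeeping the interaction between $\lambda$-differentiation and the $\lambda$-dependent geometry: the operators $\widetilde{\mathcal C}_\omega$, the pullback operators $(\widetilde\gamma^\pm_\lambda)^*$ inside the decomposition~\eqref{e:C-lambda-conjee}, and the conormal spaces themselves all depend on $\lambda$, so differentiating produces vector fields (vanishing on $\widetilde\Sigma$, by~\eqref{e:tilde-gamma-saved}) and derivatives of pseudodifferential families; verifying that each such term is bounded uniformly in $\varepsilon$ on the $\lambda$-independent conormal spaces is precisely what Lemma~\ref{l:C-conormal} was set up to handle, but assembling the induction cleanly—especially ensuring the inhomogeneous terms in the differentiated equation land in $I^{\frac14+}$ rather than a rougher space—requires care. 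A secondary subtlety is that $\widetilde v_\omega$ is not holomorphic in $\omega$ (only $v_\omega$ is), so one genuinely works with the real-variable derivative $\partial_\lambda$ at fixed $\varepsilon$, and must track that the $\varepsilon\to0+$ limit commutes with these derivatives, which is exactly the content of~\eqref{e:boundvder-2}.
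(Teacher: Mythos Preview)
Your proposal is correct and follows essentially the same strategy as the paper: differentiate the two characterizing identities~\eqref{e:C-lambda-equator-2} and~\eqref{eq:U2u} (after conjugating by $\Theta_\lambda$), run an induction on the order $\ell$ using the contradiction/rescaling argument together with Lemma~\ref{l:cvg}, Lemma~\ref{l:C-conormal}, and Lemma~\ref{l:unique-2}, and then assemble smoothness in $\lambda$. The one organizational difference is that the paper's induction only establishes \emph{uniform boundedness} of $\partial_\lambda^\ell\widetilde v_{\lambda+i\varepsilon}$ in $I^{\frac14+}$ (without identifying the limits during the induction) and then applies Arzel\`a--Ascoli once at the end, combined with the $\ell=0$ convergence from Proposition~\ref{l:boundv}, to obtain convergence in $C^\infty(\mathcal J;I^{\frac14+})$ in a single stroke; this sidesteps the need to identify the limit and re-invoke uniqueness at every inductive step.
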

%%%%%%%%%%%%%%%%%%%%%%%%%%%%%%%%%%%%%%%%%%%%%%%%%%%%%%%%%%%%%%%%%%%%%%%%%%%%%%%%
\Remarks 1. From~\eqref{e:boundvder-1} we get a regularity statement for $v_{\lambda+i0}=\Theta_{\lambda}^{-*}\widetilde v_{\lambda+i0}$:
$$
v_{\lambda+i0}\in C^\ell(\mathcal J;H^{-\frac12-\ell-}(\partial\Omega;T^*\partial\Omega))\quad\text{for all }\ell.
$$
Here the loss of~$\ell$ derivatives comes from differentiating $\Theta_{\lambda}^{-*}$ in~$\lambda$.

\noindent 2. The property~\eqref{e:boundvder-1} can be reformulated as follows: the distribution
$(\lambda,x)\mapsto v_{\lambda+i0}(x)$
lies in $I^{0+}(\mathcal J\times \partial\Omega,N^*_+\Sigma^-_{\mathcal J}\sqcup N^*_-\Sigma^+_{\mathcal J})$
where $\Sigma^\pm_{\mathcal J}:=\{(\lambda,x)\mid \lambda\in\mathcal J,\ x\in \Sigma^\pm_\lambda\}$
and we orient the conormal bundles $N^*\Sigma^\pm_{\mathcal J}$ using the positive orientation
on~$\partial\Omega$.
%%%%%%%%%%%%%%%%%%%%%%%%%%%%%%%%%%%%%%%%%%%%%%%%%%%%%%%%%%%%%%%%%%%%%%%%%%%%%%%%
\begin{proof}
1. We start with a few identities on $\widetilde v_{\omega}$, $\omega\in \mathcal J+i(0,\infty)$.
Let $\widetilde {\mathcal C}_\omega=\Theta_\lambda^*\mathcal C_\omega\Theta_\lambda^{-*}$, $\lambda=\Re\omega$, be the conjugated
restricted single layer potential defined in~\eqref{e:tilde-C-omega}. Applying $\Theta_\lambda^*$ to~\eqref{e:C-lambda-eqrev} we get
\begin{equation}
  \label{e:bvderi-1}
\widetilde{\mathcal C}_\omega\widetilde v_\omega=\widetilde G_\omega\quad\text{where}\quad
\widetilde G_\omega:=\Theta_\lambda^*\big((R_\omega f)|_{\partial\Omega}\big).
\end{equation}
From~\eqref{e:bv-U} we have
\begin{equation}
  \label{e:bvderi-2}
\indic_\Omega u_\omega=R_\omega(f-\mathcal I \Theta_\lambda^{-*}\widetilde v_\omega).
\end{equation}
Differentiating these identities $\ell$ times in $\lambda=\Re\omega$, we get
\begin{align}
  \label{e:bvderi-3}
\widetilde{\mathcal C}_\omega \partial_\lambda^\ell \widetilde v_\omega&=
\partial_\lambda^\ell \widetilde G_\omega-\sum_{r=0}^{\ell-1}\binom{\ell}{r}(\partial_\lambda^{\ell-r}\widetilde{\mathcal C}_\omega)
(\partial_\lambda^r \widetilde v_\omega),\\
  \label{e:bvderi-4}
R_\omega \mathcal I\Theta_\lambda^{-*}\partial_\lambda^\ell\widetilde v_\omega&=
\partial_\omega^\ell R_\omega f-\indic_\Omega \partial_\omega^\ell u_\omega
-\sum_{r=0}^{\ell-1}\binom{\ell}{r}\big(\partial_\lambda^{\ell-r}(R_\omega\mathcal I\Theta_\lambda^{-*})\big)
(\partial_\lambda^r\widetilde v_\omega).
\end{align}

\noindent 2. Take an arbitrary sequence $\omega_j=\lambda_j+i\varepsilon_j\to\lambda\in\mathcal J$, $\Im\omega_j>0$. We show that for each $\ell\in\mathbb N_0$
\begin{equation}
  \label{e:bvder-1}
\partial^\ell_\lambda \widetilde v_{\omega_j}\quad\text{is bounded uniformly in~$j$}\quad\text{in }
I^{\frac14+}(\mathbb S^1,N^*_+\widetilde\Sigma^-\sqcup N^*_-\widetilde\Sigma^+).
\end{equation}
We use induction on~$\ell$, showing~\eqref{e:bvder-1} under the assumption
\begin{equation}
  \label{e:bvder-1.5}
\partial^r_\lambda \widetilde v_{\omega_j}\quad\text{is bounded uniformly in~$j$}\quad
\text{in }I^{\frac14+}(\mathbb S^1,N^*_+\widetilde\Sigma^-\sqcup N^*_-\widetilde\Sigma^+)\quad\text{for all }
r<\ell.
\end{equation}
Fix arbitrary $\beta>0$; the main task will be to show that
\begin{equation}
  \label{e:bvder-2}
\|\partial^\ell_\lambda \widetilde v_{\omega_j}\|_{H^{-\frac12-\beta}}\quad\text{is bounded in }j.
\end{equation}
We argue by contradiction: if~\eqref{e:bvder-2} does not hold then we can pass to a subsequence
to make $\|\partial^\ell_\lambda \widetilde v_{\omega_j}\|_{H^{-\frac12-\beta}}\to \infty$.
Define
\begin{equation}
  \label{e:bvder-3}
\widetilde v_j:=\partial^\ell_\lambda \widetilde v_{\omega_j}/\|\partial^\ell_\lambda \widetilde v_{\omega_j}\|_{H^{-\frac12-\beta}},\quad
\|\widetilde v_j\|_{H^{-\frac12-\beta}}=1.
\end{equation}
Since $H^{-\frac 12-\beta}$ embeds compactly into $H^{-N}$, where we fix $N>\frac 12+\beta$,
we may pass to a subsequence to get
\begin{equation}
  \label{e:bvder-4}
\widetilde v_j\to  \widetilde v_0\quad\text{in}\ H^{-N}\quad\text{for some}\quad
\widetilde v_0\in H^{-N}(\mathbb S^1;T^*\mathbb S^1).
\end{equation}
We now analyse the right-hand side of~\eqref{e:bvderi-3} for $\omega=\omega_j$. Since
$R_{\omega}f=E_{\omega}*f$, $\partial_\omega^r E_{\omega_j}\to \partial_\lambda^r E_{\lambda+i0}$ in $\mathcal D'(\mathbb R^2)$
by Lemma~\ref{l:loc}, and $f\in \CIc(\Omega)$ is independent of $j$, we see that
\begin{equation}
  \label{e:bvder-5}
\partial^\ell_\lambda\widetilde G_{\omega_j}\quad\text{is bounded uniformly in~$j$}\quad\text{in}\ C^\infty(\mathbb S^1).
\end{equation}
By Lemma~\ref{l:C-conormal} and~\eqref{e:bvder-1.5} we next have for all $r<\ell$
\begin{equation}
  \label{e:bvder-5.5}
(\partial_\lambda^{\ell-r}\widetilde{\mathcal C}_{\omega_j})
(\partial_\lambda^r\widetilde v_{\omega_j})\quad\text{is bounded uniformly in~$j$}\quad\text{in }I^{-\frac34+}(\mathbb S^1,N^*_+\widetilde\Sigma^-\sqcup N^*_-\widetilde\Sigma^+).
\end{equation}
Dividing~\eqref{e:bvderi-3} by $\|\partial^\ell_\lambda \widetilde v_{\omega_j}\|_{H^{-\frac12-\beta}}$, we then get
\begin{equation}
  \label{e:bvder-6}
\widetilde{\mathcal C}_{\omega_j}\widetilde v_j\to 0\quad\text{in}\quad I^{-\frac34+}(\mathbb S^1,N^*_+\widetilde\Sigma^-\sqcup N^*_-\widetilde\Sigma^+).
\end{equation}
We now apply Lemma~\ref{l:cvg} to
$$
v_j:=\Theta_{\lambda_j}^{-*}\widetilde v_j,\quad
v_0:=\Theta_{\lambda}^{-*}\widetilde v_0,\quad
\mathcal C_{\omega_j}v_j=\Theta_{\lambda_j}^{-*}\widetilde{\mathcal C}_{\omega_j}\widetilde v_j
$$
and get
\begin{equation}
  \label{e:bvder-7}
v_j\to v_0\quad\text{in}\quad H^{-\frac 12-\beta},\quad
v_0\in I^{\frac 14+}(\partial\Omega,N^*_+\Sigma^-_\lambda\sqcup N^*_-\Sigma^+_\lambda).
\end{equation}
By Lemma~\ref{l:C-lambda-strong-limit} and~\eqref{e:bvder-7} we have $\mathcal C_{\omega_j}v_j\to \mathcal C_{\lambda+i0}v_0$ in $\mathcal D'(\partial\Omega)$,
thus by~\eqref{e:bvder-6}
\begin{equation}
  \label{e:bvder-8}
\mathcal C_{\lambda+i0}v_0=0.
\end{equation}
We now obtain a support condition on $\mathcal R_{\lambda+i0}\mathcal Iv_0$ by analyzing the right-hand side of~\eqref{e:bvderi-4}
for $\omega=\omega_j$. Similarly to the proof of~\eqref{e:bvder-5} we have
$$
\partial^\ell_\omega R_{\omega_j}f\quad\text{is bounded uniformly in~$j$}\quad\text{in}\quad C^\infty(\mathbb R^2).
$$
By a similar argument using additionally~\eqref{e:bvder-1.5} we get for all $r<\ell$
$$
\big(\partial_\lambda^{\ell-r}(R_{\omega_j}\mathcal I\Theta_{\lambda_j}^{-*})\big)(\partial_\lambda^r \widetilde v_{\omega_j})\quad\text{is bounded
uniformly in~$j$}\quad\text{in }\mathcal D'(\mathbb R^2),
$$
where we denote $\partial_\lambda^k(R_{\omega_j}\mathcal I\Theta_{\lambda_j}^{-*}):=\partial_\lambda^k(R_{\omega}\mathcal I\Theta_{\lambda}^{-*})|_{\omega=\omega_j}$.

By Lemma~\ref{l:loc} and~\eqref{e:bvder-7} we get
$$
R_{\omega_j}\mathcal I\Theta^{-*}_{\lambda_j}\widetilde v_j\to R_{\lambda+i0}\mathcal I v_0\quad\text{in}\quad
\mathcal D'(\mathbb R^2).
$$
Now, dividing~\eqref{e:bvderi-4}
by $\|\partial^\ell_\lambda \widetilde v_{\omega_j}\|_{H^{-\frac12-\beta}}$ 
and using that $\supp(\indic_\Omega \partial^\ell_\omega u_{\omega_j})\subset\overline\Omega$ for all~$j$
we obtain
\begin{equation}
  \label{e:bvder-9}
\supp(R_{\lambda+i0}\mathcal Iv_0)\subset\overline\Omega.
\end{equation}
Applying Lemma~\ref{l:unique-2} and using~\eqref{e:bvder-7}--\eqref{e:bvder-9} we now see that $v_0=0$.
This gives a contradiction with~\eqref{e:bvder-7}, since $\|v_j\|_{H^{-\frac12-\beta}}$ is bounded away from~0 by~\eqref{e:bvder-3}.
This finishes the proof of~\eqref{e:bvder-2}.

The bound~\eqref{e:bvder-2} implies the stronger boundedness statement~\eqref{e:bvder-1}. Indeed, the proof of
Lemma~\ref{l:cvg} (more precisely, \eqref{e:bouncy-castle-1} and the paragraph following it) shows
that any seminorm of $\Theta_{\lambda_j}^{-*}\partial_\lambda^\ell \widetilde v_{\omega_j}$ in $I^{\frac 14+}(\partial\Omega,N^*_+\Sigma^-_{\lambda_j}\sqcup N^*_-\Sigma^+_{\lambda_j})$ (see~\eqref{e:I-s-pm-seminorms}) is bounded in terms
of $\|\partial_\lambda^\ell \widetilde v_{\omega_j}\|_{H^{-\frac12-\beta}}$ (for any choice of $\beta$) and
of some $I^{-\frac 34+}(\partial\Omega,N^*_+\Sigma^-_{\lambda_j}\sqcup N^*_-\Sigma^+_{\lambda_j})$-seminorm
of $\Theta_{\lambda_j}^{-*}\widetilde{\mathcal C}_{\omega_j}\partial_\lambda^\ell \widetilde v_{\omega_j}$.
The former is bounded in~$j$ by~\eqref{e:bvder-2} and the latter is bounded in~$j$
by~\eqref{e:bvderi-3}, \eqref{e:bvder-5}, and~\eqref{e:bvder-5.5}.

\noindent 3. From~\eqref{e:bvder-1} we see that (as before, using the seminorms~\eqref{e:I-s-pm-seminorms}),
the family of distributions $\widetilde v_{\lambda+i\varepsilon}$
is bounded uniformly in $\varepsilon\in (0,1]$ in the space $C^\infty(\mathcal J;I^{\frac 14+}(\mathbb S^1,N^*_+\widetilde\Sigma^-\sqcup N^*_-\widetilde\Sigma^+))$. 
By the Arzel\`a--Ascoli Theorem~\cite[Theorem~47.1]{Munkres-Topology} and since any sequence which is bounded
in $I^{\frac 14+}(\mathbb S^1,N^*_+\widetilde\Sigma^-\sqcup N^*_-\widetilde\Sigma^+)$ is also precompact
in this space (following from~\eqref{e:I-s-pm-seminorms} and the compactness of embedding $H^s\subset H^t$ for $s>t$) it
follows that $\widetilde v_{\lambda+i\varepsilon}$ is also precompact in the space
$C^\infty(\mathcal J;I^{\frac 14+}(\mathbb S^1,N^*_+\widetilde\Sigma^-\sqcup N^*_-\widetilde\Sigma^+))$.
Moreover, $\widetilde v_{\lambda+i\varepsilon}\to\widetilde v_{\lambda+i0}$
in the space $C^0(\mathcal J;H^{-\frac12-}(\mathbb S^1;T^*\mathbb S^1))$ by Proposition~\ref{l:boundv}.
Together these two statements imply that as $\varepsilon\to 0+$
$$
\widetilde v_{\lambda+i\varepsilon}\to \widetilde v_{\lambda+i0}\quad\text{in}\quad
C^\infty(\mathcal J;I^{\frac 14+}(\mathbb S^1,N^*_+\widetilde\Sigma^-\sqcup N^*_-\widetilde\Sigma^+)),
$$
giving~\eqref{e:boundvder-1} and~\eqref{e:boundvder-2}.
\end{proof}
%%%%%%%%%%%%%%%%%%%%%%%%%%%%%%%%%%%%%%%%%%%%%%%%%%%%%%%%%%%%%%%%%%%%%%%%%%%%%%%%

%%%%%%%%%%%%%%%%%%%%%%%%%%%%%%%%%%%%%%%%%%%%%%%%%%%%%%%%%%%%%%%%%%%%%%%%%%%%%%%%
\subsection{Proof of Theorem \ref{t:2}}
\label{s:prooft2}

Fix $f \in\CIc(\Omega)$, let $\omega=\lambda+i\varepsilon$ where $\lambda\in\mathcal J$ and $0<\varepsilon\ll 1$.
Without loss of generality we assume that $f$ is real-valued. It suffices to show existence
of the limit of $(P-(\lambda+i\varepsilon)^2)^{-1}f$,
since $(P-(\lambda-i\varepsilon)^2)^{-1}f$ is given by its complex conjugate.

Let $u_\omega\in C^\infty(\overline\Omega)$ be
the solution to the boundary-value problem~\eqref{eq:ellip}. Recalling~\eqref{eq:defP2} we see
that
$$
(P-\omega^2)^{-1}f=\Delta u_\omega\ \in\ C^\infty(\overline\Omega).
$$
Next, by~\eqref{e:u-lambda-form} we have
\begin{equation}
  \label{e:prooftor-1}
u_\omega=(R_\omega f)|_\Omega-S_\omega v_\omega
\end{equation}
where the `Neumann data' $v_\omega:=\mathcal N_\omega u_\omega\in C^\infty(\partial\Omega;T^*\partial\Omega)$ is defined using~\eqref{e:N-omega-def}.

By Proposition~\ref{l:boundv} we have
$$
v_{\lambda+i\varepsilon}\to v_{\lambda+i0}\quad\text{in }H^{-\frac12-}(\partial\Omega;T^*\partial\Omega)\quad\text{as }\varepsilon\to 0+,
$$
with convergence locally uniform in $\lambda\in\mathcal J$.
Using Lemma~\ref{l:loc} and recalling that $R_\omega f=E_\omega*f$ and $S_\omega v_\omega=(R_\omega \mathcal I v_\omega)|_{\Omega}$, we pass to the limit in~\eqref{e:prooftor-1} to get
$$
u_{\lambda+i\varepsilon}\to u_{\lambda+i0}:=(R_{\lambda+i0}f)|_{\Omega}-S_{\lambda+i0}v_{\lambda+i0}\quad\text{in }
\mathcal D'(\Omega)\quad\text{as }\varepsilon\to 0+,
$$
with convergence again locally uniform in $\lambda\in\mathcal J$.
This gives the convergence statement~\eqref{eq:Pla} with 
$$
(P-\lambda^2-i0)^{-1}f=\Delta u_{\lambda+i0}.
$$
Next, since $R_{\lambda+i0}f\in C^\infty(\mathbb R^2)$ and $v_{\lambda+i0} \in I^{\frac14  } ( \partial\Omega, N^*_+ \Sigma_\lambda^- \sqcup
N^*_- \Sigma_\lambda^+ )$, we apply the mapping property~\eqref{eq:con-new} to get
$u_{\lambda+i0}\in I^{-1}(\overline\Omega,\Lambda^-(\lambda))$
which implies
$$
(P-\lambda^2-i0)^{-1}f\in I^{1}(\overline\Omega,\Lambda^-(\lambda)).
$$
Since $ C^\infty_{\rm{c}} ( \Omega ) $ is dense in 
$ H^{-1} ( \Omega ) $ (see for instance \cite[Lemma 5]{RalstonR}), it is then standard 
(see for instance \cite[Proposition 4.1]{SiSchr}) that the spectrum of $ P $ in $\mathcal  J^2 $ is
purely absolutely continuous.
 \qed

%%%%%%%%%%%%%%%%%%%%%%%%%%%%%%%%%%%%%%%%%%%%%%%%%%%%%%%%%%%%%%%%%%%%%%%%%%%%%%%%
%%%%%%%%%%%%%%%%%%%%%%%%%%%%%%%%%%%%%%%%%%%%%%%%%%%%%%%%%%%%%%%%%%%%%%%%%%%%%%%%
\section{Large time asymptotic behaviour}
\label{s:asy}

We will now adapt the analysis of \cite[\S\S 5,6]{DZ-FLOP} and use \eqref{eq:sol} to describe 
asymptotic behaviour of solutions to~\eqref{eq:PDE}, giving the proof of Theorem~\ref{t:1}.
Assume that $\lambda\in (0,1)$ satisfies the Morse--Smale conditions of Definition~\ref{d:2}
and fix an open interval $\mathcal J\subset (0,1)$ containing~$\lambda$ such that
each $\omega\in \mathcal J$ satisfies the Morse--Smale conditions as well
(this is possible by Lemma~\ref{l:perturb-MS}).
We emphasize that in this section, in contrast with~\S\S\ref{s:blp}--\ref{s:liap},
we denote by $\lambda$ the fixed real frequency featured in the forcing term in~\eqref{eq:PDE}
and by $\omega$ an arbitrary real number (often lying in~$\mathcal J$).

%%%%%%%%%%%%%%%%%%%%%%%%%%%%%%%%%%%%%%%%%%%%%%%%%%%%%%%%%%%%%%%%%%%%%%%%%%%%%%%%
\subsection{Reduction to the resolvent}
\label{s:asy-reduce}

Fix $ f \in C^\infty_{\rm{c}} ( \Omega ;\mathbb R ) $ and let $u$ be the solution to~\eqref{eq:PDE}.
We first split~\eqref{eq:sol} into two parts. Fix a cutoff function
\begin{equation}
  \label{e:varphi-def}
\varphi\in \CIc(\mathcal J;[0,1]),\quad
\varphi=1\quad\text{on}\quad [\lambda-\delta,\lambda+\delta]\quad\text{for some }\delta>0.
\end{equation}
By~\eqref{eq:sol} we can write
\begin{equation}
\label{eq:DeltaOmega}       u(t) = \Delta^{-1}_\Omega \Re\big(e^{i\lambda t}(w_1(t)+r_1(t))\big), 
\end{equation}
where, with $\mathbf W_{t,\lambda}$ defined in~\eqref{eq:sol},
\begin{equation}
\label{eq:b1} 
\begin{aligned}
w_1 (  t ) =
\varphi(\sqrt P)\mathbf W_{t,\lambda}(P)f,\quad
r_1 (  t ) = 
(I-\varphi(\sqrt P))\mathbf W_{t,\lambda}(P)f.
\end{aligned}
\end{equation}
The contribution of $ r_1 $ to $ u $ is bounded in $H^1(\Omega)$ uniformly as $t\to \infty$ as follows from
%%%%%%%%%%%%%%%%%%%%%%%%%%%%%%%%%%%%%%%%%%%%%%%%%%%%%%%%%%%%%%%%%%%%%%%%%%%%%%%%
\begin{lemm}
\label{l:b1}
We have
\begin{equation}
\label{eq:lb1}
\big\| \Re \big(e^{i\lambda t}r_1(t)\big) \big\|_{ H^{-1}(\Omega) } \leq {2\over\lambda\delta}\|f\|_{H^{-1}(\Omega)} \quad\text{for all}\quad t\geq 0.
\end{equation}
\end{lemm}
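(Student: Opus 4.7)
The plan is to apply the spectral theorem to the self-adjoint operator $P$ on $H^{-1}(\Omega)$. Since $f$ is real-valued and $P$ is a real operator (it commutes with complex conjugation), for any bounded Borel function $g$ on $[0,1]$ we have $\Re(g(P)f)=(\Re g)(P)f$. Hence, writing $\omega:=\sqrt z$, it suffices to bound the symbol
\[
h_t(z):=(1-\varphi(\omega))\,\Re\bigl(e^{i\lambda t}\mathbf W_{t,\lambda}(\omega^2)\bigr)
\]
uniformly on $\sigma(P)=[0,1]$ by $2/(\lambda\delta)$, for then the functional calculus yields $\|\Re(e^{i\lambda t}r_1(t))\|_{H^{-1}}=\|h_t(P)f\|_{H^{-1}}\le(\sup_{[0,1]}|h_t|)\,\|f\|_{H^{-1}}$.

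The first step is an explicit computation of $\Re(e^{i\lambda t}\mathbf W_{t,\lambda}(\omega^2))$. Starting from \eqref{eq:sol},
\[
e^{i\lambda t}\mathbf W_{t,\lambda}(\omega^2)=\frac{e^{i\lambda t}-e^{-i\omega t}}{2\omega(\omega+\lambda)}+\frac{e^{i\lambda t}-e^{i\omega t}}{2\omega(\omega-\lambda)},
\]
and combining the two fractions over the common denominator $2\omega(\omega^2-\lambda^2)$ one obtains, after simplification,
\[
e^{i\lambda t}\mathbf W_{t,\lambda}(\omega^2)=\frac{e^{i\lambda t}-\cos(\omega t)}{\omega^2-\lambda^2}-\frac{i\lambda\sin(\omega t)}{\omega(\omega^2-\lambda^2)}.
\]
The second term is purely imaginary (for real $\omega,\lambda$), so
\[
\Re\bigl(e^{i\lambda t}\mathbf W_{t,\lambda}(\omega^2)\bigr)=\frac{\cos(\lambda t)-\cos(\omega t)}{\omega^2-\lambda^2}.
\]

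The second step is the elementary sup bound. The cutoff factor $1-\varphi(\omega)$ vanishes on $[\lambda-\delta,\lambda+\delta]$, so we only need to estimate this ratio for $\omega\in[0,1]$ with $|\omega-\lambda|\ge\delta$. There
\[
\left|\frac{\cos(\lambda t)-\cos(\omega t)}{\omega^2-\lambda^2}\right|\le\frac{2}{|\omega-\lambda|(\omega+\lambda)}\le\frac{2}{\delta\cdot\lambda},
\]
where we used $\omega+\lambda\ge\lambda$ since $\omega\ge0$. Combined with $|1-\varphi(\omega)|\le 1$ and the functional calculus bound above, this gives \eqref{eq:lb1}.

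The argument is entirely routine; the only mild obstacle is the algebraic manipulation in the first step, and it is resolved once one notes that the partial-fraction pieces from the two $\pm$ terms reassemble with the common denominator $\omega^2-\lambda^2$ so that the imaginary contributions collect into the $\sin(\omega t)$ term and the real part takes the clean telescoping form above.
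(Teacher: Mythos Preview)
Your proof is correct and follows essentially the same approach as the paper: compute the real part of the symbol explicitly as $(\cos(\lambda t)-\cos(\omega t))/(\omega^2-\lambda^2)$, bound it by $2/(\lambda\delta)$ using that $1-\varphi$ vanishes on $[\lambda-\delta,\lambda+\delta]$ and $\omega+\lambda\ge\lambda$, and apply the functional calculus for $P$ on $H^{-1}(\Omega)$. The only difference is that you spell out the justification $\Re(g(P)f)=(\Re g)(P)f$ via reality of $f$ and $P$, which the paper leaves implicit.
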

%%%%%%%%%%%%%%%%%%%%%%%%%%%%%%%%%%%%%%%%%%%%%%%%%%%%%%%%%%%%%%%%%%%%%%%%%%%%%%%%
\begin{proof}
We calculate $\Re(e^{i\lambda t}r_1(t)) = \mathbf R_{t,\lambda}(P)f$ where
\[ 
\mathbf R_{t,\lambda}(z)=
\Re\big(e^{i\lambda t}\mathbf W_{t,\lambda}(z)(1-\varphi(\sqrt z))\big)=
{( \cos (t \sqrt z)  - \cos (t \lambda )) ( 1 - \varphi ( \sqrt {z } ) )\over  \lambda^2-z } .
\]
Since $\varphi=1$ near $\lambda$, we see
that $\sup_{[0,1]}|\mathbf R_{t,\lambda}|\leq 2/(\lambda\delta)$.
Now~\eqref{eq:lb1} follows from the functional calculus for the self-adjoint operator $ P $ on $H^{-1}(\Omega)$. 
\end{proof} 
%%%%%%%%%%%%%%%%%%%%%%%%%%%%%%%%%%%%%%%%%%%%%%%%%%%%%%%%%%%%%%%%%%%%%%%%%%%%%%%%
Define for $\omega\in\mathcal J$ the limits in~$\mathcal D'(\Omega)$
(which exist by Theorem~\ref{t:2}, see~\S\ref{s:prooft2})
\begin{equation}
  \label{e:u-pm-def}
u^\pm(\omega):=\Delta_\Omega^{-1}(P-\omega^2\pm i0)^{-1}f.
\end{equation}
Here $u^+(\omega)$ is the complex
conjugate of~$u^-(\omega)$ since $f$ is real valued.

By Stone's formula for the operator~$P$ (see for instance \cite[Theorem B.10]{DZ-Book})
and a change of variables in the spectral parameter we have
\begin{equation}
\label{eq:w2w1}
\begin{aligned}
\Delta_\Omega^{-1} w_1(t) &=
{1\over \pi i}\int_{\mathbb R} \varphi(\omega)\mathbf W_{t,\lambda}(\omega^2)(u^-(\omega)-u^+(\omega))\omega\,d\omega \\
&=
{1\over 2\pi}\int_0^t\int_{\mathbb R}\varphi(\omega)(e^{i(\omega-\lambda)s}-e^{-i(\omega+\lambda)s})(u^+(\omega)-u^-(\omega))\,d\omega ds.
\end{aligned}
\end{equation}

%%%%%%%%%%%%%%%%%%%%%%%%%%%%%%%%%%%%%%%%%%%%%%%%%%%%%%%%%%%%%%%%%%%%%%%%%%%%%%%%
\subsection{Global geometry}
\label{s:gge}

The proof of Theorem~\ref{t:2} in~\S \ref{s:prooft2} shows that $ u^\pm (\omega )$ 
are smooth families of cononormal distributions associated to $ \omega $-dependent 
lines in $ \mathbb R^2 $, more precisely
\begin{equation}
  \label{e:u-pm-conormal}
u^\pm(\omega)\in I^{-1}(\overline\Omega,\Lambda^\pm(\omega))
\end{equation}
where $\Lambda^\pm(\omega)$ are defined in~\eqref{eq:defLa}.
To understand the behaviour of $ \Delta_\Omega^{-1} w_1(t) $ as $ t \to \infty $ we present an explicit version of~\eqref{e:u-pm-conormal},
relying on Proposition~\ref{p:boundvder}. The most confusing thing here are the signs defined 
in \eqref{e:n-pm-gamma-def}. Figures \ref{f:Lamp}, \ref{f:interlace}, and~\ref{f:ggeo} can be used for guidance
here. 

%%%%%%%%%%%%%%%%%%%%%%%%%%%%%%%%%%%%%%%%%%%%%%%%%%%%%%%%%%%%%%%%%%%%%%%%%%%%%%%%
\begin{figure}
\qquad\includegraphics{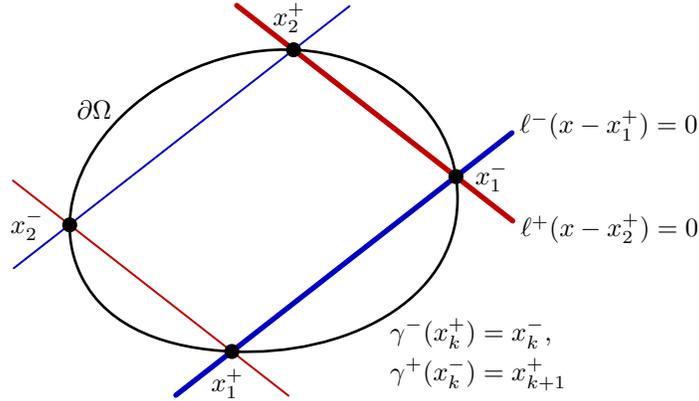}
\caption{An illustration of~\eqref{eq:Sigmapm} with $ \Sigma^\pm_\lambda = \{ x_1^\pm(\lambda), x_2^\pm(\lambda) \} $.}
\label{f:interlace}
\end{figure}
%%%%%%%%%%%%%%%%%%%%%%%%%%%%%%%%%%%%%%%%%%%%%%%%%%%%%%%%%%%%%%%%%%%%%%%%%%%%%%%%

Let $\omega\in \mathcal J$ and $\Sigma^\pm_\omega\subset\partial\Omega$ be the attractive/repulsive sets of the chess billiard $b(\bullet,\omega)$
defined in~\eqref{e:Sigma-pm-def}. Recall that $b=\gamma^+\circ\gamma^-$ and  the involutions $\gamma^\pm(\bullet,\omega)$ map $\Sigma^+_\omega$ to~$\Sigma^-_\omega$.
Let $n$ be the minimal period of the periodic points of~$b$. To simplify notation,
we assume that each of the sets $\Sigma^\pm_\omega$ consists of exactly~$n$ points, that is, it
is a single periodic orbit of~$b$ (as opposed to a union of several periodic orbits),
but the analysis works in the same way in the general case. We write
(with the cyclic convention that $  x_{n+1}^\pm(\omega) = x_1^\pm ( \omega) $, $ x_0^\pm ( \omega ) = x_n^\pm ( \omega ) $)
\begin{equation}
\label{eq:Sigmapm}   \Sigma^\pm_ \omega  = \{ x_k^\pm ( \omega ) \}_{k=1}^n , \ \ \ 
\gamma^- ( x_k^+  ) = x_k^-  , \ \ \gamma^+ 
( x_k^- ) = x_{k + 1}^+, 
\end{equation}
and $  b^{\pm 1} ( x_k^\pm ( \omega ) , \omega ) = x_{k+1}^\pm$. By Lemma~\ref{l:perturb-MS},
we can make $x_k^\pm(\omega)$ depend smoothly on~$\omega\in\mathcal J$.

In the notation of \eqref{e:n-pm-gamma-def} and~\eqref{eq:defLa}, 
\begin{equation}
\label{eq:defLap}  \begin{gathered}  \Lambda^- ( \omega ) =
\bigsqcup_{ k = 1}^n N^*_+ \Gamma_\omega^- ( x_k^{-} ( \omega ) ) \sqcup
\bigsqcup_{ k = 1}^n N^*_- \Gamma_\omega^+ ( x_k^{+}  ( \omega ) ) , \\
N^*_+ \Gamma_\omega^- ( x^-_k ( \omega )) = 
\{  ( x,  \tau d \ell^-_\omega)   : \ell^-_\omega ( x - x_k^- ( \omega ) ) =0  ,  \ \
 \nu^-_{k}  \tau > 0 \}  , \\
N^*_- \Gamma_\omega^+ ( x^+_{k} ( \omega )) = 
\{  ( x,  \tau d \ell^+_\omega  ) : \ell^+_\omega  ( x - x_k^+( \omega )   )=0,  \ \
 \nu_k^+  \tau < 0 \}  ,\\
\nu^\pm_k  :=  \nu^\pm ( x_k^\pm ( \omega ) , \omega ) :=\sgn \partial_\theta \ell_\omega^\pm  ( x_k^\pm ( \omega )  ) 
 . \end{gathered} 
\end{equation}
where $ \ell^\pm_\omega  ( x ) := \ell^\pm ( x, \omega ) $. 
We note that $ \nu_k^\pm  $ are  independent of $ \omega\in\mathcal J$.
To obtain 
$ \Lambda^+ ( \omega ) $ we switch the sign of $ \tau $  in \eqref{eq:defLap} --
see Figure~\ref{f:Lamp}.

%%%%%%%%%%%%%%%%%%%%%%%%%%%%%%%%%%%%%%%%%%%%%%%%%%%%%%%%%%%%%%%%%%%%%%%%%%%%%%%%
\begin{figure}
\includegraphics{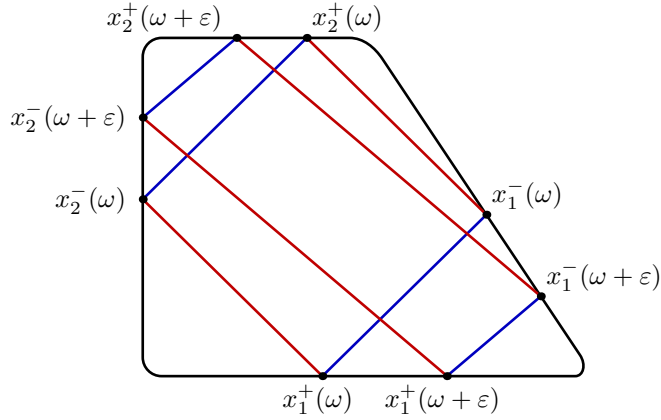}
\caption{An illustration of Lemma~\ref{l:ggeo}, showing
the periodic trajectory of the chess billiard for $\omega$
and for $\omega+\varepsilon$, $\varepsilon>0$. In this example
$\nu_1^+=\nu_1^-=1$, $\nu_2^+=\nu_2^-=-1$.
Lemma~\ref{l:ggeo} shows in which direction the blue and red segments move
as $\omega$ grows. For example,
the entire red segment $\{x\in\Omega\mid \ell^+_\omega(x-x^+_1(\omega))=0\}$,
which connects $x_1^+(\omega)$ to $x_2^-(\omega)$, lies
inside the half-plane $\{x\mid \ell^+_{\omega+\varepsilon}(x-x^+_1(\omega+\varepsilon))<0\}$,
which is consistent with~\eqref{eq:ggeo}.}
\label{f:ggeo}
\end{figure}
%%%%%%%%%%%%%%%%%%%%%%%%%%%%%%%%%%%%%%%%%%%%%%%%%%%%%%%%%%%%%%%%%%%%%%%%%%%%%%%%

We need the following geometric result (see Figure~\ref{f:ggeo}):
%%%%%%%%%%%%%%%%%%%%%%%%%%%%%%%%%%%%%%%%%%%%%%%%%%%%%%%%%%%%%%%%%%%%%%%%%%%%%%%%
\begin{lemm}
\label{l:ggeo}
With the notation above we have for all $\omega\in\mathcal J$
\begin{equation}
\label{eq:ggeo}
x \in \overline \Omega,  \ \ \ell^\pm_\omega ( x - x^\pm_k ( \omega ) ) = 0 \ 
\
\Longrightarrow \ \ \sgn \left[ \partial_\omega ( \ell_\omega^\pm ( x - x^\pm_k ( \omega ) ) )
\right] = \mp \nu_k^\pm
.
\end{equation}
\end{lemm}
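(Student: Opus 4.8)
The plan is to fix $\omega\in\mathcal J$ and read the left-hand side of~\eqref{eq:ggeo} as the $\omega$-derivative of $h(\tilde\omega):=\ell^\pm(x-x^\pm_k(\tilde\omega),\tilde\omega)$ with $x\in\overline\Omega$ held fixed. Since $\ell^\pm(\bullet,\tilde\omega)$ is linear in its first slot and $\tilde\omega\mapsto x^\pm_k(\tilde\omega)$ is a curve inside $\partial\Omega$ (so its velocity is tangent to $\partial\Omega$), the chain rule gives, writing $x^\pm_k(\omega)=\mathbf x(\theta^\pm_k(\omega))$ in a fixed positively oriented coordinate $\theta$ (the one used in~\eqref{e:b-diff}) and letting $\partial_\theta$ denote differentiation along $\partial\Omega$,
\begin{equation*}
h'(\omega)=\partial_\lambda\ell^\pm(x-x^\pm_k(\omega),\omega)-\partial_\theta\ell^\pm(x^\pm_k(\omega),\omega)\,\partial_\omega\theta^\pm_k(\omega).
\end{equation*}
Using the hypothesis $\ell^\pm(x-x^\pm_k(\omega),\omega)=0$ together with the identity~\eqref{e:l-pm-differential}, the first term equals $\tfrac1{2\omega(1-\omega^2)}\,\ell^\mp(x-x^\pm_k(\omega),\omega)$. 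In particular $h'(\omega)$ is an \emph{affine} function of $x$ along the line $\{\ell^\pm_\omega=\ell^\pm_\omega(x^\pm_k(\omega))\}$, since the second term does not depend on $x$ and $\ell^\mp_\omega$ is affine. By $\lambda$-simplicity this line meets $\overline\Omega$ in the closed segment with endpoints $x^\pm_k(\omega)$ and $\gamma^\pm(x^\pm_k(\omega),\omega)$ (see~\eqref{e:Gamma-def}), so it suffices to check $\sgn h'(\omega)=\mp\nu^\pm_k$ at these two endpoints; affineness then propagates the sign to all of $\overline{\Gamma^\pm_\omega(x^\pm_k(\omega))}$.

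For the endpoint computation I would first record a sign lemma: if $y(\omega)=\mathbf x(\theta_y(\omega))\in\Sigma_\omega$ is a periodic point of $b(\bullet,\omega)$, then $\pm\partial_\omega\theta_y(\omega)>0$ according as $y\in\Sigma^+_\omega$ or $y\in\Sigma^-_\omega$. This follows by implicitly differentiating $b^n(y(\omega),\omega)=y(\omega)$ to get $\partial_\omega\theta_y=\partial_\omega(b^n)/(1-\partial_\theta b^n)$, where $\partial_\omega(b^n)>0$ by~\eqref{e:b-diff} and the chain rule (all factors being positive since $b$ is orientation preserving), and where $1-\partial_\theta b^n(y)$ is positive on $\Sigma^+_\omega$ and negative on $\Sigma^-_\omega$ by the definition~\eqref{e:Sigma-pm-def}. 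I would also use that $\sgn\partial_\theta\ell^\pm(y,\omega)=\nu^\pm(y,\omega)$, that $\nu^\pm(\gamma^\pm(y),\omega)=-\nu^\pm(y,\omega)$ by~\eqref{e:flip-sign}, and that $\gamma^\pm$ exchanges $\Sigma^+_\omega$ with $\Sigma^-_\omega$.

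Now at the endpoint $x=x^\pm_k(\omega)$ the first term of $h'(\omega)$ vanishes, so $h'(\omega)=-\partial_\theta\ell^\pm(x^\pm_k(\omega),\omega)\,\partial_\omega\theta^\pm_k(\omega)$, of sign $-\nu^\pm_k\cdot(\pm1)=\mp\nu^\pm_k$ by the sign lemma. For the endpoint $x=\gamma^\pm(x^\pm_k(\omega),\omega)$ I would exploit the defining relation $\ell^\pm(\gamma^\pm(z))=\ell^\pm(z)$ from~\eqref{e:gamma-pm-def}: this shows $h(\tilde\omega)=\ell^\pm(x-\gamma^\pm(x^\pm_k(\tilde\omega),\tilde\omega),\tilde\omega)$ as well, so running the same computation with the periodic point $z(\omega):=\gamma^\pm(x^\pm_k(\omega),\omega)\in\Sigma^\mp_\omega$ in place of $x^\pm_k(\omega)$ again kills the first term and yields $h'(\omega)=-\partial_\theta\ell^\pm(z(\omega),\omega)\,\partial_\omega\theta_z(\omega)$, of sign $-(-\nu^\pm_k)\cdot(\mp1)=\mp\nu^\pm_k$ (here $\partial_\omega\theta_z$ has sign $\mp1$ since $z\in\Sigma^\mp_\omega$, and $\partial_\theta\ell^\pm(z,\omega)$ has sign $-\nu^\pm_k$ by~\eqref{e:flip-sign}). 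Both endpoints give $\mp\nu^\pm_k$, and affineness in $x$ finishes the proof. The main point requiring care is the sign bookkeeping — the superscript $\pm$, the values $\nu^\pm_k=\pm1$, the orientation reversal of $\gamma^\pm$, and the $\mp$ in the conclusion — together with the observation that the far endpoint $\gamma^\pm(x^\pm_k(\omega))$ is again a periodic point with sign-controlled $\omega$-derivative, which is what lets the affine-reduction argument close rather than forcing one to compare the two competing terms in $h'(\omega)$ directly.
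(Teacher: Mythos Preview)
Your proof is correct and uses the same core ingredients as the paper: the decomposition of $\partial_\omega[\ell^\pm_\omega(x-x^\pm_k(\omega))]$ via~\eqref{e:l-pm-differential} into an $\ell^\mp$ term and a $\partial_\theta\ell^\pm\cdot\partial_\omega\theta$ term, together with the sign lemma $\sgn\partial_\omega\theta_y=\pm1$ for $y\in\Sigma^\pm_\omega$ obtained by implicitly differentiating $b^n(y(\omega),\omega)=y(\omega)$ and invoking~\eqref{e:b-diff} and~\eqref{e:Sigma-pm-def}.

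The organizational difference is this: the paper first passes to the reflected point $x^\mp_\ell=\gamma^\pm(x^\pm_k)$ and then checks the two summands separately---the $\partial_\omega\theta$ term via the sign lemma, the $\ell^\mp$ term via~\eqref{e:sign-identity}---concluding that both have sign $\mp\nu^\pm_k$ (the latter possibly zero) throughout the segment. You instead evaluate the full sum at the two endpoints $x^\pm_k$ and $\gamma^\pm(x^\pm_k)$, where in each case the $\ell^\mp$ term vanishes, and then interpolate by affineness. Your variant trades the appeal to~\eqref{e:sign-identity} for a second application of the $\partial_\omega\theta$ sign lemma at the reflected periodic point $z\in\Sigma^\mp_\omega$; both routes are short and the sign bookkeeping you flagged is handled correctly.
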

%%%%%%%%%%%%%%%%%%%%%%%%%%%%%%%%%%%%%%%%%%%%%%%%%%%%%%%%%%%%%%%%%%%%%%%%%%%%%%%%
\begin{proof}
1. We note that the definition~\eqref{e:gamma-pm-def}  of $ \gamma_\omega^\pm $  
and~\eqref{eq:Sigmapm} give
\begin{equation}
\label{eq:k2ell}  \ell^\pm_\omega ( x - x_k^\pm ( \omega ) ) =  \ell^\pm_\omega ( x - x_\ell^\mp  ( \omega )  ), \ \ \
\ell = \left\{ \begin{array}{ll} k-1, & + ,\\ \ k, & - .\end{array} \right.\end{equation}
We also note that \eqref{e:flip-sign} implies 
\begin{equation}
\label{eq:flips} 
\sgn \partial_\theta \ell^\pm_\omega ( x^\mp_\ell (\omega ) ) =:
\nu^\pm ( x_\ell^\mp ( \omega ) , \omega ) = - \nu^\pm ( x_k^\pm ( \omega ) , \omega ) = 
-\nu^\pm_k , 
\end{equation}
where $ \theta \mapsto \mathbf x ( \theta ) \in \partial \Omega $ is a positive parametrization of
$ \partial \Omega $ by $ \mathbb S^1_\theta $.
It is sufficient to establish \eqref{eq:ggeo} with $ x_k^\pm $ replaced by 
$ x_\ell^\mp $ where $ \ell $ is given in \eqref{eq:k2ell}:
\begin{equation}
\label{eq:gge}
x \in \overline \Omega,  \ \ \ell^\pm_\omega ( x - x^\mp_\ell ( \omega ) ) = 0 \ 
\
\Longrightarrow \ \ \sgn \left[ \partial_\omega ( \ell_\omega^\pm ( x - x^\mp_\ell ( \omega ) ) ) 
\right] = \mp \nu_k^\pm
.
\end{equation}

\noindent 2. 
Using \eqref{e:l-pm-differential} and the condition on $ x $ in \eqref{eq:gge} we see that
\begin{equation}
\label{eq:decomps} 
\partial_\omega \left[   \ell_\omega^\pm ( x - x_\ell^\mp ( \omega ) ) \right]  =
{\ell^\mp_\omega ( x - x_\ell^\mp ( \omega )  )\over  2 \omega ( 1 - \omega^2 )} - 
d_x \ell_\omega^\pm ( \partial_\omega x_\ell^\mp ( \omega ) ). \end{equation}
We start by considering the sign of the second term on the right hand side:
\begin{equation}
\label{eq:sgnsgn} 
-\sgn d_x \ell_\omega^\pm ( \partial_\omega x_\ell^\mp ( \omega ) ) = 
-\sgn  [ \partial_\theta \ell_\omega^\pm] ( x_\ell^\mp ( \omega ) ) \partial_\omega \theta ( 
x_\ell^\mp ( \omega ) ) =  \nu_k^\pm \sgn \partial_\omega [ \theta ( x_\ell^\mp ( \omega ) )] ,
\end{equation}
where we used \eqref{eq:flips}.

We now put $ f := \theta \circ b^n \circ \theta^{-1} $ with $ n $ the primitive period. Then \eqref{e:b-diff} and \eqref{e:Sigma-pm-def} give
\[
  f ( \theta ( x_\ell^\mp ( \omega ) ) , \omega ) =  \theta ( x_\ell^\mp  ( \omega ))  ,  \ \ 
  \partial_\omega  f ( x, \omega ) > 0 , \ \ 
\mp \big( 1 - [\partial_\theta f ] ( \theta (x^\mp_\ell ( \omega )),\omega) \big) > 0 .  \ \ 
\]
Differentiating the first equality in $ \omega $ gives
\[
\partial_\omega [ \theta ( x_\ell^\mp ( \omega ) )] = 
\partial_\omega  f (\theta(x) , \omega )|_{x= x_\ell^\mp ( \omega )} / \big( 1 - [\partial_\theta f ] ( \theta (x^\mp_\ell ( \omega )),\omega) \big) ,
\]
and hence $ \sgn \partial_\omega [ \theta ( x_\ell^\mp ( \omega ) )]  
= \mp 1 $. Returning to \eqref{eq:sgnsgn} we see that
\[
- \sgn d_x \ell_\omega^\pm ( \partial_\omega x_\ell^\mp ( \omega ) ) = \mp \nu_k^{\pm}.
\]

\noindent
3. We next claim that 
\begin{equation}
\label{eq:ggeo1}
x \in \overline \Omega,  \ \ \ell^\pm_\omega ( x - x^\mp_\ell ( \omega ) ) = 0 \ 
\
\Longrightarrow \ \  \sgn \ell_\omega^\mp ( x - x^\mp_\ell ( \omega ) ) \in\{ \mp \nu_k ^\mp ,0\}  . 
\end{equation}
Combined with \eqref{eq:decomps}  and the conclusion of Step 2, this will give \eqref{eq:gge}
and hence \eqref{eq:ggeo}.
Since the set on the left-hand side of \eqref{eq:ggeo1} is given by 
$ x = (1- t ) x_\ell^\mp ( \omega ) + t \gamma_\omega^\pm ( x_\ell^\mp ( \omega ) ) $, 
$ 0 \leq t \leq 1$, 
it suffices to establish the conclusion in \eqref{eq:ggeo1} for $ x = \gamma_\omega^\pm ( x_\ell^\mp ( \omega ) ) $.
For that we use \eqref{e:sign-identity} and \eqref{eq:flips} which give
\[
\sgn  \ell_\omega^\mp ( \gamma_\omega^\pm ( x_\ell^\mp  ( \omega ) ) - x_\ell^\mp  ( \omega ))  = \pm \nu^\pm ( x^\mp_\ell ( \omega ) , \omega ) = \mp \nu_k^\pm , 
\]
completing the proof.
\end{proof} 
%%%%%%%%%%%%%%%%%%%%%%%%%%%%%%%%%%%%%%%%%%%%%%%%%%%%%%%%%%%%%%%%%%%%%%%%%%%%%%%%
In the notation of this section, Theorem \ref{t:2} is reformulated as follows.
Note that henceforth in this section, $\varepsilon$ denotes a sign (either $+$ or $-$)
in contrast with its use in the statement and proof of Theorem~\ref{t:2}.
%%%%%%%%%%%%%%%%%%%%%%%%%%%%%%%%%%%%%%%%%%%%%%%%%%%%%%%%%%%%%%%%%%%%%%%%%%%%%%%%
\begin{lemm}
\label{l:conor}
In the notation of \eqref{e:u-pm-def}, \eqref{eq:defLap} and with 
$ \varepsilon \in\{ + , - \} $,
\begin{equation}
\label{eq:upm}
\begin{gathered}
u^\varepsilon (  x,\omega  ) = 
\sum_{ k = 1}^n \sum_{\pm} g^\varepsilon_{k, \pm} (x , \omega  )  + u^\varepsilon_{0} ( x, \omega )  , 
  \ \ \ u_0^\varepsilon \in C^\infty ( \overline{\Omega }\times {\mathcal J} ), \ \ \ 
g^\varepsilon_{k, \pm } \in \mathcal D' ( \mathbb R^2 ) , \\
g^\varepsilon_{k, \pm} ( x, \omega  ) = \frac{1}{2 \pi} \int_{\mathbb R} e^{   i \tau
  \ell^\pm_\omega (  x - x^\pm_k ( \omega ))} a^\varepsilon_{k, \pm} (  \tau,\omega 
) \,d\tau , \ \ \ 
( x, \omega)  \in \mathbb R^2 \times {\mathcal J}, 
  \end{gathered}
\end{equation}
where $ a^\varepsilon_{k, \pm } \in S^{-1+}( {\mathcal J}_\omega\times  \mathbb R_\tau ) $ is
supported in $\{\tau\colon\pm\varepsilon\nu_k^\pm\tau\geq 1\}$.
\end{lemm}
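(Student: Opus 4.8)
The plan is to unwind the chain of identities established in \S\ref{s:liap} and \S\ref{s:gge} and to keep careful track of the wave front set / frequency signs. Recall from \S\ref{s:prooft2} that
\[
u^\varepsilon(\omega) = \Delta_\Omega^{-1}(P-\omega^2+i\varepsilon 0)^{-1}f = (R_{\omega+i\varepsilon 0}f)|_\Omega - S_{\omega+i\varepsilon 0}v_{\omega+i\varepsilon 0},
\]
where (treating $\varepsilon\in\{+,-\}$ as a sign) $v_{\omega+i\varepsilon 0}$ is the limiting Neumann data from Proposition~\ref{l:boundv}, which by Proposition~\ref{p:boundvder} depends smoothly on $\omega\in\mathcal J$ and lies in $I^{1/4}(\partial\Omega,N^*_+\Sigma^\varepsilon_\omega \sqcup N^*_-\Sigma^{-\varepsilon}_\omega)$ (for $\varepsilon=+$; the opposite orientation for $\varepsilon=-$, by complex conjugation). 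The first term $R_{\omega+i\varepsilon 0}f = E_{\omega+i\varepsilon 0}*f$ is in $C^\infty(\overline\Omega\times\mathcal J)$ by Lemma~\ref{l:loc} and the smoothness of $f\in\CIc(\Omega)$, so it is absorbed into $u_0^\varepsilon$. Everything therefore reduces to a microlocal description of $S_{\omega+i\varepsilon 0}v_{\omega+i\varepsilon 0}$ as a function of $(x,\omega)$.

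First I would localize $v_{\omega+i\varepsilon 0}$ near its singular support $\Sigma^+_\omega\sqcup\Sigma^-_\omega$, writing it as a finite sum of pieces microsupported near each $x_k^\pm(\omega)$ plus a $C^\infty(\partial\Omega\times\mathcal J)$ remainder, using a $\lambda$-dependent partition of unity subordinate to the coordinates $\Theta_\lambda$ of \S\ref{s:C-conormal}; this is exactly where the smooth-in-$\omega$ statement \eqref{e:boundvder-1} of Proposition~\ref{p:boundvder} is used. The smooth remainder contributes, via Lemma~\ref{l:singl-2} (which gives $S_{\omega+i\varepsilon 0}: C^\infty(\partial\Omega)\to C^\infty(\overline\Omega)$, and whose proof also shows smooth dependence on $\omega$), another term in $u_0^\varepsilon$. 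Next, for each localized piece, which is a conormal distribution on $\partial\Omega$ at the single point $x_k^\pm(\omega)$ with wave front set in $N^*_{\pm\varepsilon\nu_k^\pm}\{x_k^\pm(\omega)\}$, I would invoke the proof of Lemma~\ref{l:con} (more precisely the explicit formula \eqref{eq:defgpm} and the convolution-with-logarithm computation, plus the Fourier characterization of conormal distributions): applying $S_{\omega+i\varepsilon 0}$ produces a conormal distribution in $\overline\Omega$ on the line $\Gamma^\pm_\omega(x_k^\pm(\omega))=\{\ell^\pm_\omega(x-x_k^\pm(\omega))=0\}$, which by \eqref{eq:uosc} can be written as an oscillatory integral $\int e^{i\tau\ell^\pm_\omega(x-x_k^\pm(\omega))}a^\varepsilon_{k,\pm}(\tau,\omega)\,d\tau$ with $a^\varepsilon_{k,\pm}\in S^{-1+}$; the symbol order drop by $5/4$ in Lemma~\ref{l:con} combined with $v_{\omega+i\varepsilon 0}\in I^{1/4}$ and the one-dimensional prefactor convention gives precisely $S^{-1+}$ as asserted. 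To see that the whole construction is smooth in $\omega\in\mathcal J$ I would differentiate under the integral sign, using that $\Theta_\lambda$, the points $x_k^\pm(\omega)$ (Lemma~\ref{l:perturb-MS}), and $E_{\omega+i\varepsilon 0}$ (Lemma~\ref{l:loc}) are all smooth in $\omega$, and that the symbol estimates survive $\omega$-differentiation with the same order $-1+$.

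The support condition on $a^\varepsilon_{k,\pm}$ — that it is supported in $\{\pm\varepsilon\nu_k^\pm\tau\geq 1\}$ — is the content of the wave front set statement $\WF(u^\varepsilon(\omega))\subset\Lambda^\varepsilon(\omega)$ combined with the explicit description \eqref{eq:defLap} of $\Lambda^\varepsilon(\omega)$. Concretely, for $\varepsilon=+$ one has $v_{\lambda+i0}\in I^{1/4}(\partial\Omega, N^*_+\Sigma^-_\lambda\sqcup N^*_-\Sigma^+_\lambda)$, so the piece localized at $x_k^+$ has frequency sign $-\nu_k^+$ along $d\theta$, the piece at $x_k^-$ has sign $+\nu_k^-$; pushing forward by $\ell^\pm$ and applying $S_{\lambda+i0}$ (which, via $\partial_{x}\log_\pm = (x\pm i0)^{-1}$ and \eqref{e:x-i0-FT}, preserves the half-line on which the Fourier transform is supported) yields exactly the conormal halves $N^*_-\Gamma^+_\omega(x_k^+)$ and $N^*_+\Gamma^-_\omega(x_k^-)$ appearing in \eqref{eq:defLap}, i.e. the sign of $\tau$ in $e^{i\tau\ell^\pm_\omega}$ is $-\nu_k^\pm$ resp. $+\nu_k^-$. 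Unifying the two lines through $x_k^-$ and $x_k^+$ via the identity \eqref{eq:k2ell} and reindexing gives the stated form; the conjugation-invariance of the construction passes $\varepsilon=+$ to $\varepsilon=-$ with all signs reversed, which is precisely the factor $\pm\varepsilon$. The main obstacle I anticipate is bookkeeping of these signs — matching the half-conormal-bundle orientation conventions of \eqref{e:n-pm-gamma-def}, the sign $\nu_k^\pm$ of $\partial_\theta\ell^\pm_\omega$, and the $\pm i0$ prescriptions in the fundamental solution $E_{\lambda\pm i0}$ — rather than any analytic difficulty, since all the hard estimates (limiting absorption, conormal regularity, smooth $\omega$-dependence) are already in place from \S\S\ref{s:blp}--\ref{s:liap}. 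The lemma is essentially a repackaging of Theorem~\ref{t:2} plus Proposition~\ref{p:boundvder} into an explicit oscillatory-integral normal form, and the proof is short modulo that sign tracking.
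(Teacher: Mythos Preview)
Your approach is essentially the paper's: decompose $u^\varepsilon(\omega)=(R_{\omega+i\varepsilon 0}f)|_\Omega-S_{\omega+i\varepsilon 0}v_{\omega+i\varepsilon 0}$, absorb $R_{\omega+i\varepsilon 0}f$ into $u_0^\varepsilon$ via Lemma~\ref{l:loc}, localize $v_{\omega+i\varepsilon 0}$ near each $x_k^\pm(\omega)$ using Proposition~\ref{p:boundvder}, push through Lemma~\ref{l:con} to get the oscillatory integrals, and read off the support of $a^\varepsilon_{k,\pm}$ from~\eqref{eq:defLap}. The paper's proof is a compressed version of exactly this, working with $\varepsilon=-$ and obtaining $\varepsilon=+$ by conjugation.

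One genuine slip in your sign bookkeeping: your two descriptions of the wavefront set of the Neumann data disagree. At the top you (correctly) have $v_{\omega+i\varepsilon 0}\in I^{1/4}(\partial\Omega,N^*_+\Sigma^\varepsilon_\omega\sqcup N^*_-\Sigma^{-\varepsilon}_\omega)$, but in the detailed paragraph you write ``for $\varepsilon=+$ one has $v_{\lambda+i0}\in I^{1/4}(\partial\Omega,N^*_+\Sigma^-_\lambda\sqcup N^*_-\Sigma^+_\lambda)$'', which is the $\varepsilon=-$ orientation. Your subsequent sign computation (giving $\sgn\tau=-\nu_k^+$ for the $\ell^+$ piece and $\sgn\tau=+\nu_k^-$ for the $\ell^-$ piece) then matches the support condition $\{\pm\varepsilon\nu_k^\pm\tau\geq 1\}$ for $\varepsilon=-$, not $\varepsilon=+$. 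So the calculation is right but mislabeled; you are in fact doing the $\varepsilon=-$ case, as the paper does, and should pass to $\varepsilon=+$ by conjugation at the end. You anticipated that the signs would be the trap, and indeed that is the only place the write-up stumbles.
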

%%%%%%%%%%%%%%%%%%%%%%%%%%%%%%%%%%%%%%%%%%%%%%%%%%%%%%%%%%%%%%%%%%%%%%%%%%%%%%%%
\begin{proof}
We consider the case of $\varepsilon=-$, with the case $\varepsilon=+$ following
since $u^+(\omega)=\overline{u^-(\omega)}$.
Recall from~\S\ref{s:prooft2} that
$$
u^-(\omega)=u_{\omega+i0}=(R_{\omega+i0}f)|_{\Omega}-S_{\omega+i0}v_{\omega+i0},
$$
where $R_{\omega+i0}f\in C^\infty(\mathbb R^2\times\mathcal J)$
by Lemma~\ref{l:loc} and
$$
v_{\omega+i0}\in C^\infty(\mathcal J;I^{\frac14+}(\partial\Omega,N^*_+\Sigma^-_\omega\sqcup N^*_-\Sigma^+_\omega)),
$$
with smoothness in $\omega$ understood in the sense of Proposition~\ref{p:boundvder}.
By the mapping property~\eqref{eq:con-new} we have
$$
S_{\omega+i0}v_{\omega+i0}\in C^\infty(\mathcal J;I^{-1+}(\overline\Omega,\Lambda^-(\omega))).
$$
Here smoothness in $\omega$ is obtained
by following the proof of Lemma~\ref{l:con}, which writes $S_{\omega+i0}v$ for $v\in C^\infty(\mathcal J;I^{\frac14+}(\partial\Omega,N^*_\pm\{x^\mp_\ell(\omega)\}))$ as a sum
of a function in $C^\infty(\overline\Omega\times\mathcal J)$ and the pullback by $\ell^\pm_\omega$ of a conormal distribution
to~$\ell^\pm_\omega(x_\ell^\mp(\omega))=\ell^\pm_\omega(x_k^\pm(\omega))$ with $k,\ell$ related by~\eqref{eq:k2ell}.
This gives the representation~\eqref{eq:upm}. Here
we can follow~\eqref{eq:defLa} and~\eqref{eq:defLap} to obtain an explicit parametrization of the conormal bundles
$ N_\mp^* \Gamma^\pm_\omega(x^\pm_k(\omega))=N_\pm^* \Gamma^\pm_\omega(x^\mp_\ell(\omega))$ and check that $a^\varepsilon_{k,\pm}$
can be written as a sum of a symbol supported in $\{\tau\colon\pm\varepsilon\nu_k^\pm\tau\geq 1\}$ and
a symbol which is rapidly decaying in $\tau$, with the contribution of the latter lying in $C^\infty(\overline\Omega\times\mathcal J)$.
\end{proof}
%%%%%%%%%%%%%%%%%%%%%%%%%%%%%%%%%%%%%%%%%%%%%%%%%%%%%%%%%%%%%%%%%%%%%%%%%%%%%%%%
The next lemma disposes of the term $ u_0^\varepsilon $:
%%%%%%%%%%%%%%%%%%%%%%%%%%%%%%%%%%%%%%%%%%%%%%%%%%%%%%%%%%%%%%%%%%%%%%%%%%%%%%%%
\begin{lemm}
\label{l:smou}
Suppose that $ u^\pm (x, \omega) \in C^\infty (  \overline \Omega \times {\mathcal J} ) $. If $ w_1 $ is
defined by \eqref{eq:w2w1} then for any $ k $, there exists $ C_k $ such that for all $t\geq 0$,
$ \| \Delta_\Omega^{-1}w_1 ( t) \|_{ C^k ( \overline \Omega ) }  \leq C_k $.
\end{lemm}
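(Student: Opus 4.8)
\textbf{Proof plan for Lemma~\ref{l:smou}.}
The plan is to exploit the oscillatory integral in $\omega$ that appears in~\eqref{eq:w2w1} together with the smoothness of $u^\pm(\bullet,\omega)$ in all variables. Write $\Delta_\Omega^{-1}w_1(t)$ as in the second line of~\eqref{eq:w2w1}; the key point is that the integrand is a smooth function of $(\omega,s)$ taking values in $C^k(\overline\Omega)$, compactly supported in $\omega$ because of the cutoff $\varphi$. First I would separate the two exponential factors: the term with $e^{-i(\omega+\lambda)s}$ is harmless because $\omega+\lambda$ stays away from zero on $\operatorname{supp}\varphi\subset(0,1)$ and $\lambda\in(0,1)$, so integration by parts in $\omega$ gives an $\mathcal O(s^{-N})$ bound for every $N$, and then the $\int_0^t ds$ integral converges absolutely and uniformly in $t$. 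For the term with $e^{i(\omega-\lambda)s}$ one cannot integrate by parts freely near $\omega=\lambda$, but one does not need to: set $F(s):=\int_{\mathbb R}\varphi(\omega)e^{i(\omega-\lambda)s}\bigl(u^+(\bullet,\omega)-u^-(\bullet,\omega)\bigr)\,d\omega$, which is (up to the factor $2\pi$) the Fourier transform in $\omega$ of a $C^\infty_{\mathrm c}(\mathcal J)$ family of $C^k(\overline\Omega)$ elements, hence $F\in\mathscr S(\mathbb R_s;C^k(\overline\Omega))$ by the standard Paley--Wiener/Fourier-transform argument. In particular $\|F(s)\|_{C^k(\overline\Omega)}\le C_{k,N}\langle s\rangle^{-N}$ for all $N$.

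With these two bounds in hand, $\Delta_\Omega^{-1}w_1(t)=\tfrac1{2\pi}\int_0^t\bigl(F(s)+G(s)\bigr)\,ds$ where $G$ is the contribution of the $e^{-i(\omega+\lambda)s}$ term, and both $F$ and $G$ are Schwartz in $s$ with values in $C^k(\overline\Omega)$. Therefore
\[
\|\Delta_\Omega^{-1}w_1(t)\|_{C^k(\overline\Omega)}\le \tfrac1{2\pi}\int_0^\infty\bigl(\|F(s)\|_{C^k}+\|G(s)\|_{C^k}\bigr)\,ds=:C_k<\infty
\]
uniformly in $t\ge 0$, which is the claim. One should note that smoothness of $u^\pm(\bullet,\omega)$ up to $\partial\Omega$ and jointly with $\omega\in\mathcal J$ is exactly the hypothesis of the lemma, so that the map $\omega\mapsto u^\pm(\bullet,\omega)\in C^k(\overline\Omega)$ is $C^\infty$ with compact support once multiplied by $\varphi$; this is what makes the Fourier transform in $\omega$ rapidly decaying in $s$.

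The main obstacle, such as it is, is purely bookkeeping: one must justify that differentiation in $x\in\overline\Omega$ (up to order $k$) commutes with the $\omega$- and $s$-integrals and that the resulting integrands are dominated by integrable functions of $(\omega,s)$ uniformly in $t$ and $x$. This is routine given joint smoothness and compact support in $\omega$: all $x$-derivatives of order $\le k$ of $\varphi(\omega)\,u^\pm(x,\omega)$ are continuous in $(x,\omega)$ on the compact set $\overline\Omega\times\operatorname{supp}\varphi$, hence bounded, so dominated convergence and Fubini apply, and the integration-by-parts argument in $\omega$ (for the rapid decay in $s$) is performed after taking $x$-derivatives, again legitimate by the same compactness. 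No new analytic input beyond the hypotheses and elementary Fourier analysis is required.
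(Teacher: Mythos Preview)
Your proof is correct and takes essentially the same approach as the paper: both exploit that $\varphi(\omega)(u^+(x,\omega)-u^-(x,\omega))$ is smooth and compactly supported in $\omega$, so the $\omega$-integral decays rapidly in $s$, making the $\int_0^t ds$ integral uniformly bounded. The paper handles both exponentials at once via the identity $(1+s^2)^{-1}(1+D_\omega^2)e^{i(\pm\omega-\lambda)s}=e^{i(\pm\omega-\lambda)s}$ and a single integration by parts in $\omega$; your side remark that one ``cannot integrate by parts freely near $\omega=\lambda$'' is a harmless misconception (the integration by parts $\partial_\omega e^{i(\omega-\lambda)s}=is\,e^{i(\omega-\lambda)s}$ works regardless of whether $\omega-\lambda$ vanishes), but the Fourier-transform argument you use instead is equally valid.
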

%%%%%%%%%%%%%%%%%%%%%%%%%%%%%%%%%%%%%%%%%%%%%%%%%%%%%%%%%%%%%%%%%%%%%%%%%%%%%%%%
\begin{proof}
Recalling~\eqref{eq:w2w1}, we see that it suffices to prove that for any $u\in C^\infty(\overline\Omega\times\mathcal J)$
$$
\sup_{t\geq 0}\|w(t)\|_{C^k(\overline\Omega)}\leq C_k\quad\text{where}\quad
w(t):=\int_0^t\int_{\mathbb R}\varphi(\omega)(e^{i(\omega-\lambda)s}-e^{-i(\omega+\lambda)s})u(\omega)\,d\omega ds.
$$
Integrating by parts in~$\omega$, we get
$$
\begin{aligned}
w(x,t)&=\int_0^t\int_{\mathbb R } \varphi ( \omega ) u ( x,\omega) 
[ ( 1 + s^2 )^{-1} ( 1 + D_\omega^2 ) ] ( e^{ i ( \omega - \lambda ) s } - e^{ - i ( \omega +
\lambda ) s} )\, d \omega d s \\
&=\int_0^t \int_{\mathbb R } ( 1 + D_\omega^2  ) [ \varphi ( \omega ) u ( x,\omega) ]
( e^{ i ( \omega - \lambda ) s } - e^{ - i ( \omega +
\lambda ) s} ) ( 1 + s^2 )^{-1}\, d \omega d s
\end{aligned}
$$
which is bounded in $C^\infty(\overline\Omega)$ uniformly in $t\geq 0$.
\end{proof} 
%%%%%%%%%%%%%%%%%%%%%%%%%%%%%%%%%%%%%%%%%%%%%%%%%%%%%%%%%%%%%%%%%%%%%%%%%%%%%%%%
Returning to \eqref{eq:w2w1} we see that we have to analyse the behaviour of 
\begin{equation}
\label{eq:defwep}
w_{k, \pm }^{\varepsilon,\varepsilon' }  (  x,t ) := 
\frac 1 { 2 \pi } \int_0^t \int_{\mathbb R} \varphi ( \omega ) 
g^\varepsilon_{k , \pm } ( x , \omega ) e^{ i s ( \varepsilon' \omega - \lambda ) } \,d \omega ds , \ \
\varepsilon, \varepsilon' \in \{ + , - \} ,
\end{equation}
as $ t \to  \infty $. More precisely, if the term $u_0^\varepsilon$ in the decomposition~\eqref{eq:upm}
were zero, then
\begin{equation}
  \label{e:w-1-con}
\Delta_\Omega^{-1}w_1(x,t)=\sum_{k=1}^n\sum_{\pm}\sum_{\varepsilon,\varepsilon'\in \{+,-\}}
\varepsilon\varepsilon'w_{k,\pm}^{\varepsilon,\varepsilon'}(x,t).
\end{equation}

%%%%%%%%%%%%%%%%%%%%%%%%%%%%%%%%%%%%%%%%%%%%%%%%%%%%%%%%%%%%%%%%%%%%%%%%%%%%%%%%
\subsection{Asymptotic behaviour of \texorpdfstring{$ w_{k,\pm}^{\varepsilon, \varepsilon' } $}{w}}
\label{s:asw}

For $\tau\neq 0$, define
\[
   \begin{split}  A_{k, \pm }^{\varepsilon,\varepsilon' } ( x, t, \tau ) &  := 
\frac{1}{ 2\pi} \int_0^t \int_{\mathbb R }  e^{
i \tau  \ell_\omega^\pm ( x - x_k^\pm ( \omega ) ) + i ( \varepsilon' \omega - \lambda ) s}
\varphi ( \omega )a_{k, \pm }^{\varepsilon } ( \tau, \omega )\, d \omega d s \\
& = \frac{ \tau}{ 2\pi} \int_0^{t\over \tau}  \int_{\mathbb R }  e^{
i \tau  ( \ell_\omega^\pm ( x - x_k^\pm ( \omega ) ) +  ( \varepsilon' \omega - \lambda )r ) }
\varphi ( \omega )a_{k, \pm }^{\varepsilon } ( \tau,\omega )\, d \omega d r ,
\end{split}
\]
in the notation used for $ g_{k,\pm}^\varepsilon $ in Lemma \ref{l:conor},
where in the second line we made the change of variables $s=\tau r$. We then have
\begin{equation}
\label{eq:defwep1}
 w_{k, \pm }^{\varepsilon,\varepsilon' }  ( x,t ) =  \frac{1}{ 2\pi} \int_{\mathbb R } A_{k, \pm }^{\varepsilon,\varepsilon' } ( x,t, \tau ) \, d \tau ,
\end{equation}
in the sense of oscillatory integrals (since $ \partial_x  \ell_\omega^\pm ( x - x_k^\pm ( \omega ) ) = d \ell_\omega^\pm \neq 0 $ the phase is nondegenerate -- see
\cite[\S 7.8]{Hormander1}).
From the support condition in Lemma~\ref{l:conor} we get
\begin{equation}
  \label{e:A-supporter}
A_{k,\pm}^{\varepsilon,\varepsilon'}(x,t,\tau)\neq 0\quad\Longrightarrow\quad
\pm\varepsilon\nu_k^\pm\tau\geq 1.
\end{equation}
The lemma below shows that we only need to integrate over a compact interval in $ r $:
%%%%%%%%%%%%%%%%%%%%%%%%%%%%%%%%%%%%%%%%%%%%%%%%%%%%%%%%%%%%%%%%%%%%%%%%%%%%%%%%
\begin{lemm}
\label{l:chis}
There exist $ \chi \in C^\infty_{\rm{c}} ( (0, \infty ) ) $ and $\varphi$ satisfying~\eqref{e:varphi-def}
such that for
\[
\widetilde A_{k, \pm}^{\varepsilon,\varepsilon' } ( x, t, \tau)  :=
 \frac{ \tau}{ 2\pi} \int_0^{t\over\tau} \int_{\mathbb R }e^{
i \tau ( \ell_\omega^\pm ( x - x_k^\pm ( \omega ) ) +  ( \varepsilon' \omega - \lambda ) r)} ( 1 - \chi ( \pm \varepsilon' \nu_k^\pm r ) ) \varphi ( \omega )a_{k, \pm }^{\varepsilon } ( \tau, \omega) 
 \,d \omega d r , \]
\[ \widetilde w_{k, \pm}^{\varepsilon,\varepsilon' } ( x,t ) := 
\frac{1}{ 2\pi} \int_{\mathbb R} \widetilde A_{k, \pm}^{\varepsilon,\varepsilon' } ( x,t,  \tau)\, d\tau ,
\]
we have  $\| \widetilde w_{k, \pm}^{\varepsilon,\varepsilon' } ( t) \|_{ C^k ( \overline \Omega )} 
\leq C_k $ for every $ k $ and uniformly as $ t \to \infty $.
\end{lemm}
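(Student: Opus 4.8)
The plan is to localize the $r$-integral defining $w_{k,\pm}^{\varepsilon,\varepsilon'}$ away from a neighbourhood of $r=0$ in a sign-dependent way and show that, away from that neighbourhood, a non-stationary phase argument produces rapid decay in $\tau$, uniformly in the upper endpoint $t/\tau$. The key observation is that the $\omega$-derivative of the phase $\tau(\ell^\pm_\omega(x-x^\pm_k(\omega))+(\varepsilon'\omega-\lambda)r)$ is $\tau\big(\partial_\omega\ell^\pm_\omega(x-x^\pm_k(\omega))+\varepsilon' r\big)$; by Lemma~\ref{l:ggeo}, on the support of $g^\varepsilon_{k,\pm}$ — where $\ell^\pm_\omega(x-x^\pm_k(\omega))=0$ — we have $\sgn\partial_\omega\ell^\pm_\omega(x-x^\pm_k(\omega))=\mp\nu^\pm_k$. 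Hence when $\pm\varepsilon'\nu^\pm_k r$ is bounded away from $0$ \emph{and} has the sign making $\varepsilon' r$ push the $\omega$-derivative \emph{away} from $0$, the phase is non-stationary in $\omega$ on the (compact, by the cutoff $\varphi$) $\omega$-support; and when instead $\pm\varepsilon'\nu^\pm_k r$ is a large positive number, one can still integrate by parts in $\omega$ since the $\varepsilon' r$ term dominates $\partial_\omega\ell^\pm_\omega(x-x^\pm_k(\omega))$, which stays bounded on $\operatorname{supp}\varphi$. The only region where this fails is $r$ near $0$, which is exactly what the cutoff $1-\chi(\pm\varepsilon'\nu^\pm_k r)$ excises — so we should choose $\chi\in\CIc((0,\infty))$ equal to $1$ on a small interval $[\delta_0,\delta_1]$ chosen so that on $\operatorname{supp}(1-\chi(\pm\varepsilon'\nu^\pm_k\,\cdot\,))\cap\{\pm\varepsilon'\nu^\pm_k r>0\}$ the quantity $|\partial_\omega\ell^\pm_\omega(x-x^\pm_k(\omega))+\varepsilon' r|$ is bounded below by a positive constant times $1+|r|$, uniformly for $x\in\overline\Omega$ and $\omega\in\operatorname{supp}\varphi$ (here $\varphi$ is shrunk around $\lambda$ if needed so that $\partial_\omega\ell^\pm_\omega(x-x^\pm_k(\omega))$ varies little; this is where the freedom to re-choose $\varphi$ in the statement is used).

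First I would fix such $\chi$ and $\varphi$ and integrate by parts $N$ times in $\omega$ in the inner integral defining $\widetilde A_{k,\pm}^{\varepsilon,\varepsilon'}$. Each integration by parts contributes a factor $\big(\tau(\partial_\omega\ell^\pm_\omega(x-x^\pm_k(\omega))+\varepsilon' r)\big)^{-1}$ times derivatives (in $\omega$) of $\varphi(\omega)a^\varepsilon_{k,\pm}(\tau,\omega)$ and of the lower-order amplitude factors; since $a^\varepsilon_{k,\pm}\in S^{-1+}(\mathcal J_\omega\times\mathbb R_\tau)$, the $\omega$-derivatives of the amplitude are $\mathcal O(|\tau|^{-1+})$, and the lower bound $|\partial_\omega\ell^\pm_\omega(x-x^\pm_k(\omega))+\varepsilon' r|\geq c(1+|r|)$ on the support of $1-\chi(\pm\varepsilon'\nu^\pm_k r)$ gives, after $N$ steps,
\begin{equation}
\label{eq:wtA-decay}
|\partial_x^\alpha \widetilde A_{k,\pm}^{\varepsilon,\varepsilon'}(x,t,\tau)|\leq C_{N,\alpha}\,|\tau|^{1+|\alpha|}\cdot |\tau|^{-N}\langle\tau\rangle^{-1+}\int_0^\infty (1+r)^{-N}\,dr\leq C'_{N,\alpha}\langle\tau\rangle^{1+|\alpha|-N},
\end{equation}
uniformly in $t\geq 0$; note the $r$-integral is now convergent for $N\geq 2$ and its value is bounded independently of $t$ because the cutoff $1-\chi(\pm\varepsilon'\nu^\pm_k r)$ supported in $\{\pm\varepsilon'\nu^\pm_k r>0\}$ forces $r$ to range over a half-line on which the phase really is non-stationary (the contribution of the part of $\operatorname{supp}(1-\chi(\pm\varepsilon'\nu^\pm_k r))$ in $\{\pm\varepsilon'\nu^\pm_k r<0\}$, where the phase could be stationary, is empty by the support condition \eqref{e:A-supporter} which pins the sign of $\varepsilon\nu^\pm_k\tau$, hence of $r$'s allowed sign, once one tracks that $\varepsilon'$-sign case separately — I will organize the proof by the sign of $\pm\varepsilon'\nu^\pm_k r$ and observe that exactly one sign survives, and on that sign $\chi$ removes the stationary part). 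Choosing $N$ large enough that $1+|\alpha|-N<-1$ makes $\langle\tau\rangle^{1+|\alpha|-N}$ integrable, so $\widetilde w_{k,\pm}^{\varepsilon,\varepsilon'}(x,t)=\frac1{2\pi}\int_{\mathbb R}\widetilde A_{k,\pm}^{\varepsilon,\varepsilon'}(x,t,\tau)\,d\tau$ is bounded in $C^k(\overline\Omega)$ uniformly in $t$, for every $k$.

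The main obstacle I anticipate is the bookkeeping of the two sign pairs $(\varepsilon,\varepsilon')$ together with $\nu^\pm_k$: one must verify that for the surviving sign of $r$ (forced by \eqref{e:A-supporter}), the cutoff $1-\chi(\pm\varepsilon'\nu^\pm_k r)$ is supported precisely where $\varepsilon' r$ and $\mp\nu^\pm_k=\sgn\partial_\omega\ell^\pm_\omega(x-x^\pm_k(\omega))$ \emph{add} rather than cancel, so that the phase is genuinely non-stationary there and the constant $c>0$ in the lower bound exists uniformly over $x\in\overline\Omega$. This is exactly the content of Lemma~\ref{l:ggeo} combined with the identity $\ell^\pm_\omega(x-x^\pm_k(\omega))=0$ on the wavefront set of $g^\varepsilon_{k,\pm}$, but assembling it cleanly requires care with the positive-orientation conventions in \eqref{e:n-pm-gamma-def}–\eqref{eq:defLap}; once that sign is settled the rest is routine non-stationary phase with uniformity in the endpoint $t/\tau$. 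A secondary, purely technical point is the uniformity of \eqref{eq:wtA-decay} down to small $|\tau|$: since $a^\varepsilon_{k,\pm}$ is supported in $\{\pm\varepsilon\nu^\pm_k\tau\geq 1\}$, $\tau$ is bounded away from $0$ on the support of the integrand, so the factor $|\tau|^{1+|\alpha|}$ is harmless and no separate low-frequency analysis is needed.
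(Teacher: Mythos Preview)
Your overall strategy—non-stationary phase in $\omega$ using Lemma~\ref{l:ggeo}—is exactly what the paper does, but there is a genuine gap in how you obtain the lower bound $|\partial_\omega F(x,\omega)+\varepsilon'r|\geq c(1+|r|)$ uniformly over $x\in\overline\Omega$, where $F(x,\omega):=\ell^\pm_\omega(x-x_k^\pm(\omega))$. Lemma~\ref{l:ggeo} only tells you that $\sgn\partial_\omega F=\mp\nu_k^\pm$ on the \emph{line} $\{F(\cdot,\omega)=0\}$; it says nothing about $\partial_\omega F$ for general $x\in\overline\Omega$. In fact $\partial_\omega F(\cdot,\omega)$ is an affine function of $x$ (by~\eqref{e:l-pm-differential}), so its zero set is a line in $\mathbb R^2$ which can certainly meet $\overline\Omega$. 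Consequently, for $x$ away from the singular support of $g^\varepsilon_{k,\pm}$ and for small $|r|$ (which is \emph{not} removed by $1-\chi$, since $\chi\in\CIc((0,\infty))$ forces $\chi=0$ near $0$), the $\omega$-phase can be stationary and your integration-by-parts bound~\eqref{eq:wtA-decay} fails.

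The paper closes this gap by introducing a spatial cutoff $\psi\in\CIc(\overline\Omega)$ with $\psi=1$ near $\{F(\cdot,\lambda)=0\}$ and $\supp\psi$ small enough that $\mp\nu_k^\pm\partial_\omega F>0$ throughout $\supp\psi\times\supp\varphi$. On $\supp(1-\psi)$ the conormal distribution $g^\varepsilon_{k,\pm}(x,\omega)$ is smooth in $(x,\omega)$ (since $F\neq 0$ there), so the argument of Lemma~\ref{l:smou} applies directly. On $\supp\psi$, one chooses $\chi$ so that $\chi=1$ on a neighbourhood of the compact range of $\mp\nu_k^\pm\partial_\omega F$; then writing $s=\pm\varepsilon'\nu_k^\pm r$ one has $\partial_\omega F+\varepsilon'r=\pm\nu_k^\pm(s-c_0)$ with $c_0:=\mp\nu_k^\pm\partial_\omega F\in\{\chi=1\}$ and $s\in\supp(1-\chi)$, giving $|s-c_0|\geq c\langle s\rangle$ and hence the desired decay. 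Note also that your description of what $1-\chi$ does is inverted: it does not ``excise $r$ near~$0$''—rather, $\chi$ excises a compact interval in $(0,\infty)$ containing the (localized) stationary value, and $1-\chi$ retains both the small-$s$ region and the large-$s$ region; the small-$s$ region is harmless only after the $\psi$-localization.
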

%%%%%%%%%%%%%%%%%%%%%%%%%%%%%%%%%%%%%%%%%%%%%%%%%%%%%%%%%%%%%%%%%%%%%%%%%%%%%%%%
\begin{proof}
1. Put 
$ F (x , \omega ) := \ell_\omega^\pm ( x - x_k^\pm ( \omega ) )  $. Lemma~\ref{l:ggeo} shows that
for all $\omega\in\mathcal J$
$$
x\in\overline\Omega,\
F(x,\omega)=0\quad\Longrightarrow\quad
\mp \nu_k^\pm\partial_\omega F(x,\omega)>0.
$$
Fix a cutoff function $\psi\in \CIc(\overline\Omega)$ such that
$\psi=1$ in a neighborhood of $\{x\in\overline\Omega\mid F(x,\lambda)=0\}$ and
$\mp \nu_k^\pm\partial_\omega F(x,\lambda)>0$ for all $x\in\supp\psi$.
Choosing $\varphi$ supported in a sufficiently small neighborhood of~$\lambda$, we 
see that there exists $\chi\in \CIc((0,\infty))$ such that for all $\omega\in\supp\varphi$
\begin{align}
  \label{e:chis-1}
x\in\overline\Omega\cap\supp(1-\psi)&\quad\Longrightarrow\quad
F(x,\omega)\neq 0;\\
  \label{e:chis-2}
x\in\overline\Omega\cap\supp\psi&\quad\Longrightarrow\quad
\mp \nu_k^\pm\partial_\omega F(x,\omega)\notin\supp (1-\chi).
\end{align}

\noindent 2.
Using the singular support property of conormal distributions (see \S \ref{s:con}) and~\eqref{e:chis-1}, we have 
\[
( 1 - \psi(x) )\varphi(\omega) g_{k,\pm}^{ \varepsilon } ( x, \omega ) \in C^\infty (\overline\Omega\times\mathbb R ) .
\]
The proof of Lemma \ref{l:smou} shows that $ \| ( 1 - \psi(x) ) \widetilde w_{k, \pm}^{\varepsilon,\varepsilon' } ( t ) \|_{C^k ( \overline \Omega )}\leq C_k $, uniformly as $t\to\infty $.

On the other hand, \eqref{e:chis-2} implies that for some constant $c>0$
\[
\begin{gathered}
\omega\in\supp\varphi,\quad 
x \in \overline\Omega\cap\supp \psi, \quad r \in \supp ( 1 - \chi ( \pm \varepsilon' \nu_k^\pm \bullet ) )
\\
\Longrightarrow\quad
| \partial_\omega F ( x, \omega )+ \varepsilon' r  | \geq c\langle r\rangle.
\end{gathered}
\]
Integration by parts in $ \omega $ shows that
\[
\partial_x^\alpha \left[ \psi ( x ) \widetilde A_{k, \pm}^{\varepsilon,\varepsilon' } (  x, t, \tau) \right] = \mathcal O ( \langle \tau \rangle^{-\infty } ) ,
\]
uniformly in $ t $. But that gives uniform smoothness of $ \psi(x) \widetilde w_{k,\pm}^{\varepsilon,\varepsilon' }(x,t) $,
finishing the proof.
\end{proof}
%%%%%%%%%%%%%%%%%%%%%%%%%%%%%%%%%%%%%%%%%%%%%%%%%%%%%%%%%%%%%%%%%%%%%%%%%%%%%%%%
The lemma shows that in the study of \eqref{eq:defwep}
we can replace $ A $ in \eqref{eq:defwep1} by 
\[
\begin{gathered}
B_{k, \pm}^{\varepsilon,\varepsilon' } ( x,t, \tau)=
A_{k, \pm}^{\varepsilon,\varepsilon' } ( x,t, \tau)-
\widetilde A_{k, \pm}^{\varepsilon,\varepsilon' } ( x,t, \tau)
\\
=\frac{ \tau }{ 2\pi} \int_0^{t\over \tau}  \int_{\mathbb R }
e^{
i \tau  ( \ell_\omega^\pm ( x - x_k^\pm ( \omega ) ) +  ( \varepsilon' \omega - \lambda )r ) }
\chi ( \pm \varepsilon' \nu_k^\pm r )
\varphi ( \omega ) a_{k, \pm }^{\varepsilon } ( \tau, \omega ) 
\,d \omega d r.
\end{gathered}
\]
Define the limit $B_{k,\pm}^{\varepsilon,\varepsilon'}(x,\infty,\tau)$ by replacing the
integral $\int_0^{t/\tau}\,dr$ above by $\int_0^{(\sgn\tau)\infty}\,dr$,
which is well-defined thanks to the cutoff $\chi(\pm\varepsilon'\nu_k^\pm r)$,
where we recall that $ \chi \in \CIc ( ( 0, \infty ) ) $.
The next lemma describes the behaviour of this limit as $\tau\to\infty$:
%%%%%%%%%%%%%%%%%%%%%%%%%%%%%%%%%%%%%%%%%%%%%%%%%%%%%%%%%%%%%%%%%%%%%%%%%%%%%%%%
\begin{lemm}
\label{l:Binf}
Denote $F(x,\omega):=\ell_\omega^\pm(x-x_k^\pm(\omega))$. Then 
$ e^{ -i \tau F(x,\lambda) } B_{k, \pm}^{\varepsilon,\varepsilon' } (  x, \infty,\tau)$ lies in the symbol class
$S^{-1+} ( \overline\Omega_x \times \mathbb R_\tau ) $ and 
\[  e^{ -i \tau F(x,\lambda)} B_{k,\pm}^{\varepsilon,\varepsilon'} (  x , \infty,\tau ) \in
\begin{cases}
\chi(\mp\nu_k^\pm\partial_\lambda F(x,\lambda)) a_{ k , \pm}^\varepsilon (  \tau, \lambda ) 
+ S^{-2+} (  \overline\Omega 
\times \mathbb R   )  , & \varepsilon =  \varepsilon' = +, \\
S^{-\infty} (  \overline\Omega  
\times \mathbb R)  , & \text{otherwise.}
\end{cases} 
\]
\end{lemm}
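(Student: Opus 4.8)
The plan is to apply stationary phase to the $r$-integral in $B_{k,\pm}^{\varepsilon,\varepsilon'}(x,\infty,\tau)$, after first factoring out the rescaled oscillation. Write $F(x,\omega):=\ell_\omega^\pm(x-x_k^\pm(\omega))$, so that
\[
B_{k,\pm}^{\varepsilon,\varepsilon'}(x,\infty,\tau)
=\frac{\tau}{2\pi}\int_0^{(\sgn\tau)\infty}\int_{\mathbb R}
e^{i\tau(F(x,\omega)+(\varepsilon'\omega-\lambda)r)}
\chi(\pm\varepsilon'\nu_k^\pm r)\varphi(\omega)a_{k,\pm}^\varepsilon(\tau,\omega)\,d\omega\,dr.
\]
First I would change variables $r\mapsto (\sgn\tau)r$ to make the $r$-range $[0,\infty)$ independent of $\tau$, which is harmless since $\chi$ is compactly supported in $(0,\infty)$ and forces $\pm\varepsilon'\nu_k^\pm(\sgn\tau)>0$; combined with the support condition \eqref{e:A-supporter} this determines the sign of $\varepsilon'$ relative to $\pm\varepsilon\nu_k^\pm$. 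The phase in the $(\omega,r)$-variables is $\Phi(x,\omega,r)=F(x,\omega)+(\varepsilon'\omega-\lambda)|r|$, and $|\tau|$ plays the role of the large parameter. The critical point equations are $\partial_r\Phi=0$, i.e. $\varepsilon'\omega=\lambda$, so $\omega=\varepsilon'\lambda$; this lies in $\supp\varphi\subset\mathcal J\subset(0,1)$ only when $\varepsilon'=+$, giving $\omega=\lambda$, and then $\partial_\omega\Phi=\partial_\omega F(x,\lambda)+|r|=0$ requires $|r|=-\partial_\omega F(x,\lambda)$, which has a solution with $|r|>0$ precisely when $-\partial_\omega F(x,\lambda)>0$, i.e. (by Lemma~\ref{l:ggeo}, since $F(x,\omega)$ need not vanish here but the sign computation in its proof is local) when $\mp\nu_k^\pm\partial_\omega F(x,\lambda)>0$. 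This is exactly where the cutoff $\chi(\mp\nu_k^\pm\partial_\lambda F(x,\lambda))$ in the statement comes from. When $\varepsilon'=-$, or when $x$ is outside the support of that cutoff, the phase has no critical point on the support of the amplitude and non-stationary phase gives an $S^{-\infty}$ contribution after integration by parts in $\omega$ (or $r$).

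\textbf{Key steps.}
The main computation is the stationary phase expansion at the nondegenerate critical point $(\omega_c,r_c)=(\lambda,-\partial_\omega F(x,\lambda))$ in the case $\varepsilon=\varepsilon'=+$. I would check the Hessian of $\Phi$ in $(\omega,r)$: $\partial_r^2\Phi=0$, $\partial_\omega\partial_r\Phi=\varepsilon'=1$, $\partial_\omega^2\Phi=\partial_\omega^2 F(x,\lambda)$, so the Hessian has determinant $-1\neq 0$ and signature $0$, hence the stationary phase prefactor is $|\tau|^{-2/2+1}=1$ (the extra $\tau$ in front of the integral exactly cancels the $|\tau|^{-1}$ from the two-dimensional stationary phase, leaving an overall symbol of the same order as $a_{k,\pm}^\varepsilon$). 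Evaluating the phase at the critical point gives $\Phi(x,\lambda,r_c)=F(x,\lambda)+(\lambda-\lambda)|r_c|=F(x,\lambda)$, which accounts for the factor $e^{i\tau F(x,\lambda)}$ that we are instructed to factor out. The leading term of the expansion is then $\chi(\mp\nu_k^\pm\partial_\lambda F(x,\lambda))\varphi(\lambda)a_{k,\pm}^\varepsilon(\tau,\lambda)$ up to a nonzero constant absorbed into the normalization (and using $\varphi(\lambda)=1$ by \eqref{e:varphi-def}); the remainder of the asymptotic expansion, obtained from \cite[Theorem~7.7.5]{Hormander1}, involves derivatives of the amplitude and the phase and lies in $S^{-2+}$ since $a_{k,\pm}^\varepsilon\in S^{-1+}$ and each term in the expansion gains one order of decay. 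That $e^{-i\tau F(x,\lambda)}B_{k,\pm}^{\varepsilon,\varepsilon'}(x,\infty,\tau)\in S^{-1+}(\overline\Omega\times\mathbb R)$ globally — including the $x$-derivative bounds and the behavior for $|\tau|$ bounded away from zero, where \eqref{e:A-supporter} keeps us on $|\tau|\geq 1$ — follows from the symbol estimates on $a_{k,\pm}^\varepsilon$ together with the smoothness of $F$ and $x_k^\pm(\omega)$ in $(x,\omega)$ (Lemma~\ref{l:perturb-MS}), applying the stationary phase bounds uniformly in the parameter $x\in\overline\Omega$.

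\textbf{Main obstacle.}
I expect the main difficulty to be bookkeeping rather than analysis: keeping the signs $\varepsilon,\varepsilon',\nu_k^\pm,\pm$ straight when identifying \emph{which} combinations produce a genuine critical point inside $\supp\varphi$, and matching the resulting cutoff $\chi(\mp\nu_k^\pm\partial_\lambda F(x,\lambda))$ to the one appearing in the definition of $\widetilde A_{k,\pm}^{\varepsilon,\varepsilon'}$ in Lemma~\ref{l:chis} (so that the cutoff in $B=A-\widetilde A$ is consistent). A secondary point requiring care is the uniformity of the stationary phase expansion in $x$ near $\partial\Omega$ and near the boundary of the support of the spatial cutoff — there one must either note that the critical point depends smoothly on $x$ wherever $\mp\nu_k^\pm\partial_\lambda F(x,\lambda)$ is bounded away from $0$ (which is arranged by the choice of $\psi$ and $\chi$ in the proof of Lemma~\ref{l:chis}), and that on the transition region the amplitude $\chi(\pm\varepsilon'\nu_k^\pm r)$ and the spatial cutoff jointly keep the phase non-stationary, so the $S^{-\infty}$ alternative applies there. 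Finally I would record that the case $\varepsilon=-$ follows from $\varepsilon'=-$ (no critical point) by the same non-stationary phase argument, and that the $\varepsilon=+,\varepsilon'=-$ and $\varepsilon=-,\varepsilon'=+$ mixed cases are likewise non-stationary once the sign constraint \eqref{e:A-supporter} is imposed, completing all the entries of the case distinction in the statement.
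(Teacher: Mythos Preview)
Your approach is essentially the paper's: apply the method of stationary phase to the $(\omega,r)$ double integral, locate the only critical point at $\omega=\varepsilon'\lambda$, use that $-\lambda\notin\supp\varphi$ to dispose of $\varepsilon'=-$, and read off the leading term when $\varepsilon=\varepsilon'=+$. Two small remarks. First, the paper avoids your change of variables $r\mapsto(\sgn\tau)r$: once one observes that the cutoff $\chi(\pm\varepsilon'\nu_k^\pm r)$ forces $\sgn\tau=\pm\varepsilon'\nu_k^\pm$ (else $B\equiv0$), one can simply extend the $r$-integral to all of $\mathbb R$ and carry the factor $|\tau|$ outside; this keeps the phase as $\tau\cdot\Phi$ rather than mixing $\tau$ and $|\tau|$ as in your write-up. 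Second, the factor $\chi(\mp\nu_k^\pm\partial_\lambda F(x,\lambda))$ arises directly by evaluating the amplitude cutoff $\chi(\pm\nu_k^\pm r)$ at the critical point $r_c=-\partial_\lambda F(x,\lambda)$; Lemma~\ref{l:ggeo} is not needed here (it concerns the sign of $\partial_\omega F$ on the set $\{F=0\}$, which is irrelevant for this computation). Finally, in the case $\varepsilon=-$, $\varepsilon'=+$ the two support constraints on $\tau$ are incompatible, so $B\equiv0$ outright; you do not need non-stationary phase there.
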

%%%%%%%%%%%%%%%%%%%%%%%%%%%%%%%%%%%%%%%%%%%%%%%%%%%%%%%%%%%%%%%%%%%%%%%%%%%%%%%%
\begin{proof}
We first note that if  $\pm \varepsilon' \nu_k^\pm  \tau < 0 $ then $ B_{k,\pm}^{\varepsilon,\varepsilon'} ( x , \infty , \tau ) = 0 $. Hence we can assume that 
\begin{equation}
\label{eq:sgnt}
\sgn \tau = \pm \varepsilon' \nu_k^\pm.
\end{equation}
In that case we can replace limits of integration in $ r $ by $ (-\infty, \infty ) $, with $\tau$ replaced by $|\tau|$ in the prefactor
$\tau\over 2\pi $.
The method of stationary phase (see for instance \cite[Theorem~7.7.5]{Hormander1}) 
can be applied to the double integral $\int_{\mathbb R^2}\,d\omega dr$ and the critical point in given by 
$$
\omega = \varepsilon' \lambda,\quad r = - \varepsilon' \partial_\omega F ( x, \omega ).
$$
Since $ \omega = - \lambda $ lies outside of the support of $\varphi$,
if $\varepsilon'=-$ then (by the method of nonstationary phase) we have
$  B_{k,\pm}^{\varepsilon,\varepsilon'}(x,\infty,\tau)  \in S^{-\infty}(\overline\Omega\times\mathbb R)$. We thus assume that $\varepsilon'=+$,
which by~\eqref{eq:sgnt} gives $ \pm \nu_k^\pm \tau > 0 $.
If $\varepsilon=-$ then the support property of $a_{k,\pm}^\varepsilon$ in Lemma~\ref{l:conor}
shows that $B_{k,\pm}^{\varepsilon,\varepsilon'}(x,\infty,\tau)=0$. Thus we may assume that $\varepsilon=\varepsilon'=+$.
In the latter case the method of stationary phase gives the expansion for $B_{k,\pm}^{\varepsilon,\varepsilon'}(x,\infty,\tau)$.
\end{proof} 
%%%%%%%%%%%%%%%%%%%%%%%%%%%%%%%%%%%%%%%%%%%%%%%%%%%%%%%%%%%%%%%%%%%%%%%%%%%%%%%%
We now analyse the remaining term given by 
\begin{equation}
\label{eq:defvk}    
v_{k, \pm}^{\varepsilon, \varepsilon'} ( x, t ) := 
\frac{1}{2 \pi } \int_{\mathbb R} C_{k,\pm}^{\varepsilon, \varepsilon'} ( x, t, \tau )\, d\tau , 
\end{equation}
where 
\[ 
 C_{k, \pm}^{\varepsilon,\varepsilon' } ( x, t, \tau)=\frac{ \tau  }{ 2\pi} \int_{t\over \tau}^{(\sgn\tau)\infty  }
\int_{\mathbb R }
e^{ i \tau  ( \ell_\omega^\pm ( x - x_k^\pm ( \omega ) ) +   ( \varepsilon' \omega - \lambda )r ) }
\chi_{k,\pm}^{\varepsilon'} ( r ) 
\varphi ( \omega )
a_{k, \pm }^{\varepsilon } ( \tau , \omega) 
\,d \omega d r ,
\] 
and $ \chi_{k,\pm}^{\varepsilon'} ( r )  := \chi ( \pm \varepsilon' \nu_k^\pm r ) \in 
C^\infty_{\rm{c}} ( \mathbb R \setminus \{ 0 \} ) $. 
The last lemma deals with this term:
%%%%%%%%%%%%%%%%%%%%%%%%%%%%%%%%%%%%%%%%%%%%%%%%%%%%%%%%%%%%%%%%%%%%%%%%%%%%%%%%
\begin{lemm}
\label{l:last}
For $ v_{k,\pm}^{\varepsilon,\varepsilon'} $ given by \eqref{eq:defvk} we have, for every 
$ \beta > 0 $, 
\begin{equation}
\label{eq:lastl}
\|  v_{ k, \pm }^{\varepsilon,\varepsilon'}(t) \|_{ H^{\frac12- \beta } ( \Omega ) } \to 0 \quad\text{as}\ t \to  \infty. 
\end{equation}
\end{lemm}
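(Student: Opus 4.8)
The plan is to show that $v_{k,\pm}^{\varepsilon,\varepsilon'}(t)$ is, up to uniformly bounded error in $C^\infty$, a conormal distribution associated to the $t$-dependent line $\{\ell^\pm_\omega(x-x_k^\pm(\omega))=0\}$ \emph{evaluated at the $\omega$ for which that line reaches $x$ at time $t$}, and that as $t\to\infty$ the relevant symbol amplitude becomes supported at higher and higher frequencies $\tau$, so that the $H^{1/2-\beta}$ norm decays. Concretely, I will substitute back $s=\tau r$ in the inner $r$-integral of $C_{k,\pm}^{\varepsilon,\varepsilon'}$ to rewrite it as $\frac1{2\pi}\int_t^{(\operatorname{sgn}\tau)\infty}\int_{\mathbb R}e^{i\tau\ell^\pm_\omega(x-x^\pm_k(\omega))+i(\varepsilon'\omega-\lambda)s}\,\chi_{k,\pm}^{\varepsilon'}(s/\tau)\varphi(\omega)a^\varepsilon_{k,\pm}(\tau,\omega)\,d\omega\,ds$, so that $v_{k,\pm}^{\varepsilon,\varepsilon'}(t)=\frac1{2\pi}\int_{\mathbb R^2}(\cdots)\,d\tau\,ds$. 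By the same arguments used in Lemmas~\ref{l:Binf} and~\ref{l:chis} (nonstationary phase in $\omega$ when $\varepsilon'=-$, and the support condition on $a^\varepsilon_{k,\pm}$ from Lemma~\ref{l:conor} when $\varepsilon=-$), the only case that is not $\mathcal O(t^{-\infty})$ in $C^\infty(\overline\Omega)$ is $\varepsilon=\varepsilon'=+$; in that case the cutoff $\chi^{\varepsilon'}_{k,\pm}(s/\tau)$ forces $s\sim |\tau|$, so effectively $|\tau|\gtrsim c_0 s\gtrsim c_0 t$ on the support of the integrand.

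First I would reduce to $\varepsilon=\varepsilon'=+$. Then I would localize with the cutoff $\psi(x)$ of Lemma~\ref{l:chis}: away from the line $\{\ell^+_\lambda(x-x^+_k(\lambda))=0\}$ the whole expression is smooth and $\mathcal O(t^{-\infty})$ in every $C^k$ by nonstationary phase in $\omega$, exactly as in the proof of Lemma~\ref{l:chis}. Near the line, I perform stationary phase in $(\omega,s)$ (equivalently, in $(\omega,r)$ as in Lemma~\ref{l:Binf}) for fixed $\tau$; the critical point is $\omega=\lambda$, $r=-\partial_\omega F(x,\lambda)$ where $F(x,\omega):=\ell^+_\omega(x-x^+_k(\omega))$, but now the constraint of the $r$-integral is $r\in[t/\tau,(\operatorname{sgn}\tau)\infty)$, so the critical point contributes only when $t/|\tau|\lesssim |\partial_\omega F(x,\lambda)|$, i.e. $|\tau|\gtrsim t/\sup|\partial_\omega F|$. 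Hence, up to $\mathcal O(t^{-\infty})$ errors in $C^\infty(\overline\Omega)$, $e^{-i\tau F(x,\lambda)}C_{k,+}^{+,+}(x,t,\tau)$ is a symbol in $S^{-1+}(\overline\Omega\times\mathbb R_\tau)$ supported in $|\tau|\geq c_0 t$ (for some $c_0>0$ depending only on $\sup_{\operatorname{supp}\psi}|\partial_\omega F(x,\lambda)|$). This is the analogue of Lemma~\ref{l:Binf} with the roles of the integration region reversed, and is proved the same way.

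It then remains to estimate the $H^{1/2-\beta}$ norm of the oscillatory integral $v_{k,+}^{+,+}(x,t)=\frac1{2\pi}\int_{\mathbb R}e^{i\tau F(x,\lambda)}b_t(x,\tau)\,d\tau$ where $b_t\in S^{-1+}(\overline\Omega\times\mathbb R)$ is supported in $|\tau|\geq c_0 t$ and bounded in $S^{-1+}$ uniformly in $t$ (i.e. $|b_t(x,\tau)|\leq C_\delta\langle\tau\rangle^{-1+\delta}$ for every $\delta>0$, uniformly in $t$). Since $F(x,\lambda)$ is a nonzero affine function of $x$ with $d_xF\neq0$, the map $x\mapsto(F(x,\lambda),\,\text{transverse variable})$ is a diffeomorphism onto its image, and a conormal distribution with amplitude in $S^{m}(\mathbb R)$ localized to frequencies $|\tau|\geq T$ has $H^{1/4+m+\delta'}$ norm bounded by a constant times $(\int_{|\tau|\geq T}\langle\tau\rangle^{2(1/4+m+\delta')-1}\langle\tau\rangle^{2m}\cdots)$ — more simply, by Plancherel in the conormal variable, $\|v\|_{H^{r}}^2\lesssim \int_{|\tau|\geq c_0 t}\langle\tau\rangle^{2r}|b_t(\xi)|^2\,d\tau\lesssim \int_{|\tau|\geq c_0 t}\langle\tau\rangle^{2r-2+2\delta}\,d\tau$, which for $r=\tfrac12-\beta$ and $\delta<\beta$ is $\lesssim t^{2r-1+2\delta}=t^{-2\beta+2\delta}\to0$ as $t\to\infty$. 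This gives \eqref{eq:lastl}.

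The main obstacle is bookkeeping rather than a genuine difficulty: one must carefully verify that the $t\to\infty$ tail of the $r$-integral (the difference between $\int_0^t$ and $\int_0^\infty$ in $s$, which is what $C_{k,\pm}^{\varepsilon,\varepsilon'}$ encodes) genuinely produces an amplitude supported in $|\tau|\geq c_0 t$ and uniformly bounded in $S^{-1+}$, with all derivatives in $x$ controlled — this requires running the stationary-phase expansion of Lemma~\ref{l:Binf} with the truncated $r$-domain and checking that the boundary term at $r=t/\tau$ and the nonstationary contributions are $\mathcal O(t^{-\infty})$ in $C^\infty$, using $\chi^{\varepsilon'}_{k,\pm}\in\CIc(\mathbb R\setminus\{0\})$ to stay away from $\tau=0$. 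Once that symbol estimate is in hand, the Sobolev decay is immediate from Plancherel as above, exactly as the $H^{1/2-}$ regularity of $u^+$ was obtained in \S\ref{s:prooft2}.
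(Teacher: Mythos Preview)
Your key observation is correct and elegant: the cutoff $\chi\in\CIc((0,\infty))$ forces $C_{k,\pm}^{\varepsilon,\varepsilon'}(x,t,\tau)$ to vanish \emph{exactly} for $|\tau|<t/c_2$ where $[c_1,c_2]\supset\supp\chi$. This is a cleaner way to see why the norm decays than the paper's argument, and the reduction to $\varepsilon=\varepsilon'=+$ is fine. However, the execution has a genuine gap.

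The claim that in the transition region $|\tau|\in[t/c_2,t/c_1]$ one has $e^{-i\tau F(x,\lambda)}C(x,t,\tau)\in S^{-1+}$ uniformly in $t$, with the boundary term at $r=t/\tau$ being $\mathcal O(t^{-\infty})$ in $C^\infty$, is incorrect. Stationary phase in $(\omega,r)$ over the truncated domain $[t/\tau,\infty)\cap\supp\chi$ produces, beyond the interior contribution at $(\lambda,-\partial_\omega F(x,\lambda))$, a boundary contribution from the endpoint $r=t/\tau$. That boundary term is obtained from one-dimensional stationary phase in $\omega$ along $\{r=t/\tau\}$, whose critical point $\omega_*$ solves $\partial_\omega F(x,\omega_*)=-t/\tau$ and is in general \emph{different} from $\lambda$. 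Its phase is therefore $\tau F(x,\omega_*)+(\omega_*-\lambda)t$, not $\tau F(x,\lambda)$, so it does not fit the conormal form $e^{i\tau F(x,\lambda)}b_t(x,\tau)$ you assert. The size of this term is $\mathcal O(\tau^{-3/2+})$ (one half-power better than the interior, as usual for boundary contributions), which after the $\tau$-integral over an interval of width $\sim t$ gives $\mathcal O(t^{-1/2+})$ in $L^\infty$---decaying, but nowhere near $\mathcal O(t^{-\infty})$, and with nontrivial $x$-singularities along the family of lines $\{F(\cdot,\omega_*)=0\}$. So your ``bookkeeping'' step is where the real analysis lies, and your proposed conormal-then-Plancherel route does not go through as stated.

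The paper takes a different and more robust route that sidesteps stationary phase altogether. It multiplies by a cutoff $\psi(\ell^\mp_\omega(x))$ in the transverse direction, takes the full two-dimensional Fourier transform in~$x$, and uses the tensor-product structure $U_\pm(x,t,\omega)=\psi(\ell^\mp_\omega(x))V_\pm(\ell^\pm_\omega(x),t,\omega)$ to compute $\widehat U_\pm(\xi,t,\omega)$ explicitly in terms of $D(\tau,t,\rho):=\int_{t/\tau}^{(\sgn\tau)\infty}\widetilde\chi(r)e^{i\tau\rho r}\,dr$. One then has the elementary bound $|D(\tau,t,\rho)|\leq C\langle\tau\rho\rangle^{-1}$ (integration by parts in $r$ for $|\tau\rho|>1$, trivially for $|\tau\rho|\leq1$) together with the pointwise limit $D\to0$ as $t\to\infty$ for each fixed $\tau\neq0$. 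Plugging into $\|v\|_{H^{1/2-\beta}}^2\leq\int\langle\xi\rangle^{1-2\beta}|\int\varphi(\omega)\widehat U_\pm\,d\omega|^2\,d\xi$ and applying the Dominated Convergence Theorem (with a majorant coming from $\langle\tau\rho\rangle^{-1}$, $\widehat\psi\in\mathscr S$, and $a\in S^{-1+}$, leading to an integrable bound $|\omega-\lambda|^{\beta/2-1}$ after the $\xi$-integration) gives the result. Your support observation $|\tau|\gtrsim t$ is implicit here (it is what makes $D\to0$ pointwise), but the transition-region issue never arises because no attempt is made to reduce to a single conormal phase.
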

%%%%%%%%%%%%%%%%%%%%%%%%%%%%%%%%%%%%%%%%%%%%%%%%%%%%%%%%%%%%%%%%%%%%%%%%%%%%%%%%
\begin{proof}
1. We put $ \varepsilon' = + $ as the other case is similar and simpler.
To simplify notation we will often drop $ \varepsilon$ and $ k $.
Fix a cutoff function
\begin{equation}
  \label{e:endgame-psi}
\psi\in\CIc(\mathbb R),\quad
(\ell^\mp_\omega)^*\psi=1\quad\text{near }\overline\Omega\quad\text{for all}\quad
\omega\in\supp\varphi.
\end{equation}
For $x\in\mathbb R^2$, $t>0$, and $\omega\in\mathcal J$, define
\[ 
 U_\pm ( x, t,  \omega ) = \psi ( \ell^\mp_\omega( x) ) V_\pm ( \ell^\pm_\omega  ( x ), t, \omega ) , 
\]
where (in the sense of oscillatory integrals)
\begin{equation}
  \label{e:cobra}
\begin{gathered}
V_\pm (y, t, \omega ) :=  \int_{\mathbb R}
 {\tau\over 2\pi}   \int_{t\over \tau}^{(\sgn\tau)\infty}
e^{ i \tau  ( y - \ell_\omega^\pm ( x^\pm_k ( \omega ) ) +   ( \omega - \lambda )r ) }
\widetilde\chi ( r ) b (\tau, \omega ) 
\,  d r d \tau ,\\
b := {a_{k, \pm }^{\varepsilon }\over 2\pi} \in S^{-1+}(\mathcal J_\omega\times\mathbb R_\tau),\quad
\widetilde\chi := \chi_{k,\pm}^{+} \in \CIc ( \mathbb R \setminus 0 ).
\end{gathered}
\end{equation}
Then we have for $x\in\Omega$
$$
v_{k,\pm}^{\varepsilon,+}(x,t)=\int_{\mathbb R}\varphi(\omega) U_\pm(x,t,\omega)\,d\omega,
$$
which together with the Fourier characterization of the Sobolev space $H^{\frac12-\beta}(\mathbb R^2)$ implies the following bound,
where $\widehat U_\pm$ denotes the Fourier transform of $U_\pm$ in the $x$ variable:
\begin{equation}
\label{eq:U2FT}
  \| v_{k,\pm}^{\varepsilon, +} (t)  \|_{ H^{\frac12 - \beta }( \Omega ) }^2\leq  
\int_{\mathbb R^2} \langle \xi \rangle^{1-2 \beta} 
\left| \int_{\mathbb R} \varphi ( \omega  ) \widehat U_\pm ( \xi , t, \omega ) \,d\omega \right|^2 \,d \xi .
\end{equation}

\noindent 2. Thinking of $ L_\omega^\pm  $ (see \eqref{e:L-pm-def}) as elements of the dual $\mathbb R^2$ of $ ( \mathbb R^2 )^* $
we have by~\eqref{eq:L2l}
\[ (\mathbb R^2)^* \ni \xi =  L^+_\omega(\xi)  \ell^+_\omega + 
 L^-_\omega(\xi) \ell^-_\omega , \ \   \ell^\pm_\omega \in 
(\mathbb R^2)^* . \]
Hence, since $\det \partial ( x_1, x_2 )/\partial (\ell^+_\omega, \ell^-_\omega  ) = \frac12 \omega \sqrt { 1 - \omega^2 }$, 
\[
\begin{split}
\mathcal F_{ x \to \xi} \big( f ( \ell^+_\omega (  x ) ) g ( \ell^-_\omega ( x  ) ) \big) & = 
\int_{\mathbb R^2} e^{ - i L^+_\omega( \xi)  \ell^+_\omega (x) -i  L^-_\omega ( \xi)  \ell^-_\omega 
( x) } f ( \ell^+_\omega ( x ) ) g ( \ell^-_\omega ( x  ) )\, dx \\
& =
\tfrac 12 \omega \sqrt { 1 - \omega^2 } \hat f ( L^+_ \omega ( \xi)) \hat g (
  L^-_{ \omega } (\xi) ) .
\end{split} 
\]
Consequently,
\begin{equation} 
\label{eq:hatv} 
\widehat   U_\pm ( \xi , t, \omega ) = L^\pm_\omega( \xi) 
D (L^\pm_\omega( \xi)  , t,  \omega - \lambda )  \widehat \psi (  L^\mp_\omega ( \xi  ) )
e^{ - i   L^\pm_\omega ( \xi ) \ell^\pm_\omega  (x^\pm_k ( \omega ) )} b ( L^\pm_ \omega ( \xi ), \omega ) , \end{equation}
where we absorbed the Jacobian into $b $ and put
\[
D (\tau, t,  \rho ) :=  \int_{t\over\tau}^{ (\sgn\tau) \infty } \widetilde\chi ( r ) e^{ i \tau \rho r } dr ,\quad
\tau\neq 0.
\]
Since $\widetilde\chi\in\CIc(\mathbb R)$, we have
$ D ( t, \tau, \rho ) \to 0 $, for fixed $ \tau\neq 0 $ as $ t \to \infty$, uniformly in $ \rho$.
In view of the support condition in Lemma~\ref{l:conor} (which implies that $ |L^\pm_\omega ( \xi )|\geq 1$ 
on the support of~$ \widehat U_\pm $) we then get 
\begin{equation} 
\label{eq:limUhat}
\int_{\mathbb R} \varphi ( \omega ) \widehat U_\pm ( \xi, t, \omega )\, d\omega \to 0 \quad\text{as } t \to \infty , 
\end{equation}
for all $ \xi \in \mathbb R^2 $. Using the Dominated Convergence Theorem and~\eqref{eq:U2FT} we see
that to establish \eqref{eq:lastl} it is enough to show that the integrand on the right hand side of \eqref{eq:U2FT} is bounded by a
$t$-independent integrable function of~$\xi$. 

\noindent
3. We have
\begin{equation}
  \label{e:D-estimate}
| D ( \tau , t,  \rho ) | \leq C \langle \tau \rho \rangle^{-1}. 
\end{equation}
Indeed, since the support of $ \widetilde\chi $ is bounded, we have $ D = \mathcal O ( 1 ) $.
On the other hand, when $  | \tau \rho  | > 1 $ we can integrate by parts using that
$e^{i\tau\rho r}= (i \tau \rho)^{-1} \partial_r e^{ i \tau \rho r} $ which gives the estimate. 

Recalling~\eqref{eq:hatv}, \eqref{e:D-estimate} and using that $\widehat\psi\in\mathscr S(\mathbb R)$ by~\eqref{e:endgame-psi} and $ b(\tau,\omega)=\mathcal O(\langle\tau\rangle^{-1+\frac\beta2})$ by~\eqref{e:cobra}, we get
$$
|\widehat U_\pm(\xi,t,\omega)|\leq C\big\langle L^\mp_\omega(\xi)\big\rangle^{-10}
\big\langle L^\pm_\omega(\xi)\big\rangle^{\beta\over 2}\big\langle(\omega-\lambda)L^\pm_\omega(\xi)\big\rangle^{-1}.
$$
Thus is remains to show that
$$
\begin{gathered}
\bigg\|\int_{\mathbb R}\varphi(\omega)H(\xi,\omega)\,d\omega\bigg\|_{L^2(\mathbb R^2_\xi)}<\infty\\
\text{where}\quad
H(\xi,\omega):=\langle\xi\rangle^{\frac 12-\beta}\big\langle L^\mp_\omega(\xi)\big\rangle^{-10}
\big\langle L^\pm_\omega(\xi)\big\rangle^{\beta\over 2}\big\langle(\omega-\lambda)L^\pm_\omega(\xi)\big\rangle^{-1}.
\end{gathered}
$$
Using the integral version of the triangle inequality for $L^2(\mathbb R^2_\xi)$, this reduces to
\begin{equation}
  \label{e:majorant}
\int_{\mathbb R}\varphi(\omega)\|H(\xi,\omega)\|_{L^2(\mathbb R^2_\xi)}\,d\omega<\infty.  
\end{equation}
Fix $\omega\in\supp\varphi$ and make the linear change of variables
$\xi\mapsto\eta=(\eta_+,\eta_-)$, $\eta_\pm=L^\pm_\omega(\xi)$. Then we see that
$$
\|H(\xi,\omega)\|_{L^2(\mathbb R^2_\xi)}^2
\leq C\int_{\mathbb R^2}\langle\eta\rangle^{1-2\beta}\langle \eta_\mp\rangle^{-20}
\langle \eta_\pm\rangle^{\beta}\big\langle(\omega-\lambda)\eta_\pm\big\rangle^{-2}\,d\eta.
$$
Integrating out $\eta_\mp$ and making the change of variables
$\zeta:=(\omega-\lambda)\eta_\pm$, we get (for $\omega$ bounded and assuming $\beta<1$)
$$
\|H(\xi,\omega)\|_{L^2(\mathbb R^2_\xi)}^2
\leq C\int_{\mathbb R}\langle\eta_\pm\rangle^{1-\beta}
\big\langle(\omega-\lambda)\eta_\pm\big\rangle^{-2}\,d\eta_\pm
\leq C|\omega-\lambda|^{\beta-2}.
$$
Thus
$$
\int_{\mathbb R}\varphi(\omega)\|H(\xi,\omega)\|_{L^2(\mathbb R^2_\xi)}\,d\omega
\leq C\int_0^1 |\omega-\lambda|^{\frac\beta 2-1}\,d\omega <\infty,
$$
giving~\eqref{e:majorant} and finishing the proof.
\end{proof} 
%%%%%%%%%%%%%%%%%%%%%%%%%%%%%%%%%%%%%%%%%%%%%%%%%%%%%%%%%%%%%%%%%%%%%%%%%%%%%%%%

%%%%%%%%%%%%%%%%%%%%%%%%%%%%%%%%%%%%%%%%%%%%%%%%%%%%%%%%%%%%%%%%%%%%%%%%%%%%%%%%
\subsection{Proof of Theorem \ref{t:1}}

We now review how the pieces presented in~\S\S \ref{s:asy-reduce}--\ref{s:asw} fit together to give the proof of Theorem \ref{t:1}.

In view of \eqref{eq:DeltaOmega}, Lemma~\ref{l:b1}, and~\eqref{e:u-pm-conormal} it suffices to show that
\begin{equation}
\label{eq:w1t}  
\begin{gathered}
\Delta_\Omega^{-1}w_1 ( t ) = u^+ ( \lambda ) + r_2 ( t ) + \tilde e(t), \\
\| r_2 ( t ) \|_{H^1(\Omega) } = \mathcal O( 1 ) ,\quad \| \tilde e ( t ) \|_{H^{\frac12-} ( \Omega ) }
\to 0 \quad\text{as } t \to  \infty .
\end{gathered}
\end{equation}
We use the formula~\eqref{eq:w2w1} which expresses $\Delta_\Omega^{-1}w_1(t)$ as an integral
featuring the distributions $u^\varepsilon(x,\omega)$, $\varepsilon\in\{+,-\}$.
Lemma~\ref{l:conor} gives a decomposition of $ u^\varepsilon $ into 
the conormal components $ g_{k, \pm}^\varepsilon $ and the smooth component $u^\varepsilon_0$.
Lemma \ref{l:smou} then shows that the contribution of $u^\varepsilon_0$ to~$\Delta_\Omega^{-1}w_1(t)$ can
be absorbed into $ r_2 ( t) $. 

The contribution of conormal terms $g_{k,\pm}^\varepsilon$ to~$\Delta_\Omega^{-1}w_1(t)$ is then given by~\eqref{e:w-1-con}. Restricting integration in $ r $ using the cut-off 
$ 1 - \chi $ in Lemma \ref{l:chis}, produces other terms which can be absorbed into $ r_2 ( t ) $. The limit of the remaining terms as $ t\to+\infty $ is described in Lemma~\ref{l:Binf}: summing over $k$ and $\pm$ gives the leading term as $ u^+ ( \lambda ) $ (as seen by returning to Lemma~\ref{l:conor}, where the cutoff $\chi$ does not matter by~\eqref{e:chis-1} and~\eqref{e:chis-2}) and terms which again can be absorbed in $ r_2 ( t ) $. 

What is left is given by a sum of \eqref{eq:defvk}. Lemma \ref{l:last} shows that 
those terms all go to $ 0 $ in $ H^{\frac 12-} ( \Omega ) $ as $ t \to  \infty $ and their
sum constitutes $ \tilde e ( t ) $.

%%%%%%%%%%%%%%%%%%%%%%%%%%%%%%%%%%%%%%%%%%%%%%%%%%%%%%%%%%%%%%%%%%%%%%%%%%%%%%%%
%%%%%%%%%%%%%%%%%%%%%%%%%%%%%%%%%%%%%%%%%%%%%%%%%%%%%%%%%%%%%%%%%%%%%%%%%%%%%%%%
\medskip\noindent\textbf{Acknowledgements.}
We would like to thank Peter Hintz and Hart Smith for  helpful discussions concerning \S \ref{s:microp} and injectivity of $ \mathcal C_\lambda $ (see Remark in \S \ref{s:anal-uniq}), respectively, and
 for their interest in this project. Special thanks are also due to Leo Maas for his comments
 on an early version of the paper, help with the introduction and references and
experimental data in Figure \ref{f:psycho} (reproduced with permission from the
American Institute of Physics). We are grateful to the anonymous referees for
a careful reading of the paper and many comments
to improve the presentation. SD was partially supported by NSF CAREER grant DMS-1749858
and a Sloan Research Fellowship, while JW and MZ were partially supported by NSF grant DMS-1901462. 

%%%%%%%%%%%%%%%%%%%%%%%%%%%%%%%%%%%%%%%%%%%%%%%%%%%%%%%%%%%%%%%%%%%%%%%%%%%%%%%%
% BIBLIOGRAPHY
%%%%%%%%%%%%%%%%%%%%%%%%%%%%%%%%%%%%%%%%%%%%%%%%%%%%%%%%%%%%%%%%%%%%%%%%%%%%%%%%
\bibliographystyle{abbrv}
\bibliography{General,Dyatlov,Aqua}

\end{document}